\newenvironment{myabstract}{\par\noindent
{\bf Abstract . } \small }
{\par\vskip8pt minus3pt\rm}
\newcounter{item}[section]
\newcounter{kirshr}
\newcounter{kirsha}
\newcounter{kirshb}
\newenvironment{enumroman}{\setcounter{kirshr}{1}
\begin{list}{(\roman{kirshr})}{\usecounter{kirshr}} }{\end{list}}
\newenvironment{enumarab}{\setcounter{kirshb}{1}
\begin{list}{(\arabic{kirshb})}{\usecounter{kirshb}} }{\end{list}}
\newenvironment{athm}[1]{\vskip3mm\par\noindent
{\bf #1 }. \slshape }
{\upshape\par\vskip10pt minus3pt}
\newtheorem{theorem}{Theorem}[section]
\newtheorem{lemma}[theorem]{Lemma}
\newtheorem{corollary}[theorem]{Corollary}
\newenvironment{demo}[1]{\noindent{\bf #1.}\upshape\mdseries}
{\nopagebreak{\hfill\rule{2mm}{2mm}\nopagebreak}\par\normalfont}
\theoremstyle{definition}
\newtheorem{example}[theorem]{Example}
\newtheorem{definition}[theorem]{Definition}
\def\Nr{{\mathfrak{Nr}}}
\def\Fr{{\mathfrak{Fr}}}
\def\Sg{{\mathfrak{Sg}}}
\def\Fm{{\mathfrak{Fm}}}
\def\Cm{{\mathfrak{Cm}}}
\def\K{{\mathfrak{K}}}
\def\CA{{\bf CA}}
\def\K{{\bf K}}
\def\RCA{{\bf RCA}}
\def\SA{{\bf SA}}
\def\Rd{{\ Rd}}
\def\(R)RA{{\bf (R)RA}}
\def\RA{{\bf RA}}
\def\RRA{{\bf RRA}}
\def\Dc{{\bf Dc}}
\def\R{\mathbb{R}}
\def\N{\mathbb{N}}
\def\C{\mathbb{C}}
\def\A{{\mathfrak{A}}}
\def\B{{\mathfrak{B}}}
\def\C{{\mathfrak{C}}}
\def\D{{\mathfrak{D}}}
\def\P{{\mathfrak{P}}}
\def\Rd{{\mathfrak{Rd}}}
\def\Dc{{\bf Dc}}
\def\PA{{\bf PS}}
\def\PEA{{\bf PEA}}
\def\RPEA{{\bf RPEA}}
\def\QRA{{\bf QRA}}
\def\QPA{{\bf QPA}}
\def\SC{{\bf SC}}
\def\SA{{\bf SA}}
\def\Rd{{\mathfrak{Rd}}}
\def\Ra{{\mathfrak{Ra}}}
\def\L{{\mathfrak{L}}}
\def\P{{\mathfrak{P}}}
\def\Ca{{\mathfrak{Ca}}}
\def\(R)RA{{\bf (R)RA}}
\def\RA{{\bf RA}}
\def\RRA{{\bf RRA}}
\def\Dc{{\bf Dc}}
\def\PEA{{\bf PEA}}
\def\PA{{\bf PA}}
\def\R{\mathbb{R}}
\def\N{\mathbb{N}}
\def\C{{\cal{C}}}
\def\At{{\mathfrak{At}}}
\def\GG{\cal {G}}
\def\G{{\mathfrak{G}}}
\def\RDf{{\bold {RDf}}}
\def\Df{{\bold {Df}}}
\def\M{{\bold {M}}}
\def\Dc{{\bf Dc}}
\def\Sg{{\mathfrak{Sg}}}
\def\PA{{\bf PA}}
\def\QEA{{\bf QEA}}
\def\RQPEA{{\bf RQPEA}}
\def\PEA{{\bf PEA}}
\def\PA{{\bf PA}}
\def\L{{\mathfrak{L}}}
\def\SA{{\bf SA}}
\def\Rd{{\mathfrak{Rd}}}
\def\Ra{{\mathfrak{Ra}}}
\def\L{{\mathfrak{L}}}
\def\P{{\mathfrak{P}}}
\def\Ca{{\mathfrak{Ca}}}
\def\(R)RA{{\bf (R)RA}}
\def\RA{{\bf RA}}
\def\RRA{{\bf RRA}}
\def\Dc{{\bf Dc}}
\def\PEA{{\bf PEA}}
\def\PA{{\bf PA}}
\def\R{\mathbb{R}}
\def\N{\mathbb{N}}
\def\C{{\cal{C}}}
\def\Dc{{\bf Dc}}
\def\Sg{{\mathfrak{Sg}}}
\def\PA{{\bf PA}}
\def\RPA{{\bf RPA}}
\def\RDf{{\bf RDf}}
\def\Nr{{\mathfrak{Nr}}}
\def\Fr{{\mathfrak{Fr}}}
\def\Sg{{\mathfrak{Sg}}}
\def\Fm{{\mathfrak{Fm}}}
\def\K{{\mathfrak{K}}}
\def\CA{{\bf CA}}
\def\K{{\bf K}}
\def\RCA{{\bf RCA}}
\def\SA{{\bf SA}}
\def\Rd{{\ Rd}}
\def\(R)RA{{\bf (R)RA}}
\def\RA{{\bf RA}}
\def\RRA{{\bf RRA}}
\def\Dc{{\bf Dc}}
\def\R{\mathfrak{R}}
\def\N{\mathbb{N}}
\def\C{\mathbb{C}}
\def\A{{\mathfrak{A}}}
\def\B{{\mathfrak{B}}}
\def\C{{\mathfrak{C}}}
\def\D{{\mathfrak{D}}}
\def\Ig{{\mathfrak{Ig}}}
\def\M{{\mathfrak{M}}}
\def\Sb{{\mathfrak{Sb}}}
\def\Kf{{\bf Kf}}
\def\Kn{{\bf Kn}}
\def\Kc{{\bf Kc}}
\def\Fm{{\mathfrak{Fm}}}
\def\Cm{{\mathfrak{Cm}}}
\def\Tm{{\mathfrak{Tm}}}
\def\Rd{{\mathfrak{Rd}}}
\def\E{{\mathfrak{E}}}
\def\Dc{{\bf Dc}}
\def\PA{{\bf PA}}
\def\PEA{{\bf PEA}}
\def\F{{\mathfrak F}}
\def\QPA{{\bf QPA}}
\def\QPEA{{\bf QPEA}}
\def\RQEA{{\bf RQEA}}
\def\RK{{\bf RK}}
\def\PEA{{\bf PEA}}
\def\Lf{{\bf Lf}}
\def\LfK{{\bf LfK}}
\def\Cs{{\bf Cs}}
\def\ReK{{\bf ReK}}
\def\SsK{{\bf SsK}}
\def\DcK{{\bf DcK}}
\def\Bb{{\mathfrak{Bb}}}
\def\restr #1{{\restriction_{#1}}}
\def\ws{winning strategy}
\def\ws{winning strategy}
\def \set#1{\{#1\} }
\def\cyl#1{{\sf c}_{#1}}
\def\diag#1#2{{\sf d}_{#1#2}}
\def\c #1{{\cal #1}}
\def\Fm{{\mathfrak{Fm}}}
\def\Cm{{\mathfrak{Cm}}}
\def\Tm{{\mathfrak{Tm}}}
\def\cyl#1{{\sf c}_{#1}}
\def\diag#1#2{{\sf d}_{#1#2}}
\def\c #1{{\cal #1}}
\def\pa{$\forall$}
\def\pe{$\exists$}
\def\nodes{{\sf nodes}}
\def\restr #1{{\restriction_{#1}}}
\def\Ra{{\mathfrak{Ra}}}
\def\Nr{{\mathfrak{Nr}}}
\def\CA{{\bf CA}}
\def\RCA{{\bf RCA}}
\def\c#1{{\mathcal #1}}
\def\set#1{ \{#1\}}
\def\Ca{{\mathfrak Ca}}
\def\pe{$\exists$}
\def\pa{$\forall$}
\def\Cm{{\mathfrak Cm}}
\def\Sg{{\mathfrak Sg}}
\def\At{{\sf At}}
\def\rng{{\sf rng}}
\def\dom{{\sf dom}}
\def\cyl#1{{\sf c}_{#1}}
\def\diag#1#2{{\sf d}_{#1#2}}
\def\ws{winning strategy}
\title{The Finitizability Problem in Algebraic Logic, recent results and developments: From neat embeddings to Erdos' graphs} 
\author{Tarek Sayed Ahmed\\
Department of Mathematics, Faculty of Science,\\ 
Cairo University, Giza, Egypt.
  }
\begin{document}
\maketitle
\begin{myabstract} This is an article 
on the so-called Finitizability Problem in Algebraic Logic. We take a magical tour from the early works of Tarski on relation algebras
in the forties all the way to neat embeddings and recent resuts in algebraic logic using Erdos probabilistic graphs.
Several deep theorems proved for cylindric algebras are surveyed refined and slightly generalized to other algebraisations of first order logic,
like polyadic algebras and diagonal free cylindric algebras.
A hitherto unpublished presentation of this problem in a categorial setting is presented. Techniques from 
stability theory are applied to representation problems in algebraic 
logic.
Philosophical implications are extensively discussed.
\footnote{ 2000 {\it Mathematics Subject Classification.} Primary 03G15.

{\it Key words}: algebraic logic, polyadic algebras, amalgamation} 

\end{myabstract}

Algebraic logic starts from certain special logical considerations, abstracts from them, places them in a general algebraic context
and via this generalization makes contact with other branches of mathematics (like set theory and topology). 
It cannot be overemphasized that algebraic logic is more algebra than logic,
nor more logic than algebra; in this paper we argue that algebraic logic, particularly the theory of cylindric algebras,
has become sufficiently interesting and deep 
to acquire a distinguished status among other subdisciplines of mathematical logic.

The principal ideas of the theory of cylindric algebras which is the
algebraic setting of first order logic were elaborated by Tarski in
cooperation with his students L. H. Chin and F. B. Thompson during
the period 1948 - 1952. This was a natural outcome of Tarski's
formalization of the notion of truth in set theory, for indeed the
prime examples of cylindric algebras are those algebras whose
elements are sets of sequences (i.e., relations) satisfying first
order formulas. Tarski envisaged that cylindric algebras to first order logic, will be like
Boolean algebras to sentential logic.

The idea of solving problems in logic by first translating them to
algebra, then using the powerful methodology of algebra for solving
them, and then translating the solution back to logic, goes back to
Leibnitz and Pascal. Papers on the history of Logic (e.g., Anellis -
Houser \cite{AH1991}, Maddux \cite{Mad1991}) allert us to the fact that this
method was fruitfully applied in the 19th century
 with the work of Boole,
De Morgan, Peirce, Schr\"{o}der, etc., on classical logic, see
\cite{AH1991}. Employing the similarity
between logical equivalence and equality, those pioneers developed logical
systems in which metalogical investigations take on a plainly
algebraic character. Boole's work evolved into the
modern theory of Boolean algebras, and that of De Morgan, Peirce and
Schr\"{o}der led to but did not end with the theory of relation algebras.
From the beginning
of the contemporary era of logic there were two approaches to the subject, one
centered on the notion of logical equivalence and the other, reinforced by Hilbert's work on metamathematics,
centered
on the notions of assertion and inference.
It was not until much later that logicians started to think about
connections between these two ways of looking at logic. Tarski
\cite{Tar1935} gave the precise connection between Boolean algebra
and the classical propositional calculus. His approach builds on
Lindenbaum's idea of viewing the set of formulas as an algebra with
operations induced by the logical connectives. Logical equivalence
is a congruence relation on the formula algebra. This is the
so-called Lindenbaum-Tarski method. 
When Tarski applied the this method to the predicate
calculus, it led him naturally to the concept of cylindric algebras.

Also, we can see that traditionally algebraic logic has focused on
the algebraic investigation of particular classes of algebras of
logic, whether or not they could be connected to some known
assertional system by means of the Lindenbaum- Tarski method.
However, when such a connection could be established, there was
interest in investigating the relationship between various
metalogical properties of the logistic system and the algebraic
properties of the associated class of algebras (obtaining what are
sometimes called "bridge theorems"). For example, it was discovered
that there is a natural relation between the interpolation theorems
of classical, intuitionistic, intermediate propositional calculi,
and the amalgamation properties of varieties of Heyting algebras.
Similar connections were investigated between interpolation theorems
in the predicate calculus and amalgamation results in varieties of
cylindric and polyadic algebras.

Henkin began working with Tarski on the subject of cylindric algebras in the fifties, and
a report of their joint research appeared in 1961. By then Monk had
also made substantial contributions to the theory. The three
planned to write a comprehensive two-volume treatise on the theory
of cylindric algebras. The first volume treated cylindric
algebras from a general algebraic point of view, while the second
volume contained other topics, such as the representation
theory, to which Andr\'eka and N\'emeti contributed a lot,  and connections between cylindric algebras and logic. 
We can find that the theory of cylindric algebras is explicated primarily in three substantial monographs :
Henkin, Monk and Tarski \cite{HMT1}, \cite{HMT2}, and Henkin, Monk,
Tarski, Andreka and Nemeti \cite{HMTAN81}.
This covers the development of the subject till the mid eightees of the last century. This paper surveys and refines later developments of the subject. 
Highlighting the connections with graph theory, model theory, set theory, finite combinatorics, the paper presents topics of broad interest in a 
way that is accessible to a large audience. The paper is not only purely expository, for it contains, in addition, new ideas 
and results and also new approaches to old ones. We hope that this paper also provides rapid dissemination of the latest research in the field.

A cylindric algebra consists of a Boolean algebra endowed with an
additional structure consisting of distinguished elements and
operations, satisfying a certain system of equational axioms. The
introduction and study of these algebras has its motivation in two
parts of mathematics: the deductive systems of first-order logic,
and a portion of elementary set theory dealing with spaces of
various dimensions, better known as cylindric set algebras.

Cylindric set algebras are algebras whose elements are relations of a cer tain pre-assigned arity, endowed with set-theoretic operations
that utilize the form of elements of the algebra as sets of sequences.  Our notation is in conformity with the monograph \cite{HMT1}, \cite{HMT2}.
$\B(X)$ denotes the boolean set algebra $(\wp(X), \cup, \cap, \sim, \emptyset, X)$.
Let $U$ be a set and $\alpha$ an ordinal. $\alpha$ will be the dimension of the algebra.
For $s,t\in {}^{\alpha}U$ write $s\equiv_i t$ if $s(j)=t(j)$ for all $j\neq i$.
For $X\subseteq {}^{\alpha}U$ and $i,j<\alpha,$ let
$${\sf C}_iX=\{s\in {}^{\alpha}U: \exists t\in X (t\equiv_i s)\}$$
and 
$${\sf D}_{ij}=\{s\in {}^{\alpha}U: s_i=s_j\}.$$ 

$(\B(^{\alpha}U), {\sf C}_i, {\sf D}_{ij})_{i,j<\alpha}$ is called the full cylindric set algebra of dimension $\alpha$ 
with unit (or greatest element) $^{\alpha}U$.
Examples of subalgebras of such set algebras arise naturally from models of first order theories. Indeed if $\M$ is a first order structure in a first
order language $L$ with $\alpha$ many variables, then one manufactures a cylindric set algebra based on $\M$ as follows.
Let
$$\phi^{\M}=\{ s\in {}^{\alpha}{\M}: \M\models \phi[s]\},$$
(here $\M\models \phi[s]$ means that $s$ satisfies $\phi$ in $\M$), then the set
$\{\phi^{\M}: \phi \in Fm^L\}$ is a cylindric set algebra of dimension $\alpha$. Indeed 
$$\phi^{\M}\cap \psi^{\M}=(\phi\land \psi)^{\M},$$
and $$^{\alpha}{\M}\sim \phi^M=(\neg \phi)^{\M},$$ 
$${\sf C}_i(\phi^{\M})=\exists v_i\phi^{\M},$$
and finally 
$${\sf D}_{ij}=(x_i=x_j)^{\M}.$$
$\Cs_{\alpha}$ denotes the class of all subalgebras of full set algebras of dimension $\alpha$.
$\CA_{\alpha}$ stands for the class of cylindric algebras of dimension $\alpha$.
This is obtained from cylindric set algebras by a process of abstraction and is defned by a {\it finite} schema
of equations that hold of course in the more concrete set algebra.

\begin{definition} By \textit{a cylindric algebra of dimension} $\alpha$, briefly a
$\CA_{\alpha}$, we mean an
algebra
$$ {\A} = ( A, +, \cdot,-, 0 , 1 , {\sf c}_i, {\sf d}_{ij}
)_{\kappa, \lambda < \alpha}$$ where $(A, +, \cdot, -, 0, 1)$ is a
boolean algebra such that $0, 1$, and ${\sf d}_{i j}$ are
distinguished elements of $A$ (for all $j,i < \alpha$),
$-$ and ${\sf c}_i$ are unary operations on $A$ (for all
$i < \alpha$), $+$ and $.$ are binary operations on $A$, and
such that the following postulates are satisfies for any $x, y \in
A$ and any $i, j, \mu < \alpha$:
\begin{enumerate}
\item [$(C_1)$] $  {\sf c}_i 0 = 0$,
\item [$(C_2)$]$  x \leq {\sf c}_i x \,\ ( i.e., x + {\sf c}_i x = {\sf c}_i x)$,
\item [$(C_3)$]$  {\sf c}_i (x\cdot {\sf c}_i y )  = {\sf c}_i x\cdot  {\sf c}_i y $,
\item [$(C_4)$] $  {\sf c}_i {\sf c}_j x   = {\sf c}_j {\sf c}_i x $,
\item [$(C_5)$]$  {\sf d}_{i i} = 1 $,
\item [$(C_6)$]if $  i \neq j, \mu$, then
 ${\sf d}_{j \mu} = {\sf c}_i
 ( {\sf d}_{j i} \cdot  {\sf d}_{i \mu}  )  $,
\item [$(C_7)$] if $  i \neq j$, then
 ${\sf c}_i ( {\sf d}_{i j} \cdot  x) \cdot   {\sf c}_i
 ( {\sf d}_{i j} \cdot  - x) = 0
 $.
\end{enumerate}
\end{definition}

${\A}\in \CA_{\omega}$ is locally finite, if the dimension set of every element $x\in A$
is finite. The dimension set of $x$, or $\Delta x$ for short, is the set
$\{i\in \omega: {\sf c}_ix\neq x\}.$ 
Tarski proved that every locally finite $\omega$-dimensional
cylindric algebra is representable, i.e. isomorphic
to a subdirect product of set algebra each of dimension $\omega$.
Let $\Lf_{\omega}$ denote the class of 
locally finite cylindric algebras. 
Let $\RCA_{\omega}$ stand for the class of isomorphic copies 
of subdirect products
of set algebras each of dimension $\omega$, 
or briefly, 
the class of $\omega$ dimensional representable
cylindric algebras. Then Tarski's theorem reads 
$\Lf_{\omega}\subseteq \RCA_{\omega}$. 
This representation theorem is non-trivial; in fact it is equivalent
to G\"odel's celebrated Completeness Theorem \cite[\S 4.3]{HMT2}.

Soon in the development of the subject, 
it transpired that the class $\Lf_{\omega}$, the algebraic counterpart
of first order logic,  had 
some serious defects when treated as the 
sole subject of research in an autonomous algebraic theory.
In universal algebra one prefers to deal 
with {\it equational classes} of algebras i.e. classes
of algebras characterized by
systems of postulates, in which 
every postulate has the form of an equation (an identity). 
Such classes are also referred to as {\it varieties}. 

Classes of algebras which are not varieties are often 
introduced in discussions
as specialized subclasses of varieties. 
One often treats fields as a special case of rings. 
This is due to the tradition that in algebra, mainly the equational
language and thus equational logic is used.
Thus, finding an equational form for an algebraic entity is always a value on its own 
right.
Another reason for this preference, is 
the fact that every variety is closed under 
certain general closure operations frequently 
used to construct new algebras from given ones.
We mean here the operations of forming subalgebras, 
homomorphic images and direct products.
By a well known theorem of Garrett Birkhoff, 
varieties are precisely those classes of algebras that have all 
three of these closure properties.  
Local finiteness does not have the form of an identity, 
nor can it be equivalently 
replaced by any identity or system of identities, nor indeed
any set of first order axioms.
This follows from the simple observation that
the ultraproduct of infinitely 
many algebras in $\Lf_{\omega}$ is not, in general, 
locally finite, and a first order
axiomatizable class is necessarily closed under ultraproducts.

When Alfred Tarski introduced cylindric algebras, 
he introduced the class of locally finite cylindric algebras
and proved that $\Lf_{\omega}\subseteq \RCA_{\omega}$.

But some modifications in the definition of Tarski's cylindric algebras
seemed desirable. The definition contains certain assumptions
which considerably restrict the scope of the definition and thus 
can be dispensed 
with. One such assumption is the fixed dimension $\omega$.
The other is local finiteness. 
The restrictive character of these two notions
becomes obvious when we turn our attention to cylindric set algebras.
We find that there are algebras of all dimensions, 
and set algebras that are not locally finite 
are easily constructed. For these reasons the original conception of a cylindric algebra
was extended. The restriction to dimension $\omega$
and local finiteness were removed, 
and the class $\CA_{\alpha}$, of cylindric algebras of dimension $\alpha$, 
where $\alpha$ is any ordinal, finite or tranfinite, was introduced. 

\begin{athm}{Problem}
A central and indeed still (very) active part of research in algebraic logic 
is the vaguely posed frequently 
discussed problem concerning 
improvements of Tarski's representation Theorem. 
\end{athm}

This problem is referred to as 
{\it The Finitizability Problem} by the Budapest group, specifically by
Andr\'eka, N\'emeti  and Sain \cite{N96}
while it is referred to as the {\it Representation Problem} by the London group, 
specifically by Hodkinson and Hirsch \cite{HHbook}.
Here we need to clarify two points.  
First,  strictly speaking, the Finitizability Problem and the 
Representation Problem are not one and the same thing. 
The Finitizability Problem is more restrictive than the 
Representation Problem.
The Finitizability Problem is the Representation Problem 
restricted to the case when in the search for axioms that enforce 
representability, one requires that such axiomatizations are not 
only ``simple, elegant, transparent, decidable,  etc.'' but also {\it strictly finite.} 
In our subsequent discussion, when we refer to the above problem  we 
shall use both words relying on  context.  

Second,  the attribution of names to schools could be 
a little misleading. For example,
the early J\'onsson-Tarski paper \cite{JonTar48} 
was entitled ``{\it Representation problems} for relation algebras''.
Their Representation
Problem is the following: We are given the equationally defined class of 
relation algebras and the
set-theoretically defined class of special relation algebras arising from binary
relations, with the concrete non-boolean operations being composition and forming converses. 
Now,  is every one of the former isomorphic to one of the latter, i.e., is every
relation algebra representable? The famous answer is ``no''. 
So one can say that the 
Representation Problem 
can be traced back to the work of Tarski and his students
on relation algebras
back in the forties of the 20th century. This, however, does not change the fact
that Hirsch and Hodkinson attribute the name ``Representation Problem" 
to the problem above. In the course of our discussion we will give more tangible and 
concrete
forms of the Finitizability Problem.

To start with, the Finitizability Problem
asks for an equationally defined class of algebras, with only finitely many finitary
operations, such that this class can be defined by finitely many equations. In addition, 
every algebra in
this class is representable (assuming that that notion has been suitably defined) 
with the additional (rather vague) property  that the equational theory of that class can
serve as an adequate algebraic version of first-order logic (with or without equality). 
Sain  \cite{Sa98} did
this for logic without equality. It seems that it is unlikely to be done for 
first order logic with equality and there are results in the literature that support this pesimistic viewpoint. 
For attempts to explain what this means, see the original formulations of
Henkin, Monk, and Tarski, for example, in paragraph 1, page 20, of \cite{Monk},
Problem 1 in \cite{HenMonk} and Section 3.5 of \cite{TG}. 

For recent extensive discussions of the Finitizability Problem,
one can of course look at N\'emeti \cite{N96}, or the papers 
of Sain \cite{Sa98} and Simon
\cite{Sim93}. Simon's emphasis \cite{Sim93} is on what the Finitization oroblem is {\it not}.
Here we concentarte on what the Finitizability problem {\it is}.
In what follows we give our reading of the Finitizability Problem 
and, for that matter,
the Representation Problem. 
This problem has, and continues to, invoke extensive amount of research.
To get a grasp of how substantial the problem is, 
let us  start ``from the beginning". 

We intend to find a representation theorem for cylindric algebras, which 
is similar to that for Boolean algebras, due to Stone.
In the latter case, Stone proved that every Boolean algebra 
is isomorphic to a Boolean set algebra.
In analogy with this, we would like to prove that every cylindric 
algebra is isomorphic to some ``concrete cylindric set algebra".
We will explain why the class $\RCA_{\alpha}$ 
of representable cylindric algebras is 
indeed a plausible ``natural'' candidate 
for this. 
It is easily seen that every cylindric set algebra
of given dimension $\alpha<\omega$ is simple (has no proper congruences)
and therefore subdirectly (and directly) indecomposable 
in the sense of the general theory of algebras \cite{HMT1}.
Hence when discussing the problem 
as to which $\CA_{\alpha}$'s are isomorphic to cylindric 
set algebras, 
it is natural to restrict ourselves to subdirectly indecomposable algebras.
On the other hand, as a consequence of a classical 
theorem of Birkhoff, every $\CA_{\alpha}$ is isomorphic to a subdirect product 
of subdirectly indecomposable $\CA_{\alpha}$'s. Therefore
we are naturally led to the problem of characterizing those $\CA_{\alpha}$'s
which are isomorphic to subdirect products of set algebras. 
Henkin, Monk and Tarski
declare that these
are the {\it representable} algebras, thus the notation $\RCA_{\alpha}.$ 

$\RCA_{\alpha}$ plays a role in the theory of cylindric algebras analogous to the role played
by Boolean set algebras in the theory of Boolean algebra, rings of matrices in ring theory and group of permutations in group theory.
The Representation
Problem in Boolean algebras is completely resolved by Stone's Theorem. 
In ring theory we see for example
the Wederburn-Artin Theorem
and Goldie's Theorem, which gives nice intrinsic 
conditions for an abstract ring to be 
isomorphic to a subdirect product
of rings of matrices. As opposed to 
boolean
algebras, the Representation Problem 
for the $\CA$ case proves to be much substantial; indeed it proves to be harder and richer. 

The definition of representability, without any change 
in its formulation, is extended to algebras of infinite dimension. 
In this case, however, an intuitive justification is less clear
since cylindric set algebras of infinite dimension 
are not in general subdirectly indecomposable. 
In fact, for $\alpha\geq \omega$
no intrinsic property is known which singles 
out the algebras isomorphic  to set algebras
among all representable $\CA_{\alpha}$'s, as opposed to the finite dimensional case
where such algebras can be intrinsically characterized by the property of being simple.
But in any case, members of $\RCA_{\alpha}$
can be still represented as algebras consisting of genuine 
$\alpha$-ary relations over
a disjoint union of Cartesian squares, the class consisting of all such algebras
is denoted by ${\bf Gs}_{\alpha}$, with ${\bf Gs}$ standing for 
{\it generalized set algebras.} 

Generalized set algebras thus
differ from the ordinary cylindric set 
algebras in one respect only: the unit of the algebra, i.e., the 
$\alpha$-dimensional Cartesian space $^{\alpha}U$, is 
replaced everywhere in their construction by 
any set that is a disjoint union of 
arbitrarily many pairwise disjoint Cartesian spaces of the same 
dimension. This broadening of the definition makes this class of concrete algebras closed under products, a necessary condition
to be a variety, which it is. At the same time, the class of generalized cylindric set algebras, 
just as that of ordinary cylindric set algebras,
has many features that make it well qualified to represent
$\CA_{\alpha}$. The construction of the algebras in this (bigger) class 
retains its concrete character, 
all the fundamental operations and distinguished elements
are unambiguously defined in set-theoretic terms, and the definitions are uniform
over the whole class; 
geometric intuition underlying the construction 
gives us good insight into the structures of the algebras.
Thus there is (geometric) 
justification that 
$\RCA_{\alpha}$ consists of the standard models
of $\CA$-theory. Its members consist of genuine $\alpha$-ary relations, and the operations are set-theoretically concretely 
defined utilizing the form of these
relations as sets of sequences.

But it soon transpired that the $\CA$ axioms (originating from the (complete) 
axiomatization 
of locally finite algebras, 
do not exhaustively generate all valid principles governing 
$\alpha$-ary relations, when $\alpha>1$. More precisely,
for $\alpha>1$, $\RCA_{\alpha}$ is properly contained in $\CA_{\alpha}$.
$\CA_{\alpha}$, for $\alpha>1$, is only an {\it approximation} 
of $\RCA_{\alpha}$. 
Tarski \cite{HMT1} proved that $\RCA_{\alpha}$ is a variety.
Henkin \cite{HMT2} proved that $\RCA_2$ is finitely axiomatizable. 
However for $\alpha>2$, the class $\RCA_{\alpha}$ cannot be axiomatized 
by a finite schema of equations analogous to that
axiomatizing $\CA_{\alpha}$, a classical result of Monk \cite{M69}. 
Furthermore, for any $\alpha>2$, there is an unavoidable
and inevitable 
degree of complexity 
to any (potential) axiomatization of $\RCA_{\alpha}$, as shown by 
Andr\'eka \cite{An97}.  For example Andreka proved that if $E$ is an equational 
axiomatization of $\RCA_n$ for $2<n<\omega$, there for any natural number $k$ there is an equation in $E$ containing more than $k$ distinct variables 
and all the operation symbols. 
We will refine Andreka's complexity results in the last section in treating the class of representable quasipolyadic equality
algebras, which is a cousin of cylindric algebras.
 
The Finitizability Problem (and for that matter a form of the Representation Problem) 
is thus the attempt to circumvent or sidestep such
complexity.  
If we look at $\RCA_{\alpha}$ as the 
standard models to which the $\CA_{\alpha}$'s aspire, 
the Finitizability Problem
can  thus be rephrased as the attempt to capture the essence 
of the standard models by thorough ``finitary'' 
means. Alternatively, to find 
other broader comprehensible classes  
of ``standard models''  that are sufficiently concrete and tangible.
Most important of all these classes would have to exhaust
the class $\CA_{\alpha}$, or in the worst case possibly a slightly smaller class,
i.e., a variety that is finitely axiomatizable 
(by equations) over 
$\CA_{\alpha}.$

If the class of cylindric set algebras had turned out to be finitely axiomatizable, 
algebraic logic would have evolved along a significantly  different path than it did in the past 
$40$ years. This would have undoubtfully marked the end of the abstract class $\CA_{\alpha}$ as a separate subject of research; 
after all why bother about abstract algebras, if a few nice extra axioms can lead us from those
to concrete algebras consisting of genuine relations, with set theoretic operations uniformy defined over these relations. However, 
due to Monk's non-finitizability result, together with its improvements by various algebraic logicians (from Andr\'eka to Venema) 
$\CA_{\alpha}$ was here to stay and its ``infinite distance' from $\RCA_{\alpha}$
became an important research topic.

\section{Solutions} 

\subsection{Using Twisting}

There has been work in representing 
cylindric algebras using 
quasigroups, cf. \cite{M74}, \cite[p. 91--93]{HMT2}, \cite{Co}
or sheaves \cite{Comer}. Groups were used to represent 
$\CA_3$'s and relation algebras
in \cite{Giv}. A classical result of 
Resek \cite[p. 01]{HMT2}
that is relevant in this connection 
shows that algebras satisfying the $\CA$ axioms plus 
the so-called 
{\it merry-go-round identities}, 
or $MGR$ for short, 
can be represented as {\it relativized} set algebras, 
a primary advance in development of the theory of $\CA$'s, 
as indicated in the introduction of 
\cite{HMT2}. 
Resek's result was polished and 
``finitized'' by Thompson. We refer in this connection to the  Andr\'eka and 
Thompson's paper
\cite{AT}.  
The replacement of $MGR$ by a finite scheme is entirely due to Thompson and
appears in his dissertation; see \cite[3.2.8]{HMT2}. The proof in \cite{AT} is due to 
Andr\'eka. 
It might be not be appropriate to use the word ``finitized'' here, since, if $\alpha<\omega$,
Resek's result already produces a \emph{finite} axiom set, and if $\alpha\geq\omega$, then
Thompson's simplification is still an \emph{infinite} axiom set. 
What is meant
in this context, is that, in case $\alpha\geq\omega$, 
there are originally infinitely many $MGR$ schemata,
and that Thompson reduced this infinity to 2.  
Thompson actually proved much more than
stated here---
he weakened the commutivity of cylindrifications and showed that atoms of
the algebras are represented as orbits of single sequences under  groups 
of permutations of the underlying set. A complete statement (with proofs) of
Resek's and Thompson's theorems can be found in \cite{Maddux}.
We note that the first proof provided for this theorem was more than $100$ pages long. So the result is mentioned in \cite{HMT2} without proof.
Below we give a new sketch of proof of 
the Resek Thompson theorem (due to Ferenczi) using neat embeddings.
Recently, Simon \cite{Sim97} proved that any abstract 
$3$-dimensional cylindric algebra satisfying 
$MGR$ can be obtained from a ${\bf Cs}_3$ by the methods called {\it twisting and
dilation}, studied in
\cite[p. 86--91]{HMT2}. This adds to our understanding of the distance between
the abstract notion of cylindric algebra 
and its concrete one, at least in the case of dimension 3. 
However, Simon had to broaden Henkin's notion of twisting to exhaust the class 
$\CA_3$.  He also showed that Henkin's more restrictive notion of twisting 
does not fit the bill; there are abstract $\CA_3$'s satisfying the $MGR$
that  cannot be obtained by the methods of 
relativization, dilation and twisting, the latter 
understood in the sense of Henkin. Simon's twisting is a stronger ``distortion" of the original algebra, and so its scope
is wider, it can ``reach" more algebras. 
The analogous problem for higher dimension is an intriguing open problem. 
(We heard however, that Thompson proved the result for $\CA_4$).

Now we give an outline of Simon's result. Now instead of asking  
``What is missing from $\CA$'s to be representable?", Henkin turned around the question and asked how much set 
algebras needed to be distorted to provide a representation of all $\CA_{\alpha}$'s. And, strikingly, the anwser is 
``not very much", at least for the lowest value of $\alpha$, 
for which Monk's result and its improvements apply, namely $\alpha=3$.

\begin{definition} Let $\A\in \CA_{\alpha}$. Then ${\sf s}_i^jx={\sf c}_i({\sf d}_{ij}\cdot x)$ if $i\neq j$ and ${\sf s}_i^ix=x$.
$${}_k{\sf s}(i,j)x={\sf s}_i^k{\sf s}_j^i{\sf s}_k^jx.$$
For $x\in A$, 
$$\breve{x}={}_2{\sf s}(0,1)x.$$
Let $k,l,m$ be distinct. Then $MGR_k$ is the equation
$${}_k{\sf s}(l,m){\sf c}_kx={}_k{\sf s}(m,l){\sf c}_kx.$$
\end{definition}
The proof of the following can be destilled from \cite{HMT1} Theorem 1.5.15-1.5.17.
\begin{theorem}
Let $\A\in \CA_3$ and $\{k,l,m\}=3$.
Then 
\begin{enumroman}
\item $\A\models MGR_k$ iff $\A\models {}_k{\sf s}(l,m)_k{\sf s}(l,m){\sf c}_kx={\sf c}_kx$
\item $\A\models MGR_k$ iff $\A\models MGR_l$
\end{enumroman}
\end{theorem}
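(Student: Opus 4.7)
The plan is to derive both parts from a single auxiliary identity, which I call Lemma~A:
\[
{}_k{\sf s}(l,m)\,{}_k{\sf s}(m,l)\,{\sf c}_k x = {\sf c}_k x,
\]
valid in every $\CA_3$ independently of any $MGR$ hypothesis. Intuitively, on a ${\sf c}_k$-closed element the composite ${}_k{\sf s}(l,m)$ permutes the two coordinates $l$ and $m$ using $k$ as an auxiliary slot, while ${}_k{\sf s}(m,l)$ reverses that permutation, so the twofold composite telescopes back to the identity. To establish Lemma~A, I would unfold both ${}_k{\sf s}(i,j)={\sf s}_i^k{\sf s}_j^i{\sf s}_k^j$ and each ${\sf s}_a^b y={\sf c}_a({\sf d}_{ab}\cdot y)$, then simplify the resulting nested expression of six cylindrifications and six diagonal factors using three tools: the absorption rule ${\sf s}_a^b{\sf c}_a y={\sf c}_a y$ for $a\neq b$ (immediate from $(C_3)$ together with ${\sf c}_a{\sf d}_{ab}=1$), axiom $(C_3)$ to extract cylindrifications from meets, and the commutativity $(C_4)$ of distinct cylindrifications.

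Granting Lemma~A and its $(l,m)\leftrightarrow(m,l)$ companion (proved identically), part~(i) becomes short. For $(\Rightarrow)$, assume $MGR_k$ and apply ${}_k{\sf s}(l,m)$ to both sides of ${}_k{\sf s}(l,m){\sf c}_k x={}_k{\sf s}(m,l){\sf c}_k x$; the right-hand side then reduces to ${\sf c}_k x$ by Lemma~A, yielding the desired identity. For $(\Leftarrow)$, the two variants of Lemma~A show that, restricted to the set ${\sf c}_k A$ of ${\sf c}_k$-closed elements of $\A$, the operators ${}_k{\sf s}(l,m)$ and ${}_k{\sf s}(m,l)$ are mutually inverse bijections (a short auxiliary check based on the identity ${\sf c}_k{\sf s}_l^k={\sf c}_l{\sf s}_k^l$ together with the absorption rule shows that ${}_k{\sf s}(l,m)$ sends ${\sf c}_k A$ into itself), and in particular ${}_k{\sf s}(l,m)$ is injective there. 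The hypothesis then reads
\[
{}_k{\sf s}(l,m)\bigl({}_k{\sf s}(l,m){\sf c}_k x\bigr)={\sf c}_k x={}_k{\sf s}(l,m)\bigl({}_k{\sf s}(m,l){\sf c}_k x\bigr),
\]
and cancelling the outer ${}_k{\sf s}(l,m)$ recovers $MGR_k$.

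For part~(ii) I would use~(i) to rephrase $MGR_k$ and $MGR_l$ as involution identities ${}_k{\sf s}(l,m){}_k{\sf s}(l,m){\sf c}_k x={\sf c}_k x$ and ${}_l{\sf s}(k,m){}_l{\sf s}(k,m){\sf c}_l x={\sf c}_l x$ respectively, then exhibit their equivalence through a cyclic rotation of the three coordinates. The operators ${}_k{\sf s}(l,m)$, ${}_l{\sf s}(m,k)$, ${}_m{\sf s}(k,l)$ are obtained from one another by rotating $(k,l,m)$, and the $\CA_3$-axioms being themselves invariant under any permutation of $\{0,1,2\}$, the same equational toolkit ($(C_3)$, $(C_4)$, and the absorption rule) that drives Lemma~A can be re-deployed to convert the involution identity at coordinate $k$ into the one at coordinate $l$, by substituting $x$ by ${\sf c}_k y$ where needed and simplifying via ${\sf c}_k{\sf c}_l y={\sf c}_l{\sf c}_k y$.

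The main obstacle is Lemma~A. Although the compact ${}_k{\sf s}(i,j)$ notation makes the identity look slick, its verification is a nontrivial equational calculation in $\CA_3$; the corresponding computation in HMT occupies a substantial portion of \cite[\S 1.5]{HMT1}. The crux is that the six substitutions in ${}_k{\sf s}(l,m){}_k{\sf s}(m,l)$ must telescope completely on ${\sf c}_k$-closed inputs. My concrete route would be to first simplify the inner ${}_k{\sf s}(m,l){\sf c}_k x$ by applying the absorption ${\sf s}_k^l{\sf c}_k x={\sf c}_k x$ to strip off the innermost substitution, and then push the two remaining substitutions through the outer ${}_k{\sf s}(l,m)$ by carefully matching diagonal factors via $(C_3)$, so that the ${\sf d}_{ij}$-factors and ${\sf c}_i$-cylindrifications pair up and cancel, leaving precisely ${\sf c}_k x$.
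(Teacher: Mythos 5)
Your Lemma~A is the right pivot for part~(i), and it is in fact a valid $\CA_3$-identity (it is essentially \cite{HMT1}~1.5.15; note the paper offers no proof of its own here, only the pointer to \cite{HMT1}~1.5.15--1.5.17). Once Lemma~A and its $(l,m)\leftrightarrow(m,l)$ twin are available, both directions of (i) go through exactly as you say. One caveat on your verification plan: the toolkit you list --- $(C_3)$, $(C_4)$ and the absorption rule ${\sf s}_a^b{\sf c}_ay={\sf c}_ay$ --- is not enough to make the six-letter word telescope. The collapsing steps of the form ${\sf s}_a^b{\sf s}_b^ay={\sf s}_a^by$ rest on the identity ${\sf d}_{ab}\cdot{\sf s}_b^ay={\sf d}_{ab}\cdot y$, whose nontrivial inequality ${\sf d}_{ab}\cdot{\sf s}_b^ay\leq y$ requires $(C_7)$; this law is used several times in the reduction (e.g.\ to contract the middle pair ${\sf s}^k_m{\sf s}^m_k$ and the outer pair ${\sf s}^l_k{\sf s}^k_l$). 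Add $(C_7)$, or the derived law ${\sf d}_{ab}\cdot{\sf s}_b^ay={\sf d}_{ab}\cdot y$, to your toolkit and the computation does close up.

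The genuine gap is part~(ii). Invariance of the $\CA_3$ axioms under permutations of $\{0,1,2\}$ only shows that if an equation is \emph{derivable from the axioms} then so is every permuted variant of it. But $MGR_k$ is not derivable from the $\CA_3$ axioms --- that is the entire reason the merry-go-round identities exist --- it is an additional hypothesis on the particular algebra $\A$, and a hypothesis satisfied by $\A$ need not be satisfied in any permuted form. What (ii) demands is a derivation of $MGR_l$ from $\CA_3$ \emph{plus} $MGR_k$, and the symmetry of the axioms contributes nothing to that. Your fallback move, substituting $x:={\sf c}_ky$ and commuting cylindrifications, only produces instances of the merry-go-round identity relativized to elements closed under \emph{both} ${\sf c}_k$ and ${\sf c}_l$, which is strictly weaker than $MGR_l$: the latter quantifies over all ${\sf c}_l$-closed elements, and in dimension $3$ a ${\sf c}_l$-closed element need not be ${\sf c}_k$-closed. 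The implication $MGR_k\Rightarrow MGR_l$ is a substantive equational computation relating the action of ${}_k{\sf s}(l,m)$ on ${\sf c}_k$-closed elements to that of ${}_l{\sf s}(k,m)$ on ${\sf c}_l$-closed elements (this is the content of \cite{HMT1}~1.5.16--1.5.17), and your proposal does not contain it.
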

Then, we may write $MGR$ for any of the equations $MGR_k$ $(k<3)$.
If $\alpha>3$, then $MGR$ consists of two schemas of equations
$${}_k{\sf s}(l,m){\sf c}_kx={}_k{\sf s}(m,l){\sf c}_kx \text { when } |\{k,l,m\}|=3.$$
$${}_k{\sf s}(l,m)_k{\sf s}(m,n){\sf c}_kx={}_k{\sf s}(n,l)_k{\sf s}(l,m){\sf c}_kx \text { when }\{k,l,m,n\}|=4.$$
In both dilation and twising one starts out with a complete and atomic $\CA$, adjoins new elements and /or changes the operations to get a new, 
complete atomic $\CA$ with certain prescribed 
properties. In $\CA$'s of this kind, and actually in all complete and atomic Boolean algebras with operators where the extra non Boolean operations 
distribute over arbitrary joins, the operators are determined by their behaviour on the atoms. Then it is possible, and often even desirable, to work with the 
{\it atom structure} of such an algebra instead of the algebra itself. 
(However, a word of caution is in order; 
this does not always work. It can be proved that there exists $\A$ and $\B$ having the same atom structure; that is $\At\A=\At\B$, 
with $\B\subseteq \A$, while $\B$ is representable and $\A$ is not. What is going on here is that $\A$ 
has more elements and these cannot be represented as 
true relations. Furthermore, $\A$ can be chosen to be the minimal completion of $\B$. More on that later on, theorems \ref{com}, \ref{com1}, \ref{complete}).

In th following definition, the composition $\{(x,y): \exists z (xRz\land zSy)\}$ of binary relations $R$ and $S$ will be denoted by $R|S$.
If $R$ is an $n+1$-ary relation and $X_0, X_1\ldots X_{n-1}$ are sets then $R^*(X_0\ldots X_{n-1})$ stands for the $R-$image 
$\{y:(\exists x_0\in X_0)\ldots (\exists x_{n-1}\in X_{n-1})R(x_0\ldots x_{n-1}y)\}$ of $X_0\ldots X_{n-1}.$

\begin{definition} Let $\alpha$ be an ordinal. A structure
$$\B=(B, T_i, E_{ij})_{i,j<\alpha}$$
with binary relation $T_i$ and unary relations $E_{ij}$ is a cylindric atom structure of dimension $\alpha$ if the following conditions hold for all $i,j,k<\alpha$:
\begin{enumroman}
\item $T_i$ is an equivalence relation on $B$
\item $T_i|T_j=T_j|T_i$
\item $E_{ii}=B$
\item $E_{ij}=T_k(E_{ik}\cap E_{kj})$ if $k\notin \{i,j\}$
\item $T_j\cap (E_{ij}\times E_{ij})\subseteq Id$ if $i\neq j$.
\end{enumroman}
\end{definition}
$\Ca_{\alpha}$ is the class of cylindric atom structures of dimension $\alpha$.
The complex algebra $\Cm\B$ of an atom structure $(B, T_i, E_{i,j})$ is the algebra
$$(\wp(B), \cap, \cup, \sim, T_i^*, E_{ij})_{i,j<\alpha}$$
where for $X\subseteq B$, 
$$T_i^*X=\{y\in B:  (\exists x\in X )(xT_iy)\}$$
The proof of the following is tedious but routine.

\begin{theorem} If $\B\in \Ca_{\alpha}$ iff $\Cm\B\in \CA_{\alpha}$
\end{theorem}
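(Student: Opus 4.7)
The plan is to verify the theorem by a direct axiom-by-axiom translation between the clauses (i)--(v) defining a cylindric atom structure $\B$ and the equational axioms $(C_1)$--$(C_7)$ defining $\CA_\alpha$. The structural observation that makes this work is that $T_i^*$ is a completely additive normal operator on $\wp(B)$: one has $T_i^*\emptyset=\emptyset$ and $T_i^*(\bigcup_{\lambda}X_\lambda)=\bigcup_{\lambda}T_i^*X_\lambda$. Hence every equation on $\Cm\B$ reduces to a pointwise statement about the action of $T_i^*$ on singletons $\{x\}$, where $T_i^*\{x\}=\{y\in B : xT_iy\}$. Both implications can then be read off by testing each axiom on atoms of $\Cm\B$.

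For the direction $\B\in\Ca_\alpha\Rightarrow\Cm\B\in\CA_\alpha$, the Boolean reduct is automatically a Boolean algebra. Axiom $(C_1)$ is immediate from normality of $T_i^*$. Reflexivity of $T_i$ (part of clause (i)) gives $(C_2)$; transitivity and symmetry of $T_i$ together yield $(C_3)$, with $\subseteq$ using transitivity and $\supseteq$ using symmetry together with reflexivity; clause (ii) is the pointwise content of $(C_4)$; clause (iii) is $(C_5)$; clause (iv) is the singleton form of $(C_6)$; and clause (v) gives $(C_7)$, since if some $y$ were to lie in both $T_i^*(E_{ij}\cap X)$ and $T_i^*(E_{ij}\cap -X)$, there would be distinct elements $a\in E_{ij}\cap X$ and $b\in E_{ij}\cap -X$ both in the appropriate $T$-orbit of $y$, contradicting (v).

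For the converse direction, one specializes each axiom on $\Cm\B$ to singleton inputs: $(C_2)$ on $\{x\}$ gives $xT_ix$; applying $(C_3)$ to singletons $\{a\}$ with varying $Y$ recovers transitivity and symmetry of $T_i$ simultaneously; $(C_4)$ on $\{x\}$ yields clause (ii) by equating the two iterated $T$-orbits; axioms $(C_5)$--$(C_7)$ specialize in the same style to clauses (iii)--(v). The main obstacle is bookkeeping rather than conceptual: $(C_3)$ simultaneously encodes idempotence, transitivity, and symmetry of $T_i^*$, so one must pick the right singleton substitutions to disentangle these properties, and $(C_7)$ requires a little care because set-theoretic complement and $T_i^*$ do not commute. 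Once that orchestration is fixed, each correspondence is a direct calculation, which is why the excerpt labels the proof \emph{tedious but routine}.
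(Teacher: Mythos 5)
Your proposal is correct and is exactly the "tedious but routine" verification the paper has in mind: the paper gives no proof of its own, deferring to \cite{HMT1} Theorem 2.7.40, and the argument there is precisely this axiom-by-axiom translation using complete additivity of $T_i^*$ to reduce each $\CA$ axiom to its pointwise content on atoms. Your matching of clauses (i)--(v) to $(C_2)$--$(C_7)$ (with $(C_1)$ coming for free from normality) and your singleton-substitution tricks for extracting symmetry and transitivity from $(C_3)$ are the standard ones.
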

\begin{demo}{Proof} \cite{HMT1} Theorem 2.7.40.
\end{demo}

The idea behind dilation may be expressed vaguely as follows. 
In a $\CA$ iff it is not outright impossible to have an atom in a certain position, then insert a new 
atom there.

\begin{definition}\label{dilate} Let $\B=(B,T_i, E_{ij})_{i,j<\alpha}\in \Ca_{\alpha}$. Let 
$\Psi\subseteq {}^{\alpha}B$ and suppose that for all $\psi\in \Psi$
\begin{equation}
\begin{split}
\psi_i\notin E_{jk} \text { if } |\{i,j,k\}|=3,
\end{split}
\end{equation}
\begin{equation}
\begin{split}
\psi_iT_i|T_j\psi_j \text { for all } i,j<\alpha
\end{split}
\end{equation}
Then the result of dilating $\B$ with $\Psi$ is $\B^{\Psi}=(B^{\Psi}, T_i^{\Psi}, E_{ij}^{\Psi})$
where $B^{\Psi}=B\cup \{v_{\Psi}: \psi\in \psi\}$, $E_{ii}^{\Psi}=\B^{\Psi}$ and $E_{ij}^{\Psi}=E_{ij}$ if $i\neq j$. 
To define $T_i^{\Psi}$ it is convenient to introduce the following notation for $a\in B^{\Psi}$ and $i<\alpha$.
$|a|_i=a$ if $a\in B$ and $|a|_i=\psi_i$ if $a=v_{\psi}$.
Then we let
$$aT_i^{\Psi}b\Leftrightarrow |a|_iT_i|b|_i.$$
\end{definition}
So the new atoms are all outside the diagonals, and each of the is ``coordinatized" by an $\alpha$ sequence of old atoms: the $ith$ element
of this sequence determines how the new atom behaves with respect to $T_i$.
We note that the definition in \cite{HMT2} 3.2.69 allows only dilating with one sequence. So Simon's dilation is "apparently" more general. However, this is not the case, 
for it is not hard to show that
dilating a $\Ca_{\alpha}$ $\B$ with the set $\{\psi_{\beta}: \beta<\mu\}$ (where $\mu$) is an ordinal can be simulated by 
taking the union of $\B_{\beta}$, where $\B_{\beta}$ is $\B_{\delta}^{\psi_{\delta}}$
if $\beta$ is the succesor of $\delta$, or $\bigcup\{\B_{\delta}: \delta<\beta\}$ if $\beta$ is a limit ordinal.
Now if $\B\in \Ca_3$ and $\B^{\Psi}$ 
is the result of dilating $\B$ with $\psi$, then $\Cm\B^{\Psi}\in \Ca_{\alpha}$; furthermore if 
$\Cm\B^{\Psi}\models MGR$ provided $\Cm\B\models MGR$.
So dilation does not take us out from $\CA$'s and it also prerves the $MGR$ identities.
Now twisting, originally, consists of starting from a complete atomic $\CA_\alpha$ $\A$, selecting atoms $a,b\in \A$ 
and an ordinal $k<\alpha$ and then redefining ${\sf c}_k$ on 
$a$ and $b$ by interchanging the action of ${\sf c}_k$ on $a$ and $b$, in part, ``twisting".
Twisting is used to ``'distort" atom structures. It produces $\Ca_{\alpha}$'s from 
$\Ca_{\alpha}$'s, and it typically kills $MGR$. (However, in some circumstances it reproduces $MGR$!).
Henkin's twisting is defined as follows. Suppose we have a $\Ca_{\alpha}$ 
$\B=(B, T_i, E_{ij})_{i,j<\alpha}$, $x,y\in B$, with not $xT_ky$ 
and two partitions $x/T_k=X_0\cup X_1$, $y/T_k=Y_0\cup Y_1$, where $X_0\cap X_1=\emptyset=Y_0\cap Y_1$ 
and the following condition hold; for brevity write $M=x/T_k\cup y/T_k$:
\begin{enumarab}
\item $(M\upharpoonright T_i)\sim Id\subseteq (X_0\times Y_0)\cup (y_0\times X_0)\cup (X_1\times Y_1)\cup (Y_1\times X_1)$ for all $i\in \alpha\sim 
\{k\}$
\item If $i\in \alpha\sim \{k\}$ and $a\in M$, then there is a $b\in M\sim \{a\}$ such that $aT_ib$.
\item If $i\in 2$, and $\lambda,\mu\in \alpha$, then $X_i\cap E_{k\lambda}\cap E_{k\mu}\neq \emptyset$ 
iff $Y_i\cap E_{k\lambda}\cap E_{k\mu}\neq \emptyset$.
\end{enumarab}
Then define
$\B'=(B, T_i, E_{ij})$ as follows
$T_i'=T_i$ if $i\neq k$ while $T_k$' is the equivalence relation on $\B$ wuth equivalence classes
$k/T_k$ for $k/T_k\cap M=\emptyset$ along with $X_0\cup Y_0$ and $X_1\cup Y_0$.
now Simon generalized Henkin's twisting by taking an arbitrary sequence of atoms, instead of just two.
The two notion coinicide when $|I|=2.$

\begin{definition}\label{twist}
Let $\B = (B, T_i, E_{ij})_{i,j< \alpha} \in \Ca_\alpha$, $t \in\alpha$ and $\xi \in {}^I B$ for some set $I$, and suppose that
\begin{equation}
\begin{split}
(\xi_i, \xi_j) \in T_t ~~~~ \textrm{ for all distinct }~~~~~ i, j \in I
\end{split}
\end{equation}
\begin{equation}
\begin{split}\xi_i \notin E_{jk}~~ \textrm{for all}~~ i\in I~~ \textrm{and all distinct} ~~j,k < \alpha~~ \textrm{such that} ~~ t \notin \{j,k \}.
\end{split}
\end{equation}
For $i\in I$, let $\Xi_i$ denote the $T_t$-class of $\xi_i$, let $\pi$ be a permutation of $I$, 
and for all $i\in I$, let $\Xi_i$ be partitioned into $\Xi'_i$ and $\Xi''_i$. 
Assume that for all $i\in I$ and $j < \alpha, j \neq t$,
\begin{equation}
\begin{split}
dom(T_j \cap (\Xi'_i \times \Xi'_{\pi i})) \supseteq \Xi'_i,~~~~~ran(T_j \cap (\Xi'_i \times \Xi'_{\pi i})) \supseteq \Xi'_{\pi i}\\dom(T_j \cap (\Xi''_i \times \Xi''_{\pi i})) 
\supseteq \Xi''_i,~~~~~ran(T_j \cap (\Xi''_i \times \Xi''_{\pi i})) \supseteq\Xi''_{\pi i}
\end{split}
\end{equation}
Then we form a new relational structure $\B' = (B, T'_i,E_{ij})_{i,j < \alpha}$ by letting $T'_i$ be the equivalence 
relation on $B$ with equivalence classes $x/T_t$ for $x\in B \sim\bigcup_{i\in I} \Xi_i$, 
together with the classes $\Xi'_i \cup\Xi''_{\pi i} (i \in I)$, and $T'_i = T_i$ if $i\neq t$. We say that $\B' (\C m \B')$ is a twisted version of $\B (\Cm \B)$.  
\end{definition}
Let $${\bf Tw}K=\{I\K\cup I\{\A: \A \text { is a twisted version of some } \C\in K\}.$$

The final operation on $\CA$'s we consider is that of {\it relativization}.
Relativization of a $\CA_{\alpha}$ is defined in \cite{HMT1} 2.2.4. It is exactly like relativization in Boolean algebras.
That is if $\A\in \CA_{\alpha}$ and $a\in A$, then ${\bf Rl}_a\A=\{x\in \A: x\leq a\}$ and the operations are relativized to $a$.
If $K\subseteq \CA_{\alpha}$, let $${\bf Rl} K=\{{\bf Rl}_a\A: a\in A, \A\in K\}.$$
But unlike twisting, dilation, and for that matter, relativization in Boolean algebras, relativization can get us out of cylindric algebras.
So let $${\bf Rl}^{ca}K=\CA_{\alpha}\cap {\bf Rl}K$$
and let 
$${\bf Srl}^{ca}K=\CA_{\alpha}\cap {\bf SRl}K,$$
where the second ${\bf S}$ stands for the operation of forming subalgebras.
Note that the Resek Thompson result says that $\CA_3\cap Mod(MGR)\subseteq {\bf Srl}^{ca}\Cs_3$.
Now we are ready to recall Simon's amazing theorem:

\begin{theorem}$\CA_3\subseteq {\bf SRl}^{ca}{\bf Tw}{\bf Srl}^{ca}\Cs_3$. That is for every $\A\in \CA_3$ there are 
$\A_1$, $\A_2$, $\A_3\in \CA_3$ and $\A_4\in \Cs_3$ such that
$\A_3\subseteq {\bf Rl}\A_4$, $\A_2$ is a twisted version of $\A_3$, $\A_1={\bf Rl}\A_2$ and $\A\subseteq \A_1$
\end{theorem}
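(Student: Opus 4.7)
My plan is to reduce to the complete atomic setting and then peel off the operators on the right-hand side from outside in. Since the outermost operator is the subalgebra operator $\bf S$, it suffices to prove the inclusion after embedding $\A$ into its minimal (Monk) completion, which is a complete atomic $\CA_3$; so I assume $\A$ itself is complete and atomic. Working with the atom structure $\B = \At\A$, my objective is to exhibit a $\Ca_3$-structure $\B_3$ with $\Cm\B_3 \models MGR$, a twisted version $\B_2$ of $\B_3$, and an element $a \in \Cm\B_2$ such that $\A$ embeds into ${\bf Rl}_a \Cm\B_2$.

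The bottom of the chain is handled by the Resek--Thompson theorem cited above: if $\Cm\B_3 \models MGR$, then $\Cm\B_3 \in {\bf Srl}^{ca}\Cs_3$, and this yields the required $\A_4 \in \Cs_3$ with $\A_3 = \Cm\B_3 \subseteq {\bf Rl}\,\A_4$. The heart of the argument is therefore the construction of $\B_3$ from $\B$ by an \emph{inverse twisting}: for each $k < 3$, I locate the $T_k$-classes that witness the failure of $MGR_k$ in $\Cm\B$ and, reading Definition \ref{twist} backwards, split each offending class along a suitable partition and re-glue the pieces through the inverse of the permutation used by the forward twist. Because $MGR_0$, $MGR_1$ and $MGR_2$ are equivalent by the theorem preceding Definition \ref{dilate}, a single coordinated round of inverse twists suffices; it produces $\B_3$ whose complex algebra satisfies $MGR$ and whose forward twist along the same partitions returns $\B$.

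The main obstacle is a rigidity phenomenon: condition (4) in Definition \ref{twist} forbids moving any atom lying on a diagonal $E_{jk}$ with $j,k$ distinct from the twisting coordinate, so whenever the obstruction to $MGR$ in $\Cm\B$ touches diagonal atoms, or the needed refining partitions simply do not exist inside $\B$, I must first enlarge $\B$. This is where Definition \ref{dilate} enters: I inject a family $\Psi$ of fresh off-diagonal atoms, obtaining a dilated $\B^{\Psi} \in \Ca_3$ into which $\A$ embeds as a subalgebra of ${\bf Rl}_a \Cm\B^{\Psi}$ for $a = \sum \At\A$, while creating exactly the room the subsequent inverse twist needs in order to satisfy its partition, domain, and range conditions. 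Composing the dilation, the inverse twist, and the Resek--Thompson step, and then reading the composition forward through ${\bf Srl}^{ca}$, ${\bf Tw}$, and ${\bf SRl}^{ca}$, delivers the chain $\A \subseteq {\bf Rl}\A_2$, $\A_2 \in {\bf Tw}\{\A_3\}$, $\A_3 \in {\bf Srl}^{ca}\Cs_3$ claimed in the theorem; the delicate bookkeeping of how diagonals interact with successive twists and relativizations is what I expect to consume the bulk of the technical work.
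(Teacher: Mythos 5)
Your overall route coincides with the paper's: reduce to a complete atomic algebra, dilate to make room, use a twist to pass between an algebra violating $MGR$ and one satisfying it (exploiting that twisting is involutive, which is exactly your ``inverse twisting''), and finish with the Resek--Thompson theorem to land in ${\bf Srl}^{ca}\Cs_3$. The decomposition $\A\subseteq \A_1={\bf Rl}\A_2$, $\A_2$ a twisted version of $\A_3$, $\A_3\subseteq{\bf Rl}\A_4$ is assembled in the same way, with $\A_1$ recovered by relativizing the dilated complex algebra to the sum of the original atoms.

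There is, however, one genuine gap in your very first step. You reduce to the complete atomic case by embedding $\A$ into its \emph{minimal} (MacNeille/Monk) completion and assert that this completion is a complete atomic $\CA_3$. That is false in general: since $\A$ is dense in its minimal completion, every atom of the completion already lies in $\A$, so the minimal completion is atomic if and only if $\A$ is. For a non-atomic (e.g.\ atomless) $\A\in\CA_3$ your reduction therefore produces no atom structure $\B=\At\A$ to dilate and twist, and the entire construction has nothing to act on. The paper avoids this by embedding $\A$ into its \emph{canonical extension} $\A^{\sigma}=(\A_+)^+$, which is always complete and atomic and remains a $\CA_3$ by the J\'onsson--Tarski preservation theorem; since the outermost operator in the statement is ${\bf S}$, this embedding is harmless. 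Replacing your completion by $\A^{\sigma}$ repairs the step, and the rest of your argument then proceeds as in the paper.
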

\begin{demo}{Sketch of proof} One starts with a $\CA_3$ $\A$ and checks if $MGR$ holds in it. If it does then by the Resek Thompson theorem, 
$\A$ is a subalgebra of a relativized $\Cs_3$, and we are done. Otherwise 
$\A$ embeds into its canonical extension $\A^{\sigma}$ 
($\A_1$ in the formulation of the theorem) in order to be able to repair the failure of $MGR$ by twisting its atom structure. 
Note that $\A_1$ is complete and atomic, and is in 
$\CA_3$. Before the parameters in twisting is chosen, one has to apply dilation first so that definition \ref{twist} applies. That is where 
${\bf Rl}^{ca}$ in the theorem comes. $\A^{\sigma}$ can be recovered by 
relativising the dilated algebra $\A_2$ with the top element (i.e. the sum of atoms) of 
$\A^{\sigma}.$
The next step is to apply twisting to the dilated algebra and get a $\CA_3$ $\A_3$ in which $MGR$ holds, 
and then use the Resek thompson result to represent the latter as an  ${\bf Srl}^{ca}\Cs_3$. 
So here twising is used in a more constructive way; by twisting an algebra in which $MGR$ does not hold, we get one where
$MGR$ holds.  Since the effect of twisting can always be undone by twisting the twisted algebra, the procedure we have described show that $\A$ can be
obtained from a subalgebra of a relativized $\Cs_3$ by applying twisting, relativization 
and the operation of forming subalgebras.
\end{demo}

A different slant on the Representation Problem  is:
How can we abstract 
away from the subject matter of specific concrete $\alpha$-ary relations 
to arrive at their essential 
forms? For unary relations the answer is given completely by Stone's Theorem:
Every boolean algebra is isomorphic to a set algebra, 
i.e. is representable.
The significance of this is that the (finitely many) axioms of boolean algebras
exactly capture the true properties of unary relations: 
all and only those properties that hold 
in every 
domain of individuals endowed with unary relations are derivable from the axioms.
This is a great achievement for the algebraic viewpoint.
The same situation holds for locally finite cylindric algebras.
The axiomatization provided by Tarski 
is sufficient for representability of locally finite cylindric algebras.
However, if we want to {\it algebraize first order logic}, then we have to 
remove the (non-first order) restriction of local finiteness, 
and keep only equations
or at worst quasi-equations.  According to Blok and Pigozzi, 
\cite{BP89} {\it algebraizable} logics are those 
logics whose algebraic counterparts are quasi-varieties, 
i.e. classes of algebras axiomatized by quasi-equations.
But then infinity seems unavoidable, it backlashes in a strong 
sense:  Any first order  universal axiomatization 
of the class $\RCA_{\alpha}$ for $\alpha>2$ is essentially infinite, and has to be extremely complicated. 
A sample of Andreka's result for $\alpha\geq \omega$, is that if $\Sigma$ is a set of univeral formulas 
axiomatizing $\RCA_{\alpha}$, $k\in \omega$ and $l<\alpha$, then $\Sigma$ contains
infinitely many formulas in which at least one diagonal constant with index $l$, 
more than $k$ cylindrifications, and more than $k$ variables 
occur.

The metalogical aspect of the Finitizability Problem asks
for an algebraizable expansion of first order logic that admits a finite complete 
and sound Hilbert style axiomatization
of the valid formula schemata, involving only valid formula schemata. 
This is formulated as problem 4.16 in \cite{HMT2}.
This form of the Finitizability Problem,  
it seems, is not unrelated to Hilbert's program of finitizing 
metamathematics, and indeed it seems to add
to our knowledge of 
reasoning about reasoning. 

On the other hand, the Representation  Problem (in its algebraic form) 
is indeed non-trivial, as the following 
quotation from Henkin, Monk and Tarski in  \cite{HMT1}
might suggest:
``An outstanding open problem in cylindric algebra theory is that of exhibiting a class
of cylindric algebras which contains an isomorphic image of 
every cylindric algebra and hence serves to represent the class of all these
algebras, and which at the same time is 
sufficiently concrete and simply constructed to qualify
for this purpose from an intuitive point of view. 
It is by no means certain or even highly plausible
that a satisfactory solution of this problem will ever be found!''
(our exclamation mark).

Fortunately today the situation seems to be not as drastic!
However, it still involves some open questions.

\subsection{Without any twisting}

Several different stratagems were evolved to 
get round the obstacle of the non-finite axiomatizability
of the class of representable algebras.
One promulgated by Tarski 
especially was to find elegant intrinsic 
conditions for representability.
For example, certain comprehensible 
subclasses of abstractly defined $\CA_{\alpha}$'s turn out to be representable.
In this connection, examples include locally finite, dimension complemented, 
semisimple, and diagonal algebras of infinite dimension, 
cf. \cite[Thm. 3.2.11]{HMT2}

Another  sample of such results in this direction is the classical result of 
Henkin and Tarski, formulated as Thm. 3.2.14 in \cite{HMT2}, that states that 
any atomic $\CA_{\alpha}$ whose atoms are rectangular
is representable. This was strengthened by Andr\'eka et al. \cite{AGMNS},  by 
looking at dense subsets consisting of rectangular elements 
that are not necessarily atoms. Venema
\cite{V98} extended this result to the diagonal free case. Such a representation theorem allows the introduction of non-orthodox complete
axiomatization of $\L_n$, when $n\geq 3$, which is first order logic restricted to the first $n$ variables.

This approach of finding simple intrinsic sufficient conditions for representability
has continued to the present, and now forms an extensive 
field, cf. \cite{HMT1}, \cite{HMT2}, \cite{N96}.

Another strategy of 
attacking the Finitizability Problem is to define variants of 
$\RCA_{\alpha}$, $\alpha>2$  
that are finitely axiomatizable 
and are still adequate to algebraize first order logic. 
Such an approach originates
with Craig \cite{C} and is further pursued by
Sain \cite{Sa98}, \cite{SaGy96}, Simon \cite{Sim93}
and the present author \cite{ma}, \cite{BL}, \cite{FC}, \cite{th}, \cite{Bulletin}, \cite{SSL}, \cite{Notre}, and \cite{AU}.

The reasoning here is that
maybe the negative results we already mentioned
are merely a historical accident resulting from the particular (far from unique) 
choice of extra non-boolean operations,
namely the cylindrifications and diagonal elements.
This approach typically involves changing the signature of $\CA_{\alpha}$ 
by either taking reducts or expansions. 
Else, perhaps even changing the signature altogether but 
bearing in mind that
cylindrifications and diagonal 
elements are term definable in the new signature, cf. \cite{C} and \cite{Sa98}. 
This could be accompanied by broadening the notion of representability, allowing 
representation on arbitrary
subsets of $\alpha$-ary relations, rather than just (disjoint unions of) 
Cartesian squares \cite{Marx99}.  
The approach of broadening the permissible units is referred 
to in the literature as {\it relativization} or the {\it non-square} approach.
The
term relativization is a term that is already old in logic, 
and has been used in the theory of cylindric algebras from the
very beginning, see \cite[\S 2.2]{HMT1}. It is based on Henkin's ideas of changing the semantics to obtain a completeness theorem for higher
order logics.

The second term
comes from the Amsterdam Group, in fact it is due to Venema \cite{Venema}.
Relativization might involve adding new operations that become no longer
term definable after relativization, 
such as the difference operator \cite{Venema}.
This approach is related to {\it dynamic logic}, cf. \cite{AJN}. 
Modalizing set algebras
yields variants of the $n$-variable fragment of first order logic 
differing from the classical
Tarskian view, because the unit $W$ of the set algebra in question may 
not be of the form of a ``square'' $^nU$, but merely a subset thereof. 
These mutant logics (like the guarded fragments of first order logic) 
are under intensive 
study at the present time and we cite \cite{AJN} and \cite{MLM} 
as sources. 

We should mention, in this connection, that 
\cite{Sa98} provides a solution for first order logic without equality.
Indeed in \cite{Sa98} a stricty finite set of axioms are given for 
a class of representable algebras that is an extension of 
the class of representable quasi-polyadic algebras.  
This provides an algebraizable extension of first order logic {\it without equality}
that admits a finite complete 
and sound Hilbert style axiomatization
of the valid formula schemata, involving only valid formula schemata.
This answers the equality free version of problem 4.16 in \cite{HMT2}. 
Adding equality is problematic so far. 

Another more adventurous approach is to ``stay inside'', so to speak,
the ``$\CA$-$\RCA$ infinite discrepancy'' and to try
to capture the essence of (the equations holding in) 
$\RCA_{\alpha}$ in as  simple a
manner as possible, inspite of Andreka's complexity results, and without resorting to any kind of ``twisting"! 
It is not hard to show that the set of equations 
holding in $\RCA_{\alpha}$ for any countable
$\alpha$ is recursively enumerable. And indeed, using a well-known trick of Craig,
several (recursive) 
axiomatizations of $\RCA_{\alpha}$
exist in the literature, the first 
such axiomatization originating with Monk, building on work of McKenzie \cite{Mc}, 
\cite[p.112]{HMT2} and \cite{M69}.
Robinson's finite forcing in model theory proves 
extremely 
useful here as shown by Hirsch and 
Hodkinson. 
The very powerful recent approach of synthesizing axioms by games 
due to Hirsch and Hodkinson \cite{HH}, 
building on work of Lyndon \cite{Lyndon56}, 
is a typical instance
of giving an intrinsic characterization of the class of representable algebras
by providing an {\it explicit} axiomatization of this class in a step-by-step fashion.
This approach is of a very wide scope; 
using Robinson's finite forcing in the form of games, Hirsch and Hodkinson \cite{HHbook} 
axiomatize, not only the variety of representable algebras, 
but almost all 
pseudo-elementary classes existing in the literature, 
an indeed remarkable achievement.

It turns out, as pointed out by Hirsch and Hodkinson, 
that representations of an algebra can be described in a first 
order 2-sorted language. 
The first sort in a model of this {\it defining theory}
is the algebra itself, while the second sort is a representation of it. 
The defining theory specifies the relation between the two, 
and its axioms depend on what kind of representation is considered.
Thus the representable algebras are those models of the first sort 
of the defining theory, with the second sort providing the representation.
This method has its roots for relation algebras in 
McKenzie's dissertation \cite{Mc}.

The class of all structures that arise as the first sort of a model of a two-sorted 
first order theory is an old venerable notion in model theory introduced 
by Maltsev
in the forties of the 20th century. Ever since it was studied by Makkai and others. 
It is known as a pseudo-elementary class.  
What is meant here is  
a $PC_{\Delta}$ class in the sense of \cite{Ho} 
but expressed in a two sorted language. 
The term pseudo-elementary class strictly 
means $PC_{\Delta}$ 
when the second sort is empty, but the two notions were proved to be 
equivalent by Makkai \cite{Makkai}.

Any elementary class is pseudo-elementary, 
but the converse is not true;  
the class of $\alpha$-dimensional neat reducts
of $\beta$-dimensional cylindric algebras for $1<\alpha<\beta$
is an example; see \cite{BL}, \cite{IGPL}, \cite{FM}, \cite{Qs} and \cite{MQ}. 
Another is the class of strongly representable atom structures 
and the completely representable ones, as proved by Hirsch and 
Hodkinson in \cite{HH2000}.  Extending the latter result of Hirsch and Hodkinson on cylindric algebras, 
it is shown shown that the class of strongly representable atom structures
of many reducts of polyadic algebras, including polyadic algebras, cylindric algebras and diagonal free cylindric algebras
is not first order axiomatizable \cite{complete}.
The constructions used for this purpose employs the probabalistic methods of  of Erd\"os in constructing finite graphs with arbitrarily large 
chromatic number and girth, see theorems \ref{can}, \ref{com}. 

Many classes in algebraic logic can be seen
as pseudo-elementary classes. 
The defining theory is usually finite, simple 
and essentially recursively enumerable. 
According to Hirsch and Hodkinson a 
fairly but not completely general {\it definition} of the notion of representation
is just the second sort of a model of a two-stwo-sorted 
(more often than not recursively enumerable) first order theory, 
where the first sort of the theory
is the algebra. 

Put in this form,  Hirsch and Hodkinson apply model-theoretic finite forcing
to the Representation Problem. 
Model-theoretic forcing, as described in Hodges  
\cite{Ho}, and indeed in 
the proof of the classical Completeness
Theorem by Henkin, and in 
his Neat Embedding Theorem to be recalled below, 
typically involves constructing a model of a first order theory by a {\it game.}

The game builds the model  
step-by-step, elements of the model being produced by the second player
called $\exists$, in his response to criticism by the first 
player, called $\forall.$ 
The approach of Hirsch and Hodkinson is basically to combine the forcing games with 
the pseudo-elementary approach mentioned above to representations.

That is to build the second sort 
of a model of the defining theory whose first sort is  
the algebra the representability of which is at issue.
Taking the defining theory of the pseudo-elementary class to be given, 
this defines the notion of representation to be 
axiomatized. 

We refer the reader to \cite{HH97a} and \cite{HHbook}
for applications of
this technique to axiomatize the 
classes of representable relation and cylindric algebras.
The step-by-step technique in op.cit of building representations, especially when 
viewed as a game,  can be extremely potent and inspiring. 
Not only does it allow
the construction of axiomatizations of relation algebras and 
cylindric algebras and other kinds of related algebras, but close examination of the way
that games can be played on given algebras, 
provides very fine and detailed information about their structure, and makes one delve deep into the analysis. 

\subsection{ Games in action, a case study}

We next apply the game approach to obtain an explicit recursive axiomatization of the class of representable 
polyadic equality algebras of dimension $d$, where $d$ is finite and $d\geq 2$. (No such axiomatization exists in the literature for $d\geq 3$).

We recall the definition of quasi polyadic equality algebras of arbitrary dimension from \cite{ST}.
When the dimension is finite, quasipolyadic equality algebras and polyadic equality algebras are practically the same; we refer to both by polyadic 
equality algebras.

\begin{definition}  By a quasipolyadic equality algebra of dimension $\alpha$, briefly a $\QPEA_{\alpha}$, 
we mean an algebra
$\A=(A,+,-, {\sf c}_i, {\sf s}_j^i, {\sf p}_{ij}, {\sf d}_{ij})_{i,j<\alpha}$ 
where $(A, +, -)$ is a boolean algebra with $+$ denoting the boolean join and $-$ denoting complementation, ${\sf c}_i, {\sf s}_j^i$ and ${\sf p}_{ij}$ 
are unary operations on $A,$
${\sf d}_{ij}$ is a constant (for $i,j<\alpha$) and the following postulates are satisfied for all 
$i, j, k\in \alpha$
\begin{enumerate}
\item ${\sf s}_i^i={\sf p}_{ii}=Id,\text { and }{\sf p}_{ij}={\sf p}_{ji}$
\item $x\leq {\sf c}_ix$ (here $a\leq b$ abbreviates $a+b=b$). 
\item ${\sf c}_i(x+y)={\sf c}_ix+{\sf c}_iy$
\item ${\sf s}_j^i{\sf c}_ix= {\sf c}_ix$
\item ${\sf c}_i{\sf s}_j^ix={\sf s}_j^ix \text { if } i\neq j$
\item ${\sf s}_j^i{\sf c}_kx={\sf c}_k{\sf s}_j^ix \text { if } k\notin \{i,j\}$
\item ${\sf s}_j^i$ and ${\sf p}_{ij}$ are boolean endomorphisms\footnote{Homomorphisms from $(A,+,-)$ to itself.}
\item ${\sf p}_{ij}{\sf p}_{ij}x=x$
\item ${\sf p}_{ij}{\sf p}_{ik}={\sf p}_{jk}{\sf p}_{ij}x   \text { if } |\{i,j,k\}|=3$
\item ${\sf p}_{ij}{\sf s}_i^jx={\sf s}_j^ix$
\item ${\sf s}_j^i{\sf d}_{ij}=1$
\item $x\cdot {\sf d}_{ij}\leq {\sf s}_j^ix$
\end{enumerate}
\end{definition}
In what follows $d$ is finite with $d\geq 2$. In this case we write $\PEA_d$ for $\QPEA_d$ 
and $\RPEA_d$ for $\RQPEA_d$. A full polyadic set algebra of dimension $d$ is an algebra
$$(\wp(^dU), \cap, \cup, \sim, \emptyset ,^{d}U, {\sf C}_i, {\sf D}_{ij}, {\sf P}_{ij})_{i,j<d},$$
where the ${\sf C}_i$'s (cylindrifications) and ${\sf D}_{ij}$'s (the diagonals) are defined like the $\CA$ case and 
$${\sf P}_{ij}X=\{s\circ [i,j]: s\in X\}.$$
$\C$ is representable if it is isomorphic to to subdirect product of set algebras.

Let $\C$ be a polyadic algebra of finite dimension $d\geq 2$. Let $\bar{x}, \bar{y}$ 
be $d$ tuples of elements of some set . We write $x_i$ for the $i$th element of $\bar{x}$, so that $\bar{x}=(x_0,\ldots x_{d-1})$.
For $i,j<d$, we write $\bar{x}\equiv_i\bar{y}$ if $x_k=y_k$ for all 
$k<d$ $k\neq i$. We write $\bar{x}\equiv_{ij}\bar{y}$ if $\bar{x}=\bar{y}\circ [i,j]$.

\begin{definition} Let $\C,d$ be as above. 
\begin{enumarab}
\item A $\C$ prenetwork $N$ is a complete directed $d$ dimensional hypergraph with each hyperedge labelled by an element of $\C$.
Formally $N$ consists of a finite set of nodes, an a map assignining an element of $\C$ to each $d$-tuple of nodes. We use the synbol $N$ to denote the set of nodes,
the mapping, and the graph itself.

\item A $\C$ network or simply a network is a $\C$ prenetwork $N$ satisfying:
\begin{enumroman}
\item for each $i,j<d$, and $d$ tuple $\bar{x}$ from nodes$(N)$ (written, $\bar{x}\in {}^dN)$, if $x_i=x_j$, then $N(\bar{x})\leq {\sf d}_{ij}$.

\item for any $\bar{x}, \bar{y}\in {}^dN$, and any $i<d$, if $\bar{x}\equiv_i\bar{y}$, then $N(\bar{x})\cdot {\sf c}_iN(\bar{y})\neq 0$

\item for any $\bar{x}, \bar{y}\in {}^dN$, and any $i,j<d$, if $\bar{x}\equiv_{ij}\bar{y}$, then $N(\bar{x})\cdot {\sf p}_{ij}N(\bar{y})\neq 0$

\item if $\bar{x}, \bar{y}, \bar{z}\in {}^dN$, $i,j,k<d$ $\bar{x}\equiv_i\bar{y}$, $x_i=z_j$, $y_i=z_k$ and $N(\bar{z})\leq {\sf d}_{jk}$, then
$N(\bar{x})\cdot N(\bar{y})\neq 0$. 
\end{enumroman}

\item We write $N\subseteq N'$ if the nodes of $N'$ include those of $N$, and $N'(\bar{x})\leq N(\bar{x})$ for all $\bar{x}\in {}^dN$.

\end{enumarab}
\end{definition}
\begin{definition} Let $N$ be a prenetwork over the fixed polyadic algebra $\C$, and let $n\leq \omega$. The game $G_n(N,\C)$ is of length
$n$, and $N_0=N$. A play of $G_n(N,\C)$ is a sequence of $\C$ prenetworks $N_0\subseteq \ldots N_n$ if $n$ is finite, 
and $N_0\subseteq N_1\subseteq \ldots $ if $n=\omega$.

In the $t$th round $t<n$, let the last pre-network played be $N_t$. For his move in this round, $\forall$ has two kinds of moves:
\begin{enumarab}
\item Cylindrifier move.
$\forall$ picks $i\leq d$, $\bar{x}\in {}^dN_t$ and $a\in \C$.
\begin{enumroman}
\item  $i<d$.
In this case, $\exists$ must respond to $\forall$'s move with a prenetwork $N_{t+1}\supseteq N_t$ given by:

{\bf reject } $N_{t+1}$ is the same as $N_t$ except that $N_{t+1}(\bar{x})=N_t(\bar{x})\cdot -{\sf c}_ia$

{\bf accept} The nodes of $N_{t+1}$ are those of $N_t$ plus a new node $z$. Let $\bar{z}$ be given by $\bar{z}\equiv_i\bar{x}$, $z_i=z$. The labels of 
$d$ tuples of nodes are given by

$N_{t+1}(\bar{z})=a\cdot \prod_{j,k<d, z_j=z_k}{\sf d}_{jk}$

$N_{t+1}(\bar{x})=N_t(\bar{x})\cdot {\sf c}_ia$

$N_{t+1}(\bar{y})=N_t(\bar{y})$ for all $y\in {}^dN_t\sim \{\bar{x}\}$

$N_{t+1}(\bar{y})=\prod_{j,k<d, y_j=y_k}{\sf d}_{jk}$ for all $y\in {}^dN_{t+1}\sim (\{\bar{z}\}\cup {}^dN_t)$

\item $i=d$, $\exists$ must respond to $\forall$'s move with a prenetwork $N_{t+1}\supseteq N_t$ given by:

{\bf accept} $N_{t+1}(\bar{x})=N_t(\bar{x})\cdot a$ or {\bf reject } $N_{t+1}(\bar{x})=N_t(\bar{x})\cdot -a.$
\end{enumroman}

\item polyadic move

In this case, $\forall$ picks $i, j< d$, $\bar{x}\in {}^dN_t$ and $a\in \C$.

$\exists$ must respond to $\forall$'s move with a prenetwork $N_{t+1}\supseteq N_t$ given by:

{\bf reject } $N_{t+1}$ is the same as $N_t$ except that $N_{t+1}(\bar{x})=N_t(\bar{x})\cdot -{\sf p}_{ij}a$

{\bf accept}The nodes of $N_{t+1}$ are the same as $N_t$.The labels of 
$d$ tuples of nodes are given by

$N_{t+1}(\bar{x})=N_t(\bar{x}\circ [i,j])\cdot {\sf p}_{ij}a$

$N_{t+1}(\bar{y})=N_t(\bar{y})$ for all $y\in {}^dN_t\sim \{\bar{x}\}$

If each $N_t$ $t<n)$ is a $\C$ network, then $\exists$ has won. otherwise $\forall$ won.

\end{enumarab}

\end{definition}

A strategy for a player in a game of the form $G_n(N,\C)$ 
is a set of rules telling the player what move to make in each situation. A strategy for $\forall$ 
will tell him which tuple, which indices, and which algebra element to pick, and one for $\exists$ will tell her whether 
to accept or reject. A strategy is said to be used by a player in a play of the game if that player uses it in every round, 
so his or her moves always accord with what the
strategy suggests. A strategy in the game $G_n(N,\C)$ 
is said to be winning for its owner if the owner wins all matches in which the strategy is used regardless of how the 
opposite player decides to move.
If $a\in \C\sim \{0\}$, let $I_a$ be the $\C$ network with set of nodes $d$, 
given by $I_a(0,1,\ldots d-1)=a$ and $I_a(\bar{x})=\prod_{x_i=x_j}{\sf d}_{ij}$ for each other tuple
$\bar{x}$.
Now given a polyadic equality algebra, it is not always possible to construct a reprsentation jusy be fixing defects one by one, and $\exists$ does not always have a winnin strategy in $G_{\omega}(N,\A)$. If we were confined to thinking in terms of step by step constructions of representations, we might easily say at this point that they just don't work
in general, and give up. With games, however, it is natural to shift the problem from showing that $\exists$ has a winning strategy, to asking {\it when} 
she has a winning startegy. It will turn out that she has such a strategy precisely when the given algebra is representable, 
and we can use this to axiomatize 
$\RPEA_d$, as we illustrate in what follows. 

\begin{theorem} Let $d\geq 2$ be finite. Let $\C$ be a $d$ dimensional polyadic  algebra. 
$\C$ is representable if and only if $\exists$ has a 
winning strategy in the game 
$G_n(I_a,\C)$ for each $n<\omega$ and each 
non zero $a\in \C$.
\end{theorem}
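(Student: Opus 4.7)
The plan is to prove the two directions separately. For soundness ($\Rightarrow$), I would translate an embedding $h\colon\C\to \wp({}^dU)$ of $\C$ into a full polyadic set algebra into a winning strategy for \pe{}. Given $a\neq 0$ and a witness $\bar s\in h(a)$, \pe's strategy is to maintain, throughout the play, an assignment $v\colon \nodes(N_t)\to U$ satisfying the invariant that $\langle v(x_0),\dots,v(x_{d-1})\rangle \in h(N_t(\bar x))$ for every $\bar x\in{}^dN_t$. Initially setting $v(i)=s_i$ makes this hold for $I_a$. In each round \pe{} consults $h$ to decide whether to accept or reject, and on an ``accept'' in a cylindrifier move she picks a witness $u\in U$ in $h(a')$ for the new node. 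The preservation of the invariant, together with the four network clauses, follows directly from the definitions of $\cyl{i}$, ${\sf p}_{ij}$ and ${\sf d}_{ij}$ in a set algebra.

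The substantive direction is completeness ($\Leftarrow$), for which I propose three steps. Step 1 is to lift the finite-game strategies to an $\omega$-game on an elementary extension. The statement ``\pe{} wins $G_n(I_a,\cdot)$ for every $a\neq 0$'' is, for each fixed $n$, a first-order scheme over the $\PEA_d$-language (a nested alternation of quantifiers of depth bounded by $n$ over the finitely many move-types). Passing to a countably saturated elementary extension $\C^\ast\succeq\C$ and applying a compactness / K\"onig-tree argument on the tree of possible \pe{}-responses, I would amalgamate the finite winning strategies into a single \ws{} in $G_\omega(I_a,\C^\ast)$ for every non-zero $a\in \C^\ast$. Step 2 is to use this $\omega$-strategy to build a representation. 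I run a single match against a ``generic'' \pa{} who, with a standard bookkeeping schedule, enumerates every triple $(i,\bar x,a')$ --- with $i\le d$, $\bar x$ a tuple over the eventually-introduced nodes, and $a'\in\C^\ast$ --- so that every such demand is played infinitely often. The union $N_\omega$ of the increasing chain of networks lives on a set of nodes $M$, and I set $h_a(b)=\{\bar y\in{}^dM: N_\omega(\bar y)\le b\}$; the scheduling guarantees $h_a$ preserves $\cyl{i}$ (from the $i<d$ probes), is boolean-faithful (from the $i=d$ probes), preserves ${\sf p}_{ij}$ (from the polyadic probes), and respects diagonals (from the network axioms themselves). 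Step 3 is assembly: because $(0,1,\dots,d-1)\in h_a(a)$ by construction, each $h_a$ separates $a$ from $0$; the subdirect product of the $h_a$ over $a\in\C^\ast\setminus\{0\}$ embeds $\C^\ast$ into $\RPEA_d$, and elementarity transfers this to $\C$.

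The principal obstacle is Step 1: uniform-in-$n$ winning strategies for the finite games need not piece together directly, since \pa's choice of $a'$ at each round ranges over all of $\C$ and the strategy tree is highly infinitely branching. Saturation is what restores compactness --- each finite-depth scheme describes the existence of a depth-$n$ response-tree, and in a saturated elementary extension one extracts a coherent infinite response-tree. This is a mild generalization of the $\mathrm{PC}_\Delta$-to-elementary-closure passage (cf.\ the Hirsch--Hodkinson pseudo-elementary paradigm surveyed in the preceding sections). A subsidiary subtlety in Step 2 is verifying that $h_a$ is an honest $\PEA_d$-homomorphism and not merely a ``weak'' representation; this is exactly where the $i=d$ cylindrifier move (pure boolean separation) and the polyadic probes earn their keep, since they are precisely the ingredients absent from cruder step-by-step constructions for the cylindric case.
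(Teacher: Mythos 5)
Your overall architecture --- soundness by reading a strategy off a representation, completeness by running a generic play against a scheduling $\forall$ and then taking a subdirect product over the non-zero $a$ --- is the same as the paper's, and your Step 1 correctly isolates a passage that the paper glosses over: the paper reduces to countable $\C$ (legitimate, since $\RPEA_d$ is a variety and membership in a variety is tested on countable subalgebras) and then simply lets $\exists$ ``use her winning strategy'' in the $\omega$-round game without saying how winning every $G_n$ yields a strategy for $G_\omega$. Your saturation argument is the right kind of repair, but as stated it undercuts your Step 2: a countably saturated elementary extension $\C^\ast$ of an infinite $\C$ is uncountable, so your ``generic'' $\forall$ cannot enumerate every triple $(i,\bar x,a')$ with $a'\in\C^\ast$ in $\omega$ rounds, and the claim that $h_a$ is a homomorphism (which rests on every element of the algebra being probed against every tuple) collapses. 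You must come back down: first reduce to countable $\C$, pass up to $\C^\ast$ to obtain the $\omega$-strategy, then apply downward L\"owenheim--Skolem to a two-sorted structure encoding that strategy to extract a countable $\B$ with $\C\subseteq\B\preceq\C^\ast$ on which $\exists$ still wins $G_\omega$; represent $\B$ by the step-by-step construction and restrict to $\C$.

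The second gap is your parenthetical claim that $h_a(b)=\{\bar y\in{}^dM: N_\omega(\bar y)\le b\}$ ``respects diagonals from the network axioms themselves.'' Network clause (i) gives only one inclusion: if $y_i=y_j$ then $N_\omega(\bar y)\le {\sf d}_{ij}$. Nothing in the game prevents the play from producing \emph{distinct} nodes joined by a hyperedge labelled below ${\sf d}_{ij}$ (for instance, $\forall$ plays a cylindrifier move whose witness $a$ satisfies $a\le{\sf d}_{01}$), so $h_a({\sf d}_{ij})$ properly contains the true diagonal of ${}^dM$ and your map is only a homomorphism of diagonal-free reducts. This is exactly why the paper first builds $h^*$ into $\wp({}^dM)$, then defines $x\sim y$ iff some $\bar z$ with $z_k=x$, $z_l=y$ lies in $h^*({\sf d}_{kl})$, verifies that $\sim$ is an $h^*$-congruence (network clause (iv) exists precisely to make this work), and finally passes to the quotient $M/\sim$ to obtain the genuine representation $h_a$. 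Your proof needs the same quotient step; with these two repairs it coincides with the paper's argument.
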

\begin{demo}{Proof} One side is easy. If $\C$ is representable, then $\exists$ can use a representation to 
give her a winning strategy in each of the games. Conversely suppose she can win
each game $G_t(I_a,\C)$ for every $t<\omega$ and every non zero element $a$. Fix 
such an element $a$. We can suppose that $\C$ is countable, for $\RPEA_d$ is a variety. Consider a play of $G_{\omega}(I_a,\C)$ in which $\exists$ 
uses her winning strategy and $\forall$ picks every $d$ tuple of nodes ever constructed, every $i\leq d$, every $i,j<d$, 
and every $a\in \C$ eventually during the game. Let $M$ denote the
set of all nodes introduced during this  game.
Define
$\D=(\wp(^dM), \cup, \sim {\sf C}_i, {\sf D}_{ij}, {\sf P}_{ij})_{i,j<d}$ and a map
$h^*: \C\to \D$ bas follows.
For $r\in C$, let
$$h^*(r)=\{\bar{x}\in {}^dM; \exists t<\omega(\bar{x}\in {}^dN_t\land N_t(\bar{x})\leq r\}.$$
$\forall$ moves of the second kind, when $i=d$, guarantee that for any $d$ -tuple $\bar{x}$ and any $a\in \C$
for sufficiently large $t$ we have either $N_t(\bar{x})\leq a$ or $N_t(\bar{x})\leq -a$. This ensures that $h^*$ 
preserves the Boolean operations. $\forall$ moves of the first kind, 
when $i<d$ ensure that the cylindrifications are respected by $h^*$, while $\forall$ moves in response to the polyadic moves guarantee
that substitutions are preserved.
So $h^*$ is a homomorphism fro the diagonal free reduct of $\C$ into the diagonal free reduct of $\D$.
Now define
$$x\sim y\Leftrightarrow \exists \bar{z}\in {}^dM, \exists k,l<d(z_k=x\land z_l=y\land \bar{z}\in h^*(d_{kl})).$$
Then $\sim$ is an equivalence relation; further $\sim$ is an $h^*$ congruence on $M$, in the sense that if $\bar{x}, \bar{y}\in {}^dM$ and $x_i\sim y_i$ for all $i<d$, 
then $\bar{x}\in h^*(r)$ iff $\bar{y}\in h^*(r)$.
Now let $B=M/\sim$ and define $$h_a(r)=\{(x_0/\sim, \ldots x_{d-1}/\sim): (x_0\ldots x_{d-1})\in h^*(r)\}.$$   
Then it can be checked that $h_a$ is the required representation.
\end{demo}

Now we want to synthesis the above games to obtain a recursive axiomatization of $\RPEA_d$. 
Write $L^d_c$ for the signature $\{0,1,+,-, {\sf c}_i, {\sf d}_{ij}, {\sf p}_{ij}: i,j<d\}$ of $d$ dimensional polyadic algebras.
A $d$ dimensional term network is a pair consisting of a finite non empty set $N$, or nodes$(N)$
of nodes, and a map, also written $N$, assigning an $L_c^d$ term $N(\bar{x})$ to each $d$ tuple $\bar{x}$ of nodes. 

\begin{enumarab}

\item Given a term network $N$, an index $i\leq d$, a $d$ tuple $\bar{x}\in {}^dN$ and an $L_c^d$ term $\tau$, we define two term netwrorks
$Out=Out(N,i,\bar{x},\tau)$ and $In=In(N,i,\bar{x},\tau)$ corrsponding to the two ways $\exists$ can respond to the cylindrifier move 
in the game. 

\begin{enumroman}
\item $i<d$
We define
$In$ and $Out$ as follows

\begin{itemize}

\item $nodes(Out)=nodes(N)$, and $nodes(In)=nodes(N)\cup \{z\}$ for some new node $z$

\item For all $\bar{y}\in {}^dN\sim \{\bar{x}\}$, $In(\bar{y})=Out(\bar{y})=N(\bar{y})$

\item $Out(\bar{x})=N(\bar{x})\cdot-{\sf c}_i\tau$

\item $In(\bar{x})=N(\bar{x})\cdot {\sf c}_i\tau$

\item Let $\bar{z}\equiv_i \bar{x}$ with $z_i=z$. Then $In(\bar{z})=\tau\cdot \prod_{j,k<d, x_k=x_j}{\sf d}_{jk}$

\item For all other $d$ tuples $\bar{y}$ involving $\bar{z}$, $In(\bar{y})=\prod_{j,k<d, y_j=y_k}{\sf d}_{jk}$
\end{itemize}

\item $i=d$. In this case, we define 

\begin{itemize}

\item $nodes(Out)=nodes(N)=nodes(In)$
\item $In(\bar{x})=N(\bar{x})\cdot \tau$
\item $Out(\bar{x})=N(\bar{x})\cdot -\tau$

\item For all $\bar{y}\in {}^dN\sim \{\bar{x}\}$, $In(\bar{y})=Out(\bar{y})=N(\bar{y})$
\end{itemize}
\end{enumroman}
\item Given a term network $N$, an index $i, j< d$, a $d$ tuple $\bar{x}\in {}^dN$ and an $L_c^d$ term $\tau$, we define two term networks
$Out=Out(N,i,j,\bar{x},\tau)$ and $In=In(N,i,j,\bar{x},\tau)$ corresponding to the two ways $\exists$ can respond to the polyadic moves 
in the game. We define
$In$ and $Out$ as follows

\begin{itemize}

\item $nodes(Out)=nodes(N)=nodes(In)$

\item For all $\bar{y}\in {}^dN\sim \{\bar{x}\}$, $In(\bar{y})=Out(\bar{y})=N(\bar{y})$

\item $Out(\bar{x})=N(\bar{x})\cdot-{\sf p}_{ij}\tau$

\item $In(\bar{x})=N(\bar{x}\circ [i,j])\cdot {\sf p}_{ij}\tau.$

\end{itemize}
\end{enumarab}

\begin{definition}
Let $N$ be a $d$ dimensional term network . 
\begin{enumarab}
\item We define the formula $CNet^d(N)$ to be the conjunction of the following four formulas:
$$\bigwedge_{\bar{x}\in {}^dN, i,j<d, x_i=x_j}N(\bar{x})\leq {\sf d}_{ij}$$
$$\bigwedge_{\bar{x}, \bar{y}\in {}^dN, i<d, \bar{x}\equiv_i\bar{y}}N(\bar{x})\cdot {\sf c}_iN(\bar{y})\neq 0$$
$$\bigwedge_{\bar{x}, \bar{y}\in {}^dN, i,j<d, \bar{x}\equiv_{i,j}\bar{y}}N(\bar{x})\cdot {\sf p}_{ij}N(\bar{y})\neq 0$$
$$\bigwedge_{\bar{x},\bar{y},\bar{z}\in {}^dN, i,j,k<n, \bar{x}\equiv_i\bar{y}, x_i=z_j, y_i=z_k}(N(\bar{z})\leq {\sf d}_{jk})\to (N(\bar{x})\cdot N(\bar{y})\neq 0).$$
\item Now we define inductively:
$$\psi_0^d(N)=CNet^d(N)$$
$$\psi_{n+1}^d(N)=\forall y[\bigwedge_{\bar{x}\in {}^dN, i\leq d}\psi_n^d(In(N,i,\bar{x},y))\lor \psi_n^d(out(N,i,\bar{x},y))$$
$$\bigwedge_{\bar{x}\in {}^dN, i,j<d}\psi_n^d(In(N,i,j,\bar{x},y))\lor \psi_n^d(out(N,i,j,\bar{x},y)).]$$ 
\end{enumarab}
\end{definition}

It can be shown by induction on $n<\omega$, that for all assignments $\tau$ of he variables in the terms of $N$ into $\C$, 
we have $\C,\tau\models \psi_n^d(N)$ if and only if $\exists$ has a winning strategy
for $G_n(N^{\tau}, \C)$.
Let $x$ be any variable and define $J_x$ to be the graph with nodes $d$, with $J_x(0,\ldots d-)=x$ and $J_x(\bar{y})=\prod_{y_i=y_j}{\sf d}_{ij}$ for all $\bar{y}\in 
{}^dd\sim \{(0,1,\ldots, d-1)\}$. Then $\C$ is representable, if and only if $\C,\tau\models \psi_n^d(J_x)$ for all $n<\omega$ 
and all assignmentes $\tau: \{x\}\to \C\sim \{0\}$.
Let $\phi_n^d$ be the sentence $\forall x(x\neq 0\to \psi_n^d(J_x))$.

\begin{theorem} For any $3\leq d<\omega$, $\C$ is representable if and only if $\C\models \phi_n^d$ for all $n<\omega$.
\end{theorem}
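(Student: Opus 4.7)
The plan is to deduce the equivalence directly from the game-theoretic characterization of representability already established (the previous theorem), together with the intermediate claim sketched in the text: for every $n<\omega$, every $d$-dimensional term network $N$, every polyadic equality algebra $\C$, and every assignment $\tau$ of the variables occurring in $N$ into $\C$, one has $\C,\tau\models \psi_n^d(N)$ if and only if $\exists$ has a \ws\ in $G_n(N^{\tau},\C)$. Once this is in place, setting $N=J_x$ and $\tau(x)=a$ yields $N^{\tau}=I_a$, so $\C\models \phi_n^d$ means exactly that for every non-zero $a\in\C$, $\exists$ has a \ws\ in $G_n(I_a,\C)$, which by the previous theorem is equivalent to $\C\in \RPEA_d$.

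Thus the real content is the intermediate claim, which I would prove by induction on $n$. The base case $n=0$ is unpacking: $\psi_0^d(N)=CNet^d(N)$ is the conjunction of the four clauses in the definition of a $\C$-network, so $\C,\tau\models \psi_0^d(N)$ holds iff $N^{\tau}$ is a $\C$-network, and $G_0$ is a zero-round game which $\exists$ wins precisely when the starting network is already a $\C$-network. For the inductive step, consider $\psi_{n+1}^d(N)$. Its universal quantifier $\forall y$ corresponds to $\forall$'s choice of an element $a\in\C$ in the next round. The outer conjunction ranges over $\forall$'s two kinds of moves --- the cylindrifier moves (indexed by $\bar{x}\in{}^dN$ and $i\le d$) and the polyadic moves (indexed by $\bar{x}$ and $i,j<d$). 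The inner disjunction $\psi_n^d(In(\ldots))\lor\psi_n^d(Out(\ldots))$ gives $\exists$ the choice between the \textbf{accept} and \textbf{reject} response. By construction $In$ and $Out$ mirror exactly the two responses described in the definition of $G_n(N,\C)$, so under any assignment $\tau$, $In^{\tau}$ and $Out^{\tau}$ are the two possible networks $N_{t+1}$ that $\exists$ can produce. Applying the induction hypothesis to $In(N,\ldots,y)$ and $Out(N,\ldots,y)$ with the extended assignment $\tau\cup\{y\mapsto a\}$, this exactly says that $\exists$ has a \ws\ of length $n$ from the resulting position, which is precisely the recursive definition of a \ws\ for $\exists$ in $G_{n+1}(N^{\tau},\C)$.

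The main obstacle is the bookkeeping in the inductive step: one must check carefully that each clause in the defining formulas of $In$ and $Out$ truly corresponds to the prescribed update of the network in the game, both for the cylindrifier moves (both the case $i<d$, where a new node $z$ is introduced and labels at tuples involving $z$ are set to $\tau\cdot\prod \sf{d}_{jk}$ or to diagonal products, and the case $i=d$, where no new node is added), and for the polyadic moves where the twist $\bar{x}\circ[i,j]$ appears. A subsidiary care is that the formula $\psi_n^d(N)$ genuinely has the variables of $N$ free and that the quantifier $\forall y$ correctly captures an arbitrary element; this is immediate since the terms in $In,Out$ are built from those of $N$ plus the single fresh variable $y$ standing for $\forall$'s chosen element $a$. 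With this verification the inductive step goes through, and the theorem follows by combining the intermediate claim with the previous characterization.
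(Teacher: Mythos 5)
Your proposal is correct and follows essentially the same route as the paper: the paper itself derives the theorem by combining the (stated but unproved) inductive claim that $\C,\tau\models \psi_n^d(N)$ iff $\exists$ has a winning strategy in $G_n(N^{\tau},\C)$ with the earlier game-theoretic characterization of representability via the games $G_n(I_a,\C)$. Your write-up merely supplies more detail on the induction than the paper does, which is a welcome addition rather than a divergence.
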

Since $\PEA_d$ is a variety with discriminator 
term ${\sf c}_0{\sf c}_1\ldots {\sf c}_{d-1}x$, 
then we can convert these universal sentences into equations $\epsilon_n^d$ that axiomatize $\RPEA_D$ withn $\PEA_d$

\begin{theorem}\label{polyadic} For finite $d\geq 3$, $\RPEA_d$ is axiomatized by the equations $\{\epsilon_n^d: n<\omega\}$ together
wth the equations for $\PEA_d.$
\end{theorem}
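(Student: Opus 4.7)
The plan is to deduce Theorem~\ref{polyadic} from the preceding theorem by converting each universal sentence $\phi_n^d$ into an equivalent equation $\epsilon_n^d$ modulo $\PEA_d$. The preceding theorem already gives $\RPEA_d = \{\C \in \PEA_d : \C \models \phi_n^d \text{ for every } n < \omega\}$, so it suffices to exhibit, for each $n$, an equation $\epsilon_n^d$ whose models in $\PEA_d$ are precisely the models of $\phi_n^d$.

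The crucial structural fact is that, for finite $d$, $\PEA_d$ is a discriminator variety with unary discriminator term $c(x) = {\sf c}_0 {\sf c}_1 \cdots {\sf c}_{d-1} x$: one has $c(0) = 0$ in every $\PEA_d$, while $c(x) = 1$ in every simple (equivalently, subdirectly irreducible) $\PEA_d$ whenever $x \neq 0$. Discriminator varieties are semisimple and congruence-distributive, so every member of $\PEA_d$ is a subdirect product of simple ones; consequently it is enough to find, for each $n$, an equation equivalent to $\phi_n^d$ on every simple $\PEA_d$.

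The translation then proceeds by induction on the construction of $\psi_n^d$. In a simple $\PEA_d$, $s = t$ is equivalent to $c(s \oplus t) = 0$ and $s \neq t$ to $c(s \oplus t) = 1$, where $\oplus$ is Boolean symmetric difference; meanwhile $s \leq t$ is already the equation $s \cdot -t = 0$. Normalising every atomic formula appearing inside $\psi_n^d$ to the form $u = 0$, conjunctions collapse via $(u_1 = 0) \land (u_2 = 0) \iff u_1 + u_2 = 0$, and disjunctions via $(u_1 = 0) \lor (u_2 = 0) \iff c(u_1) \cdot c(u_2) = 0$, the latter valid in every simple algebra because $c$ takes only the values $0$ and $1$. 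Each universal quantifier $\forall y[\dots]$ is absorbed into the implicit universal quantification of an equation. Iterating this translation through the recursive definition of $\psi_n^d$ and then through $\phi_n^d = \forall x(x \neq 0 \to \psi_n^d(J_x))$ yields the desired $\epsilon_n^d$.

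The main obstacle is bookkeeping rather than substance: $\psi_n^d$ has depth $n$ of nested disjunctions and universal quantifiers, so an explicit display of $\epsilon_n^d$ becomes unwieldy, though it is still recursively defined in $n$. Once the reduction is effected, a $\PEA_d$ lies in $\RPEA_d$ if and only if it satisfies $\{\epsilon_n^d : n < \omega\}$, completing the proof.
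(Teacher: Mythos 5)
Your proposal is correct and follows the same route as the paper: the paper's entire justification is the one-line observation that $\PEA_d$ is a discriminator variety with discriminator term ${\sf c}_0{\sf c}_1\cdots {\sf c}_{d-1}x$, so the universal sentences $\phi_n^d$ of the preceding theorem convert into equations $\epsilon_n^d$ valid over $\PEA_d$. You have simply filled in the standard details of that conversion (reduction to simple algebras via semisimplicity, normalisation of atomic formulas, and the encoding of $\land$, $\lor$ and $\neq$ using the discriminator), all of which are sound.
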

Call the set of equations in the above theorem $\Sigma$. Let $l<d,$ $k<d$, $k'<\omega$ be natural numbers, 
then by a careful inspection, one can see that $\Sigma$ contains
infinitely equations in which $-$ occurs, one of $+$ or $\cdot$ ocurs  a diagonal or a permutation with index $l$ occurs, more
than $k$ cylindrifications and more than $k'$ variables occur.
We will show that this (degree of complexity) is true for {\it any} axiomatization of $\RPEA_d$ when $d\geq 3$.
{\it There is a prevalent misconception that {\it cylindric algebras} of dimension $d$  are suitable for dealing with $\L_d$ (first order logic restricte to the first $d$
variables)
with its semantical and syntactical notions, whereas in fact, it is the class of polyadic algebras of dimension $d$, that constitute 
the ``real" algebraic counterpart 
of $\L_d$.} However, the theory of (representable) cylindric algebras $(\RCA_d)$ is far more developed than that of polyadic equality algebras.

Since, by our above slogan,  notions of $\L_d$ are reflected by $\RPEA_d$ while syntactical notions, 
like provability in $\L_d$ is reflected by the class 
$\PEA_d$, it is natural and indeed useful to see what results on $\RCA_d$ generalise to $\RPEA_d$ and which do not.
This will be a recurrent theme in what follows.

In \cite{AU}, \cite{IGPL2}, \cite{OT}, \cite{SL}, \cite{IGPL6}, \cite{OTT} and 
\cite{SSL} we use a disguised form of Robinson's 
finite forcing in model theory
to prove interpolation and omitting types for certain infinitary
expansions of first order logic. This involves building models in an essentially 
step-by-step manner, 
although we do not resort to games.
In treating omitting types for extensions of first order logic, a Baire category approach is adopted, see theorems, \ref{covK}, \ref{ZF}, 
and this is utterly unsurprising for one can go from the Baire category approach to games via the Banach Mazur Theorem
\cite{Bull5}, \cite{IGPL2}, \cite{OT}, \cite{Bull5b}, \cite{Bull3}, \cite{TT}, \cite{OTT}.
But in any case, the step-by-step approach, 
whether by games or otherwise, is widely accepted and 
has been used by many authors.

The connection of games to Robinson's finite forcing is well known. 
Indeed the games we played above are called Forcing games. 
Forcing games are also known to descriptive set theorists as Banach-Mazur games.  Model theorists use them as a way of building 
infinite structures with controlled properties. To sketch the idea, we quote Hodges:`` imagine that a countably infinite team of builders are constructing a house $A$. 
Each builder has his or her own task to carry out and  has infinitely many chances to enter the site and add some finite amount of material to the house; 
these slots for the builders are interleaved so that the whole process takes place in a sequence of steps enumerated by the natural numbers. 
To show that the house can be built to order, we need to show that each builder separately can carry out his or her appointed task, 
regardless of what the other builders do. So we imagine each builder as player $\exists$ in a game where all the other 
players are lumped together as another player $\forall$, 
and we aim to prove that $\exists$ has a winning strategy for this game. When we have proved this for each builder separately, 
we can imagine them going to work, each with their own winning strategy. They all win their respective games and the result is one beautiful house.
More technically, the elements of the structure $A$ are fixed in advance, 
but the properties of these elements have to be settled by the play. 
Each player moves by throwing in a set of atomic or negated atomic statements about the elements, 
subject only to the condition that the set consisting of all the statements thrown in so far must be 
consistent with a fixed set of axioms written down before the game. 
At the end of the joint play, the set of atomic sentences thrown in has a canonical model, and this is the structure $A$; 
there are ways of ensuring that it is a 
model of the fixed set of axioms. A possible property $P$ of $A$ is said to be enforceable if a builder who is given the task of making 
$P$ true of $A$ has a winning strategy. 
A central point (due essentially to Ehrenfeucht) is that the conjunction of 
a countably infinite set of enforceable properties is again enforceable."
We shall use such ideas in connection to some stability theory in theorem \ref{stability}.

The name forcing comes from an application of related ideas by Paul Cohen to construct models of set theory in the early 1960s. 
In the mathematical discipline of set theory  forcing is the  technique invented by Paul Cohen for proving  consistency and independence results.
It was first used, in 1962, to prove the independence of the continuum hypothesis and the axiom of choice from Zermelo-Fraenkel set theory.
Forcing was considerably reworked and simplified in the sixties, and has proven to be an extremely 
powerful technique both within set theory and in other areas of  mathematical logic such as descriptive set theory and recursion theory.
We shall use the technique of iterated forcing in showing the independence of a purely algebraic statement involving neat embeddings
in theorem \ref{OT}.

Abraham Robinson adapted the methods of forcing  to make a general method for building countable structures, 
and Martin Ziegler introduced the game setting. The games played above, have infinite lengths, but are determined.
In fact, all the games studied in this paper has the following topological form. Take the Cantor set $^{\omega}2$.
Now $^{\omega}2$, regarded as the product space of the set $2$ with the discrete topology, is a Polish space, that is, 
a topological space that is metrizable with a metric that is both separable
and complete. A condition is a map $p:Y\to \{0,1\}$ where $Y$ is a finite set of $\omega$. 
Let $M_p=\{f\in {}^{\omega}2: p\subseteq s\}$. Then this is a base for the topology on $^{\omega}2$. 
Given a non empty closed set $F\subseteq ^{\omega}2$ and a set $P\subseteq F$, 
players $\forall$ $\exists$ play the following game of length $\omega$.The players choose between them an 
increasing chain $p_0\subseteq p_1\subseteq\ldots$  of 
conditions so that $F\cap M(p_i)\neq \emptyset$. 
Then, it can be shown that  $\exists$ has a winning  strategy iff $P$ is comeager. Player $\forall$ has a winning strategy 
if there exists a condition $p$ such that
$M(p)\sim P$ is comegaer in $M(p)$. 
if $P$ is a Borel set, then the game is determined, that is one of the players have a winning strategy. Using the axiom of choice one can show that 
there is a (non-Borel) set $P$ such that $G(F,P)$ is not determined.
However if one adopts the axiom of determinacy, asserting that such games are determined one is led to an 
extension of $ZF$ contradicting choice.
Recent work of Woodin has revealed that such a theory is equiconsistent with $ZFC$ + the existence of infinitely many Woodin cardinals. 
Not to digress any further, in the two player games we play, one of the 
players has a winning
strategy, i.e. the games we play are determined.
When $\exists$ has a winning strategy in the $\omega$ round game over a given algebra, then this implies
that the algebra in question has a representation, and this game can be truncated to finite ones, these in turn can be translated 
effectively into an axiomatization
of the class of representable algebras.

Another approach initiated by Van Benthem 
and Venema,  consists of 
viewing cylindric set algebras
as subalgebras of complex algebras of Kripke frames 
that have the same signature as atom structures
of cylindric algebras \cite{Venema}, 
thus opening an avenue to techniques and methods 
coming from modal logic.
This typically involves introducing 
Gabbay-style rules on the logic side. 
These extremely liberal Gabbay-style inference systems
correspond to classes that are inductive, i.e., axiomatized by 
$\forall\exists$-formulas.
An example of such a class is the class of rectangularly dense
cylindric algebras, \cite{AGMNS}, \cite{V98}. 
We should mention that this approach is an 
instance, or rather, an application 
of the triple duality, in the sense of Goldblatt \cite{Goldblatt2000}, 
existing between abstract modal logic, Kripke frames 
or relational structures, and boolean algebras with operators.
In this connection, we refer to the article by Venema in  \cite{MLM}
for an explanation and application of 
this duality to relation algebras and to \cite{Venema},
\cite{Marx95}, and \cite{amal} for further elaboration on this duality.
More still can be acheived if we allow relativization. Every weakly associative algebra has a relativized representation; 
this is a classical result of Maddux \cite{Maddux} and is the $\RA$ analogue of the Resek Thompson result for relativized $\CA$'s.
Weakly associative algebras correspond to the so called {\it arrow logic}. 
Kurusz \cite{K} considered arrow logic augmented with various kinds of infinite counting modalities such as ``much more" and 
``many times". 
Adding these modal operators to weakly associative arrow logic result in finitely axiomatizable and decidable logics. 
Arrow logic with projections is extremely expressive. Indeed Tarski and Givant \cite{TG} 
show that the whole of set theory can be built in such a framework, which can be seen as `finitizing' set theory.
Nemeti used such results to show that finite variable fragments have Godel's incompleteness theorem \cite{Bulletin}.
Adding projections \cite{Kb} can even spoil the robust decidability of weakly associative arrow logic: some of these are not even recursively enumerable
using a reduction of unsolvable Diophantine equations. This negative property was shown to be an artifact of the underlying set theory - certain 
non well founded theories
interpret the meaning of projections as to allow for finite axiomatizability even of full arrow logic with projections.
The so called directed cylindric algebras of N\'emeti form a  related topic. 
Sagi \cite{Sagi2000} proves the representation of these in the absence of the axiom of foundation, showing that positive solutions to the Finitizability problem can be obtained
when the underlying set theory is weakened.

There is also a purely categorial approach to the representation
problem. Instead of viewing the representation 
problem as a two-sorted first order theory,
one can present it  in the context of a {\it functor} going from from one sort to 
the other, or rather from one category to the other.  
We have two categories, 
the category of abstract algebras 
and the category of concrete algebras and adjoint functors
between the two categories.
Indeed,  the representation problem can be seen as a typical duality, be it between 
models and theories, or boolean algebras with operators and 
modal logic \cite{Goldblatt89}, 
or quasi-varieties and algebraizable logics \cite{BP89}. 
Indeed it is argued in \cite{Mad} that duality theories, 
representation theorems and adjoint functors are different words for the same 
thing.  At the end of the article, we give another more sophisticated categorial formulation of
the Finitizability problem, where we look at {\it inverses} of the {\it Neat reduct} functor going from one category to another in $\omega$ extra 
dimensions, and try to reflect those in an adjoint situation. A solution to the finitizability problem is thereby presented as an equivalence of two 
categories.

However, it is debatable 
whether any of the intrinsic existing characterizations of the class of the representable algebras
are ``good enough." 
In particular, axiomatizations of this class, existing in the literature, 
see, e.g., \cite{HMT2}, \cite{An97}, \cite{Venema}, \cite{HHbook}, 
seem not to be considered satisfactory from 
the algebraic point of view. Maybe 
a better
description of the present situation 
would be that none 
of these axiomatizations are considered final.  

This also applies to the 
modal  approach initiated by Venema \cite{Venemac} \cite{Venemap}, \cite{V98}.
A similar  situation occurs in modal logic, 
when people say that Gabbay's irreflexitivity rule
\cite{Ga81} introduces
variables by the back door and is {\it inimical} 
to the true nature of modal logic.

Quoting Hirsch and Hodkinson, cf. p.9 in \cite{HHbook}
``The precise objections are hard to pin down, but broadly, it seems that these
axiomatizations are regarded as unsatisfactory in some way: they are too
complicated---or perhaps too trivial, just paraphrasing the original problem without
providing any significant new `algebraic insight'.'' 

There are also some reservations to the approach 
initiated by Hirsch and Hodkinson adopted in \cite{HH97a}
\cite{HHII}, and \cite{HHbook}. 
It is sometimes argued that the use of games is just
a presentational matter. 
In any case the problem to find simpler 
axiomatizations of representable algebras is still open.

\section{Neat embeddings}

A form of the Representation 
Problem is to describe properties of the class
$\RCA_{\alpha}$ and try to give a 
useful characterization of it in {\it abstract} terms.
An old result of Henkin which gives an abstract sufficient and necessary condition
for representability fits here.
An algebra 
$\cal A$ is in $\RCA_{\alpha}$ if and only if for every
$\beta>\alpha$, 
it can be embedded as a {\it neat subreduct} in some cylindric algebra
of dimension 
$\beta$, or, equivalently, using ultraproducts, into 
an algebra
with $\omega$ extra dimensions.

This brings us to the central venerable notion in 
the representation theory of cylindric algebras, 
namely the notion of 
{\it neat reducts.} The notion of neat reducts, which we now recall, 
is also due to Henkin \cite[p.401]{HMT1}.
An old venerable notion in algebraic logic, the notion of neat reducts is now gaining some
momentum, \cite{ma}, \cite{San}, \cite{BL}, \cite{SL}, \cite{IGPL}, \cite{th}, \cite{FM}, \cite{Qs}, 
\cite{Tarski}, \cite{IGPL4}, \cite{note}, \cite{IGPL5}, \cite{IGPL6}, 
\cite{Bull6}, \cite{Bull6b}, and \cite{IGPL7}.
A {\it neat reduct } of a cylindric algebra $\cal A$
is basically a new algebra 
of lesser dimension obtained from $\cal A$ by overlooking some of its 
operations and discarding some of its elements. 
More precisely:

\begin{definition} 
Let $\alpha<\beta$ be ordinals, and let ${\cal A}\in \CA_{\beta}$. Then the 
{\it neat-$\alpha$ reduct} of $\cal A$, in symbols $\Nr_{\alpha}\cal A$, 
is the $\CA_{\alpha}$ whose domain is the set of 
all {\it $\alpha$-dimensional} elements of 
$\cal A$
defined by  
$$Nr_{\alpha}A=\{a\in A: {\sf c}_i^{\cal A} a=a\text { for all } i\in \beta\smallsetminus \alpha\}.$$
The operations of $\Nr_{\alpha}{\cal A}$ are those of 
the $\alpha$-dimensional reduct 
$$\Rd_{\alpha}{\cal A}=
\langle A,-^{\cal A},\ .^{\cal A}\ , {\sf c}_i^{\cal A}, {\sf d}_{ij}^{\cal A}\rangle _{i,j\in \alpha}$$ 
of $\cal A$, restricted to $Nr_{\alpha}A$. 
\end{definition}
When no confusion is likely, we omit the superscript $\cal A$.
$Nr_{\alpha}A$ as easily checked, 
is closed under the indicated
operations and indeed is a $\CA_{\alpha}.$ 
$\Nr_{\alpha}\cal A$ is thus
a special subreduct of $\cal A$, i.e., a special 
subalgebra of a reduct in the universal algebraic 
sense.

For a class $K$, let $SK$ denote the class of all algebras 
embedable into members of $K$.  $\Dc_{\alpha}$ denotes the class of dimension complemented cylindric algebras of dimension $\alpha$.
$\A\in \Dc_{\alpha}$ if $\Delta x\neq \alpha$ for all $x\in \A$. $\Dc_{\alpha}$ us a non-trivial generalization of $\Lf_{\alpha}$, when $\alpha$ is infinite, 
and most results for 
$\Lf_{\alpha}$ generalize to $\Dc_{\alpha}$, see e.g \cite{HMT1} Theorems 2.6.67-71-72, and \cite{HMT2} Thm 4.3.28.
These theorems does not depend on the fact that the dimension set $\Delta x$ is finite in locally finite cylindric algebra, 
rather they depend on the fact that its complement, $\alpha\sim \Delta x$, is 
infinite, a property that holds for $\Dc_{\alpha}$'s. 
If $\A\in \Dc_{\alpha}$ and $\tau\in {}^{\alpha}\alpha$ is a finite transformation then
${\sf s}_{\tau}^{\A}$ denotes the substitution operation as defined in \cite{HMT2}. 
These make $\Dc_{\alpha}$'s actually reducts of quasipolyadic equality 
algebras. The ${\sf s}_{\tau}$'s are defined by composition of finitely many of the 
the operations ${\sf s}_i^j$'s, where ${\sf s}_i^jx={\sf c}_j(x\cdot {\sf d}_{ij})$. Neat reducts 
play a key role in the representation theory of cylindric algebras
as indeed illustrated in the next celebrated theorem of Henkin, 
which is basically a completeness result. This overwhelming result casts its shadow over the entire field.

\begin{theorem}\label{neat}
For any ordinal $\alpha$ and any ${\A}\in \CA_{\alpha}$, 
the following two conditions are equivalent:

\begin{enumroman}

\item ${\A}\in \RCA_{\alpha}$

\item ${\A}\in S\Nr_{\alpha}\CA_{\alpha+\omega}.$

\end{enumroman}

\end{theorem}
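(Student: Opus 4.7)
The plan is to prove the two directions separately, with (i) $\Rightarrow$ (ii) a routine ``padding'' argument and (ii) $\Rightarrow$ (i) carrying the substantive content. For (i) $\Rightarrow$ (ii), since $\RCA_{\alpha}$ is closed under subdirect products, it suffices to treat a single factor $\A_i \in \Cs_{\alpha}$ with unit ${}^{\alpha}U_i$. I would form the full set algebra $\B_i$ of dimension $\alpha+\omega$ with unit ${}^{\alpha+\omega}U_i$, and define $\theta_i:\A_i\to\B_i$ by
$$\theta_i(X)=\{s\in{}^{\alpha+\omega}U_i:s\restriction\alpha\in X\}.$$
A direct calculation shows $\theta_i(X)$ is fixed by ${\sf C}_j$ for every $j\in[\alpha,\alpha+\omega)$, so $\theta_i(X)\in\Nr_{\alpha}\B_i$, and that $\theta_i$ commutes with the boolean operations, with ${\sf C}_j$ for $j<\alpha$, and with the diagonals ${\sf D}_{jk}$ for $j,k<\alpha$. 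Taking a product over $i$ and using that $\Nr_{\alpha}$ commutes with direct products yields an embedding of $\A$ into $\Nr_{\alpha}\prod_i\B_i$, which witnesses $\A\in S\Nr_{\alpha}\CA_{\alpha+\omega}$.

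For (ii) $\Rightarrow$ (i), suppose $\A\subseteq\Nr_{\alpha}\B$ with $\B\in\CA_{\alpha+\omega}$. My strategy is: pass to a well-behaved subalgebra of $\B$ in the big dimension, invoke a representation theorem there, then descend to the neat reduct. Set $\B^{\ast}=\Sg^{\B}A$. Every generator $a\in A$ has dimension set $\Delta a\subseteq\alpha$, and every element of $\B^{\ast}$ is a term built from finitely many generators using finitely many operations; since cylindrification only removes indices and each diagonal ${\sf d}_{ij}$ contributes at most $\{i,j\}$ to the dimension set, every $x\in\B^{\ast}$ has finite $\Delta x$. In particular $(\alpha+\omega)\smallsetminus\Delta x$ is infinite, so $\B^{\ast}\in\Dc_{\alpha+\omega}$. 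By the classical representation theorem for dimension complemented algebras (which, as the excerpt notes, depends only on the infinitude of the complement of $\Delta x$), $\B^{\ast}\in\RCA_{\alpha+\omega}$. Finally, the neat $\alpha$-reduct of a subdirect product of set algebras of dimension $\alpha+\omega$ with units ${}^{\alpha+\omega}W_i$ is a subdirect product of generalized set algebras of dimension $\alpha$ whose units are disjoint unions of the Cartesian spaces ${}^{\alpha}W_i$; thus $\Nr_{\alpha}\B^{\ast}\in\RCA_{\alpha}$, and $\A\subseteq\Nr_{\alpha}\B^{\ast}$ gives $\A\in\RCA_{\alpha}$.

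The main obstacle is the intermediate representation theorem for $\Dc_{\alpha+\omega}$ invoked in the middle of the second direction. This is the Henkin-style construction at the heart of the proof; the extra $\omega$ dimensions play the role of a reservoir of ``fresh'' indices supplying witnesses to existential assertions ${\sf c}_i x\neq 0$, in direct analogy with Henkin constants in the proof of the classical completeness theorem. I would execute this step by a Lindenbaum--Tarski construction: extend a given non-zero element to a maximal proper filter enjoying the witness property that whenever ${\sf c}_i x$ belongs to the filter there is some index $k\in(\alpha+\omega)\smallsetminus\Delta x$ with ${\sf s}_k^i x$ in the filter as well, and then read off a representation from the resulting quotient. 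The delicate bookkeeping is to ensure that each new witness remains compatible with the diagonal axioms and with the witnesses already chosen for previously processed elements; this is precisely the argument carried out in \cite[\S 4.3]{HMT2}, and the fact that this representation theorem is equivalent to G\"odel's completeness theorem (as flagged in the excerpt) quantifies its depth.
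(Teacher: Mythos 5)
Your overall architecture is sound and both directions go through, but your decomposition of (ii) $\Rightarrow$ (i) is organized differently from the paper's. The paper does not first represent the generated subalgebra in the high dimension and then descend: it fixes a non-zero $a\in\A$, dilates $\B$ to some $\B'\in\Dc_{\beta}$ with $\beta\geq\alpha+\omega$ and $|B'|=|\beta|$ (the enlargement is what guarantees enough spare indices to serve as witnesses when the algebra is uncountable), extracts a Henkin ultrafilter $F$ containing $a$ with the quantifier-elimination property (${\sf c}_kx\in F$ implies ${\sf s}^k_lx\in F$ for some $l\notin\Delta x$), and then writes down a weak set-algebra representation of $\A$ itself directly, via $f(x)=\{\tau\in V:{\sf s}_{\bar\tau}x\in F\}$. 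Your route instead passes through the classical theorem $\Dc_{\alpha+\omega}\subseteq\RCA_{\alpha+\omega}$ applied to $\Sg^{\B}A$, and then needs the additional (true and standard) fact that $\Nr_{\alpha}$ carries representable $(\alpha+\omega)$-dimensional algebras to representable $\alpha$-dimensional ones. Both routes rest on exactly the same Henkin ultrafilter lemma, so neither is circular; yours outsources the cardinality bookkeeping to the black-boxed $\Dc$ representation theorem, while the paper's direct construction of $f$ on $\A$ avoids your final descent step. Your (i) $\Rightarrow$ (ii) padding argument is the standard one, which the paper dismisses as trivial.

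One intermediate claim in your argument is false as written. From $\Delta a\subseteq\alpha$ for the generators you conclude that every $x\in\Sg^{\B}A$ has finite $\Delta x$; your own reasoning only yields $\Delta x\subseteq\Delta a_0\cup\dots\cup\Delta a_{n-1}\cup F$ with $F$ finite, and when $\alpha$ is infinite a generator may well have $\Delta a=\alpha$ (take $\A$ to be a full $\Cs_{\alpha}$), so $\Delta x$ need not be finite. What does follow --- and is all you need --- is that $\Delta x\smallsetminus\alpha$ is finite, hence $(\alpha+\omega)\smallsetminus\Delta x$ is infinite and $\Sg^{\B}A\in\Dc_{\alpha+\omega}$. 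Replace the finiteness claim by this weaker statement and the step is correct.
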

\begin{demo}{Proof} One side is trivial. The difficult implication is $(ii)\to (i)$. So let $\A\subseteq \Nr_{\alpha}\B$ and $\B\in \CA_{\alpha+\omega}$.
Let $a\in \A$ be non-zero. Then we can fnd a an ordinal $\beta\geq \alpha+\omega$ and $\B'\in \CA_{\beta}$ such that $\A\subseteq \Nr_{\alpha}\B'$ and $|B'|=|\beta|$.
We can further assume that $\B'\in \Dc_{\beta}$. Abusing notation we write $\B$ for $\B'$.
Then one finds a Boolean  ultrafilter $F$ of $\B$, such that $a\in F$, and $F$ eliminates quantifiers
in the sense that ${\sf c}_kx\in F$, then ${\sf s}_l^kx\in F$ for some $l\notin \Delta x $. Such an ultrafilter, which we will encounter gain, will be called
a Henkin ultrafilter. Then one 
takes $V={}^\alpha\beta^{(\omega)}=\{s\in {}^{\alpha}\beta: |\{i\in \alpha: s_i\neq i\}|<\omega\}$ and finally one defines
$f:\A\to \wp(V)$ by $f(x)=\{\tau\in V: {\sf s}_{\bar{\tau}}^{\B'}x\in F\},$ where $\bar{\tau}=\tau\cup Id\upharpoonright \beta\sim \alpha$.
Then $f$ is a homomorphism into a weak set algebra with $f(a)\neq 0$. The representation corresponding to $F$ in this manner will be called the canonical model 
of $F$.
\end{demo}

Infinity manifests itself in (ii) above, and it does so essentially in 
the case when $\alpha>2$, 
in the sense  that if $\A$ neatly embeds into an algebra in 
finitely many extra dimensions, 
then it might not be representable, as shown by Monk. 
All $\omega$ extra dimensions are needed for representability.
One
cannot truncate 
$\omega$ to any finite ordinal. The $\omega$ extra dimensions 
play the role of added constants or witnesses in Henkin's 
classical proof of the completeness theorem. 
Therefore it is no coincidence that
variations on theorem \ref{neat} lead to metalogical results concerning 
interpolation and omitting types for the corresponding 
logic. Such 
results can be proved by Henkin's 
method of constructing models out of constants. 
In this connection we refer to
\cite{SSL}, \cite{AU}, \cite{OT}, \cite{Am}, \cite{OTT}, \cite{IGPL5},\cite{IGPL6}, \cite{Bull6b}, \cite{complete}, \cite{Bull5}, \cite{amal}, \cite{neet1},\cite{neet2}
and \cite{Bulletin}.
Let us set out from the known property of cylindric algebras that neat
embedding property implies representability, i.e.%
\begin{equation}
\A\in S\Nr_{\alpha}\CA_{\alpha+\varepsilon}\text{\ implies }%
\A\in\mathbf{IGs}_{\alpha} \label{alap}%
\end{equation}
\newline where $\alpha,\varepsilon$ are infinite ordinals.
Ferenczi analysed the property (\ref{alap}) for various classes of algebras.
He introduced some new classes of cylindric like algebras (classes
\textbf{K}$_{\alpha+\varepsilon}^{\alpha}$\textbf{, M}$_{\alpha+\varepsilon
}^{\alpha}$\textbf{, F}$_{\alpha+\varepsilon}^{\alpha}$\textbf{ }e.g.) and
formulated a group of theorems connected with the property (\ref{alap}).
In \cite{00Fer} the following problem is investigated: Is it possible to
replace in (\ref{alap}) the class $\CA$ by a larger class so that the
implicitation in (\ref{alap}) is still true. In other words, can we loosen up the axioms when we get to 
$\omega$ extra dimensions? The answer is affirmative.
Let $C_1-C_7$ denote the cylindric axioms as defined in \cite{HMT1} 1.1.1. 
In \cite{00Fer} the following class \textbf{K}$_{\alpha+\varepsilon}^{\alpha}%
$\textbf{ }is introduced:
Suppose that \textbf{K}$_{\alpha+\varepsilon}^{\alpha}$ is the class for which
\textbf{K}$_{\alpha+\varepsilon}^{\alpha}\vDash\left\{  C_{1},C_{2}%
,C_{3},C_{5},C_{7},C_{4-},C_{6-}\right\}  $\newline where $\beta$ denotes
$\alpha+\varepsilon$ and C$_{4-}$ denotes the pair of the following weakenings
a) and b) of axiom $C_{4}$ $:$

C$_{4-}\;$a) ${\sf c}_{m}{\sf s}_{n}^{j}x={\sf s}_{n}^{j}{\sf c}_{m}x$

C$_{4-}\;$b)$\;{\sf c}_{m}{\sf s}_{n}^{j}{\sf c}_{m}x={\sf s}_{n}^{j}{\sf c}_{m}x$

where $j\in\alpha\;$and $m,n\in\beta,\;m\neq n,$ and $C_{6-}$ denotes the
following weakenings a), b) and c) of axiom $C_{6}:$

C$_{6-}\;$a) ${\sf d}_{mn}={\sf d}_{nm}$

C$_{6-}\;$b) ${\sf d}_{mn}\;{\sf d}_{nj}={\sf d}_{mj}$

C$_{6-}\;$c) ${\sf c}_{m}{\sf d}_{jn}={\sf d}_{jn}\;\;m\notin\left\{  j,n\right\}  .$

\bigskip

\bigskip Notice that C$_{4}-$ and C$_{6}-$ are restrictions of the $\beta
-$dimensional axioms C$_{4}$ and C$_{6}.$

\begin{theorem}\label{T1}Let $\A\in \CA_{\alpha}$. Then $\A\in\mathbf{S\Nr K}_{\alpha+\varepsilon}^{\alpha}$\ if and
only if $\A\in\mathbf{IGs}_{\alpha}$ where $\alpha$ and
$\varepsilon$ are infinite fixed ordinals.
\end{theorem}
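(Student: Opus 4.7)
The direction $(\Leftarrow)$ is free from Henkin's Neat Embedding Theorem (Theorem~\ref{neat}). If $\A\in\mathbf{IGs}_\alpha$ then $\A\in S\Nr_\alpha\CA_{\alpha+\omega}\subseteq S\Nr_\alpha\CA_{\alpha+\varepsilon}$ (we can absorb the extra dimensions since $\varepsilon$ is infinite); moreover $C_1, C_2, C_3, C_5, C_7$ together with the weakenings $C_{4-}$ and $C_{6-}$ are all theorems of the cylindric axioms, so $\CA_{\alpha+\varepsilon}\subseteq\mathbf{K}_{\alpha+\varepsilon}^{\alpha}$ and therefore $\A\in S\Nr_\alpha\mathbf{K}_{\alpha+\varepsilon}^{\alpha}$.

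For the non-trivial direction $(\Rightarrow)$, let $\A\subseteq\Nr_\alpha\B$ with $\B\in\mathbf{K}_{\alpha+\varepsilon}^{\alpha}$ and fix $0\neq a\in A$. The plan is to imitate verbatim the Henkin-style proof of Theorem~\ref{neat}, verifying at each step that only the weaker package $\{C_1, C_2, C_3, C_5, C_7, C_{4-}, C_{6-}\}$ is actually needed. First I would replace $\B$ by a dilation $\B'\in\mathbf{K}_\beta^{\alpha}$ of dimension $\beta\geq\alpha+\varepsilon$ with $|B'|=|\beta|$ and $\beta\setminus\Delta x$ infinite for every $x\in B'$, still satisfying $\A\subseteq\Nr_\alpha\B'$; the construction is purely Boolean plus the $\mathbf{K}$-axioms, which are equational and therefore lift to ultraproducts and dilations. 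Next I would build a Boolean ultrafilter $F$ of $\B'$ with $a\in F$ that \emph{eliminates quantifiers} in the precise sense that whenever ${\sf c}_k x\in F$ with $k\in\alpha$, there exists $l\in\beta\setminus\alpha$ with $l\notin\Delta x$ and ${\sf s}_l^k x\in F$. The standard transfinite recursion goes through because the key enabling identity ${\sf c}_k x={\sf s}_l^k x$ for $l\notin\Delta x$ is derivable from $C_1, C_2, C_3, C_7$ together with $C_{4-}$, without calling upon the full $C_4$. Finally, setting $V={}^\alpha\beta^{(\omega)}$, I define
\[
 f_a(x)=\{\tau\in V:\;{\sf s}_{\bar\tau}^{\B'}x\in F\},\qquad \bar\tau=\tau\cup\mathrm{Id}\restr{\beta\setminus\alpha},
\]
and take the subdirect product of the $f_a$'s over all $0\neq a\in A$ to exhibit $\A$ as a member of $\mathbf{IGs}_\alpha$.

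The main obstacle is the verification that $f_a$ is a $\CA_\alpha$-homomorphism into a weak set algebra. In the classical proof this exploits full commutativity of cylindrifications $C_4$ and the full diagonal axiom $C_6$, neither of which is available in $\mathbf{K}_{\alpha+\varepsilon}^{\alpha}$. The crucial observation is that every substitution ${\sf s}_{\bar\tau}$ we ever need to compose is built from basic ${\sf s}_n^j$'s with $j\in\alpha$ (because $\bar\tau$ is the identity outside $\alpha$) and $n\in\beta$, while every cylindrification applied from the outside is ${\sf c}_k$ with $k\in\alpha$. This is precisely the regime where $C_{4-}$ a) gives the required commutation ${\sf c}_m{\sf s}_n^j={\sf s}_n^j{\sf c}_m$ for $m\neq n$, $j\in\alpha$, and where $C_{4-}$ b) delivers the absorption ${\sf c}_m{\sf s}_n^j{\sf c}_m={\sf s}_n^j{\sf c}_m$ used to show that $f_a$ preserves ${\sf c}_k$. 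Similarly, preservation of the diagonal constants by $f_a$ and the fact that the elements of $f_a(x)$ really lie in $V$ (not just in ${}^\alpha\beta$) reduce to the symmetry, transitivity and fixing clauses a), b), c) of $C_{6-}$, which together encode the equivalence-relation behaviour of $\{{\sf d}_{ij}\}$ needed for the canonical model. Once these two technical lemmas (commutation of ${\sf s}_{\bar\tau}$ with ${\sf c}_k$ for $k\in\alpha$, and the behaviour of ${\sf s}_{\bar\tau}$ on diagonals) are established under $C_{4-}$ and $C_{6-}$, the rest of the argument is a verbatim copy of the proof of Theorem~\ref{neat}: $f_a(a)\neq 0$ because $a\in F$, and the subdirect product of the $f_a$'s witnesses $\A\in\mathbf{IGs}_\alpha$.
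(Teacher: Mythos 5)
First, a point of reference: the paper does not actually prove Theorem~\ref{T1}; it states it as a result of Ferenczi and cites \cite{00Fer}, so there is no in-paper proof to measure your argument against. That said, your overall skeleton is the one Ferenczi's work follows: the direction $\mathbf{IGs}_{\alpha}\Rightarrow S\Nr_\alpha\mathbf{K}_{\alpha+\varepsilon}^{\alpha}$ is correctly dispatched via the Neat Embedding Theorem plus the observation that $\CA_{\alpha+\varepsilon}\subseteq\mathbf{K}_{\alpha+\varepsilon}^{\alpha}$, and the converse is indeed proved by rerunning the Henkin ultrafilter/canonical-model argument of Theorem~\ref{neat} while tracking which axioms are used.

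The difficulty is that, as written, your argument for the hard direction is a promissory note rather than a proof: the entire mathematical content of the theorem is the claim that the Henkin ultrafilter exists and that $f_a$ is a homomorphism \emph{using only} $C_1,C_2,C_3,C_5,C_7,C_{4-},C_{6-}$, and you assert this by pattern-matching ("this is precisely the regime where $C_{4-}$ applies") without carrying out the computations. Two concrete soft spots. First, the identity you call "key enabling", ${\sf c}_kx={\sf s}_l^kx$ for $l\notin\Delta x$, is false in general: one only has ${\sf s}_l^kx\leq{\sf c}_kx$ together with ${\sf s}_l^k{\sf c}_kx={\sf c}_kx$; what the ultrafilter construction actually needs is the supremum identity ${\sf c}_kx=\sum_{l\notin\Delta x}{\sf s}_l^kx$ (valid in dimension-complemented algebras, cf.\ the use of \cite[1.11.6]{HMT1} in Theorem~\ref{covK}), and one must check that its proof survives the weakening of $C_4$ and $C_6$. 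Second, the dilation step is not free: equations are preserved by ultraproducts, but the construction that extends a $\mathbf{K}_{\alpha+\varepsilon}^{\alpha}$-algebra to a dimension-complemented $\mathbf{K}_{\beta}^{\alpha}$-algebra with the same neat $\alpha$-reduct is a separate argument (the $\CA$ version in \cite[2.6.34--35]{HMT1} is not purely Boolean), and the axioms of $\mathbf{K}$ are not a "scheme" in the technical sense because the superscript $\alpha$ pins the index $j$ to the base dimension. Until the commutation and diagonal lemmas you name are actually derived from $C_{4-}$ and $C_{6-}$, and the dilation is justified for $\mathbf{K}$, the proposal records the right strategy but does not yet establish the theorem.
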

\bigskip

It can be proven that in a sense the class \textbf{K}$_{\alpha+\varepsilon
}^{\alpha}$\ is the optimal extension of the class $\mathbf{CA}_{\alpha}$ such
that (\ref{alap}) is still true. Considering the classical logical
aspects of theorem \ref{T1} we can draw the conclusion that in G\"{o}del's
completeness proof we use only a fragement of the complete calculus, i.e. it
is possible to restrict the equality axioms, and to weaken the commutativity
of the quantifiers $\exists x_{i}$ so that G\"{o}del's theorem remains true.
In \cite{08Fer}, the generalization of (\ref{alap}), from
the class \textbf{Gws}$_{\alpha}$ to the class \textbf{Crs}$_{\alpha}%
\cap\mathbf{CA}_{\alpha}$ (i.e., to the class included in the Resek - Thompson
representation Theorem) is investigated. The problem is: Is there a class of cylindric like
algebras instead of \textbf{CA}$_{\alpha}$ such that \textbf{ }(\ref{alap})
is still true when the class \textbf{Gs}$_{\alpha}$ is replaced by the
class \textbf{Crs}$_{\alpha}\cap\mathbf{CA}_{\alpha}$? The answer is again in the affirmative.
A new class \textbf{M}$_{\alpha+\varepsilon}^{\alpha}$ of cylindric like
algebras is introduced in \cite{08Fer}. The character of this class is similar
to that of \textbf{K}$_{\alpha+\varepsilon}^{\alpha}$ i.e. this class
satisfies all the cylindric axioms except for C$_{4}$ and C$_{6}$. Instead of
these axioms it satisfies some concrete weakenings of these axioms.
The following thorem is true for \textbf{M}$_{\alpha+\varepsilon}^{\alpha}$ :
Suppose that $\mathcal{A}\in\mathbf{CA}_{\alpha}$
\begin{theorem}\label{T2} $\mathcal{A}\in\mathbf{ICrs}_{\alpha}\cap\mathbf{CA}_{\alpha}$ if
and only if $\mathcal{A}\in\mathbf{S\Nr M}_{\alpha+\varepsilon}^{\alpha}$ where
$\alpha$ and $\varepsilon$ are infinite fixed ordinals.
\end{theorem}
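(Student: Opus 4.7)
The plan is to adapt the proof of the classical Henkin Neat Embedding Theorem (Theorem \ref{neat}) and of Theorem \ref{T1} to this relativized setting, exploiting the fact that the axioms of $\mathbf{M}^{\alpha}_{\alpha+\varepsilon}$ are precisely the weakening of the full cylindric axioms that corresponds to passing from genuine Cartesian units to relativized units in the sense of the Resek--Thompson theorem.

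For the direction $\mathcal{A} \in \mathbf{ICrs}_\alpha \cap \mathbf{CA}_\alpha \Longrightarrow \mathcal{A} \in \mathbf{S}\Nr\mathbf{M}^{\alpha}_{\alpha+\varepsilon}$, which I expect to be the routine one, I would assume a relativized representation $h:\mathcal{A}\to \mathbf{Rl}_V \wp({}^\alpha U)$ of $\mathcal{A}$ (with $V\subseteq {}^\alpha U$), and extend $U$ and $V$ to a set $W\subseteq {}^{\alpha+\varepsilon}U^+$ for some superset $U^+\supseteq U$, so that the $\alpha$-th coordinate projection of $W$ recovers $V$. Defining ${\sf c}_i$, ${\sf d}_{ij}$, ${\sf s}_j^i$ on $\wp(W)$ in the set-theoretic relativized way and checking that the resulting $\mathcal{B}$ satisfies the axioms $\{C_1,C_2,C_3,C_5,C_7,C_{4-},C_{6-}\}$ is a direct computation; since we only have to verify weakenings of $C_4$ and $C_6$, non-square units cause no difficulty. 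Then $h$ embeds $\mathcal{A}$ into $\Nr_\alpha\mathcal{B}$ by identifying elements of $\wp(V)$ with their cylindrifications along the extra dimensions in $\wp(W)$.

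For the harder converse direction, suppose $\mathcal{A}\subseteq \Nr_\alpha \mathcal{B}$ with $\mathcal{B}\in \mathbf{M}^{\alpha}_{\alpha+\varepsilon}$, and let $0\neq a\in \mathcal{A}$. Following the Henkin ultrafilter technique used in Theorem \ref{neat}, I would first enlarge $\varepsilon$ if needed (passing to an ultrapower to obtain $\omega$ many unused witness-dimensions above any finite threshold) and reduce to the case that $\mathcal{B}$ is dimension-complemented in its own signature. The key step is to construct, by a transfinite Henkin argument, a boolean ultrafilter $F$ of $\mathcal{B}$ with $a\in F$ that \emph{eliminates quantifiers} in the weak sense appropriate to $\mathbf{M}$: whenever ${\sf c}_k x\in F$ there is an $l$ outside $\Delta x$ (and outside a chosen guard set) with ${\sf s}_l^k x\in F$. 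One then defines
\[
f(x)=\{\tau\in V : {\sf s}_{\bar\tau}^{\mathcal{B}}x\in F\},\qquad \bar\tau=\tau\cup \mathrm{Id}\!\upharpoonright_{\beta\setminus\alpha},
\]
where the unit $V\subseteq {}^\alpha\beta^{(\omega)}$ is defined to consist of exactly those finite-support transformations $\tau$ for which the guard ${\sf s}_{\bar\tau}1\in F$ (this last condition is what forces $V$ to be a genuine relativized unit rather than all of ${}^\alpha\beta^{(\omega)}$). Verifying that $f$ is a homomorphism into a $\mathbf{Crs}_\alpha\cap \mathbf{CA}_\alpha$ uses $C_{4-}(a)$ and $C_{4-}(b)$ to push substitutions through cylindrifications only where legitimate, and uses $C_{6-}$ for the interaction with diagonals.

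The main obstacle, and essentially the whole content of the theorem, is the verification that the two weakenings $C_{4-}$ and $C_{6-}$ are \emph{exactly} enough: strong enough that $f$ remains a well-defined $\mathbf{CA}_\alpha$-homomorphism into the relativized set algebra on $V$, but weak enough that the unit $V$ is not forced to be a disjoint union of Cartesian squares, so that the image lies in $\mathbf{Crs}_\alpha$ rather than $\mathbf{Gs}_\alpha$. In particular, one must show that the failure of full $C_4$ in $\mathcal{B}$ does not generate non-representability defects inside $\Nr_\alpha\mathcal{B}$, and that the Henkin ultrafilter construction can still be driven through despite the weaker commutativity of cylindrifications; this is where the precise form of $C_{4-}$ is used critically, analogously to the role played by $\mathit{MGR}$ in the Resek--Thompson theorem.
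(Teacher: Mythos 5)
First, a point of comparison: the paper gives no proof of Theorem \ref{T2} at all. The result is quoted from Ferenczi \cite{08Fer}, and the paper does not even record the axioms of the class $\mathbf{M}^{\alpha}_{\alpha+\varepsilon}$, saying only that it ``satisfies some concrete weakenings'' of $C_4$ and $C_6$. So there is no in-paper argument to measure your proposal against; it has to stand on its own.

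It does not, and the gap is concrete: throughout, you take the axioms of $\mathbf{M}^{\alpha}_{\alpha+\varepsilon}$ to be the same weakenings $C_{4-}$, $C_{6-}$ that define $\mathbf{K}^{\alpha}_{\alpha+\varepsilon}$ in Theorem \ref{T1} (you verify exactly the set $\{C_1,C_2,C_3,C_5,C_7,C_{4-},C_{6-}\}$ in the easy direction, and you invoke $C_{4-}(a)$, $C_{4-}(b)$ and $C_{6-}$ in the Henkin step of the hard direction). This is inconsistent with Theorem \ref{T1}: if the dilation $\mathcal{B}$ you build from a mere $\mathbf{Crs}_{\alpha}$ representation really satisfied $C_{4-}$ and $C_{6-}$, then $\mathcal{A}\in\mathbf{S}\Nr\mathbf{K}^{\alpha}_{\alpha+\varepsilon}=\mathbf{IGs}_{\alpha}$, so every algebra in $\mathbf{ICrs}_{\alpha}\cap\mathbf{CA}_{\alpha}$ would be representable. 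That is false --- by the Resek--Thompson theorem $\mathbf{ICrs}_{\alpha}\cap\mathbf{CA}_{\alpha}$ coincides with the class of $\mathbf{CA}_{\alpha}$'s satisfying the merry-go-round identities, which properly contains $\RCA_{\alpha}$. Hence the ``direct computation'' you wave at in the forward direction cannot go through: the non-square unit genuinely obstructs $C_{4-}$ (or $C_{6-}$), and the class $\mathbf{M}^{\alpha}_{\alpha+\varepsilon}$ must be axiomatized by strictly weaker conditions than $\mathbf{K}^{\alpha}_{\alpha+\varepsilon}$. Symmetrically, in the converse direction an algebra $\mathcal{B}\in\mathbf{M}^{\alpha}_{\alpha+\varepsilon}$ need not satisfy $C_{4-}(a)$, $C_{4-}(b)$, so the step where you ``push substitutions through cylindrifications'' is not licensed. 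The Henkin-ultrafilter/canonical-model skeleton is the right general shape (it is how Theorem \ref{neat} and Theorem \ref{T1} are proved), but the entire mathematical content of Theorem \ref{T2} is the identification of the correct weakenings --- calibrated against the merry-go-round schema rather than against full representability --- and the verification that precisely those survive relativization and still drive the ultrafilter construction. Your proposal never identifies them, so what remains is a template, not a proof.
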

\bigskip

Resek and Thompson's famous theorem says: $\mathcal{A}\in\mathbf{CA}_{\alpha
}^{M}$ if and only if $\mathcal{A}\in\mathbf{ICrs}_{\alpha}\cap\mathbf{CA}%
_{\alpha}$ where $\mathbf{CA}_{\alpha}^{M}$ denotes the class of cylindric
algebras satisfying the merry-go-round properties. Theorem \ref{T2} allows us to
give a new proof for this classical theorem.
Furthermore it shows that the $NET$ does cast its shadows over the entire field.
Theorem \ref{T2} also has remarkable consequences for logic. In classical first order
logic if the language is extended by new individual variables then the
deduction system obtained is a conservative extension of the old one. This fails to be true for logics with infinitary predicates. But, as a consequence
of the theorem \ref{T2} it can be proven that restricting the commutativity of
quantifiers and the equality axioms in the expanded language and supposing the
merry-go-round properties in the original language, the foregoing extension is
already a conservative one (see \cite{09Fer}).

Coming back to Henkin's Neat Embedding Theorem, we have the following.
On the one hand, 
the characterization established by Henkin is an 
abstract characterization of the class $\RCA_{\alpha}$, and it does help
occasionally to prove that certain subclasses consist 
exclusively of representable algebras.
This occurs when it is easier to 
prove that an algebra embeds neatly into another algebra in 
$\omega$ extra dimensions, and applies, for example,
in the cases of the classes of the so-called 
diagonal cylindric algebras and the semisimple ones, 
\cite[2.6.50]{HMT1} and \cite[3.2.11]{HMT2}.
On the other hand, this characterization is not satisfactory 
from the set-theoretic representation point of view, 
since it is not 
{\it intrinsic} and refers 
to algebras outside the algebra considered, namely, algebras
with $\omega$ extra dimensions.
\footnote{By an intrinsic property of an algebra $\A$ we understand, loosely speaking,
a property which can be expressed entirely in terms of symbols denoting 
the operations of $\A$, and variables ranging exclusively over elements 
of the universe of $\A$, subsets of $\A$, 
relations between elements of $A$, sets of such subsets and relations, etc., and not,
e.g., in case $A$ is a family of sets, in terms of variables ranging over $\cup A.$ 
(Note that the notion of an intrinsic property is of mathematical nature; 
to be made precise it must be relativized to
a well defined formal language).}

In \cite{HMT1} algebras in the class  $S\Nr_{\alpha}\CA_{\alpha+\omega}$ 
are said to have the
{\it neat embedding property}. Therefore, an algebra has the neat embedding property
if it neatly embeds into an algebra with $\omega$ extra dimensions, 
and according to Henkin's Neat Embedding Theorem
the class $\RCA_{\alpha}$ coincides with the class of 
cylindric algebras that have the neat embedding property. 
Since locally finite algebras have the neat embedding property, 
Theorem 3.2 can be seen
as an indeed substantial generalization of Tarski's representation 
Theorem 
pushing it to the limit.
Quoting Henkin-Monk and Tarski \cite{HMT1} p.400
``The notion of the neat embedding property  appears to be 
more suitable for an abstract 
algebraic treatment than that of representability. 
This is the main reason why our
discussion of neat reducts and their subalgebras in the present section will be
comprehensive and detailed."

It is known (cf. \cite{AU} or \cite{SL} ) that the Neat Embedding Theorem, 
or the $NET$ for short, proved
by Henkin in the fifties, is an algebraization of 
Henkin's celebrated proof of the completeness
of first order logic, or rather an extension thereof. 
Indeed it can be viewed as a typical instance 
of Robinson's finite forcing in  model theory \cite{HH97a}. 
Referred to by Hirsch and Hodkinson \cite{HH97b} as one of the 
earliest examples of step-by-step building representations
in algebraic logic, 
variants of the $NET$ have been successfully applied to (algebraically)
prove the completeness of several versions of quantifier logics, that are 
extensions,  variants, or reducts of first order logic. 
Examples include Keisler's logics investigated algebraically in \cite{DM63}, 
and various reducts thereof, like the logics studied in \cite{AGN77} 
and much later in \cite{Sa98}, under the name of {\it typeless
finitary logics of infinitary relations}, 
see also 
\cite[\S4.3]{HMT2} for a systematic treatment
of such logics. Other contexts to which the $NET$ applies to prove completeness
are the higher order logics investigated by 
Sagi \cite{Sagi2000} and Sagi, and Sayed \cite{SS99}.
Variations on the $NET$ gives results on amalgamation, which 
is the algebraic equivalent of interpolation 
in the corresponding logic.  Indeed this theme is pursued in \cite{AU}, 
\cite{IGPL3},\cite{BL}, \cite{Am}, \cite{IGPL4}, \cite{IGPL5},\cite{IGPL6}, 
\cite{Bull6b}, and \cite{AUU}.

\subsection{Variations on the $NET$}

Several other strenghthenings and incarnations of the $NET$ has been investigated by the author for finite dimensions. For a class $K$ let 
$S_cK=\{\A: \exists \B\in K:\A\subseteq \B \text { and for all } X\subseteq \A, \sum^{\A} X=1, \text { then }\sum^{\B}X=1\}$.
We write $\A\subseteq_c \B$, if $\A\in S_c\{\B\}$. Now we let $n<\omega$. It is proved in \cite{SSL} that if $\A\in \CA_n$ 
then $\A$ is completely representable if and only if $\A\in S_c\Nr_n\CA_{\omega}$ and
$\A$ is atomic. While $\RCA_n$ is a variety, it can be shown that the class 
$S_c\Nr_n\CA_{\omega}$ is a pseudo elementary class, 
that is not elementary; furthermore; its elementary closure,  $UpUrS_c\Nr_n\CA_{\omega}$ is not finitely axiomatizable. 
(In fact any $n\geq 3$ and any class $K$ such that $\Nr_n\CA_{\omega}\subseteq K\subseteq S_c\Nr_n\CA_{n+2}$, $K$ is not elementary.)
In \cite{Stability} the following question is investigated. When does $\A\in S_c\Nr_n\CA_{\omega}$ 
posses a cylindric representation preserving a given set of (infinite) 
meets carrying them to set theoretic intersection? If $\A$ has a representation preserving arbitrary meets, then $\A$ is atomic.
Conversely, when $\A$ is countable and atomic then $\A$ has such a representation. Such a representation is called an atomic or complete representation.
A complete representation carries arbitrary joins to unions. That is if $rep:\A\to \wp(^nX)$ is a complete representation, then
$rep(\sum X)=\bigcup_{x\in X}rep(x)$, whenever $\sum X$ exists. We give two examples showing   
that countability is essential and we 
cannot replace $S_c\Nr_n\CA_{\omega}$ by $\Nr_n\CA_{n+k}\cap \RCA_n$ for any finite $k$.

\begin{example}\label{countable}

Here we define an atomic relation algebra $\A$ with uncountably many
atoms.  This algebra will be used to construct cylindric algebras of dimension 
$n$ showing that countability is essential in the above characterization. For undefined terminology the reader is referred 
to \cite{HHbook}. The atoms are $1', \; a_0^i:i<\omega_1$ and $a_j:1\leq j<
\omega$, all symmetric.  The forbidden triples of atoms are all
permutations of $(1',x, y)$ for $x \neq y$, \/$(a_j, a_j, a_j)$ for
$1\leq j<\omega$ and $(a_0^i, a_0^{i'}, a_0^{i^*})$ for $i, i',
i^*<\omega_1$.  In other words, we forbid all the monochromatic
triangles.  Write $a_0$ for $\set{a_0^i:i<\omega_1}$ and $a_+$ for 
$\set{a_j:1\leq j<\omega}$. Call this atom
structure $\alpha$.  Let $\A$ be the term algebra on this atom
structure.  $\A$ is a dense subalgebra of the complex algebra
$\Cm\alpha$. We claim that $\A$ has no complete representation.
Indeed, suppose $\A$ has a complete representation $M$.  Let $x, y$ be points in the 
representation with $M \models a_1(x, y)$.  For each $i<\omega_1$ there is a 
point $z_i \in M$ such that $M \models a_0^i(x, z_i) \wedge a_1(z_i, y)$.  Let 
$Z = \set{z_i:i<\omega_1}$.  Within $Z$ there can be no edges labelled by 
$a_0$ so each edge is labelled by one of the countable number of atoms in 
$a_+$.  Ramsay's theorem forces the existence of three points 
$z^1, z^2, z^3 \in Z$ such that $M \models a_j(a^1, z^2) \wedge a_j(z^2, z^3) 
\wedge a_j(z^3, z_1)$, for some single $j<\omega$.  This contradicts the 
definition of composition in $\A$.

Let $S$ be the set of all atomic $\A$-networks $N$ with nodes
 $\omega$ such that\\ $\set{a_i: 1\leq i<\omega,\; a_i \mbox{ is the label 
of an edge in }
 N}$ is finite.
Then it is straightforward to show $S$ is an amalgamation class, that is for all $M, N 
\in S$ if $M \equiv_{ij} N$ then there is $L \in S$ with 
$M \equiv_i L \equiv_j N.$  
Hence the complex cylindric algebra $\Ca(S)\in \CA_\omega$.
Now let $X$ be the set of finite $\A$-networks $N$ with nodes
$\subset\omega$ such that 
\begin{enumerate}
\item each edge of $N$ is either (a) an atom of
$\c A$ or (b) a cofinite subset of $a_+=\set{a_j:1\leq j<\omega}$ or (c)
a cofinite subset of $a_0=\set{a_0^i:i<\omega_1}$ and
\item $N$ is `triangle-closed', i.e. for all $l, m, n \in nodes(N)$ we
have $N(l, n) \leq N(l,m);N(m,n)$.  That means if an edge $(l,m)$ is
labelled by $1'$ then $N(l,n)= N(m,n)$ and if $N(l,m), N(m,n) \leq
a_0$ then $N(l,n).a_0 = 0$ and if $N(l,m)=N(m,n) =
a_j$ (some $1\leq j<\omega$) then $N(l,n).a_j = 0$.
\end{enumerate}
For $N\in X$ let $N'\in\Ca(S)$ be defined by 
\[\set{L\in S: L(m,n)\leq
N(m,n) \mbox{ for } m,n\in nodes(N)}\]
Then if $N\in X, \; i<\omega$ then $\cyl i N' =
(N\restr{-i})'$.
The inclusion $\cyl i N' \subseteq (N\restr{-i})'$ is clear.
Conversely, let $L \in (N\restr{-i})'$.  We seek $M \equiv_i L$ with
$M\in N'$.  This will prove that $L \in \cyl i N'$, as required.
Since $L\in S$ the set $X = \set{a_i \notin L}$ is infinite.  Let $X$
be the disjoint union of two infinite sets $Y \cup Y'$, say.  To
define the $\omega$-network $M$ we must define the labels of all edges
involving the node $i$ (other labels are given by $M\equiv_i L$).  We
define these labels by enumerating the edges and labelling them one at
a time.  So let $j \neq i < \omega$.  Suppose $j\in nodes(N)$.  We
must choose $M(i,j) \leq N(i,j)$.  If $N(i,j)$ is an atom then of
course $M(i,j)=N(i,j)$.  Since $N$ is finite, this defines only
finitely many labels of $M$.  If $N(i,j)$ is a cofinite subset of
$a_0$ then we let $M(i,j)$ be an arbitrary atom in $N(i,j)$.  And if
$N(i,j)$ is a cofinite subset of $a_+$ then let $M(i,j)$ be an element
of $N(i,j)\cap Y$ which has not been used as the label of any edge of
$M$ which has already been chosen (possible, since at each stage only
finitely many have been chosen so far).  If $j\notin nodes(N)$ then we
can let $M(i,j)= a_k \in Y$ some $1\leq k < \omega$ such that no edge of $M$
has already been labelled by $a_k$.  It is not hard to check that each
triangle of $M$ is consistent (we have avoided all monochromatic
triangles) and clearly $M\in N'$ and $M\equiv_i L$.  The labelling avoided all 
but finitely many elements of $Y'$, so $M\in S$. So
$(N\restr{-i})' \subseteq \cyl i N'$.
Let $X' = \set{N':N\in X} \subseteq \Ca(S)$.
Then the subalgebra of $\Ca(S)$ generated by $X'$ is obtained from 
$X'$ by closing under finite unions.
Clearly all these finite unions are generated by $X'$.  We must show
that the set of finite unions of $X'$ is closed under all cylindric
operations.  Closure under unions is given.  For $N'\in X$ we have
$-N' = \bigcup_{m,n\in nodes(N)}N_{mn}'$ where $N_{mn}$ is a network
with nodes $\set{m,n}$ and labelling $N_{mn}(m,n) = -N(m,n)$. $N_{mn}$
may not belong to $X$ but it is equivalent to a union of at most finitely many 
members of $X$.  The diagonal $\diag ij \in\Ca(S)$ is equal to $N'$
where $N$ is a network with nodes $\set{i,j}$ and labelling
$N(i,j)=1'$.  Closure under cylindrification is given.
Let $\c C$ be the subalgebra of $\Ca(S)$ generated by $X'$.
Then $\A = \Ra(\c C)$.
Each element of $\A$ is a union of a finite number of atoms and
possibly a co-finite subset of $a_0$ and possibly a co-finite subset
of $a_+$.  Clearly $\A\subseteq\Ra(\c C)$.  Conversely, each element
$z \in \Ra(\c C)$ is a finite union $\bigcup_{N\in F}N'$, for some
finite subset $F$ of $X$, satisfying $\cyl i z = z$, for $i > 1$. Let $i_0,
\ldots, i_k$ be an enumeration of all the nodes, other than $0$ and
$1$, that occur as nodes of networks in $F$.  Then, $\cyl 
{i_0} \ldots
\cyl {i_k}z = \bigcup_{N\in F} \cyl {i_0} \ldots
\cyl {i_k}N' = \bigcup_{N\in F} (N\restr{\set{0,1}})' \in \A$.  So $\Ra(\c C)
\subseteq \A$.

$\A$ is relation algebra reduct of $\c C\in\CA_\omega$ but has no
complete representation.
Let $n>2$. Let $\B=\Nr_n \c C$. Then
$\B\in \Nr_n\CA_{\omega}$, is atomic, but has no complete representation.
\end{example}

\begin{example}\label{OTT}
We use a simplified version of a construction in \cite{ANT}. Ultimately we will show that we cannot replace $S_c\Nr_n\CA_{\omega}$ by
$\Nr_n\CA_{n+m}\cap \RCA_n$ for any finite $m$ in completely representing given countable atomic algebras. 
That is for every $m\geq 0$, there exists a countable atomic representable $\A\in \Nr_n\CA_{n+m}$, that has no complete
representation. 
Let $k$ be a cardinal. Let $\E_k=\E_k(2,3)$ denote the relation algebra
which has $k$ non-identity atoms, in which $a_i\leq a_j;a_l$ if $|\{i,j,l\}\in \{2,3\}$
for all non-identity atoms $a_i, a_j, a_k$.
Let $k$ be finite, let $I$ be the set of non-identity atoms of $\E_k(2,3)$ and let $P_0, P_1\ldots P_{k-1}$ be an enumeration of the elements of $I$.
Let $l\in \omega$, $l\geq 2$ and let $J_l$ denote the set of all subsets 
of $I$ of cardinality $l$. Define the symmetric ternary relation on $\omega$ by $E(i,j,k)$ if and only if $i,j,k$ are evenly distributed, that is
$$(\exists p,q,r)\{p,q,r\}=\{i,j,k\}, r-q=q-p.$$
Let $m<n$ be given finite ordinals. We show that there exists $\C\in \Nr_m\CA_n\cap \RCA_m$ and $X\subseteq C$ such that $\prod X=0$
but for any non-zero representation $f:\C\to \mathfrak{D}$ we have  $\bigcap_{x\in X} f(x)\neq 0$.
Now assume that $n>2$, $l\geq 2n-1$, $k\geq (2n-1)l$, $k\in \omega$. Let $\M=\E_k(2,3).$
Then $$(\forall V_2\ldots, V_n, W_2\ldots W_n\in J_l)(\exists T\in J_l)(\forall 2\leq i\leq n)$$
$$(\forall a\in V_i)\forall b\in W_i)(\forall c\in T_i)(a\leq b;c).$$
That is $(J4)_n$ formulated in \cite{ANT} p. 72 is satsified. Therefore, as proved in \cite{ANT} p. 77,
$B_n$ the set of all $n$ by $n$ basic matrices is a cylindric basis of dimension $n$.
But we also have $$(\forall P_2,\ldots ,P_n,Q_2\ldots Q_n\in I)(\forall W\in J_l)(W\cap P_2;Q_2\cap\ldots \cap P_n:Q_n\neq 0)$$
That is $(J5)_n$ formulated on p. 79 of \cite{ANT} holds. According to definition 3.1 (ii) $(J,E)$ is an $n$ blur for $\M$, 
and clearly $E$ is definable in $(\omega,<)$.
Let $C$ be as defined in lemma 4.3 in \cite{ANT}.   
Then, by lemma 4.3, $C$ is a subalgebra of $\Cm\B_n$, hence it contains the term algebra $\Tm\B_n$.
Denote $\C$ by $\Bb_n(\M, J, E)$. Then by theorem 4.6 in \cite{ANT} $\C$ is representable, and by theorem 4.4 in \cite{ANT} 
for $m<n$
$\Bb_m(\M,J,E)=\Nr_m\Bb_n(\M,J,E)$. However $\Cm\B_n$ is not representable hence $\C\in \Nr_m\CA_n\cap \RCA_n$ is atomic, countable, representable, 
but not completely 
representable. 
\end{example}

In \cite{Stability} we also investigate the question of when representations preserve a given (possibly infinite) set of meets.
Here we are touching deep set theoretic waters. 
We show that when the meets are ultrafilters then preservation of $<{}^{\omega}2$ many meets is possible (in $ZFC$), 
while if they are not then we are led to a statement that
is independent of $ZFC$.
In fact we prove the following theorem:

\begin{theorem}\label{covK} Let $\A\in S_c\Nr_n\CA_{\omega}$ be countable. Let $covK$ be the least cardinal such that the real line can be covered by 
$\kappa$ nowhere dense sets.
Let $\kappa<covK$. Let $(X_i:i\in \kappa)$ be a family of subsets of $\A$ such that
$\prod X_i=0$ for all $i\in \kappa$. Then for every $a\in A$ $a\neq 0$, there exists a representation $f:\A\to \wp(^nX)$ such that 
$f(a)\neq 0$ and $\bigcap_{x\in X_i}f(x)=\emptyset$
for all $i\in \kappa$.
\end{theorem}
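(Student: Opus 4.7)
The plan is to adapt the Rasiowa--Sikorski lemma from countably many to $\kappa$-many types, via a Baire-category argument in the Stone space of a countable dilation of $\A$, using that in a Polish space the union of fewer than $covK$ meager sets cannot cover any nonempty open set.

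First I would invoke the hypothesis to fix $\B\in\CA_\omega$ with $\A\subseteq_c\Nr_n\B$. Since $\A$ is countable, a L\"owenheim--Skolem style passage to a countable cylindric subalgebra of a suitable dilation of $\B$ -- large enough to contain $\A$ together with a witness $\sub k l x$ for every cylindrification $\cyl k x$ and every nonzero $b\le\cyl k x$ -- lets me assume $\B$ is countable and lies in $\Dc_\omega$. Because $\A\subseteq_c\Nr_n\B$ preserves existing sums, for each $i\in\kappa$ one has $\prod^{\B}X_i=0$, that is, every nonzero $b\in\B$ admits $x\in X_i$ with $b\cdot -x\ne 0$.

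Next I would work in the Stone space $S=\mathrm{Ult}(\B)$, which is a compact metrizable, hence Polish, space. I would list three families of forbidden sets for the ultrafilter $F$ I am building: (a) the clopen $\{F:a\notin F\}$, avoided simply by restricting attention to the nonempty clopen $M_a=\{F:a\in F\}$; (b) for each $k<\omega$ and $x\in\B$, the Henkin-defect set $H_{k,x}=\{F:\cyl k x\in F,\ \sub k l x\notin F\text{ for all }l<\omega\}$; (c) for each $i\in\kappa$ and each finite modification $\bar\tau$ of the identity on the dimension set of $\B$, the realization set $G_{i,\bar\tau}=\{F:\sub{}{\bar\tau}x\in F\text{ for all }x\in X_i\}$. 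Each $H_{k,x}$ is nowhere dense because $\B\in\Dc_\omega$ provides a cylindrification witness on any nonzero $b\le\cyl k x$. Each $G_{i,\bar\tau}$ is nowhere dense provided $\prod^{\B}\{\sub{}{\bar\tau}x:x\in X_i\}=0$, which holds because $\sub{}{\bar\tau}$ is a Boolean endomorphism that, modulo replacing $\bar\tau$ by an injective representative using the $\omega$ extra dimensions, is a Boolean automorphism preserving arbitrary meets coming from $\A$.

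The total number of nowhere dense sets to be avoided is $\aleph_0+\kappa\cdot\aleph_0<covK$, so by definition of $covK$ their union cannot cover $M_a$. I would then choose any $F\in M_a$ outside this union and feed it into the canonical model construction from the proof of Theorem \ref{neat}. This produces a representation $f:\A\to\wp({}^nV)$ into a weak, hence generalized, set algebra, with $f(a)\ne 0$ witnessed by the identity sequence since $a\in F$. If some $\tau\in V$ belonged to $\bigcap_{x\in X_i}f(x)$, then $\sub{}{\bar\tau}x\in F$ for every $x\in X_i$, contradicting $F\notin G_{i,\bar\tau}$; hence $\bigcap_{x\in X_i}f(x)=\emptyset$ for every $i\in\kappa$, as required.

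The main obstacle I expect is clause (c): verifying that for every finite $\bar\tau$ the endomorphism $\sub{}{\bar\tau}$ of $\B$ transports $\prod X_i=0$ to $\prod\sub{}{\bar\tau}X_i=0$. This hinges on the complete embedding $\A\subseteq_c\Nr_n\B$ together with the freedom offered by the $\omega$ extra dimensions to reduce each finite transformation to a Boolean automorphism that fixes $\A$ setwise. Once this is secured, the rest of the argument is a routine Baire-category assembly and a standard application of Henkin's canonical model construction.
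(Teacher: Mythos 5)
Your proposal is correct and follows essentially the same route as the paper's proof: pass to a countable dilation in which ${\sf c}_kx=\sum_{l\notin\Delta x}{\sf s}^k_lx$ holds, transport $\prod X_i=0$ along the complete Boolean endomorphisms ${\sf s}_{\tau}$ for finite $\tau$, and then pick a Henkin ultrafilter in $N_a$ avoiding the resulting family of fewer than $covK$ nowhere dense sets in the Stone space, feeding it into the canonical weak-set-algebra representation. The only cosmetic differences are that the paper works with $\Sg^{\D}A\in\Lf_{\omega}$ rather than a $\Dc_{\omega}$ dilation and phrases the cardinality step as reducing the union to countably many nowhere dense sets before applying Baire category, neither of which changes the substance.
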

\begin{demo}{Proof}  Assume that $\A$ is countable with $\A\in S_c\Nr_n\CA_{\omega}.$ Let $a\in \A$ be non-zero. 
Then $\A=\Nr_n\D$ with $\D\in \CA_{\omega}$.
Let $\B=\Sg^{\D}A$. Then $\B\subseteq \D$, $B$ is countable and $\B\in \Lf_{\omega}.$
Futhermore we have $a\in B$ is non-zero and $\prod X_i=0$ in $\B$. 
We have by \cite[1.11.6]{HMT1} that 
\begin{equation}\label{t1}
\begin{split} (\forall j<\alpha)(\forall x\in B)({\sf c}_jx=\sum_{i\in \alpha\smallsetminus \Delta x}
{\sf s}_i^jx.)
\end{split}
\end{equation}
Here $\sum$ denotes supremum  and for distinct $i,j<\beta$, ${\sf s}_i^jx$ is defined by 
${\sf c}_j(x\cdot {\sf d}_{ij})$. ${\sf s}_i^ix$ is defined to be $x$.
If $x$ is a formula, then ${\sf s}_i^jx$ is the 
operation of replacing the free occurrences of variable $v_j$ by $v_i$ 
such that the substitution is free.
Now let $V$ be the weak space $^{\omega}\omega^{(Id)}=\{s\in {}^{\omega}\omega: |\{i\in \omega: s_i\neq i\}|<\omega\}$.
For each $\tau\in V$ for each $i\in \kappa$, let
$$X_{i,\tau}=\{{\sf s}_{\tau}x: x\in X_i\}.$$
Here ${\sf s}_{\tau}$ 
is the unary operation as defined in  \cite[1.11.9]{HMT1}.
${\sf s}_{\tau}$ 
is the algebraic counterpart 
of the metalogical operation of 
the simultaneous  substitution of variables (indexed by the range of $\tau$) 
for variables (indexed by its domain) \cite[1.11.8] {HMT1}.
For each $\tau\in V,$ ${\sf s}_{\tau}$ is a complete
boolean endomorphism on $\B$ by \cite[1.11.12(iii)]{HMT1}. 
It thus follows that 
\begin{equation}\label{t2}\begin{split}
(\forall\tau\in V)(\forall  i\in \kappa)\prod{}^{\A}X_{i,\tau}=0
\end{split}
\end{equation}
Let $S$ be the Stone space of the boolean part of $\A$, and for $x\in \A$, let $N_x$ 
denote the clopen set consisting of all
boolean ultrafilters that contain $x$.
Then form \ref{t1}, \ref{t2}, it follows that for $x\in \A,$ $j<\beta$, $i<\kappa$ and 
$\tau\in V$, the sets 
$$\bold G_{j,x}=N_{{\sf c}_jx}\setminus \bigcup_{i\notin \Delta x} N_{{\sf s}_i^jx}
\text { and } \bold H_{i,\tau}=\bigcap_{x\in X_i} N_{{\sf s}_{\bar{\tau}}x}$$
are closed nowhere dense sets in $S$.
Also each $\bold H_{i,\tau}$ is closed and nowhere 
dense.
Let $$\bold G=\bigcup_{j\in \beta}\bigcup_{x\in B}\bold G_{j,x}
\text { and }\bold H=\bigcup_{i\in \kappa}\bigcup_{\tau\in V}\bold H_{i,\tau.}$$
By properties of $covK$, it can be shown $\bold H$ is a countable collection of nowhere dense sets.
By the Baire Category theorem  for compact Hausdorff spaces, we get that $X=S\sim \bold H\cup \bold G$ is dense in $S$.
Accordingly let $F$ be an ultrafilter in $N_a\cap X$.
By the very choice of $F$, it follows that $a\in F$ and  we have the following 
\begin{equation}
\begin{split}
(\forall j<\beta)(\forall x\in B)({\sf c}_jx\in F\implies
(\exists j\notin \Delta x){\sf s}_j^ix\in F.)
\end{split}
\end{equation}
and 
\begin{equation}
\begin{split}
(\forall i<\kappa)(\forall \tau\in V)(\exists x\in X_i){\sf s}_{\tau}x\notin F. 
\end{split}
\end{equation}
Next we form the canonical representation corresponding to $F$
in which satisfaction coincides with genericity. 
To handle equality we define
$$E=\{(i,j)\in {}^2{\alpha}: {\sf d}_{ij}\in F\}.$$
$E$ is an equivalence relation on $\alpha$.   
$E$ is reflexive because ${\sf d}_{ii}=1$ and symmetric 
because ${\sf d}_{ij}={\sf d}_{ji}.$
$E$ is transitive because $F$ is a filter and for all $k,l,u<\alpha$, with $l\notin \{k,u\}$, 
we have 
$${\sf d}_{kl}\cdot {\sf d}_{lu}\leq {\sf c}_l({\sf d}_{kl}\cdot {\sf d}_{lu})={\sf d}_{ku}.$$
Let $M= \alpha/E$ and for $i\in \omega$, let $q(i)=i/E$. 
Let $W$ be the weak space $^{\alpha}M^{(q)}.$
For $h\in W,$ we write $h=\bar{\tau}$ if $\tau\in V$ is such that
$\tau(i)/E=h(i)$ for all $i\in \omega$. $\tau$ of course may
not be unique.
Define $f$ from $\B$ to the full weak set algebra with unit $W$ as follows:
$$f(x)=\{ \bar{\tau} \in W:  {\sf s}_{\tau}x\in F\}, \text { for } x\in \A.$$ 
Then it can be checked that $f$
is a homomorphism 
such that $f(a)\neq 0$ and 
$\bigcap f(X_i)=\emptyset$ for all $i\in \kappa$, hence the desired.
The natural restriction of $f$ to $\A$ is as desired.
\end{demo}
The above proof depended on the following topological property. If $X$ is a second countable compact Hausdorff space
and $(A_i: i<\kappa)$ is a family of nowhere dense sets then $X\sim \bigcup_{i<\kappa} A_i$ is dense.
The idea of proof is that the (possibly uncountable union) $\bigcup_{i<\kappa}A_i$ can be written as a {\it countable} union of nowhere dense sets
and then a direct application of the Baire category theorem for compact Hausdorff enables one to get the desired ultrafilter $F$.
The question arises as to what happen if we replace $covK$ by $^{\omega}2$. (Recall that it is consistent that they are not equal).
In this case the theorem cannot be proved in $ZFC$. We would need extra (independent) axioms. One possible axiom is Martin's axiom $(MA)$. 
This follows from the fact that  $MA$ implies that if we have a union of nowhere dense sets over an indexing set $I$ with $|I|<{}^{\omega}2$
then it is a countable union. But $MA$ is two strong.
For any ordinal $\alpha$, let $P_{\alpha}$ be the statement: Given a collection $<2^{\omega_{\alpha}}$ subsets of $\omega_{\alpha}$ such that
the intersection of any $<\omega_{\alpha}$ has cardinality $\omega_{\alpha}$, then there is $B\subseteq \omega_{\alpha}$ of cardinality $\omega_{\alpha}$
such that for each element $A$ of the collection $|B-A|<\omega_{\alpha}.$
$P_0$ is the statement : Whenever $\A$ is a family of subsets of $\omega$ such that $|{\A}|<{}^{\omega}2$
and $A_0\cap A_1\cap\ldots A_n$ is infinite, whenever $A_0,A_1\ldots A_n\in \A$, then there is a subset $I$ of $\omega$
such that $I\sim A$ is finite for every $A\in I$ 
which is essentially the combinatorial part of $MA$.
It can be shown that $MA\implies P_0$ and that $MA$ is strictly stronger than $P_0$  
$P_0$ is essentially the combinatorial part of $MA$.
Under $P_0$ the following can be proved: If $X$ is a topological space with countable base, 
then the family of nowheredense sets $J$ has the property that whenever $J_1\subseteq J$ and $|J_1|<{}^{\omega}2$,
there is a countable $J_0\subseteq J$ such that every member of $J_1$ is included in a member of $J_0$ \cite{F}.
And thats all we need.  $P_0$ is equivalent to Martin's axiom 
restricted to the so called $\sigma$-centered partially ordered sets, so it is a restricted form of Martin's 
axiom \cite{F}. But actually what we need is even, yet, a weaker assumption, and that is Martin's
axiom restricted to {it countable} partially ordered sets, called $MA(countable)$. In passing we note that $covK$ is the largest cardinal such that
$MA(countable)$ is true, so that in some exact sense the cardinal $covK$ is the best possible.
In short when we loosen the statement $\kappa<covK$ to $\kappa<{}^{\omega}2$ we are led to an independent statement in set theory.
In fact such a statement, is a consequence of $MA$, and like $MA$ it is independent from $ZFC+\neg CH$.
The consistency of such a statement is proved by showing that 
is a consequence of a combinatorial consequence of Martin's axiom, namely $P_0$.
The independence is proved using iterated forcing. We note that if $\mathfrak{m}$ 
denotes the least cardinal such that $MA$ fails and $\mathfrak{p}$ is the least
cardinal such that $MA$ for $\sigma$ centered partially ordered sets fails, then clearly $\omega_1\leq \mathfrak{m}\leq \mathfrak{p}\leq covK\leq {}^{\omega}2$.
It is consistent that $\mathfrak{m}$ is singular, it is provable  that both $covK$ and $\mathfrak{p}$ are regular, and it is provable 
$covK$ cannot have countable cofinality.
It is also consistent that $\mathfrak{m}<\mathfrak{p}<covK$.
Using Shelah's techniques from stability theory, we also 
investigate preservation of $<{}^{\lambda}2$ many (maximal) meets, where $\lambda$ is a regular uncountable 
cardinal, for uncountable algebras in $S_c\Nr_n\CA_{\omega}$. This will be proved below.

\begin{theorem}
Let $\A\in S_c\Nr_n\CA_{\omega}$ be infinite such that $|A|=\lambda$, $\lambda$ is a a regular cardinal.
Let $\kappa<{}^{\lambda}2$. Let $(X_i:i\in \kappa)$ be a family of non-principal ultrafilters of $\A$.
Then there exists a representation $f:\A\to \wp(^nX)$ such that $\bigcap_{x\in X_i}f(x)=\emptyset$
for all $i\in \kappa$.
\end{theorem}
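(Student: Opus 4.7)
The plan is to adapt the Stone-space/Henkin construction of Theorem \ref{covK} to the uncountable setting, using a splitting-tree argument of height $\lambda$ in place of Baire category, together with a Shelah-style cardinality computation to show that the set of ``bad'' branches has cardinality $<2^{\lambda}$.

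First, I reduce to working in a locally finite dilation. Since $\A\in S_c\Nr_n\CA_\omega$, I pick $\D\in \CA_\omega$ with $\A\subseteq_c\Nr_n\D$ and set $\B=\Sg^\D A\in\Lf_\omega$; then $|B|=\lambda$ and, since $\A\subseteq_c\B$, every non-principal ultrafilter $X_i$ of $\A$ still satisfies $\prod^\B X_i=0$. Each substitution operator ${\sf s}_\tau$, for $\tau$ in the countable weak space $V={}^\omega\omega^{(Id)}$, is a complete boolean endomorphism of $\B$, so $\prod^\B{\sf s}_\tau X_i=0$ as well. Consequently in the Stone space $S$ of $\B$ each closed set $H_{i,\tau}=\bigcap_{x\in X_i}N_{{\sf s}_\tau x}$ is nowhere dense, as is each Henkin set $G_{j,x}=N_{{\sf c}_jx}\setminus\bigcup_{l\notin\Delta x}N_{{\sf s}_l^jx}$. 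It suffices to exhibit an ultrafilter $F\in S$ with $a\in F$, for any prescribed non-zero $a\in\A$, that avoids every $G_{j,x}$ and every $H_{i,\tau}$; the canonical weak-set-algebra representation built from $F$, exactly as in the proof of Theorem \ref{neat}, then restricts to the desired $f$ on $\A$.

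Second, I construct $F$ as a branch of a height-$\lambda$ binary tree of conditions. Enumerate the Henkin tasks in order type $\lambda$ as $(j_\alpha,x_\alpha)_{\alpha<\lambda}$, and assign to each node $\sigma\in{}^{<\lambda}2$ a non-zero element $b_\sigma\in\B$, with $b_\emptyset=a$ and the two immediate successors of $b_\sigma$ being incompatible pieces of $b_\sigma$. At successor level $\alpha$ I split $b_\sigma$ so that both children force the Henkin condition $G_{j_\alpha,x_\alpha}$; this is possible because local finiteness gives ${\sf c}_{j_\alpha}x_\alpha=\sum_{l\notin \Delta x_\alpha}{\sf s}_l^{j_\alpha}x_\alpha$, providing cofinally many fresh witnesses below $b_\sigma$. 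At limit levels $\alpha<\lambda$ the descending chain has a non-zero lower bound: since $\lambda$ is regular and each successor step commits only finitely many coordinates, the dimension set of the running element $b_{\sigma|\beta}$ grows by finite amounts and remains inside a locally finite subalgebra, so a meet exists via a fresh-variable argument as in the proof of Theorem \ref{neat}. Each branch $\sigma\in{}^{\lambda}2$ then yields a Henkin ultrafilter $F_\sigma$ containing $a$.

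Third, I arrange the avoidance of every $H_{i,\tau}$. Non-principality supplies, at each node $\sigma$, some $x\in X_i$ with $b_\sigma\cdot -{\sf s}_\tau x\neq 0$, so I can interleave an ``escape splitting'' forcing at least one successor of $\sigma$ outside $H_{i,\tau}$. A Shelah-style stability computation, leaning on the regularity of $\lambda$ and on $|B|=\lambda$, then shows that the sub-tree of branches still lying inside a single $H_{i,\tau}$ has cardinality at most $\lambda$: the bad sub-tree must discard a splitting coordinate at every level, capping admissible branches at $\lambda$. Summing over all $\kappa\cdot|V|=\kappa$ sets $H_{i,\tau}$, the collection of bad branches has cardinality at most $\kappa\cdot\lambda<2^{\lambda}$, while the full tree has $2^{\lambda}$ branches, so a good branch $\sigma^\ast$ exists and $F=F_{\sigma^\ast}$ is the required ultrafilter. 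The main obstacle is precisely this bad-sub-tree cardinality claim, where the regularity of $\lambda$ is used essentially and where Shelah's stability-theoretic tree-splitting bookkeeping replaces the Baire-category device of the countable case.
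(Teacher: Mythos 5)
Your step 3 contains the fatal gap, and it occurs at two levels. First, a scheduling obstruction: there are $\kappa\cdot|V|=\kappa$ tasks $H_{i,\tau}$ to ``escape'' from, and $\kappa$ may well exceed $\lambda$ (e.g.\ $\kappa=\lambda^{+}<{}^{\lambda}2$), while your tree has only $\lambda$ levels; so you cannot even interleave one escape splitting per task, let alone cofinally many. Second, even for a single task handled at a single level, your cardinality claim is false: forcing one of the two children at level $\alpha_{0}$ to contain $-{\sf s}_{\tau}x$ only constrains one coordinate of a branch (as a function of its initial segment), so the set of branches taking the non-escaping child still has cardinality ${}^{\lambda}2$, not $\lambda$. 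The assertion that ``the bad sub-tree must discard a splitting coordinate at every level'' does not follow from anything you have arranged. There is also a secondary problem in step 2: a descending chain of non-zero elements $b_{\sigma\restriction\beta}$ of limit length need not have a non-zero lower bound in a Boolean algebra (think of any atomless algebra), and local finiteness of $\B$ does not repair this; the construction must carry finitely consistent \emph{sets} of conditions, whose increasing unions are automatically finitely consistent at limits, rather than single elements.

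The paper's proof works around both problems by never scheduling the ultrafilters $X_{i}$ into the construction at all. It fixes a filtration $\B=\bigcup_{\alpha<\lambda}A_{\alpha}$ with $|A_{\alpha}|<\lambda$, an enumeration of finite tuples $\bar{z}_{\alpha}$ in which each tuple recurs on a stationary subset of $\lambda$, and builds the tree of consistent sets $\Gamma_{\eta}$ so that at successor-of-successor stages each branch \emph{decides} every element of $A_{\beta}$ against $\Delta$, and at successor-of-limit stages each branch either isolates the current type by a set of size $<\lambda$ or splits explicitly via elements ${\sf s}_{\bar{z}_{\beta}}\theta$. A club--stationary intersection argument then yields the key structural fact: any type (in particular any ultrafilter of $\A$) realized in two distinct branch-models is isolated by a set of size $<\lambda$. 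Since a non-principal ultrafilter is not so isolated, each $X_{i}$ is realized in at most \emph{one} of the ${}^{\lambda}2$ models, so fewer than ${}^{\lambda}2$ models are discarded and a surviving model omits every $X_{i}$ simultaneously --- a counting argument that is indifferent to whether $\kappa$ exceeds $\lambda$. Your plan would need to be restructured along these lines; as written, the ``bad sub-tree has cardinality at most $\lambda$'' step cannot be made to work.
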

\begin{demo}{Proof} see theorem \ref{ZF}
\end{demo}
The above theorem, to the best of our knowlege is the first theorem that uses techniques from stability theory in algebraic logic.
Note that, in the countable case, the condition of maximality of ``types" considered shifts us from 
an independent statement, to one that is provable in $ZFC$.

Conversely the $NET$ conjoined 
with some form of Ramsey's theorem 
has been applied to show the essential incompleteness of $\L_n$,
the first order logic restricted to the 
first $n$ variables when $\omega>n>2$,
see, e.g. 
\cite{M71}, \cite{HHM2000}. 
This follows from 
the following classical algebraic result of Monk
that established  the  ``infinite distance'' between $\CA$'s and $\RCA$'s. 
Monk's result marked a turning point in 
the development of the subject, and is considered
one of the most, if not {\it the} most, 
important model-theoretic result concerning
cylindric algebras.

\begin{theorem} Let $\omega>n>2$ and $m\in \omega$. 
Then $\RCA_n$ is properly contained in $S\Nr_n\CA_{n+m}.$
Thus $\RCA_n$ is not finitely axiomatizable.
\end{theorem}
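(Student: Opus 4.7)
The inclusion $\RCA_n\subseteq S\Nr_n\CA_{n+m}$ is immediate from Henkin's Neat Embedding Theorem (Theorem \ref{neat}): that theorem identifies $\RCA_n$ with $S\Nr_n\CA_{n+\omega}$, and an algebra sitting inside a neat $n$-reduct of some $\CA_{n+\omega}$ automatically sits inside the neat $n$-reduct of its $(n+m)$-dimensional restriction. The substance of the theorem is therefore twofold: (a) strictness of this inclusion for every finite $m$, and (b) the ensuing non-finite axiomatizability.

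For (a), the plan is to exhibit for each finite $m$ an algebra $\A_m\in S\Nr_n\CA_{n+m}\setminus\RCA_n$ by a Monk-style recipe. Take a finite ``coloured'' graph $G=G_{n,m}$ (for instance a complete graph $K_N$ with edges labelled from a palette whose size is linear in $n+m$, in the spirit of $\E_k(2,3)$ appearing in Example \ref{OTT}) and build the atomic cylindric algebra $\A(G)$ whose atoms code locally consistent $n$-cliques. Choosing the palette so that $\chi(G)$ is large enough in terms of $n+m$ supplies an $(n+m)$-dimensional cylindric basis of local labellings, and the complex algebra of this basis is a $\CA_{n+m}$ in whose $n$-neat reduct $\A(G)$ embeds; hence $\A(G)\in S\Nr_n\CA_{n+m}$. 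At the same time $\chi(G)$ stays finite, so a Ramsey-type pigeonhole applied to any candidate representation forces a monochromatic triangle, contradicting a forbidden triple hard-wired into the atom structure, and ruling out representability.

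For (b), suppose for contradiction that $\RCA_n$ is axiomatized over $\CA_n$ by finitely many equations $\sigma_1,\ldots,\sigma_p$. Each algebra $\A_m$ from (a) falsifies some $\sigma_{i(m)}$; pigeonhole locates a single $\sigma_i$ that fails in infinitely many $\A_m$, and the fundamental theorem of ultraproducts transports this failure to $\A=\prod_U\A_m$ for a non-principal ultrafilter $U$ concentrated on those indices. So $\A\notin\RCA_n$. On the other hand, for each fixed $k$, whenever $m\geq k$ one has $\A_m\in S\Nr_n\CA_{n+m}\subseteq S\Nr_n\CA_{n+k}$ (pass to a further neat reduct of the ambient $\CA_{n+m}$ down to dimension $n+k$), and $S\Nr_n\CA_{n+k}$ is closed under ultraproducts since $\CA_{n+k}$ is a variety and $\Nr_n$ commutes with ultraproducts. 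Thus $\A\in\bigcap_k S\Nr_n\CA_{n+k}$, and a standard compactness argument (stitching the witnesses $\B_k\supseteq\A$ in $\CA_{n+k}$ via an ultraproduct and then an elementary chain into a single witness in $\CA_{n+\omega}$) identifies this intersection with $S\Nr_n\CA_{n+\omega}=\RCA_n$, contradicting $\A\notin\RCA_n$.

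The principal obstacle is (a). One must tune the combinatorics so that a single algebra $\A(G)$ simultaneously admits an $(n+m)$-dimensional cylindric basis — the ``largeness'' side, driven by the chromatic number of $G$ — and refuses to be represented by a Ramsey argument — the ``finiteness'' side. Getting both at once for every prescribed $m$ is the heart of Monk's original combinatorial construction, and the calibration of the size and colouring of $G_{n,m}$ is where all the delicate arithmetic lives; everything else in the proof is either routine ultraproduct bookkeeping or a direct appeal to the Neat Embedding Theorem.
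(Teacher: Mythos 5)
Your proposal is correct and follows essentially the same route as the paper: Monk's Ramsey-based algebras $\A_m\in S\Nr_n\CA_{n+m}\setminus\RCA_n$ (the "many atoms, few colours" calibration), followed by the observation that a non-principal ultraproduct of the $\A_m$ lands in $S\Nr_n\CA_{n+\omega}=\RCA_n$ by the Neat Embedding Theorem, which rules out any finite equational axiomatization. The extra detail you supply on the ultraproduct bookkeeping and on stitching the witnesses $\B_k$ into a single $\CA_{n+\omega}$ is exactly what the paper leaves implicit in its sketch.
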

\begin{demo}{Sketch of proof} 
Monk used Ramsey's Theorem to construct 
for each $m\in \omega$ and $2<n<\omega$, an algebra 
$\A_m\in \Nr_n\CA_{n+m}$ that is not representable.
The ultraproduct of the $\A_m$'s constructed by Monk 
(relative to any non-principal ultrafilter on $\omega$) 
is in $S\Nr_{n}\CA_{n+\omega}$,
hence by the $NET$, is representable. 
Using elementary model theory, it follows thus that the class 
$\RCA_n$ for $\omega>n>2$,
is not finitely axiomatizable.
\end{demo}

The $\A_m$'s   are referred to in the 
literature as Monk or Maddux algebras. Both authors used them.
The key idea of the construction of a Monk algebra is not so hard. 
Such algebras are finite, hence atomic, 
more precisely their boolean reduct is atomic.
The atoms are given colors, and 
cylindrifications and diagonals are defined by stipulating that monochromatic triangles 
are inconsistent. 
If a Monk algebra has many more atoms than colors, 
it follows from Ramsey's Theorem that any representation
of the algebra must contain a monochromatic triangle, so the algebra
is not representable. Andr\'eka's splitting as seen in \cite{An97} 
is a variation on the same theme, and indeed leads to a refinement of Monk's result (See below).
Here, splitting refers to splitting an atom
into more atoms ,
that enforces non-representability, 
in which case the original atom before the process of splitting
is no longer an atom after the process of splitting. 
However, Andr\'eka's splitting does not appeal to any form of Ramsey's theorem.
In \cite{IGPL} Andr\'eka's splitting is used to show that the class
of neat $n$ reducts of $\beta$-dimensional cylindric algebras
is not elementary for $2<n<\beta\cap \omega$. (It will be also used below).
We note that Monk established a very interesting connection between 
finite combinatorics and algebraic logic 
\cite{M74}, a recurrent theme in algebraic logic.
A recent use - establishing this link - of Monk algebras with a 
powerful combinatorial result of Erd\"os on probabilistic graphs has 
shown that 
the class of the so-called strongly representable atom structures of relation 
algebras and $n$-dimensional representable cylindric algebras 
is not elementary 
\cite{HH2000}.  
Such counterexamples were used in \cite{SSL}  to 
show that the omitting types theorem fails
for the finite variable fragments of first order logic, 
as long as the number of variables available 
is at least 3. The omitting types theorem  
for variants of first order logic, be it reducts or expansions 
was studied intensely in recent times
\cite{SSL}, \cite{Notre}, \cite{IGPL6}, 
\cite{OT}, \cite{OTT}, \cite{Bull3}, \cite{Bull5b}, \cite{Bull5}, 
and \cite{Bull5b}, see also theorems \ref{o}, \ref{oo}.

We should mention that 
Lyndon's three papers 
\cite{Lyndon50}, \cite{Lyndon56}, and \cite{Lyndon61}
on relation algebras were very influential. 
\cite{Lyndon56}
is the basis for Hirsch-Hodkinson's (step-by-step) 
approach to the representation problem.  
\cite{Lyndon61}, on the other hand, contains results 
of constructing non-representable relation algebras
from projective geometries. 
This led to Monk's famous result that the class of 
representable relation algebras is not finitely axiomatizable \cite{Monk64}. Here Bruck-Ryser theorem
on non existence of projective planes of certain orders was used.
The second key paper in this context is \cite{Monk65} 
where Monk extended his result (of non-finite axiomatizability) to 
$3$ dimensional representable cylindric algebras.
In both of these papers, Monk uses projective geometries. In his 1964 paper \cite{Monk64}
the relation algebras used arise from Lyndon's construction applied to projective
lines, and in the 1965 paper \cite{Monk65}, the algebras dealt with 
are 3-dimensional cylindric algebras constructed 
from Lyndon's relation algebras, which are defined as follows.

\begin{definition} Let $U$ be a set with $|U|\geq 4$ and let $e$ be any element such that $e\notin U$. 
then the Lyndon algebra on $U$, is the $\RA$ type algebra defined to be
$$\L(U)=(\wp(U\cup\{e\}), \cap,\sim, \circ, \breve{}, id)$$
where $id=\{e\}$, for all $X\subseteq U\cup \{e\}$, $\breve{X}=X$ and $\circ$ is the completely additive opeartion on $\wp(U\cup \{e\}$ defined between singletons of $U\cup \{e\}$
as follows. For any $u\neq v\in U$:
$$\{u\}\circ id=id\circ \{u\}=\{u\}$$
$$\{u\}\circ \{u\}=id\cup \{u\}=\{e,u\}$$
$$\{u\}\circ \{v\}=U\sim \{u,v\}$$
$$id\circ id=id$$
Byy a Lyndon algebra we mean a Lyndon algebra on some set $U$
\end{definition}
It is known that a Lyndon algebra on $U$ is representable iff there exists a projective plane whose lines are incident with exactly $|U|$ points.
Thus there are infinitely many $n\in \omega$ such that the Lyndon algebras on $n$ are representable, 
and there are infinitely many $n\in \omega$ such tha the Lyndon algebra on $n$ is not
reprresentable. Monk used this to show that the class $\RRA$ is not finitely axiomatizable.
 
Monk extended his results to cylindric algebras of
dimension larger than 3, in his 1969 paper \cite{M69}, 
using the algebras based on Ramsey's
Theorem.

Had it been otherwise, i.e., if for $\omega>n>2$, $\RCA_n$ 
had turned out to be 
axiomatizable by a finite set of equations $\Sigma$ say,
then this $\Sigma$ would have been probably taken as the standard 
axiomatization of $\CA_n$. Unfortunately this turned out not to be the case. 
Quoting Hirsch and Hodkinson p.8 \cite{HHbook}
``As it seemed, the hopes of workers over a hundred years starting with De Morgan and 
culminating in Tarski's work to produce a (simple, elegant, or at least finite)
set of algebraic properties - or in modern terminology - 
equations that captured exactly the true properties of $n$-ary relations
for $\omega>n>2$ were shattered by Monk's result."
This impasse is still provoking 
extensive research until the present day, in essentially 
two conflicting (but complementary) forms.
To understand the ``essence'' of representable algebras, one often deals
with the non-representable ones, the ``distorted images'' so to speak. 
Simon's result in \cite{Sim97}, of ``representing" non-representable algebras, 
seems to point out that this distortion is, after all, not completely chaotic.   
This is similar to studying non-standard models of arithmetic, 
that do shed light on the standard
model. One form, which we already 
discussed, is to try to circumvent this negative non-finite axiomatizability 
result. The other form is to sharpen it. 
Indeed, Monk's negative result---as far as non-finite axiomatizability is
concerned---stated 
above, was 
refined and strengthened by many authors in many directions,
to mention a few, Andr\'eka \cite{An97}, Biro \cite{Bi92},  Maddux \cite{Maddux89}, 
Sagi \cite{Sagi99}, and Hirsch and Hodkinson \cite{HHII}. 

Maddux \cite{Maddux89} proved that Monk 
algebras can be generated by a single element. 
This is far from being trivial. Besides, this result implies essential incompleteness for finite varaible fragments of $\L_n$ $n\geq 3$
when we have only one binary relation in the language.
Making an algebra one-generated involves increasing 
complexity on its 
automorphism group. 
The structure becomes rigid. Hirsch Hodkinson and Maddux \cite{HHM} used Maddux's algebras together with a combinatorial argument to show that
for $m\geq 3$ the incusions
$\CA_m=S\Nr_m\CA_m\supset S\Nr_m\CA_{m+1}\ldots$ are strict
and all but the first inclusion is finitely axiomatized. This, too, has deep implications concerning the proof theory of $\L_n$ \cite{HHM2000}.
Biro \cite{Bi92} proves that 
$\RCA_n$, $\omega>n>2$,  remains non-finitely axiomatizable, if we add finitely many
{\it first order definable} operations, a result 
that is already implicit in Monk's and Maddux's non-finite
axiomatizability results in \cite{M69} and \cite{Maddux89}. 
The novelty occurring in Biro's result
is making the notion ``first order definable'' explicit.
Andr\'eka \cite{An97}, building on work of Jonsson \cite{Jonsson} for relation
algebras,
proves the same result in case we add other ``kinds'' of operations, 
like for example {\it modalities}, i.e. operations distributing over the boolean join, 
as long as the added operations are finitely many. 
While Biro's result excludes axiomatizations by
a finite set of equations, 
Andr\'eka's, on the other hand,  exclude axiomatizations involving 
universal formulas in which only finitely many variables occur. 
Sagi \cite{Sagi99}, building on work of Lyndon \cite{Lyndon56}, 
addresses the most general formulation of the problem
showing that the Finitizability Problem cannot be solved by 
adding finitely many {\it permutation invariant} operations in 
the sense of Tarski-Givant \cite{TG},
as long as one hopes for particular (universal) axiomatizations involving only finitely
many variables, and he gives a sufficient condition for the refutation of such a problem. 
We recall from \cite{N96} that 
a permutation invariant operation on a set algebra with unit $^nU$ 
is one that is invariant under permutations of $U$.  
Mad\'arasz \cite{Ma} addresses the case when the (finitely many) added 
operations are binary and $L_{\infty \omega}^3$ definable.
One general form of the Finitizability Problem  
for both cylindric algebras and relation algebras 
is to the best of our knowledge still open.  
This more or less concrete form for cylindric algebras 
is the following:

\begin{athm}{Open Problem  (Tarski-Givant-Henkin-Monk-Maddux- N\'emeti)} 
Can we expand the language
of cylindric set algebras of dimension $n$, $\omega> n>2$,  by finitely 
many permutation invariant operations 
so that the interpretation of these newly added operations in the resulting class of algebras
is still of a concrete set-theoretic nature,  and the resulting class 
generates a finitely axiomatizable variety or quasi-variety? 
\end{athm}
For further elaboration on this problem we refer to \cite{Sagi99}, \cite{AU}, \cite{N96}, \cite{SSS}, 
\cite{Sa98}, \cite{SaGy96}, \cite{Sim97} and \cite{Sim93}.
We refer to the above problem as the {\it permutation invariant} 
version of the Finitizability Problem.
The requirement of permutation invariance here is crucial for it corresponds to the 
(meta-logical) fact that isomorphic models 
satisfy the same formulas, a basic requirement in abstract model theory.
Without this requirement there are rather easy solutions to 
the Finitizability Problem due to 
Biro \cite{Bi92}, Maddux \cite{Maddux89}, \cite{Madd}, \cite{Madd1} 
and Simon \cite{Sim97}.

\subsection{ A reduction of the Finitizability Problem}

An important result in \cite{Sagi99} is reducing the Finitizability Problem
for relation algebras addressing (the infinitely many) 
permutation invariant expansions of relation algebras 
to working entirely inside the class of relation algebras. We believe that this could be a breakthrough, and unfortunately, to the best of our knowledge,
this result was not published. It occurs in the first chapter in Sagi's dissertation.
So here we give an outline of Sagi's (important) reduction Theorem. For an algebra $\A$ with Boolean reduct $At(\A)$ denotes the set of atoms of $\A$.

\begin{definition} Let $\A$ be an atomic relation algebra and $2\leq k\in \omega$. 
$1'$ denotes the identity relation and $\circ$ denotes composition. By a $k$ dimensional matrix of $\A$ we understand a function
$f:k\times k\to At(\A)$ satisfying the following conditions
\begin{enumroman}
\item $(\forall i\in k)(f(i,i)\leq 1')$
\item $(\forall i,j\in k)(f(i,j)=f(j,i))$
\item $(\forall i,j, l\in k)(f(i,l)\leq f(i,j)\circ f(j,i))$
\end{enumroman}
\end{definition}
The set of all $k$ dimensional atom matrices of $\A$ is denoted by $\M_k(\A)$. 
If $f\in \M_k(\A)$ and $\sigma:k\to k$, then 
$f\circ \sigma:k\times k\to At(\A)$ such that $(\forall i,j\in k)((f\circ \sigma)(i,j)=f(\sigma(i), \sigma(j))$.
Let $S\subseteq M_k\A$. Then $S$ is a substitutional base for $\A$ iff
$(\forall a\in At(\A))(\exists f\in S)(a=f_{0,1})\text { and } (\forall f\in S)(\forall i,j\in k)(f\circ [i|j]\in S).$
Then one defines an algebra $\Sb(\A)=(\wp(S), \cap, \cup, \sim, {\sf S}_i^j, {\sf D}_{ij})_{i,j<k}$ by
${\sf D}_{ij}=\{f\in S: f_{i,j}\leq 1'\}$ and ${\sf S}_i^jX=\{f\in S: f\circ [i|j]\in X\}$.
Fix an an atomic relation algebra $\A$ 
and let $\C$ be an atomic subalgebra of $\A$. 
Then $\equiv_C$ denotes the relation on $At(\A)$
defined by
$$(\forall a, b\in At(\A))(a\equiv_C b\text { iff } (\forall c\in At(C)(a\leq c\Leftrightarrow b\leq c))$$
If $S$ is a $k$ dimensional substitutional basis of $A$ then $\equiv_C$ can be extended to $S$, the obvious way, that is
$$f\equiv_Cg \text { iff } (\forall i, j\in k)(f_{ij}\equiv_C g_{ij}).$$
Let $S_C$ be the set of equivalence classes of $\equiv_C$ and $(S_C)^*=\{\bigcup X: X\subseteq S_C\}.$
Then it can be checked that the latter is a subalgebra of $\Sb(\A)$.
We let, identifying algebras with their domains,  $\wp(^2U)$ stand for full set relation algebra with unit $^2U$.
For $k\geq 2$, we let $\wp(^kU)$ stand for the algebra $(\wp(^kU), \cup,\sim ,{\sf S}_i^j, {\sf D}_{i,j})_{i,j<k},$
where ${\sf S}_i^jX=\{f\in {}^{\alpha}U: f\circ [i|j]\in X\}$ and ${\sf D}_{ij}=\{s\in {}^kU: s_i=s_j\}$.
When $k=2$ we rely on context to see which algebra we intend. 
Suppose that $\rho:\C\to \wp(^2U)$ is a representation of $\C$, that is, $\rho$ is a one to one homomorphism. 
Then $\rho^*: (S_C)^*\to  \wp(^kU)$ is defined by
$$\rho^*(X)=\{q\in {}^kU: (\exists f\in X)(\forall i,j\in X)(\forall i,j\in k)((q_i,q_j)\in \rho(f_{i,j}/\equiv_C))\}.$$
Now $\rho$ is said to be {\it $S$ rich} iff
$$(\forall s \in {}^kU)(\exists f\in S)(s\in \rho^*(f_{\equiv_C})).$$
$S$ is said to be homogeneous in $\C$ iff
$$(\forall f\in S)(\forall b\in At(\A)(b\equiv_C f_{0,1}\implies \exists g\in S(g_{0,1}=b\land g\equiv_Cf)).$$
We write $\B\subseteq_k \A$ if $\B$ is a subalgebra of $\A$ generated by $k$ elements.

\begin{definition} Let $\A$ be a simple $\RA$ and let $k,n\in \omega$. Then $\A$ satisfies the $k,n$ subalgebra condition iff
$$\forall \B\subseteq_k \A)(\exists \C\subseteq \A)(\exists \text { a rep } \rho^{\B}:\C\to \wp(^2n)(\B\subseteq \C\text { and $\C$ is atomic })$$
$(\rho^{\B}, \B\subseteq_k \A)$ will be called a system of representations
\end{definition}

\begin{theorem} Let $k,n\in \omega$. 
If there exists a simple non representable finite relation algebra which has $n+2$ dimensional substitutional basis $S$ satisfying
\begin{enumroman}
\item the $k,n$ subalgebra condition with representations $(\rho^{\C}: \C\subseteq_k \A_k),$
\item $(\forall C\subseteq_k \A_k)(\rho^{\C}$ is an $S$ rich representation of its domain),
\item $(\forall C\subseteq_k \A_k)$$S$ is homogeneous in the domain of $\rho^{\C}),$ 
\end{enumroman}
then there does not exist a permutation invariant extension of $\RRA$ axiomatizable by univverasl formulas containing $k$
variables.
\end{theorem}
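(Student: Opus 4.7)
The plan is to derive a contradiction by exhibiting, from the hypothesized non-representable $\A$, a $\tau$-expansion $\A^{+}$ of $\A$ that belongs to the alleged permutation invariant extension. Suppose, toward contradiction, that $\mathcal{L}$ is a class of $\tau$-algebras extending $\RRA$ (in the sense that the $\RA$-reduct of every member of $\mathcal{L}$ is representable), where $\tau$ is the $\RA$-signature augmented by permutation invariant operation symbols, and that $\mathcal{L}$ is axiomatized by a set $\Sigma$ of universal formulas each containing at most $k$ variables. Producing a $\tau$-structure on $\A$ that satisfies $\Sigma$ would force $\A \in \RRA$, contradicting the non-representability of $\A$.

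The interpretation of the $\tau$-operations on $\A$ will be defined locally via the system of representations supplied by hypothesis (i). For every tuple $\bar{a}$ of length at most $k$ from $A$, set $\B = \Sg^{\A}(\bar{a})$, so $\B \subseteq_{k} \A$; hypothesis (i) produces $\C$ with $\B \subseteq \C \subseteq \A$ and a genuine atomic representation $\rho^{\B}:\C \to \wp(^{2}n)$. I would then pass through the $(n+2)$-dimensional substitutional basis $S$ and the induced embedding $(\rho^{\B})^{*}:(S_{\C})^{*} \to \wp(^{n+2}n)$ to carry $\B$ into a set algebra of the full $(n+2)$-ary sort, where the permutation invariant $\tau$-operations are canonically interpretable. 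Pulling back along $(\rho^{\B})^{*}$ then yields candidate values for the $\tau$-operations at tuples built from $\bar{a}$.

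The hard part will be to show that these locally defined values cohere into a single global $\tau$-structure $\A^{+}$ on $\A$, independent of the choices of $\C$ and $\rho^{\B}$. This is precisely where hypotheses (ii) and (iii) do the work. Condition (ii) ($S$-richness of $\rho^{\B}$) guarantees that every tuple in $^{n+2}n$ lies in the image under $(\rho^{\B})^{*}$ of some element of $S$, so no tuple at which a $\tau$-operation could take a distinguishing value is invisible to the representation. Condition (iii) (homogeneity of $S$ in $\C$) ensures that two basis elements whose entries are $\equiv_{C}$-equivalent are interchangeable inside $S$; combined with the assumed permutation invariance of the $\tau$-operations on set algebras, this forces the pulled-back values at $\bar{a}$ to depend only on $\bar{a}$ itself and not on the auxiliary data. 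The resulting compatibility calculation---comparing, via an appropriate permutation of the base set $n$, the outputs produced by any two representations containing a common $k$-generated subalgebra---is the main technical obstacle.

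Once well-definedness of $\A^{+}$ is established, verifying $\A^{+} \models \Sigma$ is routine. Any axiom $\forall x_{1}\ldots x_{k}\,\phi$ of $\Sigma$ evaluated at a tuple $\bar{a}$ reduces to a quantifier-free statement about the $\tau$-subalgebra generated by $\bar{a}$; by construction this subalgebra embeds through $(\rho^{\B})^{*}$ into the $\tau$-expansion of the set algebra $\wp(^{n+2}n)$, which belongs to $\mathcal{L}$ because $\mathcal{L} \supseteq \RRA$, so $\phi(\bar{a})$ holds there and hence in $\A^{+}$. Consequently $\A^{+} \in \mathcal{L}$, which forces $\A \in \RRA$ and yields the desired contradiction.
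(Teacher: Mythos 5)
The paper states this theorem without proof, attributing it to Sagi's unpublished dissertation, so there is no argument here to compare yours against; your overall strategy --- exploit the locality of $k$-variable universal formulas to reduce satisfaction of the axioms in an expansion of the non-representable $\A$ to its $k$-generated subalgebras, each of which hypothesis (i) makes representable --- is the standard one in this line of work (J\'onsson, Andr\'eka, Sagi) and is surely the intended route. The difficulty is that what you defer as ``the main technical obstacle'' is not a verification to be postponed but the entire content of the theorem: it is exactly where hypotheses (ii) and (iii) must be deployed, and your sketch of how they would be used does not hold up. Two issues are hidden there. First, for the pullback along $(\rho^{\B})^{*}$ to be defined at all, the image of $(S_C)^{*}$ inside $\wp({}^{n+2}n)$ must be \emph{closed} under the new permutation invariant operations; this is the actual job of $S$-richness and homogeneity (richness makes every $(n+2)$-tuple over the base visible to some basis element, and homogeneity together with permutation invariance forces outputs to be unions of $\equiv_C$-classes, hence to lie back in $(S_C)^{*}$), and you never state, let alone prove, this closure. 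Second, coherence across overlapping $k$-generated subalgebras: the hypotheses hand you one representation $\rho^{\B}$ per $\B\subseteq_k\A$ and assert no relation whatsoever between $\rho^{\B}$ and $\rho^{\B'}$; your proposed fix ``via an appropriate permutation of the base set $n$'' presupposes that any two such representations are conjugate under a permutation of the base, a uniqueness-of-representation statement that is not among the hypotheses (it is precisely why the paper's follow-up theorem must produce \emph{edge transitive} representations).

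The most serious point concerns nested applications of the new operations. Once a $\tau$-operation is applied to a compound term, its output is at best an element of $(S_C)^{*}$, i.e.\ an $(n+2)$-ary object, and there is no reason it should correspond to an element of $\A$; so the global expansion $\A^{+}$ of $\A$ that your contradiction requires is not obviously obtainable. Relatedly, your claim that the $\tau$-subalgebra generated by $\bar a$ embeds into ``the $\tau$-expansion of $\wp({}^{n+2}n)$, which belongs to $\mathcal{L}$'' conflates the $(n+2)$-dimensional auxiliary algebra with a member of $\mathcal{L}$: what must lie in $\mathcal{L}$ is the permutation invariant expansion of the \emph{binary} set algebra $\wp({}^{2}n)$, and it is into that algebra that the generated $\tau$-subalgebra has to embed. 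The paper itself signals that this is where the difficulty lives: immediately after the theorem it records that Sagi carries the argument through only for \emph{balanced} formulas, i.e.\ those in which the new operations are applied only to the variables themselves and never to compound terms --- exactly the restriction under which your nested-term problem disappears. As written, your proposal would establish the unrestricted statement by an argument that silently assumes away the one case known to be delicate, so the gap is genuine.
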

Sagi \cite{Sagi99} actually solves a restricted version of this problem, namely the case when the set of formulas are balanced.
A universal formula $\phi$ in aignature extending that of $\RA$ is balanced if for every subterm $f(t_1\ldots t_n)$ of $\phi$, where $f$ is not an 
$\RA$ operation symbol, every variable occuring in $\phi$
is one of the $t_i$'s. This notion has been investigated by Jonsson, McNulty, and others. intuitively the balanced formulas are the simple ones, 
becuase the new operations can be only used in a simple special way.
The algebras used by Sagi are the Lyndon algebras based on projective geometries.

\begin{theorem} For each $k\in \omega-\{0,1\}$ there exists a simple, finite non representable relation algebra $\A_k$ satisfying the following conditions: there is $n\in \omega$ such that
$\A_k$ has an $n+2$ dimensional substitutional basis $S$ such that
\begin{enumroman}
\item $\A_k$ satisfies the $k,n$ subalgebra condition with edge transitive representation \footnote{This is a kind of rep introduced by Sagi} $(\rho^C, C\\subseteq_k \A_k)$
\item $(\forall C\subseteq_k \A_k)(\rho^{\C}$ is an $S$ rich representation of its domain),
\item $(\forall C\subseteq_k \A_k)$$S$ is homogeneous in the domain of $\rho^{\C}),$ 
\end{enumroman}
\end{theorem}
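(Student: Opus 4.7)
The plan is to construct $\A_k$ as a Lyndon algebra $\L(U_k)$ where $U_k$ is chosen so that (a) no projective plane has its lines incident with exactly $|U_k|$ points (so $\L(U_k)$ is non-representable, by the Lyndon--Monk criterion recalled just before the theorem), and (b) $|U_k|$ is sufficiently large relative to $k$ that any $k$-generated subalgebra of $\L(U_k)$ sees only a ``small'' slice of the non-identity atoms. For instance, pick $U_k$ so that $|U_k|$ is larger than a suitable polynomial in $k$ and so that Bruck--Ryser excludes a projective plane of order $|U_k|-1$; infinitely many such cardinalities exist by Monk's 1964 argument. Finiteness and simplicity of $\L(U_k)$ follow at once from the definition of the Lyndon construction.

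Next I would produce the $(n+2)$-dimensional substitutional basis. In a Lyndon algebra the atoms other than $\mathsf{id}$ ``look like'' edges of a complete graph on $U_k$, and $k$-matrices correspond to coloured complete graphs on $k$ vertices respecting the forbidden triples in the composition table. The natural choice for $n$ is one for which the algebra supports enough coherent $(n+2)$-matrices: specifically, let $n$ be the smallest integer such that any $k$-generated subalgebra $\B\subseteq \A_k$ sits inside some representable atomic $\C\subseteq \A_k$ which arises as the subalgebra generated by a projective subgeometry of $U_k$ of bounded size. Take $S$ to be the set of all $(n+2)$-matrices whose values lie in $\At(\C)$ for the $\C$'s realised across all $\B\subseteq_k \A_k$; then $S$ is closed under $f\mapsto f\circ[i|j]$ by construction, so the closure condition for a substitutional basis holds.

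For the $(k,n)$ subalgebra condition, for each $\B\subseteq_k \A_k$ I would take $\C$ to be the Lyndon subalgebra determined by the projective sub-structure of $U_k$ that supports the finitely many atoms meeting $\B$. Since projective planes of sufficiently small orders \emph{do} exist and the corresponding Lyndon algebras are representable by the Lyndon--Monk criterion, we obtain a representation $\rho^\B\colon \C\to \wp({}^2 n)$; choosing $n$ large enough to fit the incidence structure, $\rho^\B$ can be taken edge-transitive because the automorphism group of the projective plane acts transitively on incident (resp.\ non-incident) point--line pairs, and this transitivity lifts to the Lyndon-algebra representation. $S$-richness then follows because every $s\in {}^{n+2} n$ has its labelled type $(\rho(f_{ij}))_{i,j}$ realised by some $f\in S$ (thanks to the transitivity of the action and to the fact that $S$ contains every coherent matrix on the geometry). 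Homogeneity of $S$ in $\C$ is verified by observing that the equivalence $\equiv_\C$ only distinguishes atoms at the level of their membership in $\B$-generated classes, and within each such class one can transport any labelled matrix entry to any other atom in the same class using the automorphisms of the incidence geometry underlying $\C$.

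The main obstacle will be the simultaneous calibration of three parameters: the cardinality of $U_k$ (forced to avoid representability of $\L(U_k)$ itself, yet large enough to accommodate $k$-generated subalgebras inside representable Lyndon subalgebras), the dimension $n+2$ (large enough for the matrices to encode the relevant substitutions but small enough that coherent matrices actually exist), and the edge-transitive action on the geometry (needed for both $S$-richness and homogeneity). I would expect the cleanest route to be to fix $n+2$ first as the smallest dimension in which the desired matrix combinatorics goes through for all $k$-generated subalgebras simultaneously, and then let $|U_k|$ grow with $k$; this parallels exactly the way Sagi/Lyndon control projective-plane parameters. Once these numerical conditions are met, each of (i)--(iii) reduces to a direct combinatorial check on the incidence geometry and on how $\B$ sits inside $\C$, so the bulk of the real work is concentrated in this parameter-choice lemma rather than in the verification of the three clauses.
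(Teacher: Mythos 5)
Your starting point coincides with the paper's: $\A_k$ is a Lyndon algebra $\L(U)$ on a finite set $U$ that is large relative to $k$ (the paper takes $|U|\geq 2^k(2^k+1)+1$) and is chosen so that $\L(U)$ is not representable; the paper then simply sets $n=2^{4k}$ and $S=\M_{n+2}(\A_k)$, the set of \emph{all} $(n+2)$-dimensional atom matrices. From there, however, your argument diverges in two problematic ways. First, your $S$ is not the set of all matrices but a restricted collection of matrices ``whose values lie in $At(\C)$''. Since the atoms of the intermediate algebras $\C$ are in general proper unions of atoms of $\A_k$, such objects are not elements of $\M_{n+2}(\A_k)$ at all; and even after repairing this, the covering clause in the definition of a substitutional basis (every atom of $\A_k$ must occur as $f_{0,1}$ for some $f\in S$) is not guaranteed by your construction. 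Taking $S$ to be all of $\M_{n+2}(\A_k)$ makes closure under $f\mapsto f\circ[i|j]$ trivial and, for $|U|$ large, makes the covering condition immediate; restricting $S$ only creates obligations you then cannot discharge.

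Second, and more seriously, your mechanism for producing the representations $\rho^{\B}$ rests on ``projective sub-structures of $U_k$'' and on transitivity of the automorphism group of a projective plane. But $U$ is a bare set: the Lyndon--Monk criterion says $\L(U)$ is representable iff \emph{some} projective plane exists whose lines are incident with exactly $|U|$ points; no incidence geometry lives on $U$ itself, so there is no projective subgeometry to restrict to. Moreover, the algebra $\C$ witnessing the $(k,n)$ subalgebra condition is not itself a Lyndon algebra (its atoms are the $\equiv_{\C}$-classes, i.e.\ large unions of Lyndon atoms together with the identity), so the projective-plane criterion does not apply to it in any case. What actually makes $\C$ representable on the finite base $n=2^{4k}$ is a direct combinatorial construction exploiting that a $k$-generated subalgebra partitions the non-identity atoms into boundedly many classes, each large by the choice $|U|\geq 2^k(2^k+1)+1$; the same counting, not plane automorphisms, is what yields edge-transitivity, $S$-richness and homogeneity. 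As written, the central step of your proof --- actually exhibiting $\rho^{\B}$ --- has no valid source, so the argument does not go through without replacing that step entirely.
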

\begin{demo}{Sketch of proof} Let $k$ be give. Let $U$ be a finite set such that $|U|\geq 2^k(2^k+1)+1$ and
$\L(U)$ is not representable. Let $\A_k=\L(U)$. Let $n=2^{4k}$ and $S=\M_{n+2}(\A_{k})$. Then $\A_k$ is as required.
\end{demo}

The analogous result for cylindric algebras is proved by the present author. 
In other words, it is enough 
to construct a specific countable sequence of non-representable cylindric algebras
to refute the permutation invariant version of the Finitizability Problem.
This of course does not settle the problem completely, instead it transforms it
to a hopefully simpler one.
Furthermore, using Monk's algebras one can show that there does not exists a permutation invariant extension 
of $\RCA_n$ axiomatizable by universal balanced formulas containing $k$ variables.

The current ``belief'' is that 
the answer to the unrestricted 
(permutation invariant) 
form of the problem 
is either negative, or perhaps even independent of 
$ZF$ (Zermelo-Fraenkel set theory). Indeed, in \cite{N97} and \cite{SN96} 
it is proved that several 
versions of the Finitizability Problem are independent from $ZF$ 
minus the axiom of foundation (and adopting other anti-foundation axioms). 
Positive solution exists in non-well founded set theories, becuase one can generate extra infinitely dimensions, forcing a neat embedding theorem,
by digging ``downwards".
This view comes across very much in the case of Nemeti's directed cylindric algebras, invesigated by Sagi \cite{Sagi2000}.
The results
of Hirsch and Hodkinson, in \cite{HHII}, seem to be relevant to the
permutation invariant form 
of the Finitizability Problem.  In \cite{HHbook} 
[ 17.4, p. 625] the problem of axiomatizing the class of relation algebras
with a set of first order sentences using finitely many variables 
is reduced to a problem about (colorings of)
certain graphs. 
On the face of it, this seems to be bad news for graphs, rather than good news for
providing ``simple' axiomatizations (using only finitely many variables) for 
representable relation algebras.

\subsection{Canonicity and strongly representable atom structures, via Erdos graphs}

To summarize, as we have seen the representation problem lies very much at the heart of algebraic logic, and its history dates back to
the early work of Tarski on relation algebras and cylindric algebras.
Algebraic logic arose as a subdiscipline of algebra mirroring constructions
and theorems of mathematical logic. 
It is similar in this respect to such fields as algebraic geometry
and algebraic topology, 
where the main constructions and theorems are algebraic in nature, 
but the main intuitions underlying them are respectively 
geometric and topological. 
The main intuitions 
underlying algebraic logic are, of course, those of formal logic.
Investigations in algebraic logic can proceed in two conceptually different, 
but often (and unexpectedly) closely related ways.
First one tries to investigate the algebraic essence of constructions 
and results in logic, in the hope of gaining more insight
that could add to his understanding, thus his knowledge.
Second, one can study certain ``particular'' algebraic structures 
(or simply algebras) that arise in the course 
of his first kind of investigations
as objects of interest in their own right and 
go on to discuss questions which naturally  arise independently of any connection with 
logic.   But often such  purely algebraic results 
have impact on the logic side.

Examples are the undecidability of the representation problem for  finite relation algebras \cite{finite}, \cite{HHbook} that led to deep results concerning 
undecidability of product modal logics answering problems posed by Gabbay \cite{K5}. This results also implies that the class 
of representable relation algebras cannot be finitely axiomatized in $n$th order logic for any $n$.
A similar situation occurs for $\RCA_3$, so that this class cannot be finitely axiomatized in
$n$ order first order logic. (The analogous result for $\CA_n$ is unkown, for $n\geq 4$).
Another example is the interconnection
of the metalogical notion of Omitting types and algebraic notions of atom canononcity and complete representations, first presented in \cite{Bulletin}
and elaborated upon in \cite{ANT}, see also theorems \ref{o} and \ref{oo} below.
And of course there are the various completeness theorems obtained for variants or modifications of first order logic and multi modal logics
when dealing with (different forms) of the representability problem \cite{HH97b}, \cite{Sa98}, \cite{Venema}, \cite{V98}.
Sometimes certain techniques used first in algebraic logic, prove useful for solving problems in (modal) logic.  An amazing
manifestation of such a phenomena is the use 
the probabalistic methods of  of Erd\"os in constructing finite graphs with arbitrarily large 
chromatic number and girth. 
In his pioneering paper of 1959, Erdos took a radically new approach to construct such graphs: for each $n$ he 
defined a probability space on the set of graphs with $n$ vertices, and showed that, for some carefuuly chosen probability measures, 
the probability that an $n$ vertex graph has these properties is positive for all large enough $n$.
This approach, now called the {\it probabilistic method} has since unfolded into a 
sophisticated and versatile proof technique, in graph theory and in other 
branches of discrete mathematics.
This method was used in algebraic logic to show that the class of strongly representable atom structures of cylindric 
and relation algebras is not elementary  
and that varieties of representable relation algebras are barely canonical.
This result was generalized to the class $\RCA_n,$ for finite $n\geq 3$,  adding to the complexity of potential axiomatizations
of $\RCA_n$,  
for it is proved by the author, that though the class $\RCA_n$ is canonical (i.e. closed under canonical extensions), 
any axiomatization of it must contain infinitely many
non canonical sentences. 
In what follows we give an outline of proof that $\RCA_3$ is barely canonical and that 
the representation problem is undecidable for finite $\CA_3$'s. We follow the notation and terminology of \cite{HV}. 
For a given Boolean algebra with operators $\A$, $(\A_+)$ denotes its ultrafilter frame and $\A^{\sigma}=(\A_+)^+$ 
denotes its canonical extension. 
A variety $V$ is canonical if it is closed under canonical extension. We show that 
the class $\RCA_3$, though canonical, has no canonical axiomatization, and that it is undecidable whether a finite $\CA_3$ has a representation.
These results were proved for relation algebras \cite{HV}. We use a construction of Monk that makes the passage from $\RA$ to $\CA_3$.
Let $\alpha$ be a relation algebra atom structure. Let $F_{\alpha}$ be the set of consistent triples of atoms of $\alpha$. 
Then define a cylindric algebra atom structure $\F=(F_{\alpha}, \equiv_i, {\sf d}_{ij})$ as follows.
For $i<3$, 
$$t\equiv_i s\text { iff } t_i=s_i$$
and
$${\sf d}_{ij}=\{t\in F: t_k\leq 1', k\notin \{i,j\}\}.$$ 
For an atomic algebra $\A$, its atom structure will be denoted by $\At\A$.
For a relation algebra atom structure $\alpha$, $\Ca_3(\alpha)$ denotes the cylindric algebra (of dimension $3$)
atom structure as defined above. 
It turns out, as proved by Monk,  that for a given finite relation algebra $\A$, $[\Ca_3\At \A]^+\in \CA_3.$ 
Another way of obtaining cylindric algebras of finite dimension 
$d\geq 3$ from relation algebras, is due to Maddux \cite{Mad}.
If $\A\in \RA$ posses a $d$ dimensional cylindric basis, then one can construct from this basis a cylindric algebra of dimension $d$.
For a relation algebra atom structure $\alpha$, $\M_d(\alpha)$ denotes the set of all $d$ dimensional basic matrices over $\alpha$.
Next, given a graph $\G$, and a positive integer $d\geq 3$, we define a relation algebra atom structure $\alpha(\G)$ of the form
$(\{1'\}\cup (\G\times d), R_{1'}, \breve{R}, R_;)$.
The only identity atom is $1'$. All atoms are self converse, 
so $\breve{R}=\{(a, a): a \text { an atom }\}.$
The colour of an atom $(a,i)\in \G\times d$ is $i$. The identity $1'$ has no colour. A triple $(a,b,c)$ 
of atoms in $\alpha(\G)$ is consistent if
$R;(a,b,c)$ holds. Then the consistent triples are $(a,b,c)$ where

\begin{itemize}

\item one of $a,b,c$ is $1'$ and the other two are equal, or

\item none of $a,b,c$ is $1'$ and they do not all have the same colour, or

\item $a=(a', i), b=(b', i)$ and $c=(c', i)$ for some $i<d$ and 
$a',b',c'\in \G$, and there exists at least one graph edge
of $G$ in $\{a', b', c'\}$.

\end{itemize}
\begin{theorem} 
\begin{enumarab}
\item $\alpha(\G)$ is a relation atom structure. 
\item The set $\M_d(\alpha(\G))$ of $d$-dimensional basic matrices is an atom structure of a cylindric algebra of dimension $d$.
\end{enumarab}
\end{theorem}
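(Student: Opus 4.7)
The plan is to verify the two claims separately, each by direct inspection of the standard atom-structure axioms.

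For (1), I would check the axioms for a relation atom structure: the identity conditions governing $1'$, the involutive behaviour of converse, and Peirce's triangle law for $R_;$. The identity conditions are visible from the first clause of the definition of $R_;$, which includes all permutations of $(1', a, a)$ and nothing else involving $1'$. Converse is trivially involutive because every atom of $\alpha(\G)$ is self-converse. Under self-conversion, Peirce's law collapses to the requirement that $R_;$ be invariant under all six permutations of its three arguments, and this is verified bullet by bullet: the first clause is determined by which two of $a,b,c$ coincide while the third equals $1'$; the second clause depends only on the unordered multiset of colours of the three atoms; and the third clause depends only on the unordered vertex set $\{a',b',c'\}$ together with the common colour $i$. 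Each criterion is manifestly symmetric in $a,b,c$, so $\alpha(\G)$ is a relation atom structure.

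For (2), I would invoke Maddux's general construction (alluded to earlier in the excerpt): whenever a relation atom structure $\alpha$ admits a $d$-dimensional cylindric basis, the collection $\M_d(\alpha)$, equipped with the accessibility relations $\equiv_i$ (given by agreement of matrices off the $i$-th row and column) and diagonals $\{m : m(i,j)\leq 1'\}$, forms an atom structure of a $\CA_d$. Thus it suffices to verify that $\M_d(\alpha(\G))$ is itself a cylindric basis. This requires (a) that every atom of $\alpha(\G)$ occurs as some $m(0,1)$ with $m \in \M_d(\alpha(\G))$, which is straightforward by padding the diagonal with $1'$ and filling the remaining entries using a fixed non-problematic choice of colour; and (b) the amalgamation property, which states that any two basic matrices agreeing off some pair of indices can be merged into a third basic matrix combining their respective rows.

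The heart of the matter, and the main obstacle, is the amalgamation step (b). Concretely, one must label a small set of missing entries of a $d$-clique of atoms in such a way that every resulting triangle lies in $R_;$. The key observation that makes this tractable for $\alpha(\G)$ is that, by the second clause of the definition of $R_;$, the only potentially inconsistent triples are the monochromatic ones whose vertex set in $\G$ carries no edge; all colour-diverse triples are automatically consistent. Since there are $d$ colours available for non-identity atoms and only $d-2$ fresh triangles to verify at each missing entry, a pigeonhole argument supplies a colour unused among the atoms already forced, and selecting any $(v, c)$ with $v \in \G$ in that unused colour $c$ completes the amalgamation. The argument is elementary but combinatorial; once it is carried out, both parts of the theorem follow at once.
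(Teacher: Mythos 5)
Your proposal is correct in substance, but it takes a noticeably different route from the paper: the paper's entire proof is the single remark that $\alpha(\G)$ is the Hodkinson--Venema atom structure of \cite{HV} with $d$ colours in place of three, and that this ``guarantees that (2) holds'' --- i.e.\ it outsources both verifications to \cite{HV}. What you have done is reconstruct, from scratch, the argument that citation is standing in for: the permutation-invariance of the set of consistent triples for the Peircean axioms in (1), and for (2) the reduction to Maddux's theorem that a $d$-dimensional cylindric basis yields a $\CA_d$ atom structure, with the basis conditions checked by the colour-counting argument. Your identification of the amalgamation step as the crux, and the pigeonhole count ($d$ colours against at most $d-2$ constraints coming from the third points $k\notin\{i,j\}$, so an unconstrained colour always survives), is exactly the reason the construction uses $d$ colours for $d$-dimensional matrices, which is the one genuinely new ingredient relative to \cite{HV}; the paper never says this explicitly, so your write-up is the more informative one.

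One caveat you should address before calling (1) finished: invariance of $R_;$ under all permutations of $(a,b,c)$, together with the identity clauses and self-conversion, only delivers the Peircean axioms, i.e.\ that $\Cm\,\alpha(\G)$ is a nonassociative algebra in Maddux's sense. If ``relation atom structure'' is to mean that the complex algebra is an honest relation algebra, associativity of $;$ still needs a word. For $d\geq 4$ it comes for free from your part (2), since a $d$-dimensional basis witnesses associativity; for $d=3$ (which the paper allows, as $\Ca_3$ is built from $\alpha(\G)$) it does not, and one must either check the cycle condition for associativity directly --- which works here because composites of non-identity atoms are large, containing every atom of every colour different from the two composed --- or fall back on the explicit verification in \cite{HV}. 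This is a small but genuine omission; everything else in your argument goes through.
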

\begin{demo}{Proof} Note that $\alpha(\G)$ is like the relation algebra atom structure defined in \cite{HV}, 
except that we allow $d$ colors instead of just three.
This guarantees that $(2)$ holds.
\end{demo}

We assume that two-player games can be devised to charaterize the class of representable algebras \cite{HHbook}.
Let $L_{ra}$ denote the language of $\RA$'s and $L_{ca}$ denote the language of $\CA_3$.
$\Rd_{df}$ denotes the diagonal free reduct. With a slight abuse of notation we may write $\Ca_3\A$ for the cylindric algebra $[\Ca_3\At \A]^+$.
So that when $\Ca_3$ is applied to an algebra it produces an algebra, while when applied to an atom structure it produces an atom structure.

\begin{theorem}\label{h} There exists a set of first order formulas $\{\sigma_k: k\in \omega\}$ in the language $L_{ra}$  
and $\{\tau_k:k\in \omega\}$ in the language $L_{RA}$ 
such that $\sigma_k$ translates that $\exists$ has a winning  strategy after $k$ rounds in the relation algebra representation game,
and $\tau_k$ translates that $\exists$ has a winning strategy  after $k$ rounds in the $\CA_3$ representation game, 
such that the following hold:
\begin{enumroman}
\item If $\A\in \RA$ then $\A$ is representable if and only if $\A\models \sigma_k$ for al $k\in \omega$
\item if $\A\in \CA_3$ then $\A$ is representable if and only if $\A\models \tau_k$ for all $k\in \omega$.
\item if $\A$ is a finite simple relation algebras, then there exists $k_0\in \omega$, such that for all 
$k\in \omega, k\geq k_0$ 
$$\A\models \sigma_k\Longleftrightarrow \Ca_3\A\models \tau_k.$$
\item $\A$ is an atomic finite relation algebra, then $\A$ is representable iff $\Ca_3\A$ is rep iff $\Rd_{df}\Ca_3\A$ is representable.
\end{enumroman}
\end{theorem}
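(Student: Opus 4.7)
The plan is to construct $\sigma_k$ and $\tau_k$ as the syntactic encoding of the Hirsch--Hodkinson step-by-step representation games. For the $\RA$-game on $\A$, $\forall$'s moves consist of picking a triangle $(b,c)$ of labels on existing nodes and an atom $a\leq b\,;c$, and $\exists$ must produce a new node witnessing the composition; for the $\CA_3$-game on a $\CA_3$ we use the cylindrifier/diagonal game already described above for $\RPEA_d$, specialised to $d=3$ and with the polyadic moves suppressed. In each round the set of possible ``network states'' is first-order describable in the algebra, so one obtains $\sigma_k$ and $\tau_k$ by induction: $\sigma_{k+1}$ universally quantifies over $\forall$'s move and asserts the existence of a network-extension making $\sigma_k$ hold. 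Parts (i) and (ii) then follow from the standard completeness-of-the-game argument: if $\exists$ wins every finite game she wins the $\omega$-round game (by König/compactness on the finitely-branching game tree of a countable algebra, after taking a countable elementary substructure), and such a winning strategy builds a representation in the usual way, exactly as in the proof of the analogous theorem for $\RPEA_d$ in the previous section.

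For (iii), fix a finite simple relation algebra $\A$. The atoms of $\Ca_3\A$ are by construction the consistent triples $t\in F_\alpha$ of atoms of $\A$, and an atomic $\Ca_3\A$-network on a set $N$ of nodes reduces (via the projections $t\mapsto t_i$) to an $\A$-labelling of pairs of nodes satisfying the usual triangle-consistency. Thus an atomic $\CA_3$-network on $N$ is essentially the same object as an atomic $\A$-network on $N$, and a cylindrifier move by $\forall$ in the $\CA_3$-game (adding a node witnessing some ${\sf c}_i a$) translates directly into a triangle move in the $\RA$-game, and conversely. Because the initial configuration is different in the two games (the $\RA$-game opens with a triangle of labels, the $\CA_3$-game opens with a single atom of $\Ca_3\A$), there is a bounded setup phase: let $k_0$ be large enough for $\exists$ in the $\CA_3$-game to have populated the full set of triples needed to simulate the initial $\RA$-network, which, since $\A$ is finite simple, is uniformly bounded in terms of $|\A|$. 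For $k\geq k_0$ the two games become round-by-round equivalent, giving the biconditional.

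For (iv), decompose the finite atomic $\A$ into its (finitely many) simple factors; representability passes through subdirect products, so we may assume $\A$ is simple. Then chain (i)--(iii) to get $\A$ representable $\iff \A\models\sigma_k$ for all $k\iff\Ca_3\A\models\tau_k$ for all $k\geq k_0\iff\Ca_3\A$ representable. The direction $\Ca_3\A$ representable $\Rightarrow \Rd_{df}\Ca_3\A$ representable is trivial; for the converse, exploit the fact that $\Ca_3\A$ is finite and atomic with atoms that encode their own diagonal behaviour (the atom $t$ lies below ${\sf d}_{ij}$ precisely when $t_k\leq 1'$ for $k\notin\{i,j\}$, a property determined by $t$ itself). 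A diagonal-free representation of the atomic algebra is forced to send each atom to a non-empty relation on which this encoded identity behaviour holds set-theoretically; the representation therefore automatically interprets ${\sf d}_{ij}$ correctly and extends to a $\CA_3$-representation.

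The main obstacle is making the correspondence in (iii) precise and uniform in $k$: one has to match $\forall$'s moves in the two games so that a $\ws$ for $\exists$ in one translates faithfully into a $\ws$ in the other, and in particular to control the index $k_0$ that measures how many $\CA_3$-rounds are needed to reconstruct, inside a $\CA_3$-network, the triangle data used to launch the $\RA$-game. Once this bookkeeping is done, (iv) is a direct corollary and yields the bridge from $\RA$ to $\CA_3$ that is used in the sequel to transfer the Hodkinson--Venema non-canonical-axiomatisation and undecidability results.
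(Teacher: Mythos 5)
Your proposal follows essentially the same route as the paper, which in fact gives almost no proof at all: it simply says the games of Hirsch--Hodkinson can be ``suitably adjusted'' to yield (i)--(iii) and cites Monk and Johnson for (iv). Your game-encoding of $\sigma_k,\tau_k$, the finite-game/infinite-game completeness argument for (i)--(ii), and the network correspondence for (iii) are all the intended content, and you supply more detail than the paper does.

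One step deserves a correction. In (iv) you claim that a diagonal-free representation of $\Rd_{df}\Ca_3\A$ ``automatically interprets ${\sf d}_{ij}$ correctly'' because the atoms encode their own identity behaviour. That is not so: a $\Df_3$-representation is under no obligation to send the element that happens to be ${\sf d}_{ij}$ to the genuine identity relation on the base. The classical Johnson argument (which the paper quotes elsewhere, in the proof that $\Rd_{df}\C$ is not representable, via \cite{HMT2} 5.1.48--5.1.50) instead defines the relation $R=\{(u,v): \exists s\in h({\sf d}_{ij}),\ s_i=u,\ s_j=v\}$, proves it is an equivalence relation independent of $i,j$, and \emph{quotients the base by $R$} to obtain a genuine $\CA_3$-representation; this works because $\Ca_3\A$ is generated by its elements of non-full dimension set (every atom $t$ equals $\prod_{i<3}{\sf c}_i t$). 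Without the quotienting step your argument for the direction $\Rd_{df}\Ca_3\A$ representable $\Rightarrow \Ca_3\A$ representable does not go through as written, though the repair is exactly the cited classical result.
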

\begin{demo}{Proof} This can be done by suitable adjusting the games defined in \cite{HHbook}.
(iv) was proved by Monk and Johnson \cite{HMT1}.
\end{demo}
From now on ${\GG}_n$ stands for fixed  games after $n$ rounds as specified above. For relation algebras we follow the games defined in \cite{HV}
while for cylindric algebras we follow an easy  modification of the games defined in \cite{HHbook} so that the above Theorem holds.
The following two definitions are taken from \cite{HV}.
\begin{definition}
\begin{enumarab}
\item We say that a partially ordered set $(I, \leq)$ is directed if every finite subset of $I$ has an upper bound in $I$.
\item An inverse system of $L^a$ structures is a triple
$$D=((I,\leq)) (S_i:i\in I), (\pi_{ij}: i,j\in I, i\leq j)),$$
where $(I,\leq)$ is a directly partially ordered set, each $S_i$ is an $L^a$ structure, and for $i\leq j\in I$, $\pi_{ji}:S_j\to S_i$ is a surjective homomorphism,
such that whenever $k\geq j\geq i$ in $I$ then $\pi_{ii}$ is the identity map and $\pi_{ki}=\pi_{ji}\circ \pi_{kj}$.
\item We say that $D$ in an inverse system of finite structures if each $S_j$ is a finite structure, and an inverse system of bounded morphisms if each $\pi_{ij}$
is a bounded morphism.
\item The inverse limit $lim_{\leftarrow}D$ of $D$ is the substructure of $\prod_{i\in I}S_i$ with domain
$$\{\chi\in \prod_{i\in I}S_i: \pi_{ji}(\chi_j))=\chi(i)\text { whenever } j\geq i \text { in } I\}.$$
\item For any $i\in I$, the projection $\pi_i:lim_{\leftarrow}D\to S_i$ is defined by $\pi_i(\chi)=\chi(i)$
\end{enumarab}
\end{definition}

\begin{definition}
Let $D=((I,\leq) )S_i:i\in I), (\pi_{ji}: i,j\in I, i\leq j)$ be an inverse system of finite structure and bounded morphisms.
Let $I=lim_{\leftarrow} D$. For each $i\in I$ define $\pi_i^+:S_i^+\to I^+$ by $\pi_i^+(X)=\pi_i^{-1}[X]$, for $X\subseteq S_i$.
\end{definition}
Each $\pi_i^+$ is an algebra embedding $: S_i^+\to I^+$, and its range $\pi_i^+(S_i)$ is a finite subalgebra of $I^+$. It follows that
$\A_D=\bigcup_{i\in I}\pi_i^+(S_i^+)$ is a directed union of finite subalgebras of $I^+$, and we have 
$$(\A_D)_+\cong lim_{\leftarrow}D=I$$
The following theorem is proved in \cite{HV} by adapting techniques of Erdos in constructing probabilistic 
graphs with arbitrary large chromatic number and girth.

\begin{theorem} Let $k\geq 2$. There are finite graphs $G_0, G_1\ldots$ and surjective homomorphisms $\rho_i:G_{i+1}\to G_i$ for $i<\omega$
such that for each $i$, $\rho_i$ is a bounded morphism and
\begin{enumarab}
\item for each edge $xy$ of $G_i$ and each $x'\in \rho_i^{-1}(x)$, there is a $y'\in \rho_i^{-1}(y)$ such that
$x'y'$ is an edge of $G_{i+1}$,
\item $G_i$ has no odd cycles of length $\leq i$
\item $\chi(G_i)=k.$
\end{enumarab}
\end{theorem}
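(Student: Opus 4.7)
The plan is to build the sequence $(G_i, \rho_i)$ by induction on $i$, using an Erd\"os-style probabilistic fibred construction. For the base, take $G_0 = K_k$, which has $\chi(G_0)=k$ and (vacuously) has no odd cycles of length $\leq 0$. Given $G_i$ satisfying the three conditions, I build $G_{i+1}$ as a random sparse cover: fix a large integer $N$ and a probability $p\in(0,1)$ to be tuned, let $H$ have vertex set $V(G_i)\times[N]$ grouped into fibres $F_v := \{v\}\times[N]$, and for each edge $uv\in E(G_i)$ and pair $(s,t)\in[N]^2$ include $\{(u,s),(v,t)\}$ as an $H$-edge independently with probability $p$. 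The projection $\rho:H \to G_i$ is then automatically a graph homomorphism, since $H$-edges live only between $G_i$-adjacent fibres; in particular any odd cycle in $H$ of length $\leq i$ would project to an odd closed walk in $G_i$ of the same length, which by the inductive hypothesis cannot exist, so $H$ already satisfies the girth requirement up to length $i$ for free.

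Next I would verify that three events hold with probability tending to $1$ as $N\to\infty$. (A) \emph{Lifting}: every $x'\in F_u$ has a neighbour in $F_v$ for every $G_i$-edge $uv$; a union bound gives failure probability $O(|V(G_i)|\,|E(G_i)|\,N(1-p)^N)$, which is $o(1)$ whenever $Np\to\infty$. (B) \emph{Few odd $(i{+}1)$-cycles}: the expected number of odd cycles of length $i+1$ in $H$ is $O((Np)^{i+1})$, so by Markov's inequality the actual number $D$ is $O((Np)^{i+1})$ with high probability. (C) \emph{Robust colouring obstruction}: for every $G_i$-edge $uv$ and every pair of subsets $A\subseteq F_u$, $B\subseteq F_v$ with $|A|,|B|\geq N/(k-1)$, the number of $H$-edges between $A$ and $B$ strictly exceeds $D$; Chernoff concentration together with a union bound over the $\leq |E(G_i)|\cdot 4^N$ such pairs yields this whenever $pN\gg\log N$. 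For any realisation where (A)--(C) hold, I apply the Erd\"os deletion trick: remove one edge from each odd $(i{+}1)$-cycle, producing $G_{i+1}$ and $\rho_i := \rho|_{G_{i+1}}$.

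Conditions (1) and (2) then follow: (1) survives because at most $D \ll N$ deletions cannot undo lifting, and (2) combines the deletion step with the projection argument above. For (3), the upper bound $\chi(G_{i+1})\leq \chi(G_i)=k$ comes from the homomorphism $\rho_i$. For the lower bound, suppose $c:V(G_{i+1})\to[k-1]$ is proper; pigeonhole picks in each fibre $F_v$ a dominant colour $d(v)$ with $|c^{-1}(d(v))\cap F_v|\geq N/(k-1)$. If $d$ were a proper $(k-1)$-colouring of $G_i$, it would contradict $\chi(G_i)=k$, so some $G_i$-edge $uv$ has $d(u)=d(v)$; but then (C) provides, even after deletion of at most $D$ edges, a surviving $G_{i+1}$-edge inside a single colour class of $c$, a contradiction.

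The main obstacle I foresee is the parameter balancing in the last step: $p$ must be small enough that the deletion budget $D=O((Np)^{i+1})$ stays well below the guaranteed edge count $p(N/(k-1))^2$ between any two large same-colour subsets of adjacent fibres (forcing $p\ll N^{-(i-1)/i}$, a window that narrows with $i$), yet large enough for Chernoff concentration and the lifting estimate to go through (forcing $p\gg\log N/N$). This window is non-empty for each fixed $i$ once $N$ is chosen sufficiently large as a function of $i$ and $k$, so the induction closes and the entire sequence $(G_i,\rho_i)$ is produced.
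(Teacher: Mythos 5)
The paper itself offers no proof of this theorem beyond the citation to Hodkinson and Venema, so your proposal cannot be matched line by line against a source argument; but your overall strategy (a random fibred cover of $G_i$ followed by an Erd\"os-style deletion, with the chromatic number controlled downstairs by a dominant-colour argument) is in the right spirit of the cited probabilistic construction, and your treatment of conditions (2) and (3) is essentially correct: the projection argument for odd cycles of length $\leq i$ and the use of (C) to defeat any $(k-1)$-colouring both work as you describe.

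The genuine gap is in your verification of condition (1) after the deletion step. You assert that ``at most $D\ll N$ deletions cannot undo lifting.'' First, $D\ll N$ is not guaranteed by your own parameter window: you only impose $p\ll N^{-(i-1)/i}$, under which $D=O((Np)^{i+1})$ can be as large as roughly $N^{(i+1)/i}\gg N$. Second, and more importantly, even $D\ll N$ would not suffice, because the bounded-morphism condition is local at each vertex: a vertex $x'\in F_u$ has only about $pN$ neighbours in an adjacent fibre $F_v$, and since $Np\to\infty$ one always has $D=\Theta((Np)^{i+1})\gg pN$ throughout your admissible range. Nothing in (A)--(C) prevents all $D$ deleted edges from being incident to a single vertex, which would strip it of every neighbour in some fibre and destroy both condition (1) and the claim that $\rho_i$ is a bounded morphism. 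The repair needs a local estimate that you have not supplied: for instance, take $p=N^{-1+\epsilon}$ with $\epsilon<1/(i+1)$, note that the expected number of odd $(i{+}1)$-cycles through a fixed vertex is $O(N^ip^{i+1})=O(N^{(i+1)\epsilon-1})=o(1)$, and run a moment (or union bound over $t$-tuples of cycles) argument to show that with high probability no vertex lies on more than a constant number of such cycles, while every vertex retains $pN/2\to\infty$ neighbours in each adjacent fibre; only then does deleting one edge per short odd cycle provably leave condition (1) intact.
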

\begin{demo}{Proof} \cite{HV}
\end{demo}
Fix integers $k\geq m\geq 2$, and let $H_0, H_1\ldots$ and $\pi_i:H_{i+1}\to H_i$ be graphs and homomorphisms as above. 
Now fix a complete graph $K_m$ with $m$ nodes, and for each $i<\omega$, let $G_i$ be the disjoiny union of $H_i$ and $K_m$. 
For each $i<j<\omega$ define $\rho_{ij}$ to be the identity on $G_i$, and
$\rho_{ji}:G_j\to G_i$ be defined by
\begin{equation*}
\rho_{ji}(x) =
\begin{cases}
\pi_i\circ \ldots \pi_{j-1} , & \hbox{if $x\in H_j$} \\
x,  & \hbox{if $x\in K_m$.} \end{cases}
\end{equation*}
Let $$D_m^k=((\omega, \leq), (G_i:i<\omega), (\rho_{ji}: i\leq j<\omega))$$
Then $lim_{\leftarrow}D_m^k$ is a graph with chromatic number $m$.
Now combining Theorem 2 and the techniques of \cite{HV} theorem 6.8, we are ready for:

\begin{theorem}\label{can} $\RCA_3$ has no canonical axiomatization
\end{theorem}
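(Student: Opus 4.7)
The plan is to transfer the Hirsch--Venema proof that $\RRA$ is barely canonical (Theorem~6.8 of \cite{HV}) to $\RCA_3$, using the Monk--Johnson passage $\Ca_3$ from relation-algebra atom structures to cylindric atom structures of dimension three. Two ingredients are crucial and already in place: Theorem~\ref{h}, giving game sentences $\sigma_k$ and $\tau_k$ on the two sides together with the equivalence $\A \models \sigma_k \Leftrightarrow \Ca_3\A \models \tau_k$ for $k$ sufficiently large (and for finite simple $\A$), and the inverse systems $D_m^k$ of Erd\H{o}s graphs recalled at the end of the excerpt. The overall strategy is to exhibit, for every putative canonical axiomatization $\Sigma$ of $\RCA_3$, a countable cylindric algebra $\A_k$ that satisfies every sentence in $\Sigma$ yet fails to be representable.

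Fix $m = 2$. For each large $k$, apply the functor $G \mapsto \Ca_3\alpha(G)$ componentwise to the inverse system $D_m^k$. By the earlier discussion of $\alpha(\cdot)$ and $\Ca_3$, this produces an inverse system $D$ of finite cylindric atom structures $S_i = \Ca_3\alpha(G_i)$ with surjective bounded-morphism transition maps. Let $\A_k := \A_D = \bigcup_i \pi_i^+(S_i^+)$ be the associated directed union of finite complex subalgebras, so that $(\A_k)_+ \cong \lim_{\leftarrow} D$ and hence $(\A_k)^\sigma = (\lim_{\leftarrow} D)^+$. The graph underlying $\lim_{\leftarrow} D$ is essentially $\lim_{\leftarrow} D_m^k$, which has chromatic number $m = 2$; by the chromatic-number obstruction to representability for Monk-type algebras (exactly as in the proof of the analogous result for $\RRA$ in \cite{HV}, transferred through $\Ca_3$ via Theorem~\ref{h}(iv)), we conclude $(\A_k)^\sigma \notin \RCA_3$. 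Since $\RCA_3$ is a canonical variety, this already forces $\A_k \notin \RCA_3$.

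It remains to show that $\A_k$ nevertheless models every canonical first-order sentence valid throughout $\RCA_3$, provided $k$ is chosen large enough. Each finite piece $\pi_i^+(S_i^+)$ arises from a graph $G_i$ of chromatic number $k$, so by the chromatic-number--game-depth correspondence (Theorem~\ref{h}(i)--(iii) combined with the strategy transfer via $\Ca_3$), player $\exists$ wins arbitrarily many rounds of the $\RCA_3$ representation game on each finite piece as $k$ grows. The final and most delicate step is to push this from the finite pieces to $\A_k$ itself: concretely, one uses that each canonical sentence is $d$-persistent --- preserved in passing from an atomic BAO to the complex algebra of its ultrafilter frame --- and hence preserved when passing from an atomic strongly-representable finite subalgebra $\pi_i^+(S_i^+)$ to the directed union $\A_k$, whose ultrafilter frame is by construction the inverse limit of the finite atom structures. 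This is where the Hirsch--Venema machinery is invoked in full, and it is the principal obstacle the proof must overcome. With this preservation in hand, any canonical axiomatization $\Sigma$ of $\RCA_3$ would force $\A_k \models \Sigma$, hence $\A_k \in \RCA_3$, contradicting the previous paragraph; therefore no canonical axiomatization of $\RCA_3$ exists.
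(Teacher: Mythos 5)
Your scaffolding (the Erd\H{o}s inverse systems $D_m^k$, the passage $G\mapsto\Ca_3\alpha(G)$, and the identification of $(\A_D)_+$ with $\lim_{\leftarrow}D$ so that $\A^{\sigma}\cong[\Ca_3(\alpha(G))]^+$) is exactly the paper's, but the way you bring the hypothesized axiomatization $\Sigma$ into contradiction with the construction does not go through. The first problem is the choice $m=2$. The only thing one can extract from the assumption that $\Sigma$ is canonical is a quantitative transfer principle (\cite{HV}, Prop.~5.4): there is $n_0$ such that for each $n$ there is $n^*$ with the property that if $\exists$ wins ${\GG}_{n^*}(\A)$ \emph{and} ${\GG}_{n_0}(\A^{\sigma})$, then she wins ${\GG}_{n}(\A^{\sigma})$. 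To violate this one must make the limit graph have chromatic number $m=e(n_0)$ --- large enough that $\exists$ survives the first $n_0$ rounds on $\A^{\sigma}$, yet finite so that $\forall$ wins ${\GG}_{u(m)}(\A^{\sigma})$. With $m=2$ the hypothesis ``$\exists$ wins ${\GG}_{n_0}(\A^{\sigma})$'' fails (the sufficient condition is $\chi(G)\geq e(n_0)\gg 2$), so the transfer principle is never engaged and no contradiction is available. The tuning $m=e(n_0)$, $n=u(m)$, $k=e(n^*)$ is the substance of the proof, not a cosmetic choice.

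The second problem is your final preservation step. The finite pieces $\pi_i^+(S_i^+)=[\Ca_3\alpha(G_i)]^+$ are \emph{not} strongly representable: $G_i$ is finite, so $\chi(G_i)=k<\infty$ and $\forall$ wins ${\GG}_{u(k)}$ on each piece; they satisfy only an initial segment of the game axioms and certainly not $\Sigma$. Moreover, canonicity ($d$-persistence) concerns the passage $\A\mapsto\A^{\sigma}$, not directed unions; first-order sentences are in general not preserved under unions of chains, so ``each canonical sentence is preserved when passing from the finite subalgebras to the directed union'' is not a theorem. Note also that under the reductio hypothesis the statement $\A_k\models\Sigma$ is \emph{equivalent} to $\A_k\in\RCA_3$, which you have already (correctly) refuted via canonicity of the variety; so the contradiction cannot be reached by directly verifying $\Sigma$ in $\A_k$. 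The paper's mechanism is different: from the transfer principle one gets that $\forall$'s winning strategy in ${\GG}_{n}(\A^{\sigma})$, together with $\exists$'s win in ${\GG}_{n_0}(\A^{\sigma})$, forces $\forall$ to win ${\GG}_{n^*}(\A)$; that strategy uses only finitely many elements and hence localizes to some finite subalgebra $[\Ca_3(\alpha(G_i))]^+$ with $\chi(G_i)=e(n^*)$, where $\exists$ also wins ${\GG}_{n^*}$ --- a purely game-theoretic contradiction on a finite piece. That localization argument is what your proposal is missing.
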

\begin{demo}{Proof} Let ${\GG}_n$ denote the games defined after $n$ rounds. Assume that $\RCA_3$ has a canonical axiomatization.
Then is an $n_0$ such that for any $n<\omega$, there is $n^*<\omega$, such that for any $\CA_n$ $\A$
if $\exists$ has a winning strategy in ${\GG}_{n^*}(\A)$ and ${\GG}_{n_0}(\A^{\sigma})$ then she has a winning strategy in 
${\GG}_n(\A^{\sigma})$ \cite{HV} pop 5.4.
Let $e:\omega\to \omega$ be defined by $e(n)=2^{dn.4^n}$.
Then if $G$ is a graph and $\chi(G)\geq e(n)$ for some $n<\omega$, then $\exists$ has a winning strategy in ${\GG}_n(\alpha(G)^+)$ \cite{HV} prop 6.4.
For $n<\omega$, let $n'<\omega$ be so large such that any colouring using $dn$ colours , of the edges of a complete graph with $n'$ nodes 
has a monochromatic triangle. Let $u:\omega\to \omega$ be defined ny $u(n)=n'-2+n'(n'-1)(dn+1)$. 
Then if $G$ is a graph with $\chi(G)\leq n<\infty$ and $|G|\geq n'-1$
then $\forall$ has a winning strategy in ${\GG}_{u(n)}(\alpha(G)^+)$ \cite{HV} prop 6.6.
Let $m=e(n_0)$ and $n=u(m)$ . Since the games played are determined,
there is $n^*<\omega$, such that for any cylindric algebra $\A$, such that $\exists$ has a winning startegy in  ${\GG}_{n_0}(\A^{\sigma})$, if $\forall$ 
has a winning strategy in ${\GG}_n(\A^{\sigma})$
then he has a winning strategy in ${\GG}_{n^*}(\A)$.
Let $k=e(n^*)$.
Let $D=D_m^k=((\omega, \leq), (G_i:i<\omega), (\rho_{ij}: i\leq j<\omega))$ be the inverse system as defined above.
We have $G=lim_\leftarrow D$ and $\chi(G_i)=k$ for all $i$ and $\chi(G)=m$.
Let $\alpha(D)=((\omega,\leq), \Ca_3((\alpha(G_i)):i<\omega), (\alpha^{\rho_{ji}}: i\leq j<\omega))$, where 
$\rho_{ji}:\Ca_3(\alpha(G_j))\to \Ca_3(\alpha(G_i))$
is defined by $((a,k), (b,k),(c,k))\mapsto ((\rho_{ji}a,k),(\rho_{ji}b,k) (\rho_{ji}c,k))$. Then $\alpha(D)$ is an inverse system of cylindric algebra
atom structures and 
bounded morphisms. Each $[\Ca_3(\alpha (G_i))]^+$ is a cylindric algebra. Write $\A$ for the algebra $\A_{\alpha(D)}$. 
Then $[\Ca_3(\alpha (G_i))]^+\subseteq \A$ for all $i<\omega$ and $\A$ is the directed union $\bigcup_{i<\omega}[\Ca_3(\alpha (G_i)]^+$.
Then $\A$ is a cylindric algebra of dimension $3$. 
It can be checked that $\A$ is atomic and $\alpha(lim_\leftarrow D)\cong lim_\leftarrow\alpha(D)$, hence $\A^{\sigma}\cong [\Ca_3(\alpha(G))]^+$. 
Note that here $\Ca_3$ is applied to an infinite atom structure. But it can be easily checked that the resulting atom structure is that of a cylindric algebra of dimension $3$.
Now $G$ has chromatic number $m$ and is infinite. Then $\exists$ has a winning strategy 
in ${\GG}_{n_0}(\A^{\sigma})$ while $\forall$ has a winning strategy in 
${\GG}_{n}(\A^{\sigma})$. By choice of $n^*$, $\forall$ also has a winning strategy in ${\GG}_{n*}(\A)$ that 
only uses finitely many elements $W\subseteq \A$. 
We may choose $i<\omega$ such that $W\subseteq \alpha(G_i)^+$. Since the latter is a
subalgebra of $\A$, then this is a winning strategy  for $\forall$ in ${\GG}_{n^*}[\Ca_3(\alpha(G_i))]^+$. 
But $\chi(G_i)=e(n^*)$, then $\exists$ has a winning strategy in this same game. 
This is a contradiction that finishes the proof.
\end{demo}
Monk's construction was used in \cite{HHK} together with the deep result of Hirsch and 
Hodkinson of the undecidability of the representation problem for 
finite relation algebras \cite{un}, to show that the modal logics $K5\times K5\times K5$ are undecidable.
Using the same technique we now show

\begin{theorem}\label{un}  It is undecidable whether a finite simple $\CA_3$ is representable
\end{theorem}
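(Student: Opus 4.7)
\begin{demo}{Proof plan} The plan is to reduce the (known undecidable) representation problem for finite simple relation algebras to the representation problem for finite simple $\CA_3$'s, via Monk's construction $\Ca_3$ discussed just above Theorem \ref{h}. Hirsch and Hodkinson proved in \cite{un} that there is no algorithm which, given a finite simple relation algebra, decides whether it is representable. So it suffices to exhibit a computable map $\A\mapsto \Ca_3\A$ from finite simple relation algebras to finite simple $\CA_3$'s such that $\A$ is representable if and only if $\Ca_3\A$ is representable.

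First I would verify that the map is well defined and effective. Given a finite (hence atomic) relation algebra $\A$, the atom structure of $\Ca_3\A$ is the set $F_{\At\A}$ of consistent triples of atoms of $\A$, a subset of $(\At\A)^3$, which is finite and computable from the multiplication table of $\A$. Then $\Ca_3\A=[\Ca_3\At\A]^+$ is a finite $\CA_3$; Monk's observation cited before Theorem \ref{h} guarantees that it really is in $\CA_3$.

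Second, I would check that $\Ca_3\A$ is simple whenever $\A$ is simple. For finite $\CA_3$'s, simplicity is equivalent to the discriminator term ${\sf c}_0{\sf c}_1{\sf c}_2x$ taking only the values $0$ and $1$, equivalently to the unit being generated by any single non-zero element under cylindrifications. Since $\A$ is simple, $1\,;x\,;1=1$ for every non-zero $x\in \A$, and from this one reads off, using the definition of the equivalence relations $\equiv_i$ on $F_{\At\A}$, that ${\sf c}_0{\sf c}_1{\sf c}_2a=1$ for every non-zero atom $a$ of $\Ca_3\A$; this is a short direct calculation in the atom structure. Hence $\Ca_3\A$ is simple.

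Third, the heart of the argument is Theorem \ref{h}(iv): for a finite (hence atomic) relation algebra $\A$, $\A$ is representable if and only if $\Ca_3\A$ is representable. Combining all three points, any decision procedure for representability of finite simple $\CA_3$'s would yield a decision procedure for representability of finite simple relation algebras, contradicting \cite{un}. The main obstacle, modulo the cited facts, is really just the verification that simplicity is preserved by $\Ca_3$; everything else is packaged into Theorem \ref{h}(iv) and the Hirsch--Hodkinson undecidability theorem for finite relation algebras.
\end{demo}
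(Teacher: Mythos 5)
Your proposal is correct and follows essentially the same route as the paper: a computable reduction $\A\mapsto \Ca_3\A$ from finite simple relation algebras to finite simple $\CA_3$'s, invoking Theorem \ref{h}(iv) and the Hirsch--Hodkinson undecidability result for finite relation algebras. The only difference is that you explicitly check that $\Ca_3$ preserves simplicity, a detail the paper's (very terse) proof leaves implicit.
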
 
\begin{demo}{Proof} If there is a decision procedure of deciding whether a finite $\CA_3$ is representable, then this procedure can be implemented to decide that
whether a simple finite relation algebra is representable. For start by the  relation algebra $\A$. Form recursively $\Ca_3\A$. 
Then $\A$ is representable if and only if
$\Ca_3\A$ is representable, and we can decide the latter, so we can decide the former. 

\end{demo}
The results above adds somewhat to the complexity of axiomatizations of $\RCA_3$. 
For example, Theorem \ref{un}, as pointed out by Ian Hodkinson, implies that $\RCA_3$ has no finite axiomatization in $n$th order logic where
$n$ is any number.
Now we discuss our results in two respects. 
For other algebras, like diagonal free cylindric algebras and Halmos polyadic algebras, and for higher dimensions.
For $n=3$, our two main theorems generalise to diagonal free cylindric algebras and polyadic algebras with and without equality
and many reducts in between. This follows from Theorem \ref{h} 
and the fact that one can expand $\Ca\A$ to polyadic equality algebras by swapping coordinates.
An important reduct is that of Pinter's substitution algebras.
Now for higher dimensions, the proofs of theorems \ref{h} and  \ref{un} go 
through for diagonal free cylindric algebras. This is worthwhile formulating separately

\begin{theorem} Let $n\geq 3$ be finite. Then $\RDf_n$ cannot be axiomatized by canonical equations, and it is undecidable whether
 a finite $\Df_n$ is representable.
\end{theorem}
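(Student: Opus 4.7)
The plan is to run the arguments of Theorems \ref{can} and \ref{un} at arbitrary finite dimension $n\geq 3$ and then transfer the conclusions from full cylindric algebras to their diagonal-free reducts. For the passage to dimension $n$, the construction $\Ca_3$ of a $\CA_3$ from a relation algebra atom structure is replaced by Maddux's $n$-dimensional basic-matrix construction $\M_n$. Concretely, given a graph $G$ and a number of colours $d\geq n$, the relation algebra atom structure $\alpha(G)$ defined as in the excerpt (with $d$ colours instead of just $3$) has the property that $\M_n(\alpha(G))$ is a cylindric atom structure of dimension $n$; this is precisely the point of allowing $d$ colours in the definition, and is asserted in part~(2) of the Theorem following the definition of $\alpha(G)$. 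The games in Theorem \ref{h} together with their combinatorial estimates (the functions $e$ and $u$, Ramsey-type colouring bounds) are already formulated for arbitrary dimension in \cite{HHbook}, so an appropriate $n$-dimensional analogue of Theorem \ref{h} holds verbatim.

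Next I would build, exactly as in the proof of Theorem \ref{can}, an inverse system $D_m^k$ of finite graphs with surjective bounded morphisms, chosen so that $\chi(G_i)=k$ for all $i$ while $\chi(\lim_{\leftarrow}D_m^k)=m$, with $k,m$ tuned to the chromatic thresholds coming from the $n$-dimensional games. Applying $\M_n\circ\alpha$ termwise yields an inverse system $\alpha(D_m^k)$ of finite cylindric atom structures of dimension $n$, and the associated algebra $\A=\A_{\alpha(D_m^k)}$, viewed as a directed union of its finite subalgebras $[\M_n(\alpha(G_i))]^+$, satisfies $\A^{\sigma}\cong[\M_n(\alpha(\lim_{\leftarrow}D_m^k))]^+$. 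As in the proof of Theorem \ref{can}, assuming a canonical axiomatization produces an $n_0$ witnessing that $\exists$'s winning strategy in ${\GG}_{n_0}(\A^{\sigma})$ propagates to ${\GG}_n(\A^{\sigma})$ for all $n$, which is contradicted by the gap between the chromatic number of the limit and the chromatic numbers of the finite approximations.

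To transfer from $\CA_n$ to $\Df_n$, I would use the diagonal-free analogue of Theorem \ref{h}(iv): for these Monk-style algebras, the diagonals $\sf d_{ij}$ are recoverable from the basic-matrix structure (since a matrix labels the $(i,j)$-edge by an identity atom of the underlying relation algebra), so $\A$ is representable as a $\CA_n$ iff $\Rd_{df}\A$ is representable as a $\Df_n$, and similarly at the level of games (a winning strategy in the $\Df_n$-game is enough to force a representation of the full $\CA_n$ in the rigid Monk-construction). Feeding this equivalence into the argument above yields non-canonicity of $\RDf_n$. For undecidability, the reduction is the same as in Theorem \ref{un}: given a finite simple relation algebra $\A$, construct recursively the finite $\Df_n$-algebra $\Rd_{df}[\M_n(\At\A)]^+$; by the equivalence just noted this is representable iff $\A$ is representable, so a decision procedure for representability of finite $\Df_n$'s would decide the undecidable problem of representability of finite simple relation algebras \cite{un}.

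The main obstacle I anticipate is justifying the diagonal-free analogue of Theorem \ref{h}(iv) uniformly in $n$: one must check that the combinatorial rigidity of the $\alpha(G)$-construction is strong enough that forgetting the diagonal operations in the $n$-dimensional basic-matrix algebra does not enlarge the class of representable algebras, so that the games for $\Df_n$-representability on these particular algebras are equivalent to those for $\CA_n$-representability. Once this is in place, the chromatic/game estimates and the inverse-system machinery from the $n=3$, $\CA$-case transfer mechanically.
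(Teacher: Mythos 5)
Your proposal is sound in outline but takes a genuinely different, and substantially heavier, route than the paper's own proof, which is a two-line reduction to the already-established three-dimensional case. The paper never passes to $n$-dimensional basic matrices: given a relation algebra it forms $\Rd_{df}\Ca_3\A$ and then pads this $\Df_3$ up to a $\Df_n$ by declaring the extra cylindrifications ${\sf c}_3,\dots,{\sf c}_{n-1}$ to be the identity (the trick of \cite{HHK}). That padding is available only in the diagonal-free signature --- one cannot adjoin identity cylindrifications to a $\CA_3$ without forcing ${\sf d}_{ij}=1$ via ${\sf c}_i{\sf d}_{ij}=1$ --- which is exactly why the theorem is stated for $\RDf_n$ for all $n\geq 3$ while the corresponding $\CA_n$ statement for $n\geq 4$ is deferred to \cite{ca}. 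With the padding in place, representability of the padded $\Df_n$ is equivalent to representability of the underlying $\Df_3$, hence by Theorem \ref{h}(iv) to representability of the relation algebra, and both non-canonicity and undecidability transfer verbatim from the $n=3$ arguments. Your route instead requires three ingredients the paper's proof avoids: that $\M_n(\alpha(G))$ is an $n$-dimensional cylindric basis (true, but it forces $d\geq n$ colours and a re-verification of the chromatic thresholds); the correspondence between the relation-algebra game and the $n$-dimensional game, which for $n>3$ is not ``verbatim'' but goes through the recursive re-indexing function $g$ of round numbers that the paper defers to \cite{ca}; and the $n$-dimensional diagonal-free analogue of Monk--Johnson, which you rightly flag as the main obstacle and which needs the basic-matrix complex algebra to be generated by elements of non-full dimension set --- not automatic for a full complex algebra. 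None of this is fatal, and if completed your argument yields the stronger $\CA_n$ conclusion as well, but for the theorem as stated the identity-padding reduction is shorter and self-contained.
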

\begin{demo}{Proof} Given an $\RA$ $\A$ one defines a diagonal free cylindric algebra 
$\Rd_{df}\Ca_3\A$ and the extra cylindrifications are defined as the identity (This is actually done in \cite{HHK}) .  
Then a complete analogue of Theorem \ref{h} holds
and we are done.
\end{demo}
For cylindric algebras of higher dimensions, we show in \cite{ca} that there is a recursive function $g:\omega\to \omega$, such that
$g(k)\geq k$ eventually, and if $\exists$ has a winning strategy in ${\GG}_n(\A)$, then she has a winning strategy in ${\GG}_{g(n)}(\M_d(\At \A)).$
That being said, on replacing $\Ca_3\alpha$ by $\M_{d}(\alpha)$ in the proof of theorem \ref{can}, and undergoing the obvious modifications, 
would finish the proof.
This will show that $\RCA_n$ for $n\geq 3$ has no canonical axiomatization.
So putting together this result with the results of Andreka, we obtain the following ``reasonable" result 
conjectured (but not proved) recently by the author.

\begin{theorem}Let $n\geq 3$ be finite. 
Let $\Sigma$ be a set of equations axiomatizing $\RCA_n$. Let $l<n$ $k<n$, $k'<\omega$ be natural numbers. 
Then $\Sigma$ contains
infinitely many non-canonical equations in which $-$ occurs, one of $+$ or $\cdot$ ocurs  a diagonal with index $l$ occurs, more
than $k$ cylindrifications and more than $k'$ variables occur.
\end{theorem}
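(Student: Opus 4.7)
The plan is to combine the non-canonicity theorem for $\RCA_n$ (the extension of Theorem \ref{can} to all finite $n\geq 3$ indicated just above, obtained by replacing $\Ca_3\alpha$ by $\M_d(\alpha)$ and invoking the recursive function $g:\omega\to\omega$ that transfers winning strategies from ${\GG}_n(\A)$ to ${\GG}_{g(n)}(\M_d(\At\A))$) with Andréka's complexity results on equational axiomatizations of $\RCA_n$ \cite{An97}. The target is to show that the intersection of the ``complex'' and the ``non-canonical'' subfamilies of $\Sigma$ is already infinite.

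First I would fix an equational axiomatization $\Sigma$ of $\RCA_n$ and parameters $l<n,\ k<n,\ k'<\omega$. By Andréka's theorem (in the refined form stated just after Theorem \ref{polyadic} for $\RPEA_d$, which carries over to $\RCA_n$ with the permutation clauses removed), the set $\mathcal{C}\subseteq\Sigma$ of equations containing $-$, one of $+$ or $\cdot$, a diagonal ${\sf d}_{ij}$ with $l\in\{i,j\}$, more than $k$ cylindrifications and more than $k'$ variables is infinite. Separately, by the $n\geq 3$ version of Theorem \ref{can}, the set $\mathcal{N}\subseteq\Sigma$ of non-canonical equations is infinite: otherwise one could throw away the finitely many non-canonical axioms, adjoin the canonical $\CA_n$-axioms together with enough canonical equations valid in $\RCA_n$, and obtain a canonical axiomatization of $\RCA_n$, contradicting the non-canonicity theorem. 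The whole point is to upgrade these two infinitudes to the infinitude of $\mathcal{C}\cap\mathcal{N}$.

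For the upgrade I would merge the two underlying constructions. Andréka's proof produces a sequence of algebras $\B_k\in\RCA_n$ together with non-representable algebras $\B_k'$ obtained from $\B_k$ by splitting sufficiently many atoms, such that $\B_k$ and $\B_k'$ satisfy the same equations of complexity below a prescribed bound; consequently any axiom of $\Sigma$ separating $\B_k$ from $\B_k'$ must be ``complex'' in the sense defined above. I would replace her Monk witnesses $\B_k$ by the inverse-limit algebras $\A=\bigcup_{i<\omega}[\M_n(\alpha(G_i))]^+$ built from the Erdős graph families $D^k_m$ of Theorem \ref{can}: these lie in $\RCA_n$ while $\A^\sigma\cong[\M_n(\alpha(G))]^+\notin\RCA_n$, so any equation of $\Sigma$ true in $\A$ and false in $\A^\sigma$ is automatically non-canonical. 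Performing Andréka's splitting on $\A^\sigma$ then produces a split algebra $\A'$ such that any axiom of $\Sigma$ separating $\A$ from $\A'$ lies in $\mathcal{C}\cap\mathcal{N}$. Varying the chromatic parameter $m$ of $D^k_m$ and the splitting depth independently of $k,k'$ yields infinitely many distinct such equations, which is what was to be proved.

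The hard part will be to verify compatibility of Andréka's splitting with the Erdős-graph algebras: one must check that splitting the atoms of $\A^\sigma$ preserves the cylindric axioms, that the resulting $\A'$ is still non-representable, and that the Ramsey and chromatic-number estimates underpinning Theorem \ref{can} remain strong enough after splitting to keep $\A'$ separated from $\A$ by every sufficiently complex equation of $\Sigma$. Once this bookkeeping is in place, the combinatorial argument of the previous paragraph closes the proof.
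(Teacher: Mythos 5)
You should first be aware that the paper itself supplies no proof of this theorem: it is stated immediately after the remark that the bare-canonicity argument for $\RCA_3$ lifts to $\RCA_n$ via $\M_d(\alpha)$, with the words ``putting together this result with the results of Andreka, we obtain the following \dots result conjectured (but not proved) recently by the author,'' and no argument follows. So you are not competing with a written proof, and you deserve credit for isolating the real mathematical issue the paper glosses over: Andr\'eka's theorem gives an infinite set $\mathcal{C}\subseteq\Sigma$ of complex equations, bare canonicity gives an infinite set $\mathcal{N}\subseteq\Sigma$ of non-canonical equations, and nothing formal forces $\mathcal{C}\cap\mathcal{N}$ to be infinite. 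A genuine merge of the two constructions is indeed what is required.

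However, the merge you propose does not close the gap. Non-canonicity of an equation $e\in\Sigma$ requires an algebra $\B$ with $\B\models e$ and $\B^{\sigma}\not\models e$; in your setup the only such pair on offer is $(\A,\A^{\sigma})$, so the non-canonical axioms you can certify are exactly those failing in $\A^{\sigma}$. Andr\'eka's splitting, on the other hand, certifies complexity only for axioms that \emph{hold} in the unsplit algebra and \emph{fail} in the split one: your $\A'$ agrees with $\A^{\sigma}$ on all low-complexity equations, so an axiom failing in $\A'$ is forced to be complex only if it survives in $\A^{\sigma}$ --- and for such an axiom the pair $(\A,\A^{\sigma})$ gives no non-canonicity. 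The two mechanisms therefore catch what may be disjoint families, and the claim that ``any axiom of $\Sigma$ separating $\A$ from $\A'$ lies in $\mathcal{C}\cap\mathcal{N}$'' is unjustified: $\A'$ is not the canonical extension of $\A$, nor of any representable algebra you have produced, so failure in $\A'$ yields neither non-canonicity nor, by itself, complexity. What is actually needed is, for each complexity bound $c$, a representable $\B_c$ such that $\B_c^{\sigma}$ is non-representable \emph{and already satisfies every $\RCA_n$-valid equation of complexity at most $c$}; then the axiom of $\Sigma$ killing $\B_c^{\sigma}$ is simultaneously non-canonical and of complexity exceeding $c$. Arranging the second property for a canonical extension, rather than for a hand-built split algebra, is the hard combinatorial content, and neither your sketch nor the paper supplies it. A smaller quibble: your argument that finitely many non-canonical axioms could be ``thrown away'' and replaced by canonical ones to contradict bare canonicity is also not justified as stated; the game-theoretic proof yields the infinitude of $\mathcal{N}$ directly, without that detour.
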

On the one hand, such techniques deepens the connections between algebraic logic and graph theory, and on the other
sheds more light on the complexity of axiomatizations of the class of representable algebras, showing that the variety $\RCA_n$ is really  ``wild". 
Another result using Erdos graphs is that the class of strongly representable atom structures is not elementary.
Here we extend this result to other algebas.
From now on we follow closely \cite{HHstrong}. In \cite{HHstrong} definition 3.5, 
the authors define a cylindric atom structure based on a graph $\Gamma$.
We enrich this atom structure by the relations corresponding to the polyadic operations:

\begin{definition} We define an atom structure $\eta(\Gamma)=(H, D_{ij}, \equiv_i, P_{ij})$
as follows.
\begin{enumarab}
\item $H$ is the set of all pairs $(K, \sim)$ where $K:n\to \Gamma\times n$ is a partial map and $\sim$ is an equivalence relation
on $n$ satisfying the followng conditions

(a) If $|n/\sim|=n$ then $dom(K)=n$ and $rng(K)$ is not  independent subset of $n$. 

(b) If $|n/\sim|=n-1$, then $K$ is defined only on the unique $\sim$ class $\{i,j\}$ say of size $2$
and $K(i)=K(j)$

(c) If $|n/\sim|\leq n-2$, then $K$ is nowhere defined.

\item $D_{ij}=\{(K,\sim)\in H: i\sim j\}$

\item $(K,\sim)\equiv_i (K', \sim')$ iff $K(i)=K'(i)$ and $\sim\upharpoonright (n\setminus \{i\})=\sim'\upharpoonright (n\setminus \{i\})$

\item $(K.\sim)\equiv_{ij}(K',\sim')$ iff $K(i)=K'(j)$ and $K(j)=K'(i),$ $K\upharpoonright n\sim \{i,j\}=K'\upharpoonright n\sim \{i,j\}$ 
and if $i\sim j$ then $\sim=\sim'$, if not, then $\sim'$ is related to $\sim$ as follows
For all $k\notin [i]_{\sim}\cup [j]_{\sim}$ $[k]_{\sim'}=[k]_{\sim}$ $[i]_{\sim'}=[j]_{\sim}\setminus \{j\}\cup \{i\}$ and 
$[j]_{\sim'}=[i]_{\sim}\setminus \{i\}\cup \{j\}.$ 
\end{enumarab}
\end{definition}

\begin{definition} Let $\C(\Gamma)$ be the complex algebra of polyadic type of the above atom structure.
That is $\C(\Gamma)=(\B(\eta(\Gamma)), {\sf c}_i, {\sf s}_i^j, {\sf p}_{ij}, {\sf d}_{ij})_{i,j<n}$ with extra non-Boolean operations
defined by: 
$${\sf d}_{ij}=D_{ij}$$
$${\sf c}_iX=\{c: \exists a\in X, a\equiv_ic\}.$$
$${\sf p}_{ij}X=\{c:\exists a\in X, a\equiv_{ij}c\}$$
and $${\sf s}_i^jx={\sf c}_j(x\cap d_{ij}).$$
\end{definition}
For $\A\in \CA_n$ and $x\in A$, recall that $\Delta x,$ the dimension set of $x$, is the set $\{i\in n: {\sf c}_ix\neq x\}$.

\begin{theorem} For any graph $\Gamma$, $\C(\Gamma)$ is a simple $\PEA_n$, that is generated by the set
$\{x\in C: \Delta x\neq n\}$.
\end{theorem}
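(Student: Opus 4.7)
The statement has three assertions: $\C(\Gamma)$ is a $\PEA_n$, it is simple, and it is generated by the set $G=\{x\in C:\Delta x\neq n\}$. My plan is to treat each assertion in turn.

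For membership in $\PEA_n$, the strategy is to view $\C(\Gamma)$ as the complex algebra of the atom structure $\eta(\Gamma)$ and invoke the standard J\'onsson--Tarski style translation between atom structure conditions and axioms for algebras of the given signature. Each $\PEA_n$ axiom involving the operators ${\sf c}_i$, ${\sf p}_{ij}$, ${\sf s}_i^j$ and the constants ${\sf d}_{ij}$ becomes a first-order condition on the relations $\equiv_i$, $\equiv_{ij}$ and $D_{ij}$: that the $\equiv_i$ are commuting equivalence relations; that the $\equiv_{ij}$ are symmetric and involutive and interact correctly with the $\equiv_i$ and $D_{ij}$; and that the diagonal relations $D_{ij}$ satisfy the cylindric diagonal axioms. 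Each of these reduces to a routine verification from the definitions of $\equiv_i$, $\equiv_{ij}$, and $D_{ij}$, split according to the three cases $|n/\sim|=n$, $|n/\sim|=n-1$, and $|n/\sim|\leq n-2$.

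For simplicity, the plan is to use the standard fact that for finite $n$ the variety $\PEA_n$ is a discriminator variety with discriminator term ${\sf c}_0{\sf c}_1\cdots {\sf c}_{n-1}x$. Thus it suffices to show that ${\sf c}_0{\sf c}_1\cdots{\sf c}_{n-1}X = H$ for every nonzero $X\in\C(\Gamma)$. Since every nonzero element contains an atom $a$, this reduces to showing that the equivalence relation generated on $H$ by $\bigcup_{i<n}\equiv_i$ is the universal relation; equivalently, any two atoms of $\eta(\Gamma)$ can be joined by a finite chain of $\equiv_i$-steps. I would prove this by modifying one coordinate at a time, showing that between any two $(K,\sim)$ and $(K',\sim')$ one can interpolate legal intermediate atoms, using the fact that $\Gamma\times n$ has enough elements to avoid the ``independent range'' obstruction in clause (a) of the atom-structure definition.

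For generation, let $S$ denote the subalgebra generated by $G$. I would show every atom $\{a\}$ lies in $S$. The key observation is that for every atom $a$ and every $i<n$ the element ${\sf c}_i\{a\}$ has $i\notin\Delta({\sf c}_i\{a\})$, hence $\Delta({\sf c}_i\{a\})\neq n$, so ${\sf c}_i\{a\}\in G$; similarly for all ${\sf p}_{ij}$-images of such elements and for the diagonals $D_{ij}$, which have dimension set of size $\leq n-1$. The goal is then to exhibit a Boolean combination of such elements that isolates $\{a\}$. The main obstacle, and the heart of the argument, is that $\bigcap_{i<n}{\sf c}_i\{a\}$ in general contains atoms $b\neq a$; the strategy is to use the rigidity forced by clause (a) of the atom structure---namely that atoms with $|n/\sim|=n$ require $\operatorname{rng}(K)$ to be non-independent---together with diagonals and polyadic substitutions $\sf p_{ij}$ to subtract off each such parasitic atom, producing a formula over $G$ that defines exactly $\{a\}$. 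Atoms with $|n/\sim|<n$ are handled directly since their singleton already lies in $G$ once one verifies that the rigid partial nature of $K$ in cases (b), (c) forces some cylindrification to fix them up to elements already expressible from lower-dimensional pieces.
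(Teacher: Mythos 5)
Your overall plan is workable but takes a much longer route than the paper, and the two places where you depart from the paper are exactly where your sketch has gaps. The paper simply quotes Hirsch--Hodkinson (their Lemma 5.1) for the fact that the cylindric reduct $\Rd_{ca}\C(\Gamma)$ is already a simple $\CA_n$ generated by $\{x:\Delta x\neq n\}$, observes that every polyadic ideal of $\C(\Gamma)$ is a cylindric ideal (so simplicity transfers for free), records the identity $\{a\}={\sf c}_0\{a\}\cap\cdots\cap{\sf c}_{n-1}\{a\}$ for atoms, and is then left only with verifying the four purely polyadic axioms for the ${\sf p}_{ij}$. Concerning your generation argument: the ``parasitic atoms'' you plan to subtract off do not exist. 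If $b\equiv_i a$ for every $i<n$, then the definition of $\equiv_i$ forces the partial maps $K$ to agree everywhere and, since $n\geq 3$, every pair $\{j,k\}$ lies in $n\setminus\{i\}$ for some $i$, so the equivalence relations agree as well; hence $\bigcap_{i<n}{\sf c}_i\{a\}=\{a\}$ exactly and the rigidity-and-subtraction machinery is solving a non-problem. The issue you do not address is the real one: isolating every atom only places the atoms in $\Sg(G)$, whereas $\C(\Gamma)$ is the \emph{full} complex algebra $\wp(H)$; for infinite $\Gamma$ an arbitrary subset of $H$ is not a finitary combination of atoms, and the generation claim must be read in the complete sense (the paper itself, two results later, speaks of generating ``using infinite intersections and complementation''). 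Your argument stops one step short of that.

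For simplicity, your discriminator criterion ${\sf c}_0\cdots{\sf c}_{n-1}X=1$ for nonzero $X$ is correct, and finite $\equiv_i$-chain connectivity of the atoms does suffice, since ${\sf c}_0\cdots{\sf c}_{n-1}x$ absorbs any further cylindrification. But the connectivity is precisely where the content lies and you only assert it. A single $\equiv_i$-step changes $\sim$ only at $i$ and is constrained by the coordinate conditions on $K$, so connecting atoms of the three different shapes in the definition of $\eta(\Gamma)$ -- $|n/\!\sim|=n$ with $K$ total and of non-independent range, $|n/\!\sim|=n-1$ with $K$ defined only on the doubleton class, and $|n/\!\sim|\leq n-2$ with $K$ empty -- while keeping every intermediate pair a legal atom requires an explicit construction that ``modifying one coordinate at a time'' does not yet provide. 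As written, this step and the infinitary closure in the generation argument are genuine gaps; both are avoided in the paper by reducing to the known cylindric result and checking only the polyadic axioms on top of it.
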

\begin{demo}{Proof} $\Rd_{ca}\C(\Gamma)$ is a simple $\CA_n$ by \cite{HHstrong} lemma 5.1, 
hence if we prove that $\C(\Gamma)$ is a polyadic equality algebra, then as a polyadic equality algebra it will be simple. 
This follows from the simple observation that any polyadic ideal in $\C(\Gamma)$ is a cylindric ideal.
Furthermore for any atom $x=\{(K, \sim)\}$ of $\C$ we have $x={\sf c}_0x\cap {\sf c}_1x\ldots\cap {\sf c}_nx$.
We need to check the polyadic axioms. 
Since $\Rd_{ca}\A$ is a $\CA_n$ we need to show that 
the following hold for all $i,j,k\in n$:
\begin{enumarab}
\item ${\sf p}_{ij}$'s  are boolean endomorphisms
\item ${\sf p}_{ij}{\sf p}_{ij}x=x$
\item ${\sf p}_{ij}{\sf p}_{ik}={\sf p}_{jk}{\sf p}_{ij}x   \text { if } |\{i,j,k\}|=3$
\item ${\sf p}_{ij}{\sf s}_i^jx={\sf s}_j^ix$
\end{enumarab}
These properties, follow from the definitions, and are therefore left to the reader.
\end{demo}
Let $\Gamma=(G,E)$ be a graph. Then A set $X\subseteq G$ is independent if $E\cap (X\times X)=\emptyset$.
The chromatic number $\chi(\Gamma)$ of $\Gamma$ is the least  $k<\omega$ such that $G$ can be partitioned into $k$ independent sets, and
$\infty$ if there is no such set.

\begin{theorem} 
\begin{enumroman}
\item Suppose that $\chi(\Gamma)=\infty$. Then $\C(\Gamma)$ is representable  as a polyadic equality algebra.
\item If $\Gamma$ is infinite and $\chi(\Gamma)<\infty$ then $\Rd_{df}\C(\Gamma)$ is not representable.
\end{enumroman}
\end{theorem}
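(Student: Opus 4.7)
\begin{demo}{Proof Sketch (plan)}
The two parts correspond to the two directions of the Hirsch--Hodkinson style dichotomy, now extended to the polyadic equality signature.

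For part (i), my plan is to build a complete representation of $\C(\Gamma)$ in $\PEA_n$ by the step-by-step / forcing-game construction of the kind explained in \S1.3. Since $\chi(\Gamma)=\infty$, for every finite partition $G=X_1\cup\ldots\cup X_k$ of the vertex set at least one $X_i$ contains an edge; this is the combinatorial fact that will power $\exists$'s strategy. I would first construct the cylindric representation following verbatim the argument of \cite{HHstrong} for $\Rd_{ca}\C(\Gamma)$: play a $\PEA$-version of the game $G_\omega(I_a,\C(\Gamma))$ (see \S1.3) where $\exists$ maintains a sequence of networks labelled by atoms of $\C(\Gamma)$, and whenever a cylindrifier move demands a new witness for an atom $(K,\sim)$ whose range should be non-independent, she selects vertices in $\Gamma$ forming the required edge --- the infinite chromatic number guaranteeing that she never gets stuck, no matter how the colour coordinates in $\Gamma\times n$ have been forced by earlier play. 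To pass from the cylindric to the polyadic equality representation, I would answer the new polyadic moves by simply permuting coordinates of the already-constructed tuples: the definition of $\equiv_{ij}$ in $\eta(\Gamma)$ was tailored precisely so that this permutation respects the atom labels. The limit network determines a weak set algebra in $\PEA_n$ in which the image of the original non-zero element $a$ is non-empty, giving the representation.

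For part (ii), suppose on the contrary that $\Rd_{df}\C(\Gamma)$ is representable, with $\Gamma$ infinite and $\chi(\Gamma)=k<\omega$. Since $\C(\Gamma)$ is simple and atomic, any representation is (up to subdirect decomposition) an injective homomorphism $h:\Rd_{df}\C(\Gamma)\to\wp({}^nV)$ for some set $V$. Fix a proper $k$-colouring $c:G\to k$ of $\Gamma$. For each $g\in G$ choose an atom $x_g$ of $\C(\Gamma)$ whose defining pair $(K,\sim)$ has $K$ constant with value $(g,i_g)$ on some coordinate (there are only finitely many possibilities for $i_g$); since $h(x_g)\ne\emptyset$, fix a tuple $\bar v^g\in h(x_g)$ with a distinguished coordinate $v^g_0$. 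Using $c$ together with the finitely many choices $i_g$, I obtain a colouring of $G$ with only finitely many colours, and similarly an induced colouring of the pairs $\{v^g_0,v^{g'}_0\}$ in $V$ using $k$ colours. Now apply the infinite Ramsey theorem to an infinite subset of $G$: I extract an infinite monochromatic clique, i.e.\ an infinite $H\subseteq G$ such that all $g,g'\in H$ have the same colour $c(g)=c(g')$ and the same index $i_g=i_{g'}$.

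The hard part will be to close the contradiction cleanly without the diagonals. Picking $g\ne g'\in H$, consider the atom $y=(K',\sim')$ built from coordinates $(g,i),(g',i)$ with the \emph{same} colour coordinate $i=i_g=i_{g'}$: by clause (c) in the definition of consistent triples (which requires an edge in $\{a',b',c'\}$), and because $c(g)=c(g')$ forbids the edge $gg'$ in $\Gamma$, this $y$ is forced to be zero or is forced to behave as an edge in $\Gamma$. The diagonal-free reduct still retains $\equiv_i$, so the cylindrifier $\sf c_i$ relates $\bar v^g$ to the tuple constructed from $g,g'$; tracing through the homomorphism $h$ shows that in the representation we must have an $\bar x\in h(y)$ with prescribed coordinates $v^g_0,v^{g'}_0$, while in $\C(\Gamma)$ no such consistent atom exists. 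This is the contradiction. The main technical issue is organising the colouring so that the Ramsey-monochromatic set simultaneously fixes \emph{both} the $\Gamma$-colour and the extra $i_g$-index, which is why we need $|G|=\infty$: finite Ramsey would not suffice given that each vertex carries finitely many choices of $i_g$.
\end{demo}
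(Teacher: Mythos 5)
For part (i) your route is workable but much heavier than what is actually needed. The paper does not replay the step-by-step game at all: it simply takes the known cylindric representation $f$ of $\Rd_{ca}\C(\Gamma)$ from \cite{HHstrong} (Proposition 5.2) and observes that $f$ \emph{automatically} preserves the ${\sf p}_{ij}$'s. The reason is that $\C(\Gamma)$ is generated from $J=\{x\in C:\Delta x\neq n\}$ by complementation and \emph{infinite} intersections, the ${\sf p}_{ij}$'s are completely additive, and on an element $x$ with $\mu\in n\setminus\Delta x$ one has ${\sf p}_{kl}x={\sf s}^k_l x$ (if $\mu\in\{k,l\}$) or ${\sf p}_{kl}x={\sf s}_\mu^l{\sf s}_l^k{\sf s}_k^\mu{\sf c}_\mu x$ (if $\mu\notin\{k,l\}$), i.e.\ the polyadic operations are cylindric-term-definable on the generators. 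So no new game and no new use of $\chi(\Gamma)=\infty$ is required beyond what is already in the cylindric case. Your plan of adding polyadic moves to the game and answering them by permuting coordinates is plausible (the definition of $\equiv_{ij}$ in $\eta(\Gamma)$ is indeed tailored for this), but you would have to re-verify all the network conditions, which the term-definability argument lets you avoid entirely.

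For part (ii) there is a genuine gap. The content of (ii) is \emph{not} a new Ramsey argument against a hypothetical representation of the diagonal-free reduct; it is the reduction of $\Df$-non-representability to the already known $\CA$-non-representability. Concretely: if $\Rd_{df}\C(\Gamma)$ were representable, then, because $\C(\Gamma)$ is generated by $\{x\in C:\Delta x\neq n\}$ using complementation and infinite intersections, the standard argument (cf.\ \cite[5.1.49--5.1.51]{HMT2}, used again later in the paper for the completion construction) upgrades the diagonal-free representation to a representation of $\Rd_{ca}\C(\Gamma)$ --- one recovers an equivalence relation $R$ on the base from the would-be diagonal, checks that every generator is $R$-invariant, and factors the base by $R$. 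But $\Rd_{ca}\C(\Gamma)$ is not representable when $\Gamma$ is infinite with $\chi(\Gamma)<\infty$, by \cite{HHstrong}. Your direct argument, by contrast, never closes: the step where the atom $y$ built from $(g,i),(g',i)$ ``is forced to be zero or is forced to behave as an edge'' is not justified, and the claim that ``tracing through the homomorphism $h$'' produces a tuple in $h(y)$ with prescribed coordinates presupposes exactly the kind of control over witnesses that the diagonal-free signature does not obviously give you. Even if it could be repaired, you would be re-proving the Hirsch--Hodkinson non-representability theorem in a harder signature, rather than reducing to it.
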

\begin{demo}{Proof} (i) We have $\Rd_{ca}\C(\Gamma)$ is representable by \cite{HHstrong} proposition 5.2. 
Let $J=\{x\in C: \Delta x\neq n\}$. Then $\C(\Gamma)$ is generated from $J$ using infinite intersections and complementation.
Let $f$ be an isomorphism of of $\Rd_{ca}\C(\Gamma)$ onto a cylindric set algebra with base $U$. Since the ${\sf p}_{ij}$'s  
distribute over arbitrary (unions and) intersections, 
it suffices to  show that $f{\sf p}_{kl}x={\sf p}_{kl}fx$ for all $x\in J$. Let $\mu\in n\setminus \Delta x$.
If $k=\mu$ or $l=\mu$, say $k=\mu$, then using the polyadic axioms we have
$$f{\sf p}_{kl}x=f{\sf }{\sf p}_{kl}{\sf c}_kx=f{\sf s}_l^kx={\sf s}_l^kfx={\sf s}_l^k{\sf c}_kfx= {\sf p}_{kl}fx.$$ 
If $\mu\neq k,l$ then again using the polyadic axioms we get
$$f{\sf p}_{kl}x=f{\sf s}_{\mu}^l{\sf s}_l^k {\sf s}_k^{\mu}{\sf c}_{\mu}x=
{\sf s}_{\mu}^l{\sf s}_l^k {\sf s}_k^{\mu}{\sf c}_{\mu}fx={\sf p}_{kl}f(x)$$

(ii) Note that $\Rd_{ca}\C(\Gamma)$ is generated by $\{x\in C: \Delta x\neq n\}$ using infinite intersections and complementation.
\end{demo}
Recall that an atom structure is strongly representable if the complex algebra over this atom structure is representable \cite{HHstrong}.
We now have:
\begin{theorem}\label{st} Let $t$ be any signature between 
$\Df_n$ and $\PEA_n$. Then the class of strongly representable atom structures of type $t$
is not elementary.
\end{theorem}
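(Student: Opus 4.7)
The plan is to exhibit the failure of closure under ultraproducts. By Ło\'s's theorem, it suffices to produce a sequence of graphs $(\Gamma_i : i<\omega)$ with $\eta(\Gamma_i)$ strongly representable for every $i$, but such that some ultrapower $\eta(\prod_D \Gamma_i)$ fails to be strongly representable. The two parts of the preceding theorem give a clean recipe: we want $\chi(\Gamma_i)=\infty$ for all $i$, yet the ultraproduct $\Gamma=\prod_D \Gamma_i$ should be infinite with $\chi(\Gamma)<\infty$. Then $\eta(\Gamma_i)$ is strongly representable as a $\PEA_n$ atom structure (hence for any type $t$ between $\Df_n$ and $\PEA_n$), while $\Rd_{df}\C(\Gamma)$ is not representable, so $\eta(\Gamma)$ fails strong representability already at the $\Df_n$ reduct, and a fortiori for any $t$ in between.

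First I would construct the graphs $\Gamma_i$ using Erd\H{o}s's probabilistic method combined with compactness: for each pair $(k,g)$ there is a finite graph with chromatic number $\geq k$ and girth $\geq g$, and a standard compactness argument on the first-order theory of graphs (adding constants witnessing that any proper $k$-coloring fails) yields an infinite graph $\Gamma_i$ with $\chi(\Gamma_i)=\infty$ and girth $>i$. Next I would verify that the construction $\Gamma\mapsto \eta(\Gamma)$ is given by a uniform first-order interpretation in the language of graphs, so that $\eta(\prod_D \Gamma_i)\cong \prod_D \eta(\Gamma_i)$; this is routine from the definition of $\eta(\Gamma)$, since every clause defining $H$, $D_{ij}$, $\equiv_i$ and $P_{ij}$ is expressible by a first-order formula referring only to edges and equality of pairs $(v,k)\in G\times n$.

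Then I would analyze $\Gamma=\prod_D \Gamma_i$ for a non-principal ultrafilter $D$ on $\omega$. For each fixed odd $k$, ``there is no cycle of length $k$'' is a first-order sentence of graph theory, true in $\Gamma_i$ for all $i>k$ by the girth condition, hence true in $\Gamma$ by \L o\'s. Thus $\Gamma$ contains no odd cycle, so $\Gamma$ is bipartite, giving $\chi(\Gamma)\leq 2<\infty$; and $\Gamma$ is plainly infinite since each $\Gamma_i$ is. By the previous theorem's second clause, $\Rd_{df}\C(\Gamma)$ is not representable, so $\eta(\Gamma)$ is not a strongly representable atom structure of type $t$ for any $t$ above $\Df_n$. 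Combined with the first clause applied to each $\Gamma_i$, this gives the ultraproduct obstruction, and hence non-elementarity.

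The main obstacle is the first step: arranging that each $\Gamma_i$ has \emph{infinite} chromatic number while preserving enough girth. The classical Erd\H{o}s theorem produces only finite graphs with large (but finite) chromatic number, so the compactness lift to an infinite graph with $\chi=\infty$ and girth $>i$ must be handled carefully. An alternative, which may be cleaner, is to observe that one can even take $\Gamma_i$ to be finite with $\chi(\Gamma_i)\geq i$ and girth $>i$, provided one can still argue non-strong-representability of the ultraproduct; but the direct route via $\chi=\infty$ graphs fits perfectly with the dichotomy proved in the preceding theorem and avoids delicate issues about whether finite-chromatic-number-unboundedness survives the ultraproduct construction. Everything else is bookkeeping: checking the interpretability of $\eta$, invoking \L o\'s's theorem for the no-odd-cycle condition, and quoting the preceding theorem.
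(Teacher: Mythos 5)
Your overall architecture is exactly the paper's: combine the preceding dichotomy theorem (infinite chromatic number gives representability of $\C(\Gamma)$ as a $\PEA_n$; an infinite graph of finite chromatic number makes even $\Rd_{df}\C(\Gamma)$ non-representable) with the isomorphism $\prod_D\C(\Gamma_i)\cong\C(\prod_D\Gamma_i)$ and the observation that an ultraproduct of graphs whose girths tend to infinity has no cycles, hence chromatic number at most $2$. Your reduction to the $\Df_n$ reduct in order to cover every signature $t$ between $\Df_n$ and $\PEA_n$ is also correct.

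The one step that would fail as written is your construction of the graphs $\Gamma_i$ with $\chi(\Gamma_i)=\infty$ and girth $>i$ ``by compactness.'' Having infinite chromatic number is not a first-order property of graphs, and no consistent first-order theory can force it: the very conclusion you are drawing --- that an ultraproduct of graphs of infinite chromatic number can have chromatic number $2$ --- shows that this class of graphs is not closed under ultraproducts, hence not elementary, so no set of first-order sentences (together with the girth axioms) can have only models of infinite chromatic number. Adding constants ``witnessing that any proper $k$-colouring fails'' does not repair this, since a $k$-colouring is a partition of the vertex set rather than a tuple of elements, so its failure is not expressible by a quantifier-free diagram over new constants. The fix is elementary and is what the paper does: let $G_k$ ($k<\omega$) be the finite Erd\H{o}s graphs with $\chi(G_k)>k$ and no cycles of length $<k$, and take $\Gamma_i$ to be the disjoint union of the $G_l$ for $l>i$. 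Then $\chi(\Gamma_i)=\sup_{l>i}\chi(G_l)=\infty$ while $\Gamma_i$ still has no short cycles, and the remainder of your argument goes through verbatim. Note also that your proposed alternative (taking $\Gamma_i$ finite with $\chi(\Gamma_i)\geq i$) does not mesh with the dichotomy theorem, since its first clause requires $\chi(\Gamma_i)=\infty$ in order to conclude that $\C(\Gamma_i)$ is representable.
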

\begin{demo}{Proof} \cite{HHstrong} theorem 6.1. 
By a famous theorem of Erdos, for every $k<\omega$, there is a finite graph $G_k$ with $\chi(G_k)>k$ 
and with no cycles of length $<k$.
Let $\Gamma_k$ be the disjoint union of of the $G_l$ for $l>k$. Then $\chi(\Gamma_k)=\infty$. Thus, by the previous theorem 
$\C(\Gamma_k)\in {\bf RPEA_n}$. In fact, being simple, $\C(\Gamma_k)$ is actually a  polyadic set algebra.
Let $\Gamma$ be a non principal ultraproduct $\prod_D \Gamma_k$. So $\Gamma$ has no cycles, and so $\chi(\Gamma)\leq 2$.
It follows, again from the previous theorem,  that $\Rd_{df}\C(\Gamma)$ is not representable.
From $\prod_D\C(\Gamma_k)\cong \C(\prod_D\Gamma_k)$ we are done.
\end{demo}

\begin{corollary}\label{com} Let $\K\in \{\CA, \Df, \PA, \PEA\}$ and $2<n<\omega$. Then the following hold:
\begin{enumarab}

\item There exist two atomic 
algebras in $\K_n$  with the same atom structure, only one of which is 
representable.

\item $\RK_n$ 
is not closed under completions and is not atom-canonical. 

\item There exists a non-representable $\K_n$ with a dense representable
subalgebra.

\item $\RK_n$ 
is not Sahlqvist axiomatizable. 

\item There exists an atomic representable $\K_n$ with no complete representation.

\end{enumarab}
\end{corollary}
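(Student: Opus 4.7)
The plan is to apply the Erdős-graph machinery introduced in the proof of Theorem \ref{st} to produce a single pair of algebras witnessing all five statements simultaneously, for each $\K\in\{\Df,\CA,\PA,\PEA\}$. Let $\Gamma_k$ ($k<\omega$) be the graphs from the proof of Theorem \ref{st}, so that $\chi(\Gamma_k)=\infty$, and let $\Gamma=\prod_D\Gamma_k$ be a non-principal ultraproduct; then $\Gamma$ is infinite and $\chi(\Gamma)\le 2$. By the theorem immediately preceding Theorem \ref{st}, $\C(\Gamma_k)\in\RPEA_n$ while $\Rd_{df}\C(\Gamma)$ is not representable. Form the term algebra $\B=\Tm\At\C(\Gamma)$, i.e.\ the subalgebra of $\C(\Gamma)$ generated by its atoms. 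Then $\B$ is atomic with $\At\B=\eta(\Gamma)$, $\Cm\At\B=\C(\Gamma)$, and by a \Los-style transfer applied atom-by-atom there is a natural embedding $\B\hookrightarrow\prod_D\C(\Gamma_k)$. Since $\RPEA_n$ is a variety the ultraproduct is representable, so $\B\in\RPEA_n$. Passing to reducts, for each of the four signatures $\K$ one obtains: $\Rd_\K\B$ is atomic and representable, whereas $\Rd_\K\C(\Gamma)=\Cm\At\Rd_\K\B$ is not representable (already the $\Df_n$-reduct fails to be).

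With this pair $(\B,\C(\Gamma))$ in hand, (1) is immediate: both algebras are atomic in $\K_n$, share the same atom structure, and only $\B$ is representable. For (2), $\C(\Gamma)$ is complete and $\B$ sits densely in it (every nonzero element of $\C(\Gamma)$ dominates an atom, and every atom lies in $\B$); hence $\C(\Gamma)$ is the Dedekind--MacNeille completion of $\B$, so $\RK_n$ is not closed under completions, which is exactly the failure of atom-canonicity. Statement (3) is witnessed by the inclusion $\B\subseteq\C(\Gamma)$ itself: a representable algebra densely embedded in a non-representable one.

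For (4), suppose toward contradiction that $\RK_n$ admits a Sahlqvist axiomatisation. Sahlqvist equations are canonical for BAOs, so $\RK_n$ would be closed under the canonical extension $\A\mapsto\A^\sigma$. For any completely additive atomic BAO $\A$ (which applies to $\Rd_\K\B$, since all non-Boolean operations in our signatures are completely additive), the map $X\mapsto\sum_{a\in X}a$ computed in $\A^\sigma$ gives an embedding $\Cm\At\A\hookrightarrow\A^\sigma$; as $\RK_n$ is a variety, hence closed under subalgebras, this forces $\Rd_\K\C(\Gamma)=\Cm\At\Rd_\K\B\in\RK_n$, contradicting (2). For (5), if $\Rd_\K\B$ possessed a complete representation $f:\Rd_\K\B\to\wp({}^nU)$, then the prescription $f^+(X)=\bigcup_{a\in X}f(a)$ for $X\subseteq\At\B$ would yield a representation of $\Cm\At\Rd_\K\B=\Rd_\K\C(\Gamma)$, again contradicting (2).

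The main obstacle lies in the first paragraph: justifying that $\Tm\At\C(\Gamma)$ really embeds into $\prod_D\C(\Gamma_k)$, and that the atom structure of $\C(\Gamma)$ matches (up to isomorphism) the ultraproduct of the atom structures $\eta(\Gamma_k)$. Both rest on the fact that the predicate defining an atom $(K,\sim)$ of $\eta$ is bounded and uniform in the underlying graph, so \Los's theorem transfers atoms cleanly between $\C(\prod_D\Gamma_k)$ and $\prod_D\C(\Gamma_k)$ at the atom-structure level---this is the standard Hirsch--Hodkinson step. Once these technical points are in place, each of (1)--(5) follows from the same one-line contrast: $\B$ and $\Cm\At\B$ have the same atom structure but differ in representability.
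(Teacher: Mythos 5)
Your construction of the witnessing pair and your derivations of (1), (2), (3) and (5) follow essentially the same route as the paper: produce a weakly representable atom structure that is not strongly representable (here $\eta(\Gamma)$ for $\Gamma=\prod_D\Gamma_k$, with $\Tm\eta(\Gamma)$ representable via the embedding into $\prod_D\C(\Gamma_k)$ and $\Cm\eta(\Gamma)$ not, already at the $\Df_n$ level), and then read off the five statements from the contrast between the term algebra and its completion. That part is fine, and you correctly identify the \L o\'s-type transfer at the atom-structure level as the only technical point.

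There is, however, a genuine gap in your argument for (4). You claim that for a completely additive atomic BAO $\A$ the map $X\mapsto\sum_{a\in X}a$, with the sum computed in the canonical extension $\A^{\sigma}$, embeds $\Cm\At\A$ into $\A^{\sigma}$. This is false: when $\A$ is infinite, $\A^{\sigma}$ has atoms corresponding to \emph{non-principal} ultrafilters as well, so $\sum^{\A^{\sigma}}\At\A<1$ and your map already fails to preserve complementation (the image of the top of $\Cm\At\A$ is not the top of $\A^{\sigma}$); nor is the set of principal ultrafilters an inner substructure of the ultrafilter frame, so one cannot repair this by passing to a homomorphic image. More decisively, if your inference ``Sahlqvist $\Rightarrow$ canonical $\Rightarrow$ closed under $\Cm\At(-)$'' were sound for conjugated varieties, it would show that every canonical such variety is atom-canonical; but $\RCA_n$ is canonical (Monk) and, by your own item (2), not atom-canonical --- a point this paper itself emphasizes when discussing canonicity of $\RCA_n$. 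The correct argument, which is the one the paper uses, goes through the Dedekind--MacNeille completion rather than the canonical extension: by Venema's theorem, Sahlqvist equations are preserved under completions of conjugated (completely additive) BAOs, so a Sahlqvist axiomatization of the conjugated variety $\RK_n$ would force closure under completions, contradicting (2). With that substitution your proof is complete.
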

\begin{demo}{Proof} We prove it for $\PEA$'s. The rest is the same. Let $H$ be a weakly representable atom structure that is not strongly 
representable. Let $H$ be the  atom structure of $\A$.  Then we have: 
\begin{enumarab}
\item $\Tm H$ and $\Cm H$ have the same atom structure.
$\Tm H$ is representable and $\Cm H$ is not.
\item  $\Cm H$ is the completion of $\Tm H$.
Then  $\Cm(At \RK_n)$ is not contained in $\RK_n$.
Thus $\RK_n$ is not atom-canonical.
\item $\Tm H$ is dense in $\Cm H$.

\item $\RK_n$ is a conjugated variety 
that is not closed under completions, hence by \cite{Venema} it is not Sahlqvist axiomatizable.

\item $\Tm H$ has no complete represention; 
else $\Cm H$ would be representable. (A complete representation \cite{HH} is one that preserves infinitary meets and joins whenever defined).
\end{enumarab}
\end{demo}

\begin{corollary} For each finite $n\geq 3$, there exists a simple countable atomic representable polyadic equality 
algebra of dimension $n$ whose $\Df$ reduct is not completely representable
\end{corollary}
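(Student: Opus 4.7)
The plan is to adapt the construction of Corollary \ref{com}(5) to meet the stronger countability and $\Df_n$-reduct requirements. Let $\Gamma$ be a countable graph with $\chi(\Gamma) < \infty$ chosen so that the atom structure $H = \eta(\Gamma)$ is weakly but not strongly representable as a $\PEA_n$ atom structure. One natural way to produce such a $\Gamma$ is to start with the ultraproduct $\prod_D \Gamma_k$ from the proof of Theorem \ref{st} and then descend to a countable elementary substructure in a two-sorted language (algebra together with graph); this preserves both ``$\chi \leq 2$'' and first-order reflections of weak representability of the corresponding term algebra. An alternative is to build $\Gamma$ directly as a countable graph of finite chromatic number whose finite subgraphs supply enough forbidden-triple consistency in the definition of $\eta(\Gamma)$ to force weak representability of $H$.

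Let $\A = \Tm H$ be the term algebra over $H$, viewed as a $\PEA_n$. Then $\A$ is atomic by construction and countable since $|H| = \aleph_0$ and the $\PEA_n$ signature is finite. Simplicity transfers from $\Cm H = \C(\Gamma)$: for every non-zero atom $a$ we have ${\sf c}_0 {\sf c}_1 \cdots {\sf c}_{n-1} a = 1$ in $\C(\Gamma)$ by the simplicity theorem for $\C(\Gamma)$, and this identity survives in the subalgebra $\A$; combined with atomicity this forces $\{0\}$ and $\A$ to be the only polyadic ideals of $\A$. Representability of $\A$ as a $\PEA_n$ is precisely the statement that $H$ is weakly representable.

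To see that $\Rd_{df}\A$ has no complete representation, I would argue by contradiction. Suppose $h : \Rd_{df}\A \to \wp({}^nU)$ is a complete $\Df_n$-representation. Since $\A$ is atomic and $h$ preserves arbitrary suprema, $h$ extends canonically to a $\Df_n$-homomorphism $h^+ : \Cm(\At\Rd_{df}\A) \to \wp({}^nU)$ by the rule
\[ h^+(X) = \bigcup\bigl\{h(a) : a \in \At H,\; a \leq X\bigr\}.\]
A routine check, using complete additivity of the cylindrifications on the complex algebra, shows that $h^+$ is indeed a $\Df_n$-homomorphism. But $\Cm(\At\Rd_{df}\A) = \Rd_{df}\Cm H = \Rd_{df}\C(\Gamma)$, so this would make $\Rd_{df}\C(\Gamma)$ representable as a $\Df_n$, contradicting part (ii) of the earlier theorem on $\C(\Gamma)$ since $\Gamma$ is infinite with $\chi(\Gamma) < \infty$.

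The main obstacle is the first step: producing a countable $\Gamma$ that simultaneously has finite chromatic number and yields a weakly representable atom structure $\eta(\Gamma)$. The ultraproduct construction of Theorem \ref{st} natively gives an uncountable object (of size continuum), so the L\"owenheim-Skolem descent must be arranged in a two-sorted setup that ensures representability of $\Tm H$ survives together with the finite chromatic property of the graph sort. A secondary, but manageable, technical point is the verification that the canonical extension $h^+$ genuinely respects all of the $\Df_n$ operations; this should follow from completeness of $h$ and the fact that $\Cm H$ is generated by its atoms under infinite joins.
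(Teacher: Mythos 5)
Your argument reaches the stated conclusion, but by a genuinely different route from the paper's. The paper does not construct a simple example directly: it takes \emph{any} countable atomic representable polyadic equality algebra $\A$ whose $\Df$ reduct is not completely representable (supplied by earlier results), decomposes it into its simple components $S_a={\bf Rl}_{{\sf c}_0\cdots{\sf c}_{n-1}a}\A$ indexed by atoms $a$, and observes that if every $S_a$ had a complete representation $h_a$, then gluing these on pairwise disjoint bases via $h(\alpha)=\bigcup_a h_a(\alpha\cdot {\sf c}_0\cdots{\sf c}_{n-1}a)$ would yield a complete representation of $\A$; hence some simple component is the required example, and it inherits countability, atomicity and representability from $\A$. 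You instead build a simple witness in one shot by exploiting the simplicity of $\C(\Gamma)$ (equivalently, ${\sf c}_0\cdots{\sf c}_{n-1}a=1$ for every atom $a$, an identity that passes down to the term algebra), and your passage from a complete representation of $\Rd_{df}\Tm H$ to a representation of $\Rd_{df}\Cm H$ is the same standard step the paper already uses in Corollary \ref{com}. What the paper's reduction buys is that all the delicate work is pushed into the already-established existence of the non-simple example, so countability and representability come for free; what your approach buys is an explicit simple witness and no need for the gluing argument.

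The one real gap is the one you flag yourself: you need a \emph{countable} infinite graph $\Gamma$ of finite chromatic number for which $\Tm\eta(\Gamma)$ is still representable, and neither the ultraproduct $\prod_D\Gamma_k$ (which has the cardinality of the continuum) nor the bare statement of Theorem \ref{st} hands you this. Your L\"owenheim--Skolem repair can be made to work, but two points must be made explicit. First, weak representability of $\eta(\prod_D\Gamma_k)$ itself needs an argument: the term algebra over the ultraproduct atom structure embeds into $\prod_D\Tm\eta(\Gamma_k)$, an ultraproduct of representable algebras and hence representable since $\RPEA_n$ is a variety, because that ultraproduct is atomic with atom structure $\eta(\prod_D\Gamma_k)$. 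Second, the two-sorted descent must be arranged so that the atoms surviving into the countable substructure form exactly $\eta(\Gamma')$ for an induced (elementary) subgraph $\Gamma'$; an elementary subalgebra taken only on the algebra sort would give some subset of the atoms with no reason to be of the form $\eta(\Gamma')$. Once that is secured, $\Gamma'$ is infinite with $\chi(\Gamma')\leq 2$, so $\Rd_{df}\C(\Gamma')$ is not representable and your contradiction goes through.
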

\begin{demo}{Proof} Let $\A$ be a countable atomic representable polyadic algebra, that is not necessarily simple, satisfying that its $Df$ reduct is not 
completely representable. Consider the elements $\{{\sf c}_na: a\in At\A'\}$. Then every simple component $S_a$ of $\A$ can be
obtained by relativizing to ${\sf c}_na$ for an atom $a$. 
Then one of the $S_a$'s should have no complete representation . Else for each atom $a$ $S_a$ has a complete representation $h_a$.
From those one constructs a complete representation for $\A$. 
The domain of the representation will be the disjoint union of the domains of $h_a$, and now represent $\A$ by
$$h(\alpha)=\bigcup_{a\in AtA}\{h(\alpha\cdot {\sf c}_na)\}.$$
\end{demo}  

\begin{theorem} Let $n\in\omega$. Let $K$ be a signature between $\Df_n$ and $\QEA_n$. 
Then the class of completely representable $K$ algebras 
is elementary
if and only if $n\leq 2$, in which case this class coincides with the (elementary) class of atomic representable algebras.
\end{theorem}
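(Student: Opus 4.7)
The plan is to establish both directions of the equivalence by handling the low-dimensional case abstractly and exploiting the Erd\H{o}s graph construction from Theorem \ref{st} for the negative direction. For $n \leq 2$, the strategy is to show that the class of completely representable $K$-algebras coincides with the (first-order axiomatizable) class of atomic representable $K$-algebras. The cases $n = 0, 1$ reduce essentially to atomic Boolean algebras, for which the canonical atom-by-atom map is always a complete representation. For $n = 2$, I would invoke the fact that $\RCA_2$ is finitely axiomatizable (Henkin) and that the concrete representation of an atomic element of $\RCA_2$ can be built so as to preserve arbitrary joins; this uses the absence of any ``dimension-complexity'' obstruction in dimension $2$. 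Since every completely representable algebra is automatically atomic and representable, the two classes agree, and elementarity follows.

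For $n \geq 3$, the plan is to prove non-elementarity by exhibiting a sequence of completely representable $K$-algebras whose non-principal ultraproduct fails to be representable, thereby violating closure under ultraproducts. Following the construction preceding Theorem \ref{st}, let $G_k$ be finite graphs (from Erd\H{o}s) with $\chi(G_k) > k$ and girth $> k$, and put $\Gamma_k = \bigsqcup_{l > k} G_l$, so that $\chi(\Gamma_k) = \infty$ for each $k$. The theorem preceding Theorem \ref{st} then gives that each $\C(\Gamma_k)$ is a representable polyadic equality algebra, hence its $K$-reduct is representable for every signature $K$ between $\Df_n$ and $\QEA_n$. Taking a non-principal ultraproduct, $\Gamma = \prod_D \Gamma_k$ has no cycles (since girths tend to infinity), hence $\chi(\Gamma) \leq 2 < \infty$, so by the same theorem the diagonal-free reduct of $\C(\Gamma) \cong \prod_D \C(\Gamma_k)$ is not representable; a fortiori the $K$-reduct is not representable and so not completely representable. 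This kills closure under ultraproducts in every signature considered.

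The main obstacle, and the step that needs the most care, is upgrading the conclusion ``$\C(\Gamma_k)$ is representable'' to ``$\C(\Gamma_k)$ is \emph{completely} representable.'' My plan is to inspect the concrete representation produced by the proof of the theorem preceding Theorem \ref{st}: it is constructed from an essentially Hirsch--Hodkinson style back-and-forth built over the atom structure $\eta(\Gamma_k)$, with base points coordinatized by $\Gamma_k \times n$. When $\chi(\Gamma_k) = \infty$ one can arrange, by enumerating atoms of $\C(\Gamma_k)$ and using the chromatic condition at each step, that every atom is realized by some $n$-tuple of base points; by a standard argument this forces the resulting homomorphism to send arbitrary suprema to unions, yielding a complete representation. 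One then checks that the polyadic operations ${\sf p}_{ij}$ are respected (as in the proof of the earlier theorem, using the polyadic axioms and dimension-set manipulations), so the representation remains complete in the full $\QEA_n$ signature and a fortiori in any $K$ in between.

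Finally, to close the argument cleanly, I would note that elementary classes are closed under ultraproducts, so the existence of the sequence $\{\C(\Gamma_k)\}$ above directly contradicts elementarity of the completely representable class for $n \geq 3$. Combined with the positive result for $n \leq 2$, this yields both directions of the biconditional.
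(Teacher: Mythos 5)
The paper does not prove this theorem in-line; it simply cites \cite{MS}, where the negative direction for $n\geq 3$ is obtained not from the Erd\H{o}s graphs but from constructions in the style of Example \ref{countable} --- an atomic algebra with uncountably many atoms admitting no complete representation, paired with a countable elementary subalgebra that \emph{is} completely representable --- so that failure of elementarity is witnessed by non-closure under elementary equivalence rather than under ultraproducts. Your ultraproduct strategy is structurally legitimate: if each $\C(\Gamma_k)$ were completely representable, then $\prod_D\C(\Gamma_k)\cong\C(\prod_D\Gamma_k)$ having a non-representable diagonal-free reduct would indeed refute elementarity for every signature between $\Df_n$ and $\QEA_n$. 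But the step you yourself flag as the main obstacle contains a genuine gap, in two respects.

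First, the criterion you invoke for completeness is the wrong one. Arranging that ``every atom is realized by some $n$-tuple of base points'' only says $h(a)\neq\emptyset$ for each atom $a$, which holds for \emph{any} representation of these simple algebras. A representation $h$ of an atomic algebra is complete iff the union of the sets $h(a)$, for $a$ ranging over the atoms, exhausts the unit --- i.e.\ iff \emph{every} tuple of the base satisfies some atom. This covering condition is much stronger and is not forced by realizing each atom somewhere. Second, and more substantively, the theorem you lean on only delivers \emph{representability} of $\C(\Gamma_k)$ when $\chi(\Gamma_k)=\infty$, and the whole point of this circle of ideas --- see Corollary \ref{com}(5) --- is that in dimension $\geq 3$ atomic representable algebras need \emph{not} be completely representable. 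Complete representability of $\C(\Gamma_k)$ amounts (the atom structure being countable) to $\exists$ having a winning strategy in the $\omega$-round \emph{atomic} game on $\eta(\Gamma_k)$, which is strictly stronger than representability of the complex algebra and requires its own argument; nothing in your proposal supplies it. Until you either produce that winning strategy for these particular atom structures, or fall back on the L\"owenheim--Skolem/elementary-subalgebra route of \cite{MS} and \cite{HH97b}, the $n\geq 3$ direction is not established. (The $n\leq 2$ direction is right in outline, though the assertion that every atomic representable algebra of dimension $2$ is completely representable also deserves an explicit construction rather than an appeal to the absence of ``dimension-complexity obstructions''.)
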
 
\begin{demo}{Proof} \cite{MS}
\end{demo}

Also we have:

\begin{theorem}\label{com1} Let $n\in\omega$. Let $K$ be a signature between $\Df_n$ and $\QEA_n$. 
Then the class of representable $K$ algebras 
is closed under completions 
if and only if $n\leq 2$.
\end{theorem}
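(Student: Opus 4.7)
The plan is to establish the two directions of the biconditional separately, with the nontrivial direction $(n\ge 3\Rightarrow$ not closed$)$ handled by the machinery already developed in the paper, and the easy direction $(n\le 2\Rightarrow$ closed$)$ handled by invoking finite axiomatizability in low dimensions.

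For the direction $n\ge 3\Rightarrow \mathbf{R}K_n$ is not closed under completions, I would simply appeal to Corollary~\ref{com}(2). That corollary was obtained via the Erd\H{o}s probabilistic graph construction of Theorem~\ref{st}, which produces a weakly representable but not strongly representable atom structure $H$ in the richest signature (polyadic equality), hence \emph{a fortiori} in every signature $K$ between $\Df_n$ and $\PEA_n$. Since the term algebra $\Tm H$ is representable while its complex algebra $\Cm H$ is not, and $\Cm H$ is precisely the (Monk / MacNeille) completion of the atomic algebra $\Tm H$, we obtain a representable $K$-algebra whose completion lies outside $\mathbf{R}K_n$. Because in finite dimension $\QEA_n$ coincides with $\PEA_n$, this handles the full range of signatures requested.

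For the direction $n\le 2\Rightarrow \mathbf{R}K_n$ is closed under completions, I would split into cases. For $n\in\{0,1\}$ the claim is essentially the well-known fact that the MacNeille completion of a Boolean (set) algebra is again a Boolean (set) algebra, since cylindrifications become degenerate. For $n=2$ I would use Henkin's theorem (recalled earlier in the excerpt) that $\mathbf{R}\CA_2$ is finitely axiomatizable; more carefully, the equations needed over $\CA_2$ (respectively $\Df_2$, $\PEA_2$, $\QEA_2$) to cut out the representable subvariety are of Sahlqvist form, hence preserved under canonical extensions, and a direct verification shows they are also preserved when passing from an atomic member of $\mathbf{R}K_2$ to its MacNeille completion $\Cm(\At\A)$. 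Equivalently, for $n=2$ every atom structure of a representable $K$-algebra is \emph{strongly} representable, so the completion of any member of $\mathbf{R}K_2$ (reduced to its atomic case via subdirect decomposition) remains representable.

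The main obstacle is the uniformity of the positive direction across all signatures $K$ sandwiched between $\Df_2$ and $\QEA_2$: one must check that each additional operation ($\mathsf{d}_{ij}$, $\mathsf{s}_i^j$, $\mathsf{p}_{ij}$) behaves correctly under MacNeille completion in dimension $2$. This is however routine because in dimension $2$ the non-Boolean operations are severely constrained (there are only two indices, so substitutions and transpositions reduce to a handful of term-definable maps) and the representation of an atomic $\A\in\mathbf{R}K_2$ extends by additivity to the completion. The negative direction, by contrast, requires no further work beyond citing Corollary~\ref{com}(2), since the hard combinatorial content (the Erd\H{o}s graphs and the atom structure $\eta(\Gamma)$) has already been set up.
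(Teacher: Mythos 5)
Your proposal is correct and follows essentially the same route as the paper: the negative direction for $n\geq 3$ is obtained by citing Corollary~\ref{com}(2) (the weakly but not strongly representable atom structure built from Erd\H{o}s graphs), and the positive direction for $n\leq 2$ rests on the fact that the representable algebras in each signature between $\Df_2$ and $\QEA_2$ are axiomatized by Sahlqvist equations, whence (by Venema's preservation theorem for conjugated varieties, the same result the paper invokes via \cite{Venema}) the variety is closed under completions. The only difference is presentational: you spell out the low-dimensional cases and the per-operation check a little more explicitly than the paper does.
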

\begin{demo}{Proof} For $\Df$, $\QPA$, $\SC$, the class of representable algebras of dimension $\leq 2$ coincides with the class of algebras
that is axiomatized by  Sahlqvist equations. For $\CA$ and $\QEA$ 
the class of representable algebras of dimension $\leq 2$ is also finitely axiomatizable
by Sahlqvist equations. The case $n>2$ follows from the proof of  corollary 22.
\end{demo}

Some historical remarks are in order. It was proved by Stone in the
1930s that every Boolean algebra $\B$ can be embedded into a complete and atomic
Boolean set algebra, namely the Boolean algebra of the class of all
subsets of the set
of ultrafilters in $\B$. This canonical extension of $\B$ was characterized
algebraically
by Jonsson and Tarski in 1951. They developed a theory of Boolean algebras
with operators (similar in spirit to the theory of groups with
operators), proved
that every Boolean algebra with operators $\B$ can be embedded into a canonical
complete and atomic Boolean algebra with operators $\A$ of the same
similarity type
as $\B$, and established a number of preservation theorems concerning
equations and
universal Horn sentences that are preserved under the passage from $\B$ to $\A$. They
concluded that the canonical extension of every abstract relation
algebra is again a
relation algebra, and similarly for cylindric algebras and other
related structures.
They did not settle the question of whether, e.g., the canonical extension of a
representable relation algebra (a relation algebra that is isomorphic
to a concrete
algebra of binary relations on a set) is again representable, since
the equations
that characterize representable relation algebras are in general not
preserved under the passage to canonical extensions. 
As sketched above any  
axiomatization of representable relation and cylindric algebras, must involve infinitely many non -canonical 
sentences.

However, this problem was
settled in the affirmative
sometime in the 1960s, by Monk (unpublished).
This implies that the class of representable relation algebras is barely canonical.
An analogous result for the class of representable cylindric algebras hold.
MacNeille and Tarski showed in the 1930s that every Boolean algebra has 
another natural complete extension - and in fact it is a minimal complete
extension
that is formed using Dedekind cuts. In 1970, Monk developed the theory
of minimal
complete extensions of Boolean algebras with complete operators in analogy with
the Jonsson-Tarski theory of canonical extensions of Boolean algebras
with operators. In particular, he proved an analogous preservation theorem, and
concluded
that the minimal complete extension of every relation algebra is again
a relation
algebra, and similarly for cylindric algebras and other related
structures. He was
unable to settle the question of whether the minimal complete
extension of a representable relation algebra or a representable finite-dimensional
cylindric algebra
is representable. This question was finally settled negatively for
relation algebras
and finite-dimensional cylindric algebras of dimension $>2$ by Hodkinson in 1997 \cite{Hod97}; his proof used a
back-and-forth game-theoretic argument, and was rather complicated. 
In theorem \ref{complete},  we give a simpler proof of Hodkinson's theorem, and we  extend Hodkinson's negative results to other kinds of
algebras of logic, for instance to quasi-polyadic algebras with and
without equality
(developed by Halmos), to substitution algebras, and to diagonal-free
cylindric algebras. This simplified method of proof also uses a back-and-forth
game-theoretic
argument based on structures defined from certain graphs.
We will basically show that there exists a weakly representable atom structure, that is not strongly representable.
Note that theorem \ref{st} is a strengthening of this result, because, unlike the class of strongly representable atom structures,
the class of weakly reprsentable atom structures
is elementary.

\subsection{ Back to where we started; Monk's result}

We note that the technique in theorem \ref{st} is a dual to Monk's non finite axiomatizability result.
To further elaborate on this, becuase $\RCA_n$ is a variety, an atomic algebra $\A$ will be in $\RCA_n$ iff all equations defining $\RCA_n$ holds in $\A$.
From the point of view of of $\At\A$, each equation corresponds to a certain universal monadic second-order statement, 
where the universal quantifiers are restricted 
to ranging over the set of atoms that lie underneath elements of $\A$. Such a statement fails in $\A$ iff $At\A$ 
be partitioned into finitely many $\A$-definable sets with certain 
`bad' properties. Call this a {\it bad partition}. 
A bad partition of a graph is a finite colouring: a partition of its sets of nodes into finitely many independent sets. 
This idea can be 
used to reprove Monk's non finite axiomatizability result, 
that $\RCA_n$ cannot be finitely axiomatized, by finding a sequence of atom structures, 
each having some sets that form a bad partition, but with minimal number of sets in a bad
partition increasing as we go along the sequence.
This boils down, to finding graphs of finite chromatic numbers $\Gamma_i$, having an ultraproduct
$\Gamma$ with infinite chromatic number. 
So the above construction can be used to prove the famous non-finite axiomatizability results of Monk and Johnson of $\RCA_n$, $\RPEA_n$
and $\RDf_n$ for $2<n<\omega$. Curiously the above problem is a reverse of this. An atom structure is strongly representable iff it 
has no bad partition using any sets at all. So, here, we want to find atom structures, with no bad partitions, 
with an ultraproduct that does have a bad partition.
From a graph we can create an atom structure that is strongly rep iff the graph
has no finite colouring. So the problem is to find a sequence of graphs with no finite colouring, 
with an ultraproduct that does have a finite colouring. We want graphs of infinite chromatic numbers, having an ultraproduct
with finite chromatic number. It is not obvious, a priori, that such graphs actually exist.
And here is where Erdos' methods offer solace. Indeed, graphs like this can be found using the probabilistis methods of Erdos, for those methods
render finite graphs of arbitrarily large chormatic number and girth. 
By taking disjoint union we obtain graphs of infinite chromatic number (no bad partitions) and arbitarly large girth. A non principal 
ultraproduct of these
has no cycles, so has chromatic number 2 (bad partition).
Using these probabilistic techniques of Erd\"os in  \cite{GHV} it is also shown 
that there exist continuum-many canonical equational classes of Boolean algebras 
with operators that are not generated by the complex algebras of any first-order 
definable class of relational structures. And yet again, using a variant of this construction  the authors resolve the vexing 
long-standing question of Fine that baffled logicians for some time, 
by exhibiting a bimodal logic that is valid in its canonical frames, but is not sound and complete for any first-order definable class of 
Kripke frames.

\section{Stronger forms, connections with systems of varieties}

The class of representable cylindric algebras cannot be axiomatized by a set of universal formulas containing finitely many variables \cite{An97},
the class or (representable) cylindric algebras fails to have the amalgamation property \cite{P}, 
and the class of completely representable cylindric algebras
is not elementary \cite{HH97b}. All of those results switch positive when we go to the polyadic paradighm.
There is a finite schema that axiomatizes the class of representable polyadic algebras,
this class has the superamalgamation property and indeed atomic algebras are completely representable, as we proced to show.
A similar result is proved in \cite{complete}. But there the context was countable and therefore it was possible to appeal to the Baire category 
theorem. In our next theorem, the proof is still topological, but we do something different:

\begin{theorem} Let $\alpha$ be an infinite ordinal. Let $\A\in \PA_{\alpha}$ be atomic. Then $\A$ is completely representable.
That is for all non-zero element $a\in \A$ there exists a polyadic set algebra $\B$ and a homomorphism $f:\A\to \B$ such that $f(a)\neq 0$ and
$f(\sum X)=\bigcup_{x\in X}f(x)$ whenever $\sum X$ exists in $A$. 
In particular, the class of completely representable $\PA_{\alpha}$'s is elementary.
\end{theorem}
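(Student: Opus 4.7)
The plan is to sidestep the Baire--category argument used for the countable case in \cite{complete} by exploiting a structural feature of the polyadic setting that has no analogue for cylindric algebras: in $\PA_{\alpha}$ with $\alpha$ infinite, every substitution operator ${\sf s}_{\tau}$ is a \emph{complete} Boolean endomorphism, preserving all existing suprema. This permits the representation induced by a principal ultrafilter generated by an atom to be automatically complete, with no topological machinery required and no restriction on the cardinality of $\A$.

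Fix nonzero $a \in A$; by atomicity pick an atom $b \leq a$, and let $F = \{x \in A : b \leq x\}$ be the principal ultrafilter generated by $b$. Following the canonical model construction in the proof of Theorem \ref{neat}, define an equivalence relation $E$ on $\alpha$ by $(i,j) \in E$ iff ${\sf d}_{ij} \in F$, put $M = \alpha/E$ with quotient map $q : \alpha \to M$, and let $W = {}^{\alpha}M^{(q)}$ be the weak space of maps differing from $q$ on a finite set. Define $h : \A \to \wp(W)$ by $h(x) = \{\bar{\tau} \in W : {\sf s}_{\tau} x \in F\}$, where $\tau \in {}^{\alpha}\alpha$ is any finite-support lift of $\bar{\tau}$. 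That this is well defined, yields a polyadic homomorphism, and satisfies $h(a) \neq 0$ (witnessed by $q$, since ${\sf s}_{Id} a = a \in F$) proceeds exactly as in Theorem \ref{neat}, using only that $F$ satisfies the Henkin elimination property: if ${\sf c}_i x \in F$, then $b \leq {\sf c}_i x$, and complete additivity of ${\sf c}_i$ on the atomic $\A$ yields an atom $b' \leq x$ and an index $l$ with $b \leq {\sf s}_l^i b'$, whence ${\sf s}_l^i x \in F$.

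The heart of the argument is the verification of complete preservation of suprema. Suppose $\sum X$ exists in $\A$ and $\bar{\tau} \in h(\sum X)$; then $b \leq {\sf s}_{\tau}(\sum X)$, and completeness of ${\sf s}_{\tau}$ gives ${\sf s}_{\tau}(\sum X) = \sum_{x \in X} {\sf s}_{\tau}(x)$, so $b \leq \sum_{x \in X} {\sf s}_{\tau}(x)$. If $b \not\leq {\sf s}_{\tau}(x)$ for every $x \in X$, then since $b$ is an atom each ${\sf s}_{\tau}(x) \leq -b$, whence $\sum_{x \in X} {\sf s}_{\tau}(x) \leq -b$, contradicting $b \leq \sum_{x \in X} {\sf s}_{\tau}(x)$. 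Therefore $b \leq {\sf s}_{\tau}(x)$ for some $x \in X$, i.e., $\bar{\tau} \in h(x)$; the reverse inclusion is monotonicity. Hence $h(\sum X) = \bigcup_{x \in X} h(x)$, and $h$ is the desired complete polyadic representation separating $a$ from $0$.

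For the final clause, $\RPA_{\alpha}$ is a finitely axiomatizable variety by the Daigneault--Monk theorem, hence elementary, and atomicity is expressible by a single first-order sentence in the Boolean signature. The theorem identifies the class of completely representable $\PA_{\alpha}$'s with the atomic members of $\RPA_{\alpha}$ (the reverse implication, that a complete representation forces atomicity, is a standard argument via separation of nonzero elements by atoms of the image), so this class is elementary. The main obstacle in the proof is the completeness of substitutions, which must be verified carefully from the polyadic axioms in infinite dimension; without it the argument breaks, as illustrated by Corollary \ref{com1}, where the analogous statement for $\CA_{\alpha}$ and $\QEA_{\alpha}$ with $\alpha > 2$ is shown to fail.
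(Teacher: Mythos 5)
Your central insight --- that a principal ultrafilter generated by an atom automatically preserves every existing supremum (the ``atoms distribute over sums'' argument), so no Baire-category machinery is needed --- is exactly the insight the paper uses. But your proposal omits the step that actually carries the proof: the dilation. The paper first invokes the Daigneault--Monk theorem to embed $\A$ as $\Nr_{\alpha}\B$ for a minimal dilation $\B\in\PA_{\mathfrak{n}}$, where $\mathfrak{n}\geq\mathfrak{c}$ and $\sum_{s<\mathfrak{m}}\mathfrak{n}^{s}=\mathfrak{n}$, and only in $\B$ does the quantifier-elimination identity ${\sf c}_{(\Gamma)}p=\sum\{{\sf s}_{\bar{\tau}}p:\tau\in{}^{\alpha}\mathfrak{n},\ \tau\upharpoonright\alpha\sim\Gamma=Id\}$ hold, because the witnessing substitutions move indices into the $\mathfrak{n}\setminus\alpha$ spare dimensions. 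Working inside $\A$ itself, as you do, the corresponding supremum over $\tau\in{}^{\alpha}\alpha$ need not equal ${\sf c}_{(\Gamma)}p$ (an element of a $\PA_{\alpha}$ may have support all of $\alpha$, so there is no room to reindex), and you never address ${\sf c}_{(\Gamma)}$ for infinite $\Gamma$ at all, even though these operations are part of the polyadic signature and must be preserved by $h$. Your appeal to ``complete additivity of ${\sf c}_i$ yields an atom $b'\leq x$ and an index $l$ with $b\leq{\sf s}_l^ib'$'' is precisely the point that fails without the dilation.

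There are two further signature-level problems. First, $\PA_{\alpha}$ is Halmos' polyadic algebra \emph{without} equality, so there are no diagonal elements ${\sf d}_{ij}$; the equivalence relation $E$ and the quotient $M=\alpha/E$ you import from the cylindric Theorem 2.3 are not available. Second, your target algebra has the weak space $W={}^{\alpha}M^{(q)}$ as unit, but the full substitutions ${\sf s}_{\tau}$ for arbitrary $\tau\in{}^{\alpha}\alpha$ do not act on a weak space ($s\circ\tau$ can leave $W$), so $\wp(W)$ is not a polyadic set algebra. The paper's representation deliberately lands in the full set algebra with unit ${}^{\alpha}\mathfrak{n}$, which is possible only after dilating. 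Once the dilation is in place, your atom argument goes through verbatim and is the same as the paper's; the elementarity clause is then handled as you indicate.
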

\begin{demo}{Proof} Let $a\in A$ be non-zero. Let $\mathfrak{m}$ be the local degree of $\A$, $\mathfrak{c}$ its effective cardinality and 
$\mathfrak{n}$ be any cardinal such that $\mathfrak{n}\geq \mathfrak{c}$ 
and $\sum_{s<\mathfrak{m}}\mathfrak{n}^s=\mathfrak{n}$. Then by \cite{DM63} there exists $\B\in \PA_{\mathfrak{n}}$ 
such that $\A\subseteq \Nr_{\alpha}\B$ and $A$ generates $\B$. 
Being a minimal dilation of $\A$, the local degree of $\B$ is the same as that of $\A$, 
in particular each $x\in \B$ admits a support of cardinality $<\mathfrak{m}$.
We have for all $Y\subseteq A$, $\Sg^{\A}Y=\Nr_{\alpha}\Sg^{\B}Y$. Without loss of generality, we assume that $\A=\Nr_{\alpha}\B$.
Hence $\A$ is first order interpretable in $\B$. In particular, any first order sentence (e.g. the one expressing that $\A$ is atomic)
of the language of $\PA_{\alpha}$ translates effectively to a sentence $\hat{\sigma}$ of the language of $\PA_{\beta}$ such that for all 
$\C\in \PA_{\beta}$, we have
$\Nr_{\alpha}\C\models \sigma\longleftrightarrow \C\models \hat{\sigma}$. Since $\A=\Nr_{\alpha}\B$ and $\A$ is atomic, it 
follows that $\B$ is also atomic.
Let $\Gamma\subseteq \alpha$ and $p\in \A$. Then in $\B$ we have 
\begin{equation}\label{tarek1}
\begin{split}
{\sf c}_{(\Gamma)}p=\sum\{{\sf s}_{\bar{\tau}}p: \tau\in {}^{\alpha}\mathfrak{n},\ \  \tau\upharpoonright \alpha\sim\Gamma=Id\}.
\end{split}
\end{equation}
Let $X$ be the set of atoms of $\A$. Since $\A$ is atomic, then  $\sum^{\A} X=1$. Now $\A=\Nr_{\alpha}\B$, then
for all $\tau\in {}^{\alpha}\mathfrak{n}$ we have
\begin{equation}\label{tarek2}
\begin{split}
\sum {\sf s}_{\bar{\tau}}^{\B}X=1.
\end{split}
\end{equation}
Let $X^*$ be the set of principal ultrafilters of $\B$.  
These are isolated points in the Stone topology. So we have $X^*\cap T=\emptyset$ for every
nowhere dense set $T$ (since principal ultrafilters lie outside nowhere dense sets). 
Now  for all $\Gamma\subseteq \alpha$ and all $p\in A$,
\begin{equation}\label{tarek3}
\begin{split}
G_{(\Gamma,p)}=N_{{\sf c}_{(\Gamma)}p}\sim \bigcup N_{s_{\bar{\tau}}p}
\end{split}
\end{equation}
is nowhere dense, and so is
\begin{equation}\label{tarek4}
\begin{split}
G_{X, \tau}=S\sim \bigcup_{x\in X}N_{s_{\bar{\tau}}x}.
\end{split}
\end{equation}
for every $\tau\in {}^{\alpha}\mathfrak{n}$. 
Let $F$ be a principal ultrafilter of $S$ containing $a$. 
This is possible since $\B$ is atomic, so there is an atom $x$ below $a$, just take the 
ultrafilter generated by $x$.
Then $F\in X^*$, so $F\notin G_{\Gamma, p}$, $F\notin G_{X,\tau}$ 
for every $\Gamma\subseteq \alpha$ and $p\in A$
and $\tau\in {}^{\alpha}\mathfrak{n}$.
Now define for $c\in A$
$$f(c)=\{\tau\in {}^{\alpha}\mathfrak{n}: {\sf s}_{\bar{\tau}}^{\B}c\in F\}.$$
Then $f$ is a homomorphism from $\A$ to the full
set algebra $\C$ with unit $^{\alpha}\mathfrak{n}$ such that $f(a)\neq 0$ cf. \cite{super}. 
Furthermore $f$ is a complete representation.
Indeed we have 
$$f(x)=\bigcup\{f(b): b\text { is an atom }\leq x\}.$$
Let $X$ be a subset of $\A$ such that $\sum X$ exists. Then $s\in
f(\sum X)$ iff $s\in f(b)$ for some atom $b\leq \sum X$ iff $s\in
f(b)$ for some atom $b$  some $x$ with $b\leq x\in X,$ iff $s\in
f(x)$ iff $s\in \bigcup f(X).$

\end{demo}

However polyadic algebras are viewed as non satisfactory, in the recursive sense,  because they have uncountably many operations.
A stronger form of the Finitizability problem is to search for varieties that are not only well behaved from the axiomatic point of view
but also has other desirable properties like for instance, the amalgamation property.
In this section, we wish to analyze this dichotomy between the cylindric paradigm and polyadic one 
and try to draw a border line that separates the two paradigms.
So one form of the Finitizability problem, is to find a well behaved variety $V$ of representable algebras that enjoy the positive properties
of both paradigms.

\subsection{Cylindric paradighm}

We start from the cylindric one. One form of capturing the essence of 
the cylindric paradigm is that of {\it systems of varieties definable by schemes}.
Such systems provide a unifying framework for almost all 
cylindric-like algebraic logics existing in the literature. The idea of a system of varieties definable by schemes is simple, 
and indeed it transforms the work on universal (algebraic) logic, 
to the realm of universal algebras.
An example, and indeed the prime source of such systems,  is the system $(\CA_{\alpha}: \alpha \text { an infinite ordinal })$
where $\CA_{\alpha}$ is the variety of cylindric algebras of dimension $\alpha$. 
$\A\in \CA_{\alpha}$ if $\A=(\B, {\sf c}_i, {\sf d}_{ij})_{i,j\in \alpha}$
 where $\B$ is a Boolean algebra and the ${\sf c}_i$'s are unary operations of cylindrifications and ${\sf d}_{ij}$'s are diagonal elements.
$\CA_{\alpha}$ is defined by a finite schema of equation, one such schema is $\sigma:={\sf c}_i{\sf c}_jx={\sf c}_j{\sf c}_ix, i,j\in \alpha$, 
reflecting the fact 
that cylindrifications commute. Now assume that $\eta\in {}^{\omega}\alpha$ is one to one, and assume that $i,j\in \omega$,  
then we define $\eta(\sigma)={\sf c}_{\eta(i)}{\sf c}_{\eta(j)}x={\sf c}_{\eta(j)}{\sf c}_{\eta(i)}$.
Similarly if $E$ is a set of equations then $\eta(E)$ is defined as $\{\eta(\sigma): \sigma\in E\}$.
Now consider the system of varieties $(\CA_{\alpha}: \alpha\in Ord)$. Then this system is definable by a finite schema of equations meaning
the following. There is a set $E$ of equations in the language of $\CA_{\omega}$, and indeed a finite one, such that
$$\CA_{\alpha}=Mod\{\eta(E): \eta\in {}^{\omega}\alpha \text { is one to one }\},$$
and the point is that the same $E$ works for all $\alpha.$
The exact definition of systems of varieties is given in \cite{HMT2}  definition 5.6.12, by abstracting away from $\CA$'s:

\begin{definition}
\begin{enumroman}
\item A type schema is a quadruple $t=(T, \delta, \rho,c)$ such that
$T$ is a set, $\delta$ maps $T$ into $\omega$, $c\in T$, and $\delta c=\rho c=1$.
\item A type schema as in (i) defines a similarity type $t_{\alpha}$ for each $\alpha$ as follows. The domain $T_{\alpha}$ of $t_{\alpha}$ is
$$T_{\alpha}=\{(f, k_0,\ldots k_{\delta f-1}): f\in T, k\in {}^{\delta f}\alpha\}.$$
For each $(f, k_0,\ldots k_{\delta f-1})\in T_{\alpha}$ we set $t_{\alpha}(f, k_0\ldots k_{\delta f-1})=\rho f$.
\item A system $(\K_{\alpha}: \alpha\geq \omega)$ of classes 
of algebras is of type schema $t$ if for each $\alpha\geq \omega$ $\K_{\alpha}$ is a class of algebras of type 
$t_{\alpha}$.
\end{enumroman}
\end{definition}

\begin{definition} Let $t$ be  type schema. 
\begin{enumroman}
\item With each $\alpha$ we associate a language $L_{\alpha}^t$ of type $t_{\alpha}$: for each $f\in T$ and 
$k\in {}^{\delta f}\alpha,$ we have a function symbol $f_{k0,\ldots k(\delta f-1)}$ of rank $\rho f$ 
\item Let $\eta\in {}^{\beta}\alpha$. We associate with each term $\tau$ of $L_{\beta}^t$ a term $\eta^+\tau$ of $L_{\alpha}^t$.
For each $\kappa,\omega, \eta^+ v_k=v_k$. if $f\in T, k\in {}^{\delta f}\alpha$, and $\sigma_1\ldots \sigma_{\rho f-1}$ are terms of $L_{\beta}^t$, then
$$\eta^+f_{k(0),\ldots k(\delta f-1)}\sigma_0\ldots \sigma_{\rho f-1}=f_{\eta(k(0)),\ldots \eta(k(\delta f-1))}\eta^+\sigma_0\ldots \eta^+\sigma_{\rho f-1}.$$
Then we associate with each equation $\sigma=\tau$ of $L_{\beta}^t$ the equation $\eta^+\sigma=\eta^+\tau$ of $L_{\alpha}^t$.
\item Let $E$ be a set of equations of $L_{\omega}^t$. a system $(\K_{\alpha}:\alpha\geq \omega)$ of type schema $t$ 
is definable by $E$ if for every $\alpha\geq \omega$, we have
$$\K_{\alpha}=Mod\{\eta^+e: e\in E: \eta\in {}^{\omega}\alpha, \eta \text { one to one }\}.$$
\end{enumroman}
\end{definition}
Examples of such systems include Pinter's substitution algebras $\SC$'s, and quasi- $\PA$'s ($\QPA$) and quasi- $\PEA$'s $(\QPEA)$. 
Such algebras are all defined in \cite{HMT2}. 
For such systems, general notions like locally finite algebras, dimension complemented algebras
and neat reducts can be formulated.

\begin{definition} Let $(\K_{\alpha}:\alpha\geq \omega)$ be a type of schema $t$. For $\A\in \K_{\alpha}$, 
$a\in A,$
let $\Delta a=\{i\in \alpha: {\sf c}_ia\neq a\}.$
\begin{enumroman}
\item  $\A\in \K_{\alpha}$ is locally finite if $\Delta x$ is finite for every $x\in A.$ $\Kf_{\alpha}$ is the class of locally finite $\A\in \K_{\alpha}$.
\item $\A\in \K_{\alpha}$ is dimension complemented if for every finite $X\subseteq A$,
$\alpha\sim \bigcup_{x\in X}\Delta x$ is infinite. $\Kc_{\alpha}$ is the class of dimension complemented $\A\in \K_{\alpha}$.
\item We assume that for any $f\in T$, and $x_0,\ldots x_{\rho f-1}\in \A\in \K_{\beta}$, 
if $\alpha\leq \beta$, and $\Delta x_i\leq \alpha$ and $k\in {}^{\delta f}\alpha$, 
we have $\Delta [f_{k0,\ldots, k(\delta f-1)}(x_0\ldots x_{\rho f-1})] \subseteq \alpha$.   
Now Suppose $\omega\leq \alpha\leq \beta$ and $\A\in \K_{\beta}$.
Then $\Nr_{\alpha}\B$ is the subalgebra of $\Rd_{\alpha}\B$ with universe $\{x\in B: \Delta x\subseteq \alpha\}$.
This is well defined. 
$\Nr_{\alpha}\K_{\alpha+\omega}=\{\Nr_n\A: \A\in \K_{\alpha+\omega}\}$ and $\Kn_{\alpha}$ is the class obtained by forming subalgebras
of $\Nr_{\alpha}\K_{\alpha+\omega}$, in short $\Kn_{\alpha}=S\Nr_{\alpha}\K_{\alpha+\omega}$. 
\end{enumroman}

\end{definition}
The class $\Nr_{\alpha}\K_{\beta}$ is the class of neat $\alpha$ reducts.
The class of neat reducts is important for other other algebraic logics, as well \cite{Bulletin}. 
The following is proved by Andr\'eka and N\'emeti:

\begin{theorem}  Let $(\K_{\alpha}:\alpha\geq \omega)$ be definable by a schema, and let $\alpha\geq \omega$. Then
$\Kf_{\alpha}\subseteq \Kc_{\alpha}\subseteq \Kn_{\alpha}=SUp\Kn_{\alpha}$.
\end{theorem}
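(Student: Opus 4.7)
\begin{demo}{Proof plan}
The assertion splits into three parts: the inclusion $\Kf_\alpha\subseteq \Kc_\alpha$, the inclusion $\Kc_\alpha\subseteq \Kn_\alpha$, and the identity $\Kn_\alpha=SUp\Kn_\alpha$. The first is immediate from the definitions: if $\A\in\Kf_\alpha$ and $X\subseteq A$ is finite, then $\bigcup_{x\in X}\Delta x$ is a finite union of finite sets, hence finite, and since $\alpha\geq\omega$ its complement in $\alpha$ is infinite.

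The middle inclusion $\Kc_\alpha\subseteq \Kn_\alpha$ is the substantive step and will be the main obstacle. Given $\A\in\Kc_\alpha$, I will construct a dilation $\B\in\K_{\alpha+\omega}$ with $\A\subseteq\Nr_\alpha\B$, by adapting Henkin's classical neat embedding argument from $\CA$'s to the abstract setting of a schema-definable system. The idea is to fix a bijection $\eta:\alpha+\omega\to\alpha+\omega$ that is the identity on $\alpha$, and to build $\B$ freely generated by $A$ subject to the equations of $\K_{\alpha+\omega}$ together with the diagram of $\A$ in the language $L^t_\alpha$. The crucial use of dimension complementedness is that for any finite $X\subseteq A$, cofinitely many indices in $\alpha$ fall outside $\bigcup_{x\in X}\Delta x$, and these can be ``shifted'' via injections $\omega\hookrightarrow\alpha\smallsetminus\bigcup_{x\in X}\Delta x$ to play the role of the new dimensions in $(\alpha+\omega)\smallsetminus\alpha$, thereby ensuring that the canonical map $\A\to\B$ is injective and its image is contained in $\Nr_\alpha\B$. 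Schema definability is precisely what makes the transfer of equations between dimensions possible in a uniform, syntactic way, so that $\B\in\K_{\alpha+\omega}$ can actually be produced without needing any ad hoc calculation for each variety in the system. This is the step where I expect the bulk of the work to live, and where one would most naturally invoke (or redo) the corresponding lemmas from \cite{HMT2} and \cite{BL}.

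For the final identity, the inclusion $\Kn_\alpha\subseteq SUp\Kn_\alpha$ is trivial. Conversely, I need $SUp\Kn_\alpha\subseteq \Kn_\alpha$, and since $S\Kn_\alpha=\Kn_\alpha$ by the definition $\Kn_\alpha=S\Nr_\alpha\K_{\alpha+\omega}$, it suffices to show $Up\Kn_\alpha\subseteq \Kn_\alpha$. Take algebras $\A_i\in\Kn_\alpha$, each embedded as $\A_i\subseteq\Nr_\alpha\B_i$ with $\B_i\in\K_{\alpha+\omega}$, and let $D$ be an ultrafilter on the index set. Then $\prod_i\A_i/D$ embeds into $\prod_i\Nr_\alpha\B_i/D$, and a direct application of \L os's theorem to the equations ${\sf c}_jx=x$ for $j\in(\alpha+\omega)\smallsetminus\alpha$ yields $\prod_i\Nr_\alpha\B_i/D\subseteq\Nr_\alpha(\prod_i\B_i/D)$. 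Finally, since the system is definable by the set of equations $E$, each $\K_{\alpha+\omega}$ is a variety and in particular closed under ultraproducts, so $\prod_i\B_i/D\in\K_{\alpha+\omega}$, giving $\prod_i\A_i/D\in S\Nr_\alpha\K_{\alpha+\omega}=\Kn_\alpha$ as required.
\end{demo}
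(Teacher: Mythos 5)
The paper does not actually prove this theorem: it is the Andr\'eka--N\'emeti result quoted verbatim from \cite{HMT2}, Theorem 5.6.15, and the ``proof'' in the text is just that citation. So there is no in-paper argument to measure you against; I can only judge your plan on its own terms. Your first step ($\Kf_{\alpha}\subseteq \Kc_{\alpha}$) is correct and complete. Your third step is also correct and essentially complete: $S\Kn_{\alpha}=\Kn_{\alpha}$ is immediate from $\Kn_{\alpha}=S\Nr_{\alpha}\K_{\alpha+\omega}$, the embedding $\prod_i\Nr_{\alpha}\B_i/D\subseteq \Nr_{\alpha}(\prod_i\B_i/D)$ follows from \L{o}\'s applied to the equations ${\sf c}_jx=x$ coordinatewise (note it is only an embedding, not an equality, since one cannot intersect infinitely many sets of $D$ to choose representatives in $\Nr_{\alpha}\B_i$ --- but an embedding is all you need), and $\K_{\alpha+\omega}$ is a variety because it is $Mod\{\eta^+e\}$, hence closed under $Up$.

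The genuine gap is the middle inclusion $\Kc_{\alpha}\subseteq\Kn_{\alpha}$, which is where the entire content of the theorem lies and which your proposal only gestures at. If you present $\B$ as the $\K_{\alpha+\omega}$-algebra freely generated by $A$ modulo the diagram of $\A$, then containment of the image in $\Nr_{\alpha}\B$ is not automatic: you must also impose the relations ${\sf c}_ja=a$ for $j\in(\alpha+\omega)\smallsetminus\alpha$ and $a\in A$, and once you do, injectivity of $\A\to\B$ becomes exactly the hard claim --- that these extra relations, together with the $(\alpha+\omega)$-instances of the schema $E$, do not collapse distinct elements of $A$. Establishing this requires a reduction of every equational derivation in $\alpha+\omega$ dimensions to one in $\alpha$ dimensions, obtained by substituting, for the finitely many new indices occurring in a given derivation, fresh indices drawn from $\alpha\smallsetminus\bigcup_{x\in X}\Delta x$ (available precisely because $\A\in\Kc_{\alpha}$), and checking that the translated instances $\eta^+e$ of the schema are again instances of the schema and that the closure condition (iii) on dimension sets is respected. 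None of this is carried out, and your opening device --- a bijection of $\alpha+\omega$ fixing $\alpha$ pointwise, which can only permute the new indices among themselves --- is not the right gadget for it; what is needed are injections of finite sets of new indices back into $\alpha$. So the plan points in a workable direction, consistent with the proof in \cite{HMT2}, but as written the decisive step is a promissory note rather than a proof.
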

\begin{demo}{Proof} \cite{HMT2} Thm 5.6.15
\end{demo}

We note that in most algebraic logics the class $\Kn_{\alpha}$ coincide with the class of representable 
algebras and so cannot be axiomatized by a finite schema, and indeed 
a certain complexity is inevitable in any such axiomatization, 
witness the results of Andr\'eka in \cite{An97}.
Andreka's results generalize to quasi-polyadic equality algebras. 
We follow the notation of \cite{ST} in treating quasipolyadic equality algebras.
In particular, we view $\B\in \QPEA_{\alpha}$ to be of the form $(\A, {\sf p}_{ij})_{i,j<\alpha}$, where $\A\in \CA_{\alpha}$
and ${\sf p}_{ij}$'s are substitutions corresponding only to transpositions.
The class of neat reducts is defined in complete analogy to the $\CA$ case.
We mention two deep results proved recently by the author:

\begin{theorem} Let $n\geq 3$. Let $l\geq 2$. Then the class $S\Nr_n\QPEA_{n+l}$ is not finitely axiomatizable by a set of quantifier free
formulas containing finitey many variables.
\end{theorem}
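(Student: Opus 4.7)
The plan is to follow the general template of Andr\'eka's splitting arguments \cite{An97}, extended to the quasipolyadic equality signature, and to combine this with an analogue of the Monk--Maddux construction that is sensitive to the number of extra dimensions. The result we want is a non-finite quantifier-free axiomatizability of $S\Nr_n\QPEA_{n+l}$, so the strategy is to produce for every quantifier-free formula $\varphi$ (equivalently, a finite set of open formulas) in the signature of $\QPEA_n$ with at most $k$ variables, two algebras $\A_k,\B_k$ that cannot be separated by $\varphi$, yet with $\A_k\in S\Nr_n\QPEA_{n+l}$ and $\B_k\notin S\Nr_n\QPEA_{n+l}$.

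First I would fix a finite base relation algebra $\M$ rich enough to admit an $n{+}l$-dimensional cylindric basis (in the sense of \cite{ANT}), but not an $n{+}l{+}1$-dimensional one; the Monk-style algebras $\E_k(2,3)$ appearing in Example \ref{OTT} (or variants of Maddux's algebras discussed right after Monk's theorem) furnish concrete candidates. The associated $n$-dimensional basic-matrix algebra $\C_k=\Bb_n(\M_k,J,E)$ lies in $\Nr_n\QPEA_{n+l}$ (one simply checks that the polyadic transposition operators ${\sf p}_{ij}$ act on basic matrices by relabeling, so that the neat embedding is preserved when one passes from $\CA$ to $\QPEA$; this is the point where $l\ge 2$ is used in order to have enough room to realize transposition compositions). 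Simultaneously, by splitting a carefully chosen atom of $\C_k$ into many copies in the Andr\'eka style, I would manufacture a second algebra $\D_k$ that fails to be in $S\Nr_n\QPEA_{n+l}$; non-membership is witnessed by the failure of a consequence of the $n{+}l$-dimensional basis axioms, of the kind Monk constructed in \cite{M69}, together with the observation that splitting preserves the polyadic transpositions on the unsplit part.

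Next, the pivotal step is an Ehrenfeucht--Fra\"\i ss\'e-style argument showing that $\A_k:=\C_k$ and $\B_k:=\D_k$ satisfy exactly the same open $\QPEA_n$-formulas in $\le k$ variables. Here the splitting is calibrated so that the duplicator has a winning strategy in the $k$-pebble atomic game on the two algebras; because quantifier-free formulas do not see beyond the quantifier-free $k$-type of a tuple, the pebble game controls them. Transpositions ${\sf p}_{ij}$ act only by permuting coordinates of basic matrices, so they can be tracked by the pebble game in the same way that cylindrifications and substitutions ${\sf s}_j^i$ were tracked in \cite{An97}. This is the main obstacle: one must verify that the transposition operations do not give the spoiler any new way to detect the split atoms with fewer than $k$ variables. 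The cleanest way I would carry this out is by showing that the splitting can be made \emph{uniform} under the symmetric group action induced by $\{{\sf p}_{ij}:i,j<n\}$ on the basic matrices, so that the ${\sf p}_{ij}$ act as inner automorphisms on the split part.

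Finally, assuming towards contradiction that $S\Nr_n\QPEA_{n+l}$ were axiomatized by a finite set $\Sigma$ of quantifier-free formulas using in total at most $k$ variables, I would pick the pair $(\A_k,\B_k)$ from the construction. Then $\A_k\models\Sigma$ forces $\B_k\models\Sigma$ by indistinguishability, so $\B_k\in S\Nr_n\QPEA_{n+l}$, contradicting the construction. A refinement of the same argument, as in Andr\'eka's Theorem refined in the last part of Section~1 of the excerpt, yields the stronger conclusion that $\Sigma$ must, for every $k$, contain a formula with more than $k$ variables, so no finite $\Sigma$ of quantifier-free formulas in finitely many variables suffices.
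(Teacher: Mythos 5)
Your overall architecture---use an Andr\'eka-style splitting to manufacture, for each $k$, an algebra outside the class that cannot be detected by quantifier-free formulas in $k$ variables---is the right one and is essentially what the paper does. But the pivotal step as you have set it up, the Ehrenfeucht--Fra\"\i ss\'e argument showing that the unsplit algebra $\C_k$ and its split $\D_k$ satisfy \emph{exactly the same} open $\QPEA_n$-formulas in $\leq k$ variables, is both stronger than what you need and not what splitting delivers. Since $\C_k$ embeds into the split algebra $\D_k$, universal sentences pass from $\D_k$ \emph{down} to $\C_k$ for free; the direction you actually need---every $k$-variable open formula satisfied by all tuples of $\C_k$ is satisfied by all tuples of $\D_k$---would require the $k$-generated subalgebras of $\D_k$ to refute no open formula that $\C_k$ validates, and the subalgebras of $\D_k$ that contain split pieces are genuinely new algebras for which your pebble game gives no control. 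What the splitting \emph{does} give, and what suffices, is that every $k$-generated subalgebra of $\D_k$ is representable, hence lies in $S\Nr_n\QPEA_{n+l}$; consequently $\D_k$ satisfies every quantifier-free formula in $\leq k$ variables that is valid throughout the class, and the companion algebra $\C_k$ and the game are not needed at all. As written, the exact $k$-equivalence claim is the gap.

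For comparison, the paper's proof starts not from a Monk--Maddux relation algebra with a cylindric basis but from a concrete $\QPEA_n$ set algebra $\A'$ generated by a rectangle $R=\prod_{i<n}U_i$ with $|U_0|=m>2^k$ together with a ``successor'' relation $F$ on $U_0$, and splits ${\sf s}_{\tau}R$ into $m+1$ atoms for each permutation $\tau$ to obtain $\A_k$. Non-membership in $S\Nr_n\QPEA_{n+l}$ is witnessed not by a basis obstruction but by Andr\'eka's equations $e_m$, which are valid in $S\Nr_n\QEA_{n+2}$ and hence in $S\Nr_n\QEA_{n+l}$ for every $l\geq 2$ (this inclusion is where the hypothesis $l\geq 2$ enters---not, as you suggest, to ``realize transposition compositions''), while $\A_k$ violates them because any representation of even its $\CA$-reduct would force $|U_0|\geq m+1$. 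The $k$-generated subalgebras are then shown representable directly, and the finite-variable collapse finishes the argument. If you replace your EF step by this one-algebra subalgebra argument, your proof goes through.
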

\begin{demo}{Sketch of Proof} Let $m>2^k$ $m<\omega$ 
and let $(U_i:i<n)$ be a system of disjoint sets each of cardinality $m$ such that $U_0=\{0,\ldots m-1\}$.
Let  $$U=\bigcup\{U_i:i<n\}.$$
$$q\in \times_{i<\alpha}U_i=\{s\in {}^{\alpha}U:  s_i\in U_i\text { for all } i<\alpha\},$$
and let $$R=\{z\in \times _{i<\alpha}U_i:|\{i<\alpha: z_i\neq q_i\}|<\alpha\}.$$
Let
$$F=\{s\in {}^nU^{(q)}: s_0, s_1\in U_0, s_1=s_0+1(mod(m)\}.$$
Let $\A'$ be the polyadic equality algebra generated by $R$ and $F$. 
Let $\A$ be the algebra we obtain from $\A$ by splitting ${\sf s}_{\tau}R$ into ${\sf s}_{\tau}R_j$ 
for $j\leq m$ and $\tau\in Per$. That is $\A$ is an algebra
such that
\begin{enumarab}
\item $\A'\subseteq \A$, and the Boolean part of $\A$ is a Boolean algebra,
\item ${\sf s}_{\tau}R_j$, are pairwise distinct atoms of $\A$ for each $\tau\in Tr$ and $j\leq m$ and 
${\sf c}_i{\sf s}_{\tau}R_j={\sf c}_i{\sf s}_{\tau}R$ for all $i<n$ and all $\tau\in Tr,$
\item each element of $\A$ is a join of element of $\A'$ and of some ${\sf s}_{\tau}R_j$,'s
\item ${\sf c}_i$ distributes over joins,
\item The ${\sf s}_{\tau}$'s are Boolean endomorphisms such 
that ${\sf s}_{\tau}{\sf s}_{\sigma}a={\sf s}_{\tau\circ \sigma}a$.
\end{enumarab}
Then \cite{An97} p. 160-161 $\Rd_{ca}\A_k$ is not representable
\footnote{Strictly speaking, Andreka proves this only for the finite dimensional case, but the proof works for the infinite dimensional case 
as well.}, and by the technique in \cite{p} all $k$ generated subalgebras are representable.
The idea is that a representation of even the $\CA$ reduct of $\A$ would force that $|U_0|\geq m+1$, which is not the case.
Now let $(e_m: m<\omega)$ be the second set of equations in Remark 2 p.162 of \cite{An97}. Then
by \cite{An97} p.163-166
$S\Nr_n\QEA_{n+2}\models e_m.$  Finally it is not the case that $\A_k\models e_m$
where $\A_k $ is the algebra for which $|U_0|=m.$
The conclusion now follows by the argument of Andreka in \cite{An97} p.163.
\end{demo}

It can be easily shown \cite{neet} extending  results of Hirsch and Hodkinson to the quasipolyadic case, 
that for $3\leq m<\omega$, each of the inclusions 
$\QEA_m=S\Nr_m\QEA_m\supset S\Nr_m\QEA_{m+1}\ldots$ is strict and all but the first inclusion is not finitely axiomatized.
Furthermore $\RQEA_m$ is not finitely axiomatizable over $S\Nr_m\QEA_{m+k}$ for all $k\in \omega$.
Indeed, following Hirsch and Hodkinson \cite{HHbook}, we define 
relation algebras $\A(n,r)$ having two parameters $n$ and $r$ with $3\leq n<\omega$ and $r<\omega$.
Let $\Psi$ satisfy $n,r\leq \Psi<\omega$. We specify the stom structure of $\A(n,r)$.
\begin{itemize}
\item The atoms of $\A(n,r)$ are $id$ and $a^k(i,j)$ for each $i<n-1$, $j<r$ and $k<\psi$.
\item All atoms are self converse.
\item We can list te forbidden triples $(a,b,c)$ of atoms of $\A(n,r)$- those such that
$a.(b;c)=0$. Those triples that are not forbidden are the consistent ones. This defines composition: for $x,y\in A(n,r)$ we have
$$x;y=\{a\in At(\A(n,r)); \exists b,c\in At\A: b\leq x, c\leq y, (a,b,c) \text { is consistent }\}$$
Now all permutations of the triple $(Id, s,t)$ will be inconsistent unless $t=s$.
Also, all permutations of the following triples are inconsistent:
$$(a^k(i,j), a^{k'}(i,j), a^{k''}(i,j')),$$
if $j\leq j'<r$ and $i<n-1$ and $k,k', k''<\Psi$.
All other triples are consistent.
\end{itemize}

\begin{theorem} We have $\A(n,r)\in S\Ra \PEA_n$. 
\end{theorem}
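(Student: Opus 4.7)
\begin{demo}{Proof proposal}
The plan is to exhibit an $n$-dimensional cylindric basis for $\A(n,r)$ in the sense of Maddux, extract a $\CA_n$ from its complex algebra, enrich it to a $\PEA_n$ using the natural symmetry of the atom structure, and finally identify $\A(n,r)$ with a subalgebra of the $\Ra$-reduct.

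First I would define $M=M(n,r)$ to be the set of all $n\times n$ basic matrices over $\At\A(n,r)$, i.e.\ maps $f:n\times n\to\At\A(n,r)$ satisfying $f(i,i)=id$, $f(i,j)=\breve{f(j,i)}=f(j,i)$ (since all atoms are self-converse), and $f(i,k)\leq f(i,j);f(j,k)$ for all $i,j,k<n$. The key verification is that $M$ is a \emph{cylindric basis}: the only non-trivial axiom is the amalgamation property, that if $f,g\in M$ agree on $n\setminus\{i\}$ and $n\setminus\{j\}$ respectively (for $i\neq j$), then there is $h\in M$ extending both, i.e.\ filling in a consistent entry for the unique new pair. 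Here one exploits the generous parameter $\Psi$: the forbidden triples only constrain atoms sharing the same first coordinate $i<n-1$ and second coordinate $j<r$, while the superscript $k<\Psi$ is free. Because $\Psi$ is chosen with $\Psi\geq n,r$, when we need to choose a new atom $a^k(i,j)$ to place into the missing entry, we can pick the superscript $k$ large enough to avoid the finitely many monochromatic triangles forbidden by the third family of inconsistent triples. This will constitute the main obstacle; all other basis conditions follow by inspection.

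Having established that $M$ is an $n$-dimensional cylindric basis, the complex algebra $\Cm M$ is a $\CA_n$ with the standard cylindrifications and diagonals (as in \cite{HHbook}); its atom structure is $M$ itself with the accessibility relations $f\equiv_i g\iff f\restriction(n\setminus\{i\})\times(n\setminus\{i\})=g\restriction(n\setminus\{i\})\times(n\setminus\{i\})$ and $D_{ij}=\{f\in M:f(i,j)=id\}$. Next I would define, for each transposition $[i,j]$ on $n$, the map $\pi_{ij}:M\to M$ by $\pi_{ij}(f)(k,l)=f([i,j]k,[i,j]l)$. Since forbidden triples and identity entries are invariant under permutations of coordinates, each $\pi_{ij}$ is an atom-structure automorphism, and defines a polyadic operation ${\sf p}_{ij}X=\{\pi_{ij}(f):f\in X\}$ on $\Cm M$. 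A routine check (axioms (7)--(9) and the interaction with cylindrifications and diagonals) shows that $(\Cm M,{\sf p}_{ij})_{i,j<n}\in\PEA_n$.

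Finally I would define $\rho:\A(n,r)\to \Ra\Cm M$ by sending each atom $a\in\At\A(n,r)$ to $\{f\in M:f(0,1)=a\}$ and extending by joins. One verifies that $\rho(a);\rho(b)=\rho(a;b)$, $\rho(\breve{a})=\widetilde{\rho(a)}$ and $\rho(id)=\diag{0}{1}$, using the definition of composition in $\A(n,r)$ in terms of consistent triples and the triangle condition on basic matrices. Injectivity of $\rho$ is immediate once one notes that any atom $a$ appears as $f(0,1)$ for some $f\in M$ (take $f(0,1)=a=f(1,0)$, $f(i,i)=id$, and fill the remaining entries using the amalgamation argument above). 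This exhibits $\A(n,r)\cong\rho(\A(n,r))\subseteq\Ra(\Cm M,{\sf p}_{ij})_{i,j<n}$, proving $\A(n,r)\in S\Ra\PEA_n$ as desired.
\end{demo}
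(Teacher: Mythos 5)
Your overall architecture (find a basis for an $n$-dimensional cylindric-type structure, note that the atom structure is invariant under permuting coordinates so that the complex algebra carries polyadic operations, and then embed $\A(n,r)$ into the $\Ra$-reduct) is the same shape as the paper's argument. The paper, however, does not work with $n\times n$ basic matrices at all: it invokes the result from \cite{HHbook} that the set $H_n^{n+1}(\A(n,r),\Lambda)$ of $(n+1)$-\emph{wide} $n$-dimensional $\Lambda$-\emph{hypernetworks} is a hyperbasis (this is exactly how $\A(n,r)\in S\Ra\CA_n$ is proved there), and then simply observes that this hyperbasis is symmetric, i.e.\ closed under $N\mapsto N\circ\sigma$, which upgrades the conclusion from $S\Ra\CA_n$ to $S\Ra\PEA_n$. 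The extra width $n+1$ and the hyperlabels are not decoration: they are what makes the step-by-step/amalgamation argument go through for these particular algebras.

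The genuine gap in your version is the claim that the set of plain $n\times n$ basic matrices over $\At\A(n,r)$ is a cylindric basis, and specifically your amalgamation argument. You propose to satisfy the missing triangle constraints by "picking the superscript $k$ large enough to avoid the finitely many monochromatic triangles." But inspect the forbidden triples of $\A(n,r)$: all permutations of $(a^k(i,j),a^{k'}(i,j),a^{k''}(i,j'))$ with $j\leq j'<r$ are forbidden \emph{for every} choice of $k,k',k''<\Psi$. Consistency of a triple depends only on the index pairs $(i,j)$, never on the superscripts; the superscripts exist to inflate the number of atoms, not to create room in the composition table. So varying $k$ cannot rescue a configuration whose index pairs are already bad, and the amalgamation step has no argument behind it. Without that step you have not produced a cylindric basis, and hence no $\CA_n$ (let alone $\PEA_n$) into whose $\Ra$-reduct $\A(n,r)$ embeds. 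To repair the proof you would have to either verify amalgamation for basic matrices by a genuine combinatorial analysis of the index pairs (which is doubtful, and is precisely why \cite{HHbook} passes to $(n+1)$-wide hypernetworks with hyperlabels), or follow the paper and work with the symmetric hyperbasis $H_n^{n+1}(\A(n,r),\Lambda)$ from the start.
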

\begin{demo}{Proof} 
The set $H_n^{n+1}(\A(n,r), \Lambda)$ aff all $(n+1)$ wide $n$ 
dimensional $\Lambda$ hypernetworks over $\A(n,r)$ is an $n+1$ wide $n$ 
dimensional {\it symmetric} $\Lambda$ hyperbasis. 
$H$ is symmetic, if whenever $N\in H$ and $\sigma:m\to m$, then $N\circ\sigma\in H$.
Hence $\A(n,r)\in S\Ra \PEA_n$.
In \cite{HHbook} it is proved that $\A(n,r)\in S\Ra\CA_n$, but observing that $H_n^{n+1}(\A(n,r),\Lambda)$ is symmetric
gives our stronger result.  
\end{demo}
Next we define certain polyadic equality algebras based on the relation algebras we defined:
Let $3\leq m\leq n$,  
$$\C_r=\Ca(H_m^{n+1}(\A(n,r),  \omega)).$$
Note that $\C_r$ depends on $n$ and $m$, but we omit reference to those not to clutter notation. 
Since $H_m^{n+1}(\A(n,r), \omega))$ is symmetric this defines a polyadic algebra of dimension $m$.
We can also prove that

\begin{theorem} For any $r$ and $3\leq m\leq n<\omega$, we 
have $\C_r\in \Nr_m\PEA_n$. 
\end{theorem}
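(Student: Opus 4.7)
The plan is to exhibit a concrete dilation of $\C_r$ in dimension $n$, namely to show that
$$\C_r \;\cong\; \Nr_m \B, \qquad \text{where } \B \;=\; \Ca\bigl(H_n^{n+1}(\A(n,r),\omega)\bigr).$$
By the symmetry argument used in the preceding theorem (every $(n+1)$-wide $n$-dimensional hypernetwork is closed under composition with permutations $\sigma:n\to n$), the hyperbasis $H_n^{n+1}(\A(n,r),\omega)$ is symmetric, so $\B$ carries the full polyadic equality structure and lies in $\PEA_n$. This is the canonical candidate.

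First I would define the embedding $\theta:\C_r\to\B$. Every $N\in H_n^{n+1}(\A(n,r),\omega)$ has a natural restriction $N\!\restriction_{m}\in H_m^{n+1}(\A(n,r),\omega)$ obtained by keeping only those hyperedges whose indices lie in $m$ (and the $(n+1)$-th ``hyperlabel'' coordinate). Set
$$\theta(X) \;=\; \{\,N\in H_n^{n+1}(\A(n,r),\omega)\,:\,N\!\restriction_{m}\in X\,\}, \qquad X\in \C_r.$$
Routine checking shows $\theta$ is a Boolean embedding, and that it commutes with $\c_i$, $\diag{i}{j}$, and $\sub{i}{j}$ (hence also with ${\sf p}_{ij}$) for $i,j<m$, because all these operations on hypernetworks are definable from the edges whose indices lie in $m$, and therefore descend through the restriction map. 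In the other direction, one must check that the image is contained in $\Nr_m\B$: if $N\in\theta(X)$ and $N'\equiv_i N$ with $i\geq m$, then $N'\!\restriction_{m}=N\!\restriction_{m}\in X$, so ${\sf c}_i\theta(X)=\theta(X)$; similarly for $\sub{j}{i}$ with $i\geq m$.

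The main obstacle -- and the real content of the theorem -- is the surjectivity of $\theta$ onto $\Nr_m\B$. Take $Y\in \Nr_m\B$, i.e.\ $Y\subseteq H_n^{n+1}(\A(n,r),\omega)$ with ${\sf c}_i Y = Y$ for every $m\leq i<n$. One must show that membership of $N$ in $Y$ depends only on $N\!\restriction_{m}$, which reduces to the following amalgamation/extension claim: given any two hypernetworks $N,N'\in H_n^{n+1}(\A(n,r),\omega)$ with $N\!\restriction_{m}=N'\!\restriction_{m}$, there is a finite chain
$$N = N_0,\;N_1,\;\dots,\;N_\ell = N' \qquad \text{with each}\ \ N_t \equiv_{i_t} N_{t+1},\ \ i_t\geq m.$$
This is where the closure properties of the hyperbasis $H_n^{n+1}(\A(n,r),\omega)$ come in: one rebuilds $N'$ from $N$ by re-coordinatising one index $i\geq m$ at a time, using the ``cylindrifier'' and ``hyperlabel-refinement'' moves guaranteed by the hyperbasis axioms. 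Once this chain exists, ${\sf c}_i Y = Y$ for $i\geq m$ forces $N\in Y\Longleftrightarrow N'\in Y$, and one recovers $X=\{N\!\restriction_{m}:N\in Y\}\in \C_r$ with $\theta(X)=Y$.

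Combining injectivity, preservation of all $\PEA_m$-operations, and the surjectivity onto $\Nr_m\B$, the map $\theta$ is a $\PEA_m$-isomorphism, establishing $\C_r\in\Nr_m\PEA_n$. I expect the technical bulk of the write-up to lie entirely in verifying the amalgamation/extension claim in the last paragraph; everything else is a straightforward bookkeeping exercise with the definitions of the hypernetwork operations.
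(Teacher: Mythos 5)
Your proposal is correct and follows essentially the same route as the paper: the paper takes the same dilation $\B=\Ca(H_n^{n+1}(\A(n,r),\omega))\in\PEA_n$ (using symmetry of the hyperbasis) and concludes via the identity $H_m^{n+1}(\A(n,r),\omega)=H|_m^{n+1}$ together with the Hirsch--Hodkinson isomorphism $\Ca(H|_m^{n+1})\cong\Nr_m\Ca H$, which is exactly the restriction-map isomorphism you set out to verify by hand. The only difference is that the paper cites this isomorphism as known, whereas you sketch its proof (including the amalgamation/extension chain that is indeed its technical core).
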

\begin{demo}{Proof} $H_n^{n+1}(\A(n,r), \omega)$ is a wide $n$ dimensional $\omega$ symmetric hyperbases, so $\Ca H\in \PEA_n.$
But $H_m^{n+1}(\A(n,r),\omega)=H|_m^{n+1}$.
Thus
$$\C_r=\Ca(H_m^{n+1}(\A(n,r), \omega))=\Ca(H|_m^{n+1})\cong \Nr_m\Ca H$$
\end{demo}
\begin{theorem} $\Rd_{ca}\C_r\notin S\Nr_m\CA_{n+1}$, and for any non principal ultrafilter on $\omega$,
if $3\leq m<n<\omega$, we have $\prod \C_r/F\in {\bf RPEA}_m$.
\end{theorem}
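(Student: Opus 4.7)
The plan is to treat the two parts separately, exploiting throughout that $\C_r$ is built from the relation algebra $\A(n,r)$ via symmetric hyperbases, and that the parameter $r$ controls a pigeonhole obstruction which survives at any finite level $r$ but is destroyed in the ultraproduct.

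\paragraph{First part: $\Rd_{ca}\C_r\notin S\Nr_m\CA_{n+1}$.} I would argue by contradiction. Assume $\Rd_{ca}\C_r\subseteq \Nr_m\mathfrak{D}$ for some $\mathfrak{D}\in\CA_{n+1}$. Since $\C_r=\Ca(H_m^{n+1}(\A(n,r),\omega))$ is atomic and its $2$-dimensional elements (those $x$ with $\Delta x\subseteq 2$) form an isomorphic copy of $\A(n,r)$, a standard argument transferring neat embeddings from $\CA$'s to their $\Ra$-reducts would yield $\A(n,r)\in S\Ra\CA_{n+1}$. I would then exhibit a winning strategy for $\forall$ in the $(n+1)$-pebble relation algebra game on $\A(n,r)$, thereby contradicting the neat embedding. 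The strategy exploits the forbidden-triple pattern: triples $(a^k(i,j),a^{k'}(i,j),a^{k''}(i,j'))$ with $j\le j'<r$ are inconsistent. Starting from the identity edge, $\forall$ uses cylindrifier moves to build a witness configuration of $n+1$ nodes whose edges all carry atoms of the form $a^{k}(i,j)$ for some fixed $i<n-1$, varying $k$ and $j$. The number of rounds is chosen as a Ramsey-type function of $r$, so that pigeonhole on the $j$-coordinate forces, inside the $(n+1)$-element network, a forbidden triangle that $\exists$ cannot label consistently, yielding the contradiction.

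\paragraph{Second part: $\prod_F\C_r\in \mathbf{RPEA}_m$.} I would first show $\A^\ast:=\prod_F \A(n,r)\in\mathbf{RRA}$. The crucial observation is that in $\A^\ast$ the index $j$ ranges, for each fixed $i$, over an equivalence class of sequences $\langle j_r:r<\omega\rangle$ whose values tend to infinity along $F$; hence for any finite collection of nonidentity atoms of $\A^\ast$ one can relabel the representative sequences so that no two share a common $j$-coordinate, thereby avoiding every forbidden triple. Concretely, I would construct a representation of $\A^\ast$ by verifying that $\exists$ has a winning strategy in the $\omega$-round atomic network game, using at each stage Łoś's theorem to lift a finite fragment of the strategy from the factors. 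Once $\A^\ast$ is representable, symmetric $n$-dimensional hyperbases for it exist in arbitrarily wide versions; in particular $H_m^{n+1}(\A^\ast,\omega)$ is an $(n+1)$-wide $m$-dimensional symmetric $\omega$-hyperbasis, so $\Ca(H_m^{n+1}(\A^\ast,\omega))\in\mathbf{RPEA}_m$. Finally, since the hyperbasis construction and the $\Ca$ operator are both uniformly first-order definable on atom structures, an application of Łoś's theorem gives $\prod_F\C_r\cong \Ca(H_m^{n+1}(\A^\ast,\omega))$, which completes the proof.

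\paragraph{Main obstacle.} The delicate step is the first part: I must calibrate the game length so that $\forall$'s pigeonhole strategy fits inside the number of rounds corresponding to $(n+1)$ pebbles, and I must justify rigorously the passage from ``$\Rd_{ca}\C_r\in S\Nr_m\CA_{n+1}$'' to ``$\A(n,r)\in S\Ra\CA_{n+1}$''. The latter requires showing that $\A(n,r)$ sits inside the $\Ra$-reduct of the neat-embedding target in a way that preserves the abstract relation algebra game, which hinges on the specific way the hyperbasis atoms of $\C_r$ encode the atoms of $\A(n,r)$ through their $\{0,1\}$-edges.
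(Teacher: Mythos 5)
Your proposal follows essentially the same route as the paper, which for this theorem simply defers to Hirsch--Hodkinson (Corollary 15.10 and the exercise on p.~484 of \cite{HHbook}): the negative part via passing to the $\Ra$-reduct, reducing to $\A(n,r)\notin S\Ra\CA_{n+1}$, and winning the game for $\forall$ by a pigeonhole on the colour indices of the forbidden triples; the positive part via representability of $\prod_F\A(n,r)$ and transfer through the hyperbasis construction by \L{}o\'s. One small correction: $\prod_F\C_r$ is in general only \emph{embeddable} in $\Ca(H_m^{n+1}(\A^*,\omega))$, not isomorphic to it, since the ultraproduct of complex algebras over infinite atom structures is a proper subalgebra of the complex algebra of the ultraproduct; this is harmless here because $\mathbf{RPEA}_m$ is a variety and hence closed under subalgebras.
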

\begin{demo}{Proof} The first part is like the $\CA$ case proved in Corollary 15.10 in \cite{HHbook}.
The second part is identical to exercise 2 on p. 484 of \cite{HHbook}
\end{demo}
We now have

\begin{theorem}
\begin{enumarab}
\item Each of the inclusions $S\Ra\PEA_3\supset S\Ra\PEA_4\supset \ldots$ is strict
\item All but the first inclusion above can be finitely axiomatized
\item For $3\leq m<\omega$, each of the inclusions $\PEA_m=S\Nr_m\PEA_m\supset S\Nr_m\PEA_{m+1}\ldots$ is strict
\item All but the first inclusion is not finitely axiomatized
\end{enumarab}
\end{theorem}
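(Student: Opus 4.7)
The plan is to derive clauses (1), (3), (4) from the two families of algebras $\A(n,r)$ and $\C_r=\Ca(H_m^{n+1}(\A(n,r),\omega))$ already at our disposal, combined with a \Los-style ultraproduct argument, while clause (2) is handled by adapting the Hirsch--Hodkinson finite axiomatization of $S\Ra\CA_{n+1}$ over $S\Ra\CA_n$ (for $n\geq 4$) to the polyadic setting via \emph{symmetric} hyperbases.

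For (3), fix $3\leq m\leq n$ and let $r$ be large enough that the construction of $\A(n,r)$ and $\C_r$ is available. The theorem preceding the statement gives $\C_r\in\Nr_m\PEA_n\subseteq S\Nr_m\PEA_n$, while $\Rd_{ca}\C_r\notin S\Nr_m\CA_{n+1}$ forces $\C_r\notin S\Nr_m\PEA_{n+1}$ (a polyadic neat embedding would restrict to a cylindric one), giving strictness. For (4), the same theorem gives $\prod_F\C_r\in\RPEA_m\subseteq S\Nr_m\PEA_{n+1}$ for any non-principal ultrafilter $F$ on $\omega$. If $S\Nr_m\PEA_{n+1}$ were finitely axiomatizable over $S\Nr_m\PEA_n$, \Los's theorem applied to the finitely many axioms that the $\C_r$'s all falsify would force the ultraproduct itself to lie outside $S\Nr_m\PEA_{n+1}$, contradicting representability. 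For (1), the relation algebras $\A(n,r)$ supply the parallel witnesses: $\A(n,r)\in S\Ra\PEA_n$ by the earlier theorem on symmetric $\Lambda$-hyperbases, while the Hirsch--Hodkinson argument that $\A(n,r)\notin S\Ra\CA_{n+1}$ (for $r$ large enough in terms of $n$) immediately yields $\A(n,r)\notin S\Ra\PEA_{n+1}$, since $S\Ra\PEA_{n+1}\subseteq S\Ra\CA_{n+1}$.

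Clause (2) is the most delicate. Fix $n\geq 4$. One shows that membership of an algebra $\A\in S\Ra\PEA_n$ in $S\Ra\PEA_{n+1}$ is equivalent to the extendability of an $n$-dimensional symmetric hyperbasis (implicit in membership in $S\Ra\PEA_n$) to a symmetric $(n+1)$-dimensional hyperbasis. Following Hirsch and Hodkinson, the key point is that for $n\geq 4$ the coherence conditions linking an $(n+1)$-tuple to all its $n$-subtuples involve only a \emph{bounded} amount of local amalgamation, and so can be captured by a single equational scheme of fixed finite length; this is precisely what fails at $n=3$ because the three-dimensional pentagon-free condition is not expressible by finitely many equations over $\RA=S\Ra\PEA_3$. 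The polyadic adjustment is to additionally close the hyperbasis under the action of the symmetric group on coordinates; since this group has order $(n+1)!$ and acts on tuples of bounded arity, only finitely many additional equations (one per permutation orbit) are needed on top of the cylindric axioms. Together with the fact that the resulting class is closed under $S$ and that the equational encoding is preserved under reducts and subalgebras, this yields a finite axiomatization of $S\Ra\PEA_{n+1}$ over $S\Ra\PEA_n$ for every $n\geq 4$.

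The main obstacle will be carrying out (2) cleanly: one must verify that the polyadic version of Hirsch--Hodkinson's amalgamation axiom, once augmented with closure under coordinate permutations, really does collapse to a finite equational scheme, and does not tacitly import the three-dimensional pathology that prevents $S\Ra\PEA_4$ from being finitely axiomatized over $S\Ra\PEA_3$. I expect this to go through for $n\geq 4$ because the permutation action involves only the finite group $S_{n+1}$, introducing only a bounded bookkeeping overhead on top of the cylindric case; the genuine subtlety is checking that the witnesses the axiom demands continue to exist symmetrically, i.e.\ that one can complete a partial symmetric hyperbasis one orbit at a time without disturbing orbits already assembled. Once this local amalgamation lemma is established in the symmetric setting, clause (2) reduces mechanically to Hirsch--Hodkinson's cylindric finite axiomatization.
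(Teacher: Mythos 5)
The paper states this theorem without proof, as an immediate corollary of the four lemmas that precede it, and your assembly of items (1), (3) and (4) from those lemmas is exactly the intended argument: $\A(n,r)\in S\Ra\PEA_n\setminus S\Ra\CA_{n+1}\supseteq S\Ra\PEA_{n+1}$ gives (1); $\C_r\in\Nr_m\PEA_n$ together with $\Rd_{ca}\C_r\notin S\Nr_m\CA_{n+1}$ gives (3); and $\prod\C_r/F\in\RPEA_m\subseteq S\Nr_m\PEA_{n+1}$ combined with the standard ultraproduct/compactness argument gives (4). Your reductions of polyadic non-membership to cylindric non-membership via the $\Rd_{ca}$ reduct are correct.

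Item (2) is where your proposal has a genuine gap, and you should reconsider the statement itself before trying to prove it. Everything the paper has assembled for the $\Ra$ chain --- witnesses $\A(n,r)\in S\Ra\PEA_n$ lying outside $S\Ra\CA_{n+1}$, together with an ultraproduct that is representable and hence lands back in $S\Ra\PEA_{n+1}$ --- is precisely the machinery for a \emph{non}-finite-axiomatizability result, parallel to item (4); the paper's own prose introducing this material asserts for the neat-reduct chain that ``all but the first inclusion is not finitely axiomatized,'' and the Hirsch--Hodkinson construction this section transplants is designed to prove the negative statement for $S\Ra\CA_{n+1}$ over $S\Ra\CA_n$. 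Read that way, (2) follows by the very ultraproduct argument you already give for (4), now applied to the $\A(n,r)$ and using $\prod_r\A(n,r)/F\in\RRA\subseteq S\Ra\PEA_{n+1}$. Taking (2) at face value as a positive claim, your proposed proof does not go through as written: the assertion that the $(n+1)$-dimensional symmetric hyperbasis-extension condition ``involves only a bounded amount of local amalgamation and so can be captured by a single equational scheme'' is exactly the point at issue, and you yourself flag the symmetric amalgamation lemma on which it rests as unverified; nothing in the paper, and nothing in the cited Hirsch--Hodkinson material, supplies it. The passing identification $\RA=S\Ra\PEA_3$ is also unjustified, since the $\Ra$-reduct of a three-dimensional algebra need not even satisfy associativity of relative product. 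In short: (1), (3), (4) are fine; (2) is unproved by your argument and is most plausibly a misstatement whose correct (negative) form you already know how to prove.
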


Now we prove a complexity result for $\RPEA_n$ when $n$ is finite $n\geq 3.$

\begin{theorem}\label{c} Let $n\geq 3$. 
Let $\Sigma$ be a set of equations axiomatizing $\RPEA_n$. Let $l<n$ $k<n$, $k'<\omega$ be natural numbers. 
Then $\Sigma$ contains
infinitely equations in which $-$ occurs, one of $+$ or $\cdot$ ocurs  a diagonal or a permutation with index $l$ occurs, more
than $k$ cylindrifications and more than $k'$ variables occur

\end{theorem}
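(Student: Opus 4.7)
The plan is to adapt Andreka's splitting technique from \cite{An97} to the polyadic equality setting, refining the construction sketched for the preceding theorem so that the non-representability witness is forced to carry all five complexity features simultaneously. For each natural number $m$ with $m > 2^{k}$, I would construct an algebra $\A_m \in \PEA_n$ by starting from a base algebra $\A'$ generated by two elements $R$ and $F$ in the style of that sketch, but taking $U_l = \{0, \ldots, m-1\}$ (rather than $U_0$) as the carrier of the modular successor relation encoded by $F$. Then I would split each $\mathsf{s}_\tau R$ (for $\tau$ a permutation of $n$) into $m+1$ pairwise disjoint atoms $\mathsf{s}_\tau R_j$, $j\leq m$, with cylindrifications, diagonals and permutations extended in the only way consistent with the splitting. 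By the same reasoning as in the preceding theorem, $\A_m$ fails to be in $\RPEA_n$ --- any representation would force the carrier at index $l$ to have at least $m+1$ elements --- while every subalgebra of $\A_m$ generated by at most $k$ elements is in $\RPEA_n$.

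The next step is to verify that $\A_m$ satisfies every equation in the language of $\PEA_n$ that has at most $k$ cylindrifications, or at most $k'$ variables, or omits $-$, or omits both $+$ and $\cdot$, or omits the index $l$ in every diagonal and permutation symbol. The last clause is the genuinely new feature beyond Andreka's cylindric result and it is handled by the symmetry built into the construction: because the modular cycle lives at coordinate $l$, any equation whose variables, diagonals and permutations all avoid index $l$ is evaluated in a subreduct that does not see the splitting at coordinate $l$, and in that subreduct the splitting is invisible. The $-$ clause and the $+/\cdot$ clause follow by the standard argument that a splitting of atoms can be detected only by equations that compare Boolean joins of atoms with Boolean complements. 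The cylindrification and variable bounds are exactly those handled in \cite{An97} pp.163--166, with an additional routine check to cover the extra operators $\mathsf{p}_{ij}$.

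Since $\Sigma$ axiomatizes $\RPEA_n$ and $\A_m \notin \RPEA_n$, at least one equation $\sigma_m \in \Sigma$ must fail in $\A_m$; by the preceding paragraph $\sigma_m$ then contains $-$, contains $+$ or $\cdot$, mentions a diagonal or permutation with index $l$, involves more than $k$ cylindrifications, and has more than $k'$ variables. Because any single equation in the language of $\PEA_n$ has only finitely many variables and cylindrifications, a single fixed equation can fail in $\A_m$ for at most finitely many choices of $m$ (the parameter $m$ can always be taken larger than its complexity), so the assignment $m \mapsto \sigma_m$ has infinite range, producing infinitely many distinct equations in $\Sigma$ of the required complexity.

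The main obstacle will be establishing the symmetry needed for the index-$l$ clause in the presence of the permutation operators $\mathsf{p}_{ij}$: one must verify that after relocating the modular cycle from $U_0$ to $U_l$ the algebra $\A_m$ remains a legitimate $\PEA_n$, and that no permutation $\mathsf{p}_{ij}$ with $i,j \neq l$ can transport the modular obstruction onto a coordinate touched by an equation that avoids index $l$. This amounts to a careful analysis of the orbits of the split atoms under the action of the permutation group, an ingredient that does not arise in the cylindric setting of \cite{An97}. Once this is secured, the remainder of the argument follows Andreka's template essentially verbatim, the extra permutation operators contributing only bookkeeping.
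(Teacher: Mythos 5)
Your proposal is correct and follows essentially the same route as the paper: Andr\'eka's splitting method producing, for each parameter $m$, a non-representable $\PEA_n$ whose $k'$-generated subalgebras and whose various reducts (complementation-free, $\{+,\cdot\}$-free, one-cylindrification-dropped, index-$l$-dropped) are all representable, so that every equation of $\Sigma$ failing in it must carry all five complexity features, with infinitude obtained by letting the parameter grow and noting that a fixed equation can only fail for finitely many values of $m$. The only cosmetic difference is that you relocate the modular cycle to coordinate $l$, whereas the paper keeps it at coordinate $0$ and reduces to the case $l=0$ by the coordinate symmetry of the ambient set algebra.
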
 
\begin{demo}{Sketch of Proof} This is proved in \cite{p}. Here we give an outline of the proof that 
renders the gist of the techniques of Andreka in \cite{An97} but only for finite $n$. We shall construct
a non-representable algebra $\A$, such that its $k$ generated subalgebras are representable, and
  \begin{enumarab}
\item The complementation free reduct 
$\A^{-}$ of $\A$ is a homomorphic image of a subalgebra $\C$ of the complemention free reduct of $\P^{-}$ of a 
$\P$ in $\RPEA_n$. In fact this $\P$ is a set algebra with infinite base.
\item $\A^{-} \notin \RPEA_n^{-}$, the complementation free reduct of $\RPEA_n$.
\item $\A$ can be represented as a polyadic set algebra such that every operation except for $\cup$ and $\cap$ are the natural ones.
\item  There is an infinite set $W$, such that for all $\mu<n$, there is an embedding $h:\A\to (\B(^nW), {\sf c}_i, {\sf p}_{ij}, {\sf d}_{ij})_{i,j<n}$ 
such that $h$ is a homomorphism preserving all operations except for ${\sf c}_{\mu}.$
\item There is an infinite set $W$, such that there is an embedding $h:\A\to (\B(^nW), {\sf c}_i, {\sf p}_{ij}, {\sf d}_{ij})_{i,j<n}$ 
such that $h$ is a homomorphism preserving all opeartions except for ${\sf d}_{ij}, {\sf p}_{ij},$ $i\neq j$, with $l\in \{i,j\}$.
\end{enumarab}
This will prove the theorem as indicated in \cite{An97} p.195.
Now we proceed with the construction. Let
$2^{{k.n!+1}}\leq K(n-1)$ and let $m=K(n-1)$.
Let $\{U_i:i\leq n\}$ be a system of disjoint sets such that $|U_0|=m$ and $|U_i|\geq \omega$ for $0<i\leq n$.
Let $f:U_0\to U_0$ be a bijection such that the orbits of $f$ have cardinality $K$.
Let 
$$U=\bigcup\{U_i: i\leq n\}$$
$$R=\prod_{i<n} U_i$$
$$F=\{s\in {}^nU: s_0, s_1\in U_0 \text{ and } s_1=f(s_0)\}.$$
Let $\A'$ be the subalgebra of $\wp(^nU)$ generated by $R$ and $F$, and let $\A$ be the algebra obtained 
from $\A'$ by splitting into $m+1$ (distinct) atoms as defined above. 
Now it turns out that $\Rd_{ca}\A'$ is not representable, for again such a  representation
would force that $|U_0|\geq m+1$ which is not the case. 
To show that various reducts of $\A'$ are representable, we first transform $U_0$ into a set $W_0$ such that
$|W_0|=m+1$. In more detail, let $u\notin U$, let $(W_i:i\leq n)$ be such that 
$W_0=U_0\cap \{u\}$, $|U_0|=m$, $W_i=U_i$ for $0<i\leq n$ and let
$W=\bigcup\{W_i:I\leq n\}=U\cup \{w\}$.
Let $R'=\prod_{i<n}W_i$ and let $R_j', j\leq m$ be a partition of $R'$ such that
${\sf c}_i{\sf s}_{\tau}R_j'={\sf c}_i{\sf s}_{\tau}R'$ for all $i<n$, $j\leq m$. This partition, now, exists by $|W_i|\geq m+1$ for all $i\leq n$
and \cite{An97} lemma 2.
Let $f:U_0\to U_0$ be a bijection such that a all orbits of $f$ have cardinality $K$. Such an $f$ clearly exists. Extend this 
permutation to a permutation of $W$
such that $f$ permutes $W_0$, all orbits of $f\upharpoonright W_0$ are of size $K$, and $f$ is the identity on $W\sim W_0$.  
We denote this extension by $f$ as well. Now we follow \cite{An97} p. 199.
Let $e$ be the smallest equivalence relation containing $f$. Fix some $u\in U_0$, $w\in W\sim U$ and let
$w/e=\{v: vew\}$, $u/e=\{v: veu\}$. Let $\delta:w/e\to u/e$ be such that
$\delta$ preserve $f$ and let
$$\sigma=\delta\cup Id\upharpoonright U_0, \ \ \pi=\delta\cup \delta^{-1}\cup Id\upharpoonright (U_0\sim u/e).$$
Then $\sigma:W_0\to U_0$ and $\pi:W_0\to W_0$. Define $s(w|u)=\sigma\circ s$. Let
$R'=\prod_{i<n}W_i$ and let $R_j: j\leq n$ be a partition of $R'$ such that ${\sf c}_i{\sf s}_{\tau}R_j'={\sf c}_i{\sf s}_{\tau}R'$ forall $i<n$ $j\leq m$
and $\tau\in Tr$.
Let $G$ be the set of all permutations of $U$ that leave $R$ and $F$ fixed.
Let $$D=\{s\in {}^nW: u/e\cap Rng(s)\neq \emptyset, w/e\cap Rng(s)\neq \emptyset\}$$
$$B'=\{x\subseteq {}^nW: x=\pi x, (x\cap {}^nU)=G(x\cap {}^nU) \text { and } (\forall s\in x\cap D(s(w|u)\in x\}$$
So far we are following \cite{An97} verbatim, with the sole exception of allowing substitutions in the splitting.
Now, we allow substitutions also in $B$,  we modify the definition of $B$ as follows
$$B=\{\bigcup\{{\sf s}_{\tau}R_j':\tau\in H, j\in J\}\cup {\sf s}_{\sigma}x: J\subseteq m+1, \sigma\in Tr, H\subseteq Tr, 
x\in B'\}.$$
It can be checked that (this modified) $B$ is closed under the operations of 
$$\wp(^nW), \cup,\cap, \emptyset, {}^nW, {\sf c}_i, {\sf p}_{ij}, {\sf d}_{ij})_{i,j<n}.$$
Let $\B=(B,\cup,\cap,\emptyset ,^nW, {\sf c}_i, {\sf p}_{ij}, {\sf d}_{ij})_{i,j<n}$. Then $\B$ is a subalgebra of the complementation free reduct
$\P^-$ of $\P=(\wp(^nW), {\sf c}_i, {\sf p}_{ij}, {\sf d}_{ij})$.
Now we show that the complementation free reduct $\A^-$ of $\A$ is embeddable into a homomorphic image of $\B$. 
We first define an algebra $\C$ and a homomorphism 
of $\B$ into $\C$. Let
$$V={}^{n}W\sim D$$
$$C=\wp(V),\ \ {\sf c}_i^{\C}x={\sf c}_ix\cap V, {\sf p}_{ij}^{\C}x={\sf p}_{ij}x\cap V, {\sf d}_{ij}^{\C}={\sf d}_{ij}\cap V.$$
$$\C=(C, \cup, \cap, \emptyset, V, {\sf c}_i^{\C}, {\sf p}_{ij}^{\C}, {\sf d}_{ij}^{\C})_{i,j<n}.$$
Let $g:\B\to \C$ be defined by $g(x)=x\cap V$.
Then we need to check that $g$ is a homomorphsim. This is done by Andreka for all the operations except substitutions, so we need to check those.
Let $i,j<n$ and $x\in \B$. We want to show that $g({\sf p}_{ij}^{\B}x)={\sf p}_{ij}^{\C} g(x)$, i.e. that
$({\sf p}_{ij}^{W}x)\cap V={\sf p}_{ij}^{W}(x\cap V)\cap V$. 
But this follows from definitions and from the fact that if $s\in V$, then $s\circ [i,j]\in V$.
Let $M=\{x\cap V: x\in B\}$ and $\M=(M,\cup,\cap,\emptyset ,V, {\sf c}_i^{\C}, {\sf p}_{ij}^{\C}, {\sf d}_{ij}^{\C})_{i,j<n}$
Then $\M$ is a homomorphic image of $\B$.

We now show that $\A^{-}$ is embeddable into $\M$.
Now any element of $A$ is of the form $\sum\{{\sf s}_{\tau}R_j: \tau\in H\}+a$ where $H\subseteq Tr$, $J\subseteq m+1$ 
$a\in \A'$ and $a\cap s_{\tau}R=\emptyset$, because 
$\A$ is obtained from $\A'$ by splittng $R$ into $m+1$ 
parts $R_j$, $j\leq m$. Call an element $a\in \A$ normal, if there is a 
single $\tau$ such that $a=\sum\{{\sf s}_{\tau}R_j: j\in J\subseteq m+1\}.$
Then for any $a\in A$, we have $a=\sum a_i$ for some normal $a_i$'s.
Let $\pi=[u,v]$. We first define $h$ on the normal $a$'s by
$$h(\sum\{{\sf s}_{\tau}R_j:j\in J\}+a)=\bigcup\{{\sf s}_{\tau}R_j': j\in J\}\cup a\cup \pi a.$$
Now $h$ is well defined, one to one and $h(a)\in M$ for all normal $a\in A$ by the reasoning of Andreka\cite{An97}p.197.
Then for any element $a\in A$, with $a=\sum a_i$ define $\bar{h}(a)=\sum h(a_i).$
Then $\bar{h}$ is as required.

Now $\A^{-}\notin \RPEA_n^{-}$ is the same as the proof of Andreka \cite{An97} p.199.

Now we represent the $\cup$ and $\cap$ free reduct of $\A$.
We define a mapping
$h:\A\to \wp(^nU)$. We first define $f$ on the normal elements.
Let $a$ be normal, $a=\sum\{{\sf s}_{\tau}R_i: i\in J\}+a'$. Let $z\in {\sf s}_{\tau}R\sim {\sf s}_{\tau}(R_0'\cup R_1')$ be fixed and let
$\eta:\{J\subseteq m+1, 0\in J\}\to \{G: G\subseteq {\sf s}_{\tau}R\sim ({\sf s}_{\tau}(R_0\cup R_1): z\in G\}$ be an arbitrary injection.
$$h(\sum \{{\sf s}_{\tau}R_j:j\in J\}={\sf s}_{\tau}R_0'\cup \eta(J), \text { if } 0\in J, J\neq m+1$$
$$h(\sum \{{\sf s}_{\tau}R_j:j\in J\}=R\sim (R_0'\cup \mu(m+1)\sim J)), \text { if } 0\notin J, J\neq \emptyset$$
$$h(\sum \{{\sf s}_{\tau}R_j:j\in J\}=0, \text { if } J=\emptyset$$
$$h(\sum \{{\sf s}_{\tau}R_j:j\in J\}=R, \text { if } J=m+1$$ 

Now define $$h(a)=(a\sim {\sf s}_{\tau}R)\cup h(a\cap {\sf s}_{\tau}R).$$
Then define $\bar{h}$ by extending $h$ as above.
Then it is easy to check that $\bar{h}$ is as required.

Now we show that $\A$ becomes representable if we drop any of the cylindrifications. 
We shall use the following fact that is easy to check.
Recall that $\A$ was obtained from $\A'$ by splitting into $m+1$ atoms. 
Now assume that $h:\A'\to (\B(^{n}U), {\sf c}_i, {\sf p}_{ij}, {\sf d}_{ij})_{i,j<n}$ is a Boolean embedding
and $h(R)=\prod_{i<n}U_i$ such that $(U_i:i<n)$ is a system of disjoint sets each having cardinality 
$\geq m+1$. Then $h$ can be extended to 
$\bar{h}:\A\to (\B(^{n}U), {\sf c}_i, {\sf p}_{ij}, {\sf d}_{ij})_{i,j<n}$ such that $\bar{h}$ preserves the same operations that 
$h$ preserves.

Let $\mu<n$. Let $U_i, W_i: i<n$ be as above. Let $f:U_0\to U_0$ be a bijection such that all orbits of $f$ have cardinality $K$.
Let $U$, $R$ and $F$ and $\A'$ as above. Recall that $\A$ is the algebra obtained from $\A'$ by splitting $R$ into $m+1$ parts.
Extend $f$ 
permutation to a permutation of $W$
such that $f$ permutes $W_0$, all orbits of $f\upharpoonright W_0$ are of size $K$, and $f$ is the identity on $W\sim W_0$.  
Let $e$ denote the equivalence relation on $W_0$ with blocks the orbits of $f$.
Let $S$ be a binary relation on the blocks of $e$ such that $S$ contains the identity relation and each block is in relation with exactly $n-1$ blocks.
Then $S$ can be viewed as a binary relation on $W_0$ satisfying certain properties \cite{An97} p. 180. On p. 180 of \cite{An97}, 
an equivalence
relation $\equiv$ is defined on sequences of different lengths, so that
$$s\equiv z\implies (s\in x\text { iff } z\in x), x\in A'.$$
Proceeding like the proof of Andreka on p.180-181, \cite{An97}, we define a function $g:{}^nW\to {}^nU$.
Let $s\in {}^nW$. Let $\Omega=\{i<n: s_i\in W_0\}$. Let $I=n\sim \{\mu\}$.
Assume that $\mu\notin \Omega$, i.e $\Omega\subseteq I$. let $s'\in {}^{\Omega}U_0$ such that 
$s'\equiv s\upharpoonright \Omega$. Such $s'$ exists by $|\Omega|<n.$
For function $f,g$, let
$$f[\Omega|g]=f\upharpoonright (Dom f\sim \Omega)\cup f'\upharpoonright G\Omega$$
Define
$$g(s)=s[\Omega|s'].$$ 
Now assume that $\mu\in \Omega$. Let $\Omega'=\{i\in \Omega: s_iSs_{\mu}\}$ and $\Omega''=\Omega\sim \Omega'$.
let
$s'\in {}^{\Omega'}U_0$ be such that
$s'\equiv s\upharpoonright\Omega'$ and let $s''\in {}^{\Omega''}(U_n\sim Ranges)$ be such that
$ker(s')=ker(s\upharpoonright \Omega'')$.
Then set
$$g(s)=s[\Omega'|s'][\Omega''|s''].$$ 
Proved by Andreka to preserve all operations except for substitutions, we need to check that $$h(x)=\{s\in {}^{n}W: g(s)\in x\}$$
defined on $\A'$ preserves substitutions, too.
But this follows from the simple observation that
$$s[\Omega|s']\circ [i,j]=s\circ [i,j][\Omega|s']$$
and  
$$s[\Omega'|s'][\Omega''|s'']\circ [i,j]=s\circ [i,j][\Omega'|s'][\Omega|s''].$$
Since $\A$ is obtained by splitting $\A'$ then $h$ can be extended to $\A$
and $h$ is as desired.

Finally, we can assume that $l=0$, see \cite{An97}p.176-177. 
We show that there is an embedding $h:\A\to (\B(^nW), {\sf c}_i, {\sf p}_{ij}, {\sf d}_{ij})_{i,j<n}$ 
such that $h$ is a homomorphism w.r.t all operations of $\A$ except 
${\sf d}_{0i}, {\sf d}_{i0}, {\sf p}_{i0}, {\sf p}_{0i}.$ The proof is like that of Claim 6 in \cite{An97} p.176.
 
Let $W$ and $U$ as above. Define $t,r:W\to U$ as on p. 176.
Define for $1\leq i\leq n$, $t_0:W\to U$, by $t_0(x)=t(x)$ 
and for $i>0$, $t_i(x)=r(x)$.  Set $g(s)_i=t_i(s_i)$. Then define $h$ for $x\in A'$ by
$$h(x)=\{s\in {}^nW: g(s)\in x\}.$$
Then we leave it to the reader to check the required.
Then $h$ extends to a mapping $\bar{h}$ on $\A$ with the required properties.
That is $$\bar{h}:\A\to (\B(^nW), {\sf c}_i, {\it p}_{ij}, {\sf d}_{ij})_{i,j<n}$$ 
such that $h$ is a homomorphism preserving all opeartions except 
for ${\sf d}_{ij}, {\sf p}_{ij},$ $i\neq j$, with $0\in \{i,j\}.$

The difficult part in this result, is to show that the $k$ generated subalgebras of 
$\A$ are representable. Let $G$ be given such that $|G|\leq k$. The idea is to use $G$ and define a ``small" subalgebra of $\A$ that contains $G$
and is representable. Let $T=\{[i,j] : i<j<n\}$. 
Define $R_i\equiv R_j$ iff
$$\forall g\in G\forall \tau\in T({\sf s}_{\tau}R_i\leq g\Longleftrightarrow {\sf s}_{\tau}R_j\leq g)$$
Then $\equiv$ is an equivalence relation on $\{R_j: j\leq m\}$ which has $\leq 2^{k.n!}$ 
blocks by $|G|\leq k$
and $|T|\leq n!$. 
Let $p$ denote the number of blocks of $\equiv$, that is $p=|R_j/\equiv:j\leq m\}|\leq 2^k\leq m$.
Let
$$B=\{a\in A_k: (\forall i,j\leq m)(\forall \tau\in T)(R_i\equiv R_j\text { and }{\sf s}_{\tau}R_i\leq a\implies {\sf s}_{\tau}R_j\leq a\}.$$
We show that $B$ is closed under the operations of $\A$. Let $i<l<n$
Clearly $B$ is closed under the Boolean operations.
${\sf d}_{il}\in \B$ since ${\sf s}_{\tau}R_j\nleq {\sf d}_{il}$ for all $j\leq m$ and $\tau\in T$.
Also $A'\subseteq B$ since ${\sf s}_{\tau}R$ is an atom of $\A'$ and ${\sf c}_ia\in A'$ for all $a\in A$.
Thus ${\sf c}_ib\in B$ for all $b\in B$.
Assume that $a\in B$ and let $\tau\in T$. Suppose that $R_i\equiv R_j$ and ${\sf s}_{\sigma}R_i\leq {\sf s}_{\tau}a$. Then 
${\sf s}_{\tau}{\sf s}_{\sigma}R_i\leq a$, so ${\sf s}_{\tau\circ \sigma}R_i\leq a$. Since $a\in \B$ we get
that ${\sf s}_{\tau\circ \sigma}R_j={\sf s}_{\tau}{\sf s}_{\sigma}R_j\leq a$, and so ${\sf s}_{\sigma}R_j\leq{\sf s}_{\tau}a$.
Thus $\B$ is also closed under substitutions.
Let $\B\subseteq \A$ be the subalgebra of $\A$ with universe $B$. Since $G\subseteq B$
it suffices to show that $\B\in \RQEA_{\alpha}$
Let $(y_j;j<p\}=\{\sum(R_j/\equiv):j\leq m\}$. 
Then $\{y_j:j<p\}$ is a partition of $R$ in $\B,$ ${\sf c}_iy_j={\sf c}_iR$ for all $j<p$ and 
$i<\alpha$ and every element of $\B$ is a join of some element of $\A'$ and of finitely many of 
${\sf s}_{\tau}y_j$'s.
We now split $R$ into $m$ ``real"  atoms using Andr\'eka's method \cite{An97} lemma 2 p. 167.
Let  $\{R_0'', \ldots  R_{m-1}''\}$ be a partition of $R$ such that ${\sf c}_iR_j''=
{\sf c}_iR$ for all $i<\alpha$.
Let $\A''$ be the subalgebra of 
$\langle \B(^{\alpha}U), {\sf c}_i, {\sf d}_{ij}, {\sf s}_{\tau}\rangle_{i,j<\alpha, \tau\in Per}$
generated by $R_0'',\ldots R_{m-1}''.$
Let 
$$\R=\{{\sf s}_{\sigma}R_j'':\sigma\in Per, j<m\}.$$
Let $$H=\{a+\sum X: a\in A', X\subseteq_{\alpha}\R\}.$$
Clearly $H\subseteq A''$ and $H$ is closed under the boolean operations. Also
because Tranformations considered are bijections we have
$${\sf c}_i{\sf s}_{\sigma}R_j={\sf c}_i{\sf s}_{\sigma}R\text { for all $j<m$ and }\sigma\in Per$$
Thus $H$ is closed under ${\sf c}_i.$ Also $H$ is closed under substitutions
Finally ${\sf d}_{ij}\in A'\subseteq H.$
We have proved that $H=A''$. This implies that every element of $\R$ is an atom of $\A''$.
We now show that $\B$ is embeddable in $\A''$, and hence will be representable.
Define for all $j<p-1$,
$$R_j'=R_j'',$$
and
$$R_{p-1}'=\bigcup\{R_j'': p-1\leq j<m\}.$$
Then define
$h(a+\sum {\sf s}_{\tau}y_j)=a+\sum\{{\sf s}_{\tau}R_j': j<p\}$
That is if $b=a+\sum {\sf s}_{\tau}y_j$
Then
$$h(b)=(b-\sum {\sf s}_{\tau}y_j)\cup\bigcup\{{\sf s}_{\tau}R_j', j<p, {\sf s}_{\tau}y_j\leq b\}$$
It is clear that $h$ is one one, 
preseves the boolean operations and the diagonal elements and is the identity on $A'$.
Now we check cylindrifications and substitutions.
$${\sf c}_ih(b)={\sf c}_i[(b-\sum {\sf s}_{\tau}y_j)\cup\bigcup\{{\sf s}_{\tau}R_j', j<p, {\sf s}_{\tau}y_j\leq b\}]$$
$${\sf c}_ih(b)={\sf c}_i(b-\sum {\sf s}_{\tau}y_j)\cup\bigcup\{{\sf c}_i{\sf s}_{\tau}R_j', j<p, {\sf s}_{\tau}y_j\leq b\}$$
$${\sf c}_ih(b)={\sf c}_i(b-\sum {\sf s}_{\tau}y_j)\cup\bigcup\{{\sf c}_i{\sf s}_{\tau}y_j, j<p, {\sf s}_{\tau}y_j\leq b\}$$
$$={\sf c}_i[(b-\sum {\sf s}_{\tau}y_j)\cup\bigcup\{{\sf s}_{\tau}y_j, j<p, {\sf s}_{\tau}y_j\leq b\}]={\sf c}_ib$$
On the other hand
$$h{\sf c}_i(b)=({\sf c}_ib-\sum {\sf s}_{\tau}y_j)\cup\bigcup\{R_j': {\sf s}_{\tau}y_j\leq {\sf c}_ib\}={\sf c}_ib$$
Preservation of substitutions follows from the fact that the substitutions are Boolean endomorphisms.

\end{demo}
Now which of the above results generalize to algbras without diagonal elements like Pinter's substitution algebras
and Halmos polyadic algebras (without equality).
We follow \cite{HHM}. We show that their construction proves more. Let $4\leq n\leq m<\omega$.
Then a set with $n+m$ elements $B_n^m$, that will constitute the set of atoms in the future relation algebra, is defined on p.201.
In this page, the forbidden triples are also specified and the relation algebar  $\A_n^m$ 
is defined as the complex algebra of the resulting  atom structure.  
The set of all $n$ by $n$ basic matrices is actually symmetric, and so they are a symmetric hyperbases, and so $\A\in \Ra\PEA_n.$
However the identity free reduct of $\A$ is not in $S\Ra \SC_{n+1}$.
Indeed assume that $\A\subseteq \Ra\C$ where $\C\in \SC_{n+1}$ \footnote{Let $\A=(A,+,\cdot,-,0,1, {\sf c}_i, {\sf s}_i^j)_{i,j<n}$ be an $\SC_m$ with $m\geq 3$. then we define
$$\Ra\A=(Nr_2\A,+,\cdot,-,0,1,  \breve, ;)$$
where $Nr_2A=\{x\in \A: {\sf c}_ix=x$ for all $i\geq 2\}$ and for any $x,y\in Nr_2\A$
$$x;y={\sf c}_2({\sf s}_2^1x\cdot {\sf s}_2^0y)$$
$$\breve{x}={}_2{\sf s}(0,1)x$${}
Here $_k{\sf s}(i,j)x={\sf s}_i^k{\sf s}_j^i{\sf s}_k^jx.$
For $\K\subseteq \SC_m$, $\Ra\K=\{\Ra\B: \B\in \K\}$. 
In the above definition we mimicked the way how relation algebras are obtained from cylindric algebras \cite{HMT2} 5.3.7.}
, then the proof of Theorem 8 in \cite{HHM}
goes through, for in the proof one can easily check that the authors 
are using the equations collected in fact 9 pages 204-205, and all these are valid in $\SC_{n+1}$.
In other words, following the same proof we arrive at the same contradition. 

Now let $\K\in \{\CA, \SC, \PA, \PEA\}.$ Then the following Theorem holds:

\begin{theorem} For any finite $n\geq 3$, and any $k\in \omega$, we have $S\Nr_n\K_{n+k}\supset S\Nr_n\K_{n+k+1}$
\end{theorem}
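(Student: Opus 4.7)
The strategy is to build, for each $n\geq 3$ and $k\in\omega$, a single algebra $\D_{n,k}$ lying in $\Nr_n\PEA_{n+k}$ whose $\SC$-reduct fails to lie in $S\Nr_n\SC_{n+k+1}$. Since the $\SC$-signature is a reduct of each of $\CA$, $\PA$ and $\PEA$, and taking reducts commutes with the operator $S\Nr_n$, this single statement rules out membership of $\D_{n,k}$ in $S\Nr_n\K_{n+k+1}$ for all four choices of $\K$ simultaneously; so one witness handles the entire theorem.

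For construction, set $d=n+k$ and take
$$\D_{n,k}\;=\;\Ca\bigl(H_n^{d+1}(\A(d,r),\omega)\bigr)$$
for sufficiently large integer $r$, where $\A(d,r)$ is the Hirsch--Hodkinson--Maddux-style relation algebra introduced in the discussion preceding the previous Theorem. The symmetry of the hyperbasis $H_n^{d+1}(\A(d,r),\omega)$, explicitly flagged in the excerpt, ensures $\D_{n,k}\in\PEA_n$. Writing $H=H_d^{d+1}(\A(d,r),\omega)$ and using the restriction identity $H_n^{d+1}(\A(d,r),\omega)=H|_n^{d+1}$ together with the natural isomorphism $\Ca(H|_n^{d+1})\cong\Nr_n\Ca H$ (both already invoked in the proof of the previous Theorem), we conclude $\D_{n,k}\in\Nr_n\PEA_{n+k}$, and hence $\D_{n,k}\in S\Nr_n\K_{n+k}$ for every $\K$ in our list.

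For the negative direction, the plan is to reuse the two-player-game argument from the proof of the previous Theorem, which shows $\Rd_{ca}\D_{n,k}\notin S\Nr_n\CA_{n+k+1}$ by exhibiting $\forall$'s winning strategy in the $(n+k+1)$-dimensional representation game. The key additional observation, already anticipated in the paragraph just before the statement, is that every equational step invoked in Fact 9 of \cite{HHM} (on which that game argument ultimately rests) is valid in $\SC_{n+k+1}$: no diagonal constants outside of term-defined substitutions, and no transpositions, are used essentially. Consequently the very same $\forall$-strategy defeats any purported embedding of $\Rd_{sc}\D_{n,k}$ into $\Nr_n\SC_{n+k+1}$, giving $\Rd_{sc}\D_{n,k}\notin S\Nr_n\SC_{n+k+1}$, which is what was needed.

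The chief obstacle is precisely the audit implicit in the previous paragraph: one must verify line by line that no step of the HHM combinatorial argument silently exploits diagonals or transpositions in a way that cannot be paraphrased inside the bare $\SC$-signature. This audit is essentially completed for the $\Ra$-version in the paragraph immediately preceding the statement, so the remaining task is checking that the transfer from $\Ra$ to $\Nr_n$, via basic matrices and hypernetworks, preserves $\SC$-invariance---a routine but delicate chase through the definitions of $H_n^{d+1}$, the basic-matrix combinatorics, and the neat-reduct operator. Once this uniformity is confirmed, strict inclusion at level $k$ falls out at once for all $\K\in\{\CA,\SC,\PA,\PEA\}$, and the theorem follows.
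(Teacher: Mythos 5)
Your route is genuinely different from the paper's, and it has a gap at exactly the point that carries the weight for the diagonal-free cases. The paper does not build any new $n$-dimensional witness at all: it argues by contraposition, applying the operator $S\Ra$ to the hypothesized equality $S\Nr_n\K_{n+k}=S\Nr_n\K_{n+k+1}$ and using $S\Ra S\Nr_n\K_{\beta}=S\Ra\Nr_n\K_{\beta}=S\Ra\K_{\beta}$ (valid since $\Ra$ only consumes three dimensions) to collapse everything to $S\Ra\K_{n+k}=S\Ra\K_{n+k+1}$. That equality is then refuted by the single relation-algebra witness $\A_{n+k}^m$ of Hirsch--Hodkinson--Maddux, which lies in $\Ra\PEA_{n+k}$ (symmetric basic matrices) while its identity-free reduct is not in $S\Ra\SC_{n+k+1}$ --- and it is precisely for \emph{this} algebra and \emph{this} non-embeddability proof (Theorem 8 of \cite{HHM}, resting on the equations of Fact 9, pp.\ 204--205) that the paper carries out the check that only $\SC_{n+k+1}$-valid equations are used.

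The gap in your proposal is that you transplant that audit to a different construction. Your witness $\D_{n,k}=\Ca(H_n^{d+1}(\A(d,r),\omega))$ is the algebra $\C_r$ of the preceding theorems, and the statement $\Rd_{ca}\C_r\notin S\Nr_n\CA_{n+k+1}$ is proved ``like Corollary 15.10 in \cite{HHbook}'' --- a hypernetwork/game argument for the algebras $\A(d,r)$, not the Fact-9 computation for the algebras $\A_{n}^m$ built from $B_n^m$. The paper nowhere verifies that the Corollary 15.10 argument survives in the bare $\SC$-signature, and since $S\Nr_n\SC_{n+k+1}$ contains the $\SC$-reducts of everything in $S\Nr_n\CA_{n+k+1}$, the non-membership $\Rd_{sc}\D_{n,k}\notin S\Nr_n\SC_{n+k+1}$ is strictly stronger than what the cited theorem gives you; it does not follow from $\Rd_{ca}\D_{n,k}\notin S\Nr_n\CA_{n+k+1}$. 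So the ``audit'' you describe as essentially completed is in fact the entire missing content for $\K\in\{\SC,\PA\}$, and it is attached to the wrong proof. Either carry out that verification for the hyperbasis game argument in full, or adopt the paper's reduction: pushing the problem down to the $\Ra$ level lets the one already-audited witness settle all four signatures and all $k$ simultaneously, with no new $n$-dimensional construction needed.
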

 \begin{demo}{Proof} \cite{HHM} Cor 2: Assume that $S\Nr_n\K_{n+k}=S\Nr_n\K_{n+k+1}$.
Then $$S\Ra \K_{n+k}=S\Ra\Nr_n\K_{n+k}=S\Ra S\Nr_n\K_{n+k}$$
$$=S\Ra S\Nr_n\K_{n+k+1}=S\Ra\Nr_n\K_{n+k+1}=S\Ra \K_{n+k+1}.$$ 
But this cannot happen because the appropriate reduct of the algebra $\A_n^m$ distinguishes beween these two classes.
\end{demo}
  
However, it is not known whether  results concerning the complexity of axiomatizations of $\RCA_n$ and $\RPEA_n$, like theorem \ref{c},
extend to $\SC$'s and $\PA$'s, and for that matter $\Df$'s.

We note that $\RPEA_n^{-}$ is a universal class that is not finitely axiomatizable; this can be proved exactly like the $\CA$ case
proved by Comer.
Now let us go deeper into the analysis of the problem of amalgamation. We will show that the notions of axiomatizability
and amalgamation are not entirely unrelated. In fact, the purpose of this discussion is to stress that what distinguishes the two paradigms 
are finite axiomatizability and
amalgamation, for some reason, they come together (in the polyadic paradigm) and they fail together 
(in the cylindric paradigm, the syntactical part of which is reflected
by systems of varieties definable by schemas.)
To analyse this we recall a recent result proved by the author connecting neat embeddings to amalgamation in a very general setting:
\begin{definition} 
\begin{enumroman}
\item Let $K$ be a class of algebras having a boolean reduct. 
$\A_0\in K$ is in the amalgamation base of $K$ if for all $\A_1, \A_2\in K$ and 
monomorphisms $i_1:\A_0\to \A_1,$ $i_2:\A_0\to \A_2$ 
there exist $\D\in K$
and monomorphisms $m_1:\A_1\to \D$ and $m_2:\A_2\to \D$ such that $m_1\circ i_1=m_2\circ i_2$. 
\item If in addition, $(\forall x\in A_j)(\forall y\in A_k)
(m_j(x)\leq m_k(y)\implies (\exists z\in A_0)(x\leq i_j(z)\land i_k(z) \leq y))$
where $\{j,k\}=\{1,2\}$, then we say that $\A_0$ lies in the super amalgamation base of $K$. Here $\leq$ is the boolean order.
$K$ has the (super) amalgamation property $((SUP)AP)$, if the (super) amalgamation base of $K$ coincides with $K$.
\end{enumroman}
\end{definition}
The super amalgamation property was introduced by Maksimova, and it was recently studied in Algebraic logic by Sagi and Shelah \cite{Shelah}
\begin{definition} 
\begin{enumroman}
\item Let $\A\in \Kn_{\alpha}$. Then $\A$ has the $UNEP$ (short for unique neat embedding property) 
if for all $\A'\in \K_{\alpha}$, $\B$, $\B'\in \K_{\alpha+\omega},$
isomorphism $i:\A\to \A'$, embeddings  $e_A:\A\to \Nr_{\alpha}\B$ and $e_{A'}:\A'\to \Nr_{\alpha}\B'$ 
such that $\Sg^{\B}e_A(A)=\B$ and $\Sg^{\B'}e_{A'}(A)'=\B'$, there exists
an isomorphism $\bar{i}:\B\to \B'$ such that $\bar{i}\circ e_A=e_{A'}\circ i$. 
\item Let $\A\in \Kn_{\alpha}$. Then $\A$ has the strong neat embedding property $SNEP$,
if for all $\B\in \K_{\alpha+\omega}$ if $\A\subseteq \Nr_{\alpha}\B$ and $A$ generates $\B$ then 
$\A=\Nr_{\alpha}\B$. 
\end{enumroman}
\end{definition}
The following is proved in \cite{universal}:

\begin{theorem} Let $\K=(\K_{\alpha}: \alpha\geq \omega)$ be a system of varieties. Let 
$\bold M=\{\A\in \K_{\alpha+\omega}:\Sg^{\A}\Nr_{\alpha}\A=\A\}.$
Assume that $\bold M$ has $SUPAP$, and that for any $\A,\B\in \bold M$ and isomorphism $f:\Nr_{\alpha}\A\to \Nr_{\alpha}\B$
there exists an isomorphism $\bar{f}:\A\to \B$ such that $f\subseteq \bar{f}$.
Then the following hold for any $\C\in \Kn_{\alpha}$.
\begin{enumroman}
\item  $\C$ has $UNEP$ if and only if $\C\in APbase(\Kn_{\alpha})$.
\item $\C$ has $UNEP$ and $SNEP$ if and only if $\C\in SUPAPbase(\Kn_{\alpha})$.
\end{enumroman}
\end{theorem}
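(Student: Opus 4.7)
My approach is to lift to $\bold M$, amalgamate there, and then neatly reduce back to $\Kn_\alpha$. UNEP is precisely what synchronises the two $\K_{\alpha+\omega}$-dilations of $\C$ so that an amalgamation in $\bold M$ descends correctly to $\Kn_\alpha$, while SNEP pins down the $\alpha$-dimensional layer needed for the super version.

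For the forward direction of (i), let $i_j:\C\to \A_j$ ($j=1,2$) be monomorphisms in $\Kn_\alpha$. Neatly embed $\A_j\subseteq \Nr_\alpha \B_j$ with $\B_j\in \K_{\alpha+\omega}$, then replace $\B_j$ by $\Sg^{\B_j}(\A_j)$ to arrange $\B_j\in \bold M$. Write $\iota_j:\A_j\hookrightarrow \B_j$ for the inclusion and set $\E_j=\Sg^{\B_j}(\iota_j i_j(\C))$, which also lies in $\bold M$. Apply UNEP with $\A'=\C$ and $i=\mathrm{id}_\C$ to the two generating neat embeddings $\iota_j\circ i_j:\C\to \Nr_\alpha\E_j$ to obtain an isomorphism $\phi:\E_1\to \E_2$ with $\phi\circ \iota_1 i_1=\iota_2 i_2$. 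AP in $\bold M$ then amalgamates $\B_1$ and $\B_2$ over $\E_1$ (inclusion on one side, $\phi$ followed by inclusion on the other), producing $\D\in \bold M$ and embeddings $\mu_j:\B_j\to \D$ compatible over $\E_1$. Then $\Nr_\alpha\D\in \Kn_\alpha$ amalgamates $\A_1,\A_2$ over $\C$ via the restrictions $\mu_j\upharpoonright \A_j$.

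For the converse of (i), given the UNEP data I would apply AP in $\Kn_\alpha$ to $e_A:\C\to \Nr_\alpha\B$ and $e_{A'}\circ i:\C\to \Nr_\alpha\B'$, producing $\F\in \Kn_\alpha$ with compatible embeddings $\nu:\Nr_\alpha\B\to \F$ and $\nu':\Nr_\alpha\B'\to \F$. Neatly embed $\F\subseteq \Nr_\alpha\G$ for some $\G\in \K_{\alpha+\omega}$. The subalgebras $\Sg^{\G}(\nu(\Nr_\alpha\B))$ and $\Sg^{\G}(\nu'(\Nr_\alpha\B'))$ both lie in $\bold M$ and their $\Nr_\alpha$-reducts are isomorphic via $\nu'\circ\nu^{-1}$; the lifting hypothesis (isomorphisms of $\Nr_\alpha$-reducts of members of $\bold M$ extend to isomorphisms of the full algebras) then upgrades this to an isomorphism $\bar i:\B\to \B'$ satisfying $\bar i\circ e_A=e_{A'}\circ i$, as required.

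Part (ii) follows the same template with SUPAP in place of AP in $\bold M$. The amalgam $\D$ then satisfies: whenever $\mu_1\iota_1(x)\le \mu_2\iota_2(y)$ in $\D$ for $x\in \A_1$, $y\in \A_2$, there exists $z^*\in \E_1$ with $\iota_1(x)\le z^*$ and $\phi(z^*)\le \iota_2(y)$. \textbf{The main technical step} is refining $z^*$ to an element of $\Nr_\alpha\E_1$, which by SNEP equals $\iota_1 i_1(\C)$. Since $\E_1\in \bold M$, the set $J=\Delta z^*\setminus\alpha$ is finite, and the candidate $u={\sf c}_{(J)}z^*$ lies in $\Nr_\alpha\E_1$ and satisfies $\iota_1(x)\le u$; using that $\iota_2(y)$ is itself $\alpha$-dimensional one checks that $\phi(u)={\sf c}_{(J)}\phi(z^*)\le {\sf c}_{(J)}\iota_2(y)=\iota_2(y)$. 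SNEP then yields $w\in \C$ with $u=\iota_1 i_1(w)$, and this $w$ witnesses the super condition in $\Kn_\alpha$. The converse of (ii) parallels that of (i), with super-amalgamation in $\Kn_\alpha$ additionally forcing SNEP on $\C$: any element of $\Nr_\alpha\B\setminus e_A(\C)$ would otherwise break the super condition in a suitably constructed amalgam.
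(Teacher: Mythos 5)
Your proposal is correct and follows essentially the same route as the paper's own proof: lift to $\bold M$, use UNEP to identify the two generated dilations of $\C$, apply (SUP)AP in $\bold M$ and take neat reducts; for the super condition, cylindrify the $\bold M$-interpolant over its finitely many extra dimensions and use SNEP to pull it into $\C$; and for the converses, amalgamate in $\Kn_\alpha$ and invoke the lifting hypothesis (resp.\ strong amalgamation over a generating non-neat-reduct to refute SNEP). No substantive differences to report.
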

   \begin{demo}{Proof} Assume that $\C$ has $UNEP$. Let $\A,\B\in \Kn_{\alpha}$. Let $f:\C\to \A$ and $g:\C\to \B$ be monomorphisms.
Then there exist $\A^+, \B^+, \C^+\in \K_{\alpha+\omega}$, $e_A:\A\to \Nr_{\alpha}\A^+$ 
$e_B:\B\to  \Nr_{\alpha}\B^+$ and $e_C:\C\to \Nr_{\alpha}\C^+$.
We can assume that $\Sg^{\A^+}e_A(A)=\A^+$ and similarly for $\B^+$ and $\C^+$.
Let $f(C)^+=\Sg^{A^+}e_A(f(C))$ and $g(C)^+=\Sg^{B^+}e_B(g(C)).$
Since $\C$ has $UNEP$, there exist $\bar{f}:\C^+\to f(C)^+$ and $\bar{g}:\C^+\to g(C)^+$ such that 
$(e_A\upharpoonright f(C))\circ f=\bar{f}\circ e_C$ and $(e_B\upharpoonright g(C))\circ g=\bar{g}\circ e_C$.
Now $\bold M$ as $SUPAP$, hence there is a $\D^+$ in $\bold M$ and $k:\A^+\to \D^+$ and $h:\B^+\to \D^+$ such that
$k\circ \bar{f}=h\circ \bar{g}$. Then $k\circ e_A:\A\to \Nr_{\alpha}\D^+$ and
$h\circ e_B:\B\to \Nr_{\alpha}\D^+$ are one to one and
$k\circ e_A \circ f=h\circ e_B\circ g$.
Now for the converse. It suffices to show that if $\A\in APbase(\Kn_{\alpha})$, if $i_1:\A\to \Nr_{\alpha}\B_1$, $i_2:\A\to \Nr_{\alpha}\B_1$ 
such that $i_1(A)$ generates $\B_1$ and $i_2(A)$ generates
$\B_2$, then there is an isomorphism $f:\B_1\to \B_2$ auch that $f\circ i_1=i_2$.
By assumption, there is a $\D\in \Kn_{\alpha}$, $m_1:\Nr_{\alpha}\B_1\to \D$, $m_2:\Nr_{\alpha}\B_2\to \D$ such that
$m_1\circ i_1=m_2\circ i_2$. We can assume that $m_1:\Nr_{\alpha}\B\to \Nr_{\alpha}\D^+$ for some $\D^+\in \bold M$, and similarly for $m_2$.
By hypothesis, Let $\bar{m_1}:\B_1\to \D^+$ and $\bar{m_2}:\B_2\to \D^+$ be isomorphisms extending $m_1$ and $m_2$.  
Then since $i_1A$ generates $\B_1$ and $i_2A$ generates $\B_2$, then $\bar{m_1}\B_1=\bar{m_2}\B_2$. It follows that 
$f=\bar{m}_2^{-1}\circ \bar{m_1}$ is as desired.
Now we prove (ii). Assume that $\C$ has $UNEP$ and $SNEP$. We obtain (using the notation in the first part)
$\D\in \Nr_{\alpha}\K_{\alpha+\omega}$ 
and $m:\A\to \D$ $n:\B\to \D$
such that $m\circ f=n\circ g$.
Here $m=k\circ e_A$ and $n=h\circ e_B$.  Denote $k$ by $m^+$ and $h$ by $n^+$.
Suppose that $\C$ has $SNEP$. We further want to show that if $m(a) \leq n(b)$, 
for $a\in A$ and $b\in B$, then there exists $t \in C$ 
such that $ a \leq f(t)$ and $g(t) \leq b$.
So let $a$ and $b$ be as indicated. 
We have  $m^+ \circ e_A(a) \leq n^+ \circ e_B(b),$ so
$m^+ ( e_A(a)) \leq n^+ ( e_B(b)).$
Since $\bold M$ has $SUPAP$, there exist $ z \in C^+$ such that $e_A(a) \leq \bar{f}(z)$ and
$\bar{g}(z) \leq e_B(b)$.
Let $\Gamma = \Delta z \sim \alpha$ and $z' =
{\sf c}_{(\Gamma)}z$. (Note that $\Gamma$ is finite, by the generating condition.) So, we obtain that 
$e_A({\sf c}_{(\Gamma)}a) \leq \bar{f}({\sf c}_{(\Gamma)}z)~~ \textrm{and} ~~ \bar{g}({\sf c}_{(\Gamma)}z) \leq
e_B({\sf c}_{(\Gamma)}b).$ It follows that $e_A(a) \leq \bar{f}(z')~~\textrm{and} ~~ \bar{g}(z') \leq e_B(b).$ Now by hypothesis
$$z' \in \Nr_\alpha \C^+ = \Sg^{\Nr_\alpha \C^+} (e_C(C)) = e_C(C).$$ 
So, there exists $t \in C$ with $ z' = e_C(t)$. Then we get
$e_A(a) \leq \bar{f}(e_C(t))$ and $\bar{g}(e_C(t)) \leq e_B(b).$ It follows that $e_A(a) \leq e_A \circ f(t)$ and 
$e_B \circ g(t) \leq
e_B(b).$ Hence, $ a \leq f(t)$ and $g(t) \leq b.$
Now assume that $\A\in SUPAPbase(\Kn_{\alpha})$. Then $\A$ is in the $APbase(\Kn_{\alpha})$ and so by the first part $\A$ has $UNEP$.
We want to show that $\A$ has $SNEP$. If not, then $\A\subseteq \Nr_{\alpha}\B$, $\B\in K,$ $A$ generates $\B$ and $A\neq \Nr_{\alpha}\B$.
Then $\A$ embeds into $\Nr_{\alpha}\B$ via the incusion map $i$ . Let $\C=\Nr_{\alpha}\B$.
Since $\A\in SUPAPbase$, there is a $\D\in \Kn_{\alpha}$ and $m_1$, $m_2$ monomorphisms 
from $\C$ to $\D$ such that $m_1(\C)\cap m_2(\C)=m_1\circ i(\A)$. Let $y\in \C\sim A$. 
Then $m_1(y)\neq m_2(y)$ for else $d=m_1(y)=m_2(y)$ will be in
$m_1(\C)\cap m_2(\C)$ but not in $m_1\circ i(\A)$. Assume that 
$\D\subseteq \Nr_{\alpha}\D^+$ with $\D^+\in K$.
There exist $\bar{m_1}:\B\to \D^+$ and $\bar{m_2}:\B\to \D^+$ extending $m_1$ and $m_2$. 
But $A$ generates $\B$ and so $\bar{m_1}=\bar{m_2}$.
Thus $m_1y=m_2y$ which is a contradiction.   
Note that the last part of the proof shows that if $\A\subseteq \Nr_{\alpha}\B$ , $A$ generates $\B$ and $A\neq \Nr_{\alpha}\B$, 
then the inclusion $\A\subseteq \Nr_{\alpha}\B$ cannot be strongly amalgamated in $\Kn_{\alpha}.$
\end{demo}

The $SAPbase(K)$ for a class $K$ is defined to be those algebras that are in the amalgamation base of $K$, 
and further the amalgam is strong. 

\begin{corollary} The following are equivalent for $\Kn_{\alpha}$ and $\A\in \K_{\alpha}$.
\begin{enumroman}
\item $\A$ has $UNEP$ and $SNEP$
\item $\A\in SUPAPbase(\Kn_{\alpha})$
\item $\A\in SAPbase(\Kn_{\alpha})$
\end{enumroman}
\end{corollary}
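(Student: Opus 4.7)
My plan is to deduce the three-way equivalence by leveraging the preceding theorem, which already establishes the equivalence of (i) and (ii), and then closing the loop through (iii). Concretely, I would prove the chain (ii) $\Rightarrow$ (iii) $\Rightarrow$ (i), combined with (i) $\Leftrightarrow$ (ii) from the theorem.

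For (ii) $\Rightarrow$ (iii), the task is to verify that the super-amalgamation condition refines ordinary strong amalgamation, i.e., that the intersection of the images $m_1(\A_1) \cap m_2(\A_2)$ in a super-amalgam $\D$ coincides with $m_1 \circ i_1(\A_0)$. The plan is: suppose $m_1(x) = m_2(y)$ for $x \in A_1$, $y \in A_2$. Apply $SUP$ to $m_1(x) \leq m_2(y)$ to obtain $z \in A_0$ with $x \leq i_1(z)$ and $i_2(z) \leq y$, and apply it again to $m_2(y) \leq m_1(x)$ to get $z' \in A_0$ with $y \leq i_2(z')$ and $i_1(z') \leq x$. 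Since the $i_j$'s are order-embeddings, comparing $i_1(z') \leq x \leq i_1(z)$ and $i_2(z) \leq y \leq i_2(z')$ forces $z = z'$, hence $x = i_1(z) \in i_1(A_0)$, so $m_1(x) \in m_1 \circ i_1(A_0)$. This yields $\A \in SAPbase(\Kn_\alpha)$.

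For (iii) $\Rightarrow$ (i), I would argue both $UNEP$ and $SNEP$ separately. Since every strong amalgam is, a fortiori, an amalgam, we have $SAPbase(\Kn_\alpha) \subseteq APbase(\Kn_\alpha)$; applying part (i) of the preceding theorem then gives $UNEP$ immediately. For $SNEP$, I would invoke the remark logged at the tail of the theorem's proof: if $\A \subseteq \Nr_\alpha \B$ with $A$ generating $\B$ but $A \neq \Nr_\alpha \B$, then picking any $y \in \Nr_\alpha\B \setminus A$ and considering the V-diagram where $\A$ embeds into $\Nr_\alpha\B$ on both sides via the inclusion, the two copies of $y$ must be identified in any amalgam whose cospan extends to $\B$ (because $A$ generates $\B$), and so the strong intersection condition fails. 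This contradicts $\A \in SAPbase(\Kn_\alpha)$, forcing $\A = \Nr_\alpha\B$ whenever $A$ generates $\B$, which is exactly $SNEP$.

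The only potentially delicate step is the argument for $SNEP$, since it implicitly assumes one can always lift the pair of embeddings $\A \to \Nr_\alpha\B$ on both sides to a pair $\B \rightrightarrows \B^+$ inside $\bold M$, from which the uniqueness reasoning of the note applies. The hypotheses of the preceding theorem on $\bold M$ (that it has $SUPAP$ and rigid lifting of isomorphisms of neat $\alpha$-reducts) are what make this lift available, so the argument really is a routine transcription of the final paragraph of the theorem's proof. With this step in hand, the three-way equivalence is immediate: (i) $\Leftrightarrow$ (ii) is the theorem, (ii) $\Rightarrow$ (iii) is the $SUP$-to-$SAP$ calculation, and (iii) $\Rightarrow$ (i) combines the trivial inclusion $SAPbase \subseteq APbase$ with the noted obstruction to strongly amalgamating a proper neat embedding.
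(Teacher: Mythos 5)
Your proposal is correct and follows essentially the same route as the paper: the cycle (i) $\Rightarrow$ (ii) $\Rightarrow$ (iii) $\Rightarrow$ (i), with (i) $\Leftrightarrow$ (ii) supplied by the preceding theorem, (ii) $\Rightarrow$ (iii) being the standard observation that superamalgamation forces the strong-intersection condition (the paper dismisses this as obvious; your two-sided application of the interpolation condition is exactly the right way to spell it out), and (iii) $\Rightarrow$ (i) extracted from the converse half of the theorem's proof of part (ii), which indeed only uses the strong amalgam $m_1(\C)\cap m_2(\C)=m_1\circ i(\A)$ together with the lifting hypothesis on $\bold M$. Your flagged concern about lifting the cospan to $\omega$ extra dimensions is precisely the point the paper's proof handles via the standing hypotheses of the theorem, so no gap remains.
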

\begin{demo}{Proof}
One proves that $(i)\implies (ii)$ like the previous proof , $(ii)\implies (iii)$ is obvious and $(iii)\implies (i)$
is actually what we proved in the previous theorem.
\end{demo}
Note that by the techiques of Sagi and Shelah in \cite{Shelah}, it is not hard to extend their result to $\RPEA_n$. For every finite 
$n\geq 3$, there is a finitely axiomatizable variety $V\subseteq \RPEA_n$ that has $SAP$ but not $SUPAP.$
However, if an algebra $\A$ strongly amalgamates with all representable algebras, then it superly amalgamate with all such algebras as well. 
Using the above charaterization, we can go deeper into the analysis,
it is easy to show that $SAP$ fails in the class of representable algebras.
We do it for $\CA$'s, the other cases are completely analogous. It
It is enough to show that there exists a representable algebra that is not in $\Nr_{\alpha}\CA_{\alpha+\omega}$.
(This is equivalent to showing that the latter is not closed under forming subalgebras).
For suppose that $\A$ is such. Then $\A\subseteq \Nr_{\alpha}\B$ and $\B\in \CA_{\alpha+\omega}$. Let $\B'=\Sg^{\B}A$, then $\B'\in \bold M$
as defined above, furthemore the inclusion $\A\subseteq \Nr_{\alpha}\B'$ cannot be strongly amalgamated  
in the class of representable algebras. Such examples exist in the literature \cite{N83}. Furthermore these algebras can be chosen to be 
diagonal cylindric algebras in the sense of \cite{P}, so that this class
does not have the amalgamation property with respect to the class of representable algebras. This answers a question of Pigozzi in \cite{P}.
(Different solutions of this and other open questions of Pigozzi's can be found in \cite{MStwo}.
For other algebras, a similar construction can be found in  \cite{SL}.) Finally it is shown in \cite{MStwo} 
that several distinguished classes of cylindric algebras lie in the $SUPAPbase(\RCA_{\alpha})$
like algebras of positive characteristic and monadic generated algbras.

So this is the magic connection between amalgamation and representability, 
the notion of {\it neat embeddings}. Algebras that have the neat embedding propery are representable, algebras that have the unique neat embedding 
amalgamate and algebras that have the unique neat embedding property and strong neat embedding property superamalgamate.
Recall, that in the finite dimensional case, atomic algebras that have the {\it complete} neat embedding property 
are those algebras that are {\it completely} representable.

\subsection{The polyadic paradigm}

Now let us go to the polyadic paradigm. There are other systems of varieties that do not conform to the notion ``definable by schemes"
and these are Halmos' polyadic algebras $\PA_{\alpha}$
and their reducts studied by Sain \cite{Sa98}, in the context of finitizing first
order logic. 
Surprisingly for $\PA_{\alpha}$ we have $\PA_{\alpha}=\Nr_{\alpha}\PA_{\beta}$ for all $\alpha<\beta$ and \cite{super}
$$\bold R\PA_{\alpha}=\PA_{\alpha}=APbase(\PA_{\alpha})=SUPAPbase(\PA_{\alpha}).$$
Here $\bold R\PA_{\alpha}$ stands for the class of representable $\PA_{\alpha}$'s. The same 
can be said about the classes of algebras studied by Sain \cite{Sa98}. However, these algebras do not fit in the framework adapted herein, 
although they are a system of varieties, they are {\it not } defined by a schema in the above sense.

{\it An interesting question is whether there {\it is} a system of varieties definable by schemes for which  
$$\Kn_{\alpha}=\K_{\alpha}=APbase(\K_{\alpha})=SUPAPbase(\K_{\alpha}).$$}
This question is strongly related, to the finitizability problem, for in the known algebraic logics existing in the literature,
the distance between $\Kn_{\alpha}$ and $\K_{\alpha}$ is essentially infinite \cite{An97}, and one form of the 
finitizability problem, though admittedly never put in this form,  is how to ``finitize" this gap \cite{Bulletin}.
Note that the above question has three essentially distinct statements: 
\begin{enumarab}
\item $\K_{\alpha}=\Kn_{\alpha}$ 
\item $\K_{\alpha}=APbase(\K_{\alpha})$
\item $\K_{\alpha}=SUPAPbase(\K_{\alpha}).$
\end{enumarab}
These are not entirely independent for clearly (3) implies (2).

In her solution to the Finitizability problem, Sain \cite{Sa98} introduced a system of varieties in which this gap can be finitized, and it was 
further proved by the present author \cite{AU} that this class has $SUPAP$. So another question arises: Suppose that we can finitize this gap, 
that is suppose we can find a finite schema  equations that
define $\Kn_{\alpha}$ above $\K_{\alpha}$, does it follow then that $\Kn_{\alpha}$ has $SUPAP$? Does it necessarily have $AP$?
In other words, how (un)related are (1)(2) and (3) of the above item. 
We believe that these are difficult questions, that touch upon crucial issues in universal algebraic logic, and that they 
definitely deserve to be dealt with in a general framework.
{\it Let us see what is happening here. The Finitizability problem is crudely the attempt to capture infinitely many extra dimensions in a finitary way. 
One way to do that is to force a neat embedding theorem, as done by Sain \cite{Sa98}.
But when we force a neat embedding theorem, this in turn, forces $UNEP$ and $SNEP$ so $AP$ and $SUPAP$ comes as well.
We do not know of a framework in which this chain is broken at some point.}
(See also the last paragraph of the article, where we return to this point, in a slightly different context).

{\it Now how to explore those two paradigms in one context?}

\subsection{A solution in the Form of equivalence of two categories} 

We can regard $(\K_{\alpha}: \alpha\geq \omega)$ as a system of concrete categories synchronized by the 
the neat reduct functor. This view now encompasses the cases $\PA_{\alpha}$ and Sain's algebras $\SA_{\alpha}$ 
studied in \cite{Sa98} and \cite{AU}, as well as the notion of systems of varieties definable by schemas.
That is, for $\beta>\alpha$, we can regard $\Nr_{\alpha}:\K_{\beta}\to \K_{\alpha}$ 
as a functor with $\Nr_{\alpha}\A$ being the neat $\alpha$ reduct of $\A$ and
for a morphism $f$, $\Nr_{\alpha}(f)=f\upharpoonright \alpha$.
In particular $\Nr_{\alpha}:\K_{\alpha+\omega}\to \K_{\alpha}$ is a functor. 

In several concrete case, it has been shown, that 
when the latter functor has an inverse, that is there exists a functor
$\bold F:\K_{\alpha}\to \K_{\alpha+\omega}$ such that $F\circ \Nr_{\alpha}$ is naturally isomorphic to the identity functor then
$\Kn_{\alpha}=\K_{\alpha}$ and $SUPAP$ follows. This has been shown to be the case 
for $\PA_{\alpha}$ \cite{super} and $\SA_{\alpha}$ \cite{AU}. 
Actually in these previous cases, it turns out 
that the category $\K_{\alpha}$ is equivalent to $\K_{\alpha+\omega}.$

{\it The equivalence of these two categories, says that the gap can be finitized, or rather in fact, it does not exist at all!}

In fact both categories are equivalent to the category $\bold M=\{\A\in \K_{\alpha+\omega}: \A=\Sg^{\A}\Nr_{\alpha}\A\}$.
Surprisingly, even for the cylindric algebras the last category is well behaved,
for example it has $SUPAP$, 
but the point is, it is not equivalent to $\K_{\alpha}$ nor $\K_{\alpha+\omega}$
in this particular case. In fact for $\CA$'s $\bold M\subseteq \Dc_{\alpha+\omega}$ and a lot of properties of the latter class do not even generalize
to the class of representable algebras see theorem \ref{amal}.

This viewpoint has not been studied much; it was only touched upon in \cite{amal}, where two techniques of proving the amalgamation
property for various classes of algebras are unified, and both presented as adjoint situations. One is due to Pigozzi, and the other is due to Nemeti.
The unification consists of presenting both techniques as transforming a diagram of algebras to be strongly amalgamated
into certain saturated representations of these algebras that can be strongly amalgamated, and then returning to the original diagram using an inverse operator. Both can be described functorially by an adjoint situation making the noton of inverse involved 
precise. In the case of Pigozzi it is the neat reduct functor (an inverse to a neat embedding functor taking an algebra into one in 
$\omega$ extra dimensions, i.e a clasical representation), while in Nemeti's case it is basically the operation of forming atom 
structures that is an inverse
of taking an algebra to its canonical extension (which can be seen as a modal representation). 
This takes the representation problem expresses by a two sorted defining theory a step further, asking that the second sort be a
saturated representation.   

We conclude that finding a system of varieties definable by schemas for which  
$$\Kn_{\alpha}=\K_{\alpha}=APbase(\K_{\alpha})=SUPAPbase(\K_{\alpha}).$$
and the algebras in $\Kn_{\alpha}$ are representable, 
would provide a strong solution to the Finitizability problem (asking for finite schema) in algebraic logic.
Note that usually algebras in $\Kn_{\alpha}$ are representable, when $\Kn_{\alpha}=SUp\Kf_{\alpha}$, and the class
of locally finite algebras are representable.  Note too, that the notion of representability is not incuded in the definition of such systems, it comes from 
``outside", since it has to do with semantics and not with syntax.

Viewing a solution to the Finitizability problem as the existence of an equivalence between two categories is a novel approach.
The equivalence of the categories $\K_{\alpha}$ and $\K_{\alpha+\omega}$ to $\bold M$ says roughly that any 
algebra in $\K_{\alpha}$ contains
infinitely many hidden extra dimensions. These unfold to force a neat embedding theorem.
When an algebra can be {\it neatly} embedded in $\omega$ extra dimensions, this (usually, but not always) 
force representability of the operations. The real technical difficulty that come up here, is that when we expand our languages, and add axioms
to code extra dimensions somehow, in the hope of obtaining a neat embedding theorem, 
then usually we succeed in representing the already existing operations; the difficult problem 
is that the new operations turn out representable as well! (This comes up across in the case of Sain's algebras in \cite{Sa98}).

In any case, we believe that the problem of finding simple (finite) 
axiomatizations for the class of 
representable algebras remains an open philosophical question.  

\subsection{Neat embeddings, amalgamation and a problem of Henkin Monk and Tarski}

In this section we state another result that sets the two paradigms apart. 
We start by quoting Henkin, Monk and Tarski in \cite{HMT1}:``It will be shown in Part II that for each $\alpha$, $\beta$
such that $\beta\geq \alpha\geq \omega$ there is a $\CA_{\alpha}$ $\A$ and a $\CA_{\beta}$ $\B$ such that 
$\A$ is a generating subreduct of $\B$ different from $\Nr_{\alpha}\B$; in fact, both $\A$ and $\B$ can be taken to
be representable. Thus $\Dc_{\alpha}$ cannot be replaced by $\CA_{\alpha}$ in Theorem 2.6.67 (ii); 
it is known that this replacement
also cannot be made in certain consequences of 2.6.67, namely 2.6.71 and 2.6.72."
And we quote Henkin and Monk in the introduction of \cite{HMT2}: 
``Throughout Part I various ``promises" were made about 
material which would be found in Part II. These are located in this volume at the appropriate places, 
with the following exceptions,  which mainly concern results whose proofs could not be reconstructed."
It turns out that these are $5$ (unfulfilled) items, cf. \cite{HMT2}. Item (5) in op.cit. reads:

  ``Cf. Part 1 page 426. We do not know whether, if $\omega\leq \alpha <\beta$, there is a $\CA_{\alpha}$
$\A$ and a $\CA_{\beta}$ $\B$ such that $\A$ is a generating subreduct of $\B$ different from $\Nr_{\alpha}\B$."

To the best of our knowledge counterexamples to generalizations of 2.6.71-72 in \cite{HMT1} are also unknown. 
We now show that in the above quoted theorems, $\Dc_{\alpha}$ cannot be replaced by $\RCA_{\alpha}$
confirming what seems to have been  a conjecture of Tarski's, the proof of which could not be reconstructed by his co-authors Henkin 
and Monk.  
In what follows, we use the notation of the monograph \cite{HMT1}, often without warning, with the following exception.
We write $f\upharpoonright A$ instead of $A\upharpoonright f$ to denote the restriction of $f$ to $A$.

\begin{lemma}\label{neat} If $ \alpha < \beta$ are any ordinals and $L \subseteq \CA_\alpha$, 
then, in the sequence of conditions 
(1) - (5) below, (1) - (4) implies the immediately following one:
\begin{enumarab}
\item For any $\A \in L$ and $ \B \in \CA_\beta$ with $ \A \subseteq \Nr_\alpha \B$, 
for all $X \subseteq A$ we have $\Sg^\A X = \Nr_\alpha \Sg^\B X$.
\item For any $\A \in L$ and $ \B \in \CA_\beta$ with $\A \subseteq \Nr_\alpha \B$, if $ \Sg^\B A = \B$, 
then $ \A = \Nr_\alpha \B$.
\item For any $\A \in L$ and $ \B \in \CA_\beta$ with $\A \subseteq \Nr_\alpha \B$, if $ \Sg^\B A = \B$, 
then for any ideal $I$ of $\B$, $\Ig^\B (A \cap I) = I$.
\item If whenever $\A \in L$, there exists $x \in {}^{|A|} A$ such that if 
$ \rho = \langle \Delta x_i : i < |A| \rangle,$ $\D = \Fr_{|A|} ^ \rho\CA_\beta$ and $g_\xi=\xi/Cr_{|A|}^{\rho}\CA_{\beta}$, 
then $\Sg ^{\Rd_\alpha \D} \{ g_\xi : \xi < |A| \} \in L$, then the following hold:
For $\A, \A' \in L$, $\B, \B' \in \CA_\beta$ with embeddings $e_A:\A \to \Nr_\alpha \B$ and $e_{A'}:\A' \to \Nr_\alpha \B'$ 
such that $ \Sg^\B e_A(A) = \B$ and $ \Sg^{\B'} e_{A'}(A) = \B'$, whenever $ i : \A \longrightarrow\A'$ 
is an isomorphism, 
then there exists an isomorphism $\bar{i} : \B \longrightarrow \B'$such that $\bar{i} \circ e_A =e_{A'} \circ i$.
\item Assume that $\beta=\alpha+\omega$. Then $L$ has the amalgamation property with respect to $\RCA_{\alpha}$. That is for all $\A_0\in L,$
${\A}_1$ and ${\A}_2\in \RCA_\alpha$, and
all monomorphisms $i_1$ and $i_2$ of ${\A}_0$ into ${\A}_1$, ${\A}_2$,
respectively, there exists ${\A}\in \RCA_{\alpha}$,
a monomorphism $m_1$ from ${\A}_1$ into ${\A}$ and a monomorphism $m_2$ from
${\A}_2$ into $\A$ such that $m_1\circ i_1=m_2\circ i_2$.

\end{enumarab}
\end{lemma}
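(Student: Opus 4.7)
The plan is to prove the chain $(1)\Rightarrow(2)\Rightarrow(3)\Rightarrow(4)\Rightarrow(5)$, with the last two steps carrying the weight. The first is immediate: instantiating (1) with $X=A$ and using $\Sg^{\B}A=\B$ from the hypothesis of (2) gives $A=\Sg^{\A}A=\Nr_{\alpha}\Sg^{\B}A=\Nr_{\alpha}\B$. For $(2)\Rightarrow(3)$, fix $b\in I$; because $\Sg^{\B}A=\B$, the element $b$ is built from finitely many elements of $A\subseteq\Nr_{\alpha}\B$ by finitely many $\CA_{\beta}$-operations, so $\Gamma:=\Delta b\cap(\beta\smallsetminus\alpha)$ is finite. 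Then ${\sf c}_{(\Gamma)}b\in\Nr_{\alpha}\B=A$ by (2), and ${\sf c}_{(\Gamma)}b\in I$ because $\CA$-ideals are closed under cylindrification; hence ${\sf c}_{(\Gamma)}b\in A\cap I$, and $b\leq{\sf c}_{(\Gamma)}b$ forces $b\in\Ig^{\B}(A\cap I)$. The reverse containment is trivial.

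For $(3)\Rightarrow(4)$, the central step, enumerate $A=\{a_{\xi}:\xi<|A|\}$ by setting $x_{\xi}=a_{\xi}$, so $\rho=\langle\Delta a_{\xi}:\xi<|A|\rangle$. By the hypothesis of (4), the subalgebra $\C:=\Sg^{\Rd_{\alpha}\D}\{g_{\xi}:\xi<|A|\}$ belongs to $L$, and plainly $\C\subseteq\Nr_{\alpha}\D$ with $\Sg^{\D}C=\D$ (since $C$ contains every generator $g_{\xi}$ of $\D$). By freeness of $\D=\Fr^{\rho}_{|A|}\CA_{\beta}$ the three assignments $g_{\xi}\mapsto e_{A}(a_{\xi})$, $g_{\xi}\mapsto e_{A'}(i(a_{\xi}))$, and $g_{\xi}\mapsto a_{\xi}$ extend to homomorphisms $f:\D\to\B$, $f':\D\to\B'$, and $\phi:\C\to\A$. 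Both $f$ and $f'$ are surjective because $e_{A}(A)$ generates $\B$ and $e_{A'}(A')$ generates $\B'$; and by construction $f\restriction C=e_{A}\circ\phi$ and $f'\restriction C=e_{A'}\circ i\circ\phi$, so the injectivity of $e_{A}$ and of $e_{A'}\circ i$ yields $\ker f\cap C=\ker\phi=\ker f'\cap C$. Applying (3) to the pair $\C\subseteq\Nr_{\alpha}\D$ gives $\ker f=\Ig^{\D}(\ker f\cap C)=\Ig^{\D}(\ker f'\cap C)=\ker f'$, whence $\bar{i}:=f'\circ f^{-1}:\B\to\B'$ is a well-defined isomorphism satisfying $\bar{i}(e_{A}(a_{\xi}))=e_{A'}(i(a_{\xi}))$ for every $\xi$, i.e. $\bar{i}\circ e_{A}=e_{A'}\circ i$.

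For $(4)\Rightarrow(5)$ with $\beta=\alpha+\omega$, let $\A_{0}\in L$ and $i_{k}:\A_{0}\to\A_{k}$ be monomorphisms into $\A_{k}\in\RCA_{\alpha}$ $(k=1,2)$; since $\RCA_{\alpha}$ is a variety, $\A_{0}\in\RCA_{\alpha}$ also. Henkin's Neat Embedding Theorem provides $\B_{k}\in\CA_{\alpha+\omega}$ with $\A_{k}\subseteq\Nr_{\alpha}\B_{k}$, and replacing $\B_{k}$ by $\Sg^{\B_{k}}A_{k}$ we may assume that $A_{k}$ generates $\B_{k}$, which forces $\B_{k}\in\Dc_{\alpha+\omega}$ because each element now involves only finitely many indices from $\beta\smallsetminus\alpha$. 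Applying (4) to $\A_{0}$ and $i_{k}(\A_{0})$ with $\B=\B_{0}$, $\B'=\Sg^{\B_{k}}i_{k}(A_{0})\subseteq\B_{k}$, we lift each $i_{k}$ to an embedding $\bar{i_{k}}:\B_{0}\to\B_{k}$ extending it. Amalgamating $\B_{1},\B_{2}$ over $\B_{0}$ via $\bar{i_{1}},\bar{i_{2}}$ inside $\Dc_{\alpha+\omega}$ (where amalgamation is available by classical HMT results) produces $\B\in\Dc_{\alpha+\omega}$ with monomorphisms $m_{k}:\B_{k}\to\B$ satisfying $m_{1}\circ\bar{i_{1}}=m_{2}\circ\bar{i_{2}}$; setting $\A:=\Nr_{\alpha}\B$ we have $\A\in\RCA_{\alpha}$ by the NET, the restrictions $m_{k}\restriction\A_{k}$ embed $\A_{k}$ into $\A$, and on $A_{0}$ they agree as required.

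The main obstacle is step $(3)\Rightarrow(4)$: the crux is to route the comparison $\ker f=\ker f'$ through the common subalgebra $\C\in L$, where equality of kernels is immediate from injectivity, and only afterwards to transfer that equality to ideals of all of $\D$ via hypothesis (3). The auxiliary assumption built into (4) that $\C\in L$ is precisely what makes (3) applicable, so weakening either ingredient is likely to break the argument; the amalgamation at $(4)\Rightarrow(5)$ is then a comparatively routine transfer through $\Dc_{\alpha+\omega}$.
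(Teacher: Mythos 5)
Your proposal is correct and follows essentially the same route as the paper: the same trivial instantiation for $(1)\Rightarrow(2)$, the same cylindrification argument ${\sf c}_{(\Delta b\sim\alpha)}b\in A\cap I$ for $(2)\Rightarrow(3)$, the same dimension-restricted free algebra $\D=\Fr^{\rho}_{|A|}\CA_{\beta}$ with the kernel comparison $\ker f\cap C=\ker f'\cap C$ lifted to $\ker f=\ker f'$ via (3), and the same NET-plus-upstairs-amalgamation scheme for $(4)\Rightarrow(5)$. The only (immaterial) divergence is that you amalgamate the $\B_{k}$ in $\Dc_{\alpha+\omega}$, whereas the paper amalgamates in Pigozzi's class $K=\{\A\in\CA_{\alpha+\omega}:\A=\Sg^{\A}\Nr_{\alpha}\A\}$; the algebras constructed lie in both classes and both have the amalgamation property by Pigozzi's classical results.
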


\begin{demo}{Proof}
(1) implies (2) is trivial. Now we prove (2) implies (3).  
From the premise that $\A$ is a generating subreduct of $\B$ we easily infer that
$|\Delta x\sim \alpha|<\omega$ for all $x\in B$. We now have $\A=\Nr_{\alpha}\B$. Now clearly 
$\Ig^{\B}(I\cap A)\subseteq I$. Conversely let $x\in I$. 
Then ${\sf c}_{(\Delta x\sim \alpha)}x$ is in $\Nr_{\alpha}\B,$
hence in $\A$. Therefore ${\sf c}_{(\Delta x\sim \alpha)}x\in A\cap I$. 
But $x\leq {\sf c}_{(\Delta x\sim \alpha)}x$,
hence the required.
We now prove (3) implies (4). 
Let  $\A, \A' \in L$, $\B, \B' \in \CA_{\beta}$ and assume that $e_A, e_{A'}$ are embeddings from $\A, \A'$ into   $\Nr_\alpha \B,
\Nr_\alpha \B'$, respectively, such that
$ \Sg^\B (e_A(A)) = \B$
and $ \Sg^{\B'} (e_{A'}(A')) = \B',$
and let $ i : \A \longrightarrow \A'$ be an isomorphism. 
We need to ``lift" $i$ to $\beta$ dimensions.
Let $\mu=|A|$. Let $x$ be a  bijection  from $\mu$ onto $A$ that satisfies the premise of $(4)$.   
Let $y$ be a bijection from $\mu$ onto $A'$,
such that $ i(x_j) = y_j$ for all $j < \mu$.
Let $\rho = \langle \Delta ^{(\A)} x_j : j < \mu\rangle$, $\D = \Fr^{(\rho)}_{\mu} \CA_{\beta}$, $g_\xi =
\xi/Cr^{(\rho)}_{\mu} \CA_{\beta}$ for all  $\xi< \mu$
and $\C = \Sg^{\Rd_\alpha \D} \{ g_\xi : \xi < \mu \}.$
Then $\C \subseteq \Nr_\alpha \D,\  C \textrm{ generates } \D
~~\textrm{and by hypothesis }~~ \C \in L.$
There exist $ f \in Hom (\D, \B)$ and $f' \in Hom (\D,
\B')$ such that
$f (g_\xi) = e_A(x_\xi)$ and $f' (g_\xi) = e_{A'}(y_\xi)$ for all $\xi < \mu.$
Note that $f$ and $f'$ are both onto. We now have 
$e_A \circ i^{-1} \circ e_{A'}^{-1} \circ ( f'\upharpoonleft \C) = f \upharpoonleft \C.$
Therefore $ Ker f' \cap \C = Ker f \cap \C.$
Hence by $(3)$ $\Ig(Ker f' \cap \C) = \Ig(Ker f \cap \C).$
So, $Ker f'  = Ker f.$
Let $y \in B$, then there exists $x \in D$ such that $y = f(x)$. Define $ \hat{i} (y) = f' (x).$
The map is well defined and is as required.
We now prove that (4) implies (5). Let $\C\in L$. Let $\A,\B\in \RCA_{\alpha}$. Let $f:\C\to \A$ and $g:\C\to \B$ be monomorhisms.
Then by the Neat Embedding Theorem, there exist $\A^+, \B^+, \C^+\in \CA_{\alpha+\omega}$ and embeddings 
$e_A:\A\to \Nr_{\alpha}\A^+$ 
$e_B:\B\to  \Nr_{\alpha}\B^+$ and $e_C:\C\to \Nr_{\alpha}\C^+$.
We can assume that $\Sg^{\A^+}e_A(A)=\A^+$ and similarly for $\B^+$ and $\C^+$.
Let $f(C)^+=\Sg^{\A^+}e_A(f(C))$ and $g(C)^+=\Sg^{\B^+}e_B(g(C)).$
Then by (4)  there exist $\bar{f}:\C^+\to f(C)^+$ and $\bar{g}:\C^+\to g(C)^+$ such that 
$(e_A\upharpoonright f(C))\circ f=\bar{f}\circ e_C$ and $(e_B\upharpoonright g(C))\circ g=\bar{g}\circ e_C$.
Let $K=\{\A\in \CA_{\alpha+\omega}: \A=\Sg^{\A}\Nr_{\alpha}\A\}$. Then $\A^+$, $\B^+$ and $\C^+$ are all in $K$. Now by \cite{P} 2.2.12 $K$ has the amalgamation property, 
hence there is a $\D^+$ in $K$ and monomorphisms $k:\A^+\to \D^+$ and $h:\B^+\to \D^+$ such that
$k\circ \bar{f}=h\circ \bar{g}$. Let $\D=\Nr_{\alpha}\D^+$. Then $k\circ e_A:\A\to \Nr_{\alpha}\D$ and
$h\circ e_B:\B\to \Nr_{\alpha}\D$ are one to one and
$k\circ e_A \circ f=h\circ e_B\circ g$. By this the proof is complete.

\end{demo}
\begin{theorem} Let $\alpha>1$. Let $\beta = \alpha + \omega$.  
Then   (1) - (5) in Lemma 1 are false for $L=\RCA_\alpha$.
 \end{theorem}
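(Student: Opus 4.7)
The plan is essentially to exploit the already established chain of implications in Lemma \ref{neat}. Since $(1) \Rightarrow (2) \Rightarrow (3) \Rightarrow (4) \Rightarrow (5)$, a single failure of $(5)$ contraposes to the simultaneous failure of all five conditions. Thus the theorem reduces to exhibiting, for $L = \RCA_\alpha$ with $\alpha \geq \omega$, a span $i_1 : \A_0 \to \A_1$, $i_2 : \A_0 \to \A_2$ of $\RCA_\alpha$-embeddings that admits no common $\RCA_\alpha$-extension making the diagram commute.

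For this I would invoke Pigozzi's classical negative result from \cite{P}, which states precisely that $\RCA_\alpha$ fails the amalgamation property for every $\alpha \geq 2$. Pigozzi's construction delivers concrete representable cylindric algebras $\A_0, \A_1, \A_2$ and embeddings $i_1, i_2$ such that any algebra $\A$ together with embeddings $m_1 : \A_1 \to \A$, $m_2 : \A_2 \to \A$ with $m_1 \circ i_1 = m_2 \circ i_2$ must violate representability: the two extensions impose first-order definable relations between the images of generators of $\A_0$ which cannot be simultaneously realised by a set-theoretic interpretation of a putative amalgam. This directly refutes $(5)$ for $L = \RCA_\alpha$.

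With $\neg (5)$ in hand, chasing the contrapositive of each implication in Lemma \ref{neat} yields successively $\neg (4)$, $\neg (3)$, $\neg (2)$, and finally $\neg (1)$, giving the theorem. The failure of $(1)$ in particular produces the witness requested in item $(5)$ of the list of unfulfilled promises in the introduction of \cite{HMT2}: a representable $\A \in \RCA_\alpha$ and a representable $\B \in \RCA_\beta$ with $\A \subseteq \Nr_\alpha \B$, $\Sg^\B A = \B$, and $\A \subsetneq \Nr_\alpha \B$. Equivalently, Theorems $2.6.67(\mathrm{ii})$, $2.6.71$, and $2.6.72$ of \cite{HMT1} become false if the hypothesis $\A \in \Dc_\alpha$ is relaxed to $\A \in \RCA_\alpha$, confirming what appears to have been Tarski's conjecture.

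The only real obstacle lies in Pigozzi's counterexample itself; the deduction from it is mechanical contraposition. If one preferred a self-contained proof rather than a citation, the hard work would be to reconstruct a concrete non-amalgamable triple $(\A_0, \A_1, \A_2)$ in $\RCA_\alpha$, for which the natural strategy is to take $\A_0$ to be a small (for instance, monadic or rigidly generated) representable algebra and to build $\A_1, \A_2$ as extensions of $\A_0$ that enforce incompatible identifications among diagonals and cylindrifications of the common generators, so that the very concreteness of any set-theoretic representation collapses distinct elements of one extension into an element identified with a different one in the other. This is precisely where the subtlety of Pigozzi's original argument lies, and it is the step I would expect to consume the bulk of any self-contained write-up.
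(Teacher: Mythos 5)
Your proposal is essentially the paper's own proof: the paper disposes of the theorem in one sentence by combining the implication chain of Lemma \ref{neat} with Pigozzi's failure of the amalgamation property for $\RCA_\alpha$, exactly as you do. The one ingredient you omit, and which the paper explicitly flags, is the verification that $\RCA_\alpha$ satisfies the \emph{premise} of condition (4) when $\beta=\alpha+\omega$. This is not cosmetic: condition (4) is itself a conditional statement, so its negation (which you need in order to continue the contraposition down to $\neg(3)$, $\neg(2)$, $\neg(1)$, and which is what licenses the step $\neg(5)\Rightarrow\neg(4)$ in the first place) requires that its antecedent actually hold for $L=\RCA_\alpha$; otherwise (4) is vacuously true even while (5) fails. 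The check is easy but should be stated: choosing $x$ to enumerate $A$, each generator $g_\xi$ of the dimension-restricted free algebra $\D=\Fr_{|A|}^{\rho}\CA_{\alpha+\omega}$ has dimension set contained in $\alpha$, so $\Sg^{\Rd_\alpha\D}\{g_\xi:\xi<|A|\}$ is a subalgebra of $\Nr_\alpha\D$ and hence lies in $S\Nr_\alpha\CA_{\alpha+\omega}=\RCA_\alpha$ by Henkin's Neat Embedding Theorem. With that line added, your argument coincides with the paper's.
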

\begin{demo}{Proof} Using lemma \ref{neat} upon noting that $\RCA_{\alpha}$ fails to have the amalgamation property \cite{P} and that
$\RCA_{\alpha}$ satisfies the premise of (4) in lemma \ref{neat} when $\beta=\alpha+\omega$.
\end{demo} 
We readily conclude:

{\bf We cannot replace $\Dc_{\alpha}$ in 2.6.67 (ii), 2.6.71-72 of \cite{HMT1} by $\RCA_{\alpha}$ when $\alpha\geq \omega.$}
In more detail we have 
\begin{theorem}\label{amal} For $\alpha\geq \omega$, the following hold:
\begin{enumroman}
\item There are non-isomorphic representable cylindric algebras of dimension $\alpha$ each of 
which is a generating subreduct of the same $\alpha+\omega$ dimensional cylindric algebra.
\item There exist $\A\in \RCA_{\alpha}$, a $\B\in \CA_{\alpha+\omega}$ and  an ideal $J\subseteq \B$, such that $\A\subseteq \Nr_{\alpha}\B$, 
$A$ generates $\B$, but $\Ig^{\B}(J\cap A)\neq \B$.
\item There exist $\A, \A' \in \RCA_{\alpha}$, $\B, \B' \in \CA_{\alpha+\omega}$ with 
embeddings $e_A:\A \to \Nr_\alpha \B$ and $e_{A'}:\A' \to \Nr_\alpha \B'$ 
such that $ \Sg^\B e_A(A) = \B$ and $ \Sg^{\B'} e_{A'}(A) = \B'$, and an isomorphism $ i : \A \longrightarrow\A'$ 
for which there exists no isomorphism $\bar{i} : \B \longrightarrow \B'$ such that $\bar{i} 
\circ e_A =e_{A'} \circ i$.
\end{enumroman}
\end{theorem}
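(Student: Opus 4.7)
The plan is to invoke the immediately preceding theorem, which---combining Lemma \ref{neat} with Pigozzi's result \cite{P} that $\RCA_\alpha$ fails the amalgamation property---establishes that all five equivalent statements $(1)$--$(5)$ of Lemma \ref{neat} fail when $L=\RCA_\alpha$ and $\beta=\alpha+\omega$. Each of the three items of Theorem \ref{amal} will then be extracted from the failure of the corresponding clause of the lemma.

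First, item (iii) is essentially the verbatim negation of clause $(4)$: once $(4)$ is known to fail for $L=\RCA_\alpha$, the required witnesses $\A, \A', \B, \B', i$ exist by definition.

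Second, item (ii) follows from the negation of clause $(3)$. That negation supplies $\A\in\RCA_\alpha$, $\B\in\CA_{\alpha+\omega}$ with $\A\subseteq\Nr_\alpha\B$, $\Sg^\B A=\B$, and an ideal $I\subseteq\B$ such that $\Ig^\B(A\cap I)\neq I$. Taking $J:=I$, and noting that one may assume $I$ is a proper ideal (the improper case being vacuous since $1\in A$ forces $\Ig^\B(A\cap\B)=\B$), gives witnesses for which $\Ig^\B(J\cap A)$ is strictly contained in $\B$, as required.

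Third, item (i) is to be deduced from the negation of clause $(2)$, which produces $\A\in\RCA_\alpha$ and $\B\in\CA_{\alpha+\omega}$ with $\A\subsetneq\Nr_\alpha\B$ and $A$ generating $\B$. Put $\A_1:=\A$ and $\A_2:=\Nr_\alpha\B$; the latter belongs to $\RCA_\alpha$ by the Neat Embedding Theorem, and both are generating subreducts of the same $\B$. I expect the main obstacle here to be upgrading ``distinct'' to ``non-isomorphic'': the failure of $(2)$ alone yields $\A_1\neq\A_2$ only as subalgebras of $\B$, not $\A_1\not\cong\A_2$ as abstract algebras. One would close this gap by inspecting a specific non-amalgamation witness (e.g.\ arising from Pigozzi's construction in \cite{P}) and choosing parameters so that $\A_1$ and $\A_2$ are distinguished by a cardinality or structural invariant; alternatively, one may track non-isomorphism through the implication chain $(2)\Rightarrow(3)\Rightarrow(4)\Rightarrow(5)$ of Lemma \ref{neat}, arranging that the failure of amalgamation witnessing the falsity of $(5)$ is reflected backwards into genuinely non-isomorphic $\A_1, \A_2$ at stage $(2)$.
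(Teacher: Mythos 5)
Your proposal matches the paper's own argument: the paper obtains Theorem \ref{amal} precisely as the ``in more detail'' unpacking of the preceding theorem that clauses (1)--(5) of Lemma \ref{neat} all fail for $L=\RCA_{\alpha}$, $\beta=\alpha+\omega$ (because $\RCA_{\alpha}$ fails the amalgamation property and satisfies the premise of clause (4)), with items (i), (ii), (iii) read off as the negations of clauses (2), (3), (4) respectively. The caveat you raise about item (i) is fair but applies equally to the paper, which offers no further argument: the negation of clause (2) yields two \emph{distinct} generating subreducts $\A\subsetneq\Nr_{\alpha}\B$ of the same $\B$, and this is exactly what the Henkin--Monk--Tarski quotation being answered asks for (``a generating subreduct of $\B$ different from $\Nr_{\alpha}\B$''), so ``non-isomorphic'' in (i) should be read as ``distinct''; likewise the ``$\neq\B$'' in item (ii) is best read as ``$\neq J$'', i.e.\ the literal negation of clause (3), which your extraction correctly supplies.
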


Lemma \ref{neat} tells us where to find direct counterexamples, namely from common subalgebras of algebras in $\RCA_{\alpha}$ that do not amalgamate.
We note that theorem \ref{amal} was generalized to $\SC$'s $\QPA$ $\QPEA$'s  \cite{fail}; 
however it does not hold for $\PA$'s nor $\SA$'s, emphasizing the dichotomy between
those two paradigms.

Let $\K\in \{\SC, \QPEA, \QPA, \CA\}$.  Let $\alpha$ be infinite. let $\A\in \K_{\alpha}$. Then 
recall that $\Delta x$, {\it the dimension set of $x$}, is defined by $\{i\in \alpha: {\sf c}_ix\neq x\}$.
Now we set:
$$\LfK_{\alpha}=\{\A\in \K_{\alpha}: \Delta x \text { is finite for all } x\in A\}$$
$$\DcK_{\alpha}=\{\A\in \K_{\alpha}: \alpha\sim \Delta x \text { is infinite for all } x\in A\}$$
$$\SsK_{\alpha}=SP\{\A: \A \text { is simple }\}$$
$$\ReK_{\alpha}=\{\A\in \K_{\alpha}: (\forall \Gamma\subseteq_{\omega} \alpha)( \forall x\neq 0)(\exists i,j\in \alpha\sim \Gamma)( i\neq j \land {\sf s}_i^jx\neq 0)\}.$$
Here, and elsewhere throughout the paper $x\subseteq_{\omega}y$ denotes that $x$ is a finite subset of $y$.
It is known that $\LfK_{\alpha}\subseteq \DcK_{\alpha}$ 
and $\DcK_{\alpha}\cup \SsK_{\alpha}\subseteq \ReK_{\alpha}\subseteq \RK_{\alpha}$ 
We mention a recent result on neat embeddings very much related to the amalgamation property.
Consider the following class $\L\subseteq \K_{\alpha}$. $\A\in \L$ if for every finite sequence 
$\rho$ without repeating terms and with range included in $\alpha$, for every non-zero $x\in A$,
there is a function $h$ and $k<\alpha$ such that $h$ is an endomorphism of $\Rd^{\rho}\A$, $k\in \alpha\smallsetminus Rg\rho$, ${\sf c}_k\circ h=h$ 
and $h(x)\neq 0$.  This $L$ is defined in \cite{HMT2} Theorem 2.6.50 (iii) for cylindric algebas, but it makes perfect sense for all algebras considered
herein . The fact that $\ReK_{\alpha}\subseteq \L\subseteq \RK_{\alpha}$ is proved in \cite{HMT1} Theorem 2.6.50 for cylindric algebras. 
The proof adapts without much difficulty to $\K_{\alpha}$. In fact, the latter follows from the neat embedding Theorem, namely
$S\Nr_{\alpha}\K_{\alpha+\omega}=\RK_{\alpha}$, where $\Nr_{\alpha}\K_{\alpha+\omega}$ denotes the class of neat $\alpha$ reducts of algebras
in $\K_{\alpha}$. 
Now $\ReK_{\alpha}\subset \L$ properly. The following example is taken from \cite{HMT1} and adapted to the cases considered herein. 
If we take $\A$ to be the full set algebra in the space $^{\alpha}\alpha$, then ${\sf s}_k^l(Id\upharpoonright \alpha)=0$ for every $k,l<\alpha$.
Suppose that $\rho$ is a finite one to one sequence with $Rg\rho\subseteq \alpha$ and $X\subseteq {}^{\alpha}\alpha$, $X\neq 0$. Let
$k\in \alpha\setminus Rg\rho$ and choose $\tau\in {}^{\alpha}\alpha$ such that $k\notin Rg\rho$, 
$\tau\upharpoonright Rg\rho\subseteq Id$ and $\tau$ is one to one.
Let $$h(Y)=\{\phi\in {}^{\alpha}\alpha: \phi\circ \tau\in Y\}.$$
Then $h$ satisfies the conclusion in the definition of $\L$. 
It is asked in \cite{HMT1} whether $\L$ (in the $\CA$ case) coincides with the class of representable cylindric algebras.
It is proved in \cite{neet} that the class $\L$ has $AP$ with respect to $\RK_{\alpha}$. Since $\RK_{\alpha}$
fails to have $AP,$ it follows that $\L\neq \RK_{\alpha}$. In other words, all three inclusions 
$\ReK_{\alpha}\subset \L\subset \RK_{\alpha}$ are proper. 
This answers a question of Henkin Monk and Tarski \cite{HMT1} p.417, formulated as problem 2.13.
The latter is one of the very few questions that are open in \cite{HMT1}, possibly 
the only one.
\section{Technical innovations }

\subsection{Appendix: Some stability theory in connection to neat embeddings}

In the following theorem, when the condition of maximality is omitted, then we are led to a statement that is independent of $ZFC +\neg CH$, 
However, when we consider ultrafilters, then we can prove, and indeed only in $ZFC$:
\begin{athm}{Theorem}\label{ZF}
Let $\A\in S_c\Nr_n\CA_{\omega}$ be countable.
Let $\kappa<{}^{\omega}2$. Let $(X_i:i\in \kappa)$ be a family of non-principal ultrafilters of $\A$.
Then there exists a representation $f:\A\to \wp(^nX)$ such that $\bigcap_{x\in X_i}f(x)=\emptyset$
for all $i\in \kappa$.
\end{athm}
\begin{demo}{Proof}
Assume that $\A$ is countable with $\A\in S_c\Nr_n\CA_{\omega}.$ Then $\A\subseteq\Nr_n\D$ with $\D\in \CA_{\omega}$.
Let $\B=\Sg^{\D}A$. Then $\B\subseteq \D$, $B$ is countable and $\B\in \Lf_{\omega}.$
Futhermore we have $\prod X_i=0$ in $\B$. We shall construct a representation of $\B$ preserving the given set of meets.
Say that an ultrafilter $F$ in $\A$ is realized in the representation $f:\B\to \wp(^{\omega}M)$ if $\bigcap_{x\in F}f(x)\neq \emptyset.$
We first construct two representations of $\B$ such that if $F$ is an ultrafilter in $\Nr_n\B$ that is realized in both representations, then $F$ is 
necessarily principal, that is $\prod F$ is an atom generating $F$.
We construct two ultrafilter $T$ and $S$ of $\B$ such that
\begin{equation}\label{t5}
\begin{split}
(\forall k<\alpha)(\forall x\in A)({\sf c}_kx\in T\implies (\exists l\notin \Delta x) {\sf s}_k^lx\in T)\\ 
(\forall k<\alpha)(\forall x\in A)({\sf c}_kx\in S\implies (\exists l\notin \Delta x) {\sf s}_k^lx\in S) 
\end{split}
\end{equation}
\begin{equation}\label{t6}
\begin{split}
\forall \tau_1, \tau_2\in {}^{\omega}\omega^{(Id)}( G_1=\{a\in \Nr_n\B: {\sf s}_{\tau_1}a\in T\},
G_2=\{a\in \Nr_n\B: s_{\tau_1}a\in S\})\\\implies G_1\neq G_2 \text { or $G_1$ is principal.}
\end{split}
\end{equation}
Note that $G_1$ and $G_2$ are indeed ultrafilters in $\Nr_n\B$.
We construct $S$ and $T$ as a union of a chain. We carry out various tasks as we build the chains.
The tasks are as in \ref{t5}, \ref{t6}, as well as
(***) for all $a\in A$ either $a\in T$ or $-a\in T$, and same for $S$. We let $S_0=T_0=\{1\}$.
There are countably many tasks. Metaphorically we hire countably many experts and give them one task each.
We partition $\omega$ into infinitely many sets and we assign one of these tasks to each expert. 
When $T_{i-1}$ and $S_{i-1}$ have been chosen and $i$ is in the set assigned to some expert $E$, then $E$ will construct
$T_i$ and $S_i$.
For consider the expert who handles task (***). Let $X$ be her subset of $\omega$. Let her list as $(a_i: i\in X)$ all elements of $X$.
When $T_{i-1}$ has been chosen with $i\in X$, she should consider whether $T_{i-1}\cup \{a_i\}$ is consistent. If it is she puts
$T_i=T_{i-1}\cup \{a_i\}$. If not she puts $T_i=T_{i-1}\cup \{-a_i\}$. Same for $S_i$.
Next consider the expert who deals with the tasks in \ref{t5}. She waits until she is gets a set $T_{i-1}$ which contains ${\sf c}_ka$. 
Every time this happens she chooses $l\notin \Delta a$
which is not used in $T_{i-1}$, and she puts $T_i=T_{i-1}\cup \{{\sf s}_k^la\}$. Same for $S_i$.
Now finally consider the tasks in \ref{t6}. Suppose that $X$ contains $i$ , and $S_{i-1}$ and $T_{i-1}$ have been chosen.
Let $e=\bigwedge S_{i-1}$ and $f=\bigwedge T_{i-1}$. We have two cases.
If $e$ is an atom in $\Nr_n\B$ then the ultrafilter $F$ containg $e$ is principal so our expert can put $S_i=S_{i-1}$ and $T_i=T_{i-1}$.
If not, then let $F_1$ , $F_2$ be distinct ultrafilters containing $e$. Let $G$ be an ultrafilter containing $f$. Say $F_1$ is different from $G$.
Let $\theta$ be in $F_1-G$. Then put $S_i=S_{-1}\cup \{\theta\}$ and $T_i=T_{i-1}\cup \{-\theta\}.$ 
It is not hard to check that the canonical models corresponding to $S$ and $T$ are as required.
In the above proof, each expert has to make sure that the theories $T$ and $S$ have some property $P$.
The proof shows that the expert can make $T$ and $S$ have $P$, provided that she is allowed to choose $T_i, S_i$ for infinitely many $i$.
We can express this in terms of a two player game, call it $G(P,X)$, where $X$ is any infinite countble subset of $\omega$
whose complement is also infinite. 
The players have to pick the pairs $(T_i,S_i)$ in turn, player $\exists$ makes 
the choice of $T_i$ if and only if
$i\in X$. Player $\exists$ wins if $T$ and $S$ have the property $P$, otherwise $\forall$ wins. 
We say that $P$ is enforceable if $\exists$ has a winninig strategy for 
this game.
Using the terminology of Hodges: The property 'every maximal type which is realized in both $M_1$ and $M_2$ is isolated' 
is enforceable.
There is no difficulty in stretching the above idea to make the experts build three, four or any finite number of models 
which overlap only at principal types. 
With a pinch of diagonalisation
we can extend the number to $\omega$. To push it still further to $^{\omega}2$ neeeds a new idea. 
This is one of those many places in model theory where we get continuum many models for the same price as two.
Adopt, basically,  the argument above, 
allowing the experts to introduce a new chain of theories which is a duplicate copy of one of the chains being 
constructed. In this case, the construction will take the form of a tree. 
Each branch $\beta$ will give rise to a chain $T_0\subseteq T_1\ldots $
of conditions, write $T_{\beta}$ for the ultrfailter containing this chain.  By splitting the tree often enough, 
the experts can ensure that there are continuum many branches
and hence continuum many models in the end. There is one expert whose job is to make sure that \ref{t6} is enforcable for each pair of branches.
But she can do this task, for at each step she has to act the number of branches is still finite.
Qouting Hodges again ``Let $R$ be an enforceable property of ordered pairs of $L$ structures. 
Let $P$ be the property which an indexed famly $(B_j:j\in J)$ of $L$ structures has iff for all $j\neq k$ in $J$ $(B_j,B_k)$ has propery $R$. 
Then $P$ is enforceable." 

Before we embark on the details we fix some terminology.
In what follows we write $\wp(^\alpha M)$ for the full cylindric set algebra
$(\wp(^{\alpha}M), \cup, \cap,\sim {\sf c}_i, {\sf d}_{ij}),$ i.e., set algebras are identified notationally with their universe. 
We shall need to modify the definition of neat reducts. Let $\A\in \CA_{\alpha}$ and $I$ be a subset of $\alpha$ (not necessarily an initial segment),
then $\Nr_{I}\A=\{x\in \A: {\sf c}_ix=x \text{ for all } i\notin I\}$. In this case $\Nr_I\A$ is only a Boolean algebra.
Let $\B\in \Lf_{\lambda}$. In our present case $\lambda=\omega$, but we shall deal with a more general  case
when the algebra in question need not be countable, so we denote the dimension by $\lambda$, which in turn, might not be countable..
A model of $\B$ is  a non-zero homomorphism $f:\B\to \wp(^{\lambda}M).$
We write $(f,M)$ for such a model. We need an exact algebraic formulation of the notion of types.
If $(f,M)$ is a model of $\B$ and $s\in {}^{\lambda}M$ 
and $I$ is a finite subset of $\lambda$, then $type_{f,I}(s)=\{a\in \Nr_I\B : s\in  f(a)\}$. Such a set is called a type. A type 
is therefore a Boolean ultrafilter of $\Nr_I\B$.  We may write just $type_l(s)$ without reference to $f$ for $type_{f,l}(s)$.
Note that $type_{f,l}(s)$ depends only on the values of $s$ on $I$. 
That is, fixing $f$,  if $s_1\upharpoonright I=s_2\upharpoonright I$ then $type_{f,I}(s_1)=type_{f,I}(s_2).$ 
Accordingly if $I$ is a finite subset of $\lambda$ and $s$ is a finite sequence defined only on $I$, we define $type_{f,l}(s)$ to be
$type_{f,l}(\bar{s})$ where $\bar{s}$ is any extension of $s$ to $\lambda$.
If $\bar{a}$ is a finite sequence we may write, by an abuse of notation, simply $type(\bar{a})$ for $type_{f,l}(\bar{a})$
without specifyng the arity of $\bar{a}$ which will be clear from
context. Also, everytime we do this $I$ and $f$ will be clear from context, or else their specification is immaterial.
For $p,q\subseteq \B$, we write $p\vdash q$, if $p\models x$ for all $x\in q$. That is 
for every model $(f,M)$ of $\B$, and $s\in {}^{\lambda}M,$ 
if $s\in \bigcap_{y\in p} f(y)$ then $s\in f(x)$.
A set $\Gamma\subseteq \B$ is consistent, if it has the finite intersection property. This is equivalent (by the completeness theorem for first order logic) 
to the fact that
there exists a model $(f,M)$ of $\B$ and $s\in {}^{\lambda}M$ such that $s\in \bigcap_{x\in \Gamma}f(x).$
Now let $\A=\Nr_n\B$.
(*) We shall define a family $(f_i,M_i)$ of models of $\B$, with $i<{}^{\lambda}2,$ such that $|M_i|=\lambda$, $f_i(d)\neq 0,$ and
such that if $s_1\in {}^{\lambda}M_1$ and $s_2\in {}^{\lambda}M_2$, $M_1$ $M_2$ are distinct, 
and $I\subseteq n$, if $type_{f_1,I}(s_1)=type_{f_2,I}(s_2)$, then there exists a $q\subseteq type_{f_2,I}(s_2)$ $|q|<\lambda$
and $q\vdash type_{f_1,I}(s_1)$. That is if a type is realized in two distinct models, then it is necessarily isolated.

From (*) the required will easily follow from the following reasoning:
We refer to $(f_i, M_i)$ simply by $M_i$. Let $\K=\{M_i:i<{}^{\lambda}2\}$. Recall that $(f_i,M_i)$ realizes $F\subseteq \A$ if 
$\bigcap_{x\in F} f_i(x)\neq \emptyset$.
We say that $M_i$ omits $F$ if the representation $(f_i, M_i)$
does not realize $F$. 
Now, let $\{F_i:i<\kappa\}$ be the given non-principal ultrafilters. Recall that $\kappa<{}^{\lambda}2$.
Let $\K_0=\{M\in \K: M \text { omits }F_0\}$. If $F_0$ is realized in two distinct models then it would principal, 
so this does not happen. Then $|\K_0|={}^{\lambda}2$.
Let $\K_1=\{M\in \K_0: M \text { omits } F_1\}$. Then, by the same reasoning, $|\K_1|={}^{\lambda}2$. 
In this way, we can define, by induction, a chain $(\K_i:i<\kappa)$ of decreasing sets of models
such that each for all $i<\kappa$, $|\K_i|= {}^{\lambda}2$, $\K_i$ omits $X_i$ and $\K_j\subseteq \K_i$ whenever $i<j$. 
That is, let $\K_{\delta}=\bigcap_{i<\delta}\K_i$ at limits and at successors, set $\K_{i+1}=\{M\in \K_i: M \text { omits } F_{i+1}\}.$
Note that at the limit cases, since $|\delta|<\kappa$, then $|K_{\delta}|={}^{\lambda}2$. Since $\kappa<{}^{\lambda}2$,
then by the same token, there is a model (representation) $M$ in the intersection, i.e. in $\bigcap_{j<\kappa}\K_j$ which clearly 
satisfies
the required. That is we obtain a set $M$ having cardinality $\lambda$ and $g:\B\to \wp(^\lambda M)$ 
such that for all $i\in \kappa$, we have $\bigcap_{x\in F_i}g(x)=\emptyset$.

Define $f:\A\to \wp(^nM)$ by $f(a)=\{s\upharpoonright n: s\in g(a)\}$. Then clearly $f$ is as required.   
Now, to implement (*)  we distinguish between several cases. Throughout the proof a boolean subalgebra $\Delta$ of $\B$ such that
$\A\subseteq \Delta$ is fixed. In particular, $|\Delta|=\lambda$.

In constructing our desired representation, we will make use of the following fact that can be proved using the Tarsi Vaught 
test for elementary substructures.
Let $\A\in \Lf_{\alpha}$, $\alpha$ infinite. Assume that $\Gamma\subseteq \A$ is consistent and satisfies that for all $x\in A$ and 
$k<\alpha$, whenever ${\sf c}_kx\in A$, then there is a $\lambda\notin \Delta x$ such that ${\sf s}_{\lambda}^kx\in \Gamma$.
Note that the ultrafilters constructed in Theorem satisfied this property, which is a form of elimination of quantifiers.
Let $f:\A\to \wp^{\alpha}M$ be a representation that relizes $\Gamma$. Let $s\in \bigcap_{x\in \Gamma}f(x)$. Then the map
$g:\A\to \wp^{\alpha}Range s$ defined by $g(a)=f(a)\cap {}^{\alpha}Range s$ is a homomorphism. 

Now for the details. We assume that $\lambda=\omega$.
We have $\A=\Nr_n\B$  and $\B\in \Lf_{\omega}$ is countable because we can assume that
$A$ generates $\B$.
Let $\{a_i:i<\omega\}$ be a list of $B$ such that $\Delta a_i\subseteq 2i$. 
The proof basically consists of defining by induction on $n<\omega$, a set 
$$J_n=\{\Gamma_{\eta}: \eta\in {}^n2, \Gamma_{\eta} \text { is a finite consistent set }\}.$$
By a consistent set, we understand a set $\Gamma$ that has the finite intersection property.
In forcing terminology, the elements of $J_n$ are are called conditions. We use two player games as above. 
For $n=0$, let $J_0=\emptyset$. Assume that $J_n$ is defined. 
Then for $\forall$ splits the conditions, he picks each and every $u\in {}^n2$ and $l\in \{0,1\}$. 
Now $\exists$ has to respond with a set of conditions, namely the set $J_{n+1}$. She has to define $\Gamma_{\mu-l}$ for every $\mu\in {}^n2$
and $l=\{0,1\}$.
She lists the set $\{(\eta_i^i, \eta_2^i, \bar{z}_1^i, \bar{z}_2^i): i<j\}$ of quadruples 
$(\eta_1, \eta_2, \bar{z_1}, \bar{z_2})$ such that $\eta_1, \eta_2\in {}^n2$ are distinct and 
$z_1$ and $z_2$ are finite sequences from $2n=\{j: j< 2n\}.$ Notice that $j<\omega$.
Now she plays a side game: for all $i\leq j$ she chooses conditions  $\Gamma_{\eta}^i$, $\eta\in {}^n2$, such that
for all $i$, for all $x\in \Gamma_{\eta}^i,$ $\Delta x\subseteq 2n$, and
for $l<m\leq j$ we have $\Gamma_{\eta}^l\subseteq \Gamma_{\eta}^m$.
Let $\Gamma_{\eta}^0=\Gamma_{\eta}$.
Now assume that
she defined $\Gamma_{\eta}^i$ for all $\eta<{}^n2$.
She distinguishes between two subcases:

(1) There is an element $a\in \Delta$  such that
$$\Gamma_{{\eta}_1}^i\cup \{a\} \text { and } \Gamma_{{\eta}_2}^i\cup \{\neg a\}$$
are consistent.
Then, she lets
$$\Gamma_{{\eta}_1}^{i+1}=\Gamma_{\eta_1}^i\cup \{a\}$$
$$\Gamma_{{\eta}_2}^{i+1}=\Gamma_{\eta_2}^i\cup \{\neg a\}.$$
$$\Gamma_{\eta}^{i+1}=\Gamma_{\eta}^i$$ for all other $\eta\in {}^n2$.

(2) There is no such $a$, then she sets
$$\Gamma_{\eta}^{i+1}=\Gamma_{\eta}^i$$ for all $\eta\in {}^n2$.
Now she has defined $\Gamma_{\eta}^i$ for all $\eta<{}^n2$ and all $i\leq j$.
At these stages of the game she makes a finite number of succesive choices, 
but to preserve the form of the game she will only mention the final choice to player $\forall$, and the rest she will keep secret.
The final output she declares, given $\eta\in {}^n2$ and $l\in \{0,1\}$, is
$$\Gamma_{{\eta}\widehat{}<l>}=\Gamma_{\eta}^j\cup \{-{\sf c}_ka_n\lor {\sf s}^k_{2n}a_n: k<2n\}\cup \{\neg{\sf d}_{l,2n+1}: l<2n\}$$
(This is the spliting of the tree as defined in the above theorem).

For each $\mu\in {}^{\lambda}2$, let $\Gamma_{\mu}=\bigcup_{\beta<\lambda}\Gamma_{\mu\upharpoonright \beta}$.
Then $\Gamma_{\mu}$ is consistent. Hence there a model $M_{\mu}$ and an assignment $s^{\mu}=(s_i^{\mu}:i<\lambda)\in {}^{\lambda}M_{\mu}$ that satisfies it.
That is there exists $f_{\mu}:\A\to \wp(^{\lambda}M_{\mu})$ such that $(s_i^{\mu}:i<\lambda)\in \bigcap_{x\in \Gamma_\mu} f_{\mu}(x).$
Define $g_{\mu}(x)=f(x)\cap {}^{\lambda}Range(s^{\mu})$. Then the the family of models $g_{\mu}:\B\to \wp(^{\lambda}Range(s^{\mu}))$, 
with $\mu\in {}^{\lambda}2$
is as desired.
\end{demo}
Building models by games is easy when the context is countable. Models of cardinality $\omega_1$ are not much harder, they can be reached 
as limits of chains of countable models.
For larger cardinals it can get very rough.
To build them by (transfinite) games usually strong set theoretic assumptions are invoked, and the combinatorics involved are quite challenging.
In our next theorem, we use transfinite induction to prove the existence of representations for uncountable algebras.
The techniques we use come from stability theory, they are essentially due to Shelah. We will be constructing uncountable structures by 
approximations of smaller size,
and this involves climbing up to uncountable cardinals through the ordinals below them. 
Climbing up a limit ordinal $\alpha$ means finding an unbounded subset of $\alpha,$ i.e. a set $X\subseteq \alpha$
such that for every $i<\alpha$, there is a $j\in X$ such that $i\leq j$. In what follows we collect useful facts about unbounded sets.
The cofinality of a limit ordinal $\alpha$, $cf(\alpha)$, is the least ordinal $\beta$ such that $\alpha$ has an unbounded subset or order type $\beta$. 
Infinite cardinal of the form $cf(\alpha)$ are called regular cardinals, in fact every regular cardinal is its own cofinality.
Infinite cradinals which are regular are called singular. Successor cardinals are regular. Let $\lambda$ be an uncountable regular cardinal. If $X$ is a subset of 
$\lambda$, a limit point of $X$ below $\lambda$ is a limit ordinal $\delta<\lambda$ such that $X\cap \delta$ is unbounded in $\lambda$. We call $X$ closed
if it contains all its limit points below $\lambda$. Subsets of $\lambda$ which are both closed and unbounded are called clubs.
A set $X\subseteq \lambda$ is called fat if it contains a club. It is called thin if $\lambda\sim X$ is fat. A subset $S$ of $\lambda$ is called stationery
if it is not thin. $S$ is stationery if it intersect every club, hence it necessarily unbounded. All clubs are stationery but the converse is false.
In our next theorem we use techniques of Shelah from stability theory \cite{Shelah2}.
\begin{athm}{Theorem}\label{stability}
Let $\A\in S_c\Nr_n\CA_{\omega}$ be infinite such that $|A|=\lambda$, $\lambda$ is a a regular cardinal.
Let $\kappa<{}^{\lambda}2$. Let $(X_i:i\in \kappa)$ be a family of non-principal ultrafilters of $\A$.
Then there exists a representation $f:\A\to \wp(^nX)$ such that $\bigcap_{x\in X_i}f(x)=\emptyset$
for all $i\in \kappa$.
\end{athm}
\begin{demo}{Proof}
Assume that $\A\subseteq_c \Nr_{n}\D$, $\D\in \CA_{\omega}$. Let $\B=\Sg^{\D}A$. Then $\B\in \Lf_{\omega}$ and
$\A\subseteq \Nr_n\B$. We can assume that $\B\in \Lf_{\lambda}$. For if $\lambda>\omega$, 
then there is a $\B'\in \Lf_{\lambda}$ such $\B=\Nr_{\omega}\B'$ and $B$ generates $\B'$. Hence $\A\subseteq_c \Nr_{n}\B'$, 
so simply replace $\B$ by $\B'$. 
Throughout the proof a boolean subalgebra $\Delta$ of $\B$ such that
$\A\subseteq \Delta$ is fixed. In particular, $|\Delta|=\lambda$.

{\bf Case 1}

We assume that $\lambda>\omega$ and $\lambda^+<\chi$.
Now for all $a\in \B$ there is a successor $i<\lambda$ such that $\Delta a\subseteq 2i$. Accordingly,
let $(a_i : i \text { a successor} <\lambda)$ be an enumeraton of $\B$, where $\Delta a_i\subseteq 2i$.
Let $\{\bar{z}_{\alpha}:\alpha<\lambda\}$ be a list of all finite $\bar{z}\subseteq \lambda$ such that for such $\bar{z}$, the set
$\{\alpha: \bar{z}_{\alpha}=z\}$ is stationary subset of $\lambda$. We can assume that $\bar{z}_{\alpha}\subseteq 1+\alpha$.
Let $\B=\bigcup_{\alpha<\lambda}A_{\alpha}$ where $|A_{\alpha}|<\lambda$
and $A_{\alpha}$ is increasing and continous. That is, at limits,  $A_{\delta}=\bigcup_{i<\delta}A_i$.
We can assume, without loss of generality,  that for $x\in A_{\alpha}$, we have  $\Delta x\subseteq \alpha$.
We shall define by induction on $\alpha<\lambda$, for each $\eta\in {}^{\alpha}2$ a consistent set $\Gamma_{\eta}$, such that
$\Delta x\subseteq \{i: i<2\alpha\}$ for all $x\in \Gamma_{\mu}$, and $|\Gamma_{\mu}|<\lambda$.
We have several subcases:
\begin{enumarab}
\item $\alpha=0,$ set $\Gamma=\emptyset$

\item $\alpha$ is a limit ordinal. 
For each $\mu\in {}^{\alpha}2$, set $\Gamma_{\mu}=\bigcup_{\beta<\alpha}\Gamma_{\mu\upharpoonright \beta}$

In case $\alpha$ is a successor ordinal, we distinguish between three subcases:

\item $\alpha=\beta+1$, $\beta$ successor. We assume, inductively, 
 that $\Gamma_{\mu}$ is defined for all $\mu\in {}^{\beta}2$. Let $(a^i: i<i(0))$ be a list of $A_{\beta}$ 
with $\Delta a^i\subseteq \{i: i<2\alpha\}$. This is possible since $\alpha=\beta+1$, and $\Delta x\subseteq \beta$ for all $x\in A_{\beta}$.
For each $\mu\in {}^{\beta}2$
we define by induction on $i<i(0)$ $\theta_{\mu}^i$ such that
$\Gamma_{\mu}\cup \{\theta_{\mu}^j: j\leq i\}$ is consistent and $\theta_{\mu}^i\in \Delta$.
Let $i$ be given. Assume that everything is defined for $j<i$. 

If there is 
$\theta\in \Delta$, $\Delta \theta\subseteq 2\alpha$, such that $\Gamma_{\mu}\cup \{\theta_{\mu}^j:j<i\}$ is consistent, and $a^i\vdash \neg \theta$,
choose $\theta_{\mu}^i=\theta$.

If not,  
choose any $\theta_{\mu}^i\in \Delta$ such that $\Delta\theta_{\mu}^i \subseteq 2\alpha$ and
$\Gamma_{\mu}\cup \{\theta_{\mu}^i: j\leq i\}$ is consistent. This is possible since $i<i(0)<\lambda$, $|\Gamma_{\mu}|<\lambda.$
and $|\Delta|=\lambda$.
Now let for 
$\mu\in {}^{\beta}2,$ 
$$\Gamma_{\mu}^1=\Gamma_{\mu}\cup \{\theta_{\mu}^j: j<i(0)\}\cup \{-{\sf c}_ka_{\beta}\lor {\sf s}^k_{2\beta}a_{\beta}: k<2\beta\}.$$ 

For $\eta\in {}^{\beta}2$ and $l\in \{0,1\}$,
$$\Gamma_{{\eta}\ \widehat{}\\<l>}=\Gamma_{\eta}^1\cup \{-{\sf d}_{j, 2\beta+1}: j<2\beta\}.$$

 For $\beta$ limit, we distinguish betwen two subcases:

\item $\alpha=\beta+1,$ $\beta$ limit, and for no $\mu<\beta$, $\bar{z}_{\mu}=\bar{z}_{\beta}$.
We assume inductively that $\Gamma_{\mu}$ is defined for all $\mu\in {}^{\beta}2$.
For $\mu\in {}^{\beta}2$, let 
$$p_{\mu}=\{{\sf c}_{(\Gamma)}\bigwedge X: X\subseteq_{\omega} \Gamma_{\mu}, \Gamma\cap \bar{z}_{\beta}=\emptyset\}.$$
If there is a finitary type $q\subseteq \Nr_I\A$, where $I=Range \bar{z}_{\beta}$, $|q|<\lambda$, such that  $p_{\mu}\cup q$ is consistent, and for every 
$\theta\in \Delta$ , $|\Delta \theta|=|\bar{z}_{\beta}|$, 
we have $p_{\mu}\cup q\vdash {\sf s}_{z_{\beta}}\theta$ or $p_{\mu}\cup q\vdash \neg {\sf s}_{z_{\beta}}\theta$, 
choose such $q_{\mu}$, and if there is no such $q_{\mu}$,
let $q_{\mu}=\emptyset$. 
For $l=0,1$, let
$$\Gamma_{{\mu}\ \ \widehat{}\ \ <l>}=\Gamma_{\mu}\cup q_{\mu}.$$

\item  $\alpha=\beta+1$, $\beta$ limit and for some $\mu<\beta$ $\bar{z}_{\mu}=\bar{z}_{\beta}$.
Assume that $\Gamma_{\mu}$ is defined for all $\mu\in {}^{\beta}2$. For $\mu\in {}^{\beta}2$, 
define $p_{\mu}$ as in the previous case. Then $|p_{\mu}|<\lambda$. Let
$$S_{\beta}=\{\mu\in {}^{\beta}2: \exists \theta\in \Delta \text { and both } p_{\mu}\cup \{{\sf s}_{\bar{z}_{\beta}}\theta\}$$
$$\text { and }
p_{\mu}\cup \{-{\sf s}_{\bar{z}_{\beta}}\theta \}\text { are consistent }\}.$$ 
By the previous case, as $|p_{\mu}|<\lambda$, for each $q\subseteq \Nr_{I}\B$ with $I=Range(\bar{z}_{\beta})$,
such that $|q|<\lambda$, $\mu\in S_{\beta}$, if
$p_{\mu}\cup q$ is consistent, then for some $\theta\in \Delta$, both 
$p_{\mu}\cup q\cup \{{\sf s}_{\bar{z}_{\beta}}\theta\}$ and $p_{\mu}\cup q\cup \{\neg {\sf s}_{\bar{z}_{\beta}}\theta\}$ are consistent. 
For $a\in \B$ set $a^0=a$ and $a^1=\neg a$. Let $\mu\in S_{\beta}$. Then  we can find
$\theta_{\mu}^{\rho}\in \Delta$ such that for every $v\in {}^{\beta}2$,
$p_{\mu}\cup \{({\sf s}_{\bar{z}_{\beta}}\theta_{\mu}^{v\upharpoonright \mu})^{v[\mu]}:\mu<\beta\}$ is consistent.
We can assume that if $\mu_1, \mu_2\in S_{\beta}$ and for every $\theta\in \Delta$, $p_{\mu_1}\vdash {\sf s}_{\bar{z}_{\beta}}\theta$ 
iff $p_{\mu_2}\vdash {\sf s}_{\bar{z}_{\beta}}\theta$, then for every $v\in {}^{<\beta}2,$
$\theta_{\mu_1}^v=\theta_{\mu_2}^v$.
Now for $\mu\in S_{\beta}$, $l=0,1$, let
$$\Gamma_{\mu\widehat{}\\ <l>}=\Gamma_{\mu}\cup \{({\sf s}_{\bar{z}_{\beta}}\theta_{\mu}^{v\upharpoonright \mu})^{v[\mu]}:\mu<\beta\}.$$
For $\mu\in {}^{\beta}2-S_{\beta},$ 
let $\Gamma_{\mu\widehat{}\\<l>}=\Gamma_u$.
\end{enumarab}
For each $\mu\in {}^{\lambda}2$ we have
$$\Gamma_{\mu}=\bigcup_{\alpha<\lambda}\Gamma_{\mu\upharpoonright \alpha}$$ 
is consistent.

Hence there a model $M_{\mu}$ and an assignment $s_{\mu}=(s_{\mu}^i:i<\lambda)\in {}^{\lambda}M_{\mu}$ that satisfies it.
That is there exists $f_{\mu}:\A\to \wp(^{\lambda}M_{\mu})$ such that $(s_{\mu}^i:i<\lambda)\in \bigcap_{x\in \Gamma_\mu} f_{\mu}(x).$
Define $g_{\mu}(x)=f(x)\cap {}^{\lambda}Range(s_{\mu})$. Then consider the family of models $g_{\mu}:\B\to \wp(^{\lambda}Range(s_{\mu}))$, 
with $\mu\in {}^{\lambda}2.$

Now suppose $\bar{z}=(i(0),\ldots i(k))$, $\mu_1\neq \mu_2\in {}^{\lambda}2$, and let $s_l=(s_{\mu_l}^{i(0)},\ldots s_{\mu_l}^{i(k)})$.
Let $I=\{i(0),\ldots, i(k)\}$. Suppose that $I\subseteq n$. Assume that $type_{\Delta,I}(s_1)=type_{\Delta,I}(s_2)$.
Suppose for contradiction that there is no $q\subseteq type_{\Delta,I}(s_1)$ with $q<\lambda$ and $q\vdash type_{\Delta,I}(s_l)$.
Choose $\alpha_0$ such that $\mu_1\upharpoonright \alpha_0\neq \mu_2\upharpoonright \alpha_0$ and for
some $i<\alpha_0,$ $\bar{z}_i=\bar{z}$. Then if $\alpha_0<\beta<\lambda$ and $\beta$ is a limit, and $\bar{z}_{\beta}=\bar{z}$,
then $\bar{z_i}=\bar{z_{\beta}}$ and so $\mu_1\upharpoonright \beta$
and $\mu_2\upharpoonright \beta\in S_{\beta}$. This is so, because there is no $q$ such that $q\vdash type_{\Delta}(s_l)$, so
 $\exists \theta\in \Delta \text { and both } p_{\mu_l\upharpoonright \beta}\cup \{{\sf s}_{\bar{z}_{\beta}}\theta\}
\text { and }
p_{\mu_l\upharpoonright \beta}\cup \{-{\sf s}_{\bar{z}_{\beta}}\theta \}\text { are consistent }.$
Now there is a club $W\subseteq \lambda$ such that for $\beta\in W$, $l=1,2$, 
$\Gamma_{\mu_{l}\upharpoonright\beta}\subseteq A_{\beta}$. Furthermore, we can assume that for $a\in A_{\beta}$
either $\Gamma_{\mu_{l}\upharpoonright\beta}\vdash a$ or $\Gamma_{\mu_{l}\upharpoonright\beta}\vdash \neg a$.
We can also assume that for each $\beta\in W$ , $\beta$ is a limit, $\beta>\alpha_0$.
As $\{\beta: \bar{z}_{\beta}=\bar{z}\}$ is a stationary subset of $\lambda$, it intersects $W$, thus there is a $\beta\in W$ such that
$\bar{z}_{\beta}=\bar{z}$. Now for every $\theta\in \Delta$, $p_{\mu_{1}\upharpoonright\beta}
\vdash {\sf s}_{\bar{z}_{\beta}}\theta$ 
iff $p_{\mu_{2}\upharpoonright \beta}\vdash {\sf s}_{\bar{z}_{\beta}}\theta$.
For if not, then by symmetry we can assume that $p_{\mu_{1}\upharpoonright\beta}\vdash {\sf s}_{\bar{z}_{\beta}}\theta$ 
but not  $p_{\mu_{2}\upharpoonright \beta}\vdash {\sf s}_{\bar{z}_{\beta}}\theta$, then for some $\psi\in p_{\mu_{1}\upharpoonright\beta}$, 
$\psi\vdash {\sf s}_{\bar{z}_{\beta}}\theta,$
$p_{\mu_{2}\upharpoonright \beta}\cup \{\neg \psi\}$ is consistent. For some $i$, $\alpha_0<i<\beta$, $\psi\in A_i$, so by subcase (ii)
for some $\theta_1\in \Delta$, $\psi\vdash \theta_1$, $\neg \theta_1\in p_{\mu_{1}\upharpoonright i+1}$ hence 
$\neg \theta\in p_{\mu_{2}\upharpoonright\beta}$.
But $\psi\in p_{\mu_{1}\upharpoonright b}$, we have $\theta\in p_{\mu_{1}\upharpoonright \beta}$ 
which is a contradiction to $type_{\Delta}(s_1)=type_{\Delta}(s_2)$.
We have proved that for every $\theta\in \Delta$, $p_{\mu_{1}\upharpoonright\beta}
\vdash {\sf s}_{\bar{z}_{\beta}}\theta$ 
iff $p_{\mu_{2}\upharpoonright \beta}\vdash {\sf s}_{\bar{z}_{\beta}}\theta$. 
So $\theta_{\mu_1}^v=\theta_{\mu_2}^v$ for every $v\in {}^{<\beta}2$, and ${\sf s}_{\bar{z}_{\beta}}[\theta_{\mu_l}^{\mu_{l}\upharpoonright v}]^{\mu_l(v)}$
$\in \Gamma_{{\mu_l}\upharpoonright \beta+1}\subseteq \Gamma_{\mu_l}$ where $v=min\{v: \mu_1(v)\neq \mu_2(v)\}$, which is a contradiction.
We have proved

(**) if $type_{\Delta,I}(s_1)=type_{\Delta,I}(s_2)$, then 
there is  $q\subseteq type_{\Delta,I}(s_1)$ with $|q|<\lambda$ and $q\vdash type_{\Delta,I}(s_l)$

Now for each $\mu\in {}^{\lambda}2$, let $g(\mu)$ be the set $v\in {}^{\lambda}2$ such that for some $\bar{a}\in M_v$
$\bar{b}\in M_{\mu}$, $type_{\Delta}(\bar{a})=type_{\Delta}(\bar{b})$ but for no $q\subseteq type(\bar{a})$, $|q|<\lambda$ does
$q\vdash type_{\Delta}(\bar{a})$. Then by (**) we have $|g(\mu)|\leq \lambda$. Since $2^{\lambda}>\lambda^+$, 
there is a $U\subseteq {}^{\lambda}2$, $|U|=2^{\lambda}$ 
such that $\mu\in U$ implies $\mu\notin g(v)$. Then $\{M_{\alpha}: \alpha\in U\}$ is as required.

{\bf Case 3}

Assume that $\lambda$ is uncountable and $\lambda^+=\chi$. We define the required representations $(f_i,M_i)$ $i<\chi$ by induction on $i$.
Assume we have defined $(f_j,M_j)$ for all $j<i$.
Let $$\Psi_i=\{type(\bar{a}): \bar{a}\in M_j: j<i\}.$$
We want a representation $(f_i,M_i)$ with $|M_i|=\lambda$ and such that
if $type(\bar{a})\in \Psi_i$ $ \bar{a}\in M_i$, then for some $q\subseteq type(\bar{a})$, $|q|<\lambda$ and $q\vdash type(\bar{a}).$
Assume that no such representation exists. Then for every representation $(f,M)$ of $\A$, $|M|=\lambda$ 
there is a type $p\in \Psi_i$ that is realized in $M$, but for which there is no such $q.$
Let $\{p_j:j<\lambda\}$ be the set of all such types. Let $\{y_k:k<\lambda\}$ be a list of all finite sequences from $\lambda$, 
each sequence appearing $\lambda$ times. Let $(a_i: i<\lambda)$ be a list of
$\B$ such that $y_i, \Delta a_i\subseteq i$ for all $i<\lambda$. 
We define by induction on $i<\lambda$ a consistent set of formulas $\Gamma_i$, such that
for all $x\in \Gamma_i$, we have $\Delta x\subseteq 2i$, $|\Gamma_i|<|i|^+ +\omega$. Let $\Gamma_0=\emptyset$ and 
at limits, $\Gamma_{\delta}=\bigcup_{i<\delta}\Gamma_i$.
Assume $\Gamma_i$ is defined.
Let $$\Gamma_i^1=\Gamma_i\cup \{-{\sf c}_ka^i\lor {\sf s}^k_{2i}a_i: k< 2i\}\cup \{\neg {\sf d}_{2i+1, j}: j\leq 2i\}.$$
Let $$\Gamma_i^2=\{\bigwedge X: X\subseteq_{\omega} \Gamma_1\}.$$
Let $$r_i=\{{\sf c}_{(\Gamma)}a: \Delta a=y_i\cup \Gamma: a\in \Gamma_i^2\}$$
Then $|r_i|<\lambda$, is an  type in $y_i$. Hence when $|\Delta y_i|=m_j$ $j<\alpha$, we have, by assumption, not $r_i\vdash p_j$.
Suppose $y_i$ is the $j$th appearance of $y_i$. If $j\geq \alpha$, or $|\Delta y_i|\neq m_j$, let
$\Gamma_{i+1}=\Gamma_i^2$, otherwise for some $\theta_i\in p_j$, $r_i\cup \{\neg \theta_i\}$ is consistent, so let
$\Gamma_{i+1}=\Gamma_i^2\cup \{\neg\theta_i\}.$
Let $\Gamma_{\lambda}=\bigcup_{i<\lambda} \Gamma_i$. Then $\Gamma_{\lambda}$ is consistent. 
Let $(f,M)$ be a representation of $\B$ such that
$\bigcap_{x\in \Gamma_{\lambda}} f(x)$ is non-empty. Let $s$ be an element of this intersection. 
Define $g(x)=f(x)\cap {}^{\lambda}Range(s)$. Note that $|Range s|=\lambda$. Then $g:\B\to \wp(^{\lambda}Range(s))$, 
is a representation that  omits all the $p_i$'s which is a contradiction.

\subsection{Quasipolyadic equality algebras as opposed to cylindric algebras}

Quoting Henkin Monk and Tarski in \cite{HMT2} p. 266-267:
``Quasi-polyadic algebras: These are like polyadic algebras, except that ${\sf s}_{\tau}$ is allowed only for finite transformations, 
and ${\sf c}_{(\Gamma)}$ only for finite
$\Gamma$. Their theory has not been much developed, but they form an interesting stage between cylindric and polyadic algebras"

We have generalized a lot of deep results proven originally for $\CA$'s to $\QPEA$'s.
That might tempt us to jump to the conclusion that these two classes are very close.
Here we show that for $\alpha$ infinite $\RQEA_{\alpha}$ is essentially different that $\RCA_{\alpha}$.
It is known that the class $\RCA_\alpha$ of representable cylindric algebras for $\alpha>2$ is not 
axiomatizable by a set of universal formulas containing 
finitely many variables \cite{An97}, 
same for $\RQEA_\alpha$ \cite{ST}. (A proof of the latter result for the infinite dimensional case is only sketched in \cite{ST}, 
and it seems to us that there are some serious gaps in this sketch).
A striking result of Andreka \cite{An97} is that for finite $\alpha>2$ the class
$\RQEA_\alpha$ is not finitely axiomatizable over $\RCA_\alpha.$ (To the best of our knowledge this result does not appear in print)
This already proves that for finite dimensions, the operations of substitutions give a lot; they cannot be captured
in a ``finitary" way. The analogous result for infinite ordinals is unknown. 
In this paper, we address the infinite dimensional case.
We do not recover Andreka's result in its strongest form, but we prove a necessary condition for the class $\RQEA_{\omega}$ 
to be non-finitely axiomatizable over $\RCA_{\omega}$.
We will show that there is an $\A\in \QPEA_{\omega}$ such that its cylindric reduct $\Rd_{ca}\A$ is representable, while $\A$ itself is not representable.
This means that the finitely many polyadic axiom schemas do not define $\RQPEA_{\omega}$ over $\RCA_\omega$. 
(In principal, there could be another finite schema that defines the quasi-polyadic operations).
This result is joint with Andr\'eka and N\'emeti. Indeed our construction is based on 
an unpublished construction of Andr\'eka and N\'emeti \cite{AN} proving the same result for finite $\alpha>3$.
(Some parts are identical to parts in \cite{AN}, but we include all the details. One reason is for the conveniance of the reader.
Second reason is that \cite{AN} is not published. )
This latter result in \cite{AN} is surpassed by Andr\'eka's result
mentioned above. In our treatment of cylindric algebras and quasi-polyadic equality algebras we follow 
\cite{HMT1}, \cite{HMT2}.

\begin{theorem} \label{q}There exists a $\A\in \QPEA_{\omega}$ such that $\Rd_{ca}\A\in \RCA_{\omega}$, but $\A$ is not representable
\end{theorem}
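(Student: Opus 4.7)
The proof adapts the Andr\'eka--N\'emeti construction for finite $\alpha>3$ to the infinite-dimensional case $\alpha=\omega$. The plan is to start from a concrete polyadic set algebra with unit a weak space $V=\,^\omega U^{(q)}$, generated by a few simple elements including a ``large'' rectangle $R$, and then to split $R$ into finitely many pairwise disjoint parts $R_0,\ldots,R_m$ in a way that is invisible to the cylindric operations but visible to the substitutions.

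First I would fix pairwise disjoint infinite sets $U_i$ ($i<\omega$) with $|U_0|=m$ for a suitably chosen finite $m$, let $U=\bigcup_{i<\omega} U_i$, pick $q\in \prod_{i<\omega} U_i$, and set $V=\{s\in {}^\omega U : \{i : s_i\neq q_i\} \text{ is finite}\}$. I would then form the concrete $\QPEA_\omega$ $\A'$ with unit $V$ generated by $R=\{s\in V : s_i\in U_i \text{ for all } i\}$ together with a single element $F$ encoding a cyclic permutation of $U_0$ whose orbits all have size $K$, exactly mirroring the set--up at the beginning of the proof of Theorem \ref{c}. Second, I would split $R$ into $m+1$ pairwise disjoint ``abstract'' atoms $R_0,\ldots, R_m$, defining the Boolean part of $\A$ accordingly and stipulating that
\[
{\sf c}_i{\sf s}_\tau R_j = {\sf c}_i{\sf s}_\tau R
\]
for every $i<\omega$, every $j\leq m$, and every finite permutation $\tau$, while the operations ${\sf p}_{ij}$ act on the $R_j$ exactly as they would on $R$. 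The first stipulation guarantees, by the argument of Andr\'eka's splitting lemma (\cite{An97}, Lemma 2), that $\Rd_{ca}\A$ embeds into a genuine cylindric set algebra: one represents the original generators in the space $V$ and then distributes each $R_j$ among the ``non-principal'' part of $R$ arbitrarily, in such a way that the cylindrifications collapse the distribution back to ${\sf c}_i R$; hence $\Rd_{ca}\A\in \RCA_\omega$. The second stipulation, on the other hand, forces any polyadic representation of $\A$ to preserve a cardinality inequality $|U_0|\geq m+1$ on the base, which is impossible since $|U_0|=m$; so $\A$ itself cannot be representable as a $\QPEA_\omega$.

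The main obstacle is the need to handle infinitely many cylindrifications and, simultaneously, infinitely many substitutions coming from finite transpositions $[i,j]$. One must verify that the splitting is coherent with every finite permutation $\tau$, not merely the generators $[i,j]$; this is forced by the polyadic axioms ${\sf p}_{ij}{\sf p}_{ik}={\sf p}_{jk}{\sf p}_{ij}$ together with ${\sf p}_{ii}=\mathrm{Id}$, which show that it suffices to impose the equation ${\sf c}_i{\sf s}_\tau R_j={\sf c}_i{\sf s}_\tau R$ on a set of $\tau$'s that generates the finite symmetric groups. A further delicate point is the construction of the cylindric representation of $\Rd_{ca}\A$: unlike the finite-dimensional case, one cannot simply enlarge $U_0$ by a single point, since the resulting weak space would change; instead one argues as in the representation claims in \cite{An97} (pp.\ 176--181), by building transfer maps $g:{}^\omega W\to {}^\omega U$ that respect the cylindric operations and the diagonals, but not the substitutions. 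Exhibiting one such map for $\Rd_{ca}\A$, and then checking that no analogous map can respect substitutions as well, simultaneously witnesses $\Rd_{ca}\A\in \RCA_\omega$ and $\A\notin \RQPEA_\omega$, completing the proof.
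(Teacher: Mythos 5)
Your strategy does not work, because Andr\'eka-style splitting creates an obstruction that is already visible to the cylindric operations, whereas Theorem \ref{q} needs an obstruction visible \emph{only} to the substitutions. In the splitting construction the auxiliary generator $F$ (the $K$-cycle on $U_0$) pins down the cardinality $|U_0|=m$ in any representation of the cylindric reduct, and the equations ${\sf c}_i{\sf s}_\tau R_j={\sf c}_i{\sf s}_\tau R$ for the $m+1$ split atoms then force $|U_0|\geq m+1$ using cylindrifications and diagonals alone; that is exactly how the paper (following \cite{An97}) concludes that the cylindric reduct of a split algebra is \emph{not} representable in the proof of Theorem \ref{c}. So under your stipulations either $\Rd_{ca}\A\notin\RCA_\omega$, or --- if the substitutions really do ``act on the $R_j$ exactly as they would on $R$'' --- the quasipolyadic structure adds no constraint beyond the cylindric one and representability of $\A$ is equivalent to that of $\Rd_{ca}\A$; in either case the theorem is not obtained. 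The representation you sketch for $\Rd_{ca}\A$ (distributing the $R_j$ over $R$ so that cylindrifications collapse back to ${\sf c}_iR$) is precisely Andr\'eka's Lemma 2, which requires every block of the base to have at least $m+1$ elements and therefore cannot coexist with the cardinality-pinning element $F$.

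The paper's construction is of a different nature. It starts from a set $A$ of unions of atoms inside the full cylindric weak set algebra on a weak space $V={}^{\omega}\N^{(p)}$, where the distinguished atom $a$ is built from a parity structure: the sets $X,Y$ of sequences deviating from $p$ in evenly, respectively oddly, many places, matched with the two cyclic relations on the $3$-element set $Z_0$. Since $A$ is literally a subuniverse of the cylindric weak set algebra, $\Rd_{ca}\A\in\RCA_\omega$ by fiat. The quasipolyadic operations are then \emph{redefined} on the orbit $P'=\{{\sf S}_\delta a:\delta\in\pi(\omega)\}$ by ${\sf s}_\sigma({\sf S}_\delta a)={\sf S}_{\sigma\circ\delta\circ[0,1]}a$ whenever $\sigma$ and $\delta$ ``transpose'', and by the genuine ${\sf S}_\sigma$ otherwise. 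The parity design makes the twist invisible to the cylindrifications (${\sf c}_ia={\sf c}_i{\sf S}_{[0,1]}a$) while all quasipolyadic axioms survive, and non-representability follows from a counting argument on the $3$-element set $Z'$ (one needs $|\bar a|=3$ and $|\bar a|\geq 4$ simultaneously), after reducing to weak set algebras via weak subdirect indecomposability. To repair your proposal you would have to make the \emph{substitutions}, not the cylindrifications, permute the split pieces incoherently --- which is essentially the twist the paper performs.
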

Proving the analogous result for polyadic equality algebras is easy since for any $\PEA_{\omega}$ its cylindric reduct is representable 
and there are easy examples of non representable
$\PEA_{\omega}$'s. But for $\QPEA_{\omega}$ the proof is much more intricate.
Our example will be constructed from a {\it weak} set algebra. A cylindric weak set algebra is an algebra whose unit is a weak space,
i.e. a set of the form $^{\alpha}U^{(p)}=\{s\in {}^{\alpha}U: |\{i\in \alpha: s_i\neq p_i\}|<\omega\}$ where $p$ is a fixed sequence in $^{\alpha}U$.
The operations of a weak set algebra with unit $V$ are the boolean operations of union, intersection and complementation
with respect to $V$, and cylindrifications and diagonal elements are defined like in set algebras but relativized to $V$.
We shall need to characterize abstractly (countable) quasipolyadic equality {\it weak} set algebras 
where we require that the algebra is also closed under finite substitutions.
This was done for cylindric algebras by Andreka, Nemeti and Thompson \cite{ANth}. It turns out, that in the countable case, 
weak set algebras coincide with the the class of weakly subdirect
indecomposable algebras for both $\CA$'s and $\QPEA$'s. 
This follows from the facts that subdirect indecomposability and its weak version are defined for general algebras via congruences, congruences correspond to ideals,
and that for $\A\in \QPEA_{\alpha}$, $I$ is a quasi-polyadic ideal of $\A$ if and only if it is a cylindric ideal of $\Rd_{ca}\A$. 
This ultimately makes the abstract characterization of
weak set algebras for countable quasi-polyadic algebras coincide with that of (countable) cylindric algebras.
Now let ${\bf WQEAs}_{\alpha}$ denote the class of quasipolyadic equality weak set algebras.
Then we have $\RQEA_{\alpha}={\bf SPWQEAs}_{\alpha}$. Here $\bf SP$ denotes the operation of forming subdirect products.
This is proved exactly like the cylindric case.
Next, we give the definition of subdirect indecomposability and its weak version relative to congruences in general algebras.

\begin{definition} 
\begin{enumroman}
\item An algebra $\A$ is weakly subdirectly indecomposable if $|A|\geq 2$ and if the formulas $R,S\in Co\A$
and $R\cap S=Id\upharpoonright A$ always imply that  $R=Id\upharpoonright A$ or $S=Id\upharpoonright A$.
\item An algebra $\A$ is subdirectly indecomposable if $|A|\geq 2$ and if for every system $R$ of relations satisfying 
$R\in {}^ICo\A$
and $\bigcap_{i\in I}R_i=Id\upharpoonright A$, there is an $i$ such that $R_i$ coincides with the identity relation.
\end{enumroman}
\end{definition}
We shall need to specify ideals in quasipolyadic equality algebras.
Ideals are congruence classes containing the least element. From now on $\alpha$ will denote an infinite ordinal and $FT_{\alpha}$ 
denotes the set of finite transformations on $\alpha$.
$x\subseteq_{\omega}y$ denotes that $x$ is a finite subset of $y$ and $Sb_{\omega}\alpha$ 
denotes the set of all $x$ such that $x\subseteq_{\omega}\alpha$. 

\begin{definition}Let $\A\in \QPEA_{\alpha}$. A subset $I$ of $\A$
in an ideal if the following conditions are satisfied:

\begin{enumroman}

\item $0\in I,$

\item If $x,y\in I$, then $x+y\in I,$

\item If $x\in I$ and $y\leq x$ then $y\in I,$

\item For all $\Gamma\subseteq_{\omega} \alpha$ and $\tau\in FT_{\alpha}$ if $x\in I$
then ${\sf c}_{(\Gamma)}x$ and ${\sf s}_{\tau}x\in I$.

\end{enumroman}
\end{definition}
If $X\subseteq \A\in \QPEA_{\alpha}$, then $\Ig^{\A}X$ is the ideal generated by $X$.

\begin{lemma} Let $\A\in \QPEA_{\alpha}$ and $X\in A$.
Then 
$\Ig^{\A}X=\{y\in A: y\leq {\sf c}_{(\Gamma)}(x_0+\ldots x_{k-1})\}: 
\text{ for some  } x\in {}^kX, \text { and }\Gamma\subseteq_{\omega} \alpha\}.$
\end{lemma}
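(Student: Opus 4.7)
\begin{demo}{Proof plan}
Let $I$ denote the set on the right-hand side, i.e.\ the collection of all $y\in A$ with $y\le {\sf c}_{(\Gamma)}(x_0+\cdots+x_{k-1})$ for some $k<\omega$, some $x_0,\ldots,x_{k-1}\in X$ and some $\Gamma\in Sb_\omega\alpha$. The proof runs in the standard two directions: $I\subseteq \Ig^\A X$ is immediate once one observes that each element on the right has the form $y\le {\sf c}_{(\Gamma)}(x_0+\cdots+x_{k-1})$ where $x_0+\cdots+x_{k-1}$ is obtained from the $x_j\in X\subseteq \Ig^\A X$ by a finite join, then ${\sf c}_{(\Gamma)}$ is applied, and then we pass down; each of these moves keeps us inside the ideal $\Ig^\A X$. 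The substance is the reverse inclusion, for which I will show that $I$ is itself an ideal containing $X$, and hence contains the least such ideal $\Ig^\A X$.

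First I would dispatch the straightforward closure conditions. That $X\subseteq I$ follows by taking $k=1$, $\Gamma=\emptyset$ and using ${\sf c}_{(\emptyset)}x=x$; that $0\in I$ is trivial; closure under $\le$ is built into the definition of $I$. For closure under finite joins, suppose $y\le {\sf c}_{(\Gamma)}(x_0+\cdots+x_{k-1})$ and $y'\le {\sf c}_{(\Gamma')}(x_0'+\cdots+x_{l-1}')$ with all $x_j,x_j'\in X$. Using monotonicity and the fact that ${\sf c}_{(\Delta)}$ is additive and $\Gamma,\Gamma'\subseteq \Gamma\cup \Gamma'$, we get
\[
 y+y'\le {\sf c}_{(\Gamma\cup\Gamma')}(x_0+\cdots+x_{k-1}+x_0'+\cdots+x_{l-1}'),
\]
so $y+y'\in I$. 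For closure under ${\sf c}_{(\Delta)}$ with $\Delta\in Sb_\omega\alpha$, one uses ${\sf c}_{(\Delta)}{\sf c}_{(\Gamma)}={\sf c}_{(\Delta\cup\Gamma)}$ together with monotonicity.

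The main obstacle is closure under substitution, i.e.\ showing ${\sf s}_\tau y\in I$ whenever $y\in I$ and $\tau\in FT_\alpha$. Since ${\sf s}_\tau$ is a Boolean endomorphism preserving order, it suffices to bound ${\sf s}_\tau {\sf c}_{(\Gamma)}(x_0+\cdots+x_{k-1})$ from above by an element of the prescribed form. Because ${\sf s}_\tau$ distributes over joins, the problem reduces to showing, for each $x\in X$, that
\[
 {\sf s}_\tau {\sf c}_{(\Gamma)} x \le {\sf c}_{(\Gamma^*)} x
\]
for some finite $\Gamma^*\subseteq \alpha$. Here is the strategy: decompose $\tau$ as a finite composition of elementary substitutions ${\sf s}_j^i$ and transpositions ${\sf p}_{ij}$, and argue by induction on the length of the decomposition, pushing the substitution past the generalized cylindrification using the QPEA axioms relating substitution and cylindrification (${\sf s}_j^i{\sf c}_k={\sf c}_k{\sf s}_j^i$ when $k\notin\{i,j\}$, ${\sf s}_j^i{\sf c}_i x={\sf c}_i x$, ${\sf p}_{ij}{\sf c}_k={\sf c}_k{\sf p}_{ij}$ when $k\notin\{i,j\}$, and ${\sf p}_{ij}{\sf c}_i={\sf c}_j{\sf p}_{ij}$). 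Each time a substitution is commuted past (or absorbed into) a cylindrification ${\sf c}_k$ with $k\in\Gamma$, the index set $\Gamma$ may change, but it will always remain a finite subset of $\alpha$ because $\tau$ has finite support and $\Gamma$ is finite. At the base of the decomposition the substitution reaches $x\in X$ and is absorbed by a further cylindrification, producing the required bound ${\sf c}_{(\Gamma^*)} x$ with $\Gamma^*$ finite. The key technical lemma to extract and prove first is exactly this commutation relation between ${\sf s}_\tau$ and ${\sf c}_{(\Gamma)}$ for $\tau\in FT_\alpha$ and $\Gamma\in Sb_\omega\alpha$; once it is in hand the closure of $I$ under ${\sf s}_\tau$ is immediate, $I$ is an ideal containing $X$, and the inclusion $\Ig^\A X\subseteq I$ follows by minimality.
\end{demo}
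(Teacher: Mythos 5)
Your proposal is correct and follows essentially the same route as the paper: both directions are handled by showing the right-hand side is an ideal containing $X$, with closure under joins and cylindrifications verified by the same computations, and closure under substitutions reduced to the inequality ${\sf s}_\tau x\le {\sf c}_{(\Gamma)}x$ for a suitable finite $\Gamma$ depending on $\tau$. The only difference is that the paper simply cites this last inequality while you sketch its derivation by decomposing $\tau$ into elementary substitutions and transpositions, which is a reasonable way to fill in the same step.
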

\begin{demo}{Proof}
Let $H$ denote the set of elements on the right hand side.
It is easy to check $H\subseteq \Ig^{\A}X$.
Conversely, assume that $y\in H,$ $\Gamma\subseteq \omega.$ 
It is clear that ${\sf c}_{(\Gamma)}y\in H$.
$H$ is closed under substitutions, since for any finite transformation $\tau$, any $x\in A$ there exists finite $\Gamma\subseteq \omega$
such that ${\sf s}_{\tau}x\leq {\sf c}_{(\Gamma)}x$.
Now let $z, y\in H$. Assume that $z\leq {\sf c}_{(\Gamma)}(x_0+\ldots x_{k-1})$
and $y\leq {\sf c}_{(\Delta)}(y_0+\ldots y_{l-1}),$
then $$z+y\leq {\sf c}_{(\Gamma\cup \Delta)}(x_0+\ldots x_{k-1}+ y_0\ldots +y_{l-1}).$$
The Lemma is proved. 
\end{demo}
It follows from \cite{HMT2} 2.3.8 that if $\A\in \QPEA_{\alpha}$ and $I$ is a cylindric ideal of $\Rd_{ca}\A$ then $I$ is an ideal of $A$.
Therefore $\A$ is (weakly) subdirectly indecomposable if and only if $\Rd_{ca}\A$ is (weakly) subdirectly indecomposable.
Now we prove the analogue of a result of Thompson for quasi-polyadic equality algebras. 
The proof is the same as that given by Andr\'eka, N\'emeti and Thompson in \cite{ANth} theorem 3,  
but for the sake of completeness (and because the proof is short)
we include the proof adapted to the quasi-polyadic equality (present) case. $\bold IK$ denotes the set of all isomorphic images of algebras
in $K$. We now have:

\begin{lemma}\label{weak} Let $\A\in \RQPEA_{\alpha}$ be countable. Then (i) and (ii) are equivalent
\begin{enumroman}
\item $\A\in \bold I{\bf WQEAs}_{\alpha}$
\item $\A$ is weakly subdirectly indecomposable.
\end{enumroman}
\end{lemma}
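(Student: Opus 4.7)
The plan is to derive both directions by reducing to the analogous theorem of Andr\'eka, N\'emeti and Thompson \cite{ANth} for countable representable cylindric algebras, exploiting the observation already recorded in the excerpt that ideals of $\A\in\QPEA_\alpha$ coincide with cylindric ideals of the reduct $\Rd_{ca}\A$. In particular, $\A$ is weakly subdirectly indecomposable as a quasi-polyadic equality algebra iff $\Rd_{ca}\A$ is so as a cylindric algebra, so it suffices to upgrade the cylindric representation supplied by the ANT theorem to one that respects the transposition operations.

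For the direction $(i)\Rightarrow(ii)$: if $\A\in{\bf WQEAs}_\alpha$ with unit $V={}^\alpha U^{(p)}$, then $\Rd_{ca}\A$ is a cylindric weak set algebra on $V$, which is weakly subdirectly indecomposable. A direct verification runs as follows: given any two non-zero $x_1,x_2\in A$, pick $s_1\in x_1$ and $s_2\in x_2$; since both differ from $p$ in only finitely many coordinates, they differ from each other on a finite set $\Delta\subseteq\alpha$, and hence $s_2\in{\sf c}_{(\Delta)}x_1\cap x_2$, showing that this intersection is a non-zero member of $\Ig(x_1)\cap\Ig(x_2)$. The ideal correspondence then transfers weak subdirect indecomposability back to $\A$.

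For the harder direction $(ii)\Rightarrow(i)$: applying the ANT theorem to $\Rd_{ca}\A$ yields an isomorphism $h\colon\Rd_{ca}\A\to\B$, where $\B$ is a cylindric weak set algebra with unit $V={}^\alpha U^{(p)}$. The construction of $h$ proceeds via a Henkin-type ultrafilter $F$ in a dilation of $\A$ of dimension $\alpha+\omega$ (furnished by the neat embedding theorem for $\QPEA$, which adapts routinely from its cylindric counterpart), and takes the form $h(x)=\{\tau\in V:{\sf s}_{\bar\tau}x\in F\}$, where $\bar\tau$ denotes the extension of the finite transformation $\tau$ by the identity. I would then equip $\wp(V)$ with the natural transposition operations ${\sf P}_{ij}X=\{s\in V:s\circ[i,j]\in X\}$, turning it into a full weak $\QPEA$ set algebra, and verify the compatibility identity $h({\sf p}_{ij}x)={\sf P}_{ij}h(x)$. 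This amounts to proving the equality ${\sf s}_{\bar\tau}\,{\sf p}_{ij}={\sf s}_{\bar\tau\circ[i,j]}$ inside the dilation, which follows from the general $\QPEA$ composition law ${\sf s}_\sigma{\sf s}_\tau={\sf s}_{\sigma\circ\tau}$ for finite transformations, together with the elementary observation that $\overline{\tau\circ[i,j]}=\bar\tau\circ[i,j]$.

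The principal obstacle is the technical bookkeeping required to carry the Henkin construction out at the quasi-polyadic-equality level rather than the cylindric one: one must verify that the dilation of $\A$ can indeed be chosen inside $\QPEA_{\alpha+\omega}$ (so that the neat embedding provides enough fresh indices to serve simultaneously as witnesses for cylindrifications and as targets for transpositions), and that the Henkin ultrafilter $F$ can be constructed to respect simultaneously the diagonals, the cylindrifications, and the full family of finite substitutions. Once these verifications are in place, the resulting map $h$ is simultaneously a cylindric and a polyadic-equality isomorphism onto a weak $\QPEA$ set algebra, completing the proof.
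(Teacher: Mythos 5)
Your direction $(i)\Rightarrow(ii)$ is fine (and is in fact more than the paper supplies, since the paper proves only $(ii)\Rightarrow(i)$): two sequences in a weak space differ on a finite set $\Delta$, so any two non-zero $x_1,x_2$ satisfy $x_2\cdot {\sf c}_{(\Delta)}x_1\neq 0$, whence any two non-zero ideals meet, and the ideal correspondence transfers this back to $\A$. The identity ${\sf s}_{\bar\tau}{\sf p}_{ij}x={\sf s}_{\overline{\tau\circ[i,j]}}x$ that you isolate for the hard direction is also the right compatibility condition to check.

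The gap is in $(ii)\Rightarrow(i)$: you never say where weak subdirect indecomposability and countability actually do their work, and the two constructions you invoke do not fit together. Applied as a black box to $\Rd_{ca}\A$, the Andr\'eka--N\'emeti--Thompson theorem yields \emph{some} cylindric isomorphism onto a weak set algebra, but its proof runs through the canonical embedding algebra and a quotient, not through a Henkin ultrafilter, so you are not entitled to assume that $h$ has the form $h(x)=\{\tau\in V:{\sf s}_{\bar\tau}x\in F\}$. Nor can an arbitrary cylindric representation of the reduct be ``upgraded'': Theorem \ref{q} in this paper exhibits a countable weakly subdirectly indecomposable $\QPEA_{\omega}$ whose cylindric reduct is a weak set algebra while the algebra itself is not representable, so any upgrade must use $\A\in\RQPEA_{\alpha}$ essentially, as your Henkin route does. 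But if you rebuild $h$ from a Henkin ultrafilter $F$ in a $\QPEA_{\alpha+\omega}$ dilation, the standard construction only guarantees $h(a)\neq 0$ for the single element $a$ used to seed $F$; it produces a homomorphism, not an embedding. To get injectivity you must choose $F$ so that every non-zero $x\in A$ is realized, i.e.\ so that ${\sf s}_{\bar\tau}x\in F$ for some $\tau$. This is precisely the step the paper carries out: from the algebraic form of weak subdirect indecomposability, $(\forall x,y\neq 0)(\exists\Delta\subseteq_{\omega}\alpha)\; x\cdot{\sf c}_{(\Delta)}y\neq 0$, and an enumeration of the countable set $A\sim\{0\}$, one builds $\Gamma:A\sim\{0\}\to Sb_{\omega}\alpha$ with $\prod\{{\sf c}_{(\Gamma a)}a:a\in A_0\}\neq 0$ for every finite $A_0$, and then takes an ultrafilter containing all the ${\sf c}_{(\Gamma a)}a$. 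Without this your $h$ has no reason to be one-to-one. (The paper then finishes by a different route again: it embeds $\A$ into a quotient of its canonical embedding algebra $\Cm\A$ to obtain a subdirectly indecomposable representable algebra and quotes the $\QPEA$ analogue of \cite{HMT2} 3.1.86. Your Henkin plan can be made to work, but only once the missing $\Gamma$-step is supplied.)
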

\begin{demo}{Proof} We shall only need that $(ii)\implies (i)$. So assume that that $\A$ is weakly subdirectly indecomposable quasipolyadic algebra
of dimension $\alpha$. Then by \cite{HMT2} 2.4.46 which works for quasipolyadic algebras, we have that
$$(*) \ \ \ \ (\forall x,y\in A\sim \{0\})(\exists \Delta \subseteq_{\omega}\alpha)x\cdot {\sf c}_{(\Delta)}y\neq 0.$$
Let $a:\omega\to A\sim \{0\}$ be any enumeration of $A\sim \{0\}$. We define $\Gamma:(A\sim \{0\})\to Sb_{\omega}\alpha$ step by step, so that
$$(**)\ \ \ \ \ \ b_n=\prod\{{\sf c}_{(\Gamma a_m)}a_m: m<n\}\neq 0\text { for all } n\in \omega, n\neq 0.$$
Let $\Gamma(a_0)=0$. Let $n\in \omega$, $n>0$, and assume that $\Gamma(a_m)$ has been defined for all $m<n$ such that
$b_n\neq 0$ holds. By $(*)$, there is a $\Delta\subseteq_{\omega} \alpha$ such that $b_n\cdot {\sf c}_{(\Delta)}a_n\neq 0$. Set $\Gamma(a_n)=\Delta$.
Then clearly $b_{n+1}=b_n\cdot {\sf c}_{(\Delta)}a_n\neq 0$. Since
$A\sim \{0\}=\{a_n: n\in \omega\}$, the function $\Gamma$ is defined.
By $(**)$ $\Gamma :\A \to Sb_{\omega}\alpha$ satisfies
$$(***) \ \ \ \ (\forall A_0\subseteq_{\omega} (A\sim \{0\}))\prod\{{\sf c}_{(\Gamma a)}a: a\in A_0\}\neq 0.$$
Then there is a maximal proper ideal of $Bl\A$ such that $m\supseteq \{-{\sf c}_{(\Gamma a)}a: a\in A\}$.
Let $\Cm\A$ be the canonical embedding algebra of $\A$. $\Cm\A$ is defined like the $CA$ case \cite{HMT1} definition 2.7.3. In particular, it has domain 
$\wp(M)$, which we denote by $Em\A$,
where
$M$ is the set of maximal Boolean ideals of $\A$. Substitutions are defined on $Em(A)$ as follows: 
$${\sf s}_{\tau}X=\bigcup_{I\in X}\{J\in M: J\subseteq {\sf s}_{\tau}I\}.$$
 Let $z=\{m\}$. Then $z\in Em\A$ and $0\neq z\leq em({\sf c}_{(\Gamma a)}a)$ for all $a\in A$. Here $em$ is the map that embeds
$\A$ into $\Cm\A$; $em(x)=\{I\in M:x \notin M\}$.
Let
$I=\{y\in Em\A: (\forall \Gamma\subseteq_{\omega}\alpha){\sf c}_{(\Gamma)}y\cdot z=0\}$. Then $I$ is an ideal of $\Cm\A$ and $I\cap em(A)=\{0\}$.
Let $\B=\Cm\A/I$. Then $\B\in \RQPEA_{\alpha}$ and $\A$ is embeddable in $\B$. Here we are using that if $\A\in \RQPEA_{\alpha}$, then so is $\Cm\A$.
The proof of this is identical to the $\CA$ case. Also $\B$ is subdirectly indecomposable by \cite{HMT1} 2.4.44. 
By \cite{HMT2} 3.1.86 $\A$ is isomorphic to a weak set algebra.
Though 2.4.44 in \cite{HMT1} and 3.1.86 in \cite{HMT2} are formulated for $\CA$'s they are true for $\QPEA$'s.
\end{demo}

The following corollary which we shall need  is now immediate

\begin{corollary} Let $\A\in \RQEA_{\alpha}$ be countable such that $\Rd_{ca}\A$ is weakly subdirectly indecomposable
(equivalently isomorphic to a cylindric weak set algebra).
Then $\A\in \bold I{\bf WQEAs}_{\alpha}$.
\end{corollary}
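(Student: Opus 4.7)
The plan is to reduce the corollary directly to Lemma \ref{weak} by transferring weak subdirect indecomposability from the cylindric reduct $\Rd_{ca}\A$ to $\A$ itself. Since $\A \in \RQEA_\alpha$, certainly $\A \in \RQPEA_\alpha$ (as finite dimensional substitutions are term-definable from transpositions), so Lemma \ref{weak} applies once the hypothesis on indecomposability is in place for $\A$.

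First I would invoke the passage immediately preceding Lemma \ref{weak}: the description of $\Ig^\A X$ together with \cite{HMT2} 2.3.8 shows that every cylindric ideal of $\Rd_{ca}\A$ is already closed under the substitutions ${\sf s}_\tau$ (because for each finite $\tau$ there is a finite $\Gamma\subseteq \alpha$ with ${\sf s}_\tau x \le {\sf c}_{(\Gamma)}x$, and cylindric ideals are closed under ${\sf c}_{(\Gamma)}$). Hence the lattice of $\QPEA$-ideals of $\A$ coincides with the lattice of $\CA$-ideals of $\Rd_{ca}\A$, and, via the standard correspondence between ideals and congruences in algebras with a Boolean reduct, the congruence lattices of $\A$ and $\Rd_{ca}\A$ coincide as well. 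In particular, $\A$ is weakly subdirectly indecomposable as a $\QPEA_\alpha$ if and only if $\Rd_{ca}\A$ is weakly subdirectly indecomposable as a $\CA_\alpha$.

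Next, by the hypothesis of the corollary, $\Rd_{ca}\A$ is weakly subdirectly indecomposable, so the previous step gives that $\A$ itself is weakly subdirectly indecomposable. Now $\A$ is countable and lies in $\RQPEA_\alpha$, so Lemma \ref{weak}, applied in the direction $(ii) \Rightarrow (i)$, yields $\A \in \bold{I}{\bf WQEAs}_\alpha$, which is exactly the conclusion.

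The only point requiring any care is the first step, namely that cylindric ideals of $\Rd_{ca}\A$ are automatically closed under the ${\sf s}_\tau$ for $\tau \in FT_\alpha$; but this is explicitly asserted in the text just before Lemma \ref{weak} and is the reason the equivalence of weak subdirect indecomposability for $\A$ and for $\Rd_{ca}\A$ is stated there. So there is no real obstacle: the corollary is essentially the observation that the hypothesis of Lemma \ref{weak}(ii) can be checked on the cylindric reduct, and this is what makes the bridge to the counterexample construction in Theorem \ref{q}, where one produces a non-representable $\A \in \QPEA_\omega$ whose cylindric reduct is representable by starting from a (quasi-polyadic equality) weak set algebra.
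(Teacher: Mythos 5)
Your proposal is correct and follows essentially the same route as the paper: the paper derives this corollary as "immediate" from Lemma \ref{weak}, using precisely the observation stated just before that lemma (via \cite{HMT2} 2.3.8 and ${\sf s}_\tau x \le {\sf c}_{(\Gamma)}x$) that cylindric ideals of $\Rd_{ca}\A$ are automatically $\QPEA$-ideals, so weak subdirect indecomposability transfers from the reduct to $\A$, after which the countable case of Lemma \ref{weak}, direction $(ii)\Rightarrow(i)$, finishes the argument. You have simply made explicit the steps the paper leaves implicit.
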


\begin{corollary} There exists a countable $\A\in \QEA_{\omega}$ that is weakly subdirectly irreducible but not representable.
\end{corollary}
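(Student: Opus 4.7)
The plan is to apply Birkhoff's subdirect decomposition theorem inside the variety $\QPEA_{\omega}$ to the non-representable algebra produced by Theorem \ref{q}. (I read the ``$\QEA_{\omega}$'' of the statement as $\QPEA_{\omega}$, which is what the preceding machinery supplies.) First I would invoke Theorem \ref{q} to obtain $\A\in \QPEA_{\omega}$ with $\Rd_{ca}\A\in \RCA_{\omega}$ but $\A\notin \RQPEA_{\omega}$. If $\A$ is not already countable, I would fix an element $a\in A$ witnessing non-representability and pass to the $\QPEA_{\omega}$-subalgebra of $\A$ generated by a countable set containing $a$; since $\RQPEA_{\omega}$ is closed under subalgebras, this countable subalgebra is still non-representable, so we may assume that $\A$ itself is countable.

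Next, since $\QPEA_{\omega}$ is equationally defined, Birkhoff's theorem gives an embedding $f:\A\hookrightarrow \prod_{i\in I}\A_{i}$ with each $\A_{i}\in \QPEA_{\omega}$ subdirectly irreducible and each projection $\pi_{i}\circ f$ surjective; the correspondence between $\QPEA_{\omega}$-congruences and quasi-polyadic ideals noted just before Lemma \ref{weak} ensures that the standard Birkhoff argument goes through verbatim in this signature. Each $\A_{i}$ is countable (as a homomorphic image of the countable $\A$) and, being subdirectly irreducible, is \emph{a fortiori} weakly subdirectly irreducible in the sense recalled earlier in the section. I then claim some $\A_{i}=:\B$ is not representable: if every $\A_{i}$ lay in the variety $\RQPEA_{\omega}$, then $\prod_{i\in I}\A_{i}\in \RQPEA_{\omega}$, and hence $\A$, a subalgebra of this product, would itself be representable, contradicting the choice of $\A$.

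The algebra $\B$ produced this way is countable, weakly subdirectly irreducible, and non-representable, which is exactly what is required. The only genuinely delicate step is the reduction to a countable $\A$ in the first paragraph; however, the construction underlying Theorem \ref{q} is built from a weak set algebra with unit a set of sequences of finite support, so it is already countable and the subalgebra trick is merely a safety net rather than a real obstacle. The essential content of the corollary is therefore inherited from Theorem \ref{q}, and the present argument simply packages that content into a subdirectly irreducible summand by a routine application of universal algebra on top of the fact that $\RQPEA_{\omega}$ is a variety.
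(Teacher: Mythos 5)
Your route is genuinely different from the paper's. The paper obtains this corollary directly from the construction behind Theorem \ref{q}: the algebra $\B$ built there has $\Rd_{ca}\B$ sitting inside a full weak set algebra, so it satisfies the condition $(\forall x,y\neq 0)(\exists\Delta\subseteq_{\omega}\omega)\,x\cdot{\sf c}_{(\Delta)}y\neq 0$ of \cite{HMT1} 2.4.46 and is therefore weakly subdirectly indecomposable outright --- no decomposition is needed, and it is precisely this weak subdirect indecomposability of the concrete algebra that Claim 3 then exploits, via Lemma \ref{weak}, to derive non-representability. Your Birkhoff decomposition is a legitimate alternative: it buys generality (any non-representable member of a variety has a non-representable subdirectly irreducible quotient, which is \emph{a fortiori} weakly subdirectly irreducible and countable as a quotient of a countable algebra) at the cost of losing contact with the specific algebra.

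Two steps need repair. First, your justification that the countable subalgebra remains non-representable is backwards: closure of $\RQPEA_{\omega}$ under subalgebras says that subalgebras of \emph{representable} algebras are representable, and gives no information about subalgebras of a non-representable one (which may well all be representable if chosen carelessly). The correct argument is that, $\RQPEA_{\omega}$ being a variety, some equation valid in it fails in $\A$ at a finite tuple; the subalgebra generated by that tuple is countable (the signature is countable) and still violates that equation. Second, the algebra of Theorem \ref{q} is \emph{not} already countable: its universe is $\{\bigcup X: X\subseteq At\}$ for a countably infinite set of atoms $At$, hence has cardinality $2^{\omega}$ --- only the unit $V$ and the atom set are countable. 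So the passage to a countable subalgebra is not a safety net but an essential step of your argument, and it must be carried out with the corrected justification above.
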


\subsection{Proof of theorem \ref{q}}

Let $U=\N$. Let $Z\in {}^{\omega}\wp(\N)$ be defined by $Z_0=Z_1=3=\{0,1,2\}$ and $Z_i=\{2i-1, 2i\}$ for $i>1$.
Let $p:\omega\to \omega$ be defined by $p(i)=2i$. Let $V={}^{\omega}U^{(p)}=\{s\in {}^{\omega}U: |\{i\in \omega: s_i\neq 2i\}|<\omega\}$.
We will work inside the weak set algebra with universe $\wp(V)$ and cylindrifications and diagonal elements for 
$i,j<\omega$ defined for $X\subseteq V$ by:
$${\sf c}_iX=\{s\in V: \exists t\in X,  t(j)=s(j) \ \ \forall j\neq i\}$$
and
$${\sf d}_{ij}=\{s\in V: s_i=s_j\}.$$
Let $${\sf P}Z=\{s\in V: (\forall i \in \omega)s_i\in Z_i\}.$$
Let $$t=\{s\in {}^{\omega\sim 2}U: |\{i\in \omega\sim 2: s_i\neq 2i\}|<\omega,  (\forall i>2)s_i\in Z_i\}.$$
Let
$$X=\{s\in t: |\{i\in \omega\sim 2: s(i)\neq 2i\}| \text { is even }\},$$
$$Y=\{s\in t: |\{i\in \omega\sim 2: s(i)\neq 2i\}| \text { is odd }\},$$
$$R=\{(u,v): u\in 3, v=u+1(mod3)\},$$
$$B=\{(u,v): u\in 3, v=u+2(mod3)\},$$
and 
$$a=\{s\in {\sf P}Z: (s\upharpoonright 2\in R \text { and } s\upharpoonright \omega\sim 2\in X)\text { or }
(s\upharpoonright 2\in B\text { and }s\upharpoonright \omega\sim 2\in Y\}.$$
Let $Eq(\omega)$ be the set of all equivalence relations on $\omega$.
For $E\in Eq(\omega)$, let $e(E)=\{s\in V: ker s=E\}$. Note that $e(E)$ may be empty. Let
$$d={\sf P}Z\cap {\sf d}_{01}.$$
$\pi(\omega)=\{\tau\in FT_{\omega}: \tau \text { is a bijection }\}.$
For $\tau\in FT_{\omega}$ and $X\subseteq V,$ recall that the substitution (unary) operation ${\sf S}_{\tau}$ is defined by 
$${\sf S}_{\tau}X=\{s\in V: s\circ \tau\in X\}.$$
Let
$$P'=\{{\sf S}_{\tau}a: \tau \in \pi(\omega)\}, \\ \ \ P=P'\cup \{{\sf S}_{\delta}d: \delta\in \pi(\omega)\}.$$
More concisely, 
$$P=\{{\sf S}_{\tau}x: \tau \in \pi(\omega),\ \  x\in \{a,d\}\}.$$
For $W\in {}^{\omega}RgZ^{(Z)}$, let
$${\sf P}W=\{s\in V: (\forall i\in \omega) s_i\in W_i\}.$$
Let
$$T=\{{\sf P}W\cdot e(E): W\in {}^{\omega}RgZ^{(Z)},  (\forall \delta\in \pi(\omega))W\neq Z\circ \delta, E\in Eq(\omega)\},$$
$$At =P\cup T,$$
and
$$A=\{\bigcup X: X\subseteq At\}.$$

\begin{athm}{Claim 1} $A$ is a subuniverse of the full cylindric weak set algebra 
$$\langle \wp(V), + ,\cdot,  -, {\sf c}_i, {\sf d}_{ij}\rangle_{i,j\in \omega}.$$ 
Furthermore $\A$ is atomic and $At \A=At\sim \{0\}$. 
\end{athm}
Notice that the boolean operations of the algebra are denoted by $+$, $\cdot $, $-$ standing for Boolean join (union), Boolean meet (intersection)
and complementation, 
respectively.

{\bf Proof of Claim 1}. Let $b = {\sf P} Z  \sim {\sf d}_{01}$. Then 
\begin{enumarab}
\item $a\cdot {\sf S}_{[0,1]} a = 0 $, $a + {\sf S}_{[0,1]} a = b $, $(\forall i \in \omega) {\sf
c}_i a = {\sf c}_i {\sf S}_{[0,1]} a ={\sf c}_i b.$

It is not difficult to check that (1) holds. One can check first $ B
= {\sf S}_{[0,1]} R$, $B\cdot R = 0, B + R = {}^2 3 - {\sf d}_{01}, (\forall i\in 2){\sf c}_i R =  {\sf c}_i B =  {\sf c}_i {}^2 3, X\cdot Y = 0, 
X \cup Y = t,$ and $(\forall i \in \omega\sim 2){\sf c}_i X = {\sf c}_i Y = {\sf c}_i t$.
From (1) we
immediately get

\item ${\sf P}Z = a + {\sf S}_{[0,1]} a + d$. For, ${\sf P} Z = {\sf P} Z \sim {\sf d}_{01} +
{\sf P} Z\cdot  {\sf d}_{01} = b + d.$\\

\item ${\sf S}_{\delta} {\sf P}W = {\sf P}(W \circ \delta^{-1})$  for every $
\delta \in \pi (\omega)$ and $W \in {}^\omega (Rg Z)^{(Z)}$.\\

Indeed, we have $ s \in{\sf S}_{\delta} {\sf P}W$ iff $ s \circ  \delta \in {\sf P}W$ iff $ s
\circ \delta_i \in W_i  \quad \forall i \in \omega $ iff $ s_j \in
W_{\delta^{-1}_j} \quad \forall j \in \omega $ iff $s \in {\sf P}(W \circ
\delta^{-1})$.

\item  ${\sf P}W \in A$ for every $W \in {}^\omega (Rg Z)^{(Z)}$.

Assume $W = Z \circ \delta^{-1}$ for some $ \delta \in \pi(\omega).$
Then ${\sf P}W = {\sf S}_\delta {\sf P}Z = {\sf S}_\delta a + {\sf S}_{\delta
\circ [0,1]} a + {\sf S}_\delta d \in A$ by (3) and (2). Assume $ W
\neq Z \circ \delta,\quad  \forall \delta \in \pi(\omega).$ Then by
$ V= \sum \{ e (E) : E \in Eq(\omega)\}$ we have
${\sf P}W = \sum \{ {\sf P}W\cdot e(E) : E \in Eq(\omega) \} \in A$.\\

\item $(\forall x, y\in At)( x \neq y \Rightarrow x\cdot y = 0)$ and $
{}V = \sum At$.

If $E\neq E'$,$ E, E' \in Eq(\omega)$ then $e(E) \cap e(E') = 0$ and
if $W\neq W'$, $W,W' \in {}^\omega Rg Z^{(Z)} $ then ${\sf P}W \cap {\sf P}W' = 0$.
Thus the elements of $T$ are disjoint from each other and from the
elements of $P$ since $(\forall x\in P)  (\exists \delta \in \pi(\omega))x \subseteq
{\sf S}_\delta {\sf P}Z = {\sf P}(Z\circ \delta^{-1})$ by
(3). Let $\delta, \delta' \in \pi(\omega)$. Clearly ${\sf S}_\delta
a \cdot {\sf S}_{\delta'} d = 0$ since ${\sf S}_\delta' a \subseteq
\prod \{ - {\sf d}_{ij} : i < j < \omega \}$ while ${\sf
S}_{\delta'} d \subseteq {\sf d}_{\delta'0 \delta'1}.$ Let $ y \in
\{ a , d\}$ and assume $ \delta' \neq \delta$. If $\delta' \neq
\delta\circ [0,1]$ then \footnote{For, we show $\delta \neq \delta'$ and $ Z
\circ \delta ^{-1} = Z \circ \delta'^{-1} $ imply $ \delta\ = \delta
\circ [0,1]$. Let $k \in \omega \sim 2$ and $ j = \delta
k$. Then $\delta^{-1}_j = k \notin 2$, hence $Z \delta^{-1}_j + Z
\delta'^{-1}_j$ implies $k = \delta^{-1}_j = \delta'^{-1}_j$, i.e.,
$\delta, k = j$. We have seen $ \delta \upharpoonright \omega
\sim 2 \subseteq \delta'$. By this and by $\delta \neq
\delta'$ we have $ \delta 0 = \delta' 1$ and $ \delta 1 = \delta'
0$. Thus $ \delta = \delta' \circ [0,1]$.}$ Z \circ \delta'^{-1} \neq Z \circ \delta^{-1}$ hence ${\sf P}(Z \circ
\delta^{-1}) \cap {\sf P}(Z \circ\delta'^{-1})=0$, thus ${\sf S}_{\delta
}y \cdot {\sf S}_{\delta'}y= 0$ since $  {\sf S}_{\sigma} y \subseteq {\sf S}_{\sigma}{\sf P}Z =
{\sf P}(Z\circ \sigma^{-1})\quad \forall \sigma \in \pi(\omega)$ by (3).
If $\delta' = \delta\circ[0,1]$ then $ {\sf S}_{\delta}a\cdot {\sf
S}_{\delta'}a = {\sf S}_{\delta}(a\cdot {\sf S}_{[0,1]}a) = 0$ by (1)
and ${\sf S}_{\delta} d = {\sf S}_{\delta} {\sf S}_{[0,1]} d = {\sf
S}_{\delta'} d$. Thus all the elements of $At$ are disjoint from
each other. By $ U = \bigcup RgZ$ we have $V = \sum \{ {\sf P}W
: W \in {}^\omega Rg Z^{(Z)} \} \subseteq \sum At$ by (4). Thus $V= \sum At$.\\

\item  $A$ is closed under the boolean operations.

For, (6) is an immediate corollary of (5) and the definition of $A$.
\item 
Let $\M$ denote the minimal subalgebra of $\wp(V)$, i.e.,
$\M = \Sg^{(\wp(V))}0.$ Then
$\M \subseteq A$.

Let $i < j < \omega$. Then $ {\sf d}_{ij} = \sum \{ e(E) : (i, j)
\in E, E \in Eq(\omega) \} = \sum (\{{\sf P}W\cdot  e(E) : W \in {}^\omega Rg
Z^{(Z)}$, $ W \neq Z\circ \delta \quad \forall \delta \in \pi (\omega)$,
$(i,j) \in E \in Eq(\omega) \} \bigcup \{ {\sf S}_{\delta} d  :
\delta \in \pi(\omega), \{ \delta0, \delta1\} = \{i, j\} \} ) \in A$.
Let $ k <\omega$. Then $ {\sf c}_{(k)} \bar{d}(k \times k) \in
\{ 0, V \} \subseteq A$ by (5). Thus by \cite{HMT1}
[2.2.24], and (6) we have $\M = \Sg^{(\wp(V))} \{ {\sf
d}_{ij}
: i < j < \omega \} \subseteq A$.\\

\item ${\sf P}W \in A$ for every $ W \in {}^\omega ( Rg Z \cup \{U\})^{(Z)}.$

Let $ \Im = \{ i \in \omega : W_i \neq U \}$. Then $ {\sf P}W = \sum \{
{\sf P}W' : W' \in {}^\omega Rg Z, W' \upharpoonright \Im \subseteq W
\}$ by $ U = \bigcup Rg Z$. Thus ${\sf P}W \in A$ by (4).\\

\item ${\sf S}_{\tau}{\sf P}W\in A$ for every $W\in {}^{\omega}RgZ^{(Z)}.$

For if ${\sf S}_{\tau}{\sf P}W=0$, then we are done. Assume that ${\sf S}_{\tau}{\sf P}W\neq 0$.
Let $z\in {\sf S}_{\tau}{\sf P}W$ be arbitrary. Let $\eta\in {}^{\omega}\omega$ such that $z_i\in Z_{\eta(i)}.$ Such an $\eta$ exists by $U=\bigcup RgZ$.
Now we have
$$(*)\ \ \  (\forall i\in \omega)W_i=W_{\eta\tau(i)}.$$ 
since $(\forall i\in \omega)z\tau(i)\in W_i\cap W_{\eta\tau(i)}$ by $z\in {\sf S}_{\tau}{\sf P}W$
and by the definition of $\eta$, hence $W_i=W_{\eta\tau(i)}$ since the elements of $RgZ$ are disjoint from each other.
Let $sup\tau=\{i\in \omega: \tau(i)\neq i\}$. Let $W'\in {}^{\alpha}RgZ^{(Z)}$ be defined by
$(\forall i\in \omega\sim sup\tau)W_i'=W_i$ and for all $(\forall i\in sup\tau)W_i'=W_{\eta(i)}$.
Then $${\sf S}_{\tau}{\sf P}W=\{s\in V: (\forall i\in \omega)s\tau(i)\in W_i\}={\sf P}W'.$$

\item $ {\sf S}_\tau x \in A$ for every $ x \in A$.

It is enough to show (10) for $ x \in At$ since $ {\sf S}_\tau$ is
additive. If $ \tau, \delta \in \pi(\omega)$ then ${\sf S}_\tau {\sf
S}_\delta a = {\sf S}_{\tau \circ \delta} a \in P \subseteq A$ since
$\tau \circ \delta \in  \pi(\omega)$. If $\tau \in {}FT_\omega
\sim  \pi(\omega)$ then ${\sf S}_\tau {\sf S}_\delta a = 0
\in A$. Note that ${\sf S}_{\delta}d=P(Z\circ \delta^{-1})\cdot {\sf d}_{\delta 0, \delta 1}$. By (9) and the above, to finish the proof (10), it is enough to show $
{\sf S}_\tau g \in A$ for all $g$ of the form ${\sf P}W\cdot e(E)$ since ${\sf S}_\tau $ is
a boolean homomorphism.  Let $g={\sf P}W\cdot e(E)$. Then by 
$$e(E)=\prod\{{\sf d}_{ij}:(i,j)\in E\}\cdot \prod \{-{\sf d}_{ij}: (i,j)\notin E\},$$
there exists a finite $K\subseteq \{{\sf d}_{ij}:  i<j<\omega\}\cup\{-{\sf d}_{ij}: i<j<\omega\}$
such that $g={\sf P}W\cdot \prod K$. Here we are using that there exists $n\in \omega$ such that ${\sf P}W\subseteq -{\sf d}_{ij}$ for all
$n\leq i<j$ since the elements of $RgZ$ are disjoint from each other and $W\in {}^{\omega}RgZ^{(Z)}$. 
The rest follows from (9), the fact that ${\sf S}_{\tau}$ is a Boolean homomorphism and  
that  $ {\sf S}_\tau {\sf d}_{ij} = {\sf d}_{\tau i \tau j} \in A$.\\

\item ${\sf c}_i x \in A$ for every $x \in A$ and $ i \in \omega$.

It is enough to show (11) for $ x \in At$ since ${\sf c}_i$ is
additive. Now ${\sf c}_i {\sf S}_\delta a = {\sf S}_\delta {\sf
c}_{\delta_i} a$. Indeed let $j = \delta_i$. Then ${\sf c}_j a = {\sf c}_j
b = {\sf P}Z(j / U) \cdot \gamma$ where $ \gamma = 1$ if $ j \in 2$ and $
\gamma = - {\sf d}_{01}$ if $ j \in \omega \sim 2$. Thus $
{\sf c}_i {\sf S}_\delta a \in A$ by (10), (8) and (7). Let $ x \in
At \sim P'$. Then $ x = {\sf P}W\cdot  \prod K$ for some $ W \in
{}^\omega (Rg Z \cup \{ U\})^{(Z)}$ and $ K \subseteq_{\omega} \{ {\sf d}_{ij} : i<
j < \omega \} \cup \{ - {\sf d}_{ij} : i< j < \omega \}$.
Assume ${\sf P}W\cdot  \prod K \neq 0$. We will show $ {\sf c}_i ({\sf P}W\cdot  \prod K )
= {\sf c}_i {\sf P}W \cdot  {\sf c}_i \prod K $. Let $ \Gamma = \{ j \in \omega
: {\sf d}_{ij} \in K \} $ and $ \Omega = \{ j \in \omega : - {\sf
d}_{ij} \in K \}.$ It is enough to show $  {\sf c}_i {\sf P}W \cdot {\sf c}_i
\prod K \subseteq {\sf c}_i ({\sf P}W\cdot  \prod K )$. Let $ s \in {\sf c}_i
{\sf P}W \cdot  {\sf c}_i \prod K$. Assume $ \Gamma \neq 0$. Let $ j \in
\Gamma$. Then $ s(i / s_j)\in  {\sf P}W\cdot \prod K$ since $ W_i = W_j$ by
${\sf P}W\cdot  \prod K \neq 0$. Assume $\Gamma = 0$. Let $ \Delta = \{ j \in
\Omega : W_j = W_i \}$. Then $|\Delta | < |W_i|$ by ${\sf P}W\cdot \prod K \neq
0$. Let $u \in W_i \sim \{ s_i : j \in \Delta \}$. Then $
s(i /u) \in {\sf P}W \cdot  \prod K$. Thus  $ {\sf c}_i ({\sf P}W\cdot  \prod K ) = {\sf
c}_i {\sf P}W \cdot  {\sf c}_i \prod K  = {\sf P}W(i /U) \cdot  {\sf c}_i \prod K \in A $
by (8) and (7).

By (6), (7) and (11) we have proved $ A \in Su \wp(V)$. 
 ($A$ is a subuniverse of $\wp(V)).$ By
(5) then we have $At \A = At \sim \{0\}$

{\bf The construction of $\B \in \QPEA_\omega$:}\\

Let $ \tau, \delta \in FT_{\omega}$. We say that $``\tau,
\delta$ transpose" iff $( \delta0-\delta1).(\tau \delta 0 - \tau
\delta 1)$ is negative.

Now we first define $ {\sf s}_\sigma : At \rightarrow A$ for every $ \sigma
\in FT_{\omega}$. 
\[ {\sf s}_\sigma({\sf S}_\delta a )  =
\begin{cases}
{\sf S}_{\sigma \circ \delta \circ [0,1] } a& \textrm{if}~~~``
\sigma, \delta
~~~\textrm{transpose}" \\
{\sf S}_{\sigma \circ \delta} a & \textrm{otherwise}
\end{cases}\]

$$ {\sf s}_\sigma (x) = {\sf S}_\sigma x \quad \textrm{if} \quad x \in At
\sim P'.$$
Then we set:
$${\sf s}_\sigma (\sum X) = \sum \{ {\sf s}_\sigma (x) : x \in X \} \quad \textrm{for}
\quad X \subseteq At.$$
We shall first prove that ${\sf s}_{\sigma}:A\to A$.

\item From the definition of ${\sf s}_\sigma$ we immediately get
${\sf s}_\sigma {\sf S}_\delta a \in \{ {\sf S}_{\sigma \circ
\delta} a, {\sf S}_{\sigma \circ \delta \circ [0,1] } a \}$ for
$\delta \in
\pi (\omega )$.\\

\item ${\sf s}_\sigma x = {\sf S}_\sigma x$ for $ \sigma \in
FT_{\omega} \sim \pi (\omega )$ and $ x \in At$.

If $ \tau \in FT_{\omega} \sim \pi (\omega )$ then $ {\sf S}_\tau a = 0$,
hence $ {\sf s}_\sigma {\sf S}_\delta a = 0 = {\sf S}_\sigma {\sf
S}_\delta a$ by (12). For $ x \in At \sim P'$ we have
${\sf s}_\sigma x = {\sf S}_\sigma x$ by definition.\\

\item  ${\sf s}_\sigma : At \rightarrow At$, is a bijection for $\sigma\in \pi(\omega)$

By (12) we have $ {\sf s}_\sigma : P' \rightarrow P'$. Assume
$\delta \neq \delta'$, $\delta, \delta' \in \pi(\omega)$. If $\delta
\neq \delta' \circ [0, 1]$ then $\{ \sigma \circ \delta, \sigma
\circ \delta \circ [0, 1] \} \cap \{ \sigma \circ \delta', \sigma
\circ \delta' \circ [0, 1] \} = 0$, hence ${\sf s}_\sigma {\sf
S}_\delta a \neq {\sf s}_\sigma {\sf S}_{\delta'} a$. Assume $
\delta = \delta' \circ [0, 1]$. In this case $``\sigma, \delta$
transpose" iff $``\sigma, \delta'$ transpose", hence $ {\sf
s}_\sigma  {\sf S}_\delta a = {\sf S}_{\sigma \circ \delta \circ [0,
1]} a \neq {\sf S}_{\sigma \circ \delta' \circ [0, 1]} a = {\sf
s}_\sigma {\sf S}_{\delta'} a $, by \footnote{This follows from the
proof of (5).}
$ [ \tau \neq \tau' \Rightarrow {\sf S}_\tau a \neq {\sf S}_{\tau'}
a]\quad \forall \tau, \tau' \in \pi(\omega)$. We have seen that ${\sf
s}_\sigma : P' \rightarrow P'$. Let $ \tau = \sigma^{-1} \circ
\delta$. Define $ \tau' = \tau \circ [0, 1]$ if $``\sigma, \tau$
transpose", $\tau' = \tau$ otherwise. Then $``\sigma, \tau$
transpose" iff $``\sigma, {\tau'}$ transpose", hence ${\sf s}_\sigma
{\sf S}_{\tau'} a = {\sf S}_{\sigma \circ \tau} a = {\sf S}_{\delta}
a$. Thus $ {\sf s}_\sigma : P' \rightarrow P'$ is onto. By $
{\sf s}_\sigma {\sf S}_\delta d = {\sf S}_\sigma {\sf S}_\delta d$
then we have $ {\sf s}_\sigma : (P \sim P')
\rightarrow (P \sim P')$. Next we show ${\sf
s}_\sigma : T \rightarrow T$ is a bijection.
Let $E \in Eq(\omega)$. Define $E(\tau) = \{ (\tau i, \tau j) : (i,
j) \in E \}$ for any $ \tau \in FT_{\omega}$. Then it is not
difficult to check that by $\sigma \in \pi (\omega)$ we have
$E(\sigma) \in Eq(\omega)$ and $ ( ker ( s \circ \sigma) = E$ iff $
Ker s = E(\sigma))$. Thus $ {\sf S}_\sigma e(E) = e(E(\sigma))$. Now
$ {\sf s}_\sigma({\sf P}W\cdot  e(E)) = {\sf S}_\sigma ({\sf P}W\cdot e(E)) = {\sf
S}_\sigma {\sf P}W\cdot  {\sf S}_\sigma e(E) = {\sf P}(W \circ \sigma^{-1})\cdot 
e(E(\sigma)) \in T$ if $W \neq Z \circ \delta $ for any $ \delta \in
\pi(\omega)$. If $ W \neq W'$ or $E \neq E'$ then $W\circ
\sigma^{-1} \neq W' \circ \sigma^{-1}$ or $ E(\sigma ) \neq
E'(\sigma)$, thus ${\sf s}_\sigma : T \rightarrow T$ is one to one. The fact that ${\sf
s}_\sigma({\sf  P}(W \circ  \sigma^{-1}) \cdot e(E( \sigma^{-1}))) = {\sf P}W\cdot 
e(E)$ shows that ${\sf s}_\sigma : T
\rightarrow T$ is onto.\\

Now we have proved that $ {\sf s}_\sigma : A \rightarrow A.$ Define 
$$\B = \langle A, +,
\cdot , -, 0, 1, {\sf c}_{i}, {\sf s}_{\tau}, {\sf d}_{ij}\rangle_{i,j\in 
\omega, \tau \in FT_{\omega}}. $$

\begin{athm}{Claim 2} $\B \in \QPEA_\omega$
\end{athm}

We shall proceed via several steps.

\item  ${\sf s}_\tau$ is a boolean homomorphism on $\A$, for any $\tau
\in FT_{\omega}$.

If $\tau \in \pi (\omega)$ then (15) follows from (14) and from the
definition of ${\sf s}_\sigma$. If $\tau \in FT_{\omega}
\sim \pi (\omega)$ then (15) follows from (13).\\

\item  ${\sf s}_\tau {\sf s}_\sigma {\sf S}_\delta  a = {\sf s}_{\tau
\circ \sigma} {\sf S}_\delta a$ for $\delta \in \pi (\omega)$,
$\tau, \sigma \in
FT_{\omega}$.\\

Assume $\delta 0 < \delta 1$.\\
Case 1: $ \tau \sigma \delta 0 < \tau \sigma \delta 1$. Then $ {\sf
s}_{ \tau \circ \sigma } {\sf S}_\delta  a = {\sf S}_{\tau \circ
\sigma \circ\delta}  a$ since $`` \tau \circ \sigma, \delta$ do not
transpose ". If $\sigma \delta 0 < \sigma \delta 1$ then $`` \sigma,
\delta$ do not transpose " and  $``\tau , \sigma \circ \delta$ do
not transpose ", hence $ {\sf s}_\sigma {\sf S}_\delta a = {\sf
S}_{\sigma \circ\delta} a$ and ${\sf s}_\tau {\sf S}_{\sigma
\circ\delta}a = {\sf S}_{\tau \circ\sigma \circ\delta} a$ and we are
done. Similarly, if $\sigma \delta 0 > \sigma \delta 1$ then ${\sf
s}_\sigma {\sf S}_\delta a= {\sf S}_{\sigma \circ \delta \circ
[0,1]} a$ and $ {\sf s}_\tau {\sf S}_{\sigma \circ \delta \circ
[0,1]} a = {\sf S}_{\tau \circ (\sigma \circ \delta \circ
[0,1])\circ [0,1]} a = {\sf S}_{\tau
\circ \sigma \circ \delta } a$ and we are done.\\

Case 2: $ \tau \sigma \delta 0 > \tau \sigma \delta 1$. Then $ {\sf
s}_{ \tau \circ \sigma } {\sf S}_\delta  a = {\sf S}_{\tau \circ
\sigma \circ\delta \circ [0,1]}  a$. If $  \sigma \delta 0 <  \sigma
\delta 1$ then ${\sf s}_\sigma {\sf S}_\delta a = {\sf S}_{\sigma
\circ \delta } a$ and ${\sf s}_\tau {\sf S}_{\sigma \circ \delta } a
= {\sf S}_{\tau \circ \sigma \circ \delta \circ [0,1]} a$ and we are
done. If $\sigma \delta 0 >  \sigma \delta 1$ then ${\sf s}_\sigma
{\sf S}_\delta a = {\sf S}_{\sigma \circ \delta \circ [0,1]} a$ and
${\sf s}_\tau {\sf S}_{\sigma \circ \delta \circ [0,1]} a = {\sf
S}_{\tau \circ\sigma
\circ \delta \circ [0,1]} a$.\\

The case $\delta 0 > \delta 1$ is completely analogous, hence we
omit it.\\

\item $ {\sf s}_\sigma ({\sf S}_\delta a + {\sf S}_{\sigma \circ
[0,1]} a) = {\sf S}_\sigma ({\sf S}_\delta a + {\sf S}_{\sigma \circ
[0,1]} a)$.

$`` \sigma, \delta$ transpose " iff  $``\sigma, \delta \circ [0,1]$
transpose ". Hence $\{ {\sf s}_\sigma {\sf S}_\delta a, {\sf
s}_\sigma {\sf S}_{\delta \circ [0,1]} a \} = \{ {\sf S}_{\sigma
\circ \delta} a, {\sf S}_{\sigma \circ \delta \circ [0,1]} a \}$ by
the definition of
$ {\sf s}_\sigma$.\\

\item $ {\sf s}_\tau {\sf s}_\sigma x = {\sf s}_{\tau \circ \sigma}
x$ for every $ \tau, \sigma \in FT_{\omega}$ and $x \in A$.

It is enough to show (18) for $x \in At$. For $x \in P'$, (18) is
true by (16). Let $ x \in At \sim P'$. Then $ {\sf
s}_\sigma x = {\sf S}_\sigma x$ by definition. Now $ {\sf S}_\sigma
x = {\sf P}W\cdot  \prod K$ for some $ W \in {}^\omega (Rg Z \cup \{U\})^{(Z)}$ and $
K \subseteq_{\omega} \{ {\sf d}_{ij} : i< j < \omega \} \cup \{ - {\sf
d}_{ij} : i< j < \omega \}$, by the proof of (10). Assume ${\sf
S}_\delta a \subseteq {\sf S}_\sigma x$ for some $ \delta \in \pi
(\omega)$. We will show that then $ {\sf S}_{\delta \circ [0,1]} a
\subseteq {\sf S}_\sigma x$, too. $ {\sf S}_\delta a \cup {\sf
S}_{\delta \circ [0,1]} a \subseteq {\sf P}Z \circ \delta^{-1} \cdot
e(Id_\omega)$, thus ${\sf S}_\delta a \leq {\sf S}_\sigma x$ implies
$[{\sf P}Z \circ \delta^{-1}\cdot e(Id_\omega) ] \cap {\sf P}W\cdot  \prod K \neq 0$.
But then ${\sf P}Z \circ \delta^{-1}\cdot  e(Id_\omega) \subseteq {\sf P}W\cdot  \prod
K$, thus $ {\sf S}_{\delta \circ [0,1]} a \subseteq {\sf S}_\sigma
x$, too. Thus ${\sf s}_\tau {\sf s}_\sigma x = {\sf S}_\tau {\sf
S}_\sigma x = {\sf S}_{\tau \circ \sigma } x = {\sf s}_{\tau \circ
\sigma } x$ by (17) and by the
definition of ${\sf s}_\tau, {\sf s}_\sigma, {\sf s}_{\tau \circ \sigma }$.\\

\item  $ {\sf c}_{(\Gamma)} {\sf S}_\delta a = {\sf c}_{(\Gamma)}
({\sf S}_\delta a + {\sf S}_{\delta \circ [0,1]} a) $ if $ \Gamma\subseteq_{\omega}\omega$, $\Gamma
\neq 0$.

Let $ i \in \omega$ be arbitrary. Then $ {\sf c}_i a =  {\sf c}_i (a
+ {\sf S}_{ [0,1]} a )$ holds by (1). Thus ${\sf c}_i {\sf S}_\delta
a = {\sf S}_\delta {\sf c}_{\delta_i} a = {\sf S}_\delta {\sf
c}_{\delta_i} ( a + {\sf S}_{ [0,1]} a) = {\sf c}_i  ( {\sf S}_\delta a + {\sf S}_{\delta \circ [0,1]} a)$.\\

\item $ {\sf s}_\sigma {\sf c}_{(\Gamma)} x = {\sf S}_\sigma {\sf
c}_{(\Gamma)} x$ for every $ x \in A$ if $ \Gamma\subseteq_{\omega}\omega,\Gamma \neq 0$.

Let $x \in A$ be arbitrary. Then $ {\sf c}_{(\Gamma)} x = \sum X$
for some $X \subseteq At$, by (11). Assume  ${\sf S}_\delta a \in
X$. Then $ {\sf c}_{(\Gamma)} {\sf S}_\delta a \subseteq  {\sf
c}_{(\Gamma)} x$, hence $  {\sf S}_{\delta \circ [0,1]} a \subseteq
{\sf c}_{(\Gamma)} x$ by (19). Therefore $ {\sf S}_{\delta \circ
[0,1]} a \in X$, too. Now $ {\sf s}_\sigma {\sf c}_{(\Gamma)} x =
\sum \{
{\sf s}_\sigma y : y \in X \}$ and (17) finish the proof of (20).\\

\item  $ \sigma \upharpoonright (\omega \sim \Gamma) = \tau
\upharpoonright (\omega \sim \Gamma) \Rightarrow {\sf
s}_\sigma {\sf c}_{(\Gamma)} x = {\sf s}_\tau {\sf c}_{(\Gamma)} x$
for every $x \in A$, $ \sigma, \tau \in FT_{\omega},$ and
$\Gamma \subseteq_{\omega} \omega$.

(21) follows from (20).\\

\item $ {\sf c}_{(\Gamma)} {\sf s}_\sigma x = {\sf c}_{(\Gamma)} {\sf
S}_\sigma x$, for every $x \in A$, $ \sigma \in FT_{\omega},$
if $ \Gamma\subseteq_{\omega}\omega, \Gamma \neq 0$.

It is enough to check (22) for $x \in P'$. Let $\delta \in \pi
(\omega )$. Then $ {\sf s}_\sigma  {\sf S}_\delta a \in \{  {\sf
S}_{\sigma \circ \delta} a, {\sf S}_{\sigma \circ \delta \circ
[0,1]} a \}$ by definition of ${\sf s}_\sigma$ and ${\sf
c}_{(\Gamma)} {\sf S}_{\sigma \circ \delta \circ [0,1]} a = {\sf
c}_{(\Gamma)} {\sf S}_{\sigma \circ \delta } a = {\sf c}_{(\Gamma)}
{\sf S}_\sigma
{\sf S}_\delta a $ by (19).\\

\item $ \tau \upharpoonright (\tau^{-1}\Gamma)$ is one - one then $ {\sf
c}_{(\Gamma)} {\sf s}_\tau x =  {\sf s}_\tau {\sf c}_{(\Delta)} x$
where $\Delta = \tau^{-1}\Gamma$.

If $\Gamma = 0$ then $\Delta = 0$ and we are done. If $\Gamma \neq
0$ and $\Delta = 0$ then $\pi \notin \pi(\omega)$ hence we are done
by (13). Assume $\Gamma \neq 0, \Delta \neq 0$. Then we are done by
(22) and (20).\\

Now we are ready to show $\B \in \QPEA_\omega$. We have to show that
$(1-15)$ in definition of polyadic equlaity algebras in \cite{HMT2} are
satisfied in $\B$. $(1 - 6) + 13$ are satisfied since $\Rd_{ca}
\B \in Ws_\omega$. $7$ holds because $``Id_\omega, \delta$ don't
transpose" $\forall \delta \in FT_{\omega}$. $8, 11, 12$ hold by (18), (22), (23) respectively. $9-10$ are
satisfied by (15). $14$ holds by (13) and $15$ holds since
${\sf s}_\tau {\sf d}_{ij} = {\sf S}_\tau {\sf d}_{ij}$ by
definition of ${\sf s}_\tau$.

We finally show:

\begin{athm}{Claim 3} $\B \notin \RQPEA_\omega$.
\end{athm}
\begin{demo}{Proof} 
Assume $\B \in \RQPEA_\omega$. Then by theorem \ref{weak} $\B$
is isomorphic to some weak set algebra $\C$ since $\Rd_{ca}\B$ is weakly subdirectly indecomposable. Let $U'$ be the base of $\C$.
The unit of $\C$ is of the form $^{\alpha}U'^{(p)}$ for some sequence $p$. 
Let $h : \B \twoheadrightarrow \C$ be an isomorphism. Let $x=Z_0\times U\times U\times U\times Z_5\times Z_6\ldots$.
That is $x=\{s\in V: s_0\in Z_0: (\forall i>4)(s_i\in Z_i)\}.$ 
Then $ x \in A$ by (8), and
${\sf c}_ix=x$ for $i\in \{1,2,3\}$. So ${\sf c}_ih(x)=h(x)$ for $i\in \{1,2,3\}$, thus $ h(x) = Z'\times U'\times U'\times U'\times \ldots $.
for some $Z' \subseteq U'$. Let $\bar{x}= \prod \{ {\sf
s}_{[0,i]} x : i \in 4 \}$. Then $\bar{x}=Z_0\times Z_0\times Z_0\times Z_0\times Z_5\times Z_6\ldots.$ 
For a relation $R$, recall that $\bar{d}(R)=\prod_{(i,j)\in R\sim Id}-{\sf d}_{ij}$.
Then we have $\bar{x}\cdot  \bar{d}(3 \times 3) \neq
0$ and $ \bar{x}\cdot \bar{d}(4 \times 4) = 0$ imply the same for
$h(x)$, therefore $|Z'| = 3$.

Let $b' = h(b), a' = h(a), g = {\sf S}_{[0,1]} a, g' = h(g).$ Then $
b \leq x \cdot {\sf s}_{[0,1]}x - {\sf d}_{01}$ hence $ b' \subseteq h(x)
\cdot {\sf S}_{[0,1]} h(x) - {\sf d}_{01}$, thus $$ \forall s \in
b'\quad (s_0, s_1) \in {}^2 Z' \sim {\sf d}_{01}\quad \textrm{and}
\quad |Z'|= 3.\qquad (\star)$$

In $\A$ we have $ a+ g = b \neq 0, a \cdot g = 0, {\sf s}_{[0,1]} a = a,
{\sf s}_{[0,1]} g = g$ and $ {\sf c}_i a = {\sf c}_i g = {\sf c}_i b
\quad \forall i \in 2$.

Therefore
$$ (*) \quad a' + g' = b' \neq 0, a' \cdot  g' = 0$$
$$ (**) \quad {\sf S}_{[0,1]} a' = a' , {\sf S}_{[0,1]} g' = g' \quad \textrm{and} $$
$$ (***) \quad {\sf c}_i a' = {\sf c}_i g' = {\sf c}_i b' \quad
\forall i \in 2$$

Let $q \in b'$ be arbitrary. $q^{01}_{uv}$ is the function $q'$ that agrees with $q$ everywhere except that
$q'(0)=u$ and $q'(1)=v$. Define $$\bar{a} = \{ (u, v) : q^{01}_{uv}
\in a' \}$$ and 
$$\bar{g} = \{(u, v) : q^{01}_{uv} \in g' \}.$$ Then by $(*)-
(\star)$ we have
$$ (*)'\quad \bar{a} + \bar{g} = {}^2 Z' \sim {\sf d}_{01}, \bar{a}\cdot 
\bar{g} = 0,$$
$$ (**)'\quad {\sf S}_{[0,1]} \bar{a} = \bar{a} , {\sf S}_{[0,1]} \bar{g} = \bar{g}
 \quad \textrm{and} $$
$$ (***)' \quad {\sf c}_0 \bar{a} = {\sf c}_0 \bar{g} = {\sf c}_0 {}^2 Z'.$$

We show that $(*)' - (***)' $ together with $|Z'| = 3$ is
impossible. By $ (***)'$ we have $ Rg \bar{a} = Rg \bar{g} = Z'$,
hence $|\bar{a}| \geq 3$ and  $|\bar{g}| \geq 3$. By $(*')$ we have
then $ |\bar{a}| =  |\bar{g}| = 3$ by $ \bar{a}\cdot   \bar{g} = 0$ and
$|{}^2 Z'_1 \sim {\sf d}_{01}| = 6 $. But by $(**)'$ and $ \bar{a} \leq
-  {\sf d}_{01}$ we have $|\bar{a}| \geq 4$, contradiction.
\end{demo}
\end{enumarab}
Claims 1-3 prove Theorem \ref{q}.

\end{demo}
With an axiomatization of the finite dimensional representable algebras at hand, we can obtain a 
recursive axiomatization of the class $\RQPEA_{\alpha}$ for infinite $\alpha$, cf. \cite{HHbook} 
corollary 8.13.

\begin{theorem} Let $\alpha\geq \omega$ be an ordinal. Then $\RQPEA_{\alpha}$ is axiomatized by the set
$$\Sigma=\QPEA_{\alpha}\cup\{(\epsilon_n^d)^{\sigma}; d,n<\omega, d\geq 3, \sigma:d\to \alpha \text { is one to one }\}$$
where the $\epsilon_n^d$ are as in theorem \ref{polyadic}
\end{theorem}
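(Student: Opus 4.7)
The plan is to establish both inclusions $\RQPEA_\alpha \subseteq \mathrm{Mod}(\Sigma)$ and $\mathrm{Mod}(\Sigma) \subseteq \RQPEA_\alpha$. For the easy direction, I would argue that every $\A\in\RQPEA_\alpha$ satisfies $\Sigma$ as follows: clearly $\A\models\QPEA_\alpha$, and for any one-to-one $\sigma:d\to\alpha$ and any $n<\omega$, the lifted equation $(\epsilon_n^d)^\sigma$ holds in $\A$ because $\A$ is a subdirect product of generalized polyadic set algebras, and in each factor the $\sigma$-indexed operations behave like those of a $d$-dimensional polyadic set algebra; since $\RPEA_d\models\epsilon_n^d$ by Theorem \ref{polyadic}, the lifted version must hold in $\A$.

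For the converse, suppose $\A\in\QPEA_\alpha$ satisfies $\Sigma$. The strategy is to reduce infinite-dimensional representability to its finite-dimensional counterpart. For each finite $\Delta\subseteq\alpha$ with $|\Delta|=d\geq 3$, fix a bijection $\sigma:d\to\Delta$ and form the $d$-dimensional polyadic equality algebra $\A_\sigma$ by viewing $\A$ through $\sigma$, that is, by keeping only the operations indexed by members of $\Delta$ and relabelling them via $\sigma$. The hypothesis $\A\models(\epsilon_n^d)^\sigma$ for all $n$ then translates into $\A_\sigma\models\epsilon_n^d$ for all $n$, so Theorem \ref{polyadic} yields $\A_\sigma\in\RPEA_d$. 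Thus every finite-dimensional ``projection'' of $\A$ is representable.

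To assemble these finite-dimensional representations into a single representation of $\A$, I would adopt the compactness recipe of Hirsch--Hodkinson, cf.\ \cite{HHbook} corollary 8.13. By a L\"owenheim--Skolem style reduction one may assume $\A$ is countable. Then, enumerating the elements of $\A$ and running a step-by-step construction along an increasing chain $\Delta_0\subsetneq\Delta_1\subsetneq\cdots$ with $\bigcup_i\Delta_i$ cofinal in $\alpha$, one uses the finite-dimensional winning strategies for $\exists$ in each sub-game $G_n(N,\A_{\sigma_i})$ as building blocks, patching them coherently to produce a homomorphism $f:\A\to\wp(^\alpha U)$ witnessing $\A\in\RQPEA_\alpha$. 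The existence of these strategies is guaranteed by the finite-dimensional half of Theorem \ref{polyadic} applied to each $\A_{\sigma_i}$.

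The main obstacle will be the patching step: ensuring that $\exists$'s winning strategies in the finite-dimensional games can be chosen \emph{compatibly} across the chain $\Delta_0\subsetneq\Delta_1\subsetneq\cdots$. This is essentially a neat-embedding / amalgamation issue. I would handle it either by invoking an appropriate unique neat embedding property in the quasi-polyadic equality setting, analogous to the one discussed in the preceding sections of the paper, or, more directly, by running the infinite-dimensional analogue of the game $G_\omega(I_a,\A)$ and showing that at each round $\exists$'s response can be extracted from the finite-dimensional strategies corresponding to the finitely many coordinates involved so far. Once coherence is secured across the chain, the resulting union representation delivers the required embedding of $\A$ into a polyadic equality set algebra, completing the proof.
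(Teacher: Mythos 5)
Your easy direction and your reduction of the hypothesis to ``every finite reduct $\Rd_{\sigma}\A$ lies in $\RPEA_d$'' are both sound, but the assembly step that is supposed to convert this into $\A\in\RQPEA_{\alpha}$ is a genuine gap, not a routine patching issue. First, your chain $\Delta_0\subsetneq\Delta_1\subsetneq\cdots$ with union cofinal in $\alpha$ only exhausts the index set when $\alpha$ is countable; L\"owenheim--Skolem lets you shrink the universe of $\A$ but not its dimension, so for uncountable $\alpha$ the construction does not even get started. Second, even for $\alpha=\omega$, the winning strategies for $\exists$ in the $d_i$-dimensional games $G_n(I_a,\Rd_{\sigma_i}\A)$ produce representations of the various reducts on unrelated base sets, and nothing forces the representation built at stage $i$ to extend to one of $\Rd_{\sigma_{i+1}}\A$; the coherence you need is precisely what the finite-dimensional completeness theorem does \emph{not} give you, and neither of your proposed remedies (a unique neat embedding property for $\QPEA$, or an infinite-dimensional game) is established anywhere in a form that would supply it. Indeed the paper's own Theorem on the non-representability of a $\QPEA_{\omega}$ with representable $\CA$-reduct should warn you that finite- and infinite-dimensional representability interact delicately here.

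The intended argument (this is what the citation of Hirsch--Hodkinson, Corollary 8.13, is standing in for --- the paper itself prints no proof) avoids assembling representations altogether by exploiting that $\RQPEA_{\alpha}$ is a \emph{variety}. It suffices to show that every equation $e$ valid in $\RQPEA_{\alpha}$ is a semantic consequence of $\Sigma$. Any such $e$ mentions only finitely many indices, covered by some one-to-one $\sigma:d\to\alpha$ with $d\geq 3$, and the reindexed equation $e^{\sigma^{-1}}$ is valid in $\RPEA_d$: one direction because $\sigma$-reducts of representable $\alpha$-dimensional set algebras are representable $d$-dimensional ones, and the other because every $d$-dimensional polyadic equality set algebra embeds, via the cylinder map $X\mapsto X\times{}^{\alpha\smallsetminus \rng\sigma}U$, into the $\sigma$-reduct of a representable $\QPEA_{\alpha}$. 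By Theorem \ref{polyadic}, $e^{\sigma^{-1}}$ follows from $\PEA_d\cup\{\epsilon_n^d:n<\omega\}$, and lifting along $\sigma$ gives $\Sigma\models e$. If you want to keep your direct approach you must either prove the coherence lemma you are assuming or switch to this equational route; as written, the hard direction is not proved.
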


\subsection{Algebras not closed under Dedekind completions}

In this section we construct an atomic representable polyadic algebra, such that the diagonal free reduct of its completion is not representable. 
We have obtained this result in theorem \ref{com} above, but here we present a simpler proof, that does not depend on the probabilistic graphs of Erdos.
The proof also substantialy simplifies Hodkinson's proof in \cite{Hod97}, although the technique used is also model-theoretic.
Also Hodkinson proves his result only for $\CA$'s; our proof covers more algebras like $\PA$'s and $\Df$'s. 
The base of the algebra, we construct,  will be a certain graph $M$ that is constructed as a limit of certain labelled graphs.
This $M$ is the heart and soul of our proof. From now on, $n$ is a finite ordinal $>2$.
Our notation is mostly standard. 
An ordinal is the set of all smaller ordinals; so for $n<\omega$, $n=\{0,1,\ldots n-1\}$. 
Maps are regarded 
formally as sets of ordered pairs. Thus, if $\theta$ is a map, we write 
$|\theta|$ for the cardinality of the set that
is $\theta$. We write $Dom(\theta)$, $Range(\theta)$ for the domain and range of $\theta$ respectively.
We write $Id_X$ for the identity map on $X$. $\wp(X)$ denotes the power set of $X$.

We write $\bar{a}, \bar{x}$ for sequences. A sequence (or tuple) $\bar{a}$ of elements of a 
set $X$, of length $n$, is formally
an element of the set $^nX$. We write $a_i$ for the $i$th element of this sequence, 
and $Range(\bar{a})$ for 
$\{a_0, \ldots a_{n-1}\}$. 
We may write $\bar{a}$ as $(a_0, \ldots a_{n-1})$.
If $\theta:X\to Y$ is a map, we write $\theta(\bar{a})$ for the sequence 
$(\theta(a_0)\ldots \theta(a_{n-1})\}\in {}^nY$.
If $\bar{a}, \bar{b}$ are $n$ sequences, we write $(\bar{a}\mapsto \bar{b})$ for the map $\{(a_i, b_i): i<n\}$. For 
$i<n$, we write $\bar{a}=_i\bar{b}$ if $a_j=b_j$ for all $j<n$ with $j\neq i.$
Fix finite $N\geq n(n-1)/2$. 
Throughout $\G$ will denote the graph $\G=(\N,E)$ with nodes $\N$ and $i,l$ is an edge i.e $(i,l)\in E$ if 
$0<|i-l|<N$.

\begin{definition}
A \textit{labelled graph} is an undirected graph $\Gamma$ such that
every edge ( \textit{unordered} pair of distinct nodes ) of $\Gamma$
is labelled by a unique label from $(\G \cup \{\rho\}) \times n$, where
$\rho \notin \G$ is a new element. The colour of $(\rho, i)$ is
defined to be $i$. The \textit{colour} of $(a, i)$ for $a \in \G$ is
$i$.
\end{definition}

We will write $ \Gamma (x, y)$ for the label of an edge $ (x, y)$ in
the labelled graph $\Gamma$. Note that these may not always be
defined: for example, $ \Gamma (x, x)$ is not.\\
If $\Gamma$ is a labelled graph, and $ D \subseteq \Gamma$, we write
$ \Gamma \upharpoonright D$ for the induced subgraph of $\Gamma$ on
the set $D$ (it inherits the edges and colours of $\Gamma$, on its
domain $D$). We write $\triangle \subseteq \Gamma$ if $\triangle$ is
an induced subgraph of $\Gamma$ in this sense.

\begin{definition}
Let $ \Gamma, \triangle$ be labelled graphs, and $\theta : \Gamma
\rightarrow \triangle$ be a map. $\theta$ is said to be a
\textit{labelled graph embedding}, or simple an \textit{embedding},
if it is injective and preserves all edges, and all colours, where
defined, in both directions. An \textit{isomorphism} is a bijective
embedding.
\end{definition}

Now we define a class $\GG$ of certain labelled graphs.

\begin{definition}
The class $\GG$ consists of all complete labelled graphs $\Gamma$ (possibly
the empty graph) such that for all distinct $ x, y, z \in \Gamma$,
writing $ (a, i) = \Gamma (y, x)$, $ (b, j) = \Gamma (y, z)$, $ (c,
l) = \Gamma (x, z)$, we have:\\
\begin{enumarab}
\item $| \{ i, j, l \} > 1 $, or
\item $ a, b, c \in \G$ and $ \{ a, b, c \} $ has at least one edge
of $\G$, or
\item exactly one of $a, b, c$ -- say, $a$ -- is $\rho$, and $bc$ is
an edge of $\G$, or
\item two or more of $a, b, c$ are $\rho$.
\end{enumarab}
Clearly, $\GG$ is closed under isomorphism and under induced
subgraphs.
\end{definition}

\begin{theorem}
There is a countable labelled graph $M\in \GG$ with the following
property:\\
$\bullet$ If $\triangle \subseteq \triangle' \in \GG$, $|\triangle'|
\leq n$, and $\theta : \triangle \rightarrow M$ is an embedding,
then $\theta$ extends to an embedding $\theta' : \triangle'
\rightarrow M$.
\end{theorem}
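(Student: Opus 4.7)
\medskip

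\noindent\textbf{Proof proposal.} The plan is to build $M$ as the union of an increasing chain $M_0 \subseteq M_1 \subseteq M_2 \subseteq \cdots$ of finite labelled graphs, each in $\GG$, using a standard step-by-step Fra\"iss\'e-style construction. Start with $M_0 = \emptyset$. Enumerate (with infinite repetitions) all triples $(\triangle,\triangle',\theta)$ where $\triangle \subseteq \triangle' \in \GG$ with $|\triangle'| \le n$, $\triangle$ is finite, and $\theta$ is an embedding of $\triangle$ into one of the $M_i$'s produced along the way (this bookkeeping is possible since at each stage $M_i$ is finite and there are only countably many isomorphism types of finite graphs in $\GG$). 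At stage $i+1$, consider the $i$-th triple $(\triangle,\triangle',\theta)$ in the enumeration (where $\theta$ maps into $M_i$), and extend $M_i$ to $M_{i+1} \in \GG$ by adjoining isomorphic copies of the new vertices $\triangle' \setminus \triangle$ and labelling all edges between these new vertices and $M_i$ appropriately. The resulting $M = \bigcup_i M_i$ is in $\GG$ (membership in $\GG$ is determined by triangles, hence is preserved under unions of chains), is countable, and has the required extension property by construction.

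\medskip

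\noindent The key technical step is an \emph{amalgamation lemma}: given $\Gamma \in \GG$ finite, $\triangle \subseteq \Gamma$, and $\triangle \subseteq \triangle' \in \GG$ with $|\triangle'| \le n$, there exists $\Gamma^+ \in \GG$ such that $\Gamma \subseteq \Gamma^+$ and $\triangle'$ embeds into $\Gamma^+$ over $\triangle$. To prove it, take $\Gamma^+$ to have vertex set $\Gamma \cup (\triangle' \setminus \triangle)$, keep all labels already defined (by $\Gamma$ or $\triangle'$), and label each new edge $\{x,v\}$ with $x \in \Gamma \setminus \triangle$ and $v \in \triangle' \setminus \triangle$ by a label of the form $(\rho, c(x,v))$, where the colour $c(x,v) \in n$ is chosen with care. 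The standard trick is to pick the colours so that every triangle involving one or two new edges either (i) contains two $\rho$-labels, satisfying clause~(4) of the definition of $\GG$, or (ii) has at least two \emph{distinct} colours among its three edges, satisfying clause~(1). Because $|\triangle' \setminus \triangle| \le n$ and we have $n$ available colours, a pigeonhole-style assignment (varying $c(x,v)$ with $v$) makes clause~(1) easy to enforce on any triangle with two edges sharing the new vertex $v$; triangles with new edges to two distinct $v, v' \in \triangle' \setminus \triangle$ automatically contain at least two $\rho$'s and fall under clause~(4).

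\medskip

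\noindent Triangles entirely inside $\Gamma$ are unchanged, and triangles entirely inside (the image of) $\triangle'$ are fine because $\triangle' \in \GG$. Thus the only triangles to worry about are those with at least one vertex in $\Gamma \setminus \triangle$ and at least one in $\triangle' \setminus \triangle$, and the above labelling scheme handles them. Iterating this amalgamation through the bookkeeping enumeration produces $M$. The \textbf{main obstacle} is precisely the verification that the chosen labelling on the new edges really does keep every triangle in $\Gamma^+$ compliant with the definition of $\GG$: one must check the mixed triangles case by case, in particular those of the form $(x, y, v)$ with $x,y \in \Gamma$ and $v \in \triangle' \setminus \triangle$, where the pre-existing label $\Gamma(x,y)$ is not under our control and we must rely on choosing $c(x,v)$ and $c(y,v)$ so that either the colour set $\{i,j,l\}$ has size at least $2$, or sufficiently many of the labels are $\rho$. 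This is where the hypothesis $|\triangle'| \le n$ becomes crucial, since it bounds the number of new vertices to be coloured and allows the pigeonhole assignment to succeed.
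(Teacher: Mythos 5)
Your proposal is correct and takes essentially the same route as the paper, which packages the identical step-by-step construction as a two-player game in which $\exists$ adjoins one new node per round and labels every edge from that node into $\Gamma\setminus F$ (where $F$ is the image of $\triangle$) by $(\rho,i)$ for a single colour $i$ not occurring among the colours of the edges from the new node into $F$; your amalgamation lemma is exactly $\exists$'s strategy, with several nodes absorbed at once and the same case split (two $\rho$-labels give clause~(4), otherwise the avoided colour gives clause~(1)). One small imprecision: the pigeonhole works because $|\triangle|\leq n-1$ bounds the number of \emph{forbidden} colours for each new vertex, not because $|\triangle'\setminus\triangle|\leq n$ bounds the number of new vertices (edges joining two new vertices to a common old one are already handled by the double-$\rho$ clause), but this does not affect the correctness of the construction.
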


\begin{demo}{Proof} Two players, $\forall$ and $\exists$, play a game to build a
labelled graph $M$. They play by choosing a chain $\Gamma_0
\subseteq \Gamma_1 \subseteq\ldots $ of finite graphs in $\GG$; the
union of
the chain will be the graph $M.$
There are $\omega$ rounds. In each round, $\forall$ and $\exists$ do
the following. Let $ \Gamma \in \GG$ be the graph constructed up to
this point in the game. $\forall$ chooses $\triangle \in \GG$ of
size $< n$, and an embedding $\theta : \triangle \rightarrow
\Gamma$. He then chooses an extension $ \triangle \subseteq
\triangle^+ \in \GG$, where $| \triangle^+ \backslash \triangle |
\leq 1$. These choices, $ (\triangle, \theta, \triangle^+)$,
constitute his move. $\exists$ must respond with an extension $
\Gamma \subseteq \Gamma^+ \in \GG$ such that $\theta $ extends to an
embedding $\theta^+ : \triangle^+ \rightarrow \Gamma^+$. Her
response ends the round.
The starting graph $\Gamma_0 \in \GG$ is arbitrary but we will take
it to be the empty graph in $\GG$.
We claim that $\exists$ never gets stuck -- she can always find a suitable
extension $\Gamma^+ \in \GG$.  Let $\Gamma \in \GG$ be the graph built at some stage, and let
$\forall$ choose the graphs $ \triangle \subseteq \triangle^+ \in
\GG$ and the embedding $\theta : \triangle \rightarrow \Gamma$.
Thus, his move is $ (\triangle, \theta, \triangle^+)$.
We now describe $\exists$'s response. If $\Gamma$ is empty, she may
simply plays $\triangle^+$, and if $\triangle = \triangle^+$, she
plays $\Gamma$. Otherwise, let $ F = rng(\theta) \subseteq \Gamma$.
(So $|F| < n$.) Since $\triangle$ and $\Gamma \upharpoonright F$ are
isomorphic labelled graphs (via $\theta$), and $\GG$ is closed under
isomorphism, we may assume with no loss of generality that $\forall$
actually played $ ( \Gamma \upharpoonright F, Id_F, \triangle^+)$,
where $\Gamma \upharpoonright F \subseteq \triangle^+ \in \GG$,
$\triangle^+ \backslash F = \{\delta\}$, and $\delta \notin \Gamma$.
We may view $\forall$'s move as building a labelled graph $ \Gamma^*
\supseteq \Gamma$, whose nodes are those of $\Gamma$ together with
$\delta$, and whose edges are the edges of $\Gamma$ together with
edges from $\delta$ to every node of $F$. The labelled graph
structure on $\Gamma^*$ is given by\\
$\bullet$ $\Gamma$ is an induced subgraph of $\Gamma^*$ (i.e., $
\Gamma \subseteq \Gamma^*$)\\
$\bullet$ $\Gamma^* \upharpoonright ( F \cup \{\delta\} ) =
\triangle^+$.
Now $ \exists$ must extend $ \Gamma^*$ to a complete
graph on the same node and complete the colouring yielding a graph
$ \Gamma^+ \in \GG$. Thus, she has to define the colour $
\Gamma^+(\beta, \delta)$ for all nodes $ \beta \in \Gamma \backslash
F$, in such a way as to meet the conditions of definition 1. She
does this as follows. The set of colours of the labels in $ \{
\triangle^+(\delta, \phi) : \phi \in F \} $ has cardinality at most
$ n - 1 $. Let  $ i < n$ be a "colour"not in this set. $ \exists$
labels $(\delta, \beta) $ by $(\rho, i)$ for every $ \beta \in
\Gamma \backslash F$. This completes the definition of $ \Gamma^+$.
\\
It remains to check that this strategy works--that the conditions
from the definition of $\GG$ are met. But this is not so hard.

Now there are only countably many
finite graphs in $ \GG$ up to isomorphism, and each of the graphs
built during the game is finite. Hence $\forall$ may arrange to play
every possible $(\triangle, \theta, \triangle^+)$ (up to
isomorphism) at some round in the game. Suppose he does this, and
let $M$ be the union of the graphs played in the game. 
\end{demo}

We want to view $M$ as a classical structure, and for that we recall some rather elementary notions from model theory.
Recall the definition of the $n$-variable infinitary language
$L^n_{\infty \omega}$. We use variables $ x_0,\ldots x_{n-1}$. The
atomic formulas are $x_i = x_j$ for any $i, j < n$, and $R(\bar{x})$
for any $k$-ary $ R \in L$ and any $k$-tuple $\bar{x}$ of variables
taken form $ x_0,\ldots x_{n-1}$. If $\phi$ is an $L^n_{\infty
\omega}$-formula then so are $\neg \phi$ and $\exists x_i \phi$ for
$i < n$; and if $\Phi$ is a set of $L^n_{\infty \omega}$-formulas
then $\bigwedge \Phi$ and $\bigvee \Phi $ are also $L^n_{\infty
\omega}$-formulas. Of course, we write $\bigwedge \{ \phi, \psi \}$
as $\phi  \wedge \psi$, etc.
The logic $L^n_{\infty \omega}$ is given semantics in a model $A$ in the
usual way, defining $ A \models \phi(\bar{a})$ for an $n$-tuple
$\bar{a}$ of elements of $A$ by induction on the formula $\phi$.

Let $L^n$ denote the first-order fragment of $L^n_{\infty \omega}$

\begin{definition}
An $n$-\textit{back-and-forth system} on $A$ is a set $\Theta$ of
one-to-one partial maps : $ A \rightarrow A$ such that:\\
\begin{enumerate}
\item if $\theta \in \Theta$ then $|\theta| \leq n$
\item if $ \theta' \subseteq \theta \in \Theta$ then $\theta' \in \Theta$
\item if $\theta \in \Theta$, $|\theta| \leq n$, and $ a \in A$,
then there is $ \theta' \supseteq \theta$ in $\Theta$ with $ a \in
Dom(\theta')$ (forth)
\item if $\theta \in \Theta$, $|\theta| \leq n$, and $ a \in A$,
then there is $ \theta' \supseteq \theta$ in $\Theta$ with $ a \in
Range(\theta')$ (back).

\end{enumerate}
\end{definition}

 Recall that a \textit{partial isomorphism} of $A$ is a
partial map $ \theta : A \rightarrow A$ that preserves all
quantifier-free $L$-formulas.

\begin{theorem} 
Let $\Theta$ be an $n$-back-and-forth system
of partial isomorphism on $A$, let $\bar{a}, \bar{b} \in {}^{n}A$,
and suppose that $ \theta = ( \bar{a} \mapsto \bar{b})$ is a map in
$\Theta$. Then $ A \models \phi(\bar{a})$ iff $ A \models
\phi(\bar{b})$, for any formula $\phi$ of $L^n_{\infty \omega}$.
\end{theorem}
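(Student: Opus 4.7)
The plan is a straightforward induction on the complexity of the $L^n_{\infty\omega}$-formula $\phi$, using the $n$-back-and-forth system $\Theta$ at the quantifier step. Throughout, if $\bar{a}\in{}^nA$ and $c\in A$, I write $\bar{a}(i/c)$ for the tuple obtained from $\bar{a}$ by replacing its $i$th entry with $c$. The hypothesis $\theta=(\bar{a}\mapsto\bar{b})\in\Theta$ is a partial isomorphism of cardinality at most $n$, which will be repeatedly restricted and extended.

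For the atomic case $\phi$ is either $x_i=x_j$ or $R(x_{i_0},\ldots,x_{i_{k-1}})$, and then $A\models\phi(\bar{a})\Leftrightarrow A\models\phi(\bar{b})$ is immediate from the fact that $\theta$ is a partial isomorphism (remembering that the variables all lie among $x_0,\ldots,x_{n-1}$, so all the relevant elements lie in $\mathrm{Dom}(\theta)$). The Boolean cases $\neg\psi$, $\bigwedge\Phi$, $\bigvee\Phi$ follow directly from the induction hypothesis applied to the subformulas, with the same $\theta$; no appeal to back-and-forth is needed.

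The only nontrivial case is $\phi=\exists x_i\,\psi$. Suppose $A\models\exists x_i\,\psi(\bar{a})$; pick $c\in A$ with $A\models\psi(\bar{a}(i/c))$. Let $\theta_0=\theta\upharpoonright(\mathrm{Dom}(\theta)\setminus\{a_i\})$, which lies in $\Theta$ by closure under subfunctions (condition (2)) and has size at most $n-1$. By the forth condition (3), there is $\theta_1\supseteq\theta_0$ in $\Theta$ with $c\in\mathrm{Dom}(\theta_1)$; set $d=\theta_1(c)$, and let $\theta'=\theta_0\cup\{(c,d)\}$, which is in $\Theta$ by (2). Writing $\bar{b}'=\bar{b}(i/d)$, we have $\theta'=(\bar{a}(i/c)\mapsto\bar{b}')\in\Theta$, so the induction hypothesis applied to $\psi$ gives $A\models\psi(\bar{b}')$, hence $A\models\exists x_i\,\psi(\bar{b})$. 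The reverse direction is symmetric, using the back condition (4) instead of forth. The subtlety that makes everything work with only $n$ variables is that the truth of $\exists x_i\,\psi(\bar{a})$ depends only on the values $a_j$ for $j\neq i$, so discarding $a_i$ from the domain of $\theta$ leaves room (within the cardinality bound $n$) to adjoin the witness $c$, which is exactly what the back-and-forth system is designed to permit. No step presents a real obstacle; the main thing is simply to respect the cardinality bound $|\theta|\leq n$ carefully when restricting and extending maps in $\Theta$.
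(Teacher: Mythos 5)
Your proof is correct and is exactly the argument the paper intends: the paper's own proof of this theorem consists of the single line ``By induction on the structure of $\phi$,'' and you have filled in precisely that induction, with the quantifier step handled by restricting $\theta$ and invoking the forth/back conditions. One small point worth fixing: when $\bar{a}$ has a repeated entry $a_i=a_j$ for some $j\neq i$, restricting to $\mathrm{Dom}(\theta)\setminus\{a_i\}$ also discards the pair $(a_i,b_i)$, which is still needed for the map $(\bar{a}(i/c)\mapsto\bar{b}(i/d))$ to lie in $\Theta$; restricting instead to $\{a_j: j\neq i\}$ (still of size at most $n-1$) avoids this and the rest of your argument goes through verbatim.
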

\begin{demo}{Proof} By induction on the structure of $\phi$. 
\end{demo}
Suppose that $W \subseteq {}^{n}A$ is a given non-empty set. We can
relativise quantifiers to $W$, giving a new semantics $\models_W$
for $L^n_{\infty \omega}$, which has been intensively studied in
recent times. (see,e.g. \cite{AJN}.) If $\bar{a} \in W$:
\begin{itemize}
\item for atomic $\phi$, $A\models_W \phi(\bar{a})$ 
iff $A \models \phi(\bar{a})$

\item the boolean clauses are as expected

\item for $ i < n, A \models_W \exists x_i \phi(\bar{a})$ iff $A \models_W
\phi(\bar{a}')$ for some $ \bar{a}' \in W$ with $\bar{a}' \equiv_i
\bar{a}$.
\end{itemize}

\begin{theorem}\label{atom} If $W$ is $L^n_{\infty \omega}$ definable, $\Theta$ is an
 $n$-\textit{back-and-forth} system
of partial isomorphisms on $A$, $\bar{a}, \bar{b} \in W$, and $
\bar{a} \mapsto \bar{b} \in \Theta$, then $ A \models \phi(\bar{a})$
iff $ A \models \phi(\bar{b})$ for any formula $\phi$ of
$L^n_{\infty \omega}$.
\end{theorem}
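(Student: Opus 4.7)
The plan is to prove this by induction on the structure of $\phi$, following the pattern of the previous theorem but with extra care at the quantifier step, which is where relativization to $W$ and the definability hypothesis come into play.

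First I would handle the base cases and Boolean clauses. For atomic $\phi$, since $\bar{a},\bar{b}\in W$ we have $A\models_W\phi(\bar{a})$ iff $A\models\phi(\bar{a})$, and the latter is preserved by the partial isomorphism $\bar{a}\mapsto \bar{b}\in\Theta$; likewise for $\bar{b}$. The clauses for $\neg$, $\bigwedge$, $\bigvee$ are immediate from the induction hypothesis.

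The main work is in the existential step. Suppose $A\models_W\exists x_i\,\phi(\bar{a})$; then there is $\bar{a}'\in W$ with $\bar{a}'\equiv_i\bar{a}$ and $A\models_W\phi(\bar{a}')$. Let $\theta=(\bar{a}\mapsto \bar{b})$. I would first discard the pair $(a_i,b_i)$ using clause (2) of the definition of a back-and-forth system to obtain $\theta_0=\theta\setminus\{(a_i,b_i)\}\in\Theta$, and then apply the ``forth'' clause (3) to $\theta_0$ and the element $a_i'$ to obtain $\theta_1\supseteq \theta_0$ in $\Theta$ with $a_i'\in\mathrm{Dom}(\theta_1)$. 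Setting $b_i'=\theta_1(a_i')$ and $b_j'=b_j$ for $j\neq i$, the map $(\bar{a}'\mapsto \bar{b}')$ is a subset of $\theta_1$, hence in $\Theta$ by clause (2). The obvious step $A\models_W\phi(\bar{b}')\Rightarrow A\models_W\exists x_i\,\phi(\bar{b})$ now follows by the induction hypothesis applied to $\phi$, provided we know $\bar{b}'\in W$. The symmetric direction uses the ``back'' clause (4) in the same way.

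The hard part, and the role of the definability hypothesis, is verifying that the new tuple $\bar{b}'$ lies in $W$. Let $\psi_W(x_0,\dots,x_{n-1})$ be an $L^n_{\infty\omega}$-formula defining $W$ in the classical (unrelativized) sense, so that for any $\bar{c}\in{}^nA$, $\bar{c}\in W$ iff $A\models\psi_W(\bar{c})$. Since $\bar{a}'\in W$ we have $A\models\psi_W(\bar{a}')$; applying the earlier theorem (for classical $\models$) to the partial isomorphism $(\bar{a}'\mapsto \bar{b}')\in\Theta$ and the $L^n_{\infty\omega}$-formula $\psi_W$ yields $A\models\psi_W(\bar{b}')$, whence $\bar{b}'\in W$ as required. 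This is exactly where we need $W$ to be $L^n_{\infty\omega}$-definable and not merely arbitrary: the previous theorem provides a ``free'' preservation result that feeds back into the relativized induction. With this point established, the inductive step closes and the proof is complete.
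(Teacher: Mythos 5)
Your proof is correct, but it takes a genuinely different route from the paper's. The paper disposes of this in two lines by \emph{syntactic relativization}: writing $\psi$ for the $L^n_{\infty\omega}$-formula defining $W$, it forms $\phi^\psi$ (with the key clause $(\exists x_i\,\phi)^\psi=\exists x_i(\psi\wedge\phi^\psi)$), observes that $A\models_W\phi(\bar{a})$ iff $A\models\phi^\psi(\bar{a})$ for $\bar{a}\in W$, and then applies the preceding theorem on classical semantics to the single formula $\phi^\psi$ — no new induction is run at all. You instead redo the induction directly on the relativized semantics, invoking the preceding theorem only for the defining formula $\psi_W$ in order to show that the witness tuple $\bar{b}'$ produced by the ``forth'' step stays inside $W$. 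The paper's reduction is shorter and reuses the earlier result wholesale; your argument is longer but isolates exactly what the definability hypothesis is for, and it shows in passing that the hypothesis can be weakened: all you actually need is that $W$ be closed under images of tuples along maps in $\Theta$ ($\Theta$-invariance), of which $L^n_{\infty\omega}$-definability is merely a sufficient condition. One cosmetic point: defining $\theta_0=\theta\setminus\{(a_i,b_i)\}$ can discard too much when the tuple $\bar{a}$ has a repeated entry $a_j=a_i$ for some $j\neq i$; you should instead take $\theta_0=\theta\restriction\{a_j:j\neq i\}$, after which the rest of your quantifier step goes through verbatim.
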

\begin{demo}{Proof} Assume that $W$ is definable by the $L^n_{\infty \omega}$
formula $\psi$, so that $W = \{ \bar{a} \in {}^{n}A:A\models \psi(a)\}$. We may
relativise the quantifiers of $L^n_{\infty \omega}$-formulas to
$\psi$. For each $L^n_{\infty
\omega}$-formula $\phi$ we obtain a relativised one, $\phi^\psi$, by
induction, the main clause in the definition being:
\begin{itemize}
\item $( \exists x_i \phi)^\psi = \exists x_i ( \psi \wedge
\phi^\psi)$.
\end{itemize}
 Then clearly, $ A \models_W \phi(\bar{a})$ iff $ A \models
 \phi^\psi(\bar{a})$, for all $ \bar{a} \in W$.
 \end{demo}

\begin{definition} Let $L^+$ be the signature consisting of the binary
relation symbols $(a, i)$, for each $a \in \G \cup \{ \rho \}$ and
$ i < n$. Let $L = L^+ \setminus \{ (\rho, i) : i < n \}$. From now
on, the logics $L^n, L^n_{\infty \omega}$ are taken in this
signature.
\end{definition}

We may regard any non-empty labelled graph equally as an
$L^+$-structure, in the obvious way. The $n$-homogeneity built into
$M$ by its construction would suggest that the set of all partial
isomorphisms of $M$ of cardinality at most $n$ forms an
$n$-back-and-forth system. This is indeed true, but we can go
further.

\begin{definition}
Let $\chi$ be a permutation of the set $\omega \cup \{ \rho\}$. Let
$ \Gamma, \triangle \in \GG$ have the same size, and let $ \theta :
\Gamma \rightarrow \triangle$ be a bijection. We say that $\theta$
is a $\chi$-\textit{isomorphism} from $\Gamma$ to $\triangle$ if for
each distinct $ x, y \in \Gamma$,
\begin{itemize}
\item If $\Gamma ( x, y) = (a, j)$ with $a\in \N$, then there exist unique $l\in \N$ and $r$ with $0\leq r<N$ such that
$a=Nl+r$. 
\begin{equation*}
\triangle( \theta(x),\theta(y)) =
\begin{cases}
( N\chi(i)+r, j), & \hbox{if $\chi(i) \neq \rho$} \\
(\rho, j),  & \hbox{otherwise.} 
\end{cases}
\end{equation*}
\end{itemize}

\begin{itemize}
\item If $\Gamma ( x, y) = ( \rho, j)$, then
\begin{equation*}
\triangle( \theta(x),\theta(y)) \in
\begin{cases}
\{( N\chi(\rho)+s, j): 0\leq s < N \}, & \hbox{if $\chi(\rho) \neq \rho$} \\
\{(\rho, j)\},  & \hbox{otherwise.} \end{cases}
\end{equation*}
\end{itemize}
\end{definition}

\begin{definition}
For any permutation $\chi$ of $\omega \cup \{\rho\}$, $\Theta^\chi$
is the set of partial one-to-one maps from $M$ to $M$ of size at
most $n$ that are $\chi$-isomorphisms on their domains. We write
$\Theta$ for $\Theta^{Id_{\omega \cup \{\rho\}}}$.
\end{definition}

\begin{lemma}
For any permutation $\chi$ of $\omega \cup \{\rho\}$, $\Theta^\chi$
is an $n$-back-and-forth system on $M$.
\end{lemma}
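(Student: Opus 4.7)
\medskip

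The plan is to verify each of the four conditions in the definition of an $n$-back-and-forth system. Conditions (i) and (ii) are immediate from the definition of $\Theta^\chi$: the cardinality bound is built in, and the restriction of a $\chi$-isomorphism to a subset of its domain is obviously still a $\chi$-isomorphism. It also suffices to establish the forth condition (iii), because if $\theta \in \Theta^\chi$ then $\theta^{-1}$ is a $\chi^{-1}$-isomorphism and hence lies in $\Theta^{\chi^{-1}}$; applying (iii) to the pair $(\Theta^{\chi^{-1}}, \theta^{-1})$ yields (iv) for $\Theta^\chi$.

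To prove (iii), fix $\theta \in \Theta^\chi$ with $|\theta| < n$ and $a \in M \setminus \mathrm{Dom}(\theta)$; write $F = \mathrm{Dom}(\theta)$ and let $\Gamma = M \upharpoonright (F \cup \{a\}) \in \GG$. I build a labelled graph $\triangle^+$ on the vertex set $\theta(F) \cup \{\delta\}$ (for some new symbol $\delta$) by declaring $\triangle^+ \upharpoonright \theta(F) = M \upharpoonright \theta(F)$ and, for each $x \in F$, defining the label $\triangle^+(\theta(x), \delta)$ to be the $\chi$-transform of $M(x, a)$ in the sense of the $\chi$-isomorphism clauses. Whenever $M(x, a) = (\rho, j)$ and $\chi(\rho) \neq \rho$ the $\chi$-transform is not unique but may be any member of $\{(N\chi(\rho) + s, j) : 0 \le s < N\}$; I retain this freedom. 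Once $\triangle^+ \in \GG$ is known, the universal property of $M$ applied to the inclusion $M \upharpoonright \theta(F) \hookrightarrow \triangle^+$ produces an embedding $\iota : \triangle^+ \to M$ fixing $\theta(F)$, and $\theta' := \theta \cup \{(a, \iota(\delta))\}$ is the required extension in $\Theta^\chi$.

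Hence the whole argument reduces to verifying $\triangle^+ \in \GG$. Every triple of distinct vertices is either contained in $\theta(F)$, in which case it sits inside $M \in \GG$, or it is of the form $(\theta(x), \theta(y), \delta)$, in which case by construction (and because $\theta$ is a $\chi$-isomorphism on $F$) all three of its labels are $\chi$-transforms of the corresponding labels on the triple $(x, y, a)$ in $\Gamma$. I then case-split on which of the clauses (1)--(4) in the definition of $\GG$ is satisfied by $(x, y, a)$ and verify that the transformed triple still meets one of them. Since $\chi$ leaves the second (colour) coordinate of every label unchanged, clause (1) is automatic when it held originally. For the remaining clauses one tracks how the alternatives ``label in $\G \times n$'' and ``label in $\{\rho\} \times n$'' can be exchanged under $\chi$: any label that was $\rho$-typed and maps to a $\G$-block, together with any label whose block is sent by $\chi$ to $\rho$, will alter which clause applies, but never all the way out of $\GG$, because the freedom in choosing $s$ for the $\rho \to \G$ transitions can be exercised to produce the required within-block $\G$-edges.

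The principal obstacle is this last case analysis. The delicate part is when the original triangle meets only clause (2) by virtue of a cross-block $\G$-edge and $\chi$ fails to preserve adjacency of those blocks; here one must use the bound $N \ge n(n-1)/2$ together with the pigeonhole-style fact that among the (at most $n$) blocks relevant to $\triangle^+$, the freedom in placing the new within-block coordinates $s$ suffices to restore a qualifying edge or, failing that, to promote the triangle to clause (3) or (4) via suitable $\rho$-assignments. Once this combinatorial check is made for each of the finitely many configurations of ``$\G$-typed vs.\ $\rho$-typed'' labels among the three edges of the triangle, $\triangle^+ \in \GG$ follows and the proof is complete.
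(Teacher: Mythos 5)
Your overall route is the same as the paper's: conditions (i) and (ii) are immediate, the back property is obtained by applying the forth property to $\Theta^{\chi^{-1}}$, and the forth step is carried out by building the $\chi$-transformed one-point extension $\triangle^+\in\GG$ of the image and realising it inside $M$ via the extension property. All of that matches the paper.

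The gap is in the only step that carries real content, the verification that $\triangle^+\in\GG$, and specifically in how the ``retained freedom'' for the new edges whose originals are $\rho$-labelled is to be spent. You never specify the choice, and your sketch of the hard cases would not survive being written out. First, the case you single out as the principal obstacle (a triangle whose three original labels are all $\G$-typed) admits no freedom whatsoever: each transformed label is then completely determined by $\chi$, so neither ``exercising the freedom in placing $s$'' nor ``promoting the triangle to clause (3) or (4) via suitable $\rho$-assignments'' is a move you are allowed to make --- whether a transformed label is $\rho$-typed is decided by $\chi$, not by you. Second, in the case where you genuinely do have a choice (the two new edges $(\theta(x),\delta)$ and $(\theta(y),\delta)$ both arising from $\rho$-labels of the same colour, with $\chi(\rho)\neq\rho$), an unconstrained choice can break clause (2): picking the same offset for both makes the two new labels equal, hence non-adjacent in $\G$, and there need be no other qualifying edge in that triangle. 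The paper's proof consists precisely of making this choice correctly: before labelling anything it fixes distinct offsets $e_s<N$, one for each node $b_s$ already in the range, chosen so that no $(e_s,j)$ already labels an edge there (possible because $N\geq n(n-1)/2$ bounds the number of edges present), and it labels $(b_s,b_t)$ by $(N\chi(\rho)+e_s,j)$ whenever the original label is $(\rho,j)$. All the $\rho$-derived labels then lie in the single block $\{N\chi(\rho),\dots,N\chi(\rho)+N-1\}$ and are pairwise distinct, hence pairwise adjacent in $\G$, which is exactly what rescues clause (2) for the triangles at issue. That device --- and the role the hypothesis $N\geq n(n-1)/2$ plays in making it available --- is the missing idea; without it your case analysis cannot be completed as described.
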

\begin{demo}{Proof}
Clearly, $\Theta^\chi$ is closed under restrictions. We check the
``forth" property. Let $\theta \in \Theta^\chi$ have size $t < n$.
Enumerate $ dom(\theta)$, $rng(\theta)$ respectively as $ \{ a_0,
\ldots, a_{t-1} \}$, $ \{ b_0,\ldots b_{t-1} \}$, with $\theta(a_i)
= b_i$ for $i < t$. Let $a_t \in M$ be arbitrary, let $b_t \notin M$
be a new element, and define a complete labelled graph $\triangle
\supseteq M \upharpoonright \{ b_0,\ldots, b_{t-1} \}$ with nodes
$\{ b_0,\ldots, b_{t} \}$ as follows.\\

Choose distinct "nodes"$ e_s < N$ for each $s < t$, such that no
$(e_s, j)$ labels any edge in $M \upharpoonright \{ b_0,\dots,
b_{t-1} \}$. This is possible because $N \geq n(n-1)/2$, which
bounds the number of edges in $\triangle$. We can now define the
colour of edges $(b_s, b_t)$ of $\triangle$ for $s = 0,\ldots, t-1$.

\begin{itemize}
\item If $M ( a_s, a_t) = ( Ni+r, j)$, for some $i\in \N$ and $0\leq r<N$, then
\begin{equation*}
\triangle( b_s, b_t) =
\begin{cases}
( N\chi(i)+r, j), & \hbox{if $\chi(i) \neq \rho$} \\
\{(\rho, j)\},  & \hbox{otherwise.} \end{cases}
\end{equation*}
\end{itemize}

\begin{itemize}
\item If $M ( a_s, a_t) = ( \rho, j)$, then assuming that $e_s=Ni+r,$ $i\in \N$ and $0\leq r<N$,
\begin{equation*}
\triangle( b_s, b_t) =
\begin{cases}
( N\chi(\rho)+r, j), & \hbox{if $\chi(\rho) \neq \rho$} \\
\{(\rho, j)\},  & \hbox{otherwise.} \end{cases}
\end{equation*}
\end{itemize}

This completes the definition of $\triangle$. It is easy to check 
that $\triangle \in \GG$. Hence, there is a graph embedding $ \phi : \triangle \rightarrow M$
extending the map $ Id_{\{ b_0,\ldots b_{t-1} \}}$. Note that
$\phi(b_t) \notin rng(\theta)$. So the map $\theta^+ = \theta \cup
\{(a_t, \phi(b_t))\}$ is injective, and it is easily seen to be a
$\chi$-isomorphism in $\Theta^\chi$ and defined on $a_t$.
The converse,``back" property is similarly proved ( or by symmetry,
using the fact that the inverse of maps in $\Theta$ are
$\chi^{-1}$-isomorphisms).
\end{demo}

But we can also derive a connection between classical and
relativised semantics in $M$, over the following set $W$:\\

\begin{definition}
Let $W = \{ \bar{a} \in {}^n M : M \models ( \bigwedge_{i < j < n,
l < n} \neg (\rho, l)(x_i, x_j))(\bar{a}) \}.$
\end{definition}

$W$ is simply the set of tuples $\bar{a}$ in ${}^nM$ such that the
edges between the elements of $\bar{a}$ don't have a label involving
$\rho$. Their labels are all of the form $(Ni+r, j)$. We can replace $\rho$-labels by suitable $(a, j)$-labels
within an $n$-back-and-forth system. Thus, we may arrange that the
system maps a tuple $\bar{b} \in {}^n M \backslash W$ to a tuple
$\bar{c} \in W$ and this will preserve any formula
containing no relation symbols $(a, j)$ that are ``moved" by the
system. The next proposition uses this idea to show that the
classical and $W$-relativised semantics agree.

\begin{theorem}
$M \models_W \varphi(\bar{a})$ iff $M \models \varphi(\bar{a})$, for
all $\bar{a} \in W$ and all $L^n$-formulas $\varphi$.
\end{theorem}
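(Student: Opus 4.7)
The plan is to proceed by induction on the complexity of $\varphi$. For atomic and Boolean $\varphi$ the two semantics $\models$ and $\models_W$ coincide by definition, so these cases are immediate. The forward direction of the quantifier case $\varphi = \exists x_i\psi$ is also easy: any $\bar{a}' \in W$ witnessing $M \models_W \exists x_i\psi(\bar{a})$ is, \emph{a fortiori}, a witness for $M \models \exists x_i\psi(\bar{a})$ once the induction hypothesis converts $M \models_W \psi(\bar{a}')$ to $M \models \psi(\bar{a}')$.

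The substantive direction is the following. Assume $\bar{a} \in W$ and $M \models \exists x_i\psi(\bar{a})$, and pick a classical witness $b \in M$ with $M \models \psi(\bar{a}(i|b))$. If $\bar{a}(i|b) \in W$ we are done, so suppose some edge from $b$ to an $a_k$ ($k \neq i$) carries a $(\rho,j)$-label; we must move $b$ to some $b'$ killing these $\rho$-edges without disturbing $\psi$. This is exactly the service the systems $\Theta^{\chi}$ were designed to provide. Let $F \subseteq \N$ be the finite set of colours $l$ such that some atomic symbol $(Nl+r,j)$ occurs in $\psi$, let $G \subseteq \N$ be the finite set of colours appearing on edges among $\{a_k : k \neq i\}$, and let $H \subseteq \N$ be the finite set of colours appearing on edges between $\{a_k: k\neq i\}$ and $b$. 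Since $F \cup G \cup H$ is finite and $\N$ infinite, pick $l_0 \in \N \setminus (F \cup G \cup H)$, and let $\chi$ be the transposition of $\rho$ and $l_0$ on $\N \cup \{\rho\}$. Because $\chi$ fixes $G$ pointwise, the identity on $\{a_k : k\neq i\}$ is a $\chi$-isomorphism and so lies in $\Theta^{\chi}$; the back-and-forth lemma for $\Theta^{\chi}$ extends it to some $\theta \in \Theta^{\chi}$ with $b \in \dom\theta$, and we set $b' = \theta(b)$.

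One then verifies two things. First, $\bar{a}(i|b') \in W$: for each $k \neq i$, if $M(a_k,b) = (\rho,j)$ then $M(a_k,b') = (Nl_0+s,j)$ for some $s$ (since $\chi(\rho)=l_0 \in \N$), and if $M(a_k,b) = (Nl+r,j)$ then $l \in H$ so $l \neq l_0$ and $M(a_k,b') = (Nl+r,j)$; in no case is a $\rho$-label produced. Second, $M \models \psi(\bar{a}(i|b'))$: because $\chi$ fixes $F$ pointwise and $l_0 \notin F$, the maps in $\Theta^{\chi}$ preserve in both directions every relation symbol $(Nl+r,j)$ with $l \in F$, so the size-$\leq n$ restrictions of $\Theta^{\chi}$ form an $n$-back-and-forth system of partial isomorphisms for the reduct of $M$ to the finite sub-signature used by $\psi$; applying the preservation theorem for back-and-forth systems of partial isomorphisms to this reduct transfers $M \models \psi(\bar{a}(i|b))$ to $M \models \psi(\bar{a}(i|b'))$. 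The induction hypothesis now yields $M \models_W \psi(\bar{a}(i|b'))$, and hence $M \models_W \exists x_i \psi(\bar{a})$, as required. The main obstacle is precisely the balancing act in choosing $\chi$: it must fix $F$ (to preserve $\psi$), fix $G$ (so that identity on $\{a_k:k\neq i\}$ qualifies as a $\chi$-isomorphism), and send $\rho$ to a colour outside $H$ (to kill the bad $\rho$-edges incident to the new witness); all of this fits together only because $\N$ is infinite while $F$, $G$, $H$ are finite.
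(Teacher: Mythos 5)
Your proof is correct and follows essentially the same route as the paper: induction on $\varphi$, with the hard direction of the quantifier case handled by choosing a permutation $\chi$ that fixes the finitely many graph-colours relevant to the formula and to the tuple while sending $\rho$ to a fresh colour, then using the ``forth'' property of $\Theta^{\chi}$ to relocate the witness into $W$ and the back-and-forth preservation theorem (on the finite reduct signature) to keep $\varphi$ true. The only cosmetic difference is that you take $\chi$ to be an explicit transposition and separate the relevant colours into the three sets $F$, $G$, $H$, where the paper bundles them into the single finite signature $L_{\varphi,\bar b}$.
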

\begin{demo}{Proof}
The proof is by induction on $\varphi$. If $\varphi$ is atomic, the
result is clear; and the boolean cases are simple.
Let $i < n$ and consider $\exists x_i \varphi$. If $M \models_W
\exists x_i \varphi(\bar{a})$, then there is $\bar{b} \in W$ with
$\bar{b} =_i \bar{a}$ and $M \models_W \varphi(\bar{b})$.
Inductively, $M \models \varphi(\bar{b})$, so clearly, $M \models_W
\exists x_i \varphi(\bar{a})$.
For the (more interesting) converse, suppose that $M \models_W
\exists x_i \varphi(\bar{a})$. Then there is $ \bar{b} \in {}^n M$
with $\bar{b} =_i \bar{a}$ and $M \models \varphi(\bar{b})$. Take
$L_{\varphi, \bar{b}}$ to be any finite subsignature of $L$
containing all the symbols from $L$ that occur in $\varphi$ or as a
label in $M \upharpoonright rng(\bar{b})$. (Here we use the fact
that $\varphi$ is first-order. The result may fail for infinitary
formulas with infinite signature.) Choose a permutation $\chi$ of
$\omega \cup \{\rho\}$ fixing any $i'$ such that some $(i'N+r, j)$
occurs in $L_{\varphi, \bar{b}}$ for some $r<N$, and moving $\rho$.
Let $\theta = Id_{\{a_m : m \neq i\}}$. Take any distinct $l, m \in
n \setminus \{i\}$. If $M(a_l, a_m) = (i'N+r, j)$, then $M( b_l,
b_m) = (i'N+r, j)$ because $ \bar{a} = _i \bar{b}$, so $(i'N+r, j)
\in L_{\varphi, \bar{b}}$ by definition of $L_{\varphi, \bar{b}}$.
So, $\chi(i') = i'$ by definition of $\chi$. Also, $M(a_l, a_m) \neq
( \rho, j)$(any $j$) because $\bar{a} \in W$. It now follows that
$\theta$ is a $\chi$-isomorphism on its domain, so that $ \theta \in
\Theta^\chi$.
Extend $\theta $ to $\theta' \in \Theta^\chi$ defined on $b_i$,
using the ``forth" property of $ \Theta^\chi$. Let $
\bar{c} = \theta'(\bar{b})$. Now by choice of of $\chi$, no labels
on edges of the subgraph of $M$ with domain $rng(\bar{c})$ involve
$\rho$. Hence, $\bar{c} \in W$.
Moreover, each map in $ \Theta^\chi$ is evidently a partial
isomorphism of the reduct of $M$ to the signature $L_{\varphi,
\bar{b}}$. Now $\varphi$ is an $L_{\varphi, \bar{b}}$-formula. Hence
we
have $M \models \varphi(\bar{a})$ iff $M \models \varphi(\bar{c})$.
So $M \models \varphi(\bar{c})$. Inductively, $M \models_W
\varphi(\bar{c})$. Since $ \bar{c} =_i \bar{a}$, we have $M
\models_W \exists x_i \varphi(\bar{a})$ by definition of the
relativised semantics. This completes the induction.
\end{demo}

We can now extract form the labelled graph $M$  a
relativised set algebra $\A$, which will turn out to be
representable atomic polyadic algebra.
\begin{definition}
\begin{enumerate}
\item For an $L^n_{\infty \omega}$-formula $\varphi $, we define
$\varphi^W$ to be the set $\{ \bar{a} \in W : M \models_W \varphi
(\bar{a}) \}$. 

\item We define $\A$ to be the relativised set algebra with domain
$$\{\varphi^W : \varphi \,\ \textrm {a first-order} \;\ L^n-
\textrm{formula} \}$$  and unit $W$, endowed with the algebraic
operations ${\sf d}_{ij}, {\sf c}_i, $ ect., in the standard way .

\end{enumerate}

Note that $\A$ is indeed closed under the operations and so is a
bona fide relativised set algebra. For, reading off from the
definitions of the standard operations and the relativised
semantics, we see that for all $L^n$-formulas $\varphi,\psi,$
\begin{itemize}
\item $-^{\A}(\varphi^W)=(\neg\varphi)^W$
\item $\varphi^W \cdot ^{\A} \psi^W = (\varphi \wedge \psi)^W$
\item ${\sf d}^{\A}_{ij}=(x_i = x_j)^W \,\ \textrm{for all} \;\ i,j<n.$
\item ${\sf c}_i^{\A}(\varphi^W)=(\exists x_i \varphi)^W \,\ \textrm{for all} \,\ i <
n.$\\ 
For a formula $\phi$ and $i,j<n$ , $\phi[x_i,x_j]$ stands for the formula obtained from $\phi$ by interchanging the free occurences of $x_i$ 
and $x_j$. Then we have:
\item ${\sf p}_{ij}^{\A}(\varphi^W)=\varphi[x_i,x_j]^W$
\end{itemize}

\end{definition}

\begin{theorem}
$\A$ is a representable (countable) atomic polyadic algebra
\end{theorem}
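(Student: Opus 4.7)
The plan is to verify the three properties in turn: polyadic-equality structure, atomicity, and representability. That $\A$ is a $\PEA_n$ follows because $W$ is $L^n_{\infty\omega}$-definable and the operations on $\A$ are exactly the algebraic counterparts of the logical connectives under the relativised semantics $\models_W$; the polyadic equality axioms transfer from the full relativised set algebra on $W$, since each axiom is an identity that holds in any relativised set algebra whose unit is closed under the relevant $\equiv_i$ and $\equiv_{ij}$ transitions (and $W$ is, because the condition ``no $\rho$-label on edges'' is preserved when you replace a coordinate or transpose two of them). Countability is immediate since there are only countably many $L^n$-formulas up to equivalence.

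For atomicity, I would exhibit the atoms explicitly. Given $\bar{a}\in W$, the complete labelled subgraph of $M$ on $\mathrm{Range}(\bar{a})$ has only labels in $\G\times n$, so it is described by a single quantifier-free $L^n$-formula $\alpha_{\bar{a}}(x_0,\ldots,x_{n-1})$ (a finite conjunction of atomic formulas of the form $(Ni+r,j)(x_l,x_m)$ together with inequalities). I claim $\alpha_{\bar{a}}^W$ is an atom: if $\bar{b},\bar{c}\in \alpha_{\bar{a}}^W$ then the map $\bar{a}\mapsto\bar{b}$ and $\bar{a}\mapsto\bar{c}$ lie in the back-and-forth system $\Theta$, so by Theorem~\ref{atom} (applied via the equivalence $M\models_W\psi\Leftrightarrow M\models\psi$ on $W$ of the preceding proposition) every $L^n$-formula $\psi$ satisfies $\bar{b}\in\psi^W\Leftrightarrow\bar{c}\in\psi^W$. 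Hence any nonzero $\psi^W\subseteq\alpha_{\bar{a}}^W$ coincides with $\alpha_{\bar{a}}^W$. Since every nonzero $\phi^W$ contains some $\alpha_{\bar{a}}^W$ (any $\bar{a}\in\phi^W$ will do), $\A$ is atomic.

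For representability, the natural candidate is the map $\mathrm{rep}:\A\to\wp({}^n M)$ sending $\phi^W$ to $\phi^M=\{\bar{a}\in{}^n M : M\models\phi(\bar{a})\}$, regarded as an element of the full polyadic equality set algebra on base $M$. Once well-definedness is established, it is routine to see that $\mathrm{rep}$ is a Boolean embedding preserving ${\sf c}_i,{\sf d}_{ij}$ and ${\sf p}_{ij}$ (each of these operations is, in $\A$, the restriction to $W$ of the classical set-theoretic operation on $\phi^M$, and injectivity comes from $\phi^W=\phi^M\cap W$ being empty once $\phi^M$ is). The hard part, and the main obstacle, is well-definedness: one must show that $\phi^W=\psi^W$ forces $\phi^M=\psi^M$.

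I would handle this exactly in the style of the proof of the preceding proposition. Suppose for contradiction that $\bar{a}\in(\phi\wedge\neg\psi)^M\setminus W$. Pick a finite subsignature $L_0\subseteq L$ containing every relation symbol appearing in $\phi$ or $\psi$ and every label $(Ni+r,j)$ appearing on an edge of $M\!\upharpoonright\!\mathrm{Range}(\bar{a})$. Choose a permutation $\chi$ of $\omega\cup\{\rho\}$ that fixes every $i$ for which some $(Ni+r,j)$ lies in $L_0$ and sends $\rho$ to an index $i_0$ outside this finite set. Starting from $\emptyset\in\Theta^\chi$ and applying the \emph{forth} property $n$ times produces $\theta\in\Theta^\chi$ with $\mathrm{Range}(\bar{a})\subseteq\mathrm{dom}(\theta)$; let $\bar{b}=\theta(\bar{a})$. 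By construction every $\rho$-label among $\bar a$'s edges is replaced in $\bar b$ by a label $(Ni_0+r',j)$ outside $L_0$, and every non-$\rho$ label is preserved, so $\bar b\in W$ and $\theta$ is a partial $L_0$-isomorphism. Theorem~\ref{atom} (applied inside the $L_0$-reduct, where $\Theta^\chi$ is a back-and-forth system) then yields $M\models(\phi\wedge\neg\psi)(\bar b)$, contradicting $\phi^W=\psi^W$ since $\bar b\in W$. Thus $\mathrm{rep}$ is well-defined and an embedding, completing the representation.
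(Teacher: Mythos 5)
Your proof is correct and follows essentially the same route as the paper: the representation is the map $\varphi^W\mapsto\varphi^M$ into the full polyadic set algebra on $M$, and atomicity comes from the quantifier-free descriptions of tuples in $W$ (the paper's $MCA$-formulas) being indivisible via the back-and-forth system $\Theta$. Your $\Theta^\chi$-argument for well-definedness of the representation spells out a step the paper leaves as "can be checked", and it is the right argument, mirroring the proof that the classical and $W$-relativised semantics agree.
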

\begin{demo}{Proof}
Let $\cal S$ be the polyadic set algebra with domain  $\wp ({}^{n} M )$ and
unit $ {}^{n} M $. Then, the map $h : \A
\longrightarrow S$ given by $h:\varphi ^W \longmapsto \{ \bar{a}\in
{}^{n} M: M \models \varphi (\bar{a})\}$ can be checked to be well -
defined and one-one. It clearly respects the polyadic operations. So it is a representation of $\A.$
A formula $ \alpha$  of  $L^n$ is said to be $MCA$
('maximal conjunction of atomic formulas') if (i) $M \models \exists
x_0\ldots x_{n-1} \alpha $ and (ii) $\alpha$ is of the form
$$\bigwedge_{i \neq j < n} \alpha_{ij}(x_i, x_j),$$
where for each $i,j,\alpha_{ij}$ is either $x_i=x_i$ or $R(x_i,x_j)$
for some binary relation symbol $R$ of $L$. 
The rough idea is that a
formula $\alpha$  being $MCA$ says that the set it defines in ${}^n M$
is nonempty, and that if $M \models \alpha (\bar{a})$ then the graph
$M \upharpoonright rng (\bar{a})$ is determined up to isomorphism
and has no edge whose label is of the form $(\rho,i)$. Hence, any two
tuples satisfying $\alpha$ are isomorphic and one is mapped to the
other by the $n$-back-and-forth system $\Theta$. By theorem \ref{atom}, no
$L^n_{\infty \omega}$- formula can distinguish them. So $\alpha$
defines an atom of $\A$ --- it is literally indivisible. Since the
$MCA$ - formulas clearly `cover' $W$, the atoms defined by them are
dense in $\A$. So $\A$ is atomic, as required. This, informally, is
the content of what follows.
Let $\varphi$ be any $L^n_{\infty\omega}$-formula, and $\alpha$ any
$MCA$-formula. If $\varphi^W \cap \alpha^W \neq \emptyset $, then
$\alpha^W \subseteq \varphi^W $.
Indeed, take $\bar{a} \in  \varphi^W \cap \alpha^W$. Let $\bar{a} \in
\alpha^W$ be arbitrary. Clearly, the map $( \bar{a} \mapsto
\bar{b})$ is in $\Theta$. Also, $W$ is
$L^n_{\infty\omega}$-definable in $M$, since we have
$$ W = \{
\bar{a} \in {}^n M : M \models (\bigwedge_{i < j< n} (x_i = x_j \vee
\bigvee_{R \in L} R(x_i, x_j)))(\bar{a})\}.$$
We have $M \models_W \varphi (\bar{a})$
 iff $M \models_W \varphi (\bar{b})$. Since $M \models_W \varphi (\bar{a})$, we have
$M \models_W \varphi (\bar{b})$. Since $\bar{b} $ was arbitrary, we
see that $\alpha^W \subseteq \varphi^W$.
Let $$F = \{ \alpha^W : \alpha \,\ \textrm{an $MCA$},
L^n-\textrm{formula}\} \subseteq \A.$$
Evidently, $W = \bigcup F$. We claim that 
$\A$ is an atomic algebra, with $F$ as its set of atoms.
First, we show that any non-empty element $\varphi^W$ of $\A$ contains an
element of $F$. Take $\bar{a} \in W$ with $M \models_W \varphi
(\bar{a})$. Since $\bar{a} \in W$, there is an $MCA$-formula $\alpha$
such that $M \models_W \alpha(\bar{a})$. Then $\alpha^W
\subseteq \varphi^W $. By definition, if $\alpha$ is an $MCA$ formula
then $ \alpha^W$ is non-empty. If $ \varphi$ is
an $L^n$-formula and $\emptyset \neq \varphi^W \subseteq \alpha^W $,
then $\varphi^W = \alpha^W$. It follows that each $\alpha^W$ (for
$MCA$ $\alpha$) is an atom of $\A$.
\end{demo}

Define $\C$ to be the complex algebra over $At\A$, the atom structure of $\A$.
Then $\C$ is the completion of $\A$. The domain of $\C$ is $\wp(At\A)$. The diagonal ${\sf d}_{ij}$ is interpreted as the set of all $S\in At\A$ with $a_i=a_j$ for some $\bar{a}\in S$.
The cylindrification ${\sf c}_i$ is interpreted by ${\sf c}_iX=\{S\in At\A: S\subseteq c_i^{\A}(S')\text { for some } S'\in X\}$, for $X\subseteq At\A$.
Finally ${\sf p}_{ij}X=\{S\in At\A: S\subseteq {\sf p}_{ij}^{\A}(S')\text { for some } S'\in X\}.$
Let $\cal D$ be the relativized set algebra with domain $\{\phi^W: \phi\text { an $L_{\infty\omega}^n$ formula }\}$,  unit $W$
and operations defined like those of $\cal A$.

\begin{theorem} ${\C}\cong \D$, via the map $X\mapsto \bigcup X$.
\end{theorem}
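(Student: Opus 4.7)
The plan is to verify that the map $f\colon \C\to \D$ given by $f(X)=\bigcup X$ is a well-defined bijection intertwining all the operations. The key conceptual point is that every element of $\D$ is an arbitrary union of atoms of $\A$, so $\D$ and the complex algebra $\C=\wp(At\A)$ are literally the same boolean structure; only the operations need to be reconciled.

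First, I would establish the bijective correspondence. Recall from the construction of $\A$ that the atoms are precisely the sets $\alpha^W$ for $MCA$ formulas $\alpha$, that distinct atoms are disjoint, and that $W=\bigcup F$ where $F=At\A$. Given $X\subseteq At\A$, write $X=\{\alpha_S^W:S\in X\}$; then $\bigcup X=(\bigvee_{S\in X}\alpha_S)^W$, which is of the form $\phi^W$ for the $L^n_{\infty\omega}$-formula $\phi=\bigvee_{S\in X}\alpha_S$, so $f(X)\in \D$. Injectivity is immediate from pairwise disjointness of atoms. For surjectivity, given any $\phi^W\in \D$, the fact already proved in the excerpt that $\alpha^W\cap\varphi^W\neq\emptyset$ forces $\alpha^W\subseteq\varphi^W$ (for any $MCA$ formula $\alpha$ and any $L^n_{\infty\omega}$ formula $\varphi$) implies $\phi^W=\bigcup\{S\in At\A:S\subseteq\phi^W\}$, so $\phi^W=f(X)$ with $X=\{S\in At\A:S\subseteq\phi^W\}$.

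Next I would verify preservation of operations. Boolean operations are immediate: since atoms partition $W$, $f(X\cup Y)=f(X)\cup f(Y)$ and $f(At\A\setminus X)=W\setminus f(X)$, matching the Boolean operations of $\D$ (complementation being taken relative to $W$). For the diagonal, ${\sf d}_{ij}^{\C}=\{S\in At\A:S\subseteq {\sf d}_{ij}^{\A}\}$ and its union equals ${\sf d}_{ij}^{\A}=(x_i=x_j)^W={\sf d}_{ij}^{\D}$. The substitution operator is handled similarly since each ${\sf p}_{ij}^{\A}$ acts on atoms and its complex-algebra lift sends $X$ to the union of atoms contained in $\bigcup_{S\in X}{\sf p}_{ij}^{\A}S$, which coincides with the formula-level operation $\varphi\mapsto\varphi[x_i,x_j]$ relativised to $W$.

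The main step, and the only one that requires real care, is preservation of cylindrification. On the complex-algebra side,
\[
{\sf c}_i^{\C}X=\{S\in At\A:S\subseteq{\sf c}_i^{\A}S'\text{ for some }S'\in X\},
\]
whose union is $\bigcup_{S'\in X}{\sf c}_i^{\A}S'$, because ${\sf c}_i^{\A}S'$ is itself a join (in $\A$) of atoms of $\A$ and these atoms are precisely those $S$ with $S\subseteq{\sf c}_i^{\A}S'$. On the set-algebra side,
\[
{\sf c}_i^{\D}(\bigcup X)=\bigl(\exists x_i\bigvee_{S'\in X}\alpha_{S'}\bigr)^W=\bigcup_{S'\in X}(\exists x_i\alpha_{S'})^W=\bigcup_{S'\in X}{\sf c}_i^{\A}(\alpha_{S'}^W),
\]
where the middle equality uses that $\exists x_i$ distributes over arbitrary disjunctions in the $W$-relativised semantics (immediate from the semantic clause for $\exists$). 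The two sides agree, giving $f({\sf c}_i^{\C}X)={\sf c}_i^{\D}f(X)$. The only subtle issue here is to make sure that the complex-algebra cylindrification, defined abstractly from the atom structure of $\A$, coincides with the concrete operation $S'\mapsto{\sf c}_i^{\A}S'$ on atoms; but that is how the atom structure of $\A$ was defined in the first place, so no additional work is needed. Putting all of this together gives the required isomorphism $\C\cong\D$.
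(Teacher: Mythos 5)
Your proof is correct, and it fills in details that the paper itself omits: the theorem is stated there without any proof, the intended argument being exactly the routine verification you give (bijectivity via the fact, already established for the previous theorem, that $\varphi^W\cap\alpha^W\neq\emptyset$ forces $\alpha^W\subseteq\varphi^W$, so every $\phi^W$ is a union of atoms; and preservation of the operations using distributivity of $\exists x_i$, $[x_i,x_j]$ and $\bigvee$ over arbitrary disjunctions in the $W$-relativised semantics). The one point you gloss over slightly is the diagonal: the paper defines ${\sf d}_{ij}^{\C}$ as the set of atoms $S$ containing \emph{some} $\bar a$ with $a_i=a_j$, and identifying this with $\{S: S\subseteq (x_i=x_j)^W\}$ needs the observation that any two tuples in the same atom are linked by a partial isomorphism in $\Theta$, hence one satisfies $x_i=x_j$ iff the other does — but this is immediate from the back-and-forth machinery already set up.
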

In the following, we assume familiarity with the definition of relation algebra atom structures by listing the consistent triples.
We also assume familiarity with the notion of basic matrices over a relation algebra atom structure, and that of 
$n$ dimensional cylindric bases \cite{Mad}. 
Though $\C$ can be represented as a {\it relativized} set algebra, we have:
\begin{theorem} $\Rd_{ca}\cal C$ is not representable.
\end{theorem}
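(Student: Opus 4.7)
\begin{demo}{Proof plan} The strategy is to translate the forbidden configurations built into the class $\GG$ into algebraic zero-identities inside $\C$, and then to show that a cylindric representation of $\Rd_{ca}\C$ would force precisely those configurations back into existence by a Ramsey-type argument.

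For each colour $c<n$ and each node $a\in\mathbb{N}$ of $\G$, introduce $\chi_c^a:=R_{(a,c)}(x_0,x_1)^W\in\A$ and $\chi_c:=\sum_a\chi_c^a\in\C$. Using that $R_{(a,c)}(x_0,x_1)$ mentions no $x_i$ with $i\geq 2$, and invoking the back-and-forth system $\Theta$ to handle the relativised quantifiers, one first verifies ${\sf c}_i\chi_c^a=\chi_c^a$ for $i\geq 2$ and observes that $\{\chi_c:c<n\}\cup\{{\sf d}_{01}\}$, refined by the $\chi_c^a$, partitions the unit of $\C$; indeed tuples in $W$ avoid $\rho$-labels, so each non-diagonal atom of $\A$ attaches a unique label $(a,c)$ to its $(0,1)$-edge. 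The algebraic heart of the argument is then the observation that, for any $a,b,e\in\mathbb{N}$ and any $c<n$,
\[ E_c^{a,b,e} := \chi_c^a\cdot{\sf s}_1^2\chi_c^b\cdot{\sf s}_0^2\chi_c^e \; = \; 0 \quad\text{whenever $\{a,b,e\}$ contains no edge of $\G$,} \]
since $E_c^{a,b,e}$ picks out exactly the atoms of $\A$ whose edges $(0,1),(0,2),(1,2)$ receive labels $(a,c),(b,c),(e,c)$, and clause~(2) of the definition of $\GG$ forbids such monochromatic triangles in $M$ unless $\{a,b,e\}$ carries a $\G$-edge.

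Assume now, for contradiction, a representation $h:\Rd_{ca}\C\to\wp(U^n)$; since $\A$ is infinite and atomic, $U$ is necessarily infinite. Define $\phi:\{(u,v)\in U^2:u\neq v\}\to n$ by $\phi(u,v)=c\iff (u,v,\star)\in h(\chi_c)$, and apply infinite Ramsey to the auxiliary $n^2$-colouring $\{u,v\}\mapsto(\phi(u,v),\phi(v,u))$ of unordered pairs to obtain an infinite linearly ordered $S\subseteq U$ on which both directional colours are constant. By re-ordering each triple $u<v<w$ in $S$ as the $n$-tuple $(u,w,v,\star,\ldots)$, the three relevant evaluations of $\phi$ reduce to $\phi(u,w),\phi(u,v),\phi(v,w)$, all going in the increasing direction and hence returning a single colour $c<n$. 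Now define $\lambda:[S]^2\to\mathbb{N}$ by $\lambda(u,v)=a\iff(u,v,\star)\in h(\chi_c^a)$; each triple $u<v<w$ in $S$ then witnesses $h(E_c^{\lambda(u,w),\lambda(u,v),\lambda(v,w)})\neq\emptyset$, so $\{\lambda(u,w),\lambda(u,v),\lambda(v,w)\}$ must contain a $\G$-edge.

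Finally, apply the canonical Ramsey theorem of Erd\"os--Rado to $\lambda$: on some infinite $S'\subseteq S$ the function $\lambda$ is constant, or injectively depends on the minimum of the pair, or injectively depends on the maximum, or is injective on pairs. The first three cases dispose of themselves: the triple of labels collapses either to a singleton (ruled out because $\G$ is loop-free) or to a two-element set $\{g(u),g(v)\}$ forced to lie at distance less than $N$ for infinitely many $v$, which is incompatible with the injectivity of $g$ combined with the finite bandwidth of $\G$. The main obstacle is the fully injective case; here I would thin $S'$ greedily to an infinite $S''\subseteq S'$ whose pair-labels on $[S'']^2$ are pairwise separated by at least $N$, which is feasible because at each inductive stage only finitely many new labels must dodge a finite forbidden zone around the already-chosen ones, while the injectivity of $\lambda$ supplies an inexhaustible stock of safe choices. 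Any triple in $S''$ then has labels that are pairwise $\G$-non-adjacent, contradicting the $\G$-edge requirement extracted above, and ruling out the representation $h$ altogether.
\end{demo}
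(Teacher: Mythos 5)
Your overall strategy---convert the forbidden monochromatic edge-free triangles of $\GG$ into zero-identities of $\C$ and then use a Ramsey-type theorem to force such a triangle out of any putative representation---is the right one, and it is in essence the strategy of the paper (which routes the argument through the relation algebra $\Cm\alpha(\G)$ and the basic-matrix atom structure rather than working inside the cylindric algebra). Your identity $E_c^{a,b,e}=0$ for edge-free $\{a,b,e\}$ is correct, and so is the four-case canonical-Ramsey analysis at the end. The proof nevertheless has a genuine gap at the single point where you pass from the colour function $\phi$ to the node-labelling $\lambda$.

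The identity $1={\sf d}_{01}+\sum_{c<n}\chi_c$ is a \emph{finite} join, so it is preserved by $h$ and $\phi$ is total. But $\chi_c=\sum_{a\in\N}\chi_c^a$ is an \emph{infinite} join in the completion $\C$, and an ordinary (non-complete) representation of $\Rd_{ca}\C$ is under no obligation to preserve it: one can perfectly well have $(u,v,\bar w)\in h(\chi_c)$ while $(u,v,\bar w)\notin h(\chi_c^a)$ for every $a$. Hence $\lambda$ is only a partial function on $[S]^2$, and the Erd\"os--Rado canonical Ramsey theorem cannot be applied to it; padding it with a dummy value does not help, since a triple all of whose pairs carry the dummy value produces no forbidden configuration. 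This is not a removable technicality but the crux of the theorem: the term algebra $\A$ \emph{is} representable, so any argument that only ever consults elements of $\A$ together with finite joins must fail. The proof has to exploit genuinely new elements of $\C$ which package infinitely many atoms into \emph{finitely} many pieces, each piece already carrying a zero-identity.

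The repair is exactly the device the paper uses (there phrased for $\Cm\alpha(\G)$ via the finite partition $J=\{1'\}\cup\{[N\N+r,s]: r<N,\ s<n\}$ of the unit). In your notation: replace the $\chi_c^a$ by the $Nn$ elements $\psi_{c,r}=\sum\{\chi_c^a: a\equiv r \pmod N\}\in\C$. Together with ${\sf d}_{01}$ these form a finite partition of the unit, hence one preserved by $h$, so the induced colouring of pairs by $(c,r)$ is total. Moreover, since any two distinct nodes of $\G$ in the same residue class modulo $N$ lie at distance at least $N$ and are therefore non-adjacent, complete additivity of the operations of the complex algebra $\C$ gives $\psi_{c,r}\cdot{\sf s}_1^2\psi_{c,r}\cdot{\sf s}_0^2\psi_{c,r}=\sum\{E_c^{a,b,e}: a,b,e\equiv r\}=0$ outright---and this is precisely where the completion enters, as $\psi_{c,r}\notin\A$. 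Ordinary Ramsey on the finite $(c,r)$-colouring then yields an infinite homogeneous set, any triple of which contradicts this single preserved zero-identity; the canonical Ramsey theorem and your thinning argument become unnecessary.
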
 

\begin{demo}{Proof} We define a relation algebra atom structure $\alpha(\G)$ of the form
$(\{1'\}\cup (\G\times n), R_{1'}, \breve{R}, R_;)$.
The only identity atom is $1'$. All atoms are self converse, 
so $\breve{R}=\{(a, a): a \text { an atom }\}.$
The colour of an atom $(a,i)\in \G\times n$ is $i$. The identity $1'$ has no colour. A triple $(a,b,c)$ 
of atoms in $\alpha(\G)$ is consistent if
$R;(a,b,c)$ holds. Then the consistent triples are $(a,b,c)$ where

\begin{itemize}

\item one of $a,b,c$ is $1'$ and the other two are equal, or

\item none of $a,b,c$ is $1'$ and they do not all have the same colour, or

\item $a=(a', i), b=(b', i)$ and $c=(c', i)$ for some $i<n$ and 
$a',b',c'\in \G$, and there exists at least one graph edge
of $G$ in $\{a', b', c'\}$.

\end{itemize}
$\alpha(\G)$ can be checked to be a relation atom structure. 
The atom structure of $\Rd_{ca}\A$ is isomorphic (as a cylindric algebra
atom structure) to the atom structure ${\cal M}_n$ of all n-dimensional basic
matrices over the relation algebra atom structure $\alpha(\G)$.
Indeed, for each  $m  \in {\cal M}_n, \,\ \textrm{let} \,\ \alpha_m
= \bigwedge_{i,j<n}  \alpha_{ij}. $ Here $ \alpha_{ij}$ is $x_i =
x_j$ if $ m_{ij} = 1$' and $R(x_i, x_j)$ otherwise, where $R =
m_{ij} \in L$. Then the map $(m \mapsto
\alpha^W_m)_{m \in {\cal M}_n}$ is a well - defined isomorphism of
$n$-dimensional cylindric algebra atom structures.
We shall prove that $\Cm\alpha(\G)$ is not representable. 
Hence the full complex cylindric algebra over the set of $n$ by $n$ basic matrices
- which is isomorphic to $\cal C$ is not representable either, for we have a relation algebra
embedding of $\Cm\alpha(\G)$ onto $\Ra\Cm\M_n$.
Assume for contradiction  that $g: \Cm\alpha(\G)\to \B$ 
is an embedding into a proper relation set algebra $\B$
with base set $X$. Each $h(a)$ ($a\in \Cm\alpha(\G)$) is a binary relation on $X$, and
$h$ respects the relation algebra operations. 
For $Y\subseteq \N$ and $s<n$, set 
$$[Y,s]=\{(l,s): l\in Y\}.$$
For $r\in \{0, \ldots N-1\},$ $N\N+r$ denotes the set $\{Nq+r: q\in \N\}.$
Let $$J=\{1', [N\N+r, s]: r<N,  s<n\}.$$
Then $\sum J=1$ in $\Cm\alpha(\G).$
As $J$ is finite, we have for any $x,y\in X$ there is a $P\in J$ with
$(x,y)\in h(P)$.
Since $\Cm\alpha(\G)$ is infinite then $X$ is infinite. 
By Ramsey's Theorem, there are distinct
$x_i\in X$ $(i<\omega)$ 
and $P\in J$ such that $(x_i, x_j)\in h(P)$ for all $i<j<\omega.$ Clearly $P\neq 1'$. 
Also $(P;P)\cdot P\neq 0$. 
This follows from that if $x_0,x_1, x_2\in X$, 
$a,b,c\in \Cm\alpha(\G)$, $(x_0,x_1)\in h(a)$, $(x_1,x_2)\in h(b)$, and 
$(x_0, x_2)\in h(c)$, then $(a;b)\cdot c\neq 0$. 
A non -zero element $a$ of $\Cm\alpha(\G)$ is monochromatic, if $a\leq 1'$,
or $a\leq [\N,s]$ for some $s<n$. 
Now  $P$ is monochromatic, it follows from the definition of $\alpha$ that
$(P;P)\cdot P=0$. This contradiction shows that 
$\Cm\alpha(\G)$ is not representable.
\end{demo}
\begin{theorem} $\Rd_{df}\C$ is not representable.
\end{theorem}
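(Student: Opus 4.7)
The plan is to adapt the Ramsey-theoretic argument of the previous theorem (that $\Rd_{ca}\C$ is not representable) so that every step is carried out using only boolean operations and cylindrifications, i.e.\ within the diagonal-free reduct. The key observation is that the pivotal partition $J = \{1',\,[N\N+r,s] : r<N,\,s<n\}$ used there is reflected in $\C$ by a family of elements definable purely by boolean joins of atoms. For each pair of positions $0\le i<j<n$ and each $r<N$, $s<n$, set
\[
  P^{ij}_{r,s} \;:=\; \sum\bigl\{\alpha^W_m : m\in\M_n,\ m_{ij}=(Nq+r,s)\text{ for some } q\in\N\bigr\},
\]
and let $E^{ij} := \sum\{\alpha^W_m:m_{ij}=1'\}$. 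Each $P^{ij}_{r,s}$ is a boolean join of atoms of $\C$, hence lies in $\Rd_{df}\C$; a direct check from the definition of $\M_n$ shows $\mathsf{c}_k P^{ij}_{r,s}=P^{ij}_{r,s}$ whenever $k\notin\{i,j\}$, so $P^{ij}_{r,s}$ is $\{i,j\}$-dimensional. For each fixed $(i,j)$, the finite family $\{E^{ij}\}\cup\{P^{ij}_{r,s}:r<N,\,s<n\}$ partitions the unit of $\C$.

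Now suppose, for contradiction, that $h:\Rd_{df}\C\hookrightarrow\B$ is a df-representation with base $X$. By the $\{i,j\}$-dimensionality, each $h(P^{ij}_{r,s})$ is of the form $\{\bar{x}\in h(1) : (x_i,x_j)\in R^{ij}_{r,s}\}$ for a binary relation $R^{ij}_{r,s}\subseteq X^2$; these together with the corresponding $R^{ij}_E$ finitely partition $X^2$ for every pair $(i,j)$. Iterating Ramsey's theorem through the $\binom{n}{2}$ pair-colourings yields an infinite $Y\subseteq X$ on which, for each $(i,j)$, every distinct ordered pair falls into a single fixed $R^{ij}_{r_{ij},s_{ij}}$ (the $E$-colours can be handled by a mild refinement since we pick distinct points of $Y$). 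Selecting any $n$ distinct elements $y_0,\dots,y_{n-1}\in Y$, the tuple $\bar{y}$ then lies in $h\bigl(\bigcap_{i<j<n}P^{ij}_{r_{ij},s_{ij}}\bigr)$, reducing the contradiction to showing that this intersection equals $0$ in $\C$.

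The hard step, and main obstacle, is the final algebraic claim. When some triangle of positions $\{i,j,k\}\subseteq n$ satisfies $s_{ij}=s_{jk}=s_{ik}$, any atom $m$ in the intersection yields a monochromatic triple $(m_{ij},m_{jk},m_{ik})$ of colour $s$ whose three labels all lie in a single residue class $N\N+r$; the spacing condition (distinct elements of $N\N+r$ differ by at least $N$ and so are never $\G$-edges) then forces triangle-inconsistency in $\alpha(\G)$ by the definition of $\GG$, and the intersection vanishes. In the cylindric proof this colour-alignment was automatic because the diagonal-definable substitutions $\mathsf{s}^j_i$ identified $P^{ij}_{r,s}$ with $P^{i'j'}_{r,s}$ across all pairs of positions; in the diagonal-free setting this identification is lost, and I expect the principal difficulty to be extracting a further Ramsey refinement — colouring $n$-subsets of $Y$ by the finite set of possible residue profiles of basic matrices in $\M_n$ — that forces at least one monochromatic triangle of colours to appear. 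Once the appropriate alignment is secured, the spacing-obstruction activates, the intersection vanishes in $\C$, and we obtain $h(0)\neq\emptyset$, contradicting injectivity of~$h$.
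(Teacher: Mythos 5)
Your reduction of everything to the vanishing of $\bigcap_{i<j<n}P^{ij}_{r_{ij},s_{ij}}$ is exactly where the argument breaks, and the gap is not one that a further Ramsey refinement can close. Ramsey's theorem homogenizes each of the $\binom{n}{2}$ pair-colourings separately, but it places no constraint whatsoever on how the resulting homogeneous colours $(r_{ij},s_{ij})$ relate to one another across different coordinate pairs: the profile $(s_{ij})_{i<j<n}$ is just some edge-colouring of $K_n$ with $n$ colours, and for every $n\geq 3$ such colourings exist with no monochromatic triangle (proper edge-colourings of $K_n$ need at most $n$ colours). For such a rainbow profile every triangle of prescribed labels is consistent in $\alpha(\G)$, because a triple of non-identity atoms that do not all have the same colour is always consistent; hence the intersection contains a basic matrix and is a nonzero element of $\C$, and no contradiction arises. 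Worse, even on a triangle of positions that does receive a single colour $s$ you would still need the three residues $r_{ij},r_{jk},r_{ik}$ to coincide: if, say, $r_{ij}=0$ and $r_{jk}=1$ then $0,1\in\N$ form a $\G$-edge and the triple is again consistent. Nothing in your set-up forces either alignment, because the only facts you use (that $h$ is a Boolean embedding commuting with the ${\sf c}_k$) are blind to the relationship between distinct coordinate pairs --- which is precisely the information carried by the diagonals you have discarded. Your parenthetical dismissal of the $E$-colours is symptomatic of the same problem: in a diagonal-free representation $h({\sf d}_{ij})$ need not be the true diagonal of the base, so choosing distinct points of $Y$ does not keep you out of $R^{ij}_E$. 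Your proposed refinement (colouring $n$-subsets by residue profiles of basic matrices) only re-derives a profile that is realized by an actual basic matrix, i.e.\ one that is automatically consistent.

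The missing idea is that the diagonal elements, though no longer distinguished constants, are still \emph{elements} of $\Rd_{df}\C$, and their images are what re-align the coordinate pairs. This is how the paper argues: from a representation $h$ of $\Rd_{df}\C$ one defines, following \cite{HMT2} 5.1.48--5.1.50, an equivalence relation $R$ on the base from $h({\sf d}_{ij})$, shows that the set $E$ of $R$-invariant elements is closed under all the operations and contains every element of dimension $<n$, and --- since $\C$ is generated by the $\alpha^W$ for atomic $\alpha$, which are at most two-dimensional because the signature has only binary relation symbols --- concludes $E=C$. Factoring the base by $R$ then converts $h$ into a genuine cylindric representation of $\Rd_{ca}\C$, contradicting the preceding theorem. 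Any direct Ramsey argument would in effect have to reprove this quotient construction, since it is exactly what ties $s_{01}$, $s_{12}$ and $s_{02}$ together; as it stands your proof does not go through.
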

\begin{demo}{Proof} Assume that $\Rd_{df}\C$ is representable, via the isomorphism $h$, as a set algebra 
${\cal D}\subseteq P(\prod U_i:i<n)$. We show that $\Rd_{ca}\cal C$ is representable, which is a contradiction.
We can asume that $U_0=\ldots =U_{n-1}=U$ and ${\sf d}_{ij}^U\subseteq h({\sf d}_{ij})$ \cite{HMT2} 5.1.48.
Define $R$ on $U$ as follows: Let $i,j<n$ be distinct, then
$$R=\{(u,v)\in U\times U: s(i)=u \text { and } s(j)=v \text { for some } s\in h({\sf d}_{ij})\}$$
Then $R$ is independent of the choice of $i$ and $j$ and is an equivalence relation on $U$ \cite{HMT2} 5.1.49
Let
$$E=\{x\in C: (\forall s,t\in {}^nU)[(\forall i<n)((s(i), t(i))\in R$$
$$\implies (s\in h(x)\longleftrightarrow t\in h(x))]\}.$$
Then $\{x\in C: \Delta x\neq n\}\subseteq E$ and $E$ is a subset $\C$ that is closed under cylindrifications, 
complementation, intersections and contains the diagonal elements \cite{HMT2} 5.1.50.
Since $E$ contains $\alpha^W$ for all atomic formulas $\alpha$ as we have only binary relation symbols, it follows that $E=C$. 
Then we can factor $U$ by $R$  so that $\C$ 
can be embedded into 
$(\wp^{n}(U/R), {\sf c}_i)_{i<n}$ via the isomorphism $f$ given by
$$f(x)=\{(s(i)/R: i<n)\in {}^n(U/R): s\in h(x)\}.$$
Moreover, as easily checked, diagonals are preserved, 
that is $$f({\sf d}_{ij})=\{s\in {}^{n}(U/R): s_i=s_j\}.$$

\end{demo} 

\begin{corollary}\label{complete} Let $n\geq 3$. Then the classes $\{\RCA_n, \RPEA_n, \RPA_n ,\RDf_n\}$ are not closed under completions
\end{corollary}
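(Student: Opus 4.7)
The plan is to read off the corollary directly from the two non-representability theorems for $\Rd_{ca}\C$ and $\Rd_{df}\C$ just proved, using the algebra $\A$ constructed from the graph $M$ and its completion $\C=\Cm(\At\A)$. For each of the four classes $K\in\{\RCA_n,\RPEA_n,\RPA_n,\RDf_n\}$ I would take the appropriate reduct of $\A$ as my atomic representable witness, and show that its completion is the corresponding reduct of $\C$, hence not representable.

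First I would justify the identification of completions with reducts of $\C$. The algebra $\A$ is atomic, and every non-Boolean operation in the signature of $\PEA_n$ (and therefore in each of the four signatures under consideration) is completely additive, since each is of the form $f(x)=\{S\in\At\A:S\le f^{\A}(S')\text{ for some atom }S'\le x\}$ on the atom structure. For atomic Boolean algebras with completely additive operators the MacNeille (= Dedekind) completion coincides with the full complex algebra over the atom structure, so the completion of $\A$ as a $\PEA_n$ is exactly $\C$. Taking reducts to a sub-signature commutes with this construction, because the operations retained are still completely additive and their values on atoms are unchanged; hence the completion of $\Rd_K\A$ in the signature of $K$ equals $\Rd_K\C$ for $K\in\{\CA_n,\PEA_n,\PA_n,\Df_n\}$. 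Also, each reduct of $\A$ is atomic with the same atoms, and is representable as a reduct of the representation of $\A$ exhibited in the preceding theorem.

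Now I dispatch the four cases. For $K=\RCA_n$: $\Rd_{ca}\A\in\RCA_n$ is atomic, and its completion $\Rd_{ca}\C\notin\RCA_n$ by the theorem ``$\Rd_{ca}\C$ is not representable''. For $K=\RDf_n$: $\Rd_{df}\A\in\RDf_n$ is atomic, and its completion $\Rd_{df}\C\notin\RDf_n$ by the theorem ``$\Rd_{df}\C$ is not representable''. For $K=\RPEA_n$: $\A\in\RPEA_n$ is atomic with completion $\C$; were $\C\in\RPEA_n$, then applying the $\Df$-reduct functor to a representation would yield a representation of $\Rd_{df}\C$, contradicting the $\Df$ non-representability result. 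The case $K=\RPA_n$ is identical on starting with $\Rd_{pa}\A$, since the $\Df$ signature is contained in the $\PA$ signature and its reduct from $\Rd_{pa}\C$ is again $\Rd_{df}\C$.

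The only nontrivial point is the interchange of reducts and completions, and the only place where there is anything to check is that the non-Boolean operations of $\C$ restrict to the completely additive extensions of the corresponding operations of $\A$; this is immediate from the definition of $\C$ as the complex algebra on $\At\A$ and from the formula $\sum X=\bigcup X$ in $\C$ for $X\subseteq\At\A$. There is no genuine obstacle beyond this bookkeeping: the heavy lifting has already been done in constructing $\A$ from the labelled graph $M$ and in proving non-representability of $\Rd_{ca}\C$ (via the Ramsey-type argument on the relation algebra $\alpha(\G)$) and of $\Rd_{df}\C$ (by extracting a $\CA_n$-representation from a hypothetical $\Df_n$-representation using the equivalence relation coming from the diagonals).
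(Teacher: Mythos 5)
Your proposal is correct and follows essentially the same route as the paper: the paper's proof is exactly "for $K\in\{\CA,\PA,\PEA,\Df\}$, $\Rd_K\C$ is the completion of $\Rd_K\A$; the latter is representable while the former is not," with the two cases $\RPEA_n$ and $\RPA_n$ reduced, as you do, to the non-representability of $\Rd_{df}\C$. The only difference is that you make explicit the (correct) bookkeeping about completions commuting with reducts for completely additive operators, which the paper leaves tacit.
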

\begin{demo}{Proof}
Let $K\in \{\CA, \PA, \PEA, \Df\}$. Then $\Rd_{K}\C$ is the completion of $\Rd_{K}\A$. The latter is representable, while the former is not.
\end{demo}

Taking the boolean reducts of $\A$ and $\C$ as given, their cylindric structure, 
is determined by the way cylindrifications are defined on atoms, i.e by their atom structure.
Now they have the {\it same} atom structure.
The difficulty in finding representations for cylindric algebras arise from the cylindrifications
and diagonal elements. By Stone's theorem, it is easy to represent the Boolean part.
So one might be tempted to think that these difficulties can be pinned down to the atom structure in case of atomic algebras.
That is representability  of an atomic algebra would depend on its atom structure, but this is not the case.
The underlying reason that $\A$ is representable  while its completion is not, is that $\C$ has more elements. 
This would have to be mirrored property in a true representation  of $\C$.
For certain algebras $\A$ deadlocks occur when one tries to find suitable genuine relations for the extra elements of $\C$.
$\A$ has few relations so a representation of it can sweep potential problems under the carpet. 
Adding the new relations in $\C$ brings the problem 
to the surface.

$\Ra\CA_n$ stands for the class of relation algebra reducts of $\CA_n$.
The full complex algebra of an atom structure $S$
is denoted by $\Cm S$, and the term algebra by $\Tm S.$
$S$ could be a relation atom structure or a cylindric atom structure.
In \cite{MS} it is proved that exists a cylindric atom structure $H$ such that $\Tm H$ is representable
while $\Cm H\notin {\bf S}\Nr_3\CA_6.$
Indeed, let $S$ be a relation atom structure such that $\Tm S$ is representable while $\Cm S\notin \RA_6$.
Such an atom structure exists \cite{HHbook} Lemmas 17.34-17.36. It follows that $\Cm S\notin {\bf S}\Ra\CA_6$.
Let $H$ be the set of $3$ by $3$ atomic networks over $S$.
Then by it is not so hard to show that $\Tm H\in \RCA_3$. We claim that $\Cm H\notin {\bf S}\Nr_3\CA_6$. For assume not, i.e. assume that
$\Cm H\in {\bf S}\Nr_3\CA_6.$
Then $\Cm S$ is embeddable in $\Ra\Cm H.$ But then the latter is in ${\bf S}\Ra\CA_6$
and so is $\Cm S$, which is not the case.
   
We note that the proof adopted herein does not generalize to $n>3$. 
\begin{itemize}
\item For $n=4$ the above proof fails as illustrated by the following example.
Let $\P$ be the pentagol relation algebra introduced in \cite{Mad} p. 369. Then $\P$ is a finite $\RA$ that is representable. 
Let $H$ be the set of all $4$ by $4$ atomic matrices. 
Then $\Cm H=\Tm H$ is a finite $\CA_4$ that is not representable as shown by Maddux \cite{Mad} p. 389.
\item If $n>4$ and $H$ is the set of all $n$ by $n$ atomic matrices over an $\RA$, then $H$ may not be an $n$-dimensional cylindric basis in 
the first place. That is $\Tm H$ may not be a $\CA_n$, let alone being an $\RCA_n$.  
 \end{itemize}
However, this result is generalized to higher dimensions in \cite{Moh2}.
That is it is proved in \cite{Moh2} that for every $n\geq 3$, and $k\geq 2$, the class $S\Nr_n\CA_{n+k}$
is not closed under completions.

\subsection{The Omitting Types Theorem fails in $\L_n$}

The ultimate purpose of algebraic logic is to solve problems in logic. 
Here we give an application of results on completions, or rather the non-existence thereof,
to omitting types of finite variable fragments. 
We work in usual first order logic $(FOL)$. For a formula $\phi$ and a first order structure $\M$
in the language of $\phi$ we write $\phi^M$ to denote the set of all assignments that satisfy $\phi$ in $M$., i.e
$$\phi^{\M}=\{s\in {}^{\omega}M: \M\models \phi[s]\}.$$
For example if $\M=(\N,<)$ and $\phi$ is the formula $x_1<x_2$ then a sequence
$s\in {}^{\omega}\N$ is in $\phi^{\M}$ iff $s_1<s_2$. 
Let $\Gamma$ be a set of formulas ($\Gamma$ may contain free variables). We say that $\Gamma$ is realized in $\M$
if $\bigcap_{\phi\in \Gamma}\phi^{\M}\neq \emptyset$. Let $\phi$ be a formula and $T$ be a theory.
We say that $\phi$ ensures $\Gamma$ in $T$ if $T\models \phi\to \mu$ for all $\mu\in \Gamma$.

The classical Henkin-Orey omitting types theorem, $OTT$ for short, states that if 
$T$ is a consistent theory in a countable language 
$\L$ and $\Gamma(x_1\ldots x_n)\subseteq \L$ is realized in every model of $T$, then there is a formula $\phi\in \L$ such that
$\phi$ ensures $\Gamma$ in $T$. The formula $\phi$ is called a $T$-witness for $\Gamma$.
Now the problem of resourse sensitivity can be applied to $OTT$ in the following sense. 
Can we always guarantee that the witness uses the same number of variables as $T$ and $\Gamma$, or do we need 
extra variables? If we do need extra variables, is there perhaps an upper bound on the number of extra variables needed?
In other words, let $\L_n$ denotes the set of formulas of $\L$ which are built up using only $n$ variables. 
The question is: If $T\cup \Gamma\subseteq \L_n$,
is there any guarantee that the witness stays in $\L_n$, or do we occasionally have to step outside $\L_n$?

Assume that $T\subseteq \L_n$. We say that $T$ is $n$ complete iff for all sentences $\phi\in \L_n$ 
we have either $T\models \phi$ or $T\models \neg \phi$. We say that $T$
is $n$ atomic iff for all $\phi\in \L_n$, there is $\psi\in \L_n$ such that $T\models \psi\to \phi$ and for all $\eta\in \L_n$ either $T\models \psi\to \eta$
or $T\models \psi\to \neg \eta$

\begin{theorem}\label{o} Assume that $\L$ is a countable first order language containing a binary relation symbol. For $n>2$ and 
$k\geq 0$,  there are a consistent $n$ complete
and $n$ atomic theory $T$ using only $n$ variables, and a set $\Gamma(x_1)$ using only $3$ varaibles
(and only one free variable) such that $\Gamma$ is realized in all models of $T$ but each $T$-witness for $T$ uses
more that $n+k$ variables
\end{theorem}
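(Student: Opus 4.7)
The plan is to transfer the non-complete-representability phenomenon of Example \ref{OTT} into a failure of the omitting-types theorem in $\L_n$. Given $n\geq 3$ and $k\geq 0$, first invoke Example \ref{OTT} with $m=k$ to fix a countable atomic $\A\in \Nr_n\CA_{n+k}\cap \RCA_n$, built as $\Bb_n(\M,J,E)$ over the finite relation algebra $\M=\E_k(2,3)$, that admits no complete representation. The atoms of $\M$ supply the binary relation symbols required by the statement, yielding a countable language $\L$ with one symbol $R_a$ for each $a\in \At\M$.

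Next, let $T$ be the $\L_n$-theory whose Lindenbaum--Tarski $\CA_n$ is isomorphic to $\A$. Simplicity of $\A$ (as a subalgebra of a weak set algebra with a single base) gives $n$-completeness of $T$; atomicity of $\A$ gives $n$-atomicity. Take $\Gamma(x_1)=\{\lnot \phi_a(x_1): a\in \At\M\}$, where $\phi_a(x_1)\equiv (\exists x_0)\,R_a(x_0,x_1)$ uses three variables. Since $\bigvee_{a\in \At\M} R_a=1$ in $\M$ and atoms of $\A$ are faithfully coded by $n$-by-$n$ basic matrices with $\M$-atom entries, every atom of $\A$ lies below some $\phi_a$, so $\bigwedge_{\mu\in \Gamma}\mu^{\A}=0$ and $\Gamma$ is algebraically non-principal.

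Realizability in every model of $T$ follows from the failure of complete representation: given any model $\M^{*}$ of $T$, the induced set representation $f:\A\to \wp({}^nU)$ fails to preserve $\sum \At\A=1$, which produces a tuple $s\in {}^nU$ escaping $\bigcup_{a\in \At\M} f(\phi_a)$, and this $s$ realizes $\Gamma$. For the variable lower bound, suppose $\phi\in \L_{n+k}$ is a $T$-witness. By Theorem 4.4 of \cite{ANT}, $\A=\Nr_n\D$ with $\D=\Bb_{n+k}(\M,J,E)\in \CA_{n+k}$, and one identifies the $(n+k)$-dim Lindenbaum algebra of $T$ with $\D$. Since $\phi(x_1)$ has only $x_1$ free, $[\phi]_T\in \Nr_n\D=\A$; being non-zero in atomic $\A$, it majorizes some atom $b\in \At\A$, and $b\leq \phi_a$ for some $a\in \At\M$ by the atom-coding, contradicting $[\phi]_T\leq \lnot \phi_a$.

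The principal obstacle is arranging that the $(n+k)$-dimensional Lindenbaum algebra of $T$ is exactly $\Bb_{n+k}(\M,J,E)$ and that the inclusion $\A\subseteq \D$ preserves the atomic sum $\sum \At\A=1$; without this, a witness in $\D$ could sit under no atom of $\A$ via a formula that refines some $a\in \At\A$ strictly in $\D$. The explicit combinatorial description of the blur construction in \cite{ANT} resolves this, since the suprema of $\A$-atoms computed inside $\Bb_{n+k}(\M,J,E)$ coincide with those in $\A$. A secondary delicate point is the three-variable coding: one must check that the formulas $\phi_a$, read at coordinate $x_1$, collectively cover all $\A$-atoms, which reduces to the combinatorial fact that every basic matrix of $\A$ has an $\M$-atom on the first row, routinely verified within the blur construction.
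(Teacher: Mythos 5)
Your overall strategy is the right one and matches the paper's: realize the algebra of Example \ref{OTT}/Theorem \ref{oo} as a Lindenbaum--Tarski algebra, get realizability of $\Gamma$ from the failure of complete representability, and kill $(n+k)$-variable witnesses by pushing them into the $(n+k)$-dimensional dilation $\D$ and down into an atomic reduct. But there are two genuine gaps. First, you index $\Gamma$ by $\At\M$ where $\M=\E_k(2,3)$ is a \emph{finite} relation algebra, so $\Gamma$ is a finite set of formulas; a finite type realized in some model is always principal (its conjunction is a witness, and here a $3$-variable one), so your $\Gamma$ cannot do the job. The paper's type ranges over the \emph{infinitely many} atoms of the blown-up-and-blurred relation algebra $\R=\Ra\C$ (each atom being $\tau_a(g)$ for an $\RA$-term $\tau_a$ in the single generator $g$, whence the single binary relation symbol in $\L$); it is the infinitude of these split atoms, whose supremum exists in the completion but is not preserved in any representation, that makes $\Gamma$ non-principal.

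Second, your realizability step does not follow as stated. Non-complete-representability gives an $n$-tuple $s$ outside $\bigcup_{b\in\At\A}f(b)$, but you need a \emph{point} outside $\bigcup_a f(\exists x_0\,R_a(x_0,x_1))=\bigcup_a {\sf C}_0 f(R_a)$; membership in the latter depends on the whole ${\sf c}_0$-cylinder through $s$, so escaping the union of atom-images does not yield escaping the union of the projections. The paper avoids this by first taking $\Gamma(x,y)=\{\neg\mu_a(x,y):a\in\At\R\}$ with \emph{two} free variables, where $\mu_a^{\M}$ is literally the image of the atom $a$ under the induced representation of $\R$, so a bad \emph{pair} $(u,v)$ exists directly; it then reduces to one free variable by the partial pairing-function coding ($M=U\cup(U\times U)$ with $3$-variable-definable projections), a step your proposal omits entirely. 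Without either the two-variable intermediate form or the pairing trick, the one-free-variable claim is not established.
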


Theorem \ref{o} is proved using algebraic logic in \cite{ANT}, 
where the following refinement of the above construction in theorem \ref{complete} 
is proved.

\begin{theorem}\label{oo}
Suppose that $n$ is a finite ordinal with $n>2$ and $k\geq 0$.
There is a countable 
symmetric integral representable 
relation algebra ${\R}$
such 
\begin{enumroman}
\item Its completion, i.e. the complex algebra of its atom structure is 
not representable, so $\R$ is representable but not completely representable 
\item $\R$ is generated by a single element.
\item The (countable) set $\B_n{\R}$ of all $n$ by $n$ basic matrices over $\R$ 
constitutes an $n$-dimensional cylindric basis. 
Thus $\B_n{\R}$ is a cylindric atom structure 
and the full complex algebra $\Cm(\B_n{\R})$ 
with universe the power set of $\B_n{\R}$
is an $n$-dimensional cylindric algebra 
\item The {\it term algebra} over the atom structure 
$\B_n{\R}$, which is the countable subalgebra of $\Cm(\B_n{\R})$ 
generated by the countable set of 
$n$ by $n$ basic matrices, $\Tm(B_n \R)$ for short,
is a countable representable $\CA_n$, but $\Cm(\B_n)$ is not representable. 
\item Hence $\C$ is a simple, atomic representable but not completely representable $\CA_n$
\item $\C$ is generated by a single $2$ dimensional element $g$, the relation algebraic reduct
of $\C$ does not have a complete representation and is also generated by $g$ as a relation algebra, and 
$\C$ is a sub-neat reduct of some simple rep $\D\in \CA_{n+k}$ such that the relation algebraic reducts of $\C$ and $\D$
coincide.  
\end{enumroman}
\end{theorem}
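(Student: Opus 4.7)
The plan is to refine the construction from Theorem \ref{complete} by incorporating ideas of Andr\'eka, N\'emeti and Thompson \cite{ANT} and Maddux's one-generation technique \cite{Maddux89}. The starting point will be a relation atom structure $\alpha(\G)$ built from a graph $\G = (\N, E)$ whose edge parameter $N$ will be tuned so that the atomic relation algebra $\R = \Tm(\alpha(\G))$ is not only representable but has its basic matrices sitting inside a $(n+k)$-dimensional cylindric algebra. Concretely, $\R$ will be integral and symmetric with atoms $\{1'\}\cup(\G\times n)$, with the forbidden triples declaring all monochromatic triangles $((a,i),(b,i),(c,i))$ inconsistent unless $\{a,b,c\}$ spans an edge of $\G$. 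Integrality is immediate from $1'$ being an atom; symmetry holds by setting $\breve x=x$. Representability of $\R$ follows by the same $n$-back-and-forth argument over the labelled graph $M$ used in the proof of Theorem \ref{complete}, applied to the binary reduct.

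The next step is to check that $\B_n\R$ is an $n$-dimensional cylindric basis; this reduces to a finite amalgamation property for basic matrices, which follows from the ``forth'' property of the system $\Theta$ enjoyed by $M$. Once this is in hand, $\C := \Tm(\B_n\R)$ is representable because the relativised set algebra $\A$ built from $M$ in Theorem \ref{complete} is atomic with atoms definable by $MCA$-formulas, and the map sending a basic matrix $m$ to $\alpha_m^W$ identifies $\C$ with the dense subalgebra of $\A$ generated by its atoms. Non-representability of $\Cm(\B_n\R)$ is obtained exactly as in Theorem \ref{complete}: a representation would, via Ramsey's theorem applied to a finite partition of the unit into monochromatic pieces $P$, force an infinite clique labelled by a single monochromatic $P$, contradicting the defining inequality $(P;P)\cdot P = 0$ read off from the forbidden triples. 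This covers (i), (iii), (iv) and (v).

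For single generation (ii) and the last clause of (vi), I would follow Maddux's coding trick, modified so that one two-dimensional element $g\in\Nr_2\C$ encodes both the atom-labelling of $\R$ and the orbit structure needed to retrieve each atom of $\R$ by polynomial terms; the same $g$ then generates the $\Ra$-reduct of $\C$ as a relation algebra. The non-complete-representability of the $\Ra$-reduct is inherited from $\R$ since a complete representation of $\Ra\C$ would restrict to one of $\R$, whence Ramsey would produce the forbidden monochromatic clique. For the sub-neat-reduct clause of (vi), the parameter $N$ is chosen $\geq (n+k)(n+k-1)/2$ so that the labelled graph construction goes through with the larger arity: one builds the analogous $(n+k)$-dimensional relativised set algebra $\D$ over $M$, verifies that its $n$-neat reduct contains $\C$, and confirms that $\Ra\D = \Ra\C$ by noting that both $\Ra$-reducts are generated from the same two-dimensional basic matrices over $\R$, which are invariant under increasing the cylindric dimension (the ``extra'' coordinates being cylindrified out).

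The main obstacle is the coordination of these ingredients in a single construction: the parameter $N$ must be small enough that the monochromatic-triangle Ramsey argument still kills the completion, yet large enough that the back-and-forth system $\Theta$ lifts from $n$ to $n+k$ coordinates so as to supply the $(n+k)$-dimensional dilation $\D$ with matching $\Ra$-reduct. Reconciling this with Maddux's rigidification, which typically kills the symmetry available in $\Theta^\chi$ for non-trivial $\chi$, is the technically delicate step and will require replacing the permutation-indexed back-and-forth families by ones relativised to the automorphisms fixing the single generator $g$.
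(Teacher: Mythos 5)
Your route is genuinely different from the paper's. The paper obtains $\R$ and $\C$ by the blow-up-and-blur (splitting) construction of \cite{ANT}, exactly as in Example \ref{OTT}: one starts from a finite non-representable Monk algebra $\E_k(2,3)$, splits its atoms along an $(n+k)$-blur $(J,E)$ with $E$ the ``evenly distributed'' ternary relation, and reads off all six clauses from the lemmas of \cite{ANT} --- $(J4)$ gives the cylindric basis, $(J5)$ gives representability of the term algebras at every level up to $n+k$, and the identity $\Bb_m(\M,J,E)=\Nr_m\Bb_{n+k}(\M,J,E)$ delivers clause (vi), with the same relation algebra reduct in every dimension, for free. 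You instead propose the Hodkinson-style labelled-graph construction of Theorem \ref{complete}. That construction does yield (i), (iii), (iv) and (v) essentially as you describe, but it does not deliver (vi), and this is where your proposal has a genuine gap.

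The obstruction is concrete. In the graph construction the relation algebra atoms are $\{1'\}\cup(\G\times d)$, where the number of colours $d$ is tied to the cylindric dimension: the ``forth'' step of the back-and-forth system needs a colour $i<d$ unused on the at most $d-1$ edges from the new node to the face, and the amalgamation property for $d$ by $d$ basic matrices fails with fewer than $d$ colours. So to build a representable $(n+k)$-dimensional dilation $\D$ you must pass to $n+k$ colours, which changes the relation algebra: its $\Ra$-reduct then has atoms $\G\times(n+k)$ rather than $\G\times n$, and the required identity $\Ra\D=\Ra\C$ is lost. Keeping $n$ colours and merely enlarging the parameter $N$ does not help, since neither the back-and-forth system nor the basis property survives in dimension $n+k$. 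You flag this coordination problem yourself but do not resolve it, and it is not clear it can be resolved inside the graph framework; the blur construction avoids it because the splitting is performed once, at the level of the relation algebra, with parameters $l$ and $k$ chosen large enough to support all dimensions up to $n+k$ simultaneously. The single-generation clauses, (ii) and the first part of (vi), are a second, smaller gap: grafting Maddux's rigidification onto the graph construction is not routine precisely because, as you note, it destroys the $\chi$-indexed symmetry on which the representability proof depends, whereas in \cite{ANT} generation by a single element is built into the splitting from the outset.
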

\begin{demo}{Sketch of proof} \cite{ANT}. In fact, the proof is like Theorem \ref{OTT}.
\end{demo}

Now we give a proof of Theorem \ref{o} modulo Theorem \ref{oo}.

\begin{demo}{Proof of Theorem \ref{o}} let $g, \C$ and $\D$ be as in theorem \ref{oo} (vi).
Then $g$ generates $\C$ and $g$ is $2$ dimensional in $\C$. We can write up a theory $T\subseteq \L_n$ such that for any model $\M$ we have
$$\M=(M,G)\models T \text { iff } \C_n(\M)\cong \C\text { and $G$ corresponds to $g$ via this isomorphism }$$
now $T\subseteq L_n$, $T$ is consistent and $n$ complete and $n$ atomic because $C$ is simple and atomic.
We now specify $\Gamma(x,y)$/ For $a\in At$, let $\tau_a$ be a relation algebraic term such that $\tau_a(g)=a$ in 
$R$, the relation algebra reduct of $\C$.
For each $\tau_a$ there is a formula $\mu_a(x,y)$ such that $\tau_a(g)=\mu_a^{\M}$. Define $\Gamma(x,y)=\{\neg \mu_a: a\in At\}$.
We will show that $\Gamma$ is as required. First we show that $\Gamma$ is realized in every model of $T$. Let $\M\models T$. 
Then
$\C_n(\M)\cong \C$, hence $\M$ gives a representation of $\R$ becuase $\R$ is the relation algebraic reduct of $\C_n(\M)$. 
But $\R$ has no complete representation, which means that 
$X=\bigcup\{\mu_a^{\M}: a\in At\}\subset M\times M$, i.e proper subset, so let $u,v)\in M\times M\sim X$. 
This means that $\Gamma$ is realized in $(u,v)$ in $\M$,
We have seen that $\Gamma$ is realized in each model of $T$. assume that that $\phi\in \L_{n+k}$ such that
$T\models \exists \bar{x}\phi$. We may assume that $\phi$ has only two free variables, say $x,y$. 
Take the representable $\D\in \CA_{n+k}$ from thm 2 (iv). recall that $g\in C\subseteq D$ and $\D$ is simple. 
Let $\M=(M,g)$ where $M$ is the base set of 
$\D.$ then $\M\models T$ because $\C$ is a subreduct of $\D$ generated by $g$. by $T\models \exists\bar{x}\phi$, we have 
$\phi^{\M}\neq \emptyset$.
Also $\phi^{\M}\in \D$ and is $2$ dimensional, hence $\phi^{\M}\in R$, since $\R$ is the relation algebraic reduct of 
$\D$, as well. But $\R$ is atomic hence $\phi^{\M}\cap \mu_a\neq \emptyset$ for some $a\in At$. this shows that 
it is not the case that $\M\models \phi\to \neg\mu_a$ where $\neg \mu_a\in \Gamma$, thus
$\phi$ is not a $T$-witness for $\Gamma.$  
Now we modify $T$, $\Gamma$ so that $\Gamma$ uses only one free variable. 
We use the technique of so-called partial pairing functions. Let $g,\C,\D$ 
be as in Theorem \ref{oo} (iv) with $\D\in \CA_{2n+2k}$. We may assume that $g$ is disjoint from the identity $1'$ 
because $1'$ is an atom in the relation algebraic reduct of $\C$. let $U$ be the base set of $\C$. 
We may assume that $U$ and $U\times U$ are disjoint. Let $M=U\cup (U\times U)$, let 
$G=g\cup \{(u,(u,v)): u,v\in U\}\cup \{((u,v),v): u,v\in U\}\cup \{((u,v), (u,v)): u,v\in U\}$
and let $\M=(M,G)$. from $G$ we can define $U\times U$ as $\{x: G(x,x)\}$ and from $U\times U$ and $G$ we can define the projection functions between 
$U\times U$ and $U$, and $g$. All these definitions use only $3$ variables. Thus for all $t\geq 3$ 
for all $\phi(x,y)\in \L_t$ there is a $\psi(x)\in \L_t$
such that $\psi^{\M}=\{(u,v)\in U\times U: \phi^{(U,g)}(u,v)\}$. For any $a\in At$ let $\psi_a(x)$ be the formula
corresponding to $\mu_a(x,y)$ this way. Conversely for any $\psi\in \L_t$ there is a $\phi\in \L_{2t}$ 
such that the projection of $\psi^{\M}$ to $U$ is $\phi^{(U,g)}$.
Now define $T$ as the $\L_n$ theory of $\M$, and set $\Gamma(x)=\{\neg \psi_a(x): a\in At\}.$
Then it can be easily checked that $\Gamma$ and $T$ are as required.
\end{demo}

\subsection{Independence of OTT}

In this final section, we use iterated forcing to prove independence of statements involving existence of representations for algebras
enjoying a complete neat embedding property. By examples \ref{countable}, \ref{OTT}, 
we have that the condition of countability and being in $S_c\Nr_n\CA_{\omega}$ cannot be dispensed with in theorem \ref{OT}.
\begin{definition} 
\begin{enumroman}
\item Let $\kappa$ be a cardinal. Let $OTT(\kappa)$ be the following statement.
$\A\in S_c\Nr_n\CA_{\omega}$ is countable and for $i\in \kappa$, $X_i\subseteq A$ are such that $\prod X_i=0$, then for all $a\neq 0$, there exists
a set algebra $\C$ with countable base, $f:\A\to C$ such that $f(a)\neq 0$ and for all $i\in \kappa$, $\bigcap_{x\in X_i} f(x)=0.$
\item Let $OTT$ be the statement that 
$$(\forall k<{}^{\omega}2\implies OTT(\kappa))$$
\item Let $OTT_m(\kappa)$ be the statement obtained from $OTT(\kappa)$ by replacing $X_i$ with ``nonprincipal ultrafilter $F_i$"
and $OTT_m$ be the statement 
$$(\forall k<{}^{\omega}2\implies OTT_m(\kappa))$$
\end{enumroman}
\end{definition}
\begin{theorem}\label{OT} 
\begin{enumroman}
\item $OTT$ is independent from $ZFC+\neg CH$. In fact for any regular cardinal $\kappa>\omega_1$, 
there is a model of $ZFC$ in which $\kappa=2^{\omega}$ 
and $OTT$ holds. Conversely, there is a model of $ZFC$ in which $\omega_3=2^{\omega}$ and $OTT(\omega_2)$ is false.
\item $OTT_m$ is provable in $ZFC$
\end{enumroman}
\end{theorem}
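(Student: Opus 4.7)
The plan for (ii) is to push the construction behind Theorem~\ref{ZF} down to the countable setting, exploiting the fact that non-principal ultrafilters are \emph{maximal} types. Given a countable $\A=\Nr_n\B$ with $\B\in\Lf_\omega$, I would play a Hodges-style forcing game of length $\omega$ to build a tree of consistent condition sets $\{\Gamma_\eta:\eta\in{}^{<\omega}2\}$ in $\B$ such that: at every splitting stage, $\Gamma_{\eta\widehat{\ }0}$ and $\Gamma_{\eta\widehat{\ }1}$ are separated by a formula $\theta$, while cylindrifier and Henkin-style witness tasks are handled by additional experts. The resulting family $\{\Gamma_\mu:\mu\in{}^\omega 2\}$ yields $2^\omega$ canonical weak set representations $(f_\mu,M_\mu)$; by the separation at splits together with the maximality of non-principal ultrafilters, any non-principal $F\subseteq\Nr_n\B$ realized in two distinct $M_\mu$ must already be separated by some $\theta$ appearing in one of the $\Gamma_\mu$'s, a contradiction. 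Hence each non-principal ultrafilter is realized in at most one representation. A pigeonhole on the $2^\omega$ branches then shows that any family of $\kappa<2^\omega$ non-principal ultrafilters can be jointly omitted. The entire argument is finitary in its moves and countable in its building blocks, so it goes through in pure $ZFC$.

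For the positive half of (i), given regular $\kappa>\omega_1$, I would invoke the Solovay--Tennenbaum iterated ccc forcing construction to produce a model of $ZFC+2^\omega=\kappa+MA$. Since $MA$ (in fact its restriction to $\sigma$-centered posets, which is $P_0$) implies $covK=2^\omega$, Theorem~\ref{covK} immediately yields $OTT(\lambda)$ for every $\lambda<2^\omega=\kappa$, so $OTT$ holds in this model. This is the easy half because we are merely cashing in an already proven topological lemma against a set-theoretic hypothesis that makes its cardinal hypothesis vacuous below $2^\omega$.

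For the negative half, start from a model of $CH$ and form the finite-support product adding $\omega_3$ Cohen reals. In the extension, $2^\omega=\omega_3$, and it is a classical fact that Cohen forcing leaves $covK=\omega_1$. The task is to manufacture, inside this model, a countable $\A\in S_c\Nr_n\CA_\omega$ together with a family $(X_i:i<\omega_2)$, $\prod X_i=0$, for which every representation realizes some $X_i$. The strategy is to dualize the Baire-category step of Theorem~\ref{covK}: fix a covering of the Stone space $S$ of some convenient $\A$ by $\omega_1$ closed nowhere dense sets (available since $covK=\omega_1<\omega_2$), and arrange that each such set arises as $\bigcap_{x\in X_i}N_x$ for a subset $X_i\subseteq A$ with infimum zero. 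Thereafter every ultrafilter $F$ of $\A$ lies in some $H_{X_i,\tau}$ for a suitable $\tau$, and the canonical model construction is therefore forced to realize the corresponding $X_i$. A convenient packaging is to choose $\A=\Nr_n\D$ where $\D\in\Lf_\omega$ is built so that its Stone space is, via a canonical surjection, identifiable with $2^\omega$ (or another familiar Polish space); the $\omega_1$ covering nowhere dense sets pull back to meets in $\A$, and the complete neat embedding property is preserved by the construction.

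The main obstacle is the encoding step in the negative direction: translating a topological covering of $S$ by fewer than $2^\omega$ nowhere dense sets into an \emph{algebraic} obstruction to representation, while keeping $\A$ countable and in $S_c\Nr_n\CA_\omega$. The difficulty is twofold: (a) not every closed nowhere dense set in $S$ need be of the form $\bigcap_{x\in X}N_x$ with $\prod X=0$, so the algebra has to be rich enough to realize the chosen cover as meets, and (b) enlarging $\A$ to accommodate more meets risks destroying both countability and the complete neat embedding property. My proposal to address (a)--(b) is to take $\A$ to be a term algebra over a carefully chosen atom structure whose Stone space is homeomorphic to a product of countably many copies of $2$, and to use the Cohen reals themselves as the codes for the covering meets; this reduces the problem to a standard absoluteness argument, noting that the statement ``$\prod X_i=0$ in $\A$ and $H_{X_i,\tau}$ covers $S$'' is $\Sigma^1_1$ in the Cohen-real parameters and hence computable from the forcing data.
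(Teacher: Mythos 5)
Your treatment of (ii) and of the positive half of (i) tracks the paper: (ii) is exactly the countable instance of the tree-of-conditions argument behind Theorem \ref{ZF} (the paper's proof of (ii) simply cites that construction), and for the consistency of $OTT$ the paper likewise runs a Solovay-style ccc iteration -- restricted to the countable posets that actually arise -- where you would instead cash in full $MA$ together with $MA\Rightarrow covK={}^{\omega}2$ and Theorem \ref{covK}; both are fine and essentially the same argument.

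The negative half of (i) is where your proposal has a genuine gap. Covering the Stone space $S$ of $\A$ by $covK$ many closed nowhere dense sets of the form $\bigcap_{x\in X_i}N_x$ only shows that every \emph{ultrafilter} of $\A$ lies in one of them; it does not show that every \emph{representation} of $\A$ realizes some $X_i$. A representation is manufactured from a Henkin ultrafilter $F$ of a dilation $\B\in\Lf_{\omega}$ of $\A$, and omitting $X_i$ in that representation only requires $F$ to avoid the substituted sets $X_{i,\tau}$ (the sets $\mathbf{H}_{i,\tau}$ in the proof of Theorem \ref{covK}) -- a far weaker demand than avoiding a prescribed cover of $S$ -- so your ``dualization'' of the Baire category step does not yield non-omittability. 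The appeal to $\Sigma^1_1$ absoluteness in the Cohen-real parameters does not repair this: it neither produces the algebra nor verifies that the meets cannot be omitted. The paper closes the gap differently: it starts from a first order theory $T$ in $n$ variables whose finite complete extensions $T'$ have type spaces $S^n(T')$ in which the isolated points are not dense, extracts a perfect clopen set $Y$ of $n$-types (homeomorphic to $^{\omega}2$), covers $Y$ by $covK$ many closed nowhere dense sets, and observes that every model of $T'$ realizes a type in $Y$ and hence realizes one of the covering types -- this is the model-theoretic fact that $covK$ many non-principal types cannot be omitted. It then sets $\A=\Fm/T$, $X_p=\{\phi/T:\phi\in p\}$, $\B=\Nr_n\A$, and transfers non-omittability down to $\B$ by the lifting argument (showing $g=\Sg^{\A\times\D}f$ is a one-to-one homomorphism with domain all of $\A$), so that a representation of the neat reduct omitting all the $X_p$ would give a model of $T$ omitting all $p$, a contradiction. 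You would need both the ``isolated points not dense'' theory and this lifting step to complete your sketch. (Your choice of the Cohen model with $covK=\omega_1$ is harmless for the literal statement, since $\neg OTT(\omega_1)$ implies $\neg OTT(\omega_2)$; the paper instead uses Miller's model with $covK=\omega_2$ and $^{\omega}2=\omega_3$.)
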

\begin{demo}{Proof} We proved (ii) above. 
$OTT$ is equivalent to $MA$ restricted to countable partially ordered sets.
We first show that for any $n>1$,
there is a countable transitive model of $ZFC$ such that $M\models OTT\land {}^{\omega}2=\omega_n$.
Let $M$ be a model of $ZFC$ with $^{\omega}2=\omega_n$.
It is not hard to show that, assuming $ZFC$ consistent,  such models (violating $CH$) exist. 
We say that $(\A,J)\in M$ is a counterexample to $OTT$ if $\A$ is a countable boolean algebra, $J$ is a family of subsets of $A$
such that $|J|\leq \omega_{n-1}^M$, and $\sum X=1$ for all $X\in J$, but there is no ultrafilter $F$ of $\A$ such that $F\cap X\neq \emptyset$
forall $X\in J$. We want to get a model, where there is no counterexamples, so we are going to adjoin infinitely many generic sets 
to kill potential counterexamples, using sophisticated iteration techniques of Solovay. In such a model $OTT$ holds for the following reasoning.
For let $\A\in S_c\Nr_n\CA_{\omega}$ and $(X_i:i<\omega_{n-1})$ be non-principal types. Then $\A=\Nr_n\B$, $\B\in \Lf_{\omega}$ is countable.
To construct the desired representation, wo we are searching for an ultrafilter $F$ that preserves the following joins and meets: 
$$(\forall j<\alpha)(\forall x\in B)({\sf c}_jx=\sum_{i\in \alpha\smallsetminus \Delta x}
{\sf s}_i^jx.)$$
$$(\forall\tau\in V)(\forall  i\in \omega_{n-1})\prod{}^{\A}X_{i,\tau}=0$$
where $$X_{i,\tau}=\{{\sf s}_{\tau}x: x\in X_i\}.$$
Such joins and meets can be easily transformed to a an equivalent set of joins $(X_i:i<\omega_{n-1})$ such that $\sum X_i=1$. 
An ultrafilter preserving the set of new joins is one that preserves the original sets of meets and joins, meaning that for all $j<\alpha$ if ${\sf c}_jx\in F$
then ${\sf s}_i^jx\in F$ for some $i\notin \Delta x$, and for ll $\tau$ there exists $i$ such that $X_{i,\tau}$ is not included in $F$.
We follow closely the 
treatment carried out in \cite{Handbook} 
proving independence of $MA$. We also follow the notation adopted therein, often without warning.
Let $P_0$ be any normalized $ccc$ (satisifying countable chain condition, i.e has no uncounable antichains)
partially ordered set with underlying set $\omega.$ We  define an increasing sequence 
$P_{\eta}$ $\eta\leq \omega_n$ of $PO$ sets each of cardinality $\leq \omega_n$ by transfinite induction.
At successor ordinls we use product forcing, and at limit ordinal we use direct limits.
At the same time we construct retractions $h_{\eta}^{\mu}$ of $P_{\mu}$ to $P_{\eta}$ for $\eta<\mu\leq \kappa$, such that
$h_{\eta}^{\xi}=h_{\eta}^{\mu}h_{\mu}^{\xi}$ to control the construction. Let $a, b$ be functions with domain $\kappa\sim \{0\}$ 
and for all $\eta<\kappa$ $a(\eta)<\eta$, $b(\eta)<\kappa$. Assume, too, that 
for all $\alpha,\beta<\kappa$,  there exists $\eta<\kappa$ such that $a(\eta)=\alpha$, $b(\eta)=\beta$. 
Suppose everything is constructed up to and including $\mu$. We continue the construction at $\mu+1$. Let $\alpha=a(\mu)$, $\beta=b(\mu)$.
Let $t(P_{\alpha})\subseteq P_{\alpha}\times \kappa\times \omega\times \omega$, such that
$P_{\alpha}$ forces $t(P_{\alpha})$ is a function with domain $\omega_n$ and range the set of all relations of partially ordered sets 
with underlying set $\omega$.  
Let $v$ be a $P_{\alpha}$ term such that $P_{\alpha}\models_f v=u(\beta^*).$
Let $\bar{R_{\mu}}$ be a $P_{\mu}$ term such that $P_{\mu}\models $ 
$v$ is $ccc\land \bar{R_{\mu}}=v$ or $v$ is not $ccc$ and $\bar{R_{\mu}}$ is the order relation of 
$P_0^*$.
Let $P_{\mu+1}$ be the forcing product $P_{\mu}\otimes \bar{R_{\mu}}$. Let $h$ be the natural restriction of $P_{\mu+1}$ to $P_{\mu}$.
Let $h_{\eta}^{\mu+1}=h$ for $\eta=\mu$ and $h_{\eta}^{\mu}h$ for $\eta<\mu$. At a limit $\eta\leq \kappa$, let $P_{\eta}$ be the union of of the 
$P_{\xi}$ for $\xi<\eta$, assuming that these have been defined. For $\xi<\eta$, let $h_{\xi}^{\eta}$ be the union of the maps $h_{\xi}^{\mu}$ for 
$\xi<\mu<\eta$.  Then, it can be checked that $|P_{\eta}|\leq \omega_n$ for all $\eta\leq \omega_n$ \cite{Handbook}.
Now we have constructed a $P=P_k\in M$ such that $M\models $$P$ is $ccc$. (The limit case follows from \cite{Handbook} 
lemma 6.5 p.448.)
Let $G$ be $M$ generic subset of $P$, and $N=M[G]$.
Then $N$ and $M$ have the same cardinals, so that $N\models {}^{\omega}2={}^{\omega_j}2=\kappa$ for all $0<j<n$. 
We show that $N\models OTT$.
Let $\A\in S_c\Nr_n\CA_{\omega}$ and $J$ be a family of subsets of $\A$ such that $|J|< \omega_n^N$ 
and $\sum X=1$ for all $X\in J$ and suppose that $(\A,J)\in N$. We want to find an ultrafilter $F$ intersecting the elements in $J$.
Assume that  $|J|=\omega_{n-1}$. The other cases are treated analogously. 
For each $\eta<\kappa$, $G_{\eta}=G\cap |P_{\eta}|$ is an $M$ generic subset of $P_{\eta}$.
Let $M_{\eta}=M[G_{\eta}]$. Then $M\subseteq M_{\eta}\subseteq N$ and all three models have the same cardinals.
Now we show that $(\A,J)\in M_{\alpha}$ for some $\alpha$. Take $t\in \wp^{M}(P\times \mu\times \mu)$ with $I_G(t)$ being the order relation on $\A$.
If $\sigma, \tau<\omega_{n-1}$ and $\sigma\leq \tau$, then there is a $p\in G$ with
$(p,\sigma,\tau)\in t$. Let $f(\sigma,\tau)$ be one such $p$. If it is not the case that $\sigma\leq \tau$, then there is a $p\in G$ such that for no
$q\leq p$ it is the case that $(q,\sigma,\tau)\in t$. Since $\kappa>\omega_{n-1}$ is regular, all $f(\sigma,\tau)\in G_{\alpha}$
for some $\alpha<\kappa$. So the order relation of $Q$ is equal to $I_{G_{\alpha}(t)}\in M_{\alpha}$ for some 
$\alpha<\kappa$. To handle $F$ fix a surjection $f\in N$ from $\omega_{n-1}$ to $J$ and apply a similar argument to 
$E=\{(\sigma,\tau))\in \omega_{n-1}\times \omega_{n-1}: \sigma\in f(\tau)\}$, to show that $E$ hence $J$ belongs to $M_{\alpha}$.
Finally fix $\alpha<\kappa$ such that $(\A,J)\in M_{\alpha}$.
Now the order on $\A$ in $M_{\alpha}$ is a  $PO$ set with underlying set $\omega$.
If $u=t_{n-1}(P_{\alpha})$ is the term such that $I_{G_{\alpha}}(u)$ is a surjection from $\kappa$ onto the set of all order relations 
of such $PO$ sets, then the order relation of $\A$ is $I_{G_{\alpha}}(u)(\beta)$ for some $\beta<\kappa$.
Let $v$ be the $P_{\alpha}$ term such that $P_{\alpha}\models v=u(\beta^*)$, so that
$I_G(v)$ is the order relation of $\A$. Take $\mu<\kappa$ such that $a(\mu)=\alpha$, $b(\mu)=\beta$.
Now in the construction of $P_{\mu+1}$ given above $\bar{R_{\mu}}$ was a term so chosen such that for any $M$ generic subset $H$ of
$P_{\mu}$ we have $I_H(\bar{R_{\mu}})=I_H(v)$ if the latter is $ccc$ inside $M[H]$.
Now we have $I_{G_{\mu}}(R_{\mu})=I_G(v)=I_{G_{\alpha}}(v)$ is the order relation on $\A$.
Finally consider the $M$ generic $G_{\mu+1}$ of $P_{\mu+1}=P_{\mu}\otimes \bar{R_{\mu}}.$
Then
$$K=\{I_{G_{\mu}}(r): \exists q\in G_{\mu}((q,r)\in G_{\mu+1})\}\in M_{\mu+1}\subseteq N$$
is an $M_{\mu}$ generic subset of the $PO$ set with order relation of $I_{G}(\bar{R})$, i.e of $\A$.
$K$ is  $F$ generic and $K$ can be extended to the desired ultrafilter.
One can construct, using standard iteration techniques,  a model of the stronger $MA$, in fact in such a model $2^{\omega}=\kappa$, 
where $\kappa$ is any regular cardinal $>\omega_1$ and $OTT$ holds.
We are done.\footnote{It is proved by Miller that $covK$ has uncountable cofinality. 
So if we start with a ground model that satisfies $GCH$, and we let $P$ the notion of forcing that 
adds $\kappa$ reals then we get in $M[G]$ $2^{\omega}=covK>\omega_1$.}

Now we prove that the negation of $OTT$ is also consistent. Let $covK$ be the least cardinal $\lambda$ 
such that the real line can be covered by $\lambda$ nowhere dense sets.
Then in theorem \ref{covK}, it is proved that $(\forall \lambda<covK\implies OTT(\kappa))$ is provable in $ZFC$.
\footnote{ Note that $covK$ is the least cardinal such that the Baire category Theorem fails. It is also
the largest cardinal such that $MA$ restricted to countable Boolean algebras holds.
To see this, we have the irrationals are homeomorphic to the space $\omega^{\omega}$. 
The topology on the Baire space is generated by sets of the form 
$N_s=\{g\in {}^{\omega}\omega: s\subseteq g\}$ for $s\in {}^{<\omega}\omega$. Dense open subsets $D$ of $^{\omega}\omega$ correspond to dense
subsets of $^{\omega}\omega$, i.e $\{s\in ^{<\omega}\omega: N_s\subseteq D\}$. Therefore $MA_{covK}(^{<\omega}\omega)$ holds.
Let $P$ be any countable partail order. If there is a condition $p\in P$ such that every two extensions of $P$ are compatible, then
$\{q\in P: q\leq p \text { or } p\leq q\}$ is a $P$ filter meeting every dense subset of $P$. If there is no such $P$, then for every element of $P$
there exists an infinite, maximal set of incompatible extensions. Since $P$ is countable, then one can inductively define an order
preserving embedding of $^{<\omega}\omega$ onto a dense subset of $P$. By a well known theorem we are done.}
Let $\kappa$ be a regular cardinal. The following notion of forcing which we denote by $C(\kappa)$
adjoins $\kappa$ real numbers called Cohen reals. $C(\kappa)$ be the set of all functions $p$ such that
$Dom(p)$ is a finite subset of $\kappa\times \omega$
and $Range(p)\subseteq \{0,1\}$.
Let $I=\kappa\times \omega$. Let $\Psi={}^I\{0,1\}$.
Let $T$ be the set of $0,1$ functions with $dom(t)\subseteq I$. Let $S$ be the $\sigma$-algebra generated by the sets $S_t$, $t\in T$, where 
$S_t=\{f\in \Psi: t\subseteq f\}$. The product measure on $S$ is the unique $\sigma$-additive measure such that
each $S_t$ has measure $1/2^{|t|}$. Let $B=S/I$ where $I$ is the ideal of measure $0$ sets.
Let $B(\kappa)$ be the measure algebra on $^{\kappa}2$ and $C(\kappa)=\{p:D\to 2, D\subseteq_{\omega} k\}$. 
The following models can be easily destilled from the work of Miller \cite{Miller}.  The model for $covK=\omega_2$ and $^{\omega}2=\omega_3$
is $M[G][H]$ where $G$ is $B(\omega_3)$ generic over $M$ and $H$ is $C(\omega_2)$ generic over $M[G]$. 
It is easy to see that $covK\geq \omega_2.$
To see that $covK\leq \omega_2$ notice that no real is Cohen over $M[H]\cap {}^{\omega}2$.
Now there is alo a model in which $covK=\omega_1$ and $^{\omega}2=\omega_3$. This is $M[H][G]$ where $H$ is $C(\omega_3)$ generic over
$M$ and $G$ is $B^{M[H]}(\omega_2)$ generic over $M[H]$.
(The above are two step iterations). 
There are other known models which give the same results, that are scattered all over the literature. 
A model of $ZFC$ in which $covK=\omega_1$ and $\omega_2={}^{\omega}2$
can be  defined by an $\omega_1$ iteration of random models over a model of $2^{\omega}=\omega_2$.
Another model is of $ZFC$ for which $covK=\omega_2$ and $^{\omega}2=\omega_3$ is defined as follows.
Start with $M\models 2^{\omega}=\omega_3\land \exists D |D|=\omega_1$ $\forall f\in \omega^{\omega} \exists g\in D \forall nf(n)<g(n)$.
(There exists an $\omega_1$ scale).
Let $N$ be an iterated random real extension of $M$. 
Then in $N$ $covK\geq \omega_2$ because the iteration has length $\omega_2$.
Also the set of $\omega_2$ 
Cohen reals added by the iteration is not meager, hence $covK=\omega_2$.
Now we show that $OTT(covK)$ is false, by which we will be done. 
We go back to $OTT$ for $FOL$. We work in a countable language $L$ where the variables available are of order type $\omega$. The variables are $v_0, v_1, \ldots v_n\ldots$.
For $n\leq \omega$, $Fm_n$ is the set of all formulas with free variables in $\{v_i:i<n\}$. An $n$ type is a $p\subseteq Fm_n$.
Fix a theory $T$ in $L$. An $n$ type $p$ is principal if there exists $\psi$ such that $T\models \psi\implies p$. Otherwise it is non-principal.
Recall that the classical Henkin-Orey omitting types theorem says that countably many non-principal types can be omitted. 
$S^n(T)$ is the set of complete types in the variables $\{v_i:i<n\}$ which are consistent with $T$. Algebraically, these are the ultrafilters in the algebra
$\B_n=\Fm_n/T$. Note that $\B_n\in \Nr_n\CA_{\omega}$.
$S^n(T)$ is the stone space of $\B_n$. $S^n(T)$ is a Boolean space with a countable basis of open sets
$$[\phi(\bar{x})]=\{p\in S^{n}T: \phi(\bar{x})\in p\}.$$
There is a correspondance between types in the variables $\{x_i:i<n\}$ and closed sets in $S^n(T)$, the closed set associated with the type $p$ is
$\bigcap_{\phi\in p}[\phi(\bar{x})]$. $p(x)$ is non-principal iff $\bigcap_{\phi\in p}[\phi(\bar{x}]$ is nowhere dense.
Let $n>1$. We adapt an example in \cite{OTT}. Fix $n>1$. 
Let $T$ be a theory such that if $T'$ is a finite and complete extension of $T$, then in $S^n(T')$ the isolated points are not dense.
It is easy to construct such theories, for any fixed $n$. 
Let $X$ be the space $S^0(T)$ of all complete $0$ types which are consistent with $T$. For an ordinal $\alpha$, let $X^{(\alpha)}$ 
be the $\alpha$-iterated
Cantor-Bendixon derivative of $X$. The language is countable, there is some $\alpha<\omega_1$ 
such that $X^{(\alpha)}=X^{(\alpha+1)}$
and $X\setminus X^{\alpha}$ is countable. $X^{\alpha}$ is a perfect set and therefore it is homeomorphic to the Cantor space $^{\omega}2$ or it is empty.
Recall that $covK$ is the least cardinal such that  the real line (equivalently any Polish space without isolated points) can be covered by $covK$ many closed nowhere
dense sets. Then, clearly $\omega<covK\leq {}^{\omega}2$. Martin's axiom (by the above) implies that $covK={}^{\omega}2$ but it is consistent that 
$covK<{}^{\omega}2$, as also illustrated above. 
We associate a set $P_{\infty}$ of $\leq covK$ many types with $X^{\alpha}$. Assume that $X^{\alpha}$ is non-empty, since it is a 
closed set in $X$, 
there is some extension $T_{\infty}$ of $T$
such that in $X$ 
$$X^{\alpha}=\bigcap_{\sigma\in T^{\infty}}[\sigma].$$
Hence the space $S^0(T_{\infty})$ is homeomorphic to $X^{\alpha}$ and to $^{\omega}2$. Then there are $Y_{\beta} (\beta<covK)$ 
closed nowhere dense sets in $S^0(T_{\infty})$
such that
$$S^0(T_{\infty})=\bigcup_{\beta<covK}Y_{\beta}.$$
Let $\beta<covK$. Since $Y_{\beta}$ is closed, there is a $0$ type $p_{\beta}$ such that in $S^{0}(T_{\infty})$
$$Y_{\beta}=\bigcap_{\sigma\in p_{\beta}}[\sigma].$$
As $Y_{\beta}$ is nowhere dense $p_{\beta}$ is non principal in $T_{\infty}$. Assuming, without loss, that $T_{\infty}\subseteq p_{\beta}$ we get that 
$p_{\beta}$ is non 
principal in $T$.
Set
$$P_{\infty}=\{p_{\beta}:\beta<covK\}.$$
Let us consider the $0$ types in $X\setminus X^{\alpha}$. These are complete consistent extensions of $T$.
For every $T'\in X\setminus X^{\alpha}$ we shall define a set $P_{T'}$ of $\leq covK$ many $n$ types that are not omitted in $T'$.
If $T'$ is not a finite extension of $T$, set $P_{T'}=\{T'\}$. Otherwise, in $S^n(T')$ the isolated types are not dense. 
Hence there is some non-empty $Y\subseteq
S^n(T')$ clopen and perfect. Now we can cover $Y$ with a family of $covK$ many closed nowhere dense sets of $n$ types. 
Since $Y$ is clo-open in $S^{n}(T')$, these sets are closed nowhere dense sets in $S^{n}(T')$, 
so we obtain a family of $covK$ many non principal $n$ types that cannot be omitted. We may assume that 
$T'\subseteq p$ for every $p\in P_T'$ and therefore every type in $P_T'$ is non principal in $T$.
Define
$$P=P_{\infty}\cup\bigcup\{P_{T'}: T'\in X\setminus X^{\alpha}\}.$$  
Now $P$ is a family of non-principal types $|P|=covK$ that cannot be omitted.
Let $\A=\Fm/T$ and for $p\in P$ let $X_p=\{\phi/T:\phi\in p\}$. Then $X_i\subseteq \Nr_n\A$, and $\prod X_i=0$.
However for any $0\neq a$, there is no set algebra $\C$ with countable base 
$M$ and $g:\A\to \C$ such that $g(a)\neq 0$ and $\bigcap_{x\in X_i}f(x)=\emptyset$.
Now let $\B=\Nr_n\A$. Let $a\neq 0$. Assume, seeking a contradiction, that there exists 
$f:\B\to \D'$ such that $f(a)\neq 0$ and $\bigcap_{x\in X_i} f(x)=\emptyset$. We can assume that
$B$ generates $\A$ and that $\D'=\Nr_n\D$ where $\D\in \Lf_{\omega}$. Let $g=\Sg^{\A\times \D}f$. We will show  
that $g$ is a one to one function with domain $\A$ that preserves the $X_i$'s which is impossible (Note that by definition $g$ is a homomorphism). 
We have
$$Dom g=Dom\Sg^{\A\times \D}f=\Sg^{\A}Domf=\Sg^{\A}\Nr_{n}\A=\A.$$
By symmetry it is enough to show that $g$ is a function.  We first prove the following (*)
 $$ \text { If } (a,b)\in g\text { and }  {\sf c}_k(a,b)=(a,b)\text { for all } k\in \omega\sim n, \text { then } f(a)=b.$$
Indeed,
$$(a,b)\in \Nr_{n}\Sg^{\A\times \D}f=\Sg^{\Nr_{n}(\A\times \D)}f=\Sg^{\Nr_{n}\A\times \Nr_{n}\D}f=f.$$
Here we are using that $\A\times \D\in \Lf_{\omega}$, so that  $\Nr_{n}\Sg^{\A\times \D}f=\Sg^{\Nr_{n}(\A\times \D)}f.$
Now suppose that $(x,y), (x,z)\in g$.
Let $k\in \omega\sim n.$ Let $\Delta$ denote symmetric difference. Then
$$(0, {\sf c}_k(y\Delta z))=({\sf c}_k0, {\sf c}_k(y\Delta z))={\sf c}_k(0,y\Delta z)={\sf c}_k((x,y)\Delta(x,z))\in g.$$
Also,
$${\sf c}_k(0, {\sf c}_k(y\Delta z))=(0,{\sf c}_k(y\Delta z)).$$ 
Thus by (*) we have  $$f(0)={\sf c}_k(y\Delta z) \text { for any } k\in \omega\sim n.$$
Hence ${\sf c}_k(y\Delta z)=0$ and so $y=z$.
We conclude that there exists a countable $\B\in \Nr_n\CA_{\omega}$ and $(X_i:i<covK)$ such that $\prod X_i=0$ but there is no representation that
preserves the $X_i$'s. In more detail. Give any $a\in \B$, if $a$ is non zero, $\C$ is a set algebra with countable base 
and $f:\B\to \C$ is a homomorphism such 
that $f(a)\neq 0$, then there exists $i<covK$, such that $\bigcap_{x\in X_i} f(x)\neq \emptyset.$
Therefore $OTT$ is false in a model of $ZFC+\neg CH$.
\footnote{Another model in which $covK=\omega_2$ and $^{\omega}2=\omega_3$ is due to Bukovski \cite{Miller} 
who does it by starting with a model of $2^{\omega}=\omega_3$, and then doing an $\omega_2$ iteration.
At each step of the iteration he does an $\omega_3$ iteration making $MA$ true. 
Alternatively we could start with a model of $2^{\omega}=\omega_3$ and then do an $\omega_2$ iteration with $\bold D$ 
where $\bold D=\{(n,f): n<\omega, f\in {}^{\omega}\omega\}$
the order for forcing an eventually dominant real.}
\end{demo}
 
Finally we point out that $covK$ can be defined to be  
the least cardinal such that the Baire category theorem for compact Hausdorff spaces fails
or the largest cardinal such that $MA(countable)$ holds or
the largest cardinal $\kappa$ such  $<\kappa$ many non principal types can be omitted.

\subsection{Neat Embeddings, Monk's result yet once again, and games}

The results in the previous section adressed the class $S_c\Nr_n\CA_{\omega}$ of algebras
having a complete neat embedding property. Neat reducts have been a central notion in algebraic logic
since the beginnings. Indeed, the consecutive problems 2.11, 2.12, 2.13  in the monograph \cite{HMT1} are on neat reducts.
Problem 2.12 is solved by Hirsch Hodkinson and Maddux \cite{HHM}. The authors of \cite{HHM} show that the 
sequence $\langle S\Nr_n\CA_{n+k}: k\in \omega\rangle$ is strictly decreasing for $\omega>n>2$ with respect to inclusion.
(Recall that we generalized this rsult to quasipolyadic equality algebras).
The infinite dimensional case is settled by Pigozzi as reported in \cite{HMT1}.
The main result in \cite{HHM} strengthes Monk's classical result that for every finite $n>2$ and any $k\in \omega$, 
$\RCA_n\subset S\Nr_n\CA_{n+k}.$ Taking $\A_k\in S\Nr_n\CA_{n+k}\sim \RCA_n$, and forming the ultraproduct 
$\prod \A_k/F$ relative to a non-principal ultrafilter
on $\omega$, the resulting structure
will be representable, showing that $\RCA_n$, though, elementary (indeed a variety) is not finitely axiomatizable.
Problem 2.13 is solved above (see the paragraph after theorem \ref{amal}). 
Problem 2.11 which is relevant to our later discussion asks: For which pair of ordinals $\alpha<\beta$ 
is the class $\Nr_{\alpha}\CA_{\beta}$ closed under
forming subalgebras  and homomorphic images?
N\'emeti proves that for any $1<\alpha<\beta$ the class $\Nr_{\alpha}\CA_{\beta}$ though closed under forming homomorphic images
and products 
is not a variety, i.e., it is not closed under forming subalgebras \cite{N83}. The next natural question is whether this class is elementary,
and in this particular case, since the class of neat reducts is closed under ultraproducts, 
 this amounts to asking whether it is  closed under elementary subalgebras?
In \cite{IGPL} it is proved that for any $1<\alpha<\beta$, the class $\Nr_{\alpha}\CA_{\beta}$ is not elementary answering problem 4.4 in \cite{HMT2}. 
In \cite{AU}, it is shown that this class cannot be characterized by any
$L_{\infty \omega}$ sentence. 
In this section we will be concerned with the class $\Nr_n\CA_{\omega}$ when $n$ is finite.
Note that $\Nr_n\CA_{\omega}\subseteq S_c\Nr_n\CA_{\omega}$.
We know that $\Nr_n\CA_{\omega}$ is closed under products and homomorphic images, thus under ultraproducts. However, for $n>1$,
it is {\it not}  closed under
elementary subalgebras, equivalently, under ultraroots. (For $n\leq 1, \Nr_n\CA_{\omega}=\RCA_n=\CA_n$; so this is a degenerate case which we 
ignore). For a class $K$, $ELK$ denotes the elementary closure of $K$, 
that is the least elementary class containing $K$. $UpK$ denotes the class of all ultraproducts of members of $K$
and $UrK$ denotes the class of all ultraroots of members of $K$.
Recall that, by the celebrated Shelah - Keisler theorem,  $ElK=UpUrK$. 

\begin{theorem} Let $n>1$. Then the class $\Nr_n\CA_{\omega}$ is pseudo-elementary, but is not elementary.
Furthermore, $El\Nr_n\CA_{\omega}\subset \RCA_n$, $EL\Nr_n\CA_{\omega}$ is recursively enumerable, and for $n>2$ is 
not finitely axiomatizable.

\end{theorem}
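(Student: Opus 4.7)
\emph{Plan of proof.} The pseudo-elementarity is the easy half. I would work in a two-sorted first order language with sorts $\sigma_1$ (for the $n$-dimensional algebra) and $\sigma_2$ (for the dilation), equipped with the $\CA_n$ symbols on $\sigma_1$, the $\CA_\omega$ symbols on $\sigma_2$, and a function symbol $\iota:\sigma_1\to\sigma_2$. The defining theory $T$ says: the $\sigma_2$-reduct is a $\CA_\omega$, $\iota$ is an injective $\CA_n$-homomorphism, and $\iota$ maps onto the set $\{x\in\sigma_2:\sf c_i x=x \text{ for all }i\geq n\}$, with the $\CA_n$-operations on $\sigma_1$ obtained by restricting those of $\sigma_2$ along $\iota$. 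Then the class of $\sigma_1$-reducts of models of $T$ is exactly $\Nr_n\CA_\omega$, so $\Nr_n\CA_\omega$ is pseudo-elementary. Since $T$ is recursive, a standard Craig-style argument (trading sorts for quantifiers) gives a recursive enumeration of the first-order consequences holding in every $\sigma_1$-reduct; that is, $\mathrm{Th}(El\Nr_n\CA_\omega)$ is r.e.

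For non-elementarity I would reuse the Andr\'eka splitting machinery (as already invoked in the excerpt for the example in \cite{IGPL}). The construction produces, starting from a representable atomic $\A\in\Nr_n\CA_\omega$, a splitting $\A'$ of $\A$ obtained by breaking a chosen atom into infinitely many copies; $\A'$ is still representable but the extra atoms leave no $\omega$-dimensional ``dilation room'' so $\A'\notin\Nr_n\CA_\omega$. A careful choice ensures that an appropriate ultrapower of $\A'$ reintroduces enough generic atoms to land in $\Nr_n\CA_\omega$, equivalently, that some $\B\in\Nr_n\CA_\omega$ is elementarily equivalent to $\A'$. Thus $\A'\in El\Nr_n\CA_\omega\setminus\Nr_n\CA_\omega$, witnessing that $\Nr_n\CA_\omega$ is not closed under ultraroots and hence not elementary. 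This is the main technical obstacle; the splitting must be tuned so that (i) representability and the failure of being a neat reduct persist in $\A'$, and (ii) some ultrapower recovers a neat-reduct witness.

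The inclusion $El\Nr_n\CA_\omega\subseteq\RCA_n$ is immediate: by Theorem~\ref{neat} we have $\Nr_n\CA_\omega\subseteq \RCA_n$, and $\RCA_n$ is a variety, hence elementary and closed under elementary equivalence. The inclusion is strict because for $n>1$ there exist representable algebras that are not neat reducts in $\omega$ extra dimensions and, after iterating the splitting once more or by taking a suitably ``rigid'' Monk-type algebra, are not elementarily equivalent to any neat reduct either; concretely, the very algebras $\A'$ produced in the previous paragraph, or the examples of \cite{N83}, suffice once one checks that no elementary equivalent of them lies in $\Nr_n\CA_\omega$.

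Finally, for $n>2$, non-finite axiomatizability of $El\Nr_n\CA_\omega$: suppose toward a contradiction $El\Nr_n\CA_\omega=\mathrm{Mod}(\Sigma)$ for a finite $\Sigma$. Take the Monk-Maddux sequence $\A_k\in S\Nr_n\CA_{n+k}\setminus\RCA_n$ used in the sketch of Monk's theorem above. Since $\A_k\notin\RCA_n$ and $El\Nr_n\CA_\omega\subseteq\RCA_n$, each $\A_k$ violates some sentence in $\Sigma$. On the other hand, the ultraproduct $\prod_F\A_k$ lies in $S\Nr_n\CA_{n+\omega}=\RCA_n$, and by a standard refinement of Monk's construction (taking the $\A_k$ of the form $\Nr_n\Ca\mathcal{B}_{n+k}$ arising from $(n{+}k)$-dimensional cylindric bases, hence $\A_k\in\Nr_n\CA_{n+k}$) a non-principal ultraproduct of $\Nr_n\CA_{n+k}$-algebras can be arranged to lie in $\Nr_n\CA_\omega$. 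Then $\prod_F\A_k\in\Nr_n\CA_\omega\subseteq\mathrm{Mod}(\Sigma)$ while cofinitely many $\A_k\not\models\Sigma$, contradicting \L o\'s's theorem. The delicate step here, which I expect to be the principal obstacle, is verifying that the particular Monk algebras one picks actually neatly embed into the $(n{+}k)$-dimensional dilation (so that each $\A_k$ itself sits in $\Nr_n\CA_{n+k}$), since only then does \L o\'s push the ultraproduct into $\Nr_n\CA_\omega$ rather than merely into $\RCA_n$.
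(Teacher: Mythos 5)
Your overall route coincides with the paper's on all four claims: pseudo-elementarity via a many-sorted defining theory together with Craig's trick for recursive enumerability, non-elementarity via Andr\'eka's splitting as in \cite{IGPL}, and non-finite axiomatizability of $El\Nr_n\CA_{\omega}$ for $n>2$ by producing non-representable algebras $\A_k\in\Nr_n\CA_{n+k}$ whose ultraproduct lies in $\Nr_n\CA_{\omega}$, so that the complement of $El\Nr_n\CA_{\omega}$ is not closed under ultraproducts. You also correctly isolate the delicate point in the last step, namely that the Monk-style witnesses must be \emph{full} neat reducts and not merely members of $S\Nr_n\CA_{n+k}$; the paper secures this from \cite[3.2.79, 3.2.84]{HMT1} by taking $\A_k=\Nr_n\B_k$ with $\B_k\in\CA_{n+k}$ containing a non-representable diagonal-free subreduct, expanding the $\B_k$ to the similarity type of $\CA_{\omega}$, and using $\prod\A_k/F=\Nr_n(\prod\C_k/F)$.

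There are, however, two genuine gaps. First, your two-sorted defining theory cannot express in first order that $\iota$ maps \emph{onto} the set $\{x\in\sigma_2:\cyl ix=x \text{ for all } i\geq n\}$: the antecedent of that clause is an infinite conjunction, and replacing it by the scheme $\cyl i\iota(x)=\iota(x)$ ($i\geq n$) only guarantees that the first sort embeds \emph{into} a neat reduct, which pins down $S\Nr_n\CA_{\omega}=\RCA_n$ rather than $\Nr_n\CA_{\omega}$. This is precisely why the paper uses a third sort of ``dimensions'' with constants $i^{\delta}$ for $i<\omega$: surjectivity onto the $n$-dimensional elements then becomes a single first-order sentence quantifying over the dimension sort. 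Second, your witnesses for the strictness of $El\Nr_n\CA_{\omega}\subset\RCA_n$ cannot work as stated: the algebras $\A'$ produced by your splitting argument are, by construction, elementarily equivalent to members of $\Nr_n\CA_{\omega}$ (that is exactly how they establish non-elementarity), so they lie \emph{inside} $El\Nr_n\CA_{\omega}$ and cannot witness strictness; and for the examples of \cite{N83} the clause ``once one checks that no elementary equivalent of them lies in $\Nr_n\CA_{\omega}$'' is precisely the nontrivial content that is missing. The non-representable $\A_k$ of your final paragraph do not help either, since strictness requires a \emph{representable} algebra outside $El\Nr_n\CA_{\omega}$. The paper delegates this point to \cite{IGPL}, where $El\Nr_{\alpha}\CA_{\beta}\subset S\Nr_{\alpha}\CA_{\beta}$ is proved; some such separate argument is needed here.
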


\begin{demo}{Proof} The class $\Nr_n\CA_{\omega}$ is not elementary \cite{IGPL}. 
To show that it is pseudo-elementary, we use a three sorted defining theory, with one sort for a cylindric algebra of dimension $n$ 
$(c)$, the second sort for the Boolean reduct of a cylindric algebra $(b)$
and the thirs sort for a set of dimensions $(\delta)$. We use superscripts $n,b,\delta$ for variables 
and functions to indicate that the variable, or the returned value of the function, 
is of the sort of the cylindric algebra of dimension $n$, the Boolean part of the cylindric algebra or the dimension set, respectively.
The signature includes dimension sort constants $i^{\delta}$ for each $i<\omega$ to represent the dimensions.
The defining theory for $\Nr_n\CA_{\omega}$ incudes sentences demanding that the consatnts $i^{\delta}$ for $i<\omega$ 
are distinct and that the last two sorts define
a cylindric algenra of dimension $\omega$. For example the sentence
$$\forall x^{\delta}, y^{\delta}, z^{\delta}(d^b(x^{\delta}, y^{\delta})=c^b(z^{\delta}, d^b(x^{\delta}, z^{\delta}). d^{b}(z^{\delta}, y^{\delta})))$$
represents the cylindric algebra axiom ${\sf d}_{ij}={\sf c}_k({\sf d}_{ik}.{\sf d}_{kj})$ for all $i,j,k<\omega$.
We have have a function $I^b$ from sort $c$ to sort $b$ and sentences requiring that $I^b$ be injective and to respect the $n$ dimensional 
cylindric operations as follows: for all $x^r$
$$I^b({\sf d}_{ij})=d^b(i^{\delta}, j^{\delta})$$
$$I^b({\sf c}_i x^r)= {\sf c}_i^b(I^b(x)).$$
Finally we require that $I^b$ maps onto the set of $n$ dimensional elements
$$\forall y^b((\forall z^{\delta}(z^{\delta}\neq 0^{\delta},\ldots (n-1)^{\delta}\rightarrow c^b(z^{\delta}, y^b)=y^b))\leftrightarrow \exists x^r(y^b=I^b(x^r))).$$
For ${\A}\in \CA_n,$ $\Rd_3{\A}$ denotes the $\CA_3$
obtained from $\A$ by discarding all operations indexed by indices in $n\sim 3.$
$\Df_n$ denotes the class of diagonal free cylindric algebras.
$\Rd_{df}{\A}$ denotes the $\Df_n$ obtained from $\A$
by deleting all diagonal elements. To prove the non-finite axiomatizability result we use Monk's algebras.
For $3\leq n,i<\omega$, with $n-1\leq i, {\C}_{n,i}$ denotes
the $\CA_n$ associated with the cylindric atom structure as defined on p. 95 of 
\cite{HMT1}.
Then by \cite[3.2.79]{HMT1}
for $3\leq n$, and $j<\omega$, 
$\Rd_3{\C}_{n,n+j}$ can be neatly embedded in a 
$\CA_{3+j+1}$.  (1)
By \cite[3.2.84]{HMT1}) we have for every $j\in \omega$, 
there is an $3\leq n$ such that $\Rd_{df}\Rd_{3}{\cal C}_{n,n+j}$
is a non-representable $\Df_3.$ (2)
Now suppose $m\in \omega$. By (2), 
choose $j\in \omega\sim 3$ so that $\Rd_{df}\Rd_3{\C}_{j,j+m+n-4}$
is a non-representable $\Df_3$. By (1) we have
$\Rd_{df}\Rd_3{\C}_{j,j+m+n-4}\subseteq \Nr_3{\B_m}$, for some 
${\B}\in \CA_{n+m}.$
Put ${\A}_m=\Nr_n\B_m$.  
$\Rd_{df}{\A}_m$ is not representable, 
a friotri, ${\A}_m\notin \RCA_{n},$ for else its $\Df$ reduct would be 
representable. Therefore $\A_m\notin EL\Nr_n\CA_{\omega}$.
Now let $\C_m$ be an algebra similar to $\CA_{\omega}$'s such that $\B_m=\Rd_{n+m}\C_m$.
Then $\A_m=\Nr_n\C_m$. Let $F$ be a non-principal ultrafilter on $\omega$. Then
$$\prod_{m\in \omega}\A_m/F=\prod_{m\in \omega}(\Nr_n\C_m)/F=\Nr_n(\prod_{m\in \omega}\C_m/F)$$
But $\prod_{m\in \omega}\C_m/F\in \CA_{\omega}$. Hence $\CA_n\sim El\Nr_n\CA_{\omega}$ is not closed under ultraproducts.
It follows that the latter class is not finitely axiomatizable.
In \cite{IGPL} it is proved that for $1<\alpha<\beta$, $El\Nr_{\alpha}\CA_{\beta}\subset S\Nr_{\alpha}\CA_{\beta}$.
\end{demo}
From the above proof it follows that
\begin{corollary} Let $K$ be any class such that $\Nr_n\CA_{\omega}\subseteq K\subseteq \RCA_n$. Then $ELK$ is not finitely axiomatizable
\end{corollary}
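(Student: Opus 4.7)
The plan is to reuse, verbatim, the ultraproduct construction from the preceding theorem. Concretely, the sequence $(\A_m : m \in \omega)$ built there has three features on which the argument hinges: (a) $\A_m \notin \RCA_n$ (since $\Rd_{df}\A_m$ is already not representable), (b) each $\A_m$ is of the form $\Nr_n \C_m$ for an algebra $\C_m$ in the signature of $\CA_\omega$, and (c) although $\C_m$ itself need not satisfy the $\CA_\omega$ axioms, the ultraproduct $\prod_{m\in\omega} \C_m/F$ (for a non-principal ultrafilter $F$ on $\omega$) does lie in $\CA_\omega$, so that $\prod_{m\in\omega}\A_m/F = \Nr_n(\prod_{m\in\omega}\C_m/F) \in \Nr_n \CA_\omega$.

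First I would observe that, since $\RCA_n$ is a variety (by Tarski's theorem that $\RCA_\alpha$ is equationally axiomatizable), it is in particular an elementary class, so the hypothesis $K \subseteq \RCA_n$ forces $ELK \subseteq EL\,\RCA_n = \RCA_n$. Combined with $\Nr_n \CA_\omega \subseteq K \subseteq ELK$, this pins $ELK$ between the two benchmark classes that already separate the proof of the preceding theorem.

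Next I would argue by contradiction: suppose $ELK$ is finitely axiomatizable. Then, being elementary and finitely axiomatizable, its complement (in the class of all $\CA_n$'s, or equivalently in any ambient elementary class) is closed under ultraproducts. But the previous paragraph yields, on the one hand, $\A_m \notin \RCA_n \supseteq ELK$ for every $m$, and on the other hand $\prod_{m \in \omega} \A_m/F \in \Nr_n \CA_\omega \subseteq K \subseteq ELK$. This contradicts closure of the complement under ultraproducts, so $ELK$ cannot be finitely axiomatizable.

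There is really no obstacle here beyond quoting the earlier construction — the whole point is that the witnesses $\A_m$ were tailored to sit outside $\RCA_n$ while their ultraproduct was pulled back into $\Nr_n\CA_\omega$, and these are precisely the bounds sandwiching $K$. The mildly delicate step, if one is being careful, is to record explicitly that $\RCA_n$ is elementary (to guarantee $ELK \subseteq \RCA_n$), but this is immediate from Tarski's theorem and needs no new argument.
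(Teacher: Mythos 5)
Your proposal is correct and is essentially the paper's own argument: the paper derives the corollary directly from the witnesses $\A_m$ of the preceding proof, using exactly the two facts you isolate — $\A_m\notin\RCA_n$ (so $\A_m\notin ELK$ since $ELK\subseteq EL\,\RCA_n=\RCA_n$, $\RCA_n$ being a variety) and $\prod_{m}\A_m/F\in\Nr_n\CA_\omega\subseteq ELK$ — and concludes via \L{}o\'s that the complement of $ELK$ is not closed under ultraproducts. Your write-up merely makes explicit the elementary-sandwich step that the paper leaves implicit.
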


For $n>2$ the addition of finitely many 
{\it first order definable operations} does not remedy the non-finite axiomatizability
result for $\RCA_n$, as proved by Biro. First order definable operations are those operations that can be defined using spare
dimensions, and hence the notion of neat reducts are appropriate for handing them. A non-trivial question 
that relates to the Finitization problem, and involves the class $\Nr_n\CA_{\omega}$ in an essential way, is whether
we can expand the signature of cylindric algebras by extra natural operations on $n$-ary relations so that
if $\A\in \Cs_n$ and is closed under these operations then this forces $\A$ to be in the class $\Nr_n\CA_{\omega}.$
(For example, the polyadic operations are not enough.) The class $\Nr_n\CA_{\omega}$ contains 
all first order definable operations, so the question can be reformulated
as to whether one can capture all first order definable operations using a {\it finite} set of operations.
Next we characterize the class $\Nr_n\CA_{\omega}$ using games. 
Since games go deeper into the analysis, they could shed light on the possible choice of such operations. 
For that, we need some preparations. We use ``cylindric algebra" games that are analogues
to certain ``relation algebra" games used by Robin Hirsch in \cite{R}. In \cite{R} Robin Hirsch studies quite extensively the 
class $\Ra\CA_n$ of relation algebra reducts of cylindric algebras of dimension $n$. This class was studied by many authors, to mention a few, 
Maddux, Simon and Nemeti. References for their work can be found in the most recent reference \cite{R}.
Our treatment in this part 
follows very closely \cite{R}.

\begin{definition}\label{def:string} 
Let $n$ be an ordinal. An $s$ word is a finite string of substitutions $({\sf s}_i^j)$, 
a $c$ word is a finite string of cylindrifications $({\sf c}_k)$.
An $sc$ word is a finite string of substitutions and cylindrifications
Any $sc$ word $w$ induces a partial map $\hat{w}:n\to n$
by
\begin{itemize}

\item $\hat{\epsilon}=Id$

\item $\widehat{w_j^i}=\hat{w}\circ [i|j]$

\item $\widehat{w{\sf c}_i}= \hat{w}\upharpoonright(n\sim \{i\}$ 

\end{itemize}
\end{definition}
If $\bar a\in {}^{<n-1}n$, we write ${\sf s}_{\bar a}$, or more frequently 
${\sf s}_{a_0\ldots a_{k-1}}$, where $k=|\bar a|$,
for an an arbitary chosen $sc$ word $w$
such that $\hat{w}=\bar a.$ 
$w$  exists and does not 
depend on $w$ by \cite[definition~5.23 ~lemma 13.29]{HHbook}. 
We can, and will assume \cite[Lemma 13.29]{HHbook} 
that $w=s{\sf c}_{n-1}{\sf c}_n.$
[In the notation of \cite[definition~5.23,~lemma~13.29]{HHbook}, 
$\widehat{s_{ijk}}$ for example is the function $n\to n$ taking $0$ to $i,$
$1$ to $j$ and $2$ to $k$, and fixing all $l\in n\setminus\set{i, j,k}$.]
Let $\delta$ be a map. Then $\delta[i\to d]$ is defined as follows. $\delta[i\to d](x)=\delta(x)$
if $x\neq i$ and $\delta[i\to d](i)=d$. We write $\delta_i^j$ for $\delta[i\to \delta_j]$.

\begin{definition}
From now on let $2\leq n<\omega.$ Let $\C$ be an atomic $\CA_{n}$. 
An \emph{atomic  network} over $\C$ is a map
$$N: {}^{n}\Delta\to At\cal C$$ 
such that the following hold for each $i,j<n$, $\delta\in {}^{n}\Delta$
and $d\in \Delta$:
\begin{itemize}
\item $N(\delta^i_j)\leq {\sf d}_{ij}$
\item $N(\delta[i\to d])\leq {\sf c}_iN(\delta)$ 
\end{itemize}
\end{definition}
Note than $N$ can be viewed as a hypergraph with set of nodes $\Delta$ and 
each hyperedge in ${}^{\mu}\Delta$ is labelled with an atom from $\C$.
We call such hyperedges atomic hyperedges.
We write $\nodes(N)$ for $\Delta.$ But it can happen 
let $N$ stand for the set of nodes 
as well as for the function and the network itself. Context will help.

Define $x\sim y$ if there exists $\bar{z}$ such that $N(x,y,\bar{z})\leq {\sf d}_{01}$.
Define an equivalence relation
$\sim$ over the set of all finite sequences over $\nodes(N)$ by $\bar
x\sim\bar y$ iff $|\bar x|=|\bar y|$ and $x_i\sim y_i$ for all
$i<|\bar x|$.

(3) A \emph{ hypernetwork} $N=(N^a, N^h)$ over $\cal C$ 
consists of a network $N^a$
together with a labelling function for hyperlabels $N^h:\;\;^{<
\omega}\!\nodes(N)\to\Lambda$ (some arbitrary set of hyperlabels $\Lambda$)
such that for $\bar x, \bar y\in\; ^{< \omega}\!\nodes(N)$ 
\begin{enumerate}
\renewcommand{\theenumi}{\Roman{enumi}}
\setcounter{enumi}3
\item\label{net:hyper} $\bar x\sim\bar y \Rightarrow N^h(\bar x)=N^h(\bar y)$. 
\end{enumerate}
If $|\bar x|=k\in nats$ and $N^h(\bar x)=\lambda$ then we say that $\lambda$ is
a $k$-ary hyperlabel. $(\bar x)$ is referred to a a $k$-ary hyperedge, or simply a hyperedge.
(Note that we have atomic hyperedges and hyperedges) 
When there is no risk of ambiguity we may drop the superscripts $a,
h$. 

The following notation is defined for hypernetworks, but applies
equally to networks.  

(4) If $N$ is a hypernetwork and $S$ is any set then
$N\restr S$ is the $n$-dimensional hypernetwork defined by restricting
$N$ to the set of nodes $S\cap\nodes(N)$.  For hypernetworks $M, N$ if
there is a set $S$ such that $M=N\restr S$ then we write $M\subseteq
N$.  If $N_0\subseteq N_1\subseteq \ldots $ is a nested sequence of
hypernetworks then we let the \emph{limit} $N=\bigcup_{i<\omega}N_i$  be
the hypernetwork defined by
$\nodes(N)=\bigcup_{i<\omega}\nodes(N_i)$,\/ $N^a(x_0,\ldots x_{n-1})= 
N_i^a(x_0,\ldots x_{n-1})$ if
$x_0\ldots x_{\mu-1}\in\nodes(N_i)$, and $N^h(\bar x)=N_i^h(\bar x)$ if $\rng(\bar
x)\subseteq\nodes(N_i)$.  This is well-defined since the hypernetworks
are nested and since hyperedges $\bar x\in\;^{<\omega}\nodes(N)$ are
only finitely long.

For hypernetworks $M, N$ and any set $S$, we write $M\equiv^SN$
if $N\restr S=M\restr S$.  For hypernetworks $M, N$, 
and any set $S$, we write $M\equiv_SN$ 
if the symmetric difference $\Delta(\nodes(M), \nodes(N))\subseteq S$ and
$M\equiv^{(\nodes(M)\cup\nodes(N))\setminus S}N$. We write $M\equiv_kN$ for
$M\equiv_{\set k}N$.

Let $N$ be a network and let $\theta$ be any function.  The network
$N\theta$ is a complete labelled graph with nodes
$\theta^{-1}(\nodes(N))=\set{x\in\dom(\theta):\theta(x)\in\nodes(N)}$,
and labelling defined by 
$(N\theta)(i_0,\ldots i_{\mu-1}) = N(\theta(i_0), \theta(i_1), \theta(i_{\mu-1}))$,
for $i_0, \ldots i_{\mu-1}\in\theta^{-1}(\nodes(N))$.  Similarly, for a hypernetwork
$N=(N^a, N^h)$, we define $N\theta$ to be the hypernetwork
$(N^a\theta, N^h\theta)$ with hyperlabelling defined by
$N^h\theta(x_0, x_1, \ldots) = N^h(\theta(x_0), \theta(x_1), \ldots)$
for $(x_0, x_1,\ldots) \in \;^{<\omega}\!\theta^{-1}(\nodes(N))$.

Let $M, N$ be hypernetworks.  A \emph{partial isomorphism}
$\theta:M\to N$ is a partial map $\theta:\nodes(M)\to\nodes(N)$ such
that for any $
i_i\ldots i_{\mu-1}\in\dom(\theta)\subseteq\nodes(M)$ we have $M^a(i_1,\ldots i_{\mu-1})= 
N^a(\theta(i), \ldots\theta(i_{\mu-1}))$
and for any finite sequence $\bar x\in\;^{<\omega}\!\dom(\theta)$ we
have $M^h(\bar x) = 
N^h\theta(\bar x)$.  
If $M=N$ we may call $\theta$ a partial isomorphism of $N$.

\begin{definition}\label{def:games} Let $2\leq n<\omega$. For any $\CA_{n}$  
atom structure $\alpha$, and $n\leq m\leq
\omega$, we define two-player games $F_{n}^m(\alpha),$ \; and 
$H_{n}(\alpha)$,
each with $\omega$ rounds, 
and for $m<\omega$ we define $H_{m,n}(\alpha)$ with $n$ rounds.

\begin{itemize}
\item 
Let $m\leq \omega$.  
In a play of $F_{n}^m(\alpha)$ the two players construct a sequence of
networks $N_0, N_1,\ldots$ where $\nodes(N_i)$ is a finite subset of
$m=\set{j:j<m}$, for each $i$.  In the initial round of this game \pa\
picks any atom $a\in\alpha$ and \pe\ must play a finite network $N_0$ with
$\nodes(N_0)\subseteq  n$, 
such that $N_0(\bar{d}) = a$ 
for some $\bar{d}\in{}^{\mu}\nodes(N_0)$.
In a subsequent round of a play of $F_{n}^m(\alpha)$ \pa\ can pick a
previously played network $N$ an index $\l<n$, a ``face" 
$F=\langle f_0,\ldots f_{n-2} \rangle \in{}^{n-2}\nodes(N),\; k\in
m\setminus\set{f_0,\ldots f_{n-2}}$, and an atom $b\in\alpha$ such that 
$b\leq {\sf c}_lN(f_0,\ldots f_i, x,\ldots f_{n-2}).$  
(the choice of $x$ here is arbitrary, 
as the second part of the definition of an atomic network together with the fact
that $\cyl i(\cyl i x)=\cyl ix$ ensures that the right hand side does not depend on $x$).
This move is called a \emph{cylindrifier move} and is denoted
$(N, \langle f_0, \ldots f_{\mu-2}\rangle, k, b, l)$ or simply $(N, F,k, b, l)$.
In order to make a legal response, \pe\ must play a
network $M\supseteq N$ such that 
$M(f_0,\ldots f_{i-1}, k, f_i,\ldots f_{n-2}))=b$ 
and $\nodes(M)=\nodes(N)\cup\set k$.

\pe\ wins $F_{n}^m(\alpha)$ if she responds with a legal move in each of the
$\omega$ rounds.  If she fails to make a legal response in any
round then \pa\ wins.

\item
Fix some hyperlabel $\lambda_0$.  $H_{n}(\alpha)$ is  a 
game the play of which consists of a sequence of
$\lambda_0$-neat hypernetworks 
$N_0, N_1,\ldots$ where $\nodes(N_i)$
is a finite subset of $\omega$, for each $i<\omega$.  
In the initial round \pa\ picks $a\in\alpha$ and \pe\ must play
a $\lambda_0$-neat hypernetwork $N_0$ with nodes contained in
$\mu$ and $N_0(\bar d)=a$ for some nodes $\bar{d}\in {}^{\mu}N_0$.  
At a later stage
\pa\ can make any cylindrifier move $(N, F,k, b, l)$ by picking a
previously played hypernetwork $N$ and $F\in {}^{n-2}\nodes(N), \;l<n,  
k\in\omega\setminus\nodes(N)$ 
and $b\leq {\sf c}_lN(f_0, f_{l-1}, x, f_{n-2})$.  
[In $H_{n}$ we
require that \pa\ chooses $k$ as a `new node', i.e. not in
$\nodes(N)$, whereas in $F_{n}^m$ for finite $m$ it was necessary to allow
\pa\ to `reuse old nodes'. This makes the game easior as far as $\forall$ is concerned.) 
For a legal response, \pe\ must play a
$\lambda_0$-neat hypernetwork $M\equiv_k N$ where
$\nodes(M)=\nodes(N)\cup\set k$ and 
$M(f_0, f_{i-1}, k, f_{n-2})=b$.
Alternatively, \pa\ can play a \emph{transformation move} by picking a
previously played hypernetwork $N$ and a partial, finite surjection
$\theta:\omega\to\nodes(N)$, this move is denoted $(N, \theta)$.  \pe\
must respond with $N\theta$.  Finally, \pa\ can play an
\emph{amalgamation move} by picking previously played hypernetworks
$M, N$ such that $M\equiv^{\nodes(M)\cap\nodes(N)}N$ and
$\nodes(M)\cap\nodes(N)\neq \emptyset$.  
This move is denoted $(M,
N)$.  To make a legal response, \pe\ must play a $\lambda_0$-neat
hypernetwork $L$ extending $M$ and $N$, where
$\nodes(L)=\nodes(M)\cup\nodes(N)$.

Again, \pe\ wins $H_n(\alpha)$ if she responds legally in each of the
$\omega$ rounds, otherwise \pa\ wins. 

\item For $m< \omega$ the game $H_{m,n}(\alpha)$ is similar to $H_n(\alpha)$ but
play ends after $m$ rounds, so a play of $H_{m,n}(\alpha)$ could be
\[N_0, N_1, \ldots, N_m\]
If \pe\ responds legally in each of these
$m$ rounds she wins, otherwise \pa\ wins.
\end{itemize}

\end{definition}

\begin{definition}\label{def:hat}
For $m\geq 5$ and $\c C\in\CA_m$, if $\A\subseteq\Nr_n(\C)$ is an
atomic cylindric algebra and $N$ is an $\A$-network then we define
$\widehat N\in\C$ by
\[\widehat N =
 \prod_{i_0,\ldots i_{n-1}\in\nodes(N)}{\sf s}_{i_0, \ldots i_{n-1}}N(i_0\ldots i_{n-1})\]
$\widehat N\in\C$ depends
implicitly on $\C$.
\end{definition}
We write $\A\subseteq_c \B$ if $\A\in S_c\{\B\}$. 
\begin{lemma}\label{lem:atoms2}
Let $n<m$ and let $\A$ be an atomic $\CA_n$, 
$\A\subseteq_c\Nr_n\C$
for some $\C\in\CA_m$.  For all $x\in\C\setminus\set0$ and all $i_0, \ldots i_{n-1} < m$ there is $a\in\At(\A)$ such that
${\sf s}_{i_0\ldots i_{n-1}}a\;.\; x\neq 0$.
\end{lemma}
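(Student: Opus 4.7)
The plan is to derive the conclusion from a single supremum identity in $\C$, namely that $\sum^{\C}\{{\sf s}_{i_0\ldots i_{n-1}} a : a \in \At(\A)\} = 1$, and then observe that a non-zero $x$ cannot be below $-1$. First, I would exploit atomicity of $\A$: by definition $\sum^{\A}\At(\A) = 1$. Since $\A \subseteq_c \Nr_n\C \subseteq \C$, the defining property of $S_c$ immediately upgrades this to $\sum^{\C}\At(\A) = 1$. This is the step where the hypothesis $\A\subseteq_c\Nr_n\C$, rather than mere subalgebra inclusion, enters decisively.

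Next I would argue that the operator ${\sf s}_{i_0\ldots i_{n-1}}^{\C}$ is completely additive on $\C$, in the sense that it preserves any supremum that happens to exist in $\C$. Recall that ${\sf s}_{i_0\ldots i_{n-1}}$ is (by definition) realized by an $sc$-word $w$, i.e.\ a finite composition of cylindrifications ${\sf c}_k$ and elementary substitutions ${\sf s}_k^l$. Each cylindrification is completely additive (a standard fact in $\CA$-theory, proved in \cite{HMT1}), and each ${\sf s}_k^l$, being defined by ${\sf s}_k^l x = {\sf c}_l(x\cdot {\sf d}_{kl})$ for $k\neq l$ (and the identity for $k=l$), inherits complete additivity from ${\sf c}_l$. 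Composing finitely many completely additive operators yields a completely additive operator, so applying ${\sf s}_{i_0\ldots i_{n-1}}^{\C}$ to the supremum $\sum^{\C}\At(\A) = 1$ gives
\[
\sum\nolimits^{\C}\{{\sf s}_{i_0\ldots i_{n-1}} a : a\in \At(\A)\} \;=\; {\sf s}_{i_0\ldots i_{n-1}}(1) \;=\; 1,
\]
where the final equality holds because every ${\sf c}_k$ and every ${\sf s}_k^l$ sends $1$ to $1$.

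Finally, given $x\in\C\setminus\{0\}$, we have $-x\neq 1$, so $-x$ cannot be an upper bound of a set with supremum $1$. Hence there must exist $a\in\At(\A)$ with ${\sf s}_{i_0\ldots i_{n-1}}a \not\leq -x$, equivalently ${\sf s}_{i_0\ldots i_{n-1}}a \cdot x \neq 0$, which is the desired conclusion. The main obstacle (and only non-routine point) is verifying complete additivity of ${\sf s}_{i_0\ldots i_{n-1}}$ as an operator on the not-necessarily-complete algebra $\C$; this rests on complete additivity of the individual ${\sf c}_k$'s, which is a classical property of cylindric algebras and requires no assumption of completeness on $\C$ itself.
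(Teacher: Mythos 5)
Your proof is correct and follows essentially the same route as the paper's: both reduce the claim to the identity $\sum^{\C}\{{\sf s}_{i_0\ldots i_{n-1}}a : a\in\At(\A)\}=1$, obtained from $\sum\At(\A)=1$ (preserved in $\C$ by the $\subseteq_c$ hypothesis) together with complete additivity of the $sc$-word, and then observe that $-x$ cannot bound a set with supremum $1$. The only cosmetic difference is that the paper first discards the cylindrifications from the word (they act as the identity on elements of $\A\subseteq\Nr_n\C$) and invokes complete additivity of the ${\sf s}^i_j$ alone, whereas you keep the ${\sf c}_k$'s and use their complete additivity directly; both versions are fine.
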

\begin{proof}
We can assume, see definition  \ref{def:string}, 
that ${\sf s}_{i_0,\ldots i_{n-1}}$ consists only of substitutions, since ${\sf c}_{m}\ldots {\sf c}_{m-1}\ldots 
{\sf c}_nx=x$ 
for every $x\in \A$.We have ${\sf s}^i_j$ is a
completely additive operator (any $i, j$), hence ${\sf s}_{i_0,\ldots i_{\mu-1}}$ 
is too  (see definition~\ref{def:string}).
So $\sum\set{{\sf s}_{i_0\ldots i_{n-1}}a:a\in\At(\A)}={\sf s}_{i_0\ldots i_{n-1}}
\sum\At(\A)={\sf s}_{i_0\ldots i_{n-1}}1=1$,
for any $i_0,\ldots i_{n-1}<n$.  Let $x\in\C\setminus\set0$.  It is impossible
that ${\sf s}_{i_0\ldots i_{n-1}}\;.\;x=0$ for all $a\in\At(\c A)$ because this would
imply that $1-x$ was an upper bound for $\set{{\sf s}_{i_0\ldots i_{n-1}}a:
a\in\At(\A)}$, contradicting $\sum\set{{\sf s}_{i_0\ldots i_{n-1}}a :a\in\At(\c A)}=1$.
\end{proof}

\begin{lemma}\label{lem:hat}
Let $n<m$ and let $\A\subseteq_c\Nr_{n}\C$ be an
atomic $\CA_n$
\begin{enumerate}
\item For any $x\in\C\setminus\set0$ and any
finite set $I\subseteq m$ there is a network $N$ such that
$\nodes(N)=I$ and $x\;.\;\widehat N\neq 0$.
\item
For any networks $M, N$ if 
$\widehat M\;.\;\widehat N\neq 0$ then $M\equiv^{\nodes(M)\cap\nodes(N)}N$.
\end{enumerate}
\end{lemma}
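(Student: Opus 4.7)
My plan for (1) is to build $N$ on the node set $I$ by assigning atoms of $\A$ to $n$-tuples from $I$ one at a time, maintaining the invariant that the running conjunction so far meets $x$ nontrivially. I would enumerate the tuples $\bar t_0, \ldots, \bar t_{k-1} \in {}^{n}I$, put $y_0 := x$, and at stage $s$ apply Lemma~\ref{lem:atoms2} to the nonzero element $y_s$ and to the indices $\bar t_s = (i_0, \ldots, i_{n-1})$ to obtain an atom $a_s \in \At(\A)$ with $y_s \cdot {\sf s}_{\bar t_s} a_s \neq 0$; then set $N(\bar t_s) := a_s$ and $y_{s+1} := y_s \cdot {\sf s}_{\bar t_s} a_s$. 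After exhausting the finitely many $n$-tuples over $I$ I would have $0 \neq y_k \leq x \cdot \widehat N$, delivering the required $N$ and the nonvanishing of $x \cdot \widehat N$.

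The remaining work in (1) is to check that this $N$ is genuinely a network. When $\bar t$ has $t_i = t_j$, the $sc$-word $w$ with $\hat w = \bar t$ (chosen as in Definition~\ref{def:string}) is built so that ${\sf s}_{\bar t} a$ is always bounded by ${\sf d}_{ij}$; since $a_s$ is an atom and ${\sf s}_{\bar t_s} a_s$ is a nonzero element bounded by ${\sf d}_{ij}$, atomicity of $\A$ yields $a_s \leq {\sf d}_{ij}$. For the cylindrifier clause $N(\delta[i \to d]) \leq {\sf c}_i N(\delta)$, I would use that both conjuncts ${\sf s}_\delta N(\delta)$ and ${\sf s}_{\delta[i \to d]} N(\delta[i \to d])$ appear in the nonzero product $y_k$, apply the standard cylindric identity ${\sf c}_i {\sf s}_\delta = {\sf c}_i {\sf s}_{\delta[i \to d]}$ on $n$-dimensional elements, and use atomicity in $\A$ to force the required inclusion of labels.

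Part (2) is shorter. Suppose $\widehat M \cdot \widehat N \neq 0$ and let $\bar t = (i_0, \ldots, i_{n-1})$ be any tuple from $\nodes(M) \cap \nodes(N)$; then the conjunct ${\sf s}_{\bar t} M(\bar t) \cdot {\sf s}_{\bar t} N(\bar t)$ occurs as a factor of the product and is therefore nonzero. If $M(\bar t) \neq N(\bar t)$, they are distinct atoms of $\A$, hence disjoint, so $M(\bar t) \cdot N(\bar t) = 0$. The operator ${\sf s}_{\bar t}$, being a composition of boolean endomorphisms ${\sf s}_i^j$ (each completely additive, as already noted in the proof of Lemma~\ref{lem:atoms2}) possibly followed by cylindrifications, preserves disjointness of elements arising as images of atoms; hence ${\sf s}_{\bar t} M(\bar t) \cdot {\sf s}_{\bar t} N(\bar t) = 0$, a contradiction. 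So $M$ and $N$ agree on every common $n$-tuple, i.e.\ $M \equiv^{\nodes(M) \cap \nodes(N)} N$.

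The hardest part I anticipate is the verification in (1) that the step-by-step construction yields a network, specifically the cylindrifier condition, which relates labels on tuples chosen at different stages of the enumeration and requires extracting compatibility from cylindric identities together with atomicity of $\A \subseteq_c \Nr_n \C$. The behaviour of ${\sf s}_{\bar t}$ on non-injective tuples $\bar t$, mediated by the $sc$-word representation, is the technically delicate piece; once it is handled, both parts follow by uniform bookkeeping of conjuncts of $\widehat N$.
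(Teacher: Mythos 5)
Your proposal follows the paper's proof essentially verbatim: part (1) labels the hyperedges of $N$ one at a time by repeated application of Lemma~\ref{lem:atoms2}, maintaining the invariant that the running product with $x$ is nonzero, and part (2) derives a contradiction from the fact that distinct atoms are disjoint and that ${\sf s}_{\bar c}$, being a composition of boolean endomorphisms on elements of $\A$ (the trailing cylindrifications act as the identity on $n$-dimensional elements), sends the zero meet to zero. One small caveat on the verification that $N$ is a network (which the paper simply declares ``easily checked''): your claim that ${\sf s}_{\bar t}a$ is ``always bounded by ${\sf d}_{ij}$'' is false as stated for arbitrary $a$; the correct one-line fix is that if the atom $a\not\leq{\sf d}_{ij}$ then $a\leq -{\sf d}_{ij}$, so ${\sf s}_{\bar t}a\leq -{\sf s}_{\bar t}{\sf d}_{ij}=-{\sf d}_{t_it_j}=0$ when $t_i=t_j$, contradicting the nonzero invariant.
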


\begin{proof}
The proof of the first part is based on repeated use of
lemma~\ref{lem:atoms2}. We define the edge labelling of $N$ one edge
at a time. Initially no hyperedges are labelled.  Suppose
$E\subseteq\nodes(N)\times\nodes(N)\ldots  \times\nodes(N)$ is the set of labelled hyper
edges of $N$ (initially $E=\emptyset$) and 
$x\;.\;\prod_{\bar c \in E}{\sf s}_{\bar c}N(\bar c)\neq 0$.  Pick $\bar d$ such that $\bar d\not\in E$.  
By lemma~\ref{lem:atoms2} there is $a\in\At(\c A)$ such that
$x\;.\;\prod_{\bar c\in E}{\sf s}_{\bar c}N(\bar c)\;.\;{\sf s}_{\bar d}a\neq 0$.  
Include the edge $\bar d$ in $E$.  Eventually, all edges will be
labelled, so we obtain a completely labelled graph $N$ with $\widehat
N\neq 0$.  
it is easily checked that $N$ is a network.
For the second part, if it is not true that
$M\equiv^{\nodes(M)\cap\nodes(N)}N$ then there are is 
$\bar c \in^{n-1}\nodes(M)\cap\nodes(N)$ such that $M(\bar c )\neq N(\bar c)$.  
Since edges are labelled by atoms we have $M(\bar c)\cdot N(\bar c)=0,$ 
so $0={\sf s}_{\bar c}0={\sf s}_{\bar c}M(\bar c)\;.\; {\sf s}_{\bar c}N(\bar c)\geq \widehat M\;.\;\widehat N$.
\end{proof}

\begin{lemma}\label{lem:khat}  Let
Let $m>n$. Let $\C\in\CA_m$ and let $\A\subseteq\Nr_{n}(\C)$ be atomic.
Let $N$ be a network over $\c A$ and $i,j <n$.
\begin{enumerate}
\item\label{it:-i}
If $i\not\in\nodes(N)$ then ${\sf c}_i\widehat N=\widehat N$.

\item \label{it:-j} $\widehat{N Id_{-j}}\geq \widehat N$.

\item\label{it:ij} If $i\not\in\nodes(N)$ and $j\in\nodes(N)$ then
$\widehat N\neq 0 \rightarrow \widehat{N[i/j]}\neq 0$.
where $N[i/j]=N\circ [i|j]$

\item\label{it:theta} If $\theta$ is any partial, finite map $n\to n$
and if $\nodes(N)$ is a proper subset of $n$,
then $\widehat N\neq 0\rightarrow \widehat{N\theta}\neq 0$.
\end{enumerate}
\end{lemma}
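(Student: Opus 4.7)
The plan is to handle the four parts in order of increasing difficulty. Parts (1) and (2) are direct bookkeeping from the product representation
$$\widehat{N}=\prod_{y\in\nodes(N)^n}{\sf s}_{y_0,\ldots,y_{n-1}}N(y_0,\ldots,y_{n-1}),$$
while parts (3) and (4) require genuine use of the substitution structure afforded by the realization of ${\sf s}_{y_0,\ldots,y_{n-1}}$ as the $sc$-word $s{\sf c}_{n-1}{\sf c}_n$ inside $\C\in\CA_m$, with the extra dimensions $n,n+1,\ldots,m-1$ playing an essential role.

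First I would dispatch parts (1) and (2). For part (1), each factor ${\sf s}_{y_0,\ldots,y_{n-1}}N(y_0,\ldots,y_{n-1})$ has dimension set contained in $\{y_0,\ldots,y_{n-1}\}\subseteq\nodes(N)$, as can be seen by tracking how the cylindrifications and substitutions in the $sc$-word act on $N(y_0,\ldots,y_{n-1})\in\A\subseteq\Nr_n\C$ (which has dimension set contained in $n$). Since $\Delta(x\cdot y)\subseteq\Delta x\cup\Delta y$, it follows that $\Delta\widehat{N}\subseteq\nodes(N)$ and so ${\sf c}_i\widehat{N}=\widehat{N}$ for any $i\notin\nodes(N)$. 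Part (2) is immediate: $N\mathrm{Id}_{-j}$ is the restriction of $N$ to $\nodes(N)\setminus\{j\}$, so $\widehat{NId_{-j}}$ is a Boolean product over a strictly smaller index set than $\widehat{N}$, and hence dominates $\widehat{N}$ in the Boolean order.

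For part (3), I would decompose $\widehat{N[i/j]}=\widehat{N}\cdot E$, where $E$ is the product of the ``extra'' factors ${\sf s}_y N(y')$ indexed by tuples $y\in(\nodes(N)\cup\{i\})^n\setminus\nodes(N)^n$, with $y'=[i|j]\circ y\in\nodes(N)^n$; the factors in $\widehat{N}$ are recovered because $[i|j]$ fixes $\nodes(N)$ pointwise. The key point is that each extra factor ${\sf s}_y N(y')$ is related to the original factor ${\sf s}_{y'}N(y')\geq\widehat{N}$ via the Boolean endomorphism ${\sf s}_i^j$ of $\C$: using the polyadic commutation identities between ${\sf s}_y$, ${\sf s}_{y'}$ and ${\sf s}_i^j$ inside $\C$, together with the invariance ${\sf s}_i^j\widehat{N}=\widehat{N}$ coming from $i\notin\Delta\widehat{N}$ (by part (1)) and the standard cylindric identity ${\sf s}_i^jx=x$ when $i\notin\Delta x$, one shows that $E$ does not annihilate $\widehat{N}$, whence $\widehat{N[i/j]}\neq 0$. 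For part (4), I argue by induction on a decomposition of $\theta$ into elementary partial maps of two types: restrictions $\mathrm{Id}_{-j}$ (which shrink the node set and preserve non-vanishing by part (2)) and identifications $[i|j]$ (which grow the node set by one and preserve non-vanishing by part (3)). The hypothesis $\nodes(N)\subsetneq n$ supplies the fresh index needed at each identification step; after each step the resulting node set is still inside $n$, so inductively $\widehat{N\theta}\neq 0$.

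The main obstacle is making the comparison of ${\sf s}_y N(y')$ and ${\sf s}_{y'}N(y')$ in part (3) precise: the polyadic identities needed to commute ${\sf s}_i^j$ past ${\sf s}_{y'}$ hold in $\C$ but rely on the auxiliary dimensions $n,n+1,\ldots,m-1$, so one must track these dimensions carefully throughout, distinguishing operations of $\A$ from their ``lifted'' versions in $\C$. This is precisely where the hypothesis $m>n$ combined with $\A\subseteq\Nr_n\C$ is used in its full strength; without spare dimensions, neither the $sc$-word realization of ${\sf s}_{y_0,\ldots,y_{n-1}}$ nor the attendant product identities defining $\widehat{\cdot}$ would be available at all.
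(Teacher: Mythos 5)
Your proposal is correct, and parts (1), (2) and (4) essentially coincide with what the paper does: part (1) via $\Delta\widehat N\subseteq\nodes(N)$ (the paper just says ``easy''), part (2) by comparing sub-products, and part (4) by the same decomposition of $\theta$ into maps $Id_{-i}$ and $[i/j]$ with $i$ fresh at each identification step (the paper cites \cite[Lemma 13.29]{HHbook} for this factorization, as you implicitly do). Where you genuinely diverge is part (3). The paper argues externally: from ${\sf c}_i\widehat N=\widehat N$ it gets $\widehat N\cdot{\sf d}_{ij}\neq 0$, then invokes Lemma \ref{lem:hat}(1) --- which rests on atomicity of $\A$ and the complete additivity of the ${\sf s}^i_j$ via Lemma \ref{lem:atoms2} --- to produce a network $M$ on $\nodes(N)\cup\set{i}$ with $\widehat M\cdot\widehat N\cdot{\sf d}_{ij}\neq 0$, and then uses Lemma \ref{lem:hat}(2) and the network coherence conditions to identify $M$ with $N[i/j]$. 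You instead argue internally and equationally: writing $\widehat{N[i/j]}=\widehat N\cdot E$ with $E$ the product of the new factors ${\sf s}_{\bar y}N(\bar y')$, $\bar y'=[i|j]\circ\bar y$, you observe that ${\sf s}^i_j$ carries each new factor onto the corresponding old factor ${\sf s}_{\bar y'}N(\bar y')\geq\widehat N$, so that ${\sf s}^i_j(\widehat N\cdot E)={\sf s}^i_j\widehat N\cdot{\sf s}^i_jE=\widehat N\cdot{\sf s}^i_jE=\widehat N\neq 0$, whence $\widehat N\cdot E\neq 0$. This buys a proof of (3) that does not route through the existence lemma (hence does not re-use atomicity or complete additivity at this point), at the cost of leaning harder on the well-definedness and composition laws for the generalized substitutions ${\sf s}_{\bar a}$ --- exactly the content of \cite[Lemma 13.29]{HHbook}, which the paper's Definition \ref{def:string} already presupposes --- so the ``main obstacle'' you flag is real but is discharged by machinery the paper has in place. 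The paper's route is shorter given that Lemma \ref{lem:hat} is already proved; yours is more self-contained and makes the role of the spare dimensions explicit.
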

\begin{proof}
The first part is easy.
The second
part is by definition of $\;\widehat{\;}$. For the third part suppose
$\widehat N\neq 0$.  Since $i\not\in\nodes(N)$, by part~\ref{it:-i},
we have ${\sf c}_i\widehat N=\widehat N$.  By cylindric algebra axioms it
follows that $\widehat N\;.\;d_{ij}\neq 0$.  By lemma~\ref{lem:hat}
there is a network $M$ where $\nodes(M)=\nodes(N)\cup\set i$ such that
$\widehat M\;.\widehat N\;.\;d_{ij}\neq 0$.  By lemma~\ref{lem:hat} we
have $M\supseteq N$ and $M(i, j)\leq 1'$.  It follows that $M=N[i/j]$.
Hence $\widehat{N[i/j]}\neq 0$.
For the final part 
(cf. \cite[lemma~13.29]{HHbook}), since there is 
$k\in n\setminus\nodes(N)$, \/ $\theta$ can be
expressed as a product $\sigma_0\sigma_1\ldots\sigma_t$ of maps such
that, for $s\leq t$, we have either $\sigma_s=Id_{-i}$ for some $i<n$
or $\sigma_s=[i/j]$ for some $i, j<n$ and where
$i\not\in\nodes(N\sigma_0\ldots\sigma_{s-1})$.
Now apply parts~\ref{it:-j} and \ref{it:ij} of the lemma.
\end{proof}

We now prove two Theorems relating neat embeddings
to the games we defined:

\begin{theorem}\label{thm:n}
Let $n<m$, and let $\A$ be a $\CA_m$.  
If $\A\in{\bf S_c}\Nr_{n}\CA_m, $
then \pe\ has a \ws\ in $F^m(\At\A)$. 
\end{theorem}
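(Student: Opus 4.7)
The plan is to describe a strategy for $\exists$ that maintains, throughout the play, the invariant that whenever $N$ is the most recently played network, $\widehat{N}\neq 0$ in $\C$, where $\A\subseteq_c \Nr_n\C$ with $\C\in\CA_m$ is the fixed complete neat embedding. The two key tools are Lemma \ref{lem:hat}(1), which provides a step-by-step mechanism for extending a partially-labelled hypergraph into a full network while preserving a given nonzero element, and Lemma \ref{lem:khat}, which records the cylindric-algebraic identities for $\widehat{N}$ under cylindrifications and substitutions. Both lemmas in turn rest on the complete additivity of substitutions from Lemma \ref{lem:atoms2}, which is precisely where the hypothesis $\A\subseteq_c\Nr_n\C$ (rather than merely $\A\subseteq\Nr_n\C$) enters.

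For the initial round, when $\forall$ plays an atom $a\in\At(\A)$, $\exists$ takes $I=\{0,1,\ldots,n-1\}\subseteq m$ and runs the step-by-step construction of Lemma \ref{lem:hat}(1) with $x=a$, declaring the label of the hyperedge $\bar d=(0,1,\ldots,n-1)$ to be $a$ as the very first step (legal, since $a\cdot {\sf s}_{\bar d}a=a\neq 0$) and then filling in all remaining hyperedges. The resulting $N_0$ has $\nodes(N_0)\subseteq n$, satisfies $N_0(\bar d)=a$, and has $\widehat{N_0}\cdot a\neq 0$, so in particular $\widehat{N_0}\neq 0$.

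At a later cylindrifier move $(N,F,k,b,l)$ with $F=\langle f_0,\ldots,f_{n-2}\rangle$, set $\bar g=(f_0,\ldots,f_{l-1},k,f_l,\ldots,f_{n-2})$. The crucial algebraic point is that $\widehat{N}\cdot {\sf s}_{\bar g}b\neq 0$ in $\C$. When $k\notin\nodes(N)$, Lemma \ref{lem:khat}(1) gives ${\sf c}_k\widehat{N}=\widehat{N}$; combining this with the hypothesis $b\leq {\sf c}_l N(f_0,\ldots,f_{l-1},x,f_l,\ldots,f_{n-2})$ and with the defining identities of the $sc$-words (definition \ref{def:string}), a short calculation—pushing ${\sf c}_k$ across ${\sf s}_{\bar g}$ and folding the cylindrification ${\sf c}_l$ from the hypothesis back into $\widehat{N}$—yields the nonvanishing. $\exists$ then reruns Lemma \ref{lem:hat}(1) on the node set $\nodes(N)\cup\{k\}$, starting from the pre-existing labels of $N$ together with the newly demanded label $b$ on $\bar g$, to obtain the required $M\supseteq N$ with $\widehat{M}\neq 0$. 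When $k\in\nodes(N)$ (possible only if $m<\omega$), the equality $\nodes(M)=\nodes(N)\cup\{k\}=\nodes(N)$ together with $M\supseteq N$ forces $M=N$; here the same algebraic identity shows that the existing label $N(\bar g)$ and the challenged $b$ are both atoms below the common cylindrification ${\sf c}_l N(F[x])$, and the strategy arranges by its earlier saturation that they agree—alternatively, one refines $N$ in advance using Lemma \ref{lem:hat}(1) so that, for every face and every old node, the label is the only atom compatible with $\widehat{N}\neq 0$.

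The main obstacle will be the algebraic verification that $\widehat{N}\cdot {\sf s}_{\bar g}b\neq 0$ in the cylindrifier step: moving cylindrifications in and out of the $sc$-word ${\sf s}_{\bar g}$ using $(C_3)$--$(C_4)$ and the definition of ${\sf s}_i^j$ is routine but finicky bookkeeping, analogous to what was done in \cite{R} for the relation-algebra setting and somewhat cleaner here since substitutions are derived operations. The finite-$m$ old-node subcase is the second delicate point, where one must either pre-saturate the network or argue that $\forall$'s only legal challenge coincides with the existing label. Once the invariant $\widehat{N}\neq 0$ is preserved through every round, $\exists$ is never stuck and wins.
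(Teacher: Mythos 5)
Your proposal follows essentially the same route as the paper's proof: maintain the invariant $\widehat{N}\neq 0$, establish ${\sf c}_k\widehat{N}\cdot {\sf s}_{\bar a}b\neq 0$ at each cylindrifier move, and invoke Lemma \ref{lem:hat} (resting on the complete additivity from Lemma \ref{lem:atoms2}) to extend to a full network $M$ whose label on the new face is forced to be $b$. The only difference is that you explicitly flag the reused-node subcase for finite $m$, which the paper's proof handles implicitly by working with ${\sf c}_k\widehat{N}$ rather than $\widehat{N}$; this is an elaboration, not a different argument.
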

\begin{proof}
If $\A\subseteq\Nr_n\C$ for some $\C\in\CA_m$ then \pe\ always
plays hypernetworks $N$ with $\nodes(N)\subseteq n$ such that
$\widehat N\neq 0$. In more detail, in the initial round , let $\forall$ play $a\in \At \cal A$.
$\exists$ play a network $N$ with $N(0, \ldots n-1)=a$. Then $\widehat N=a\neq 0$.
At a later stage suppose $\forall$ plays the cylindrifier move 
$(N, \langle f_0, \ldots f_{\mu-2}\rangle, k, b, l)$ 
by picking a
previously played hypernetwork $N$ and $f_i\in \nodes(N), \;l<\mu,  k\notin \{f_i: i<n-2\}$, 
and $b\leq {\sf c}_lN(f_0,\ldots  f_{i-1}, x, f_{n-2})$.
Let $\bar a=\langle f_0\ldots f_{l-1}, k\ldots f_{n-2}\rangle.$
Then ${\sf c}_k\widehat N\cdot {\sf s}_{\bar a}b\neq 0$.
By \ref{lem:atoms2} there is a network  $M$ such that
$\widehat{M}.\widehat{{\sf c}_kN}\cdot {\sf s}_{\bar a}b\neq 0$. Hence 
$M(f_0, k, f_{n-2})=b.$
\end{proof}

\begin{theorem}\label{thm:RaC}
Let $\alpha$ be a countable 
$\CA_n$ atom structure.  If \pe\ has a \ws\ in $H_n(\alpha)$ then
there is a representable cylindric algebra $\C$ of
dimension $\omega$ such that $\Nr_n\c C$ is atomic 
and $\At \Nr_n\C\cong\alpha$.
\end{theorem}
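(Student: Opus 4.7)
The plan is to build a representation of $\A=\Cm\,\alpha$ out of a single, highly saturated limit hypernetwork obtained by playing out $\exists$'s winning strategy against a carefully scheduled $\forall$. Since $\alpha$ is countable, the set of possible moves that $\forall$ can ever make (cylindrifier, transformation, amalgamation) is countable, so we can schedule $\omega$ rounds in which every request $\forall$ could conceivably pose against the current position is posed eventually. Playing $\exists$'s \ws\ throughout produces an $\omega$-chain $N_0\subseteq N_1\subseteq\dots$ of $\lambda_0$-neat hypernetworks whose limit $N_\omega$ (with node set $M\subseteq\omega$) enjoys three saturation properties: (i) for every $\bar f\in{}^{n-2}M$, every $l<n$, and every atom $b\leq {\sf c}_l N_\omega(f_0,\dots,x,\dots,f_{n-2})$, there is a new node $k\in M$ witnessing $b$; (ii) every partial finite surjection $\theta:\omega\to M$ gives rise to a hypernetwork $N_\omega\theta\subseteq N_\omega$ (after reindexing into $M$); (iii) any two hypernetworks in the chain that agree on common nodes have been amalgamated. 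Intuitively, $M$ is a ``universal'' labelled hypergraph for $\alpha$.

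From $N_\omega$ I would define the candidate representation as follows. Let the unit be $W=\{\bar x\in {}^\omega M : N_\omega^h(\bar x)=\lambda_0\}$, regarded as a generalised unit for an $\omega$-dimensional set algebra, and define, for each atom $a\in\alpha$,
\[
h(a)\;=\;\{\bar x\in W : N_\omega^a(x_0,\dots,x_{n-1})=a\}.
\]
Extending $h$ additively, we get a map $h:\A\to\wp(W)$. The saturation properties of $N_\omega$ translate directly into the representability conditions: (i) guarantees that cylindrifications are respected (every required witness is present as a new node); (ii) guarantees that substitutions (and hence the full $\omega$-dimensional structure of shifting coordinates) are respected; (iii) together with condition (\ref{net:hyper}) on hyperlabels, guarantees that $h$ is well-defined and injective on atoms, and ensures that the operations of an $\omega$-dimensional cylindric set algebra with unit $W$ preserve $W$. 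Let $\C$ be the subalgebra of the full $\omega$-dimensional set algebra with unit $W$ generated by $\{h(a):a\in\alpha\}$ together with all the cylindrifications and diagonals in dimensions $\geq n$; by construction $\C\in\mathbf{RCA}_\omega$.

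It then remains to identify $\Nr_n\C$ with the atomic algebra having $\alpha$ as atom structure. An element $x\in\C$ lies in $\Nr_n\C$ iff ${\sf c}_i x=x$ for every $i\geq n$; using saturation property (i) in dimensions $\geq n$, such an $x$ is determined by its restriction to the first $n$ coordinates, hence is a union of sets of the form $h(a)$, $a\in\alpha$. This shows $\Nr_n\C=\{h(a^*):a^*\in\A\}$, so $\Nr_n\C$ is atomic, its atoms are exactly the $h(a)$ for $a\in\At\alpha$, and the map $a\mapsto h(a)$ is an atom-structure isomorphism $\alpha\cong\At\Nr_n\C$. That $h$ intertwines the cylindric operations on $\alpha$ with those of $\C$ restricted to $\Nr_n$ follows from parts (\ref{it:-i})--(\ref{it:theta}) of Lemma \ref{lem:khat}, applied to $N_\omega$ in place of finite networks.

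The main obstacle, as always with step-by-step constructions of this flavour, is the bookkeeping: one must enumerate in advance every potential cylindrifier, transformation, and amalgamation demand (of which there are countably many, since $\alpha$ is countable, $M\subseteq\omega$, and hyperlabels come from a countable set we may fix), and arrange the schedule so that $\forall$ poses each demand at some round, while $\exists$'s \ws\ in $H_n(\alpha)$ still applies (she does not know $\forall$'s plan, only the current position). This is routine once one adopts a diagonal enumeration, but the verification that property (ii) actually delivers the full force of substitutions in $\omega$ dimensions — rather than merely partial isomorphisms of $N_\omega$ — is where the role of the transformation and amalgamation moves in the definition of $H_n$ becomes essential, and is the real technical core of the argument.
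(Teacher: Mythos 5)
Your overall strategy --- schedule all of $\forall$'s possible demands, play $\exists$'s winning strategy out to a limit hypernetwork, and read a representation off the limit --- is the right starting point and matches the paper's opening move, but there are three concrete gaps. First, a single play of $H_n(\alpha)$ begins with $\forall$ choosing one atom $a$, and the resulting limit network $N_a$ only realizes atoms $b$ with $b\sim a$ (this is equation (\ref{eq:sim}) in the text): atoms outside that class never occur as labels of hyperedges, so your map $h$ sends them to $\emptyset$ and is not injective on atoms. The paper avoids this by running one game per atom $a\in\alpha$ and taking $\C=\prod_{a\in\alpha}\D_a$; some such device is unavoidable unless $\Cm\alpha$ is simple. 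Second, you aim at a completely additive representation of $\A=\Cm\alpha$, which is strictly stronger than what the theorem asserts and false in the intended applications --- the theorem only requires a representable $\C\in\CA_\omega$ with $\Nr_n\C$ atomic and $\At\Nr_n\C\cong\alpha$. The paper instead builds each $\D_a$ as the cylindric algebra of first-order definable subsets of a weak unit $U_a\subseteq{}^\omega\nodes(N_a)$ (sequences agreeing cofinitely with a fixed $f_a$), which lies in $\RCA_\omega$ for free and makes no claim about $\Cm\alpha$.

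Third, and most substantively, the identification of the atoms of $\Nr_n\C$ is not delivered by your saturation property (i), nor by Lemma \ref{lem:khat}, which concerns the element $\widehat N$ computed inside a pre-existing ambient algebra rather than a structure under construction. What is actually needed is the homogeneity of the limit: every finite partial isomorphism of $N_a$ extends to one whose range (or domain) covers any prescribed finite set of nodes, hence any two tuples carrying the same atomic label satisfy the same $L$-formulas over the weak unit. This is exactly what the transformation and amalgamation moves buy, and it is the step that forces every nonzero element of $\Nr_n\C$ to lie above some $\iota_a(b(x_0,\dots,x_{n-1})^{\D_a})$. You correctly flag this as the technical core, but the proposal stops at naming it; without the back-and-forth argument the claim that $\Nr_n\C$ consists exactly of unions of atom-classes is unsupported. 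Relatedly, taking the unit to be the $\lambda_0$-labelled part of ${}^\omega M$ rather than a weak space undermines the step where an element fixed by all ${\sf c}_i$ with $i\geq n$ is determined by its first $n$ coordinates.
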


\begin{proof} 
Suppose \pe\ has a \ws\ in $H(\alpha)$. Fix some $a\in\alpha$. We can define a
nested sequence $N_0\subseteq N_1\ldots$ of hypernetworks
where $N_0$ is \pe's response to the initial \pa-move $a$, requiring that
\begin{enumerate}
\item If $N_r$ is in the sequence and 
and $b\leq {\sf c}_lN_r(\langle f_0, f_{n-2}\rangle\ldots , x, f_{n-2})$.  
then there is $s\geq r$ and $d\in\nodes(N_s)$ such 
that $N_s(f_0, f_{i-1}, d, f_{n-2})=b$.
\item If $N_r$ is in the sequence and $\theta$ is any partial
isomorphism of $N_r$ then there is $s\geq r$ and a
partial isomorphism $\theta^+$ of $N_s$ extending $\theta$ such that
$\rng(\theta^+)\supseteq\nodes(N_r)$.
\end{enumerate}
Since $\alpha$ is countable there are countably many requirements  to extend. 
Since the sequence of networks is nested , these requirements
to extend
remain in all subsequent rounds. So that we can schedule these requirements
to extend so that eventually, every requirement gets dealt with.
If we are required to find $k$ and $N_{r+1}\supset N_r$
such that 
$N_{r+1}(f_0, k, f_{n-2})=b$ then let $k\in \omega\setminus \nodes(N_r)$
be least possible for definiteness, 
and let $N_{r+1}$ be \pe's response using her \ws, 
to the \pa move $N_r, (f_0,\ldots f_{n-1}), k, b, l).$
For an extension of type 2, let $\tau$ be a partial isomorphism of $N_r$
and let $\theta$ be any finite surjection onto a partial isomorphism of $N_r$ such that 
$dom(\theta)\cap nodes(N_r)= dom\tau$. \pe's response to \pa's move $(N_r, \theta)$ is necessarily 
$N\theta.$ Let $N_{r+1}$ be her response , using her wining strategy, to the subsequent \pa 
move $(N_r, N_r\theta).$ 

Now let $N_a$ be the limit of this sequence.
This limit is well-defined since the hypernetworks are nested.  Note,
for $b\in\alpha$, that
\begin{equation}\label{eq:sim}( \exists i_0, \ldots I_{\mu-1}\in\nodes(N_a),\; N_a(i_0\ldots , i_{\mu-1}) = b)\iff b\sim a
\end{equation}

Let $\theta$ be any finite partial isomorphism of $N_a$ and let $X$ be
any finite subset of $\nodes(N_a)$.  Since $\theta, X$ are finite, there is
$i<\omega$ such that $\nodes(N_i)\supseteq X\cup\dom(\theta)$. There
is a bijection $\theta^+\supseteq\theta$ onto $\nodes(N_i)$ and $j\geq
i$ such that $N_j\supseteq N_i, N_i\theta^+$.  Then $\theta^+$ is a
partial isomorphism of $N_j$ and $\rng(\theta^+)=\nodes(N_i)\supseteq
X$.  Hence, if $\theta$ is any finite partial isomorphism of $N_a$ and
$X$ is any finite subset of $\nodes(N_a)$ then
\begin{equation}\label{eq:theta}
\exists \mbox{ a partial isomorphism $\theta^+\supseteq \theta$ of $N_a$
 where $\rng(\theta^+)\supseteq X$}
\end{equation}
and by considering its inverse we can extend a partial isomorphism so
as to include an arbitrary finite subset of $\nodes(N_a)$ within its
domain.
Let $L$ be the signature with one $\mu$ -ary predicate symbol ($b$) for
each $b\in\alpha$, and one $k$-ary predicate symbol ($\lambda$) for
each $k$-ary hyperlabel $\lambda$.  [Notational point: if $\lambda$ is
$k$-ary and $l$-ary for $k\neq l$ then make one $k$-ary predicate
symbol $\lambda$ and one $l$-ary predicate symbol $\lambda'$, so that
every predicate symbol has a unique arity.]  The set of variables for
$L$-formulas is $\set{x_i:i<\omega}$. We also have equality.  
Pick $f_a\in\;^\omega\!\nodes(N_a)$.  Let
$U_a=\set{f\in\;^\omega\!\nodes(N_a):\set{i<\omega:g(i)\neq
f_a(i)}\mbox{ is finite}}$.


We can make $U_a$ into the base of an $L$-structure $\c N_a$ and
evaluate $L$-formulas at $f\in U_a$ as follow.  For $b\in\alpha,\;
l_0, \ldots l_{\mu-1}, i_0 \ldots, i_{k-1}<\omega$, \/ $k$-ary hyperlabels $\lambda$,
and all $L$-formulas $\phi, \psi$, let
\begin{eqnarray*}
\c N_a, f\models b(x_{l_0}\ldots  x_{n-1})&\iff&N_a(f(l_0),\ldots  f(l_{n-1}))=b\\
\c N_a, f\models\lambda(x_{i_0}, \ldots,x_{i_{k-1}})&\iff&  N_a(f(i_0), \ldots,f(i_{k-1}))=\lambda\\
\c N_a, f\models\neg\phi&\iff&\c N_a, f\not\models\phi\\
\c N_a, f\models (\phi\vee\psi)&\iff&\c N_a,  f\models\phi\mbox{ or }\c N_a, f\models\psi\\
\c N_a, f\models\exists x_i\phi&\iff& \c N_a, f[i/m]\models\phi, \mbox{ some }m\in\nodes(N_a)
\end{eqnarray*}
For any $L$-formula $\phi$, write $\phi^{\c N_a}$ for
$\set{f\in\;^\omega\!\nodes(N_a): \c N_a, f\models\phi}$.  Let
$Form^{\c N_a} = \set{\phi^{\c N_a}:\phi\mbox{ is an $L$-formula}}$ 
and define a cylindric algebra
\[\c D_a=(Form^{\c N_a},  \cup, \sim, {\sf D}_{ij}, {\sf C}_i, i, j<\omega)\]
where ${\sf D}_{ij}=(x_i= x_j)^{\c N_a},\; {\sf C}_i(\phi^{\c N_a})=(\exists
x_i\phi)^{\c N_a}$.  Observe that $\top^{\c N_a}=U_a,\; (\phi\vee\psi)^{\c N_a}=\phi^{\c
N_a}\cup\psi^{\c N_a}$, etc. Note also that $\c D$ is a subalgebra of the
$\omega$-dimensional cylindric set algebra on the base $\nodes(N_a)$,
hence $\c D\in\RCA_\omega$.

Let $\phi(x_{i_0}, x_{i_1}, \ldots, x_{i_k})$ be an arbitrary
$L$-formula using only variables belonging to $\set{x_{i_0}, \ldots,
x_{i_k}}$.  Let $f, g\in U_a$ (some $a\in \alpha$) and suppose
is a partial isomorphism of $N_a$.  We can prove by induction over the
quantifier depth of $\phi$ and using (\ref{eq:theta}), that
\begin{equation}
\c N_a, f\models\phi\iff \c N_a,
g\models\phi\label{eq:bf}\end{equation} Let $\c C=\prod_{a\in
\alpha}\c \c D_a$.  Then 
$\c C\in\RCA_\omega$.  An element $x$ of $\c C$ has the form
$(x_a:a\in\alpha)$, where $x_a\in\c D_a$.  For $b\in\alpha$ let
$\pi_b:\c C\to\c \c D_b$ be the projection defined by
$\pi_b(x_a:a\in\alpha) = x_b$.  Conversely, let $\iota_a:\c D_a\to \c
C$ be the embedding defined by $\iota_a(y)=(x_b:b\in\alpha)$, where
$x_a=y$ and $x_b=0$ for $b\neq a$.  Evidently $\pi_b(\iota_b(y))=y$
for $y\in\c D_b$ and $\pi_b(\iota_a(y))=0$ if $a\neq b$.

Suppose $x\in\Nr_{\mu}\c C\setminus\set0$.  Since $x\neq 0$, 
it must have a non-zero component  $\pi_a(x)\in\c D_a$, for some $a\in \alpha$.  
Say $\emptyset\neq\phi(x_{i_0}, \ldots, x_{i_k})^{\c
 D_a}= \pi_a(x)$ for some $L$-formula $\phi(x_{i_0},\ldots, x_{i_k})$.  We
 have $\phi(x_{i_0},\ldots, x_{i_k})^{\c D_a}\in\Nr_{\mu}\c D_a)$.  Pick
 $f\in \phi(x_{i_0},\ldots, x_{i_k})^{\c D_a}$ and let $b=N_a(f(0),
 f(1), \ldots f_{n-1})\in\alpha$.  We will show that 
$b(x_0, x_1, \ldots x_{n-1})^{\c D_a}\subseteq
 \phi(x_{i_0},\ldots, x_{i_k})^{\c D_a}$.  Take any $g\in
b(x_0, x_1\ldots x_{n-1})^{\c D_a}$, 
so $N_a(g(0), g(1)\ldots g(n-1))=b$.  The map $\set{(f(0),
g(0)), (f(1), g(1))\ldots (f(n-1), g(n-1))}$ is 
a partial isomorphism of $N_a$.  By
 (\ref{eq:theta}) this extends to a finite partial isomorphism
 $\theta$ of $N_a$ whose domain includes $f(i_0), \ldots, f(i_k)$. Let
 $g'\in U_a$ be defined by
\[ g'(i) =\left\{\begin{array}{ll}\theta(i)&\mbox{if }i\in\dom(\theta)\\
g(i)&\mbox{otherwise}\end{array}\right.\] By (\ref{eq:bf}), $\c N_a,
g'\models\phi(x_{i_0}, \ldots, x_{i_k})$. Observe that
$g'(0)=\theta(0)=g(0)$ and similarly $g'(n-1)=g(n-1)$, so $g$ is identical
to $g'$ over $\mu$ and it differs from $g'$ on only a finite
set of coordinates.  Since $\phi(x_{i_0}, \ldots, x_{i_k})^{\c
\ D_a}\in\Nr_{\mu}(\c C)$ we deduce $\c N_a, g \models \phi(x_{i_0}, \ldots,
x_{i_k})$, so $g\in\phi(x_{i_0}, \ldots, x_{i_k})^{\c D_a}$.  This
proves that $b(x_0, x_1\ldots x_{\mu-1})^{\c D_a}\subseteq\phi(x_{i_0},\ldots,
x_{i_k})^{\c D_a}=\pi_a(x)$, and so $\iota_a(b(x_0, x_1,\ldots x_{n-1})^{\c \ D_a})\leq
\iota_a(\phi(x_{i_0},\ldots, x_{i_k})^{\c D_a})\leq x\in\c
C\setminus\set0$.  Hence every non-zero element $x$ of $\Nr_{n}\c C$ 
is above
a non-zero element $\iota_a(b(x_0, x_1\ldots n_1)^{\c D_a})$ (some $a, b\in
\alpha$) and these latter elements are the atoms of $\Nr_{n}\c C$.  So
$\Nr_{n}\c C$ is atomic and $\alpha\cong\At\Nr_{n}\c C$ --- the isomorphism
is $b \mapsto (b(x_0, x_1,\dots x_{n-1})^{\c D_a}:a\in A)$.
\end{proof}
In \cite{e}, we use such games to show that for $n\geq 3$, there is a representable $\A\in \CA_n$ 
with atom structure $\alpha$ such that $\forall$ can win the game $F^{n+2}(\alpha)$.
However \pe\ has a \ws\ in $H_n(\alpha)$, for any $n<\omega$.
It will follow that there a countable cylindric algebra $\c A'$ such that $\c A'\equiv\c
A$ and \pe\ has a \ws\ in $H(\c A')$.
So let $K$ be any class such that $\Nr_n\CA_{\omega}\subseteq K\subseteq S_c\Nr_n\CA_{n+2}$.
$\c A'$ must belong to $\Nr_n(\RCA_\omega)$, hence $\c A'\in K$.  But $\c A\not\in K$
and $\c A\preceq\c A'$. Thus $K$ is not elementary. From this it easily follows that the class of completely representable cylindric algebras
is not elementary, and that the class $\Nr_n\CA_{n+k}$ for any $k\geq 0$ is not elementary either. 
Furthermore the constructions works for many variants of cylindric algebras
like Halmos' polyadic equality algebras and Pinter's substitution algebras.

\begin{theorem}\label{r} Let $3\leq n<\omega$. Then the following hold:
\begin{enumroman}
\item Any $K$ such that $\Nr_n\CA_{\omega}\subseteq K\subseteq S_c\Nr_n\CA_{n+2}$ is not elementary.
\item The inclusions $\Nr_n\CA_{\omega}\subseteq S_c\Nr_n\CA_{\omega}\subseteq S\Nr_n\CA_{\omega}$ are all proper
\end{enumroman}
\end{theorem}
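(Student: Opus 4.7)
The plan is to build the proof around a single atomic representable algebra $\A$ produced by the game-theoretic construction of \cite{e}, combined with the characterizations in Theorems~\ref{thm:n} and~\ref{thm:RaC} and a downward L\"owenheim--Skolem step. First I would invoke \cite{e} to obtain, for each $n\geq 3$, a representable atomic $\A\in\CA_n$ whose atom structure $\alpha=\At\A$ admits a winning strategy for $\forall$ in $F^{n+2}(\alpha)$ but also admits a winning strategy for $\exists$ in $H_n(\alpha)$. By the contrapositive of Theorem~\ref{thm:n} (together with determinacy of the finite-length game $F^{n+2}$), $\A\notin S_c\Nr_n\CA_{n+2}$, so $\A$ lies outside every class $K$ with $K\subseteq S_c\Nr_n\CA_{n+2}$.

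Next I would produce a countable $\A'$ with $\A'\equiv \A$ such that $\exists$ still has a winning strategy in $H_n(\At\A')$. Since $\A$ must be uncountable---otherwise Theorem~\ref{thm:RaC} would place $\A$ itself in $\Nr_n\CA_\omega\subseteq S_c\Nr_n\CA_{n+2}$, a contradiction---this step requires a genuine downward L\"owenheim--Skolem argument. I would encode the game $H_n$ together with $\exists$'s winning strategy in a two-sorted pseudo-elementary theory (one sort for the algebra, one sort for the strategy-tagged game positions), analogous to the pseudo-elementary formulation of $\Nr_n\CA_\omega$ recalled earlier in the paper. Taking a countable elementary submodel of the combined structure and projecting to the algebra sort then yields $\A'\preceq\A$ with $\exists$ winning $H_n(\At\A')$ via the restricted strategy.

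Theorem~\ref{thm:RaC} applied to the countable atomic $\At\A'$ then produces $\C\in\RCA_\omega$ with $\Nr_n\C$ atomic and $\At\Nr_n\C\cong\At\A'$. Tracking the canonical bijection on atoms and using that both $\A'$ and $\Nr_n\C$ are atomic and generated in parallel fashion, one identifies $\A'\cong\Nr_n\C$, hence $\A'\in\Nr_n\CA_\omega\subseteq K$. Since $\A\equiv\A'$ while $\A\notin K$ and $\A'\in K$, $K$ is not elementary, which proves (i). For (ii), the same $\A$ already gives the second strict inclusion: $\A\in S\Nr_n\CA_\omega=\RCA_n$ by representability, but $\A\notin S_c\Nr_n\CA_\omega$ (as $S_c\Nr_n\CA_\omega\subseteq S_c\Nr_n\CA_{n+2}$), so $S_c\Nr_n\CA_\omega\subsetneq S\Nr_n\CA_\omega$. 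For the first strict inclusion $\Nr_n\CA_\omega\subsetneq S_c\Nr_n\CA_\omega$, I would exhibit an atomic algebra $\B$ with $\B\subseteq_c\Nr_n\C$ for some $\C\in\CA_\omega$ but with $\B\not\cong\Nr_n\D$ for any $\D\in\CA_\omega$; a convenient source is a blur-style or splitting-style construction as in Examples~\ref{countable} and~\ref{OTT}, where the failure to be a neat reduct is detected by non-representability of $\Cm(\At\B)$ while the inclusion $\B\subseteq\Nr_n\C$ is arranged to be complete.

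The main obstacle is the transfer of $\exists$'s winning strategy in the infinite game $H_n$ from $\A$ to the countable $\A'$: because a winning strategy is a second-order object, ordinary elementary equivalence does not preserve it, and the pseudo-elementary encoding must be rigged so that every legal move of $\exists$ in a play over the submodel returns an atom or hyperedge label that already lies in $\A'$. A secondary technical difficulty is upgrading Theorem~\ref{thm:RaC} from the atom-structure isomorphism $\At\Nr_n\C\cong\At\A'$ to the algebra isomorphism $\A'\cong\Nr_n\C$; without this upgrade one only obtains $\A'\in S\Nr_n\CA_\omega$, which is insufficient to place $\A'$ in the lower bound $\Nr_n\CA_\omega$ of the sandwich defining $K$.
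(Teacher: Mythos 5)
Your treatment of part (i) and of the second inclusion in (ii) is essentially the paper's own argument, which is only sketched in the paragraph preceding the theorem and otherwise deferred to \cite{e} and \cite{HH97b}: take $\A$ from \cite{e} with $\forall$ winning $F^{n+2}(\At\A)$ and $\exists$ winning $H_n(\At\A)$, exclude $\A$ from $S_c\Nr_n\CA_{n+2}$ via Theorem~\ref{thm:n}, pass to a countable $\A'\equiv\A$ on which $\exists$ still wins, and apply Theorem~\ref{thm:RaC}. The two obstacles you isolate are genuine and are glossed over in the paper as well; your strategy-tagged pseudo-elementary expansion followed by a countable elementary submodel is a sound way to carry out the transfer (note that determinacy of $F^{n+2}$ is not needed here, only that the two players cannot both have winning strategies), and the upgrade from $\At\Nr_n\C\cong\At\A'$ to $\A'\cong\Nr_n\C$ does require the extra identification you describe, which in \cite{e} is arranged by controlling how $\A'$ sits over its atom structure.

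The one step that would fail as written is your witness for the properness of the first inclusion $\Nr_n\CA_\omega\subseteq S_c\Nr_n\CA_\omega$. Examples~\ref{countable} and~\ref{OTT} cannot supply it: the algebras constructed there are by design of the form $\Nr_n\C$ (respectively lie in $\Nr_m\CA_n\cap\RCA_m$), i.e.\ they are \emph{full} neat reducts, so they sit on the wrong side of the inclusion you are trying to separate; the algebra of Example~\ref{OTT} is moreover built precisely so as \emph{not} to be completely representable, hence not to lie in $S_c\Nr_n\CA_\omega$ at all. Nor is non-representability of $\Cm(\At\B)$ a certificate that $\B\notin\Nr_n\CA_\omega$: the paper establishes no such implication, and Example~\ref{countable} (an atomic algebra in $\Nr_n\CA_\omega$ with no complete representation) indicates that pathology of the atom structure coexists with being a neat reduct. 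What is actually needed is a complete subalgebra of some $\Nr_n\C$, $\C\in\CA_\omega$, that is not itself isomorphic to any $\Nr_n\D$; this is what the paper's citation \cite{IGPL} provides, building on N\'emeti's result \cite{N83} that the class of neat reducts is not closed under forming subalgebras, and it is a different construction from the blur and splitting examples you point to.
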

\begin{demo}{Proof} (i) is already mentioned. While for (ii), for the first inclusion \cite{IGPL}, and for the second \cite{HH97b}.
\end{demo}

Robin Hirsch prove the analagous result of theorem \ref{r} (i) for relation algebras $(\RA$) \cite{R}. 
 For $\RA$'s we do have a $NET$ to the effect that $\RRA=S\Ra\CA_{\omega}=S\Ra\RCA_{\omega}$.
If a representable relation algebra $\A$ generates at most one $\RCA_{\omega}$ then $\A\in APbase(\RRA)$.
This is another way of saying that an $\RA$ has the $UNEP$.
In particular, $\QRA\subseteq APbase(\RRA)$. $\QRA$ defined in e.g \cite{TG} p. 242 is the class of relation algebras with quasi-projections. 
In fact, we have $\QRA\subseteq SUPAPbase(\RRA)$. A recent reference dealing with representability of $\QRA$'s 
via a Neat Embedding Theorem for $\CA$'s
is \cite{Simon}.
So for $\RA$'s, $\QRA$ is a ``natural" class such that each of its members has $SNEP$ and $UNEP$.
$\QRA$ lies at the heart of `finitizing" set theory \cite{TG}. The $\CA$ analogue of this class is the class of directed cylindric algebras 
invented by N\'emeti, and studied by Andras Simon and Gabor Sagi \cite{Sagi}.
The representability of such algebras, providing a solution to the finite dimensional version of $FP$ in certain non well founded set theories, 
can be also proved using a $NET$.
Furthermore for such algebras neat reducts commute with forming subalgebras ( that is if $X\subseteq \Nr_n\A$, then $\Sg^{\Nr_n\A}X=\Nr_n\Sg^{\A}X$),
hence this class has $SUPAP$.
In \cite{Amer} the $NET$ of Henkin is likened to his completeness proof; therefore it is not a coincidence that interpolation results and omitting types
for variants of first order logic turn out closely linked to appropriate variations on the $NET$.
Indeed one theme of this paper is to deepen and highlight this connection. 
An algebra $\A$ is representable if it neatly embeds into an algebra in $\omega$ extra dimensions, 
for a class of algebras to have the amalgamation property
its members should embed neatly into $\omega$ extra dimensions in a unique way, for a class of algebras to have super amalgamation its members
should embed uniquely and strongly into $\omega$ extra dimensions ; finally for atomic countable 
algebras to be completely representable they should embed completely into 
algebras in $\omega$ extra dimensions.
We end, this article, by remarking that for different solutions to the Finitizability problem, resorting to a $NET$, like $\QRA$, Nemeti's directed $\CA$'s
Sain's algebras, when an algebra is forced to neatly embed into one in $\omega$ extra dimensions, then it does so, strongly, 
uniquely and completely! This also happens for $\PA$'s. 
In other words, for such algebras the inclusions in theorem \ref{r} (ii) are {\it not} proper. Thats essentially why such classes have $SUPAP$ and
their atomic algebras are completely representable. We do not think that this is a coincidence, but further
research is needed to clarify this point.

\end{document}